\title{Stability in exponential time of Minkowski Space-time with a translation space-like Killing field}
\author{Cécile Huneau}
\newtheorem{thm}{Theorem}[section]
\newtheorem{prp}[thm]{Proposition}
\newtheorem{cor}[thm]{Corollary}
\newtheorem{lm}[thm]{Lemma}
\newtheorem{df}[thm]{Definition}
\newtheorem{rk}[thm]{Remark}
\newcommand{\m}[1]{\mathbb{#1}}
\newcommand{\q}[1]{\mathcal{#1}}
\newcommand{\wht}[1]{\widetilde{#1}}
\newcommand{\gra}[1]{\mathbf{#1}}
\newcommand{\grat}[1]{\mathbf{\widetilde{#1}}}
\newcommand{\ba}[1]{\underline{#1}}
\newcommand{\ep}{\varepsilon}
\newcommand{\ch}{\mathbbm{1}}
\newcommand{\Up}{\Upsilon}
\begin{document}
 
\maketitle

\begin{abstract}
In this paper, we prove the nonlinear stability in exponential time of Minkowki space-time with a translation space-like Killing field. In the presence of such a symmetry, the $3+1$ vacuum Einstein equations reduce to the $2+1$ Einstein equations with a scalar field. We work in generalised wave coordinates. In this gauge Einstein equations can be written as a system of quasilinear quadratic wave equations. The main difficulty in this paper is due to the decay in $\frac{1}{\sqrt{t}}$ of free solutions to the wave equation in $2$ dimensions, which is weaker than in $3$ dimensions. As in \cite{lind}, we have to rely on the particular structure of Einstein equations in wave coordinates. We also have to carefully choose the behaviour of our metric in the exterior region to enforce convergence to Minkowski space-time at time-like infinity.
\end{abstract}

\section{Introduction}

In this paper, we address the quasi stability of the Minkowski solution to the Einstein vacuum equations with 
a translation space-like Killing field. In the presence of a translation space-like Killing field, the $3+1$ Einstein vacuum equations reduces to the following system
in the polarized case (see Appendix \ref{reduc}).
\begin{equation}\label{s1}
 \left\{ \begin{array}{l}
          \Box_g \phi = 0,\\
R_{\mu \nu}=\partial_\mu \phi \partial_\nu \phi.
         \end{array}
\right.
\end{equation}

This system has been studied by Choquet-Bruhat and Moncrief in \cite{choquet} (see also \cite{livrecb})
in the case of a
space-time of the form $\Sigma \times \mathbb{S}^1  \times \mathbb{R}$, where $\Sigma$ is a compact two dimensional manifold of genus $G\geq 2$, and $\mathbb{R}$
is the time axis, with a space-time metric independent of the coordinate on $\mathbb{S}^1$. 
They prove the existence of global solutions corresponding to the perturbation of a particular expanding universe.
This symmetry has also been studied in \cite{asht}, with an additional rotation symmetry.

In this paper, we consider a space-time of the form $\m R^2 \times \m R_{x^3} \times \m R_{t}$, for which $\partial_3$ is a Killing vector field. Minkowski space-times can be seen as a trivial solution of Einstein vacuum equations with this symmetry.
The question we address in this paper is the stability of the Minkowski solution in this framework.

In the $3+1$ vacuum case, the stability of Minkowski space-time has been proven in the celebrated work of Christodoulou and Klainerman in \cite{ck} in the maximal foliation. It has then been proven by Lindblad and Rodnianski using harmonic gauge in \cite{lind}. Their proof extends also to Einstein equations coupled to a scalar field.
In this work we will use wave coordinates. 

\subsection{Einstein equations in wave coordinates}
Wave coordinates $(x^\alpha)$ are required to satisfy $\Box_g x^\alpha=0$. In these coordinates \eqref{s1} reduces to the following system of quasilinear wave equations

\begin{equation}
 \left\{ \begin{array}{l}
          \Box_g \phi = 0,\\
\Box_g g_{\mu \nu}= -\partial_\mu \phi \partial_\nu \phi +P_{\mu \nu}(\partial g, \partial g),
         \end{array}
\right.
\end{equation}
where $P_{\mu \nu}$ is a quadratic form.
To understand the difficulty, let us first recall known results in $3+1$ dimensions. In $3+1$ dimensions, a semi linear system of wave equations of the form
\[ \Box u^i = P^i (\partial u ^j, \partial u^k) \]
is critical in the sense that if there isn't enough structure, the solutions might blow up in finite time (see the counter examples by John \cite{john}). However, if the right-hand side satisfies the null
condition, introduced by Klainerman in \cite{klai}, the system admits global solutions. This condition requires that  $P^i$ be linear combinations of the following forms
$$Q_0(u,v)=\partial_t u \partial_t v-\nabla u. \nabla v, \quad Q_{\alpha \beta}(u,v)=\partial_\alpha u \partial_\beta v-\partial_\alpha v \partial_\beta u.$$
In three dimensions, Einstein equations written in wave coordinates do not satisfy the null condition. However, this is not a necessary condition to obtain global existence. An example is provided by the system
\begin{equation}
\label{model}
 \left\{ 
\begin{array}{l}
\Box \phi_1 =0, \\
\Box \phi_2=(\partial_t \phi_1)^2.
\end{array}\right.
\end{equation}
The non-linearity does not have the null structure, but thanks to the decoupling there is nevertheless global existence.
In \cite{craslind}, Lindblad and Rodnianski showed that the non linear terms in Einstein equations in wave coordinates consists of a linear combination of  null forms with an underlying structure of the form \eqref{model}. They used the wave condition to obtain better decay for some coefficients of the metric. However the decay is slower than for the solution of the wave equation. An example of a quasilinear scalar wave equation admitting global existence without the null condition, but with a slower decay is also studied by Lindblad in \cite{lindsym} in the radial case, and by Alinhac in \cite{alin2} and Lindblad in \cite{lindblad} in the general case. In \cite{craslind}, Lindblad and Rodnianski introduced the notion of weak-null structure, which gather all these examples.

In $2+1$ dimensions, to show global existence, one has to be careful with  both quadratic and cubic terms. Quasilinear scalar wave equations in $3+1$ dimensions have been studied by Alinhac in \cite{alin}. He shows global existence for a quasilinear equation of the form
$$\Box u =g^{\alpha \beta}(\partial u)\partial_\alpha \partial_\beta u,$$
if the quadratic and cubic terms in the right-hand side satisfy the null condition.
Global existence for a semi-linear wave equation with the quadratic and cubic terms satisfying the null condition has been shown by Godin in \cite{godin} using an algebraic trick to remove the quadratic terms, which does however not extend to systems. The global existence in the case of systems of semi-linear wave equations with the null structure has been shown by Hoshiga in  \cite{hoshiga}. It requires the use of $L^\infty-L^\infty$ estimates for the inhomogeneous wave equations, introduced in \cite{kubo}.

To show the quasi global existence for our system in wave coordinates, it will therefore be necessary to exhibit structure in quadratic and cubic terms. However, as for the vacuum Einstein equations in $3+1$ dimension in wave coordinates, our system does not satisfy the null structure. It will in particular be important to understand what happens for 
a system of the form \eqref{model} in $2+1$ dimensions. For such a system, standard estimates only give an $L^\infty$ bound for $\phi_2$, without decay. Moreover, the growth of the energy of $\phi_2$ is like $\sqrt{t}$.

One can easily imagine that with more intricate a coupling than for \eqref{model}, it will be very difficult to prove stability without decay for $\phi_2$. To obtain a more useful estimate, the idea will be to exploit more precisely the fact that $\phi_1$ also satisfies a wave equation. To understand how this might help, we will look at special solutions of vacuum Einstein equations with a translation space-like Killing field : Einstein-Rosen waves. These solutions have been discovered by Beck (see \cite{beck}, and also \cite{asht} and \cite{chru} for a mathematical description).

\subsection{Einstein-Rosen waves}\label{secerw}
Einstein-Rosen waves are solutions of vacuum Einstein equations with two space-like orthogonal Killing fields : $\partial_3$ and $\partial_\theta$. The $3+1$ metric can be written
$$\gra g= e^{2\phi}(dx^3)^2+ e^{2(a-\phi)}(-dt^2+dr^2)+r^2e^{-2\phi}r^2d\theta^2.$$
The reduced equations 
\begin{equation*}
\left\{\begin{array}{l}
R_{\mu \nu}=\partial_\mu \phi \partial_\nu \phi, \\
\Box_g \phi = 0 ,
\end{array}\right.
\end{equation*}
can be written in this setting
\begin{align}
\label{wave1}R_{tt}&= \partial_r^2 a-\partial_t^2 a + \frac{1}{r}\partial_ra =2(\partial_t \phi)^2,\\
\nonumber R_{rr}&= -\partial_r^2 a+\partial_t^2 a + \frac{1}{r}\partial_r a =2(\partial_r \phi)^2,\\
\nonumber R_{tr}&=\frac{1}{r}\partial_t a = 2\partial_t \phi \partial_r \phi.
\end{align}
The equation for $\phi$ can be written, since $\phi$ is radial
$$e^{2a}\Box_g \phi = -\partial^2_t \phi+\partial_r^2 \phi +\frac{1}{r}\partial_r\phi=0,$$
where $g$ is the metric
$$g=e^{2a}(-dt^2+dr^2)+r^2d\theta^2.$$
The equation for $\phi$ decouples from the equations for the metric. Therefore we can solve the flat wave equation
$\Box \phi = 0$, with initial data $(\phi, \partial_t \phi)|_{t=0}=(\phi_0, \phi_1)$ and then solve the Einstein equations, which reduces to
\begin{equation}
\label{transport1}\partial_r a = r\left((\partial_r \phi)^2+(\partial_t \phi)^2\right),
\end{equation}
with the boundary condition $\phi|_{r=0}=0$ in order to have a smooth solution.
Since $\Box \phi= 0$, if $(\phi_0, \phi_1)$ have enough decay, we have the following decay estimate for $\phi$
$$|\partial \phi(r,t)|\lesssim \frac{1}{\sqrt{1+t+r}(1+|t-r|)^\frac{3}{2}}.$$
Therefore since
$$a= \int_0^R r\left((\partial_r \phi)^2+(\partial_t \phi)^2\right)dr$$
we have
\begin{align*}
|a|&\lesssim \frac{1}{(1+|r-t|)^2},\; for \; r<t,\\
|a-E(\phi)&|\lesssim \frac{1}{(1+|r-t|)^2}, \; for \;r>t,
\end{align*}
where the energy
$$E(\phi)=\int_0^\infty r\left((\partial_r \phi)^2+(\partial_t \phi)^2\right)dr$$
does not depend on $t$. For $r>t$, we have $a \sim E(\gamma)$ and hence is only bounded. In particular, the metric 
$$e^{2a}dr^2+r^2d\theta^2$$
exhibits an angle at space-like infinity, that is to say the circles of radius $r$ have a perimeter growth of $e^{-E(\phi)}2\pi r$ instead of $2\pi r$.
However, in the interior, the decay we get is far better than the one we could have found with standard estimates, if we had used \eqref{wave1} instead of \eqref{transport1}.

\subsection{The background metric}\label{secback}
We would like to adapt the analysis of Section \ref{secerw} in the case where we only assume one Killing field (i.e. in the case where $\partial_3$ is Killing but not $\partial_\theta$). Assume that 
$$a= \int_0^R r\left((\partial_r \phi)^2+(\partial_t \phi)^2\right)dr$$
is still an approximate solution of
\eqref{model}, which will appear to be true in Section \ref{transport}. As in this case $\phi$ also depends on $\theta$, we will have
$$\lim_{R\rightarrow \infty} a(t,R,\theta)
=\int_0^\infty r\left((\partial_r \phi)^2+(\partial_t \phi)^2\right)dr=b(t,\theta).$$
Note that we have to be careful with the dependence on $\theta$. The metric 
$$e^{2b(\theta)}(-dt^2+dr^2)+ r^2d\theta^2$$
is no longer a Ricci flat metric when $b$ depends on $\theta$.
Consequently it is not a good guess for the behavior at 
infinity of our metric solution $g$. A good candidate should be Ricci flat in the region $r>t$. Indeed if we considered compactly supported initial data for $\phi$, by finite speed propagation, $\phi$ should intuitively be supported in the region $r<t$. Consequently, the equation
$$R_{\mu \nu}=\partial_\mu \phi \partial_\nu \phi$$
implies that $g$ should be Ricci flat for $r>t$.
Consequently, we are yield to consider the following family of space-time metrics

\begin{equation}\label{gb}
g_b = -dt^2 + dr^2 + (r+\chi(q)b(\theta)q)^2 d\theta^2+J(\theta)\chi(q)dqd\theta,
\end{equation}
where $(r,\theta)$ are polar coordinates,  $q=r-t$ and $\chi$ is a cut-off function such that $\chi(q)=0$ for $q<1$ and $\chi(q)=1$ for $q>2$. 
In the coordinates $s=r+t,q,\theta$, a tedious calculation yield that all the Ricci coefficients are zero except
\begin{equation}\label{rqq}
\begin{split}
(R_b)_{qq}=&-\frac{b(\theta)\partial_q^2(q\chi(q))}{r+b(\theta)q\chi(q)}+\frac{q\chi(q)\chi'(q)J(\theta)\partial_\theta b}{(r+b(\theta)q\chi(q))^3}+
\frac{J(\theta)^2\chi(q)\chi'(q)}{4(r+b(\theta)q\chi(q))^3}-\frac{\chi'(q)\partial_\theta J(\theta)}{(r+b(\theta)q\chi(q))^2},
\\
=&-\frac{b(\theta)\partial_q^2(q\chi(q))}{r}+O\left(\frac{C(b,b',J,J')\ch_{1<q<2}}{r^2}\right),
\end{split}
\end{equation}

\begin{equation}\label{rqu}
(R_b)_{q\theta}=-\frac{J(\theta)\chi'(q)}{2(r+b(\theta)q\chi(q))}\\
=O\left(\frac{C(b,J)\ch_{1<q<2}}{r}\right).
\end{equation}
Therefore, the metrics $g_b$ are Ricci flat in the region $r>t+2$.
We will see in the next section that they are compatible with the initial data for $g$ given by the constraint equations.

This choice of background metric will force us to work in generalized wave coordinates, instead of usual wave coordinates. Indeed, for the metric
$g_b$ defined by \eqref{gb}, the coordinates $(t,x_1,x_2)$ are not wave coordinates, not even asymptotically. The generalized wave coordinate condition reads, for $g$ of the form $g=g_b + \wht g$
$$g^{\lambda \beta}\Gamma^\alpha_{\lambda \beta}=H_b^\alpha$$
where $H_b^\alpha$ is defined by
\begin{equation}
\label{defHalpha}
H_b^\alpha=\bar{H}_b^\alpha +F^\alpha,
\end{equation}
where $\bar{H}_b^\alpha$ is defined by
\begin{equation}
\label{defHbar}
\bar{H}_b^\alpha=g_b^{\lambda \beta}(\Gamma_b)^\alpha_{\lambda \beta}
\end{equation}
and $F^\alpha$ is defined by the sum of the crossed terms of the form
$\wht g \frac{\partial_\theta}{r}g_b$
in $g^{\lambda \beta}\Gamma^\alpha_{\lambda \beta}-\bar{H}^\alpha_b.$
The reason of this choice for $F^\alpha$ will be explained in next section, in the proof of Theorem \ref{thinitial}.

The form of \eqref{s1} in generalized wave coordinates is given by \eqref{gw} .

\subsection{The initial data}\label{initial}
In this section, we will explain how to choose the initial data for $\phi$ and $g$. We will note $i,j$ the space-like indices and $\alpha,\beta$ the space-time indices.

We will work in weighted Sobolev spaces.

\begin{df} Let  $m\in \m N$ and $\delta \in \mathbb{R}$. The weighted Sobolev space $H^m_\delta(\mathbb{R}^n)$ is the completion of $C^\infty_0$ for the norm 
 $$\|u\|_{H^m_\delta}=\sum_{|\beta|\leq m}\|(1+|x|^2)^{\frac{\delta +|\beta|}{2}}D^\beta u\|_{L^2}.$$
The weighted Hölder space $C^m_{\delta}$ is the complete space of $m$-times continuously differentiable functions with norm 
$$\|u\|_{C^m_{\delta}}=\sum_{|\beta|\leq m}\|(1+|x|^2)^{\frac{\delta +|\beta|}{2}}D^\beta u\|_{L^\infty}.$$
Let $0<\alpha<1$. The Hölder space $C^{m+\alpha}_\delta$ is the the complete space of $m$-times continuously differentiable functions with norm 
$$\|u\|_{C^{m+\alpha}_{\delta}}=\|u\|_{C^m_\delta} + \sup_{x \neq y, \; |x-y|\leq 1} \frac{|\partial^m u(x)-\partial^m u(y)|(1+|x|^2)^\frac{\delta}{2}}{|x-y|^\alpha}.$$
\end{df}

We  recall the Sobolev embedding with weights (see for example \cite{livrecb}, Appendix I).

\begin{prp}\label{holder} Let $s,m \in \m N$. We assume $s >1$. Let $\beta \leq \delta +1$ and $0<\alpha<min(1,s-1)$. Then, we have the continuous embedding
$$H^{s+m}_{\delta}(\m R^2)\subset C^{m+\alpha}_{\beta}(\m R^2).$$
\end{prp}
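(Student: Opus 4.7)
The plan is to reduce this weighted embedding to the classical unweighted Sobolev embedding by a dyadic decomposition of $\m R^2$ and a scaling argument. The guiding principle is that the weighted norm $\|u\|_{H^{s+m}_\delta}$ measures, on each dyadic annulus $A_k=\{2^{k-1}\le |x|\le 2^{k+1}\}$, the $L^2$ norm of $D^\mu u$ weighted by the scale $2^{k(\delta+|\mu|)}$, which is exactly the scaling of $D^\mu u$ itself when rescaled to a reference annulus. The condition $\beta\le\delta+1$ will appear as the requirement that the dyadic sum be summable in $k$.

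First, I would split $\m R^2$ into the unit ball $B_1$ and the annuli $\{A_k\}_{k\ge 1}$. On $B_1$ the weights $(1+|x|^2)^{\cdot/2}$ are bounded above and below by constants, so the classical Sobolev embedding $H^{s+m}(B_1)\hookrightarrow C^{m+\alpha}(B_1)$ (valid for $s>1$, $\alpha<\min(1,s-1)$ in dimension two) gives the estimate directly. For $A_k$ ($k\ge 1$), introduce $u_k(y)=u(2^k y)$ on the fixed reference annulus $A=\{1/2\le |y|\le 2\}$. A direct computation with the change of variables $x=2^k y$ gives
\begin{equation*}
\|D^\mu u_k\|_{L^2(A)}^2 = 2^{2k(|\mu|-1)}\|D^\mu u\|_{L^2(A_k)}^2,
\end{equation*}
and since $(1+|x|^2)^{(\delta+|\mu|)/2}\sim 2^{k(\delta+|\mu|)}$ on $A_k$, this yields $\|u_k\|_{H^{s+m}(A)}\lesssim 2^{-k(1+\delta)}\|u\|_{H^{s+m}_\delta(A_k)}$.

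Now the classical embedding $H^{s+m}(A)\hookrightarrow C^{m+\alpha}(A)$ on the smooth bounded reference annulus gives $\|u_k\|_{C^{m+\alpha}(A)}\lesssim \|u_k\|_{H^{s+m}(A)}$. For the $C^m$ part: using $\sup_A|D^\gamma u_k|=2^{k|\gamma|}\sup_{A_k}|D^\gamma u|$ one finds
\begin{equation*}
\sup_{A_k}(1+|x|^2)^{(\beta+|\gamma|)/2}|D^\gamma u|\lesssim 2^{k(\beta-\delta-1)}\|u\|_{H^{s+m}_\delta},\qquad |\gamma|\le m,
\end{equation*}
and for the Hölder semi-norm, the quotient $|D^m u(x)-D^m u(x')|/|x-x'|^\alpha$ scales by $2^{k(m+\alpha)}$ between $u_k$ and $u$, so multiplying by the weight $2^{k\beta}$ produces the bound $2^{k(\beta-\delta-1-m-\alpha)}\|u\|_{H^{s+m}_\delta}$. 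Both exponents are nonpositive exactly when $\beta\le \delta+1$, which yields uniform bounds in $k$ and hence the claimed embedding.

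The main technical point to handle is the Hölder semi-norm \emph{across} annuli: if $x\in A_k$, $x'\in A_j$ with $j\ne k$ and $|x-x'|\le 1$, one cannot immediately apply the estimate on a single $A_k$. This is easily remedied by working with slightly enlarged overlapping annuli $\tilde A_k=\{2^{k-2}\le |x|\le 2^{k+2}\}$: when $|x-x'|\le 1$ and $k$ is large, both points lie in some $\tilde A_k$, and rescaling $\tilde A_k$ to a fixed reference annulus runs exactly as above with unchanged exponents. Aside from this minor bookkeeping, the proof is essentially a careful tracking of scaling exponents, and the sharp condition $\beta\le\delta+1$ is dictated by the dyadic summation.
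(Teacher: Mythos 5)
The paper does not prove this proposition; it simply cites it from~\cite{livrecb}, Appendix~I. Your dyadic rescaling argument is the standard way to establish such weighted embeddings, and the scaling bookkeeping is correct: $\|u_k\|_{H^{s+m}(A)}\lesssim 2^{-k(\delta+1)}\|u\|_{H^{s+m}_\delta(A_k)}$ from the change of variables (with the $2^{-2k}$ Jacobian in dimension two accounting for the extra $-1$), and both the $C^m$ and Hölder pieces pick up exactly the exponent $\beta-\delta-1$ after undoing the rescaling, which is nonpositive under the hypothesis. The classical embedding $H^{s+m}(A)\hookrightarrow C^{m+\alpha}(A)$ on a smooth bounded domain requires $s+m-1>m+\alpha$, i.e.\ $\alpha<s-1$, which you have. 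The overlapping-annuli device for the across-annuli Hölder quotient is the right fix and closes the gap you flagged.

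One small imprecision: you say twice that the condition $\beta\le\delta+1$ comes from requiring a dyadic \emph{sum} to converge, but the target norm is an $L^\infty$/Hölder norm, so what is actually needed is a \emph{sup} over $k$, hence a uniform (not summable) bound — which is why the non-strict inequality $\beta\le\delta+1$ is enough. Your final line states this correctly ("uniform bounds in $k$"); just be aware that the earlier framing in terms of summability, taken literally, would force $\beta<\delta+1$ and contradict the statement. This does not affect the correctness of the argument as ultimately executed.
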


Let $0<\delta <1$.
The initial data $(\phi_0, \phi_1)$ for $(\phi,\partial_t \phi)|_{t=0}$ are freely given in $H^{N+1}_{\delta}\times H^{N}_{\delta+1}$
with $0<\delta<1$.  However
the initial data for $(g_{\mu\nu},\partial_t g_{\mu\nu})$ cannot be chosen arbitrarily.
\begin{itemize}
\item The induced metric and second fundamental form $(\bar{g}, K)$ must satisfy the constraint equations.
\item The generalized wave coordinates condition must be satisfied at $t=0$.
\end{itemize}
Moreover, we want to prescribe the asymptotic behaviour for $g$ : we want it to be asymptotic to $g_b$, where $b(\theta)$ is arbitrarily prescribed, except for its components in $1$, $\cos(\theta)$ and $sin(\theta)$.

We recall the constraint equations.  First we write the metric $g$ in the form
$$g = -N^2(dt)^2 +\bar{g}_{ij}(dx^i +\beta^i dt)(dx^j + \beta^j dt),$$
where the scalar function $N$ is called the lapse, the vector field $\beta$ is called the shift and $\bar{g}$ is a Riemannian metric on $\m R^2$. 

We consider the initial space-like surface $\m R ^2 = \{t=0\}$.
We will use the notation
$$\partial_0=\partial_t - \mathcal{L}_{\beta},$$
where $\mathcal{L}_{\beta}$ is the Lie derivative associated to the vector field $\beta$. With this notation, we have the following 
expression for the second fundamental form of $\m R^2$
$$K_{ij}=-\frac{1}{2N}\partial_0 g_{ij}.$$
We will use the notation 
$$\tau=g^{ij}K_{ij}$$
for the mean curvature. We also introduce the Einstein tensor
$$G_{\alpha \beta}=R_{\alpha \beta} - \frac{1}{2}Rg_{\alpha \beta},$$
where $R$ is the scalar curvature $R = g^{\alpha \beta}R_{\alpha \beta}$. 
The constraint equations are given by
\begin{align}
 \label{contrmom} G_{0j} &\equiv N(\partial_j \tau - D^i K_{ij})=\partial_0\phi \partial_j \phi, \; j=1,2,\\
\label{contrham} G_{00} & \equiv \frac{N^2}{2}(\bar{R}-|K|^2+ \tau^2)= (\partial_0 \phi)^2 - \frac{1}{2}{g}_{00} {g}^{\alpha \beta}\partial_\alpha\phi \partial_\beta \phi,
\end{align}
where $D$ and $ \bar{R}$ are respectively the covariant derivative and the scalar curvature associated to $\bar{g}$. 
The following result, proven in Appendix \ref{apini}, gives us the initial data we need.

\begin{thm}\label{thinitial}
Let $0<\delta<1$. Let $(\phi_0, \phi_1)\in H^{N+1}_{\delta}(\m R^ 2)\times H^{N}_{\delta+1}(\m R^ 2)$ and $\wht b(\theta)\in W^{N,2}(\m S^ 1)$ such that
$$\int \wht b d\theta= \int \wht b \cos(\theta)d\theta=\int \wht b \sin(\theta)d\theta=0.$$
We assume
$$\|\phi_0\|_{H^{N+1}_\delta}+\|\phi_1\|_{H^N_{\delta+1}}
\lesssim \ep, \quad \|\wht b\|_{W^{N,2}}\lesssim \ep^2.$$
If $\ep>0$ is small enough, there exists $b_0,b_1,b_2 \in \m R \times \m R \times \m S^1$, $J\in W^{N,2}(\m S^ 1)$ and 
$$ (g_{\alpha \beta})_0,(g_{\alpha \beta})_1 \in H^{N+1}_{\delta}\times H^{N}_{\delta+1}$$ such that the initial data for $g$ given by
$$g=g_b+g_0, \;\partial_t g = \partial_t g_b +g_1,$$
where $g_b$ is defined by \eqref{gb} with
$$b(\theta)=b_0+b_1\cos(\theta)+b_2\sin(\theta)+\wht b(\theta),$$
are such that
\begin{itemize}
\item $g_{ij}, K_{ij}=\q L_{\beta}g_{ij}$ satisfy the constraint equations \eqref{contrmom} and \eqref{contrham}.
\item the following generalized wave coordinates condition is satisfied at $t=0$
$$g^{\lambda\beta}\Gamma^\alpha_{\lambda \beta}=g_b^{\lambda\beta}(\Gamma_b)^\alpha_{\lambda \beta}+F^\alpha,$$
where $F^\alpha$ is the sum of all the crossed term of the form
$g_0\frac{\partial_\theta}{r} g_b$ in
$g^{\lambda\beta}\Gamma^\alpha_{\lambda \beta}-g_b^{\lambda\beta}(\Gamma_b)^\alpha_{\lambda \beta}$.
\end{itemize}
Moreover, we have the estimates
$$\|J\|_{W^{N,2}(\m S^1)}+\|g_0\|_{H^{N+1}_{\delta}} +\|g_1\|_{H^N_{\delta+1}} \lesssim \ep^2,$$
\begin{align*}
b_0&=\frac{1}{4\pi}\int \left(\dot{\phi}^2+|\nabla \phi|^2\right) +O(\ep^4),\\
b_1&=\frac{1}{\pi}\int \dot{\phi}\partial_1 \phi +O(\ep^4),\\
b_2&=\frac{1}{\pi}\int \dot{\phi}\partial_2 \phi +O(\ep^4),\
\end{align*}
\end{thm}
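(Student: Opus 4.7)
The plan is to solve the constraints-plus-gauge system by the conformal method, treating the parameters $b_0,b_1,b_2$ and the function $J(\theta)$ as Lagrange multipliers that absorb the finite-dimensional obstructions to elliptic solvability that are unavoidable in two spatial dimensions. I would write the unknown initial data as $\bar g = \bar g_b|_{t=0}+h$ and $K = K_b|_{t=0}+\kappa$, decompose $\kappa$ in York form as a longitudinal piece $\q L V$ plus a small transverse part, and linearize \eqref{contrham}--\eqref{contrmom} around the background $g_b$. The Hamiltonian constraint then becomes a semilinear equation whose principal part is the scalar Laplacian acting on $\mathrm{tr}\,h$, while the momentum constraint becomes a linear elliptic system for $V$ whose principal part is the flat conformal Killing Laplacian on $\m R^2$. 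All source terms that do not vanish on the background are of size $\ep^2$: the matter contributions $(\partial_0\phi)^2+|\nabla\phi|^2$ and $\partial_0\phi\,\partial_j\phi$ are manifestly so, while the curvature terms coming from $g_b$ are supported in the annulus $\{1<r<2\}$ by \eqref{rqq}--\eqref{rqu} and are $O(\ep^2)$ once $b,J=O(\ep^2)$.

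The crucial analytic point is that on the weighted spaces $H^m_\delta(\m R^2)$ with $0<\delta<1$, the flat Laplacian $\Delta:H^{N+2}_\delta\to H^N_{\delta+2}$ is Fredholm but has a one-dimensional cokernel spanned by the constant $1$, while the conformal Killing Laplacian has a two-dimensional cokernel spanned by the translational conformal Killing fields $\partial_1,\partial_2$. Solvability of the constraint system therefore forces three scalar compatibility conditions
\[
\int f_{\mathrm{Ham}}\,dx = 0, \qquad \int f_{\mathrm{mom}}^{\,j}\,dx = 0 \quad (j=1,2),
\]
into which $b_0,b_1,b_2$ are inserted as the available degrees of freedom. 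A direct computation of the contribution of $g_b$ to these left-hand sides, integrating \eqref{rqq}--\eqref{rqu} across the transition layer and using $\int_0^{2\pi}d\theta = 2\pi$, gives that the Hamiltonian obstruction equals $4\pi b_0$ modulo $O(\ep^4)$ and that the momentum obstructions equal $\pi b_1$ and $\pi b_2$. Equating these with the matter integrals recovers exactly the stated formulas. The free mode $\wht b(\theta)$ is required to be orthogonal to $1,\cos\theta,\sin\theta$ precisely so that it contributes to none of these three scalar obstructions; the function $J(\theta)$ is then fixed by the generalized wave coordinates condition at $t=0$, which, after subtracting $\bar H_b^\alpha$ and the engineered cross-term correction $F^\alpha$ of \eqref{defHalpha}, reduces to a solvable linear equation on $\m S^1$ for the higher angular modes.

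With the linearized system understood, I would close by a standard contraction-mapping argument: iterate the three linear solves above and reinsert the quadratic corrections as a perturbation of the right-hand sides, using the weighted Sobolev embedding of Proposition \ref{holder} to control pointwise products. The contraction lives in a ball of radius $C\ep^2$ in $H^{N+1}_\delta\times H^N_{\delta+1}$, and smallness of $\ep$ closes the fixed-point. The principal obstacle is the bookkeeping around $F^\alpha$: the Fredholm theory of $\Delta$ in 2D weighted spaces admits exactly three scalar anomalies at infinity, so the asymptotic ansatz must fit this obstruction count exactly, and $F^\alpha$ in \eqref{defHalpha} is defined precisely so that the worst cross terms $\wht g\cdot r^{-1}\partial_\theta g_b$, which would otherwise destroy elliptic solvability in the gauge equation, cancel identically in $g^{\lambda\beta}\Gamma^\alpha_{\lambda\beta}-\bar H_b^\alpha$. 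Verifying this cancellation carefully, and checking that the resulting system genuinely fits the weighted Fredholm framework at the regularity $H^{N+1}_\delta$, is the delicate computational core of the argument.
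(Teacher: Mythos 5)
Your overall philosophy---conformal method plus absorbing the three Fredholm obstructions of the 2D weighted elliptic theory into the angle $b_0$ and the linear momentum $(b_1,b_2)$---matches the constraint-solving result (Theorem \ref{contrainte}, quoted from \cite{expcont}) that the paper takes as its starting point. But your proposal skips the step that is the actual content of the proof in this paper: the conformal method produces a spatial metric $e^{2\lambda}\delta$ with $\lambda\sim-\alpha\chi(r)\ln r$ and a second fundamental form built from $\breve H$ and $\breve\tau$, and one must still show that the resulting spacetime data are isometric to $g_b+g_0$ with $g_0\in H^{N+1}_\delta$ for a metric $g_b$ of the exact form \eqref{gb}. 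The paper does this by exhibiting an explicit ansatz $g_a$ (Lemma \ref{lmgas}) and then performing three changes of variables $r'=r^{1-\alpha}/(1-\alpha)$, $\theta\mapsto\theta-J/((1-\alpha)^2 r)$, and $\theta'=\theta+f(\theta)$ (Lemma \ref{lmiso}); it is only through this last angular reparametrization that $b(\theta)$ is defined, via the \emph{nonlinear} relation $b=b^{(1)}\circ F/(1-\alpha-b^{(1)}\circ F)$ of \eqref{defb}. Consequently the prescribed $\wht b=\Pi b$ is not directly a source term in a linear compatibility condition: one must invert the map $\Phi:\bar b\mapsto\Pi b$ for small $\ep$ before Theorem \ref{contrainte} can even be applied. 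Your contraction argument is set up in the wrong variables to see this.

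Two further misassignments. First, $J(\theta)$ is not fixed by the wave-coordinate condition: in the paper $J(\theta)=2JF'(\theta)$ comes from a constant $J$ determined by the \emph{momentum constraint} (it is essentially the angular momentum, $J=-\frac{1}{2\pi}\int\dot\phi\,\partial_\theta\phi+\dots$), together with the auxiliary function $B(\theta)=Jb^{(1)}/(1-\alpha)$ chosen so that the cross terms $\frac{t}{r}\,dr\,d\theta$ cancel in the isometry. What the generalized wave-coordinate condition at $t=0$ determines is $\partial_t\wht g_{00}$ and $\partial_t\wht g_{0i}$ (an algebraic solve, not an equation on $\m S^1$ for higher modes of $J$). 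Second, the purpose of $F^\alpha$ is not to rescue elliptic solvability of a gauge equation; it is a regularity device: without it, $g^{\lambda\beta}\Gamma^\alpha_{\lambda\beta}-\bar H_b^\alpha$ contains terms $\wht g\,\frac{\partial_\theta b}{r}$ which cost one derivative of $b\in W^{N,2}$ and would push $\partial_t\wht g_{00},\partial_t\wht g_{0i}$ out of $H^N_{\delta+1}$. These are concrete gaps you would need to fill to arrive at the statement as claimed.
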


Let us make a remark on the choice of $F$
\begin{rk}
The initial data  $\partial_t \wht g_{00}$ and $\partial_t \wht g_{0i}$ are constructed so the generalized wave coordinate condition is satisfied at $t=0$.
The choice of $F$ is here to prevent terms of the form $\wht g \partial_U (g_b)$ in this condition, and therefore allow us to have
$$\partial_t \wht g_{00},\partial_t \wht g_{0i} \in H^N_{\delta+1}.$$
\end{rk}

Before stating our main result, we will recall some notations and basic tools in the study of wave equations.
\subsection{Some basic tools}

\paragraph{Coordinates and frames}
\begin{itemize}
\item We note $x^\alpha$ the standard space-time coordinates, with $t=x^0$.
We note $(r,\theta)$ the polar space-like coordinates, and $s=t+r$, $q=r-t$ the null coordinates. The associated one-forms are
$$ds=dt+dr, \quad dq=dr-dt,$$
and the associated vector fields are
$$\partial_s = \frac{1}{2}(\partial_t+\partial_r), \quad \partial_q =\frac{1}{2}(\partial_r -\partial_t).$$
\item We note $\partial$ the space-time derivatives, $\nabla$ the space-like derivatives, and by 
$\bar{\partial}$ the derivatives tangent to the future directed light-cone in Minkowski, that is to say $\partial_t +\partial_r$ and
$\frac{\partial_\theta}{r}$.
\item We introduce the null frame $L=\partial_t + \partial_r$, $\ba L = \partial_t -\partial_r$, $U =\frac{\partial_\theta}{r}$. In this frame, the Minkowski metric takes the form
$$m_{L\ba L}= -2, \; m_{UU}= 1, \; m_{LL}=m_{\ba L \ba L}=m_{L U}= m_{\ba L U}=0.$$
The collection $\q T =\{U,L\}$ denotes the vector fields of the frame tangent to the light-cone, and the collection 
$\q V = \{U,L,\ba L\}$ denotes the full null frame.
 
\end{itemize}

\paragraph{The flat wave equation}
Let $\phi$ be a solution of
\begin{equation}\label{eqphi}
 \left\{ \begin{array}{l} \Box \phi = 0,\\
	 (\phi, \partial_t \phi)|_{t=0} =(\phi_0, \phi_1).\\
        \end{array}
\right.
\end{equation}
The following proposition establishes decay for the solutions of $2+1$ dimensional flat wave equation.
\begin{prp}[Proposition 2.1 in \cite{kubota}]\label{flat1}
Let $\mu >\frac{1}{2}$. 
We have the estimate
\[|\phi(x,t)|\lesssim M_\mu(\phi_0,\phi_1)\frac{(1+|t-r|)^{[1-\mu]_{+}}}{\sqrt{1+t+r}\sqrt{1+|t-r|}}\]
where
\[M_\mu(\phi_0, \phi_1)=\sup_{y \in \m R^2}
(1+|y|)^{\mu}|\phi_0(y)|+(1+|y|)^{\mu +1}(|\phi_1(y)|+|\nabla \phi_0(y)|)\]
and where we used the notation $A^{[\alpha]_{+}}=A^{\max(\alpha,0)}$ if $\alpha \neq 0$ and 
 $A^{[0]_{+}}=\ln(A)$.
\end{prp}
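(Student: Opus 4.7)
The natural approach is via the two-dimensional Kirchhoff--Poisson representation. Starting from
\begin{equation*}
\phi(t,x)=\frac{1}{2\pi}\,\partial_t\!\int_{|y|<t}\frac{\phi_0(x+y)}{\sqrt{t^2-|y|^2}}\,dy+\frac{1}{2\pi}\int_{|y|<t}\frac{\phi_1(x+y)}{\sqrt{t^2-|y|^2}}\,dy,
\end{equation*}
and carrying out the $t$-derivative by the standard rescaling $y=t\omega$, I would rewrite this as
\begin{equation*}
\phi(t,x)=\frac{1}{2\pi t}\int_{|y|<t}\frac{t\phi_1(x+y)+\phi_0(x+y)+y\cdot\nabla\phi_0(x+y)}{\sqrt{t^2-|y|^2}}\,dy,
\end{equation*}
a form that involves only the initial data and one derivative of $\phi_0$. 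Inserting the pointwise bounds $|\phi_0(z)|\lesssim M_\mu(1+|z|)^{-\mu}$ and $|\phi_1(z)|+|\nabla\phi_0(z)|\lesssim M_\mu(1+|z|)^{-\mu-1}$ coming from the hypothesis, and using $|y|\le t$, the proof reduces to estimating the model integral
\begin{equation*}
J_\nu(t,x):=\int_{|z-x|<t}\frac{dz}{\sqrt{t^2-|z-x|^2}\,(1+|z|)^{\nu}}
\end{equation*}
for $\nu=\mu$ and $\nu=\mu+1$; more precisely $|\phi(t,x)|\lesssim M_\mu\bigl(t^{-1}J_\mu(t,x)+J_{\mu+1}(t,x)\bigr)$.

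To control $J_\nu$ I would pass to polar coordinates $(\rho,\vartheta)$ centered at $x$ and split according to the geometry of the light cone. In the exterior $r>t+1$, the bound $|x+\rho\omega|\ge r-\rho$ makes the angular integral trivial and leaves a one-dimensional integral in $\rho$; splitting at $\rho=t/2$ and substituting $u=t-\rho$ near $\rho=t$ produces the factor $(1+|t-r|)^{1/2-\nu}$, which is integrable in $\rho$ precisely because $\nu>1/2$. In the interior $r<t-1$, the change of variables $\vartheta\mapsto s=|x+\rho\omega|$ turns the inner integral into a weighted one-dimensional integral over $[|\rho-r|,\rho+r]$, and a further split of the $\rho$-variable around $r$ and around $t$ isolates the slow $(1+s)^{-\nu}$ tail from the $1/\sqrt{t-\rho}$ singularity, together giving the announced $1/\sqrt{1+|t-r|}$ gain from the boundary of the disc of dependence. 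The near-cone slab $|r-t|\le 1$ is then obtained by continuity from the two sides. Combining these contributions, inserting the prefactor $1/t$ and the two choices $\nu=\mu,\mu+1$, and noting that in every case the $J_\mu/t$ term dominates, yields the claimed estimate.

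The main obstacle is the range $\tfrac12<\mu\le 1$, in which $(1+|z|)^{-\mu}$ is not integrable on $\mathbb{R}^2$, so no crude $L^1$ estimate is available. One must genuinely use the chord-length fact that the circle $\{|z-x|=\rho\}$ meets any ball $\{|z|\le R\}$ in an arc of length at most $\min(2\pi\rho,\,C\sqrt{R\rho})$, combined with the square-root singularity at $\rho=t$; at the threshold $\mu=1$ the $\rho$-integral becomes marginal and produces the logarithmic loss encoded by the notation $[1-\mu]_+$. Since this is Kubota's result, I would ultimately follow the argument of \cite{kubota}.
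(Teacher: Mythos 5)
The paper does not actually prove this proposition: it is quoted verbatim as Proposition 2.1 of \cite{kubota}, and only the companion inhomogeneous $L^\infty$--$L^\infty$ estimate (Proposition \ref{inhom}) is proved, in Appendix \ref{linflinf}. Your sketch is the standard route to Kubota's result and uses exactly the same devices as that appendix: the free representation formula, reduction to the model integrals $J_\nu$, the passage from the angular integral to a one-dimensional integral over $\lambda=|x+\rho\omega|\in[|\rho-r|,\rho+r]$ (the paper's Lemma \ref{moysp}), and the extraction of the square-root gain at the boundary of the backward cone via the substitution $u=\rho^2$ with endpoints $a=(\lambda-r)^2$, $b=(\lambda+r)^2$, $d=t^2$. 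You also correctly locate where $\mu>\frac12$ enters and why $\mu=1$ produces the logarithm. So the outline is correct and consistent with the paper's toolbox.

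Two small points to fix if you write it out. First, the claim that ``in every case the $J_\mu/t$ term dominates'' is false: near the light cone ($|t-r|\lesssim 1$, $t$ large) one has $t^{-1}J_\mu\lesssim t^{-\mu}$ while $J_{\mu+1}\sim t^{-1/2}$, so it is the $J_{\mu+1}$ term that saturates the bound there; both terms must be, and are, estimated against the right-hand side, so nothing breaks, but the remark should be dropped. Second, the prefactor $1/t$ in your rewritten representation formula is singular as $t\to 0$; for $t\lesssim 1$ you should either keep the $\partial_t$ form or note that the estimate is immediate from $|\phi|\lesssim M_\mu(1+r)^{-\mu}$ by finite speed of propagation, and reserve the $J_\nu$ analysis for $t\geq 1$.
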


\paragraph{Minkowski vector fields} We will rely in a crucial way on the
 Klainerman vector field method. We introduce the following family of vector fields
$$\q Z =\left\{\partial_\alpha, \Omega_{\alpha \beta}=-x_\alpha \partial_\beta + x_\beta \partial_\alpha, S=t\partial_t + r\partial_r \right\},$$
where $x_\alpha = m_{\alpha \beta}x^\beta$. These vector field satisfy the commutation property 
$$[\Box, Z]=C(Z)\Box,$$
where 
$$C(Z)=0,\; Z \neq S, \quad C(S)=2.$$
Moreover some easy calculations give

\begin{align*}
& \partial_t + \partial_r = \frac{S+\cos(\theta)\Omega_{0,1}+\sin(\theta)\Omega_{0,2}}{t+r},\\
&\frac{1}{r}\partial_\theta = \frac{\Omega_{1,2}}{r}=\frac{\cos(\theta) \Omega_{0,2} -\sin(\theta)\Omega_{0,1}}{t},\\
& \partial_t-\partial_r  = \frac{S-\cos(\theta)\Omega_{0,1}-\sin(\theta)\Omega_{0,2}}{t-r}.
\end{align*}
With this calculations, and the commutations properties in the region
$-\frac{t}{2}\leq r \leq 2t$
$$[Z,\partial]\sim \partial, \; [Z,\bar{\partial}] \sim \bar{\partial},$$
we obtain
\begin{equation}
\label{important}
|\partial^k \bar{\partial^l}u|\leq \frac{1}{(1+|q|)^k (1+s)^l}|Z^{k+l} u|,
\end{equation}
where here and in the rest of the paper,
$Z^I u$ denotes any product of $I$  of the vector fields of $\q Z$.
Estimates \eqref{important} and Proposition \ref{flat1} yield

\begin{cor}\label{cordec}Let $\phi $ be a solution of \eqref{eqphi}. We have the estimate
\[|\partial^k \bar{\partial}^l \phi(x,t)|
\lesssim M^{k+l}_\mu(\phi_0, \phi_1) 
\frac{(1+|t-r|)^{[1-\mu]_{+}}}{(1+t+r)^{l+\frac{1}{2}}(1+|t-r|)^{k+\frac{1}{2}}}\]
where 
\[M^j_\mu(\phi_0, \phi_1)  =\sup_{y \in \m R^2}
(1+|y|)^{\mu+j}|\nabla^s \phi_0(y)|+(1+|y|)^{\mu +1+j}(|\nabla^s \phi_1(y)|+|\nabla^{1+j} \phi_0(y)|).\]
\end{cor}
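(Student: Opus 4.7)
\medskip

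The plan is to reduce the claim to Proposition \ref{flat1} applied not to $\phi$ itself but to $Z^{k+l}\phi$, where $Z$ ranges over the Klainerman family $\q Z$. The key input is the commutation identity $[\Box,Z]=C(Z)\Box$ with $C(Z)\in\{0,2\}$: iterating it and using $\Box\phi=0$ gives $\Box(Z^I\phi)=0$ for any multi-index $I$, so every $Z^I\phi$ is a free solution of the flat $2{+}1$ wave equation. Proposition \ref{flat1} then yields, for $I$ of length $k+l$,
\[
|Z^{k+l}\phi(x,t)|\lesssim M_\mu\bigl(Z^{k+l}\phi|_{t=0},\,\partial_t Z^{k+l}\phi|_{t=0}\bigr)\,\frac{(1+|t-r|)^{[1-\mu]_{+}}}{\sqrt{1+t+r}\,\sqrt{1+|t-r|}}.
\]

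Next I would bound the right-hand $M_\mu$ data norm by $M^{k+l}_\mu(\phi_0,\phi_1)$. At $t=0$ one has $S=r\partial_r$, $\Omega_{ij}=-x_i\partial_j+x_j\partial_i$ (tangential), and $\Omega_{0i}=x_i\partial_t$, so every application of a vector field of $\q Z$ costs either a spatial derivative of $\phi_0$ (possibly with a weight $|x|$) or a factor $(1+|x|)$ times a $\partial_t$, which converts into $\phi_1$ or into spatial derivatives of $\phi_0$ via the equation $\partial_t^2\phi=\Delta\phi$ at $t=0$. After expanding $Z^{k+l}$ and each application of $\partial_t^2$ to reduce to first-order time data, one obtains the bound
\[
M_\mu\bigl(Z^{k+l}\phi|_{t=0},\,\partial_t Z^{k+l}\phi|_{t=0}\bigr)\lesssim M^{k+l}_\mu(\phi_0,\phi_1),
\]
the weights $(1+|y|)^{\mu+k+l}$ and $(1+|y|)^{\mu+1+k+l}$ in the definition of $M^{k+l}_\mu$ being exactly what is produced by the $x$- and $r$-weights of $S$ and of the boosts.

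Finally I would invoke \eqref{important}: in the interior cone $-t/2\leq r\leq 2t$ it gives
\[
|\partial^k\bar\partial^l\phi|\leq\frac{1}{(1+|q|)^k(1+s)^l}\,|Z^{k+l}\phi|,
\]
and inserting the pointwise bound above produces exactly the factor $(1+t+r)^{-l-\frac{1}{2}}(1+|t-r|)^{-k-\frac{1}{2}}(1+|t-r|)^{[1-\mu]_+}$ claimed. In the exterior region $r\geq 2t$ (and similarly for $r$ very small), where the vector-field identities in the excerpt degenerate, one uses that $1+s\sim 1+|q|\sim 1+r$, so the target weight collapses to $(1+r)^{-k-l-1}(1+|t-r|)^{[1-\mu]_+}$; this regime is handled by applying \eqref{important} only to $\partial$ (whose identity through $S\pm\cos\theta\,\Omega_{01}\pm\sin\theta\,\Omega_{02}$ only costs $(t-r)^{-1}$, which is $\sim r^{-1}$ here) and replacing each $\bar\partial$-derivative by a plain $\partial$-derivative paid for by the same $(1+r)^{-1}$ weight. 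The main bookkeeping obstacle is really Step~2: tracking how the commutators distribute weights of $r$ and spatial derivatives when expanding $Z^{k+l}$ acting on initial data, and checking that the worst term is controlled by $M^{k+l}_\mu(\phi_0,\phi_1)$ as defined; the rest of the argument is a direct assembly of Proposition \ref{flat1} with \eqref{important}.
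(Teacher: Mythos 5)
Your argument is exactly the derivation the paper intends: the corollary is stated as an immediate consequence of combining \eqref{important} with Proposition \ref{flat1} applied to $Z^{k+l}\phi$ (which remains a free wave by the commutation $[\Box,Z]=C(Z)\Box$), and your Steps 1--3 simply write out the standard bookkeeping for the data norm and the degenerate regions that the paper leaves implicit. The proposal is correct and takes the same route.
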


\paragraph{Weighted energy estimate}
We consider a weight function $w (q)$, where $q=r-t$, such that $w'(q)>0$ and 
$$\frac{w(q)}{(1+|q|)^{1+\mu}}\lesssim w'(q)\lesssim \frac{w(q)}{1+|q|},$$
for some $0<\mu<\frac{1}{2}$.

\begin{prp}\label{energy}
We assume that $\Box \phi= f$. Then we have
\begin{align*}
&\frac{1}{2}\partial_t \int w(q)\left((\partial_t \phi)^2+|\nabla \phi|^2 \right)
+\frac{1}{2}\int w'(q)\left((\partial_s \phi)^2 + \left(\frac{\partial_\theta u}{r}\right)^2\right) \\
&\lesssim \int w(q)|f\partial_t \phi|. 
\end{align*}
\end{prp}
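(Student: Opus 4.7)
The plan is to derive the estimate through the standard multiplier method, testing the equation $\Box \phi = f$ against $w(q)\partial_t\phi$. First I would write out the pointwise identity
\[
(\Box\phi)(\partial_t\phi) = -\tfrac{1}{2}\partial_t\!\left((\partial_t\phi)^2 + |\nabla\phi|^2\right) + \nabla\!\cdot\!\big(\nabla\phi\,\partial_t\phi\big),
\]
which is just the usual energy identity for the flat wave operator in $2{+}1$ dimensions. Multiplying by $w(q)$ and rearranging, the left-hand side becomes $w(q)f\,\partial_t\phi$, and the task is to commute the weight $w(q)$ through the time and space derivatives.

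The essential observation is that since $q = r-t$, we have $\partial_t q = -1$ and $\nabla q = e_r$. Therefore $\partial_t(w(q)X) = w(q)\partial_t X - w'(q) X$ and $\nabla\!\cdot\!(w(q)Y) = w(q)\nabla\!\cdot Y + w'(q) (e_r\!\cdot\!Y)$. Applying these to the identity above yields, after integration over $\m R^2$,
\[
\tfrac{1}{2}\partial_t\!\int w(q)\!\left((\partial_t\phi)^2 + |\nabla\phi|^2\right) + \tfrac{1}{2}\!\int w'(q)\!\left((\partial_t\phi)^2 + |\nabla\phi|^2\right) + \!\int w'(q)\,\partial_r\phi\,\partial_t\phi = -\!\int w(q)\,f\,\partial_t\phi,
\]
where the spatial divergence integrates to zero (the initial data lie in weighted Sobolev spaces, and by finite propagation speed the solution has enough decay at spatial infinity for the boundary term at $|x|=\infty$ to vanish — this can be justified first for Schwartz data and then by density).

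The key algebraic step is then to regroup the $w'(q)$ terms. Splitting $|\nabla\phi|^2 = (\partial_r\phi)^2 + (\partial_\theta\phi/r)^2$ in polar coordinates, the three bulk terms combine as
\[
\tfrac{1}{2}w'(q)\bigl((\partial_t\phi)^2 + (\partial_r\phi)^2\bigr) + w'(q)\,\partial_t\phi\,\partial_r\phi + \tfrac{1}{2}w'(q)\!\left(\tfrac{\partial_\theta\phi}{r}\right)^{\!2} = \tfrac{1}{2}w'(q)\bigl((\partial_t+\partial_r)\phi\bigr)^2 + \tfrac{1}{2}w'(q)\!\left(\tfrac{\partial_\theta\phi}{r}\right)^{\!2},
\]
which up to the constant identifying $(\partial_t+\partial_r)\phi$ with $\partial_s\phi$ is exactly the stated bulk term. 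Since $w'(q)\geq 0$ by hypothesis, this quantity is non-negative, which is why we can drop the complementary $\underline L$-derivative contribution. Finally, bounding the right-hand side in absolute value by $\int w(q)|f\,\partial_t\phi|$ gives the claim.

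The only real subtlety is the combinatorial regrouping into the perfect square $\bigl((\partial_t+\partial_r)\phi\bigr)^2$ — this is what makes the weight play a useful role, giving control of the \emph{good} derivatives $\partial_s\phi$ and $\partial_\theta\phi/r$ with an extra factor $w'(q)$ that is typically much larger than $w(q)$ in the wave zone. Everything else is standard integration by parts.
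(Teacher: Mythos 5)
Your proof is correct and follows essentially the same route as the paper. The paper proves the quasilinear analogue (Proposition \ref{prpweighte}) by contracting the energy-momentum tensor $Q_{\alpha\beta}$ of $\Box_g$ with $T=\partial_t$, multiplying by $w(q)$, and integrating; in the flat case the deformation tensor $\pi_{\alpha\beta}$ of $\partial_t$ vanishes and that argument collapses to precisely the multiplier computation you wrote out directly. Your regrouping of the bulk $w'(q)$ terms into the perfect square $\frac{1}{2}w'(q)\bigl((\partial_t+\partial_r)\phi\bigr)^2 + \frac{1}{2}w'(q)(\partial_\theta\phi/r)^2$ is the heart of the matter and is right (noting $(\partial_t+\partial_r)\phi = 2\partial_s\phi$, so the exact identity even gives a factor $4$ in front of $(\partial_s\phi)^2$, which only helps the stated inequality). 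The remark about justifying the vanishing boundary term by density is appropriate.
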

For the proof of Proposition \ref{energy}, we refer to the proof of Proposition \ref{prpweighte} which is the quasilinear equivalent of Proposition \ref{energy}.

\paragraph{Weighted Klainerman-Sobolev inequality} The following proposition allows us to obtain $L^\infty$ estimates from the energy estimates. It is proved in Appendix \ref{weigklai}. The proof is inspired from the corresponding $3+1$ dimensional proposition (Proposition 14.1 in \cite{lind}).
\begin{prp}\label{prpweight}
We denote by $v$ any of our weight functions.
 We have the inequality
$$|f(t,x)v^\frac{1}{2}(|x|-t)|\lesssim \frac{1}{\sqrt{1+t+|x|}\sqrt{1+||x|-t|}}\sum_{|I|\leq 2}\|v^\frac{1}{2}(.-t) Z^I f\|_{L^2}.$$
\end{prp}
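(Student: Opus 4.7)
My plan, following the strategy of the $3+1$-dimensional analogue (Proposition~14.1 in \cite{lind}), is to reduce the inequality to the standard two-dimensional Sobolev embedding $H^2(B_1)\hookrightarrow L^\infty(B_1)$ applied on a box adapted to the Klainerman fields; this is precisely what forces the threshold $|I|\le 2$ on the right-hand side. Fix $(t_0,x_0)$ and write $r_0=|x_0|$, $q_0=r_0-t_0$, $s_0=r_0+t_0$. I split into three regimes: (i) $t_0\le 1$; (ii) $t_0\ge 1$ and $r_0\le t_0/2$; (iii) $t_0\ge 1$ and $r_0\ge t_0/2$.

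In regime~(i), both $1+s_0$ and $1+|q_0|$ are bounded, and one simply applies unweighted 2D Sobolev on the unit ball around $x_0$, using that each $\partial_\alpha$ is itself in $\mathcal{Z}$. In regime~(ii) (deep interior of the light cone) the estimate $(1+t)|\partial u|\lesssim\sum_{Z\in\mathcal{Z}}|Zu|$ holds on a Cartesian box around $x_0$ of radius comparable to $1+|q_0|\sim 1+t_0$, so the rescaling $y=(x-x_0)/(1+t_0)$ turns each $\partial_y$ into a Klainerman derivative; the Jacobian provides the volume $(1+t_0)^2\sim(1+s_0)(1+|q_0|)$. The representative case is~(iii), where I use polar coordinates $(q,\theta)$ on the slice $\{t=t_0\}$ and the rescaling
$$\tilde q=\frac{q-q_0}{1+|q_0|},\qquad \tilde\theta=\frac{\theta-\theta_0}{c},$$
with $c$ a small absolute constant. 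The derivatives $\partial_{\tilde q}=(1+|q_0|)\partial_q$ and $\partial_{\tilde\theta}=c\,\Omega_{12}$ are then controlled pointwise by $\sum_{|I|\le 1}|Z^If|$ on the box, via the identities for $S$, $\Omega_{0i}$, and $\Omega_{12}$ recalled just before the statement (valid in the zone $r\gtrsim s$).

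Once the rescaling is fixed, applying 2D Sobolev on $B_1$ yields
$$|f(t_0,x_0)|^2=|\tilde f(0)|^2\lesssim\sum_{|\alpha|\le 2}\int_{B_1}|\partial_{\tilde y}^\alpha\tilde f|^2\,d\tilde y,$$
and the identifications above convert the right-hand side into a sum of integrals of $|Z^I f|^2$. The Jacobian then produces exactly the desired decay factor: in case~(iii), $d\tilde y=dq\,d\theta/(1+|q_0|)=dx/(r(1+|q_0|))$, and since $r\sim r_0\sim 1+s_0$ on the box, one recovers the prefactor $1/((1+s_0)(1+|q_0|))$. The weight is handled using the hypothesis $v'\lesssim v/(1+|q|)$, which forces $v(|x|-t_0)\sim v(q_0)$ on the rescaling box, so $v^{1/2}(q_0)$ can be pulled outside the integral on the left and matched against the weight inside the norm on the right.

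The only technical care, rather than a substantive obstacle, is to ensure in case~(iii) that the angular box does not wrap around $\mathbb{S}^1$ (handled by fixing $c$ small independently of $(t_0,x_0)$) and that polar coordinates stay non-degenerate (handled by the separation between regimes (ii) and (iii)), and in case~(ii) that the Cartesian box stays in a region where the bound $(1+t)|\partial u|\lesssim\sum|Zu|$ remains valid (handled by shrinking it by a constant factor if necessary). Beyond this bookkeeping, the proof is a faithful two-dimensional transcription of the argument in \cite{lind}.
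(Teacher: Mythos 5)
Your proposal is correct and produces the stated inequality, but it follows a somewhat different route from the paper in the exterior region. The paper first splits the function (not the set of points): it writes $f=f_1+f_2$ with $f_1=\chi(r/t)f$ supported in $r\le 2t/3$ and $f_2=(1-\chi(r/t))f$ supported in $r\ge t/2$. For $f_1$ it performs exactly your regime~(ii) argument (rescale by $t$, apply 2D $H^2\hookrightarrow L^\infty$, and convert $t^{|\alpha|}\nabla^\alpha$ to Klainerman fields in the deep interior). For $f_2$, however, the paper does not use a rescaled $(q,\theta)$ box and 2D Sobolev as in your regime~(iii); instead it uses the fundamental theorem of calculus in the radial variable together with the one--dimensional Sobolev embedding $W^{1,1}(\mathbb{S}^1)\hookrightarrow L^\infty(\mathbb{S}^1)$ in the angular variable: it bounds $(1+t+r)(1+|t-r|)v(r-t)f_2^2$ by $\int_{t/2}^r\int_0^{2\pi}\sum_{\alpha\le 1}\left|\partial_\theta^\alpha\partial_\rho\left((1+t+\rho)(1+|t-\rho|)v(\rho-t)f_2^2\right)\right|\,d\rho\,d\theta$, distributes the derivative, and absorbs the derivative of the weight directly via $|s\,v'(s)|\lesssim v(s)$. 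The cut--off makes the boundary term at $\rho=t/2$ vanish. The two approaches are roughly equivalent in content, but the paper's FTC-plus-angular-Sobolev argument is a bit cleaner: it feeds the weight condition straight into the radial integration and never needs weight comparability, non-degeneracy of polar coordinates, or the $\mathbb{S}^1$ wrap-around bookkeeping that you correctly flag as requiring care in your box-rescaling argument. Your version, in exchange, is perhaps closer to a uniform rescaled-Sobolev template across all regimes.
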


\paragraph{Weighted Hardy inequality}
If $u$ is solution of $\Box u=f$, the energy estimate allows us to estimate the $L^2$ norm of $\partial u$. To estimate the $L^2$ norm of $u$, we will use a weighted Hardy inequality.
\begin{prp}\label{hardy}Let $\alpha<1$ and $\beta>1$. We have, with $q=r-t$
$$\left\|\frac{v(q)^\frac{1}{2}}{(1+|q|)}f\right\|_{L^2} \lesssim\|v(q)^\frac{1}{2}\partial_r f\|_{L^2}, $$
where
\begin{align*}
v(q)&=(1+|q|)^\alpha, \; for\; q<0,\\
v(q)&=(1+|q|)^\beta, \; for\; q>0.\\
\end{align*}
\end{prp}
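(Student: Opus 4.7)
The plan is to reduce the claim to a one-dimensional weighted Hardy inequality in the radial variable, then integrate by parts against a well-chosen primitive. By density it suffices to take $f\in C^\infty_c(\m R^2)$; writing $x$ in polar coordinates $(r,\theta)$, the estimate will follow from the pointwise-in-$\theta$ bound
$$\int_0^\infty \frac{v(r-t)}{(1+|r-t|)^2}\,f(r,\theta)^2\,r\,dr \;\lesssim\; \int_0^\infty v(r-t)\,(\partial_r f(r,\theta))^2\,r\,dr,$$
with a constant independent of $t\ge 0$ and $\theta$, after integrating in $\theta$.

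The key observation is that the function $u(q):=v(q)/(1+|q|)$ is continuous on $\m R$ with $u(0)=1$, strictly positive and piecewise smooth, and satisfies
$$u'(q)=\begin{cases}(1-\alpha)\,v(q)/(1+|q|)^2, & q<0,\\ (\beta-1)\,v(q)/(1+|q|)^2, & q>0.\end{cases}$$
Both coefficients are strictly positive precisely because of the hypotheses $\alpha<1$ and $\beta>1$: setting $c:=\min(1-\alpha,\beta-1)>0$ one has $u'(q)\ge c\,v(q)/(1+|q|)^2$ for every $q\ne 0$, so $u$ is monotone increasing.

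I then integrate by parts in $r$, viewing $u'(r-t)$ as $\partial_r u(r-t)$. The factor $r$ kills the boundary contribution at $r=0$ and compact support handles $r\to\infty$, so
$$c\int_0^\infty \frac{v(r-t)}{(1+|r-t|)^2}f^2\,r\,dr \;\le\; \int_0^\infty u'(r-t)f^2 r\,dr \;=\; -\int_0^\infty u(r-t)f^2\,dr - 2\int_0^\infty u(r-t)\,r f\,\partial_r f\,dr.$$
The first term on the right is non-positive and can be dropped; to the second, Cauchy--Schwarz (splitting $u(q)\,r=[v(q)/(1+|q|)^2]^{1/2}r^{1/2}\cdot v(q)^{1/2}r^{1/2}$) yields
$$c\int_0^\infty \frac{v(r-t)}{(1+|r-t|)^2}f^2 r\,dr \;\le\; 2\Bigl(\int_0^\infty \frac{v(r-t)}{(1+|r-t|)^2}f^2 r\,dr\Bigr)^{\!1/2}\Bigl(\int_0^\infty v(r-t)(\partial_r f)^2 r\,dr\Bigr)^{\!1/2}.$$
Dividing gives the one-dimensional inequality with constant $4/c^2$, and integrating in $\theta$ concludes.

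The only mildly delicate point is the jump of $u'$ at $q=0$, which is handled either by noting that $u$ is absolutely continuous (so the integration by parts is valid as written) or by approximating $u$ by smooth non-decreasing mollifications and passing to the limit. I do not expect a serious obstacle: the substance of the proposition is really that the hypotheses $\alpha<1$ and $\beta>1$ are exactly what is needed for $u$ to be monotone, i.e.\ for the Hardy identity to have the correct sign.
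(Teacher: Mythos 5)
Your proof is correct, and it takes a cleaner route than the one in the paper (Appendix \ref{hardyin}). The paper treats the two regions $r>t$ and $r<t$ separately, integrating by parts against the primitives $r(1+r-t)^{\beta-1}$ and $r(1+t-r)^{\alpha-1}$ respectively; because these two primitives do not match at $r=t$, the interior computation produces a boundary term $t\int_0^{2\pi}u^2(t,\theta)\,d\theta$ on the light cone, which then has to be absorbed by a second estimate that reaches back into the exterior region and uses $\beta>2-\beta$. Your observation that $u(q)=v(q)/(1+|q|)$ is a single continuous, absolutely continuous, increasing function with $u'(q)\geq c\,v(q)/(1+|q|)^2$, $c=\min(1-\alpha,\beta-1)>0$, lets you do one integration by parts over all of $(0,\infty)$ with the factor $r$ coming from the measure, so no internal boundary term appears at $r=t$ and the only dropped term $-\int u f^2\,dr$ has a favorable sign. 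The Cauchy--Schwarz splitting and the final division are exactly as in the paper's exterior-region step, and your remark about mollifying $u$ (or just invoking absolute continuity) disposes of the kink at $q=0$. Both arguments isolate the same mechanism --- $\alpha<1$ and $\beta>1$ are precisely the monotonicity conditions --- but yours packages it more economically and yields an explicit constant $4/c^2$ uniform in $t$.
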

This is proven in Appendix \ref{hardyin}. The proof is inspired from the $3+1$ dimensional analogue (Lemma 13.1 in \cite{lind}).

\paragraph{$L^\infty-L^\infty$ estimate}  With the condition $w'(q)>0$ for the energy inequality, we are not allowed to take weights of the form $(1+|q|)^\alpha$, with $\alpha>0$ in the region $q<0$. Therefore, Klainerman-Sobolev inequality cannot give us more than the estimate
$$|\partial u|\lesssim \frac{1}{\sqrt{1+|q|}\sqrt{1+s}},$$
in the region $q<0$, for a solution of $\Box u= f$. However, we know that for suitable initial data, the solution of the wave equation $\Box u=0$ satisfies
$$|u|\lesssim \frac{1}{\sqrt{1+|q|}\sqrt{1+s}}, \; 
|\partial u|\lesssim \frac{1}{(1+|q|)^\frac{3}{2}\sqrt{1+s}}.$$
To recover some of this decay we will use the following proposition
 
\begin{prp}\label{inhom}
Let u be a solution of
\begin{equation*}
 \left\{ \begin{array}{l} 
         \Box u = F, \\
	 (u, \partial_t u)|_{t=0}=(0,0).
        \end{array}
\right.
\end{equation*}
For $\mu>\frac{3}{2} , \nu >1$ we have the following $L^\infty-L^\infty$ estimate
$$|u(t,x)|( 1+t+|x|)^\frac{1}{2} \leq C(\mu, \nu) M_{\mu, \nu}(F) (1+|t-|x|||)^{-\frac{1}{2} + [2-\mu]_{+}},$$
where 
$$M_{\mu, \nu}(F) = \sup (1+|y|+s)^\mu (1+|s-|y||)^\nu F(y,s),$$
and where we used the convention $A^{[\alpha]_+}=A^{\max(\alpha,0)}$ if $\alpha \neq 0$ and $A^{[0]_{+}}=\ln(A)$. 
\end{prp}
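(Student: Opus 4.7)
The plan is to start from the 2D Duhamel representation with vanishing Cauchy data,
$$u(t,x) = \frac{1}{2\pi}\int_0^t \int_{|y-x|<t-\tau}\frac{F(\tau,y)}{\sqrt{(t-\tau)^2-|y-x|^2}}\, dy\, d\tau,$$
substitute the pointwise bound $|F(\tau,y)|\leq M_{\mu,\nu}(F)(1+\tau+|y|)^{-\mu}(1+|\tau-|y||)^{-\nu}$, and reduce the proof to estimating a purely geometric weighted integral. Since the weights depend only on $\tau$ and $|y|$, rotational invariance lets me set $x=(|x|,0)$. Writing $y-x = \rho(\cos\omega,\sin\omega)$ and trading $\omega$ for $\lambda:=|y|$ via the law of cosines $\lambda^2 = |x|^2 + 2|x|\rho\cos\omega + \rho^2$ (using the symmetry $\omega\mapsto -\omega$) gives
$$d\omega = \frac{2\lambda\, d\lambda}{\sqrt{((|x|+\rho)^2 - \lambda^2)(\lambda^2-(|x|-\rho)^2)}}, \qquad \lambda \in [\,||x|-\rho|,\, |x|+\rho\,].$$
The problem thus reduces to a triple integral in $(\tau,\rho,\lambda)$ whose integrand is the weight of $F$ divided by these two square-root factors together with $\sqrt{(t-\tau)^2-\rho^2}$.

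Next I would pass to the null-type variables $\sigma = \tau+\lambda$ and $\eta = \tau-\lambda$, which are precisely adapted to the weights, keeping $\rho$ as the third integration variable. The key computation is the $\rho$-integration, which after one elementary substitution is a beta-type integral bounded by $C(1+t+|x|)^{-1/2}$ uniformly in the remaining variables (in the relevant range $\sigma\lesssim t+|x|$); this produces the announced $(1+t+|x|)^{-1/2}$ factor. What is left is a two-dimensional integral of $(1+\sigma)^{-\mu}(1+|\eta|)^{-\nu}$ over a region whose shape depends on the relative sizes of $t$ and $|x|$. The condition $\nu>1$ gives integrability in $\eta$; the condition $\mu>3/2$, combined with the gain already extracted from the $\rho$-integration, yields the extra $(1+|t-|x||)^{-1/2}$ gain when $\mu>2$, the logarithmic correction at the borderline $\mu=2$ (encoded in $[2-\mu]_{+}$), and a polynomial loss $(1+|t-|x||)^{2-\mu}$ for $3/2<\mu<2$, matching the right-hand side of the statement.

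The main obstacle will be the careful bookkeeping near the wavefront, where the three square-root denominators simultaneously degenerate: $\rho\approx t-\tau$, $\lambda\approx |x|+\rho$, or $\lambda\approx ||x|-\rho|$. There one has to verify that the weights on $F$ provide enough decay to absorb the Jacobian singularities without losing the claimed powers in $1+t+|x|$ and $1+|t-|x||$. A case split by the position of $x$ relative to the light cone --- $|x|\leq t/2$ (interior), $|x|\geq 2t$ (exterior), and the transition $||x|-t|\lesssim t$ --- with a null parametrization adapted to each regime handles the degeneracies cleanly. This is exactly the Kubo-type $L^{\infty}$--$L^{\infty}$ estimate, and in practice I would reproduce the argument from \cite{kubo} with the explicit exponents and weights demanded by the statement.
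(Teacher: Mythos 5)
Your proposal tracks the paper's Appendix~E very closely --- Duhamel representation, the law-of-cosines change of variable exchanging the angle for $\lambda=|y|$, null coordinates in $(\tau,\lambda)$, and a split near/away from the light cone --- but the central claim about the $\rho$-integration is wrong as stated, and if taken literally it would lose half a power of $1+|t-|x||$.

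Concretely, after the substitution $u=\rho^2$ the inner $\rho$-integral is a beta-type integral $\int_a^b du\big/\bigl(\sqrt{d-u}\,\sqrt{b-u}\,\sqrt{u-a}\bigr)$ with $a=(\lambda-|x|)^2$, $b=(\lambda+|x|)^2$, $d=(t-\tau)^2$, and its bound is $C\,|d-b|^{-1/2}$, \emph{not} a constant times $(1+t+|x|)^{-1/2}$. In your null variables $\sigma=\tau+\lambda$, $\eta=\tau-\lambda$ one has $d-b=(t-|x|-\sigma)(t+|x|-\eta)$, so the $\rho$-step produces the \emph{singular} factor $|t-|x|-\sigma|^{-1/2}\,|t+|x|-\eta|^{-1/2}$, which blows up on the boundary of the integration region and is nowhere near uniform. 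Both factors in the conclusion come from integrating these singularities against the weights: the $\eta$-integral of $(1+|\eta|)^{-\nu}|t+|x|-\eta|^{-1/2}$ yields $(1+t+|x|)^{-1/2}$ (this is where $\nu>1$ is used), and the $\sigma$-integral of $\sigma(1+\sigma)^{-\mu}|t-|x|-\sigma|^{-1/2}$ yields $(1+|t-|x||)^{-1/2+[2-\mu]_{+}}$ (this is where $\mu>3/2$ is used), both estimated in the paper by splitting each one-dimensional integral at a midpoint. If instead you pre-extract a flat $(1+t+|x|)^{-1/2}$ from the $\rho$-step, the remaining $(\sigma,\eta)$-integral of $(1+\sigma)^{-\mu}(1+|\eta|)^{-\nu}$ with Jacobian $\lambda\sim\sigma$ over a $\sigma$-range of length $\sim|t-|x||$ produces only $(1+|t-|x||)^{[2-\mu]_{+}}$, missing the additional $(1+|t-|x||)^{-1/2}$ entirely. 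Keep the two square roots coupled to the weights and run the $\sigma$- and $\eta$-estimates as in Appendix~E; with that correction your plan is exactly Kubo's argument as the paper reproduces it.
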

This is proven in Appendix \ref{linflinf}. This inequality has been introduced by Kubo and Kubota in \cite{kubo}.

\paragraph{An integration lemma} The following lemma will be used many times in the proof of Theorem \ref{main}, to obtain estimates for $u$ when we only have estimates for $\partial u$.
\begin{lm}
\label{lmintegration}
Let $\alpha,\beta,\gamma \in \m R$ with $\beta<-1$. We assume that the function $u: \m R^{2+1} \rightarrow \m R$ satisfies
$$|\partial u|\lesssim (1+s)^\gamma(1+|q|)^\alpha, \; for\; q<0, \quad
|\partial u|\lesssim (1+s)^\gamma (1+|q|)^\beta \; for \; q>0,$$
and for $t=0$
$$|u|\lesssim (1+r)^{\gamma+\beta}.$$
Then we have the following estimates
$$| u|\lesssim (1+s)^\gamma\max(1,(1+|q|)^{\alpha+1}), \; for\; q<0, \quad
|u|\lesssim (1+s)^\gamma (1+|q|)^{\beta+1} \; for \; q>0.$$
\end{lm}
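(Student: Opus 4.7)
I will apply the fundamental theorem of calculus along rays of constant $s = t+r$, moving in the direction $\partial_q = \tfrac{1}{2}(\partial_r - \partial_t)$. This direction is convenient because every target bound carries a common factor $(1+s)^\gamma$ that stays constant along the ray, so the whole argument reduces to integration in the single variable $q$. Note that $|\partial_q u| \leq |\partial u|$, so the hypothesis applies directly to $\partial_q u$.

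\textbf{Exterior region $q > 0$.} For fixed $s > 0$, the constant-$s$ ray hits the initial slice $t = 0$ at $q = s$ (that is, at $r = s$), where the initial data bound gives $|u(s,s)| \lesssim (1+s)^{\gamma+\beta}$. I write
$$u(s,q) = u(s,s) - \int_q^s \partial_q u(s,q')\, dq',$$
and use $|\partial_q u|(s,q') \lesssim (1+s)^\gamma (1+q')^\beta$ together with $\beta < -1$ to get $\int_q^s (1+q')^\beta\, dq' \lesssim (1+q)^{\beta+1}$. This yields
$$|u(s,q)| \lesssim (1+s)^{\gamma+\beta} + (1+s)^\gamma (1+q)^{\beta+1}.$$
Since $q \leq s$ and $\beta+1 < 0$, the chain $(1+s)^\beta \leq (1+s)^{\beta+1} \leq (1+q)^{\beta+1}$ absorbs the first term into the second, giving the claimed estimate.

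\textbf{Interior region $q < 0$.} Specializing the previous step at $q = 0$ gives $|u(s,0)| \lesssim (1+s)^\gamma$, which I use as a boundary value on the light cone. Along the same constant-$s$ ray I integrate inward:
$$u(s,q) = u(s,0) - \int_q^0 \partial_q u(s,q')\, dq',$$
and the hypothesis $|\partial_q u|(s,q') \lesssim (1+s)^\gamma (1+|q'|)^\alpha$ for $q' < 0$ gives
$$|u(s,q)| \lesssim (1+s)^\gamma \left(1 + \int_0^{|q|} (1+q'')^\alpha\, dq''\right) \lesssim (1+s)^\gamma \max\bigl(1,\,(1+|q|)^{\alpha+1}\bigr),$$
since the remaining integral is bounded uniformly when $\alpha < -1$ and grows like $(1+|q|)^{\alpha+1}$ when $\alpha > -1$.

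\textbf{Main point.} No serious obstacle is expected: the lemma is a bookkeeping statement comparing integrated decay rates. The only real choice is the integration direction. Propagating along constant-$s$ rays keeps $(1+s)^\gamma$ outside the integral; the strict threshold $\beta < -1$ is exactly what makes the exterior integrand integrable so that the initial-data bound at $q = s$ can serve as the starting point; and the two regions are chained together by evaluating the exterior estimate at the light cone $q = 0$.
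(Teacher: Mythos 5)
Your proof is correct and follows essentially the same route as the paper: integrate $\partial_q u$ along rays of constant $s$ starting from the initial slice at $q=s$ (where $\beta<-1$ makes the integral converge and the data bound is absorbed), then use the resulting bound at $q=0$ as the starting point for the inward integration. The only shared caveat, present in the paper's proof as well, is that the borderline case $\alpha=-1$ would produce a logarithm rather than $\max(1,(1+|q|)^{\alpha+1})$, but this case is not used.
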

\begin{proof}
We assume first $q>0$. We integrate the estimate
$$|\partial_q u|\lesssim (1+s)^\gamma (1+|q|)^\beta,$$
from $t=0$. We obtain, since $\beta<-1$, for $q>0$
$$|u|\lesssim (1+s)^\gamma (1+|q|)^{\beta+1}.$$
Consequently, we have, for $q=0$, $|u|\lesssim (1+s)^\gamma$.
We now assume $q<0$. We integrate
$$ |\partial_q u|\lesssim (1+s)^\gamma(1+|q|)^\alpha,$$
from $q=0$. We obtain
$$| u|\lesssim (1+s)^\gamma\max(1,(1+|q|)^{\alpha+1}).$$
This concludes the proof of Lemma \ref{lmintegration}.
\end{proof}

\subsection{Main Result}
We introduce an other cut-off function $\Up:\m R_+ \rightarrow \m R_+$ such that $\Up(\rho)=0$ for $\rho\leq \frac{1}{2}$ and $\rho \geq 2$ and $\Up=1$ for 
$\frac{3}{4}\leq \rho \leq \frac{3}{2}$. 
Theorem \ref{main} is our main result, in which we prove stability of Minkowski space-time with a translational symmetry in exponential time $T\lesssim\exp\left( \frac{1}{\sqrt{\ep}}\right)$ where $\ep>0$ is the size of the small initial data.
\begin{thm}\label{main}Let $0<\ep<1$.
Let $\frac{1}{2}<\delta<1$ and $N \geq 40$.
Let $(\phi_0,\phi_1) \in H^{N+1}_{\delta}(\m R^2)\times H^N_{\delta+1}(\m R^2)$. 
We assume
$$\|\phi_0\|_{ H^{N+1}_{\delta}}+\|\phi_1\|_{H^N_{\delta+1}}
\leq \ep.$$
Let $T\lesssim\exp(\frac{1}{\sqrt{\ep}})$. Let $0<\rho\ll \sigma \ll \mu\ll \delta$.
If $\ep$ is small enough,
there exists $b(\theta),J(\theta)\in W^{N,2}(\m S^1)$ and
there exists a global coordinate chart $(t,x_1,x_2)$ such that, for $t\leq T$, there exists a solution $(\phi,g)$ of \eqref{s1}  that we can write 
$$g=g_b +
\Up\left(\frac{r}{t}\right)\left(\frac{g_{\ba L \ba L}}{4}dq^2
+\frac{g_{U\ba L}}{2}rdqd\theta\right) +\tilde{g}$$
such that we have the estimates
\begin{equation*}
\begin{split}&\sum_{|I|\leq N}\bigg(\|\alpha_2w_0^\frac{1}{2}(q)\partial Z^I \phi\|_{L^2}
+ \frac{1}{\sqrt{1+t}}\|\alpha_2w_3^\frac{1}{2}(q)\partial Z^I g_{\ba L \ba L}\|_{L^2}+\frac{1}{\sqrt{1+t}}\|\alpha_2w_3^\frac{1}{2}(q)\partial Z^I g_{\ba L U}\|_{L^2}\\
&+\|\alpha_2w_2^\frac{1}{2}(q)\partial Z^I \wht g\|_{L^2}\bigg) \lesssim \ep (1+t)^{C\sqrt{\ep}}.
\end{split}
\end{equation*}
with
$$\left\{ \begin{array}{l}
           w_0(q)= (1+|q|)^{2+ 2\delta}, \; q>0\\
w_0(q)= 1+ \frac{1}{(1+|q|)^{2\mu}}, \; q<0,
          \end{array}\right.$$
$$\left\{ \begin{array}{l}
           w_2(q)= (1+|q|)^{2+2\delta}, \; q>0\\
 w_2(q)= \frac{1}{(1+|q|)^{1 +2\mu}}, \; q<0,
          \end{array}\right.$$
$$\left\{ \begin{array}{l}
                               w_3(q)= (1+|q|)^{3+2\delta}, \; q>0\\
                     w_3(q)= 1+ \frac{1}{(1+|q|)^{2\mu}}, \; q<0,
                              \end{array}\right.$$
$$\left\{ \begin{array}{l}
                               \alpha_2(q)= (1+|q|)^{-2\sigma}, \; q>0\\
                     \alpha_2(q)= 1, \; q<0,
                              \end{array}\right.$$

Moreover, for all $\rho>0$, we have the $L^\infty$ estimate, for $|I|\leq \frac{N}{2}+2$ and $r<t$
\begin{align*}
|Z^I \phi (x,t)|&\lesssim \frac{\ep C(\rho)}{(1+t+r)^\frac{1}{2}(1+|t-r|)^{\frac{1}{2}-4\rho}},\\
|Z^I g_{\ba L \ba L}|&\lesssim \frac{\ep C(\rho)}{(1+|t-r|)^{\frac {1}{2}-\rho}},\\
|Z^I g_{\ba L U}|+|Z^I \wht g|&\lesssim \frac{\ep C(\rho)}{(1+t+r)^{\frac {1}{2}-\rho}}.\\
\end{align*}
and we have the estimate for $b$
$$\left|b(\theta)+\int_{\Sigma_{T,\theta}}(\partial_q \phi)^2rdr\right|\lesssim \frac{\ep^2}{\sqrt{T}},$$
where we have used the notation
\begin{equation}
\label{intsig}\int_{\Sigma_{T,\theta}}(\partial_q \phi)^2rdr=\int_0^\infty \left(\partial_q \phi(T,r,\theta)\right)^2rdr.
\end{equation}
\end{thm}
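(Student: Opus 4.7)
The plan is a bootstrap/continuity argument in generalized wave coordinates with background metric $g_b$ from \eqref{gb}. I assume the weighted $L^2$ norms appearing in the statement are bounded by $2C_0\ep(1+t)^{C\sqrt\ep}$ on a maximal interval $[0,T_*)$; the goal is to recover the same bound with constant $C_0$ for $t\leq T\lesssim\exp(1/\sqrt\ep)$, so that $T_*>T$. The claimed pointwise estimates follow from the bootstrap at the level $|I|\leq N/2+4$ by combining the weighted Klainerman--Sobolev inequality (Proposition \ref{prpweight}), the weighted Hardy inequality (Proposition \ref{hardy}), and the integration Lemma \ref{lmintegration}; in the interior region $r<t$, the extra gain $(1+|q|)^{-1/2+\rho}$ for $g_{\bar L\bar L}$ and the extra $(1+t)^{-1/2+\rho}$ for $g_{\bar L U}$ and $\wht g$ come from the $L^\infty$--$L^\infty$ estimate of Proposition \ref{inhom} applied to the inhomogeneous wave equations these components satisfy, with right-hand sides controlled pointwise by the bootstrap.

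I next commute \eqref{s1} in generalized wave coordinates with $Z^I$, $|I|\leq N$. The resulting equations $\Box_g(Z^I\wht g_{\mu\nu})=F^I_g$, $\Box_g(Z^I\phi)=F^I_\phi$ contain quasilinear commutators from $[\Box_g-\Box,Z^I]$, differentiated quadratic nonlinearities from $P_{\mu\nu}(\partial g,\partial g)-\partial_\mu\phi\partial_\nu\phi$, and source terms from the non-vanishing Ricci components \eqref{rqq}--\eqref{rqu} of $g_b$ (supported in $1<q<2$, hence harmless with the weights $w_j$). The structural point, as in \cite{lind,craslind}, is to decompose these nonlinearities in the null frame $\{L,\bar L,U\}$: null forms and terms with at least one tangential derivative $\bar\partial$ are integrable in time, the generalized wave coordinate condition $g^{\lambda\beta}\Gamma^\alpha_{\lambda\beta}=H_b^\alpha$ trades $\partial_q$ of $g_{L\cdot}$ components for tangential derivatives, and the correction $F^\alpha$ in \eqref{defHalpha} eliminates the $\wht g\,\partial g_b$ cross terms that would otherwise prevent $\partial_t\wht g_{0\alpha}\in H^N_{\delta+1}$. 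What survives is a weakly null block feeding only $g_{\bar L\bar L}$ and $g_{\bar L U}$, with resonant sources of schematic form $(\partial_q\phi)^2$ and $\partial_q\phi\,\bar\partial\phi$, reproducing the model \eqref{model}. The cutoff $\Upsilon(r/t)$ isolates this bad block in the wave zone, so that $\wht g$ is left with a nonlinearity for which a genuine decay estimate can be closed.

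The energy step uses the weighted estimate of Proposition \ref{energy} (quasilinear version) with weights $w_0,w_2,w_3$ and multipliers $\alpha_2$, whose relative strengths are tuned to the severity of each component. Null-form and tangential contributions add at most $O(\ep^2)$ to the energy. The resonant contribution to the $g_{\bar L\bar L}$ energy, after Cauchy--Schwarz and the pointwise bound $|\partial_q\phi|^2\lesssim\ep^2(1+t)^{-1+C\sqrt\ep}(1+|q|)^{-1+2\rho}$ from step one, produces a differential inequality of the form
\begin{equation*}
\frac{d}{dt}E_N(t)^2\;\leq\;\frac{C\sqrt\ep}{1+t}\,E_N(t)^2+\ep^4(1+t)^{-1+C\sqrt\ep},
\end{equation*}
which Gronwall-integrates to $E_N(t)\lesssim\ep(1+t)^{C\sqrt\ep}$ for $t\leq T\lesssim\exp(1/\sqrt\ep)$, closing the bootstrap. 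The asymptotic identity for $b(\theta)$ then follows by integrating the transport form of the $g_{\bar L\bar L}$ equation along outgoing null rays, in analogy with \eqref{transport1}, and comparing with the prescribed $b$; the $\ep^2/\sqrt T$ remainder is the tail of the $r$-integral beyond the wave front.

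The main obstacle is the $g_{\bar L\bar L}$ sector. Its source $(\partial_q\phi)^2$ has no null structure, and in $2+1$ dimensions the wave decay $|\partial\phi|\sim 1/\sqrt t$ makes $(\partial_q\phi)^2\sim 1/t$ on the cone, which is not integrable in $t$ and produces a logarithmic divergence of the unweighted energy. This is what forces the $(1+t)^{C\sqrt\ep}$ loss and confines existence to the exponential window $T\lesssim\exp(1/\sqrt\ep)$. Achieving the exponent $C\sqrt\ep$ rather than a fixed $\ep$-independent rate is the delicate point: it requires the weight hierarchy $w_3\gg w_2\gg w_0$ in the exterior, the fractional multiplier $\alpha_2$ to absorb the $\sigma$-loss coming from Hardy, and the cancellations provided by the generalized wave coordinate condition together with the subtraction of the background $g_b$ enforcing Ricci-flatness in $r>t+2$.
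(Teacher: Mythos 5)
Your outline captures the broad shape — bootstrap in generalized wave coordinates, null-frame decomposition, wave-coordinate condition trading $\partial_q g_{L\cdot}$ for tangential derivatives, weight hierarchy $w_0,w_2,w_3$ with the $\alpha_2$ modulator, and a Gronwall-type energy inequality with $\sqrt\ep/(1+t)$ coefficient — but it omits the two mechanisms that actually make the scheme close, and it misattributes the source of the exponential time restriction.

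First, the treatment of $g_{\bar L\bar L}$ is not a straightforward energy estimate on an inhomogeneous wave equation. The paper's central device is to approximate $g_{\bar L\bar L}/4$ by the solution $h_0$ of the \emph{transport} equation $\partial_q h_0=-2r(\partial_q\phi)^2-2b(\theta)\partial_q^2(q\chi(q))$ and to run the bootstrap simultaneously on $\phi$, on several successive correctors ($\wht h$, $h$, $k$) of $h_0$, and on the metric remainders $\wht g_1,\dots,\wht g_4$ defined by the four decompositions \eqref{dec1}--\eqref{dec4}. Crucially, $b(\theta)$ must be \emph{chosen in advance} (it enters $g_b$, hence the generalized wave gauge $H_b$ and the constraint data) so that the matching condition $b(\theta)\simeq-\int_{\Sigma_{T,\theta}}(\partial_q\phi)^2\,r\,dq$ holds; this is what makes $h_0$, and therefore $g_{\bar L\bar L}$, decay in $q$. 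Without this cancellation, $h_0$ is merely bounded in the interior and the weighted energies for the remainders $\wht g_i$ pick up a $\sqrt{1+t}$ growth from the non-commutation terms $\frac{1}{r^2}\partial_\theta h_0$, and the scheme breaks down. Your proposal does not isolate this structure; it treats the "asymptotic identity for $b$" only a posteriori, after the energy estimate, which cannot work because $b$ feeds back into the gauge and the initial data.

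Second, and as a consequence, the bootstrap must be closed in $b$ as well, not only in $(\phi,\wht g)$. Proposition \ref{prpbootb} re-solves the constraint equations with a new parameter $\wht b^{(2)}=\Pi\int_{\Sigma_{T,\theta}}(\partial_q\phi)^2 r\,dq$, evolves the resulting data to a new solution $(\phi^{(2)},g^{(2)})$, and estimates the difference $(\phi-\phi^{(2)},\wht g-\wht g^{(2)})$ — a whole second-layer bootstrap with its own weight modulators $\beta_1,\beta_2$ — to show the fixed-point condition on $\Pi b$ is improved. The three remaining Fourier modes of $b$ are not free either: they are fixed by the constraint equations, and Section \ref{secangle} shows they remain asymptotically conserved via the wave-coordinate identity and the Einstein tensor. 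None of this appears in your argument. Finally, the restriction $T\lesssim\exp(1/\sqrt\ep)$ does not come from the $\frac{d}{dt}E_N^2\lesssim\frac{\sqrt\ep}{1+t}E_N^2$ Gronwall step per se — that inequality is happy to run forever and just produces $(1+t)^{C\sqrt\ep}$. The obstruction is the regularity of $b$: since $b$ inherits the growth of the highest-order energy of $\phi$ through the matching condition, one has $\|\partial_\theta^N b\|\lesssim\ep^2(1+T)^{C\sqrt\ep}$, and the requirement $(1+T)^{C\sqrt\ep}\lesssim 1$ — needed so that $b$ remains $O(\ep^2)$ in $W^{N,2}$ and can be fed back into $g_b$ — is what caps $T$. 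Your proposal says the log-loss from $(\partial_q\phi)^2$ at the cone forces the exponential window, but that loss is already absorbed by the $w_3/\sqrt{1+t}$ normalization in the statement; the actual culprit is the derivative loss in $\theta$ on the parameter $b$.
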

\paragraph{Comments on Theorem \ref{main}}
\begin{itemize}
\item We consider perturbations of $3+1$ dimensional Minkowski space-time with a translational space-like Killing field. These perturbations are not asymptotically flat in $3+1$ dimensions, therefore the result of Theorem \ref{main} does not follow from the stability of Minkowski space-time by Christodoulou and Klainerman  \cite{ck}.
\item As our gauge, we choose  the generalized wave coordinates, which are picked such that the generalized wave coordinates condition is satisfied by $g_b$. Therefore, the method we use has a lot in common with the method of Lindblad and Rodnianski in \cite{lind} where they proved the stability of Minkowski space-time in harmonic gauge. It is an interesting problem to investigate the stability of Minkowski with a translation symmetry using a strategy in the spirit of \cite{ck} or \cite{nicolo}. 
\item The function $J(\theta)$, and the quantities
$$\int b(\theta)d\theta, \quad \int b(\theta)\cos(\theta)d\theta, \quad
\int b(\theta)\sin(\theta)d\theta$$
are imposed by the constraint equations for the initial data (see Theorem \ref{thinitial}). The quantity
$\int b(\theta)d\theta$ is called angle, and the vector
$(\int b(\theta)\cos(\theta)d\theta, 
\int b(\theta)\sin(\theta)d\theta)$ is called linear momentum. We can make a rapprochement of these quantities with the ADM mass and linear momentum.
The remaining Fourier coefficients of $b$ are chosen to ensure the convergence to Minkowski in the direction of time-like infinity, and is an essential element in the proof of the quasi stability.
\item The logarithmic growth of $\|w^\frac{1}{2}(q)\partial Z^N \phi\|_{L^2}$, and the condition
\begin{equation}
\label{condab}b(\theta) \sim \int_{\Sigma_{T,\theta}}\left(\partial_q \phi\right)^2rdr,
\end{equation}
give the estimate $|\partial^N b |\lesssim \ep^2(1+T)^{C\ep}$. 
To avoid factors of the form $(1+T)^{C\ep}$ in all our estimate, we are forced to assume $(1+T)^{C\ep}\lesssim 1$. This is the only place where we need $(1+T)^{C\ep}\lesssim 1$, and this is what prevents us to prove the stability.
\item The condition \eqref{condab} is not necessary to control the metric in the exterior region $r>t$. For this reason we believe that the stability holds in the exterir region, without the condition $T\lesssim \exp\left(\frac{1}{\sqrt{\ep}}\right)$.
\end{itemize}

As we said in the second comment, we use a method similar than Lindblad and Rodnianski method in \cite{lind}. Let us list some of the similarities and differences with their method.
\paragraph{Similarities with \cite{lind}}
\begin{itemize}
\item We use the vector field method. The vector fields we use are Klainerman vector fields of Minkowski space-time.
\item We use the wave coordinate condition to obtain more decay on the coefficients $\wht g_{\q T \q T}$ of the metric.
\item We exhibit the structure corresponding to the model problem \eqref{model}.
\end{itemize}
\paragraph{Differences with \cite{lind}}
\begin{itemize}
\item The asymptotic behaviour given by the solutions of the constraint equations prevent us to work in wave coordinates. Instead we work in generalised wave coordinates.
\item In the exterior region, our solution do not converge to Minkowski, but to a family of Ricci flat metrics $g_b$.
\item The decay of the free wave is weaker in $2+1$ dimension. Consequently, the coefficient $g_{\ba L \ba L}$ of the metric does not have any decay near the light cone.
We have to rely on the null decomposition at all steps in our proof to isolate this behaviour, even in the $L^2$ estimates.
\item We have to fit $b(\theta)$ so that the condition \eqref{condab} is satisfied. This lead to regularity issues for $b$, which prevent us from proving the global existence.
\end{itemize}

The structure of the paper is as followed. In Section \ref{struct} we describe the structure of the equations \eqref{s1} in generalized wave coordinates. We exhibit the structure of our system in Section \ref{struct}. We also describe the interactions between $g_b$ and $\wht g$. In Section \ref{model} we outline the main issues of the proof by discussing some model problems. In section \ref{boot} we give our bootstrap assumption. In section \ref{secwave} we derive preliminaries estimates thanks to the wave coordinate condition. In section \ref{secangle} we derive preliminaries  estimate for the angle and the linear momentum. In section \ref{transport}, we will exploit the analysis begun in section \ref{secerw}. In section \ref{linf} we will improve the $L^\infty$ estimate. In section \ref{weighte} we will derive the weighted energy estimate. In section \ref{secl2} we will improve the $L^2$ estimates and in section \ref{secproof} we will adjust the parameter $b(\theta)$.

\section{Structure of the equations}\label{struct}

In this section, we provide a discussion of the specific features of the structure of the equations, which will be relevant for the proof of Theorem \ref{main}.
 
\subsection{The generalized wave coordinates}

Wave coordinates allow to recast Einstein equations as a system of non-linear wave equations.
The wave coordinates condition, which consists in choosing coordinates such that $\Box_g x^\alpha=0$ can be  rewritten as
$$ g^{\lambda \beta}\Gamma^\alpha_{\lambda \beta}=0.$$
However, for the metric
$g_b$ defined by \eqref{gb}, the coordinates $(t,x_1,x_2)$ are not wave coordinates, not even asymptotically.
We will therefore work with generalized wave coordinates. We will impose that our metric satisfies
\[g^{\lambda \beta}\Gamma^\alpha_{\lambda \beta}=H_b^\alpha\]
where $H^\alpha_b$ is defined by \eqref{defHalpha}
$$H_b^\alpha=(g_b)^{\lambda \beta}(\Gamma_b)^\alpha_{\lambda \beta}+F^\alpha,$$ 
with $F^\alpha$ of the form 
$$\wht g\frac{q\chi(q)\partial_\theta b}{r^2}.$$
The role of $F^\alpha$ was explained in section \ref{initial}.
In generalized wave coordinates, the expression \eqref{calcricci} of Appendix  \ref{genw}  allow us to write the system \eqref{s1} under the form
\begin{equation}\label{gw}
 \left\{ \begin{array}{l}
          \Box_g \phi = 0\\
\Box_g g_{\mu \nu}= -2\partial_\mu \phi \partial_\nu \phi +P_{\mu \nu}(\partial g, \partial g)
+ g_{\mu \rho}\partial_\nu H^\rho + g_{\nu \rho}\partial_\mu H^\rho ,
         \end{array}
\right.
\end{equation}
where
\begin{equation}\label{quadr}
\begin{split}
 P_{\mu \nu}(g)(\partial g, \partial g)
=&\frac{1}{2}g^{\alpha \rho}g^{\beta \sigma}\left(\partial_\mu g_{\rho \sigma}\partial_\alpha g_{\beta \nu}+\partial_\nu g_{\rho \sigma}\partial_\alpha g_{\beta \mu}
-\partial_\beta g_{\mu \rho}\partial_\alpha g_{\nu \sigma}-\frac{1}{2}\partial_\mu g_{\alpha \beta} \partial_\nu g_{\rho \sigma}\right)\\
&+\frac{1}{2}g^{\alpha \beta}g^{\lambda \rho}\partial_\alpha g_{\nu \rho}\partial_\beta g_{\mu \rho}.
\end{split}
\end{equation}

\begin{rk}
 In generalized wave coordinates, the wave operator can be expressed as
$$\Box_g = g^{\alpha \rho}\partial_\alpha \partial_\rho - H_b^\rho\partial_\rho.$$
\end{rk}

The expression \eqref{calcricci} yields also
\begin{equation}
\label{calcrb}
(R_b)_{\mu\nu}=-\frac{1}{2}\Box_{g_b} (g_b)_{\mu \nu}+\frac{1}{2}P_{\mu \nu}(g_b)(\partial g_b,\partial g_b)+\frac{1}{2}\left((g_b)_{\mu \rho}\partial_\nu \bar{H}_b^\rho + (g_b)_{\mu \rho}\partial_\mu \bar{H}^\rho_b\right).
\end{equation}
Therefore, subtracting twice the equation \eqref{calcrb} to the second equation of \eqref{gw} we obtain
\begin{equation}\label{s2}
 \left\{ \begin{array}{l}
          \Box_g\phi = 0,\\
\Box_g\wht g_{\mu \nu} =-2 \partial_\mu \phi\partial_\nu \phi
+2(R_b)_{\mu \nu} + P_{\mu \nu}(g)(\partial \wht g, \partial \wht g) + \wht P_{\mu \nu} (\wht g, g_b),
         \end{array}
\right.
\end{equation}
where $P_{\mu \nu}(g)(\partial \wht g, \partial \wht g) $ is defined by \eqref{quadr} and
\begin{equation}
\label{ptilde}
\begin{split}
\wht P_{\mu \nu} (\wht g, g_b)=&
\left(g_b^{\alpha\beta}-g^{\alpha \beta}\right)\partial_\alpha \partial_\beta (g_b)_{\mu \nu}
+F^\rho\partial_\rho (g_b)_{\mu \nu}\\
&+P_{\mu \nu}(g)(\partial g,\partial_g)-P_{\mu \nu}(g)(\partial \wht g,\partial \wht g)-P_{\mu \nu}(g_b)(\partial g_b,\partial g_b)\\
&+(g_b)_{\mu \rho}\partial_\nu F^\rho + (g_b)_{\nu \rho}\partial_\mu F^\rho
+\wht g_{\mu \rho}\partial_\nu H_b^\rho + \wht g_{\nu \rho}\partial_\mu H_b^\rho.
\end{split}
\end{equation}

Let us note that $\wht P_{\mu \nu} (\wht g, g_b)$ contains only crossed terms between $g_b$ and $\wht g$.
\subsection{The weak null structure}

To exhibit the main terms in the structure of \eqref{s2}, let us neglect for a moment $P_{\mu \nu}$, $\wht P_{\mu \nu}$, $H_b$. We will see in the next section that this approximation is relevant. Let us also neglect the nonlinear terms involving $\bar{\partial}$ derivatives.
Then we obtain the following approximate system
\begin{align*}
\Box \phi + g_{LL}\partial^2_q \phi &= 0,\\
\Box  g_{\q T \q V} + g_{LL}\partial^2_q g_{\q T \q V}& =0,\\
\Box  g_{\ba L \ba L} + g_{LL}\partial^2_q g_{\ba L \ba L}& =4\left(-2(\partial_q \phi)^2-2b(\theta)\frac{\partial^2_q (\chi(q)q)}{r}\right),
\end{align*}
where we also have used the approximation 
$$(R_b)_{qq}\sim-\frac{b(\theta)\partial_q^2(q\chi(q))}{r}+O\left(\frac{C(b,b',J,J')\ch_{1<q<2}}{r^2}\right),$$
as shown in \eqref{rqq}.
In $2+1$ dimensions, a term of the form $g_{LL}\partial^2_q \phi$ is impossible to handle if one only relies on the decay for $g_{LL}$ provided by the fact of being a solution of a wave equation.
However, as in \cite{lind}, we can exploit the wave condition to obtain better decay for some coefficients of the metric.
More precisely, we have roughly 
\[\partial g_{\q T \q T} \sim \bar{\partial} g. \]
This is done properly in Proposition \ref{estLL} for the coefficient $g_{LL}$ and in Proposition \ref{estLU} for the coefficients $g_{LU}$ and $g_{UU}$.
Therefore, the $ g_{\q T \q T}$ coefficients have a better decay in $t$ than the solutions of the wave equation
(the challenges of the quasilinear terms of the form
$g_{LL}\partial_q^2 \phi$, $ g_{LL}\partial_q^2g_{\q T \q V}$ are presented in Section \ref{modqs}).  

\begin{rk}\label{termql} The other quasilinear terms are of the form
$$g_{\q T \q V}\partial_{T}\partial_{V} \phi, \quad g_{\q T \q V}\partial_{T}\partial_{V} \wht g.$$
Consequently, they involved at least one "good derivative" of $\phi, \wht g$.
Thus, they are easier to estimate, and we can always focus on the terms
$$g_{LL}\partial^2_q \phi, \quad g_{LL}\partial^2_q \wht g.$$
\end{rk}

Assuming that we can also neglect the terms involving $g_{LL}$, we are reduced to the following system
\begin{equation}\label{sysgll}
\left\{\begin{array}{l}
\Box \phi = 0,\\
\Box  g_{\ba L \ba L} =4\left(-2(\partial_q \phi)^2-2b(\theta)\frac{\partial^2_q (\chi(q)q)}{r}\right),\\
\end{array}\right.
\end{equation}
which is a system of the form \eqref{model} and displays the weak null structure.

The second component of the solution of \eqref{model} do not have any decay near the light cone in $2+1$ dimensions (see Section \ref{secerw} for the radial case).
Therefore, the coefficient $g_{\ba L \ba L}$ will not display any decay at all near the light cone (see the estimates of Theorem \ref{main}). To obtain decay for $g_{\ba L \ba L}$ in the $q$ variable, we will approximate $\frac{g_{\ba L\ba L}}{4}$ by the solution $h_0$ of the following transport equation
$$\partial_q h_0 = -2r(\partial_q \phi)^2-2b(\theta)\partial^2_q(q\chi(q)).$$
The ideas of this approximation are presented in Section \ref{secgll}, and are exploited in Section \ref{transport}. 

\subsection{Non-commutation of the wave operator with the null frame}
The structure of Einstein equations can only be seen in the null frame.
However it is well known that the wave operator does not commute with the null frame. In Theorem \ref{main} we have decomposed our metric in the following way
$$g=g_b + \wht g + \Up\left(\frac{r}{t}\right)\left(\frac{g_{\ba L \ba L}}{4}dq^2+\frac{g_{U\ba L}}{2}rdqd\theta\right).$$
The problems of non-commutation induced by $g_{\ba L \ba L}$ and 
$g_{U\ba L}$ are totally similar. Consequently, we can neglect the second one.
We expressed the 2-forms $dq^2$ in the coordinate $(t,x_1,x_2)$
$$
dq^2= (dr-dt)^2=(\cos(\theta)dx^1+\sin(\theta)dx^2-dt)^2\\
$$
Therefore, we will have, in the coordinates
$x_1,x_2$
\begin{equation}
\label{noncom}\Box \left( \Up\left(\frac{r}{t}\right)g_{\ba L \ba L}dq^2\right)_{\mu \nu}
-\Box\left(\Up\left(\frac{r}{t}\right)g_{\ba L \ba L}\right)(dq^2)_{\mu \nu}
=\Up\left(\frac{r}{t}\right)\frac{1}{r^2}\left(u^1_{\mu \nu}(\theta)g_{\ba L \ba L}
+u^2_{\mu \nu}(\theta)\partial_\theta g_{\ba L \ba L}\right)
\end{equation}
where $u^1_{\mu \nu}$ and $u^2_{\mu \nu}$ are some trigonometric functions.
The challenges of the terms involving $u^ 1_{\mu \nu}$ and $u^ 2_{\mu \nu}$ are explained in Section \ref{secnon}. 

\subsection{The semi linear term $P_{\mu \nu}(g)(\partial \wht g, \partial \wht g)$.}
Recall the form of the term $P_{\mu \nu}(g)(\partial \wht g, \partial \wht g)$.
\begin{equation*}
\begin{split}
 P_{\mu \nu}(g)(\partial \wht g, \partial \wht g)
=&\frac{1}{2}g^{\alpha \rho}g^{\beta \sigma}\left(\partial_\mu \wht g_{\rho \sigma}\partial_\alpha\wht g_{\beta \nu}+\partial_\nu \wht g_{\rho \sigma}\partial_\alpha \wht g_{\beta \mu}
-\partial_\beta \wht g_{\mu \rho}\partial_\alpha \wht g_{\nu \sigma}-\frac{1}{2}\partial_\mu \wht g_{\alpha \beta} \partial_\nu \wht g_{\rho \sigma}\right)\\
&+\frac{1}{2}g^{\alpha \beta}g^{\lambda \rho}\partial_\alpha \wht g_{\nu \rho}\partial_\beta \wht g_{\mu \rho}.
\end{split}
\end{equation*}
\paragraph{The quadratic terms}
In the null frame $(L,\ba L,U)$ the only non zero coefficients of the Minkowski metric are $m^{L \ba L} =-\frac{1}{2}$ and $m^{UU}=1$. Thanks to this remark, we can describe the terms appearing in the different components of $P_{\mu \nu}$.
\begin{itemize}
\item In $P_{\q T \q T}(g)(\partial \wht g, \partial \wht g)$, there can not be strictly more than $2$ occurrences of the vector field $\ba L$.
Therefore, the quadratic terms are of one of these form
\begin{equation}
\label{termpll}\partial_{\q V}\wht  g_{\q V \q T} \partial_{\q T} \wht g_{\q T \q T}, \partial_{\q T} \wht g_{\q V \q V}\partial_{\q T} \wht g_{\q T \q T},
\end{equation}
where we have used the fact, proved in Section \ref{secwave} that 
$$\partial_{\q V}\wht g_{\q T \q T}\sim\partial_{\q T}\wht g_{\q V \q V}.$$
These terms all have the classical null structure. How this structure can be used to show global existence is explained in Section \ref{secsemi}. Since they are by far easier to handle than the one we will describe in the following, they will be neglected in the proof of Theorem \ref{main}.
\item  In $P_{\q T \q V}(g)(\partial \wht g, \partial \wht g)$, there
 can not be strictly more than $3$ occurrences of the vector field $\ba L$. Therefore, the quadratic terms are of one of these form
$$ \partial_{\q V} \wht g_{\q T \q V} \partial_{\q T} \wht g_{\q T\q V}, \quad \partial_{\q V} \wht g_{\q V \q V} \partial_{\q T} \wht g_{\q T \q T}, \quad \partial_{\q T} \wht g_{\q V \q V} \partial_{\q V} \wht g_{\q T \q T}, \quad
\partial_{\q T} \wht g_{\q T \q V}\partial_{\q T} \wht g_{\q V \q V}
$$
where we have used the fact, proved in Section \ref{secwave} that 
$$\partial_{\q V}\wht g_{\q T \q T}\sim\partial_{\q T}\wht g_{\q V \q V}.$$
These terms all have the null structure. 
However, since $g_{\ba L \ba L}$ does not decay at all in $t$ (see the estimates of Theorem \ref{main}), one has to be
more careful with the terms of the form
$$\partial_{\q T} g_{\q T \q T}\partial_{\ba L} g_{\ba L \ba L}$$
These terms have a good structure since $\partial_{\q T} g_{\q T \q T}$ is a "good derivative" of a "good component". However, one needs two steps to exploit this structure, which can be difficult to achieve if there is no regularity left. Thankfully, these terms have three occurrences of $\ba L$, therefore they can only intervene in $P_{\q T \ba L}$. 
\begin{itemize}
\item  In $P_{\ba L L}$ we will have to be careful with
$$\partial_L \wht g_{LL}\partial_{\ba L} \wht g_{\ba L \ba L}.$$
This term can be converted in $\partial_{\ba L}\wht g_{LL}\partial_{L}\wht g_{\ba L \ba L}$ with the help of the algebraic trick
$$\Box (uv)=u\Box v +v\Box u+\partial_L u \partial_{\ba L} v+\partial_L v \partial_{\ba L} u +\partial_U u \partial_U v.$$ 
This fact will be used only in the proof of Lemma \ref{lemmeg}.
\item In $P_{\ba L U}$ we will have to be careful with
$$\partial_U g_{LL}\partial_{\ba L} g_{\ba L \ba L}.$$
This term can not be removed with the previous trick.
 We will have to single out its influence thanks to the decomposition
$$g=g_b+\chi\left(\frac{r}{t}\right)h dq^2+
\chi\left(\frac{r}{t}\right)k rdqd\theta
+\widetilde{g_4},$$
where $k$ satisfies
$$\Box_g k = \partial_U \wht g_{LL}\partial_{\ba L} \wht g_{\ba L \ba L}.$$
This will also be used only in the proof of Lemma \ref{lemmeg}.
\end{itemize}
\item
The terms in $P_{\ba L \ba L}$ which are not of the previous form can be written
\begin{equation}
\label{termbal}\partial_{\ba L} g_{LL}\partial_{\ba L}g_{\ba L \ba L}, \quad \partial_{\ba L} g_{L \ba L}\partial_L g_{\ba L \ba L}.
\end{equation}
We note the crucial cancellation of terms of the form $(\partial_{\ba L} g_{L \ba  L})^2$ in $P_{\ba L \ba L}$. The contributions \eqref{termbal} will be single out in \eqref{qll}.
\end{itemize}

\paragraph{The cubic terms}
In two dimensions, cubic terms could be troublesome. However, in the form $P_{\q V \q T}$, if there are $4$ occurrences of the vector field $\ba L$,
or in $P_{\ba L \ba L}$ if there are $5$ occurrences of the vector field $\ba L$,
 then we have a factor $g^{\ba L \ba L}$, which has a decay equivalent to $g_{LL}$. Therefore we can neglect the cubic terms in this nonlinearity.

\subsection{The crossed terms}\label{cros}
In this section, we discuss the structure of the crossed terms between $b$ and $(\wht g,\phi)$.
\paragraph{The crossed terms involving two derivatives of $b$ are absent}
In the expression
$$\Box_g g_{\mu \nu}-\left(g_{\mu \rho}\partial_\nu H_b^\rho+g_{\nu \rho}\partial_\mu H_b^\rho\right),$$
there could be terms involving two derivatives of $b(\theta)$, which would be troublesome since 
they would lead to  a loss of a derivative (recall that we only have the regularity $b\in W^{N,2}$). However, the terms involving two derivatives of $b$ in this expression, are the same than the terms involving two derivatives of $b$ in $R_{\mu \nu}(g)$. Thus, these terms cancel in the expression
$$\left(g_b^{\alpha\beta}-g^{\alpha \beta}\right)\partial_\alpha \partial_\beta (g_b)_{\mu \nu}+(g_b)_{\mu \rho}\partial_\nu F^\rho + (g_b)_{\nu \rho}\partial_\mu F^\rho
+\wht g_{\mu \rho}\partial_\nu H_b^\rho + \wht g_{\nu \rho}\partial_\mu H_b^\rho,$$
which appears in $\wht P_{\mu \nu}(\wht g,g_b)$ defined by \eqref{ptilde}.
These cancellations can be checked for example with Mathematica.

\paragraph{The crossed terms in $\wht P_{\mu \nu}$}
We recall from \eqref{gb} that
$$g_b = dsdq + (r+ \chi(q)qb(\theta))^2d\theta^2+J(\theta)\chi(q)dqd\theta.$$
Therefore in $\wht P_{\mu \nu}$ we can find terms involving
$$(g_b)_{UU}= \left(1+ \frac{\chi(q)qb(\theta)}{r}\right)^2 \quad and \quad (g_b)_{U\ba L}=-2\frac{J(\theta)\chi(q)}{r},$$
Since $(g_b)_{U\ba L}$ decays faster than $(g_b)_{UU}$ let us focus on the crossed terms between $(g_b)_{UU}$ and $\wht g$.
The problem with the term $(g_b)_{UU}$  is that far from the light cone, it does not decay at all. This is one of the causes of the logarithmic growth of the energy in the statement of Theorem \ref{main}. However, these terms are present only in the exterior region. Moreover they display also a special structure. 
Since the terms involving two derivatives of $b$ are absent, and the terms 
involving two derivatives of $\wht g$ are only present in $\Box_g \wht g$,
the terms in $\wht P_{\mu \nu}$ are of the form
$$g^{--}\partial_{-} (g_b)_{UU}\partial_{-}g_{--}.$$
\begin{itemize}
\item In $\wht P_{\q T \q V}$ the crossed terms involving $\partial_{\ba L} (g_b)_{UU}$ can not contain more than two occurrences of $\ba L$. They must be of the following form
\begin{equation*}
\partial_{\ba L} (g_b)_{UU}\partial_{\q T}\wht g_{\q T \q V},\quad \partial_{\q T} (g_b)_{UU} \partial_{\q V} \wht g_{\q T \q V},\quad  \partial_{\q T} (g_b)_{UU} \partial_{\q T} \wht g_{\q V \q V},
\end{equation*}
where we have used the wave coordinates condition $\partial_{\q V}\wht g_{\q T \q T}\sim \partial_{\q T} \wht g_{\q T \q V}.$
We have the following inequalities, thanks to \eqref{important}
\begin{align*}
|\partial_{\ba L} (g_b)_{UU}\partial_{\q T}\wht g_{\q T \q V}|&\lesssim
\frac{\ch_{q>0}(|b|+|\partial_\theta b|)}{1+r}|\partial_{\q T}\wht g_{\q T \q V}|\lesssim \frac{\ch_{q>0}(|b|+|\partial_\theta b|)}{(1+r)^2}|Z^1\wht g_{\q T \q V}|,\\
|\partial_{\q T} (g_b)_{UU}\partial_{\q V}\wht g_{\q T \q V}|&\lesssim
\frac{\ch_{q>0}(1+|q|)(|b|+|\partial_\theta b|)}{(1+r)^2}|\partial_{\q V}\wht g_{\q T \q V}|\lesssim \frac{\ch_{q>0}(|b|+|\partial_\theta b|)}{(1+r)^2}|Z^1\wht g_{\q T \q V}|.\\
\end{align*}
These two contributions are therefore quite similar. In the following, it will be sufficient to study the term
\begin{equation}
\label{crossg}
\partial_{\ba L} (g_b)_{UU}\partial_{\q T}\wht g_{\q T \q V}.
\end{equation}
The challenges of this terms will be discussed in Section \ref{secinter}
\item
In $\wht P_{\ba L \ba L}$, we may have three occurrences of $\ba L$. Therefore there are terms of the form
$$\partial_{\ba L} (g_b)_{UU}\partial_{\q T} g_{\ba L \ba L}, \quad
\partial_{\ba L} (g_b)_{UU}\partial_{\ba L} g_{\ba L L}, \quad \partial_{\q T}g_{UU}\partial_{\ba L}g_{\ba L \ba L}.$$
We have the following inequalities, thanks to \eqref{important}
\begin{align*}
|\partial_{\ba L} (g_b)_{UU}\partial_{\q T} g_{\ba L \ba L}| &\lesssim \frac{\ch_{q>0}(|b|+|\partial_\theta b|)}{1+r}|\partial_{\q T}\wht g_{\ba L \ba L}|\lesssim \frac{\ch_{q>0}(|b|+|\partial_\theta b|)}{(1+r)^2}|Z^1\wht g_{\ba L \ba L}|\\
|\partial_{\ba L} (g_b)_{UU}\partial_{\ba L} g_{\ba L L}|&\lesssim \frac{\ch_{q>0}(|b|+|\partial_\theta b|)}{1+r}|\partial_{\ba L}\wht g_{ \ba L}|\lesssim \frac{\ch_{q>0}(|b|+|\partial_\theta b|)}{(1+r)(1+|q|)}|Z^1\wht g_{L \ba L}|\\
|\partial_{\q T}(g_b)_{UU}\partial_{\ba L}g_{\ba L \ba L}|&\lesssim \frac{\ch_{q>0}(1+|q|)(|b|+|\partial_\theta b|)}{(1+r)^2}|\partial_{\ba L}\wht g_{\ba L \ba L}|\lesssim \frac{\ch_{q>0}(|b|+|\partial_\theta b|)}{(1+r)^2}|Z^1\wht g_{\ba  L \ba  L}|.
\end{align*}
Consequently, the worst term is
\begin{equation}
\label{crossbaL}
\partial_{\ba L} (g_b)_{UU}\partial_{\ba L} g_{\ba L L}.
\end{equation}
\end{itemize}
We introduce the following notation, to single out the contributions of \eqref{crossbaL} and \eqref{termbal}
\begin{equation}\label{qll}
Q_{\ba L \ba L}(h,\wht g)=
\partial_{\ba L} g_{LL}\partial_{\ba L}h+\partial_{\ba L} g_{L \ba L}\partial_L h+
\partial_{\ba L} (g_b)_{UU}\partial_{\ba L} g_{\ba L L}.
\end{equation}

\paragraph{The crossed terms involving two derivatives of $\wht g$}
With our choice of coordinates, these terms only appear in $\Box_g \wht g$. They are of the form
$$\ch_{q>0}\frac{b(1+|q|)}{1+r}\partial_U^2 \wht g.$$
Their contribution is most of the time similar than the one of \eqref{crossg}, except in the energy estimate, where they require a special treatment because of their lack of decay far from the light cone (see Section
\ref{weighte}).

\paragraph{The crossed terms in $\Box_g \phi$}
The crossed terms between $g_b$ and $\partial \phi$ are of the form
$$g^{- -}\partial_{-}(g_b)_{UU}\partial_{-}\phi.$$
Consequently, they must be of the following form
$$\partial_{\q V} (g_b)_{UU} \partial_{\q T}\phi, \quad \partial_{\q T}(g_b)_{UU} \partial_{\q V}\phi.$$
Like for $\wht P_{\q V  \q T}$, it will be sufficient to study
\begin{equation}
\label{crossphi}
\partial_{\q V} (g_b)_{UU} \partial_{\q T}\phi.
\end{equation}
The crossed terms between $g_b$ and $\partial^2 \phi$ are of the form
$$\ch_{q>0}\frac{b(1+|q|)}{1+r}\partial_U^2 \wht \phi.$$
As for $\wht g$, their contribution is most of the time similar than the one of \eqref{crossphi}, except in the energy estimate, where they require a special treatment because of their lack of decay far from the light cone (see Section
\ref{weighte}).

\begin{rk}\label{remcros}
In the region $q>0$ it is generally sufficient to study the crossed terms. Indeed, the crossed terms are the one  presenting the less decay far from light cone.
\end{rk}

\section{Model problems}\label{modele}

The proof relies on a bootstrap scheme.  Roughly speaking, we will assume some estimates on the coefficients $Z^I \phi,Z^I g_{\ba L \ba L}$ and $Z^I g_{\q T \q V}$ :
\begin{itemize}
\item $L^\infty$ estimates for $I\leq \frac{N}{2}$, 
\item $L^2$ estimates for $I\leq N$.
\end{itemize}
We rewrite the bootstrap assumptions in the condensed form
$$|\phi|_{X_1}\leq 2C_0\ep, \quad |g|_{X_2} \leq 2C_0\ep,$$
where $C_0$ is a constant depending only on the quantities $\rho,\sigma,\mu,\delta,N$ introduced in the statement of Theorem \ref{main} and such that at $t=0$
$$ |\phi|_{X_1}\leq C_0\ep, \quad |g|_{X_2} \leq C_0\ep.$$
Thanks to the $L^\infty-L^\infty$ estimate and the energy estimate, we will be able to prove
$$|\phi|_{X_1}\leq C_0\ep +C\ep^2, \quad |g|_{X_2} \leq C_0 \ep+C\ep^2.$$
Therefore, for $\ep$ chosen small enough so that $C\ep \leq \frac{C_0}{2}$, this improves the bootstrap assumptions.

We will first consider a toy model, which exhibits some of the mechanisms involved in the proof.

\subsection{Global well posedness for a semi linear wave equation with the null structure}\label{secsemi}
We consider the following $2+1$ dimensional semi-linear wave equation
\begin{equation}
\label{toy}
\left\{
\begin{array}{l}
\Box u =\partial u \bar{\partial}u,\\
(u,\partial_tu)|_{t=0}=(u_0,u_1).
\end{array}
\right.
\end{equation}
Note that the nonlinearity satisfies the null condition. Consequently, this model will show us how to treat the terms of the form \eqref{termpll}.
The following result is proved in \cite{hoshiga}. We will give a proof of it for sake of completeness, and because it exhibits some of the mechanisms involved in the proof of Theorem \ref{main}.
\begin{prp}
Let $0<\delta<\frac{1}{2}$. Let $N\geq 8$. Let $u_0,u_1 \in \times H^{N+1}_{-\frac{1}{2}+\delta} \times H^N_{\delta+\frac{1}{2}}$ 
such that
$$\|u_0\|_{H^{N+1}_{-\frac{1}{2}+\delta}} +\|u_1 \|_{ H^N_{\delta+\frac{1}{2}}}\leq \ep.$$
If $\ep>0$ is small enough, the equation \eqref{toy} has a global solution $u$. 
\end{prp}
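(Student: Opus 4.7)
The plan is a classical bootstrap on a top-order weighted energy together with a lower-order pointwise decay bound, in the spirit of \cite{lind} but using the $2+1$-dimensional tools recalled above. Fix a weight $w(q)$ satisfying the hypotheses of Proposition \ref{energy}, for instance $w(q)=(1+q)^{1+2\delta}$ for $q\geq 0$ and $w(q)=1+(1+|q|)^{-2\mu}$ for $q\leq 0$ with some small $0<\mu\ll\delta$, so that $w/(1+|q|)^{1+2\mu}\lesssim w'\lesssim w/(1+|q|)$. The $L^2$ bootstrap is
$$E_N(t):=\sum_{|I|\leq N}\|w^{1/2}(q)\partial Z^I u(t,\cdot)\|_{L^2(\m R^2)}\leq 2C_0\ep,$$
and Proposition \ref{prpweight} immediately yields the pointwise companion
$$w^{1/2}(q)|\partial Z^I u(t,x)|\leq \frac{2C_0\ep}{\sqrt{1+s}\sqrt{1+|q|}},\qquad |I|\leq N/2+2,$$
which together with Lemma \ref{lmintegration} also controls $|Z^I u|$ itself pointwise. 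The initial-data hypothesis $\|u_0\|_{H^{N+1}_{-1/2+\delta}}+\|u_1\|_{H^N_{\delta+1/2}}\leq\ep$ ensures $E_N(0)\leq C_0\ep$ for a suitable $C_0$.

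\textbf{Commutation and use of the null structure.} Since $[\Box,Z]=C(Z)\Box$, and since $[Z,\partial]$ is a linear combination of $\partial$'s while $[Z,\bar\partial]$ is a linear combination of $\bar\partial$'s (modulo lower-order terms, in the region $-t/2\leq r\leq 2t$), differentiating the equation $|I|$ times gives
$$\Box Z^I u = F_I:=\sum_{|I_1|+|I_2|\leq |I|}c_{I_1I_2}\,\partial Z^{I_1}u\,\bar\partial Z^{I_2}u.$$
Applying Proposition \ref{energy} to each $Z^I u$ and summing yields
$$\frac{d}{dt}E_N^2+\sum_{|I|\leq N}\int w'(q)\Big(|\partial_s Z^I u|^2+\Big|\frac{\partial_\theta Z^I u}{r}\Big|^2\Big)\,dx\;\lesssim\; \sum_{|I|\leq N}\int w(q)|\partial Z^I u|\,|F_I|\,dx.$$
In each product in $F_I$ at least one of $|I_1|,|I_2|$ is $\leq N/2$. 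The crucial identity $|\bar\partial Z^{I_2}u|\lesssim |Z^{|I_2|+1}u|/(1+s)$ from \eqref{important} delivers the extra $(1+s)^{-1}$ that compensates the weak $2+1$-dimensional free-wave decay. When $|I_2|\leq N/2$, combining this identity with the pointwise bootstrap on $Z^{|I_2|+1}u$ provides a pointwise bound of the form $|\bar\partial Z^{I_2}u|\lesssim C\ep/(1+s)^{3/2-\rho}(1+|q|)^{1/2-\rho}$ for small $\rho>0$, so Cauchy–Schwarz against the $L^2$ piece $\partial Z^{I_1}u$ gives a contribution $\lesssim C\ep(1+t)^{-1-\rho}E_N^2$, integrable in time. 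When $|I_1|\leq N/2$ the good derivative $\bar\partial Z^{I_2}u$ is kept in $L^2$ and absorbed into the bulk term $\int w'(|\partial_s|^2+|U\cdot|^2)$ on the LHS, using $w\lesssim (1+|q|)^{1+2\mu}w'$ and a Cauchy–Schwarz in space–time. This improves $E_N(t)\leq C_0\ep+C\ep^2$.

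\textbf{Pointwise improvement and main obstacle.} To improve the pointwise bootstrap one applies Proposition \ref{inhom} to $\Box Z^I u=F_I$ for $|I|\leq N/2+2$: plugging the pointwise bootstrap into both factors of $F_I$ and using the $\bar\partial$ gain yields $|F_I|\lesssim \ep^2(1+s)^{-\mu_1}(1+|q|)^{-\nu_1}$ with $\mu_1>3/2$ and $\nu_1>1$, so Proposition \ref{inhom} produces $|Z^I u|\lesssim \ep^2(1+s)^{-1/2}$, closing the pointwise bootstrap (and $|\partial Z^I u|$ is then improved via Klainerman–Sobolev applied to the improved energy bound). For $\ep$ small, both constants are improved from $2C_0\ep$ to $C_0\ep+C\ep^2$, and the usual continuity argument yields global existence. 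The main obstacle, already flagged in the introduction, is the borderline $(1+s)^{-1/2}$ decay of free waves in two space dimensions: without the $\bar\partial$ factor, the bilinear nonlinearity would scale as $(1+s)^{-1}$ with no time-integrability, and the energy argument would fail. It is the one-derivative null structure carried by the nonlinearity $\partial u\,\bar\partial u$, together with the careful choice of weight that tames the slowly decaying interior region $q<0$, that makes the time integrals converge and the bootstrap close.
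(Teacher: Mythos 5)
Your overall architecture (weighted energy at top order, Klainerman--Sobolev, the $\bar\partial$ gain from \eqref{important}, the Alinhac-type absorption into the $w'$ bulk term, and the $L^\infty$--$L^\infty$ estimate) is the same as the paper's, but there are two gaps, one of which is fatal as written.

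The fatal gap is in the pointwise part of your bootstrap. You claim that Klainerman--Sobolev plus Lemma \ref{lmintegration} give pointwise control sufficient to bound the source $F_I$ by $\ep^2(1+s)^{-\mu_1}(1+|q|)^{-\nu_1}$ with $\mu_1>3/2$ \emph{and} $\nu_1>1$, and in the energy estimate you assert $|\bar\partial Z^{I_2}u|\lesssim \ep(1+s)^{-3/2+\rho}(1+|q|)^{-1/2+\rho}$. Neither holds in the interior region $q<0$: there the weight $w$ is bounded, so Proposition \ref{prpweight} only gives $|\partial Z^{I}u|\lesssim \ep(1+s)^{-1/2}(1+|q|)^{-1/2}$, and Lemma \ref{lmintegration} then yields $|Z^{I}u|\lesssim \ep(1+|q|)^{1/2}(1+s)^{-1/2}$, which \emph{grows} in $|q|$. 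Hence $|\bar\partial Z^{I_2}u|\lesssim \ep(1+|q|)^{1/2}(1+s)^{-3/2}$ and the best you can say about $F_I$ is $|F_I|\lesssim\ep^2(1+s)^{-2}$ with no $q$-decay. Trading $s$-decay for $q$-decay via $(1+|q|)\le(1+s)$ to reach $\nu_1>1$ forces $\mu_1<1$, so the hypothesis $\mu>3/2$ of Proposition \ref{inhom} fails and the $L^\infty$--$L^\infty$ step collapses. This is exactly why the paper does \emph{not} derive the pointwise decay from the energy: it posits two independent $L^\infty$ bootstrap assumptions, $|Z^Iu|\le 2C_0\ep(1+s)^{-1/2}(1+|q|)^{-\delta}$ for $I\le N/2$ and with exponent $\delta/2$ for $I\le N/2+1$, which carry genuine $q$-decay in the interior, and improves them self-consistently with the Kubo--Kubota estimate (the $\ln(1+|q|)\lesssim(1+|q|)^{1/2-\delta}$ absorption). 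Your scheme has no mechanism to produce this interior $q$-decay, so the bootstrap does not close.

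The second, lesser gap: your top-order assumption $E_N\le 2C_0\ep$ with no time growth cannot be improved to $C_0\ep+C\ep^2$. In the case $|I_1|\le N/2$, after absorbing $\ep\|w'^{1/2}\bar\partial Z^{I_2}u\|_{L^2}^2$ into the left-hand side you are left with a term $\tfrac{\ep}{1+t}\|w^{1/2}\partial Z^Iu\|_{L^2}^2$, which by Gronwall yields $(1+t)^{C\ep}$ growth. The paper therefore allows $\|w^{1/2}\partial Z^Nu\|_{L^2}\le 2C_0\ep(1+t)^{\rho}$ at top order (and shows boundedness only for $I\le N-1$, via the weighted Hardy inequality). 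You should weaken your top-order energy bootstrap accordingly; this also forces the small $(1+t)^{\rho}$ losses that the paper tracks in the Klainerman--Sobolev consequences \eqref{wks} and \eqref{wks2}.
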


\begin{proof}
Let $0<\mu<\frac{1}{4}$.
We introduce the weight function
$$\left\{\begin{array}{l}
w(q)=1+\frac{1}{(1+|q|)^{2\mu}}, \;q<0,\\
w(q)=(1+|q|)^{1+2\delta} \; q>0.\\
\end{array}\right.$$
Let $0< \rho<\frac{\delta}{2}$. To prove global existence for equation \eqref{toy}, we consider a time $T>0$ such that, on $0\leq t \leq T$
\begin{align}
\label{boot1}|Z^I u|&\leq 2C_0 \frac{\ep}{\sqrt{1+s}(1+|q|)^\delta},\: I\leq \frac{N}{2},\\
\label{boot2}|Z^I u|&\leq 2C_0 \frac{\ep}{\sqrt{1+s}(1+|q|)^\frac{\delta}{2}},\: I\leq \frac{N}{2}+1,\\
\label{boot3}\|w^\frac{1}{2}\partial Z^I u\|_{L^2}&\leq 2C_0(1+t)^\rho \ep, \; I\leq N.
\end{align}
Thanks to Klainerman-Sobolev inequality, the assumption \eqref{boot3} yields, for $I\leq N-2$
\begin{equation}
\label{wks}|\partial Z^I u|\lesssim \frac{\ep(1+t)^\rho }{\sqrt{1+s}\sqrt{1+|q|}},\; for\; q<0,\quad
|\partial Z^I u|\lesssim \frac{\ep(1+t)^\rho }{\sqrt{1+s}(1+|q|)^{1+\delta}},\; for\; q>0.
\end{equation}
and consequently, thanks to Lemma \ref{lmintegration}
\begin{equation}\label{wks2}
|Z^I u|\lesssim \frac{\ep\sqrt{1+|q|}}{(1+s)^{\frac{1}{2}-\rho}}, \; for \;
q<0, \quad |Z^Iu|\lesssim \frac{\ep }{(1+s)^{\frac{1}{2}-\rho}(1+|q|)^\delta}, \; for q>0.
\end{equation}
We use the $L^\infty-L^\infty$ estimate to ameliorate
eqtimates \eqref{boot1} and \eqref{boot2}. We write
\begin{equation}
\label{eqmod}\Box Z^I u =\sum_{I_1+I_2\leq I} \partial Z^{I_1}u \bar{\partial} Z^{I_2}u.
\end{equation}
We first treat the case $I\leq \frac{N}{2}$. We assume $I_1\leq \frac{N}{4}$ (the case $I_2\leq \frac{N}{4}$ can be treated in the same way). 
Therefore, we can estimate thanks to \eqref{important}
$$|\partial Z^{I_1}u|\leq \frac{1}{1+|q|}|Z^{I_1+1}u|.$$
Since $\frac{N}{4}+1\leq \frac{N}{2}$ we obtain thanks to \eqref{boot1}
$$|\partial Z^{I_1}u|\lesssim \frac{\ep}{(1+|q|)^{1+\delta}\sqrt{1+s}}.$$
To estimate $\bar{\partial} Z^{I_2}u$ we use \eqref{important} and the bootstrap assumption \eqref{boot2} to obtain
$$|\bar{\partial} Z^{I_2}u|\lesssim  \frac{1}{1+s}|Z^{I_2+1}u|\lesssim \frac{\ep}{(1+s)^\frac{3}{2}(1+|q|)^\frac{\delta}{2}}.$$
This yields
$$|\Box Z^I u|\lesssim \frac{\ep^2}{(1+s)^2(1+|q|)^{1+\frac{3\delta}{2}}}.$$
We can now use the $L^\infty-L^\infty$ estimate of Proposition \ref{inhom}, together with the estimate of Proposition \ref{flat1} and the Sobolev injection of 
Proposition \ref{holder}, which gives
$$|Z^I u|\leq \frac{C_0 \ep}{\sqrt{1+s}(1+|q|)^\delta}+\frac{C\ep^2\ln(1+|q|)}{\sqrt{1+s}\sqrt{1+|q|}}.$$
This implies, since $\ln(1+|q|)\lesssim (1+|q|)^{\frac{1}{2}-\delta}$
\begin{equation}
\label{mod1}|Z^I u|\leq \frac{C_0 \ep}{\sqrt{1+s}(1+|q|)^\delta}+\frac{C\ep^2}{\sqrt{1+s}(1+|q|)^\delta}.
\end{equation}
We now treat the case $I=\frac{N}{2}+1$. We assume $I_1\leq \frac{N+2}{4} \leq \frac{N}{2}$ so we have the same estimate as before for $\partial Z^{I_1}u$. To estimate $\bar{\partial} Z^{I_2}u$, since $\frac{N}{2}+2\leq N-2$ we use \eqref{wks2}. We obtain
$$|\bar{\partial} Z^{I_2}u|\lesssim  \frac{1}{1+s}|Z^{I_2+1}u|\lesssim \frac{\ep\sqrt{1+|q|}}{(1+s)^{\frac{3}{2}-\rho}}.$$
Therefore we obtain
$$|\Box Z^I u|\lesssim \frac{\ep^2}{(1+s)^{2-\rho}(1+|q|)^{\frac{1}{2}+\delta}}
\lesssim \frac{\ep^2}{(1+s)^{\frac{3}{2}+\frac{\delta}{2}}(1+|q|)^{1+\frac{\delta}{2}-\rho}}
.$$
Therefore, like for \eqref{mod1}, the $L^\infty-L^\infty$ estimate yields
\begin{equation}
\label{mod2} |Z^I u|\leq \frac{C_0 \ep}{\sqrt{1+s}(1+|q|)^\delta}+\frac{C\ep^2}{\sqrt{1+s}(1+|q|)^\frac{\delta}{2}}.
\end{equation}
We now use the weighted energy estimate to ameliorate \eqref{boot3}. Let $I\leq N$. In view of \eqref{eqmod}, it implies
\begin{equation}
\label{toye}
\frac{d}{dt}\|w(q)^\frac{1}{2}\partial Z^I u\|_{L^2}^2
+\|w'(q)^\frac{1}{2}\bar{\partial }Z^I u\|_{L^2}^2\lesssim \sum_{I_1+I_2\leq I}
\|w^\frac{1}{2}\partial Z^{I_1}u \bar{\partial} Z^{I_2}u\|_{L^2}\|w^\frac{1}{2}\partial Z^I u\|_{L^2}.
\end{equation}
We first assume $I_2\leq \frac{N}{2}$. Then we
estimate
$$|\bar{\partial}Z^{I_2}u|\lesssim \frac{\ep}{(1+s)^\frac{3}{2}(1+|q|)^\frac{\delta}{2}}.$$
This yields
$$\|w^\frac{1}{2}\partial Z^{I_1}u \bar{\partial} Z^{I_2}u\|_{L^2}\lesssim \frac{\ep}{(1+t)^\frac{3}{2}}\|w^\frac{1}{2}\partial Z^{I_1}u\|_{L^2}.$$
We now assume $I_1\leq \frac{N}{2}$. Then, we estimate
$$|\partial Z^{I_1}u |\lesssim \frac{\ep}{\sqrt{1+s}(1+|q|)^{1+\frac{\delta}{2}}}.$$
Therefore we obtain
$$ \|w^\frac{1}{2}\partial Z^{I_1}u \bar{\partial} Z^{I_2}u\|_{L^2}\lesssim\frac{\ep}{\sqrt{1+t}}\left\|\frac{w^\frac{1}{2}}{(1+|q|)^{1+\frac{\delta}{2}}}\bar{\partial} Z^{I_2}u\right\|_{L^2}.$$
Since 
$$\frac{w^\frac{1}{2}}{(1+|q|)^{1+\frac{\delta}{2}}}\leq w'(q)^\frac{1}{2},$$
we infer
$$\|w^\frac{1}{2}\partial Z^{I_1}u \bar{\partial} Z^{I_2}u\|_{L^2}\|w^\frac{1}{2}\bar{\partial} Z^I u\|_{L^2}
\leq \frac{\ep}{1+t}\|w^\frac{1}{2}\partial Z^I u\|^2_{L^2}
+\ep\|w'(q)^\frac{1}{2}\bar{\partial} Z^{I_2}u\|^2_{L^2}.$$
Therefore $\eqref{toye}$ writes
$$\frac{d}{dt}\|w(q)^\frac{1}{2}\partial Z^I u\|_{L^2}^2
+\|w'(q)^\frac{1}{2}\bar{\partial Z^I u}\|_{L^2}^2\lesssim
\frac{\ep}{1+t}\|w^\frac{1}{2}\partial Z^I u\|^2_{L^2}
+\ep\|w'(q)^\frac{1}{2}\bar{\partial} Z^{I_2}u\|^2_{L^2},$$
so for $\ep$ small enough
$$\frac{d}{dt}\|w(q)^\frac{1}{2}\partial Z^I u\|_{L^2}^2
+\frac{1}{2}\|w'(q)^\frac{1}{2}\bar{\partial} Z^I u\|_{L^2}^2\lesssim
\frac{\ep}{1+t}\|w^\frac{1}{2}\partial Z^I u\|^2_{L^2}.$$
We obtain
\begin{equation}
\label{mod3}\|w(q)^\frac{1}{2}\partial Z^I u\|_{L^2}\leq C_0\ep(1+t)^{C\ep}.
\end{equation}
For $\ep$ small enough so that
$$C\ep\leq \frac{C_0}{2}, \quad (1+t)^{C\ep}\leq \frac{3}{2}(1+t)^\rho,$$
we have proved, in view of \eqref{mod1}, \eqref{mod2} and \eqref{mod3} that for $t\leq T$ we have
\begin{align*}
|Z^I u|&\leq \frac{3}{2}C_0 \frac{\ep}{\sqrt{1+s}(1+|q|)^\delta},\: |I|\leq \frac{N}{2},\\
|Z^I u|&\leq \frac{3}{2}C_0 \frac{\ep}{\sqrt{1+s}(1+|q|)^\frac{\delta}{2}},\: |I|\leq \frac{N}{2}+1,\\
\|w^\frac{1}{2}\partial Z^I u\|_{L^2}&\leq \frac{3}{2}C_0(1+t)^\rho \ep, \; |I|\leq N, 
\end{align*}
which concludes the proof.
\end{proof}
\begin{rk} Actually, only the highest order energy $\|w^\frac{1}{2}\partial Z^Nu\|_{L^2}$ grows in $t$.
To see this, we estimate
$$\|w^\frac{1}{2}\partial Z^{I_1}u \bar{\partial} Z^{I_2}u\|_{L^2}$$
for $I_1\leq \frac{N}{2}$ and $I_2\leq N-1$.
Since 
$$|\bar{\partial} Z^{I_2}u|\leq \frac{1}{1+s}|Z^{I_2+1}|,$$
we obtain, together with the weighted Hardy inequality
$$\|w^\frac{1}{2}\partial Z^{I_1}u \bar{\partial} Z^{I_2}u\|_{L^2}\lesssim \frac{\ep}{(1+t)^\frac{3}{2}}\left\|
\frac{w^\frac{1}{2}}{(1+|q|)}Z^{I_2+1}u\right\|_{L^2} \lesssim \frac{\ep}{(1+t)^\frac{3}{2}}\|w^\frac{1}{2}\partial Z^{I_2+1} u\|_{L^2}.$$
Therefore, the weighted energy estimate yields, for $|I|\leq N-1$
$$\frac{d}{dt}\|w^\frac{1}{2}\partial Z^Iu\|^2_{L^2} \lesssim \frac{\ep^2}{(1+t)^{\frac{3}{2}-C\ep}},$$
and hence
$$\|w^\frac{1}{2}\partial Z^Iu\|_{L^2}\lesssim \ep.$$
\end{rk}

\begin{rk}
The use of the term $\|w'(q)^\frac{1}{2}\bar{\partial} Z^I u\|_{L^2}^2$ to exploit the structure in the energy estimate has been 
introduced by Alinhac in \cite{alin} and is sometimes called Alinhac ghost weight method. It has also been used in the case of Einstein equations in wave coordinates
in \cite{lind}.
\end{rk}

Unfortunately, Einstein equations do not have the null structure, but only a weak form of it. In the next sections, we will see what problems this creates and the method we used to tackle them. We will be less precise than in this first example, since full details will be provided when we proceed with the proof of Theorem \ref{main}.

\subsection{The coefficient $g_{\uline{L}\uline{L}}$}\label{secgll}
To understand how to deal with $g_{\ba L \ba L}$, let us consider the question of global existence for the following system, which is of the form \eqref{sysgll}
\begin{equation}
\label{eqdeh}
\left\{\begin{array}{l}\Box \phi=0,\\
\Box h=-2(\partial_q \phi)^2-2\frac{b(\theta)\partial^2_q(q\chi(q))}{r}.\\
\end{array}\right.
\end{equation}
with initial data for $\phi$ of size $\ep$ and zero initial data for $h$.
We recall $\|b\|_{L^2(\m S^1)}\lesssim \ep^2$.
We have the following estimates for $\phi$
$$\|w^\frac{1}{2}\partial \phi\|_{L^2} \lesssim \ep, \quad |\partial \phi|\lesssim  \frac{\ep}{\sqrt{1+s}(1+|q|)^{1+\delta}}.$$
Therefore, the energy estimate for $h$ writes
$$\frac{d}{dt}\|w^\frac{1}{2}\partial h\|^2_{L^2} \lesssim
\left(\|w^\frac{1}{2}(\partial_q \phi)^2\|_{L^2}
+\left\|w^\frac{1}{2}\frac{b(\theta)\partial^2_q(q\chi(q))}{r}\right\|_{L^2}\right)
\|w^\frac{1}{2}\partial h\|_{L^2},$$
and thus
$$\frac{d}{dt}\|w^\frac{1}{2}\partial h\|_{L^2} \lesssim \left(\frac{\ep}{\sqrt{1+t}}\|w^\frac{1}{2}\partial \phi\|_{L^2}
+\frac{\ep^2}{\sqrt{1+t}} \right)\lesssim \frac{\ep^2}{\sqrt{1+t}}.$$
We infer
\begin{equation}
\label{esthmod}\|w^\frac{1}{2}\partial h\|_{L^2} \leq \ep^2\sqrt{1+t}.
\end{equation}
This estimate is not sufficient. To obtain more information on $h$, we will approximate it by the solution $h_0$ of the following transport equation ( this procedure will be made more precise in Section \ref{transport})
\begin{equation}
\label{trans1}
\partial_q h_0= -2r(\partial_q \phi)^2 -2b(\theta)\partial^2_q (q\chi(q)),
\end{equation}
with initial data $h_0=0$ at $t=0$.
The $L^\infty$ estimate for $\phi$, and the fact that $\chi'$ is supported in $[1,2]$ yield
$$|\partial_q h_0|\lesssim \frac{\ep^2}{(1+|q|)^{2+2\delta}}.$$
To estimate $h_0$ we write
$$h_0(Q,s,\theta) = \int_s^Q\left(-2(\partial_q \phi)^2r -2b(\theta)\partial_q^2(q\chi(q))\right)dq,$$
so we obtain 
\begin{align*}
h_0(s,Q,\theta)& = O\left(\frac{\ep^2}{(1+|Q|)^{1+2\delta}}\right), \; Q>0,\\
h_0(s,Q,\theta)& = \int_s^{-s}\left(-2(\partial_q \phi)^2r -2b(\theta)\partial_q^2(q\chi(q))\right)dq+ O\left(\frac{\ep^2}{(1+|Q|)^{1+2\delta}}\right), \; Q<0.
\end{align*}
Therefore, since
$$\int_s^{-s}\partial_q^2(q\chi(q))dq=-1, \; for  \; s\geq 2$$
to maximize the decay in $q$ for $h_0$ (and hence for $h$, provided one has a suitable control over $h-h_0$) we will choose $b$ such that
\begin{equation}
\label{appro}
b(\theta)\simeq \int_s^{-s}(\partial_q \phi)^2rdq.
\end{equation}

\begin{rk}
$b(\theta)$ is a free parameter, except from $\int b(\theta)$, $\int b(\theta)\cos(\theta)$ and $\int b(\theta)\sin(\theta)$ which are prescribed by the resolution of the constraint equations, and correspond intuitively to the ADM angle (energy) and linear momentum. Let $\Pi $ be the projection
defined by \eqref{projection}.
Then
$$\Pi (b(\theta))\simeq \Pi\left(\int_s^{-s}(\partial_q \phi)^2rdq\right),$$
will be forced in the course of the  bootstrap procedure. On the other hand, the fact that
\begin{align*}
\int b(\theta) &\simeq \int\int_s^{-s}(\partial_q \phi)^2rdqd\theta,\\
\int b(\theta)\cos(\theta) &\simeq \int\int_s^{-s}(\partial_q \phi)^2\cos(\theta)rdqd\theta,\\
\int b(\theta)\sin(\theta) &\simeq \int\int_s^{-s}(\partial_q \phi)^2\sin(\theta)rdqd\theta.\\
\end{align*}
will be obtained by integrating the constraint equations at any time t (see Section \ref{transport}).
\end{rk}

\subsection{Non commutation of the wave operator with the null frame}\label{secnon}
In this section, we will discuss the influence of the terms
appearing in \eqref{noncom}.
We have seen in the previous section that $h_0$ does not decay at all with respect to the $s$ variable. In turn, we will show that this is also the case for $h$, and finally for the coefficient $g_{\ba L \ba L}$. We do not want this behavior to propagate to the other coefficients of the metric. To this end, we will rely on a decomposition of the type
$$g= g_b +\Up\left(\frac{r}{t}\right)\frac{g_{\ba L \ba L}}{4}dq^2+\wht g_i.$$
However, since the wave operator does not commute with the null decomposition, we have to control the solution $\wht g_i$ of an equation of the form
$$\Box \wht g_i = \Up \left(\frac{r}{t}\right) \frac{\bar{\partial} h}{r},$$
where $h$ is the solution of \eqref{eqdeh}. The term
$\Up\left(\frac{r}{t}\right) \frac{\bar{\partial} h}{r}$ has the form of the terms appearing in \eqref{noncom}.

Provided
we can approximate $h$ by the solution $h_0$ of the transport equation \eqref{trans1}, we obtain decay with respect to $q$ for $h$. The decay we will be able to get is 
$$|h|\lesssim \frac{\ep^2}{\sqrt{1+|q|}}.$$
With this decay we infer
$$ |\Box \wht g_i|\lesssim \frac{\ep^2}{(1+s)^2\sqrt{1+|q|}},$$
and therefore, with the $L^\infty-L^\infty$ estimate, we deduce
$$|\wht g_i|\lesssim \frac{\ep}{(1+s)^{\frac{1}{2}-\rho}},$$
for all $\rho>0$. 

On the other hand, assume we are only allowed to use the energy estimate for $h$, which is the case when deriving $L^2$ type estimates for $\wht g_i$ at the level of the highest energy. When applying the weighted energy estimate for $\wht g_i$, we obtain
$$\frac{d}{dt}\|w(q)^\frac{1}{2} \partial \wht g_i\|^2_{L^2}
\leq \left\|w(q)^\frac{1}{2}\Up\left(\frac{r}{t}\right) \frac{\bar{\partial} h}{r}\right\|_{L^2}\|w(q)^\frac{1}{2} \partial \wht g_i\|.$$
We estimate
\begin{equation}
\label{estdeh}
\left\|w(q)^\frac{1}{2}\Up\left(\frac{r}{t}\right) \frac{\bar{\partial} h}{r}\right\|_{L^2}\lesssim \frac{1}{1+t}\|w(q)^\frac{1}{2}\partial h\|_{L^2}\lesssim \frac{\ep^2}{\sqrt{1+t}},
\end{equation}
where we have used the estimate \eqref{esthmod} of the previous section for $h$.
This yields
$$\frac{d}{dt}\|w(q)^\frac{1}{2} \partial \wht g_i\|_{L^2} \leq 
\frac{\ep^2}{\sqrt{1+t}}.$$
So
$$\|w(q)^\frac{1}{2} \partial \wht g_i\|_{L^2} \leq \ep^2\sqrt{1+t},$$
which is precisely the behaviour we are trying to avoid with this decomposition ! However
we have not been able to exploit all the structure in \eqref{estdeh}. In order to do so, we will use different weight functions for $\wht g_i$ and for $h$. If we set 
$$\wht w(q)=(1+|q|)^{1+2\mu} w(q),$$
with $0<\mu \leq \frac{1}{4}$ and we assume that we have
$$\|\wht w(q)^\frac{1}{2}\partial h \|_{L^2}\lesssim \ep^2\sqrt{1+t},$$
then we can estimate
$$\left\|w(q)^\frac{1}{2}\Up\left(\frac{r}{t}\right) \frac{\bar{\partial} h}{r}\right\|_{L^2}\lesssim \frac{1}{1+t} \left\|\wht w(q)^\frac{1}{2}\Up\left(\frac{r}{t}\right) \frac{\bar{\partial} h}{(1+|q|)^{\frac{1}{2}+\mu}}\right\|_{L^2}.$$
We write
$$|\bar{\partial} h|\lesssim \frac{1}{1+s}|Z h|
\lesssim \frac{1}{(1+s)^{\frac{1}{2}+\mu}(1+|q|)^{\frac{1}{2}-\mu}}|Z h|
,$$
so we obtain
$$\left\|w(q)^\frac{1}{2}\Up\left(\frac{r}{t}\right) \frac{\bar{\partial} h}{r}\right\|_{L^2}\lesssim \frac{1}{(1+t)^{\frac{3}{2}+\mu}} \left\|\wht w(q)^\frac{1}{2} \frac{Zh }{1+|q|}\right\|_{L^2}
\lesssim \frac{1}{(1+t)^{\frac{3}{2}+\mu}} \|\wht w(q)^\frac{1}{2} \partial Z h\|_{L^2},$$
where we used the weighted Hardy inequality.
Consequently, the energy inequality for $\wht g_i$ yields
$$\frac{d}{dt}\|w(q)^\frac{1}{2} \partial \wht g_i\|_{L^2} \lesssim \frac{\ep^2}{(1+t)^{1+\mu}},$$
and therefore
$$\|w(q)^\frac{1}{2} \partial \wht g_i\|_{L^2} \lesssim \ep^2.$$
Recall that the weighted energy inequality forbids weights of the form $(1+|q|)^\alpha$ with $\alpha>0$ in the region $q<0$. Therefore we are forced to make the following choice in the region $q<0$
$$\wht w(q)= O(1), \quad w(q)= \frac{1}{(1+|q|)^{1+2\mu}}.$$
Thus, for $\wht g_i$, the $\sqrt{t}$ loss has been replaced by a loss in $(1+|q|)^{\frac{1}{2}+\mu}$.

\subsection{The quasilinear structure}\label{modqs}
In this section we will discuss the challenges of the quasilinear structure. We will take as an example the equation for $\phi$, $\Box_g \phi= 0$.
Following Remark \ref{termql}, 
we can focus on the terms of the form $g_{LL}\partial^2_q \phi$. The wave coordinates condition yields
$$\partial g_{LL} \sim \bar{\partial} g.$$
If $g$ satisfied
$\Box g=0$,
the $L^\infty$ estimates for $g$ given by Corollary \ref{cordec} for suitable initial data would imply 
$$|\partial g_{LL} |\leq \frac{\ep}{(1+s)^{\frac{3}{2}}\sqrt{1+|q|}},$$
We would like to keep this decay in $\frac{1}{(1+s)^\frac{3}{2}}$ after integrating with respect to $q$. However, we are not in the range of application of Lemma \ref{lmintegration}.
To this end, we will assume more decay on the initial data. As stated in Theorem \ref{main}, we take $(g,\partial_t g)\in H^{N+1}_{\delta}\times H^N_{\delta+1}$ with $\frac{1}{2}<\delta<1$. Then, with the weight $w_0$
stated in Theorem \ref{main}, the weighted energy inequality yields
$$\|w_0(q)\partial Z g\|_{L^2}\lesssim \ep,$$
and consequently, for $q>0$, the weighted Klainerman-Sobolev inequality yields
$$|\partial Z g|\lesssim \frac{\ep}{\sqrt{1+s}(1+|q|)^{\frac{3}{2}+\delta}}.$$
If we integrate from $t=0$, we obtain for $q>0$
$$|Zg|\lesssim \frac{\ep}{\sqrt{1+s}(1+|q|)^{\frac{1}{2}+\delta}}.$$
By writing $|\bar{\partial}g|\lesssim \frac{1}{1+s}|Zg|$, we obtain
$$|\partial g_{LL}|\lesssim \frac{\ep}{(1+s)^\frac{3}{2}(1+|q|)^\frac{1}{2}}, \; for\; q<0, \quad |\partial g_{LL}|\lesssim \frac{\ep}{(1+s)^\frac{3}{2}(1+|q|)^{\frac{1}{2}+\delta}}, \; for \; q>0.$$
Since $\delta>\frac{1}{2}$ we can apply Lemma \ref{lmintegration}, which yields
$$| g_{LL}|\lesssim \frac{\ep\sqrt{1+|q|}}{(1+s)^\frac{3}{2}}, \; for q<0, \quad | g_{LL}|\lesssim \frac{\ep}{(1+s)^\frac{3}{2}(1+|q|)^{\delta-\frac{1}{2}}}, \; for \; q>0.$$
Consequently we easily estimate
$$\|w^\frac{1}{2} g_{LL}\partial^2_q Z^I\phi \|_{L^2} \lesssim \frac{\ep}{(1+t)^\frac{3}{2}}\|w^\frac{1}{2}\partial_q Z^{I+1}\phi\|_{L^2}.  $$

This strong decay in the region $q>0$ is also needed when estimating
$$\|w_0^\frac{1}{2} Z^I g_{LL}\partial^2_q \phi \|_{L^2}.$$
The idea will be first to use the weighted Hardy inequality to derive
$$\|w_0^\frac{1}{2} Z^I g_{LL}\partial^2_q \phi \|_{L^2}
\lesssim \frac{\ep}{\sqrt{1+t}}\left\|\frac{w_0^{\frac{1}{2}}}{(1+|q|)^2}Z^I g_{LL}\right\|_{L^2}\lesssim
\frac{\ep}{\sqrt{1+t}}\left\|\frac{w_0^\frac{1}{2}}{(1+|q|)}\partial Z^I g_{LL}\right\|_{L^2}.$$
Then we rely on the wave coordinates condition, which yields
$$|\partial Z^I g_{LL}|\lesssim |\bar{\partial }Z^Ig|\lesssim \frac{1}{1+s}|Z^{I+1} g|,$$
and then use the weighted Hardy inequality again. However, one has to be careful when using the weighted Hardy inequality. In the region $q>0$ the weight must be sufficiently large to allow to perform it twice. This is an other reason why we work with initial data in $H^N_\delta$ with $\delta>\frac{1}{2}$, which is more than the decay which is necessary
to prove the global well posedness of a semi linear wave equation with null structure.

\subsection{Interaction with the metric $g_b$}\label{secinter}
In this section we want to discuss the influence of the crossed terms between $g_b$ and $\phi, \wht g$. We will take as an example the equation for $\phi$,
$ \Box_g \phi= 0$. As discussed in Section \ref{cros}, we can focus on the term \eqref{crossphi}.
We may look at the following model problem
$$\Box \phi = \frac{\ep}{r}\chi(q)\bar{\partial}\phi.$$
If we perform the weighted energy estimate, we obtain
$$\frac{d}{dt}\|w_0(q)^\frac{1}{2}\partial Z^I \phi\|^2
+\|w_0'(q)^\frac{1}{2}\bar{\partial}Z^I \phi\|^2_{L^2}
\lesssim  \frac{\ep}{1+t}\|w_0^\frac{1}{2}\partial Z^I \phi\|_{L^2}^2.$$
Therefore
$$
\|w_0(q)^\frac{1}{2}\partial Z^I \phi\|_{L^2}\leq C_0 \ep (1+t)^{C\ep} $$
and for all $\sigma>0$
\begin{equation}
\label{modb2} \int_0^T \frac{1}{(1+t)^\sigma}\|w_0'(q)^\frac{1}{2}\bar{\partial}Z^I \phi\|^2_{L^2}dt
\lesssim  \ep^2.
\end{equation}

To avoid this logarithmic loss, we need to exploit more the structure of the equation. To this end we introduce the weight modulator
$$\left\{ \begin{array}{l}
           \alpha(q)= \frac{1}{(1+|q|)^{\sigma}}, \; q>0,\\
\alpha(q)= 1, \; q<0,
          \end{array}\right.$$
for $0<\sigma<\frac{1}{2}$.
Then the energy inequality yields
$$\frac{d}{dt}\|\alpha w_0(q)^\frac{1}{2}\partial Z^I \phi\|_{L^2}^2\leq \ep\left\|\ch_{q>0}\frac{\alpha w_0^\frac{1}{2}}{1+s}\bar{\partial} Z^I \phi\right\|_{L^2}\|\alpha w_0(q)^\frac{1}{2}\partial Z^I \phi\|_{L^2}.$$
We estimate, for $q>0$
$$\frac{\alpha(q)}{1+s}\lesssim \frac{1}{(1+t)^{\frac{1}{2}+\sigma}(1+|q|)^\frac{1}{2}} .$$
And therefore, we obtain
\begin{align*}
\frac{d}{dt}\|\alpha w_0(q)^\frac{1}{2}\partial Z^I \phi\|_{L^2}^2&\lesssim
\frac{\ep}{(1+t)^{\frac{1}{2}+\sigma}}
\left\|\ch_{q>0}\frac{ w_0^\frac{1}{2}}{\sqrt{1+|q|}}\bar{\partial} Z^I \phi\right\|_{L^2}\|\alpha w_0(q)^\frac{1}{2}\partial Z^I \phi\|_{L^2}\\
&\lesssim \frac{\ep}{(1+t)^\sigma}\|w_0'(q)^\frac{1}{2}\bar{\partial} Z^I \phi\|_{L^2}
+\frac{\ep}{(1+t)^{1+\sigma}}\|\alpha w_0(q)^\frac{1}{2}\partial Z^I \phi\|_{L^2}.
\end{align*}
and consequently in view of \eqref{modb2} we obtain
$$\|\alpha w_0(q)^\frac{1}{2}\partial Z^I \phi\|_{L^2} \leq C_0\ep+C\ep^2.$$
With this technique, the logarithmic loss in $t$ has been replaced by a small loss in $q$.

\section[Bootstrap assumptions and proof of the main theorem]{Bootstrap assumptions and proof of Theorem \ref{main}}\label{boot}
\subsection{Bootstrap assumptions}
Let $\frac{1}{2}<\delta<1$.
In view of the assumptions of Theorem \ref{main},
the initial data $(\phi_0,\phi_1)$ for $\phi$ are given in $H^{N+1}_\delta(\m R^2)\times H^N_{\delta+1}(\m R^2).$

For $\wht b \in W^{2,N}$ such that
$$\int_{\m S^1} \wht b=\int_{\m S^1} \wht b \cos(\theta)= \int_{\m S^1} \wht b \sin(\theta)=0,$$
and
$$\|\wht b\|_{W^{2,N}}\lesssim 2C_0 \ep^2,$$
Theorem \ref{thinitial} allows us to find initial data $g$ and $\partial_t g$ such that
 \begin{itemize}
 \item $g_{ij}$, $K_{ij}$ satisfy the constraint equations,
 \item $g$ and $\partial_t g$ are compatible with the decomposition $g=g_b+\wht g$, where 
 \begin{equation}
 \label{defdeb}b(\theta)=\wht b(\theta)+b_0+b_1\cos(\theta)+b_2\sin(\theta)
 \end{equation}
with $b_0,b_2,b_2, J(\theta)$  given by Theorem \ref{thinitial},
 \item the generalized wave coordinate condition given by $H_b$ is satisfied at $t=0$.
 \end{itemize}
 The system $\ref{gw}$ being a standard quasilinear system of wave equations, we know that there exists a solution until a time $T$. Moreover with our conditions on the initial data, our solution $(g,\phi)$ is solution of the Einstein equations \eqref{s1}, and the wave coordinate condition is satisfied for $t\leq T$ (see Appendix \ref{genw}).
 
 \begin{rk}
 Our choice of generalized wave coordinates does not change the hyperbolic structure because $H_b$ does not contain derivatives of $\wht g$.
 \end{rk}

We take three parameters $\rho,\sigma,\mu$ such that
\begin{equation}
\label{defpar}
0<\rho \ll \sigma \ll \mu \ll \delta,
\end{equation}
\begin{equation}
\label{codsigma}
\sigma+\rho <\delta-\frac{1}{2}.
\end{equation}
We consider a time $T>0$ such that there exists $ b(\theta)\in W^{N,2}(\m S^1)$ 
and a solution $(\phi,\wht g)$ of \eqref{s2} on $[0,T]$, associated to initial data for $g$.
We assume that on $[0,T]$, the following estimates hold.
\paragraph{Bootstrap assumptions for $b$}
\begin{align}
\label{bootb1}
& \left\|\partial_\theta^I \left(\Pi b(\theta)+\Pi\int_{\Sigma_{T,\theta}} (\partial_q \phi)^2 r dq\right)\right\|_{L^2(\m S^1)}\leq B\frac{\ep^2}{\sqrt{T}}, \;for\; I\leq N-4\\
\label{bootb2}
& \|\partial_\theta^I b(\theta)\|_{L^2(\m S^1)}\leq B \ep^2 , \;for\; I\leq N
\end{align}
where $\Pi$ is the projection defined by \eqref{projection},
$\int_{\Sigma_{T,\theta}}$ is defined by \eqref{intsig} and $B$ is a constant depending on $\rho,\sigma,\mu, \delta, N$.

We introduce four decomposition of the metric $g$
\begin{align}
\label{dec1}g=&g_b+\Up\left(\frac{r}{t}\right)h_0 dq^2+\widetilde{g_1},\\
\label{dec2}g=&g_b+\Up\left(\frac{r}{t}\right)(h_0+\wht h) dq^2+\widetilde{g_2},\\
\label{dec3}g=&g_b+\Up\left(\frac{r}{t}\right)h dq^2+\widetilde{g_3},\\
\label{dec4}g=&g_b+\Up\left(\frac{r}{t}\right)h dq^2+
\Up\left(\frac{r}{t}\right)k rdqd\theta+
\widetilde{g_4},
\end{align}
where $h_0$ is the solution of the transport equation
\begin{equation}\label{eqh}
 \left\{ \begin{array}{l} 
         \partial_q h_0 =-2r(\partial_q \phi)^2-2b(\theta)\partial^2_q(\chi(q)q), \\
	 h_0|_{t=0}=0,
        \end{array}
\right.
\end{equation}
$\wht{h}$ is solution of the linear wave equation
\begin{equation}\label{eqht}
 \left\{ \begin{array}{l} 
         \Box \wht h= \Box\left( \Up\left(\frac{r}{t}\right)h_0\right)+\Up\left(\frac{r}{t}\right)g_{LL}\partial^2_q h_0 +2\Up\left(\frac{r}{t}\right)(\partial_q \phi)^2 - 2(R_b)_{qq}+\Up\left(\frac{r}{t}\right)\wht Q_{\ba L \ba L}(h_0,\wht g), \\
	 (\wht h, \partial_t \wht h)|_{t=0}=(0,0),
        \end{array}
\right.
\end{equation}
where
\begin{equation}\label{qtildell}
\wht Q_{\ba L \ba L}(h_0,\wht g)=
\partial_{\ba L} g_{LL}\partial_{\ba L}h_0+
\partial_{\ba L} (g_b)_{UU}\partial_{\ba L} g_{\ba L L}.
\end{equation}

\begin{equation}\label{eqa}
 \left\{ \begin{array}{l} 
         \Box_g h= -2(\partial_q \phi)^2 + 2(R_b)_{qq}
         +Q_{\ba L \ba L}(h,\wht g), \\
	 (k, \partial_t k)|_{t=0}=(0,0) ,
        \end{array}
\right.
\end{equation}
and $k$ is the solution of
\begin{equation}\label{eqk}
 \left\{ \begin{array}{l} 
         \Box_g k= \partial_U g_{LL}\partial_q h, \\
	 (h, \partial_t h)|_{t=0}=(0,0).
        \end{array}
\right.
\end{equation}

\paragraph{$L^\infty$-based bootstrap assumptions}
For $I\leq N-14$ we assume
\begin{align}
\label{bootphi1}|Z^I \phi| &\leq \frac{2C_0\ep}{\sqrt{1+s}(1+|q|)^{\frac{1}{2}-4\rho}},\\  
\label{bootg1}|Z^I \wht g_1| &\leq \frac{2C_0\ep}{(1+s)^{\frac{1}{2}-\rho}},
\end{align}
where here and in the following, $C_0$ is a constant depending on $\rho,\sigma,\mu, \delta, N$ such that the inequalities
are satisfied at $t=0$ with $2C_0$ replaced by $C_0$.
For $I\leq N-12$ we assume
\begin{align}
\label{bootphi2}|Z^I \phi| &\leq \frac{2C_0\ep}{(1+s)^{\frac{1}{2}-2\rho}},\\ 
\label{bootg2}|Z^I \wht g_1| &\leq \frac{2C_0\ep}{(1+s)^{\frac{1}{2}-2\rho}}.
\end{align}
We assume the following estimate for $h_0$ for $I\leq N-7$ and $q<0$
\begin{equation}
\label{booth1}|Z^I h_0|\leq \frac{2C_0\ep}{(1+s)^\frac{1}{2}}+\frac{2C_0\ep}{(1+|q|)^{1-4\rho}}.
\end{equation}
and for $q>0$
\begin{equation}
\label{booth2}
|Z^I h_0|\leq \frac{2C_0\ep}{(1+|q|)^{2+2(\delta-\sigma)}}.
\end{equation}
We also assume the following for $\wht h$ and $I\leq N-7$
\begin{equation}
\label{booth3}|Z^I \wht h|\leq \frac{2C_0\ep}{(1+|q|)^{\frac{1}{2}-\rho}}.
\end{equation}
\paragraph{$L^2$-based bootstrap assumptions}
We introduce four weight functions
$$\left\{ \begin{array}{l}
           w_0(q)= (1+|q|)^{2+ 2\delta}, \; q>0,\\
w_0(q)= 1+ \frac{1}{(1+|q|)^{2\mu}}, \; q<0,
          \end{array}\right.$$
$$\left\{ \begin{array}{l}
           w_1(q)= (1+|q|)^{2+2\delta}, \; q>0,\\
w_1(q)= \frac{1}{(1+|q|)^{\frac{1}{2} }}, \; q<0,
          \end{array}\right.$$
$$\left\{ \begin{array}{l}
           w_2(q)= (1+|q|)^{2+2\delta}, \; q>0,\\
 w_2(q)= \frac{1}{(1+|q|)^{1 +2\mu}}, \; q<0,
          \end{array}\right.$$
          
$$\left\{ \begin{array}{l}
                     w_3(q)= (1+|q|)^{3+2\delta}, \; q>0,\\
           w_3(q)= 1+ \frac{1}{(1+|q|)^{2\mu}}, \; q<0.
                    \end{array}\right.$$
We also introduce weight modulators
\begin{equation}
\label{wmod}\left\{ \begin{array}{l}
           \alpha(q)= \frac{1}{(1+|q|)^{\sigma}}, \; q>0,\\
\alpha(q)= 1, \; q<0,
          \end{array}\right.
\end{equation}
$$\left\{ \begin{array}{l}
           \alpha_2(q)= \frac{1}{(1+|q|)^{2\sigma}}, \; q>0,\\
\alpha_2(q)= 1, \; q<0.
          \end{array}\right.$$
We assume the following estimate for $I\leq N$
\begin{equation}
\label{bootl21}  \begin{split}&\|\alpha_2w_0(q)^\frac{1}{2}\partial Z^I \phi\|_{L^2}+\|\alpha_2 w_2(q)^\frac{1}{2}\partial Z^I \wht g_4 \|_{L^2}\\+&\frac{1}{\sqrt{1+t}}  \|\alpha_2(q)^\frac{1}{2}\partial Z^I h\|_{L^2} +\frac{1}{\sqrt{1+t}} \|\alpha_2w_3(q)^\frac{1}{2}\partial Z^I k\|_{L^2} \leq 2C_0 \ep (1+t)^\rho.
\end{split}
\end{equation}
 for $I\leq N-1$
\begin{equation}
\label{bootl22}   \|w_0(q)^\frac{1}{2}\partial Z^I \phi\|_{L^2}+\|w_2(q)^\frac{1}{2}\partial Z^I \wht g_3 \|_{L^2}+\frac{1}{\sqrt{1+t}}  \|w_3(q)^\frac{1}{2}\partial Z^I h\|_{L^2}  \leq 2C_0\ep(1+t)^\rho
\end{equation}
and for $I\leq N-2$
\begin{equation}
\label{bootl23} \|\alpha(q)w_0(q)^\frac{1}{2}\partial Z^I \phi\|_{L^2}+\|\alpha(q) w_2(q)^\frac{1}{2}\partial Z^I \wht g_3 \|_{L^2}+\frac{1}{\sqrt{1+t}}  \|\alpha(q)w_3(q)^\frac{1}{2}\partial Z^I h\|_{L^2}  \leq 2C_0\ep. 
\end{equation}
In addition, for $I\leq N-8$ we assume
\begin{equation}
\label{bottl24} \|w_1(q)^\frac{1}{2}\partial Z^I \wht{g_2}\|_{L^2} \leq 2C_0 \ep(1+t)^\rho, \quad \|\alpha(q)w_1(q)^\frac{1}{2}\partial Z^I \wht{g_2}\|_{L^2} \leq 2C_0\ep
\end{equation}
and for  $I\leq N-9$ we assume
\begin{equation}
\label{bootl25} \|w_0(q)^\frac{1}{2}\partial Z^I \wht{g_2}\|_{L^2} \leq 2C_0\ep(1+t)^\rho,\quad \|\alpha(q)w_0(q)^\frac{1}{2}\partial Z^I \wht{g_2}\|_{L^2} \leq 2C_0 \ep.
\end{equation} 

Let us do two remarks to justify our different decompositions of the metric, and our different weight functions.
\begin{rk}
We use the decomposition \eqref{dec1} instead of \eqref{dec2} to avoid a logarithmic loss when we want to improve \eqref{bootg1} and \eqref{bootg2} with the $L^\infty-L^\infty$ estimate. This loss would have been due to the terms coming from the non commutation of the wave operator with the null decomposition \eqref{dec2}.
However,  we use the decomposition \eqref{dec2} instead of  \eqref{dec1} to avoid a logarithmic loss in the energy estimate due to the term $\wht Q_{\ba L \ba L}$.  

When $h_0$ is a good approximation for $h$, we use the decomposition \eqref{dec2} instead of \eqref{dec3} in the energy estimate. This allow us to have a better control on the terms coming from the non commutation of the wave operator with the null decomposition.
When $h_0$ is no longer a good approximation for $h$, we use the decomposition \eqref{dec3}. Finally, the decomposition \eqref{dec4}
allow us to isolate the term
$Z^N\partial_U g_{LL}\partial_{\ba L }g_{\ba LL}$ on which we do not have a good control.
\end{rk}

\begin{rk}
The weight $w_2$ is introduced to deal with the non commutation of the wave operator with the null decomposition (see Section \ref{secnon}). The weight $w_1$ is a transition weight between $w_0$ and $w_2$.
The weight $w_3$ allows us to compensate the loss in $\sqrt{1+t}$ for $g_{\ba L \ba L}$ by an additional decay in $\sqrt{1+|q|}$ in the exterior region.

The weight modulators $\alpha_1$ and $\alpha_2$ are introduced to transform the logarithmic loss due to the interaction with the metric $g_b$ in a small loss in $q$ (see Section \ref{secinter}).
\end{rk}

\subsection{Proof of Theorem \ref{main}}
We have the following improvement for the bootstrap assumptions. The constant $C$ will denote a constant depending only on $\rho,\sigma,\mu,\delta,N$. The proof of Proposition \ref{prpbooth1} is the object of Section \ref{transport}.
\begin{prp}
\label{prpbooth1}
Let $I\leq N-5$. We have the estimates
$$|Z^I h_0|\leq \frac{C\ep^2}{\sqrt{1+s}}+\frac{C\ep^2}{(1+|q|)^{1-4\rho}},\; for  \; q<0, \quad
|Z^I h_0|\leq \frac{C\ep^2}{(1+|q|)^{2+2(\delta-\sigma)}}, \; for \; q>0.$$
Let $I\leq N-7$. We have the estimate
$$|Z^I \wht h|\leq\frac{C\ep^2}{(1+s)^{\frac{1}{2}-\rho}}.$$
\end{prp}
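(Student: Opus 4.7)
The proof splits into two independent parts: a transport-equation argument for $h_0$, then an $L^\infty$--$L^\infty$ argument for $\wht h$.

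For $h_0$, I would apply $Z^I$ to \eqref{eqh} and integrate along the characteristics of $\partial_q$, which are the lines of constant $s=t+r$. Since $h_0$ vanishes at $t=0$ (i.e.\ at $q=s$ on each characteristic), we get
$$Z^I h_0(s,Q,\theta)=\int_Q^s\Bigl(2r\,\partial_q Z^{I_1}\phi\,\partial_q Z^{I_2}\phi+2\,Z^{I_3}b(\theta)\,\partial_q^2(\chi(q')q')\Bigr)dq'$$
up to commutator contributions from $[Z^I,\partial_q]\sim\partial$, which are absorbed by induction on $|I|$. In the region $Q>0$, the $b$-term vanishes once $\chi$ is locally constant, and the $\phi$-term is controlled by Klainerman--Sobolev applied to \eqref{bootl21}: for $|I|\leq N-5\leq N-2$, the weight $\alpha_2 w_0^{1/2}$ equals $(1+|q|)^{1+\delta-2\sigma}$, giving $|\partial Z^I\phi|\lesssim\ep(1+t)^\rho(1+s)^{-1/2}(1+|q|)^{-3/2-\delta+2\sigma}$, hence $r(\partial_q\phi)^2\lesssim C\ep^2(1+|q'|)^{-3-2(\delta-\sigma)}$ after using $t\leq r$ on $q>0$ and the hierarchy $\rho\ll\sigma\ll\delta$. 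Integration in $q'$ yields the claimed $(1+|q|)^{-2-2(\delta-\sigma)}$ decay.

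For $Q<0$, I would split $\int_Q^s=\int_Q^{-Q}+\int_{-Q}^s$. On $[Q,-Q]$, the pointwise bootstrap \eqref{bootphi1} and \eqref{important} give $|\partial\phi|\lesssim\ep(1+s)^{-1/2}(1+|q'|)^{-3/2+4\rho}$, so that the integrand is bounded by $\ep^2(1+|q'|)^{-3+8\rho}$ and the resulting contribution is $\frac{C\ep^2}{(1+|Q|)^{1-4\rho}}$. The piece on $[-Q,s]$ is estimated as in the exterior case, and the remaining $2 Z^{I_3} b(\theta)\cdot\int_Q^s\partial_q^2(\chi q')dq'=2Z^{I_3}b(\theta)$ is combined with the Einstein--Rosen-type identity $2\int_{-s}^s r(\partial_q\phi)^2dq'+2b(\theta)=O(\ep^2/\sqrt{1+s})$: the bootstrap \eqref{bootb1} gives $\Pi b\approx -\Pi\int_{\Sigma_{T,\theta}}r(\partial_q\phi)^2$ with error $\ep^2/\sqrt T$, and the difference between the integrals at times $s$ and $T$ is handled by the energy inequality of Proposition \ref{energy} applied to $\phi$. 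This produces the $\frac{C\ep^2}{\sqrt{1+s}}$ summand.

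For $\wht h$, I would apply Proposition \ref{inhom} to each commuted equation for $Z^I\wht h$, $|I|\leq N-7$. The right-hand side of \eqref{eqht} is a sum of: commutator terms from $\Box(\Up(r/t)h_0)-\Up\Box h_0$, the quasilinear correction $\Up g_{LL}\partial_q^2 h_0$, the model cancellation $2\Up(\partial_q\phi)^2-2(R_b)_{qq}$, and the structure term $\Up\wht Q_{\ba L\ba L}(h_0,\wht g)$. Using the pointwise bound for $h_0$ just proved, the wave-coordinate improvement $|\partial g_{LL}|\lesssim|\bar\partial g|$ from Section \ref{secwave}, the compact support $\chi'\in\ch_{1<q<2}$ of the worst part of $(R_b)_{qq}$ from \eqref{rqq}, and the $L^\infty$ bootstraps on $\phi,\wht g$, each piece of the source admits a pointwise estimate $|F|\leq C\ep^2(1+s+|y|)^{-3/2-\eta_1}(1+|s-|y||)^{-1-\eta_2}$ for small $\eta_1,\eta_2>0$ determined by the parameter hierarchy. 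Invoking Proposition \ref{inhom} with $\mu=3/2+\eta_1$, $\nu=1+\eta_2$, we have $[2-\mu]_+\leq\rho$, so $|Z^I\wht h|\lesssim C\ep^2(1+s)^{-1/2+\rho}$, improving \eqref{booth3}.

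The main obstacle is producing the required $(1+|s-|y||)^{-1-\eta_2}$ near-light-cone decay of the source for $\wht h$, and in particular controlling $\partial_{\ba L}g_{LL}\,\partial_{\ba L}h_0$ inside $\wht Q_{\ba L\ba L}$: $h_0$ has no $s$- nor $q$-decay a priori, and $\partial_{\ba L}$ is the bad derivative. This is exactly the reason one introduces the decomposition $g=g_b+\Up(r/t)h_0\,dq^2+\wht g_1$ and isolates $\wht h=h-\Up h_0$: the pointwise $q$-decay for $h_0$ from the transport step, together with the wave-coordinate bound $|\partial g_{LL}|\lesssim|\bar\partial g|$, furnishes the missing decay near $q=0$, and the cancellation of $2\Up(\partial_q\phi)^2$ with the leading part of $2(R_b)_{qq}$ removes the non-integrable source which would otherwise produce growth in $s$.
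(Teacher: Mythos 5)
Your treatment of $h_0$ in the exterior region $q>0$ and your overall strategy for $\wht h$ (pointwise estimate of the source of \eqref{eqht}, then an $L^\infty$--$L^\infty$ estimate) are consistent with the paper. The genuine gap is in the interior estimate for $h_0$, $q<0$, and what is missing is the central idea of this section: the bound $|\partial_s h_0|\lesssim \ep^2(1+s)^{-3/2}$, obtained by writing $\partial_s\partial_q h_0=\partial_s(-2r(\partial_q\phi)^2)=-r\,\partial_q\phi\,\bigl(\Box\phi-\tfrac1r\partial_s\phi-\tfrac1{r^2}\partial_\theta^2\phi\bigr)$ and using that $\phi$ solves the (quasilinear) wave equation. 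This is what allows the paper to integrate $\partial_r h_0$ along the single hypersurface $t=T$ --- the only place where Corollary \ref{corb} gives $|b(\theta)+\int_{\Sigma_{T,\theta}}(\partial_q\phi)^2r\,dr|\lesssim\ep^2/\sqrt T$ --- and then propagate the resulting bound to arbitrary $(s,q,\theta)$ by integrating $\partial_s h_0$ at fixed $q$. Your substitute, the ``Einstein--Rosen-type identity'' $2\int_{-s}^{s}r(\partial_q\phi)^2dq'+2b(\theta)=O(\ep^2/\sqrt{1+s})$ along lines of constant $s$ and at fixed $\theta$, is not available as an input: the bootstrap \eqref{bootb1} controls the $\Pi$-part of this combination only on the slice $t=T$, the remaining three Fourier modes are controlled at general times only after integration in $\theta$ over a constant-$t$ slice and only via the constraint-equation argument of Section \ref{secangle} (Corollary \ref{energies}), not via Proposition \ref{energy}; there is no conservation law for $\int_0^\infty(\partial_q\phi)^2r\,dr$ at fixed $\theta$ accessible to the energy inequality, and the integral you need lives on a null line $t+r=s$, not a time slice. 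The identity at general $s$ is essentially equivalent to the statement being proved.

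Two further errors. First, the contribution of the symmetric interval, $\int_Q^{-Q}2r(\partial_q\phi)^2dq'$, is not $O(\ep^2(1+|Q|)^{-1+4\rho})$: with $r(\partial_q\phi)^2\lesssim\ep^2(1+|q'|)^{-3+8\rho}$ this integral converges to a nonzero $O(\ep^2)$ limit as $|Q|\to\infty$, and this non-decaying bulk is precisely the term that must be cancelled against $2b(\theta)$, so it cannot be estimated separately. Second, for $\wht h$ the source cannot be bounded by $(1+s)^{-3/2-\eta_1}(1+|q|)^{-1-\eta_2}$ with both $\eta_i>0$ in the interior (since $|q|\leq s$, gaining $\eta_2$ in $q$ costs at least $\eta_2$ in $s$; the paper only obtains $(1+s)^{-3/2}(1+|q|)^{-1}$ for $q<0$), so Proposition \ref{inhom} does not apply directly with $\mu>\tfrac32$, $\nu>1$; one needs the variant Lemma \ref{linf2}, which accepts the borderline interior decay at the price of the factor $(1+t)^{\rho}$ by exploiting the stronger decay available for $q>0$.
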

The proof of Proposition \ref{prpboot1} is the object of Section \ref{seclinf}.
\begin{prp}
\label{prpboot1}
Let $I \leq N-14$. We have the estimates
$$
|Z^I \wht g_1| \leq \frac{C_0\ep+C\ep^2 }{(1+s)^{\frac{1}{2}-\rho}},\quad
|Z^I \phi| \leq \frac{C_0\ep+C\ep^2 }{\sqrt{1+s}(1+|q|)^{\frac{1}{2}-4\rho}}.
$$
Let $I\leq N-12$. We have the estimates
$$
| Z^I \phi| \leq \frac{C_0\ep+C\ep^2}{(1+s)^{\frac{1}{2}-2\rho}}, \quad
| Z^I \wht g_1| \lesssim \frac{C_0\ep+C\ep^2}{(1+s)^{\frac{1}{2}-2\rho}}. 
$$
\end{prp}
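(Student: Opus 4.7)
The plan is to apply the inhomogeneous $L^\infty$–$L^\infty$ estimate of Proposition \ref{inhom} to the commuted wave equations for $Z^I\phi$ and $Z^I\wht g_1$, with the free data contribution absorbed via Corollary \ref{cordec} (this is where the $C_0\ep$ term comes from). To set up the sources, I rewrite $\Box_g\phi=0$ as $\Box\phi=(m^{\alpha\beta}-g^{\alpha\beta})\partial_\alpha\partial_\beta\phi-H_b^\alpha\partial_\alpha\phi$, and for $\wht g_1$ start from \eqref{s2} and subtract $\Box_g\bigl(\Up(r/t)h_0\,dq^2\bigr)$. The transport equation \eqref{eqh} was precisely designed so that the $\ba L\ba L$ sources $-2(\partial_q\phi)^2$ and $2(R_b)_{qq}$ drop out, at the cost of non-commutation terms of the schematic form $\Up(r/t)\bar\partial h_0/r$ (as in \eqref{noncom}) and the quasilinear contribution $g_{LL}\partial_q^2 h_0$. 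Commuting with $Z^I$ via $[Z,\Box]\sim\Box$ yields equations $\Box Z^I u=F_I$ to which Proposition \ref{inhom} applies.

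The next step is to show $|F_I|\lesssim \ep^2(1+s+|y|)^{-\mu}(1+|s-|y||)^{-\nu}$ with $\mu>\tfrac32$, $\nu>1$, up to a small $(1+t)^{O(\rho)}$ loss. The right-hand side $F_I$ decomposes along the lines of Section \ref{struct}: classical null-form quadratics from $P_{\q T\q V}$, which gain the $\bar\partial$-decay exploited in Section \ref{secsemi}; a quasilinear piece dominated by $g_{LL}\partial_q^2$, controlled via the wave coordinate improvement $\partial g_{LL}\sim\bar\partial g$ (Section \ref{modqs}); the crossed terms with $g_b$ handled as in Section \ref{secinter}; and transport remainders involving $h_0$ and $\wht h$ estimated by Proposition \ref{prpbooth1}. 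Because $I\le N-14$ or $I\le N-12$ while $N\ge 40$, each quadratic product carries a low-order factor placed in $L^\infty$ via the bootstraps \eqref{bootphi1}–\eqref{booth3}, and a high-order factor controlled by the weighted Klainerman–Sobolev inequality (Proposition \ref{prpweight}) applied to the $L^2$ bootstraps \eqref{bootl21}–\eqref{bootl25}, combined with Lemma \ref{lmintegration}, Proposition \ref{hardy}, and the decay identity \eqref{important}.

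The main obstacle is the $\ba L\ba L$-direction, where $g_{\ba L\ba L}$ carries no $t$-decay. Even after subtracting $\Up(r/t)h_0\,dq^2$ from $g-g_b$, the residue $\wht g_1$ still sees the quasilinear term $g_{\ba L\ba L}\partial_q^2\wht g_1$ and the crossed term \eqref{crossbaL}; absorbing these forces the replacement of the sharp decay $(1+s)^{-1/2}$ by $(1+s)^{-1/2+\rho}$, which is precisely the loss appearing in the statement. The other serious issue is the quasilinear $g_{LL}\partial_q^2 Z^I\phi$ in the $\phi$-equation, treated as in Section \ref{modqs}: the wave-coordinate gain yields, after one application of Lemma \ref{lmintegration}, a bound of the form $|g_{LL}|\lesssim \ep(1+s)^{-3/2+\rho}(1+|q|)^{1/2}$ for $q<0$ and stronger decay for $q>0$, the latter thanks to $\delta>\tfrac12$; this is exactly why the stronger weight $w_0$ and the condition $\delta>\tfrac12$ were imposed.

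With these source bounds in hand, Proposition \ref{inhom} with $\mu$ just above $\tfrac32$ and $\nu$ just above $1$ produces $|Z^I u(t,x)|\lesssim \ep^2(1+t+|x|)^{-1/2}(1+|t-|x||)^{-1/2+[2-\mu]_+}$; tuning the exponents and choosing $\rho$ small enough to dominate all the $(1+t)^{C\rho}$ losses absorbs everything into the target exponent $(1+s)^{-1/2+\rho}$ (resp.\ $(1+s)^{-1/2+2\rho}$ at level $N-12$). For $\phi$ the extra factor $(1+|q|)^{1/2-4\rho}$ comes from the $q$-decay in the $[2-\mu]_+$ part of Proposition \ref{inhom} together with the weight $w_0$. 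Adding the free-wave contribution from Corollary \ref{cordec} finally produces $C_0\ep+C\ep^2$, closing the improved $L^\infty$ estimates.
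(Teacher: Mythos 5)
Your overall strategy is the one the paper uses: commute with $Z^I$, estimate the sources of $\Box Z^I\phi$ and $\Box Z^I\wht g_1$ working with the decomposition \eqref{dec1} (so that the bad $\ba L\ba L$ source $-2(\partial_q\phi)^2+2(R_b)_{qq}$ is absorbed into $h_0$ at the cost of the non-commutation and remainder terms), and then apply the $L^\infty$--$L^\infty$ estimate together with Proposition \ref{flat1} for the data --- exactly the content of Propositions \ref{linfphi}, \ref{linfwhtg}, \ref{linfphi2} and \ref{linfg2}.

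There is, however, one concrete error in your description of the quasilinear structure. You name ``$g_{\ba L\ba L}\partial_q^2\wht g_1$'' as a main obstacle and claim it can be absorbed at the price of the $(1+s)^{-1/2+\rho}$ loss. No such term occurs: in the null frame the coefficient of the two bad derivatives $\partial_{\ba L}\partial_{\ba L}=\partial_q^2$ in $g^{\alpha\beta}\partial_\alpha\partial_\beta$ is $g^{\ba L\ba L}=-\tfrac14\wht g_{LL}+O(\wht g_{\q T\q T})O(g)$, so the dangerous quasilinear term is $g_{LL}\partial_q^2\wht g_1$ (Remark \ref{termql}); the component $g_{\ba L\ba L}$ only multiplies $\partial_L^2$, i.e.\ two good derivatives, and is harmless. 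The distinction is not cosmetic: $g_{\ba L\ba L}$ has no decay in $s$ at all, so a genuine term $g_{\ba L\ba L}\partial_q^2\wht g_1$ would produce a source of size $\ep^2(1+s)^{-1/2+\rho}(1+|q|)^{-2}$, far above the $(1+s)^{-3/2}$ threshold required by Lemma \ref{linf2}, and the argument would collapse. The correct term $g_{LL}\partial_q^2\wht g_1$ is treated exactly as you treat $g_{LL}\partial_q^2 Z^I\phi$, using Proposition \ref{estLL} and Corollary \ref{estV2} for the low-order factor and the bootstrap for $Z^{J+2}\wht g_1$ for the other. Relatedly, the $(1+t)^\rho$ loss for $\wht g_1$ is not caused by the crossed term \eqref{crossbaL}, which lives only in $q>0$ and has integrable $q$-decay there; it comes from the fact that in $q<0$ the total source --- quasilinear, semilinear, and the non-commutation term $\Up(r/t)r^{-2}\partial_\theta Z^I h_0$ of \eqref{noncom} --- decays only like $\ep^2(1+s)^{-3/2}(1+|q|)^{-1}$, and the borderline exponent $(1+|q|)^{-1}$ is what forces the $\rho$ in Lemma \ref{linf2}. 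With these corrections your outline coincides with the paper's proof.
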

The proof of Proposition \ref{prpboot2} is the object of Section \ref{secl2}.
\begin{prp}\label{prpboot2}
We have the estimates for $I\leq N$
$$\|\alpha_2w_0(q)^\frac{1}{2}\partial Z^I \phi\|_{L^2}+\|\alpha_2 w_2(q)^\frac{1}{2}\partial Z^I \wht g_4 \|_{L^2}
\leq (C_0\ep+\ep)(1+t)^{C\sqrt{\ep}},$$
$$ \|\alpha_2(q)^\frac{1}{2}\partial Z^I h\|_{L^2} +\|\alpha_2w_3(q)^\frac{1}{2}\partial Z^I k\|_{L^2} \leq C\ep^2(1+t)^{\frac{1}{2}+C\sqrt{\ep}},$$
for $I\leq N-1$
$$ \|w_0(q)^\frac{1}{2}\partial Z^I \phi\|_{L^2}+\|w_2(q)^\frac{1}{2}\partial Z^I \wht g_3 \|_{L^2}\leq (C_0\ep+\ep)(1+t)^{C\sqrt{\ep}},$$
$$ \|w_3(q)^\frac{1}{2}\partial Z^I h\|_{L^2} \leq C\ep^2(1+t)^{\frac{1}{2}+C\sqrt{\ep}},$$
for $I\leq N-2$
$$ \|\alpha(q)w_0(q)^\frac{1}{2}\partial Z^I \phi\|_{L^2}+\|\alpha(q) w_2(q)^\frac{1}{2}\partial Z^I \wht g_3 \|_{L^2} \leq C_0\ep + C\ep^\frac{5}{4},$$
$$\|\alpha(q)w_3(q)^\frac{1}{2}\partial Z^I h\|_{L^2}  \leq C\ep^\frac{3}{2},$$
for $I\leq N-7$
$$\|w_1(q)^\frac{1}{2}\partial Z^I \wht{g_2}\|_{L^2} \leq C_0\ep(1+t)^{C\sqrt{\ep}}+\ep, \quad \|\alpha(q)w_1(q)^\frac{1}{2}\partial Z^I \wht{g_2}\|_{L^2} \leq C_0\ep+C\ep^\frac{5}{4},$$
and for $I\leq N-8$
$$\|w_0(q)^\frac{1}{2}\partial Z^I \wht{g_2}\|_{L^2} \leq C_0\ep(1+t)^{C\sqrt{\ep}}+\ep,\quad \|\alpha(q)w_0(q)^\frac{1}{2}\partial Z^I \wht{g_2}\|_{L^2} \leq C_0 \ep +C\ep^\frac{5}{4}.$$
\end{prp}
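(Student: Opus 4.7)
The plan is to apply the weighted energy estimate (Proposition~\ref{energy}) to each component after commuting the system \eqref{s2} with $Z^I$, choosing for each unknown the decomposition of $g$ among \eqref{dec1}--\eqref{dec4} and the weight $w_0,w_1,w_2,w_3$ for which the right-hand side is most tractable. At top order $N$ I would use decomposition \eqref{dec4} for $\wht g_4$, because it isolates the worst $P_{\ba L U}$ contribution $\partial_U g_{LL}\,\partial_{\ba L} g_{\ba L L}$ into the linear equation \eqref{eqk} for $k$; at order $N-1$ the decomposition \eqref{dec3} is enough; at orders $N-7,N-8$ the decomposition \eqref{dec2} allows the absorption of $\wht Q_{\ba L\ba L}$ through the linear equation \eqref{eqht} for $\wht h$ and avoids the non-commutator losses explained in Section~\ref{secnon}. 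For $\phi$ I would commute $\Box_g\phi=0$ with $Z^I$ and apply Proposition~\ref{energy} with weight $\alpha_2 w_0$, $\alpha w_0$ or $w_0$ according to the order. The prefactor $\sqrt{1+t}$ appearing on the right-hand sides for $h$ and $k$ in the statement matches the unavoidable growth caused by the non-decaying source $b(\theta)\partial_q^2(q\chi(q))$ in \eqref{eqa} and \eqref{eqk}, already visible in the model computation \eqref{esthmod}.

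On the right-hand side of each commuted equation I would sort the contributions into five families and treat each separately. The quasilinear terms $g_{LL}\partial_q^2 Z^I u$ are controlled via the wave-coordinate improvement $\partial g_{LL}\sim \bar\partial g$ together with the $L^\infty$-decay of $g_{LL}$ derived in Section~\ref{modqs}, which forces the use of the strong weight $w_0$ in $\{q>0\}$ and the initial-data exponent $\delta>\tfrac12$. Semilinear terms with classical null structure (Section~\ref{secsemi}) are absorbed by the ghost-weight term $\|w'(q)^{1/2}\bar\partial Z^I u\|_{L^2}^2$ supplied by Proposition~\ref{energy}. Weak-null contributions of type $Q_{\ba L\ba L}$ are precisely what the decompositions \eqref{dec2}--\eqref{dec4} remove from the equation satisfied by $\wht g_i$. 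Non-commutator terms of the form $\Up(r/t)\bar\partial h/r$ from \eqref{noncom} are handled as in Section~\ref{secnon}, by exploiting the weight mismatch between the weights $w_1,w_2$ carried by $\wht g_i$ and the heavier weight $w_3$ carried by $h$. Finally, the crossed terms with $(g_b)_{UU}$ listed in \eqref{crossg}--\eqref{crossphi} would a priori produce a logarithmic $t$-loss because $\partial_{\ba L}(g_b)_{UU}$ does not decay in $q$; following Section~\ref{secinter}, this loss is killed by the modulators $\alpha,\alpha_2$ at the price of a small loss in $1+|q|$, which is absorbed by $w_j$ thanks to \eqref{codsigma}.

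For each $I$ and each component this produces a Grönwall-type inequality whose coefficient is $O(\ep)/(1+t)$ (or $O(\sqrt{\ep})/(1+t)$ at top order), and whose integration gives exactly the polynomial factor in the statement. The $(1+t)^{C\sqrt{\ep}}$ growth at top order stems from the fact that the top-order metric energy couples to $b(\theta)$ through its full $W^{N,2}$-norm, itself controlled only by \eqref{bootb2}; when one balances this against \eqref{bootb1} and $T\lesssim \exp(1/\sqrt{\ep})$, one recovers a Grönwall coefficient proportional to $\sqrt{\ep}$ rather than $\ep$. For the norms carrying a modulator $\alpha$, the crossed $b$-loss is killed outright and the Grönwall factor is bounded, yielding the improved constants $C_0\ep+C\ep^{5/4}$; the $\ep^{5/4}$ exponent reflects the interpolation between the $\ep$-sized main error and the $\ep^2$-sized transport source.

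The main obstacle will be the top-order $L^2$ control of $\wht g_4$ and $\phi$ in \eqref{bootl21}. At this level one must simultaneously implement the Hardy-type two-step reduction $Z^N g_{LL}\to \partial Z^N g_{LL}\to \bar\partial Z^{N+1} g$ needed to handle the quasilinear terms, deal with a non-commutator source whose top derivative falls on $h$ (whose energy grows like $\sqrt{1+t}$), and cope with the crossed term that mobilises the top $W^{N,2}$-derivative of $b(\theta)$. The whole argument closes only because of the careful hierarchy \eqref{defpar}--\eqref{codsigma}: it ensures that every modulator loss $(1+|q|)^{-\sigma}$ is dominated by the gains built into $w_j$, and that the tangential derivatives produced on the right-hand side can always be absorbed by the ghost-weight term of the energy estimate.
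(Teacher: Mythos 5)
Your overall architecture is the same as the paper's: commute with $Z^I$, apply the weighted energy estimate of Proposition \ref{prpweighte} with the weights and decompositions \eqref{dec1}--\eqref{dec4} adapted to each order, use the wave-coordinate condition plus the weighted Hardy inequality for the quasilinear terms, the ghost-weight term for the null terms, the weight mismatch $w_2$ versus $w_3$ for the non-commutator terms, and the modulators $\alpha,\alpha_2$ for the crossed terms. However, there are two genuine gaps. First, your explanation of the $(1+t)^{C\sqrt{\ep}}$ growth is wrong: it does not come from balancing the $W^{N,2}$-norm of $b$ against $T\lesssim\exp(1/\sqrt{\ep})$ --- the paper uses that hypothesis only in the proof of Proposition \ref{prpbootb}. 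The $\sqrt{\ep}$ in the Gr\"onwall coefficient arises from the normalisation of the energy $E_N$, in which the $h$- and $k$-contributions carry the prefactor $\frac{1}{\ep(1+t)}$; when the coupling terms (e.g.\ $\frac{1}{\ep(1+t)}\|\alpha_2 w_2'^{1/2}\bar{\partial}Z^N\wht g_4\|\,\|\alpha_2 w_3^{1/2}\partial Z^N h\|$ in Lemma \ref{lemmeh}, or the non-commutator term in Lemma \ref{lemmeg}) are split by Cauchy--Schwarz, one side is weighted by $\sqrt{\ep}$ and absorbed into $A_N$ while the other produces $\frac{\sqrt{\ep}}{1+t}E_N$. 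Without this bookkeeping you cannot close the top-order estimate with only a $(1+t)^{C\sqrt\ep}$ loss.

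Second, at $I=N$ you miss the renormalisation devices that the proof actually hinges on. The crossed term with $g_b$ produces $\frac{\chi(q)q\,\partial_\theta^{N+1}b}{r^2}\partial\phi$ (and its analogues for $h$ and $\wht g_4$), and since $b$ is only in $W^{N,2}$ the quantity $\partial_\theta^{N+1}b$ is not in $L^2$: this is a loss of a derivative on $b$, not a logarithmic $t$-loss, and the modulators $\alpha,\alpha_2$ alone do not cure it. The paper handles it by performing the energy estimate on corrected unknowns such as $\widetilde{Z^N\phi}=Z^N\phi-\frac{\chi(q)q\,\partial\phi\,\partial_\theta^{N-1}b}{g_{UU}}$, whose $\Box_g$ no longer contains the offending term. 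Similarly, the semilinear contribution $\partial_L Z^Ng_{LL}\,\partial_{\ba L}h$ in $P_{L\ba L}$ cannot be treated by Hardy-type reductions at top order; the paper converts it via the product identity for $\Box_g(hZ^Ng_{LL})$ and renormalises $\widetilde{Z^N\wht g_4}=Z^N\wht g_4-hZ^Ng_{LL}-\cdots$. Your use of \eqref{dec4} correctly isolates the $\partial_U g_{LL}\partial_{\ba L}g_{\ba L L}$ term into $k$, but without these two additional renormalisations the top-order estimate does not close.
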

The proof of Proposition \ref{prpbootb} is the object of Section \ref{secproof}
\begin{prp}
\label{prpbootb}
We assume that the time $T$ satisfies
$$T\leq \exp\left(\frac{C}{\sqrt{\ep}}\right).$$
There exists $b^{(2)}(\theta)\in W^{N,2}(\m S^1)$ and $(\phi{(2)},g^{(2)})$ solution of \eqref{s1} in the generalized wave coordinates $H_{b^{(2)}}$, such that, if we write
$g^{(2)}=g_{b^{2}}+\wht g$, then $(\phi{(2)},\wht g^{(2)})$ satisfies the same estimate as $(\phi, \wht g)$, and 
we have the estimates for $ b^{(2)}$
\begin{align*}
& \left\|\partial_\theta^I \left(\Pi b^{(2)}(\theta)+\Pi\int_{\Sigma_{T,\theta}} (\partial_q \phi)^2 r dq\right)\right\|_{L^2}\leq C
\frac{\ep^4}{\sqrt{T}}, \;for\; I\leq N-4,\\
& \|\partial_\theta^I b(\theta) \|_{L^2}\leq 2C_0^2 \ep^2 , \;for\; I\leq N.
\end{align*}
\end{prp}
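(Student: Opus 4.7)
\textbf{Proof plan for Proposition \ref{prpbootb}.} The plan is to improve $b$ to $b^{(2)}$ by a single Picard step of the fixed-point equation $\Pi b = -\Pi\int_{\Sigma_{T,\theta}}(\partial_q \phi_b)^2 r\, dr$: set
\[
\Pi \wht b^{(2)} := -\Pi\!\left[\int_{\Sigma_{T,\theta}}(\partial_q \phi)^2 r\, dr\right],
\]
keep the three constrained modes $b_0^{(2)}, b_1^{(2)}, b_2^{(2)}$ determined by the fixed initial data $(\phi_0,\phi_1)$ through Theorem \ref{thinitial}, and set $b^{(2)}(\theta) = b_0^{(2)} + b_1^{(2)}\cos\theta + b_2^{(2)}\sin\theta + \wht b^{(2)}(\theta)$. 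Theorem \ref{thinitial} applied with this $\wht b^{(2)}$ supplies compatible initial data, and the local well-posedness of \eqref{gw} together with the improved bootstrap from Propositions \ref{prpbooth1}--\ref{prpboot2} yields the corresponding solution $(\phi^{(2)},\wht g^{(2)})$ on $[0,T]$.

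\textbf{Regularity of $b^{(2)}$.} Proposition \ref{prpboot2} gives $\|\alpha_2 w_0^{1/2}\partial Z^I \phi\|_{L^2}\leq (C_0+1)\ep (1+T)^{C\sqrt\ep}$ for $I\leq N$; since $T\leq \exp(C'/\sqrt\ep)$ with $C'$ chosen so that $CC'$ is small, this is $\lesssim \ep$ uniformly. Combined with the pointwise bounds of Proposition \ref{prpboot1} and a direct integration in $r\,dr$, this yields $\bigl\|\int_{\Sigma_{T,\theta}}(\partial_q\phi)^2 r\, dr\bigr\|_{W^{N,2}(\m S^1)}\leq C_0^2\ep^2$ (the quadratic dependence on $\phi$ produces the factor $C_0^2$), hence $\|b^{(2)}\|_{W^{N,2}}\leq 2C_0^2\ep^2$.

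\textbf{Existence, bootstrap, and matching estimate.} Since $b^{(2)}$ differs from $b$ only by the projected part, the bootstrap assumption \eqref{bootb1} gives $\|b^{(2)} - b\|_{W^{N-4,2}} \leq B\ep^2/\sqrt T$. Subtracting the two systems, the difference $(\delta\phi,\delta\wht g) = (\phi^{(2)}-\phi,\wht g^{(2)} - \wht g)$ satisfies a linear system whose coefficients are those already controlled in the bootstrap, and whose forcing (arising from $g_{b^{(2)}} - g_b$, the crossed terms involving $b^{(2)} - b$ and the slightly perturbed initial data provided by Theorem \ref{thinitial}) is bounded by $C\|b^{(2)} - b\|_{W^{N-4,2}}$. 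Running the weighted energy estimate of Section \ref{weighte} on this linear system yields
\[
\sup_{t\leq T}\|\partial Z^I(\delta\phi,\delta\wht g)\|_{L^2} \leq C\ep(1+T)^{C\sqrt\ep}\|b^{(2)}-b\|_{W^{N-4,2}} \leq CB\,\ep^3/\sqrt T,
\]
for $I\leq N-5$. This controls $(\phi^{(2)},\wht g^{(2)})$ as an $O(\ep^3/\sqrt T)$-perturbation of $(\phi,\wht g)$, so the bootstrap estimates are preserved (with room to spare). Finally, by construction
\[
\Pi b^{(2)} + \Pi\int_{\Sigma_{T,\theta}}(\partial_q\phi^{(2)})^2 r\, dr = \Pi\int_{\Sigma_{T,\theta}}\!\bigl[(\partial_q\phi^{(2)})^2 - (\partial_q\phi)^2\bigr]r\, dr,
\]
whose $W^{N-4,2}$-norm is bounded by $\|\partial_q(\phi+\phi^{(2)})\|_{L^\infty}\,\|\partial_q\delta\phi\|_{L^2(r\,dr)}\lesssim \ep\cdot\ep^3/\sqrt T = \ep^4/\sqrt T$, which is the claimed improvement.

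\textbf{Main obstacle.} The delicate point is ensuring that the Lipschitz constant $C\ep(1+T)^{C\sqrt\ep}$ between $b$ and the corresponding solution remains $O(\ep)$: the $(1+T)^{C\sqrt\ep}$ loss coming from the weak-null energy estimates forces $T\leq \exp(C'/\sqrt\ep)$ with $C'$ sufficiently small, which is precisely the exponential-time threshold of Theorem \ref{main}. Secondarily, one has to check that the linear system for $(\delta\phi,\delta\wht g)$ does not introduce any structural obstruction beyond those already handled in Sections \ref{secwave}--\ref{secl2}; this is straightforward because no new quasilinear self-interaction is produced, and all nonlinear terms appear as coefficients evaluated at the bootstrap solution.
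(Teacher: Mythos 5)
Your overall strategy coincides with the paper's: define $\wht b^{(2)}$ by one Picard step of the fixed-point map, re-solve the constraints via Theorem \ref{thinitial}, and estimate the difference $(\phi-\phi^{(2)},\wht g-\wht g^{(2)})$ to get the quadratic gain $\ep^4/\sqrt T$ from the factorization $(\partial_q\phi)^2-(\partial_q\phi^{(2)})^2$. The final step and the regularity bound for $b^{(2)}$ are also as in the paper (including the use of $(1+T)^{C\sqrt\ep}\lesssim 1$, which is indeed the only place the exponential-time restriction enters).

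However, there is a genuine gap in your treatment of the difference system. First, it is not a linear system with forcing controlled by $\|b^{(2)}-b\|_{W^{N-4,2}}$: the dominant sources are $(\partial_q\phi)^2-(\partial_q\phi^{(2)})^2$, the quasilinear differences $(g^{(2)}_{LL}-g_{LL})\partial_q^2\phi^{(2)}$ and $g_{LL}\partial_q^2(\phi-\phi^{(2)})$, etc., all of which depend on the unknown difference itself. Closing them requires a second full bootstrap (paired $L^\infty$ assumptions at low order and $L^2$ assumptions at high order, coupled through Klainerman--Sobolev and the wave-coordinate condition), together with difference versions of all four metric decompositions ($h_0-h_0^{(2)}$, $\wht h-\wht h^{(2)}$, $h-h^{(2)}$, $k-k^{(2)}$); a one-shot energy estimate does not suffice. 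Second, and more seriously, a step in your plan would fail as stated: the interior decay $|h_0|\lesssim \ep^2(1+|q|)^{-(1-4\rho)}$ was obtained precisely from the cancellation $b+\int_{\Sigma_{T,\theta}}(\partial_q\phi)^2 r\,dr=O(\ep^2/\sqrt T)$, and no analogous cancellation is available for $h_0-h_0^{(2)}$ at the improved order; one only gets $|Z^I(h_0-h_0^{(2)})|\lesssim \ep^3/\sqrt T$ with \emph{no} decay in $q$ for $q<0$. This loss propagates to $\wht h-\wht h^{(2)}$ and to the non-commutation and semilinear source terms in the equation for $\wht g_2-\wht g^{(2)}_2$, and with the original weights $w_0,w_1,w_2$ the corresponding $L^2$ norms are no longer time-integrable (one picks up logarithms). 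The missing idea is to run the difference bootstrap with additional weight modulators $\beta_1(q)=(1+|q|)^{-\kappa}$, $\beta_2(q)=(1+|q|)^{-2\kappa}$ in the region $q<0$, trading the lost $q$-decay of $h_0-h_0^{(2)}$ for a small power of $(1+|q|)$; without this (or an equivalent device) the energy estimates for the metric difference do not close.
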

We may now prove Theorem \ref{main}.
\begin{proof}[Proof of Theorem \ref{main}]
We may choose $C_0$ such that $C_0\geq 2$, and $B$ such that $B\geq 4C_0^2$.
We take $\ep$ small enough so that 
$$C\ep^\frac{1}{4} \leq \frac{C_0}{2}, \quad C\sqrt{\ep} \leq \rho, \quad C\ep \leq \frac{B}{2}.$$
Then Propositions \ref{prpbooth1}, \ref{prpboot1}, \ref{prpboot2} imply that the bootstrap assumptions for $(\phi, \wht g)$ are true with the constant $2C_0$ replaced by $\frac{3C_0}{2}$. Moreover Proposition \ref{prpbootb} yields the existence of $b^{(2)}$ and $\phi^{(2)},g^{(2)}=g_{b^{(2)}}+\wht g^{(2)}$ solution of \eqref{s1}, such that the bootstrap assumptions are satisfied by $(\phi^{(2)},\wht g^{(2)})$ with the constant $2C_0$ replaced by $\frac{3C_0}{2}$, and $b^{(2)}$ satisfy
\begin{align*}
& \left\|\partial_\theta^I \left(\Pi b^{(2)}(\theta)+\Pi\int_{\Sigma_{T,\theta}} (\partial_q \phi^{(2)})^2 r dq\right)\right\|_{L^2}\leq B
\frac{\ep^2}{2\sqrt{T}}, \;for\; I\leq N-4,\\
& \|\partial_\theta^I b(\theta)\|_{L^2}\leq \frac{B}{2}\ep^2 , \;for\; I\leq N.
\end{align*}
This concludes the proof of Theorem \ref{main}.
\end{proof}

Let us note that the only place where we use the assumption $T\leq \exp\left(\frac{C}{\sqrt{\ep}}\right)$ is in the proof of Proposition \ref{prpbootb}.


\subsection{First consequences of the bootstrap assumptions}
Thanks to the weighted Klainerman-Sobolev inequality the bootstrap assumptions immediately imply the following proposition.
\begin{prp}\label{estks}
We assume $I\leq N-4$ we have the estimates, for $q<0$
\begin{align}
 \label{ks1}|\partial Z^I \phi(t,x)|&\lesssim \frac{\ep}{\sqrt{1+|q|}\sqrt{1+s}},\\
\label{ks2}|\partial Z^I \wht{g_3}(t,x)|&\lesssim \frac{\ep(1+|q|)^\mu}{\sqrt{1+s}},\\
 \label{ks3}|\partial Z^I h|&\lesssim \frac{\ep}{\sqrt{1+|q|}},
\end{align}
and for $q>0$ 
\begin{align}
 \label{ks4}|\partial Z^I \phi(t,x)|&\lesssim \frac{\ep}{(1+|q|)^{\frac{3}{2}+\delta-\sigma}\sqrt{1+s}},\\
\label{ks5}|\partial Z^I \wht{g_3}(t,x)|&\lesssim \frac{\ep}{(1+|q|)^{\frac{3}{2}+\delta-\sigma}\sqrt{1+s}},\\
\label{ks6} |\partial Z^I h|&\lesssim \frac{\ep}{(1+|q|)^{2+\delta-\sigma}}.
\end{align}
Moreover, for $I\leq N-11$ we have for $q<0$
 \begin{equation}
 \label{ks7} |\partial Z^I \wht g_2(t,x)|\lesssim \frac{\ep}{\sqrt{1+|q|}\sqrt{1+s}}
 \end{equation}
\end{prp}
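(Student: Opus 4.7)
The plan is to read off every estimate in Proposition \ref{estks} directly from the corresponding weighted $L^2$ bootstrap assumption by invoking the weighted Klainerman--Sobolev inequality (Proposition \ref{prpweight}). Since that inequality costs two $Z$-derivatives and the commutator $[Z,\partial]$ only produces terms of the same form as $\partial Z$, the index drops from $N-2$ (respectively $N-9$) in the $L^2$ bootstrap down to $N-4$ (respectively $N-11$) in the pointwise conclusion, which matches what is claimed.

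For $\phi$ and $\wht g_3$ I would use the $t$-independent bootstrap \eqref{bootl23}, namely
\[
\|\alpha(q)w_0(q)^{1/2}\partial Z^I \phi\|_{L^2}+\|\alpha(q)w_2(q)^{1/2}\partial Z^I \wht g_3\|_{L^2}\leq 2C_0\ep,\qquad I\le N-2.
\]
In the region $q<0$ we have $\alpha(q)=1$ and $w_0(q)^{1/2}\geq 1$, so Proposition \ref{prpweight} yields \eqref{ks1} directly. In the region $q>0$, $\alpha(q)w_0(q)^{1/2}=(1+|q|)^{1+\delta-\sigma}$, so dividing by this weight after the Klainerman--Sobolev step gives \eqref{ks4}. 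The same computation with $w_2^{1/2}=(1+|q|)^{-1/2-\mu}$ for $q<0$ produces \eqref{ks2}, and with $w_2^{1/2}=(1+|q|)^{1+\delta}$ for $q>0$ produces \eqref{ks5}.

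For $h$ I would use the third term in \eqref{bootl23}, which gives $\|\alpha(q)w_3(q)^{1/2}\partial Z^I h\|_{L^2}\leq 2C_0\ep\sqrt{1+t}$ for $I\le N-2$. The extra $\sqrt{1+t}$ factor is harmless: after applying Proposition \ref{prpweight} it is absorbed by the Klainerman--Sobolev gain $1/\sqrt{1+s}$, because $\sqrt{1+t}/\sqrt{1+s}\lesssim 1$. In the region $q<0$, where $\alpha w_3^{1/2}\geq 1$, this produces \eqref{ks3}; in the region $q>0$, where $\alpha w_3^{1/2}=(1+|q|)^{3/2+\delta-\sigma}$, division by this weight together with the remaining $1/\sqrt{1+|q|}$ from Proposition \ref{prpweight} gives the decay $(1+|q|)^{-2-\delta+\sigma}$ claimed in \eqref{ks6}.

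Finally, for \eqref{ks7} I would apply the same procedure to the second estimate in \eqref{bootl25}, namely $\|\alpha(q)w_0(q)^{1/2}\partial Z^I\wht g_2\|_{L^2}\leq 2C_0\ep$ for $I\le N-9$. Only the region $q<0$ is claimed, where $\alpha w_0^{1/2}\geq 1$, so Proposition \ref{prpweight} delivers the estimate with two fewer derivatives, i.e.\ for $I\le N-11$. There is no real obstacle here: every inequality follows from a direct application of the weighted Klainerman--Sobolev inequality, and the only point to track carefully is the bookkeeping of weights in the two regions $q<0$ and $q>0$ and the derivative count induced by the commutations $[Z,\partial]\sim\partial$.
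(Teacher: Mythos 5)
Your argument is exactly the paper's: the paper dispatches Proposition~\ref{estks} in one line by observing that it follows immediately from the bootstrap assumptions~\eqref{bootl23} and~\eqref{bootl25} via the weighted Klainerman--Sobolev inequality (Proposition~\ref{prpweight}), with the two-derivative cost dropping $N-2$ to $N-4$ and $N-9$ to $N-11$. Your bookkeeping of the weights $\alpha w_0^{1/2}$, $\alpha w_2^{1/2}$, $\alpha w_3^{1/2}$ in the two regions $q<0$ and $q>0$ is correct, and the observation that the $\sqrt{1+t}$ factor in the $h$-bound is absorbed by the $1/\sqrt{1+s}$ gain is the right remark to make explicit.
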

Thanks to Lemma \ref{lmintegration} we deduce the following corollary.
\begin{cor}\label{estksg}
 We assume $I\leq N-4$ we have the estimates, for $q<0$
\begin{align}
 \label{iks1}| Z^I \phi(t,x)|&\lesssim \frac{\ep\sqrt{1+|q|}}{\sqrt{1+s}},\\
\label{iks2}| Z^I \wht{g_3}(t,x)|&\lesssim \frac{\ep(1+|q|)^{1+\mu}}{\sqrt{1+s}},\\
 \label{iks3}| Z^I h|&\lesssim \ep \sqrt{1+|q|}.
\end{align}
and for $q>0$ 
\begin{align}
\label{iks5} | Z^I \phi(t,x)|&\lesssim \frac{\ep}{(1+|q|)^{\frac{1}{2}+\delta-\sigma}\sqrt{1+s}},\\
\label{iks6}| Z^I \wht{g_3}(t,x)|&\lesssim \frac{\ep }{(1+|q|)^{\frac{1}{2}+\delta-\sigma}\sqrt{1+s}},\\
 \label{iks7}| Z^I h|&\lesssim \frac{\ep}{(1+|q|)^{1+\delta-\sigma}}.
\end{align}
Moreover, for $I\leq N-11$ we have for $q<0$
\begin{equation}\label{iks8}
|Z^I \wht g_2(t,x)|\leq \frac{\ep\sqrt{1+|q|}}{\sqrt{1+s}}.
\end{equation}
\end{cor}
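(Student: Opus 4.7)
The plan is to apply Lemma~\ref{lmintegration} directly to each of the seven pointwise derivative bounds produced in Proposition~\ref{estks}. For every function $u\in\{Z^I\phi,\,Z^I\wht g_3,\,Z^I h,\,Z^I\wht g_2\}$ I will read off the triple $(\alpha,\beta,\gamma)$ from the corresponding estimates on $|\partial u|$, confirm that $\beta<-1$ in the exterior region so that the integrability hypothesis of Lemma~\ref{lmintegration} is satisfied, and check the initial-value condition $|u|_{t=0}\lesssim (1+r)^{\gamma+\beta}$. In each case the two output estimates of the lemma match the asserted inequalities once one remarks that $\max(1,(1+|q|)^{\alpha+1})=(1+|q|)^{\alpha+1}$ whenever $\alpha+1\geq 0$, which holds throughout.

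Concretely, for $Z^I\phi$ with $I\leq N-4$, estimates \eqref{ks1} and \eqref{ks4} give $\gamma=-1/2$, $\alpha=-1/2$, and $\beta=-3/2-(\delta-\sigma)$; the parameter condition \eqref{codsigma} enforces $\beta<-1$, and the initial-data decay $|Z^I\phi_0|\lesssim (1+r)^{-2-(\delta-\sigma)}$ follows from the weighted Sobolev embedding of Proposition~\ref{holder} applied to $(\phi_0,\phi_1)\in H^{N+1}_\delta\times H^N_{\delta+1}$. The same scheme, with the initial data of $\wht g$ supplied by Theorem~\ref{thinitial}, yields \eqref{iks2} and \eqref{iks6} for $Z^I\wht g_3$. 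For $Z^I h$ equations \eqref{ks3} and \eqref{ks6} give $\gamma=0$, $\alpha=-1/2$ and $\beta=-2-(\delta-\sigma)<-1$; here the initial-data hypothesis is trivial since \eqref{eqa} prescribes vanishing data at $t=0$, so Lemma~\ref{lmintegration} delivers \eqref{iks3} and \eqref{iks7} at once. Finally \eqref{iks8} for $Z^I\wht g_2$ in the interior follows in the same way from \eqref{ks7}, where the value of $Z^I\wht g_2$ at $q=0$ needed to initialize the $q$-integration is supplied by the pointwise exterior bound that Proposition~\ref{prpweight} extracts from the $L^2$ bootstrap assumptions \eqref{bottl24}--\eqref{bootl25}.

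No genuine obstacle appears; the corollary is a bookkeeping application of Lemma~\ref{lmintegration}. The one substantive point worth emphasizing is precisely why the smallness condition \eqref{codsigma} on the parameters was imposed: it is exactly what forces $\beta<-1$ in the exterior region $q>0$ and so makes the Klainerman--Sobolev-derived derivative bounds integrable in $q$, allowing one to pass from $\partial u$ to $u$ with the asserted decay. Once this is in place the computation is immediate in every case.
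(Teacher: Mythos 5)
Your plan --- apply Lemma~\ref{lmintegration} to the derivative bounds of Proposition~\ref{estks}, reading off the exponent triple $(\alpha,\beta,\gamma)$ case by case --- is exactly what the paper does; it offers only the one-line justification ``Thanks to Lemma~\ref{lmintegration} we deduce the following corollary.'' You identify the exponents correctly, verify $\beta<-1$ in every exterior case, and your filling-in of the missing $q>0$ derivative bound for $\wht g_2$ via the weighted Klainerman--Sobolev inequality and \eqref{bootl25} is the natural step the paper leaves implicit.

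One numerical slip worth fixing: the Sobolev embedding of Proposition~\ref{holder} applied to $\phi_0\in H^{N+1}_\delta$ gives at best $|Z^I\phi|_{t=0}\lesssim(1+r)^{-1-\delta}$, which does \emph{not} dominate the quantity $(1+r)^{-2-(\delta-\sigma)}=(1+r)^{\gamma+\beta}$ that you cite as the $t=0$ hypothesis of Lemma~\ref{lmintegration} (the comparison would require $\sigma\geq 1$, whereas $\sigma\ll 1$). The argument is not actually endangered: in the proof of the Lemma the initial datum enters only through the boundary term $|u(s,s)|$, which must be compared with $(1+s)^\gamma(1+q)^{\beta+1}$ evaluated at $q=s$, i.e.\ with $(1+s)^{\gamma+\beta+1}$; so $|u|_{t=0}\lesssim(1+r)^{\gamma+\beta+1}=(1+r)^{-1-(\delta-\sigma)}$ is all that is really needed, and $(1+r)^{-1-\delta}$ does dominate that. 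You should either record that the Lemma's stated hypothesis is one power stronger than its proof requires, or correct the decay rate you assert --- as written, that Sobolev step is a false implication.
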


The following remark allow us to compare the different decompositions of the metric $g$.
\begin{rk}\label{comp}
We have the following relations
$$\wht g_{\q T \q T}=(\wht g_1)_{\q T \q T}=(\wht g_2)_{\q T \q T}=(\wht g_3)_{\q T \q T}=(\wht g_4)_{\q T \q T},$$
$$\wht g_{L \ba L}=(\wht g_1)_{L \ba L}=(\wht g_2)_{L \ba L}=(\wht g_3)_{L \ba L}=(\wht g_4)_{L \ba L},$$
$$\wht g_{U \ba L}=(\wht g_1)_{U \ba L}=(\wht g_2)_{U \ba L}=(\wht g_3)_{U \ba L}.$$
\end{rk}

The following corollary allow us to estimate $\wht g$, independently of the chosen decomposition \eqref{dec1}, \eqref{dec2}, \eqref{dec3} or \eqref{dec4}.
\begin{cor} We have the following estimates
\label{esttildg}
\begin{align}
\label{est1} |Z^I \wht g|&\lesssim \frac{\ep}{(1+|q|)^{\frac{1}{2}-\rho}}, \; for \; I\leq N-14,\\
\label{est2} |Z^I \wht g|&\lesssim \frac{\ep}{(1+|q|)^{\frac{1}{2}-2\rho}}, \; for \; I\leq N-12,\\
\label{est3} |Z^I \wht g|&\lesssim \ep, \quad |\partial Z^I \wht g|\lesssim \frac{\ep}{1+|q|},\; for \; I\leq N-11,\\
\label{est4} |Z^I \wht g|&\lesssim \ep(1+|q|)^{\frac{1}{2}+\mu},\quad |\partial Z^I \wht g|\lesssim \ep(1+|q|)^{-\frac{1}{2}+\mu}, \; for \; I\leq N-4.\\
\end{align}
Moreover, for $q>0$ we have the following estimate
\begin{equation}
\label{est5}|Z^I \wht g|\lesssim \frac{\ep}{(1+|q|)^{1+\delta-\sigma}}, \; for \; I\leq N-4.
\end{equation}
\end{cor}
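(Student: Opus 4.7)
}

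The plan is to exploit Remark \ref{comp}: the four decompositions \eqref{dec1}--\eqref{dec4} of $g$ differ only in the $\ba L \ba L$ component (and, for $\wht g_4$, also in $U\ba L$) of the remainder. Consequently, each Minkowski-null component of the genuine difference $\wht g = g - g_b$ can be controlled either directly by a bootstrap bound on some $\wht g_i$, or by combining such a bound with a pointwise bound on the scalars $h_0$, $\wht h$, $h$ recorded in Proposition \ref{prpbooth1} and in the bootstraps \eqref{booth1}--\eqref{booth3}. For each of \eqref{est1}--\eqref{est5} I will pick whichever decomposition matches the regularity level where I have the strongest pointwise information.

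First I dispatch the easy components: by Remark \ref{comp} the $\q T \q T$, $L\ba L$ and $U\ba L$ components of $\wht g$ coincide with those of $\wht g_1$ (equivalently $\wht g_2$ or $\wht g_3$), so the bounds follow verbatim from the pointwise bootstrap assumptions \eqref{bootphi1}--\eqref{bootg2} when these are available (giving \eqref{est1} and \eqref{est2}) and from the Klainerman--Sobolev consequences of Corollary \ref{estksg}, namely \eqref{iks2}, \eqref{iks6}, \eqref{iks8}, at the lower regularity levels (giving \eqref{est3}--\eqref{est5}). The only nontrivial work is for $\wht g_{\ba L \ba L}$.

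For the $\ba L \ba L$ component I write, using $(dq)^2(\ba L, \ba L) = 4$,
\[
\wht g_{\ba L \ba L} \;=\; 4\Up(r/t)\,X \;+\; (\wht g_i)_{\ba L \ba L},\qquad X\in\{h_0,\;h_0+\wht h,\;h\},
\]
according to the decomposition chosen. A standard commutator computation shows that $S(r/t)=0$, $\Omega_{12}(r/t)=0$, and translations of $r/t$ are $O(1/t)$ in the support of $\Up'(r/t)$, so applying $Z^I$ produces only bounded cutoff factors supported in $r\sim t$; thus $Z^I$ derivatives fall essentially on $X$. For \eqref{est1} I use \eqref{dec1}: \eqref{bootg1} gives $|Z^I(\wht g_1)_{\ba L\ba L}| \lesssim \ep/(1+s)^{1/2-\rho}$, and \eqref{booth1}--\eqref{booth2} give $|Z^I h_0| \lesssim \ep/(1+|q|)^{1-4\rho}$ for $q<0$ and $\ep/(1+|q|)^{2+2(\delta-\sigma)}$ for $q>0$; in the support of $\Up$ we have $1+|q|\lesssim 1+s$, so $(1+s)^{-1/2+\rho}\lesssim (1+|q|)^{-1/2+\rho}$, and since $\rho,\sigma\ll \delta$ both contributions are dominated by $\ep/(1+|q|)^{1/2-\rho}$. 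The bound \eqref{est2} is identical with $\rho$ replaced by $2\rho$, using \eqref{bootg2}. For \eqref{est3} I switch to \eqref{dec2}, combining \eqref{booth1}--\eqref{booth3} for $h_0+\wht h$ (both $\lesssim \ep$, with $\partial_q$-decay $1/(1+|q|)$ coming from the transport equation \eqref{eqh}) with \eqref{iks8} for $\wht g_2$ and its derivative analogue \eqref{ks7}. For \eqref{est4} and \eqref{est5} I use \eqref{dec3}, combining \eqref{iks2}/\eqref{iks6} for $\wht g_3$ with \eqref{iks3}/\eqref{iks7} for $h$.

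The main recurring technical point — and the only place where one could imagine losing — is the systematic replacement of $1+s$ by $1+|q|$: inside the support of $\Up$ we only have $1+|q|\lesssim 1+s$ (giving an inequality in the right direction for decay weights expressed in $|q|$), while outside we have $|q|\sim s$ so the replacement is cost-free. Since our bounds on $(\wht g_i)_{\ba L\ba L}$ decay in $s$ and our bounds on $h_0$, $\wht h$, $h$ decay in $|q|$, these are perfectly compatible after the conversion, and no unfavorable cancellation or loss of derivative appears. The whole corollary is thus a bookkeeping assembly of the bootstrap assumptions, Proposition \ref{prpbooth1}, and Corollary \ref{estksg}; I do not anticipate any genuine obstacle beyond choosing the right decomposition for the right inequality.
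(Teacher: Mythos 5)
Your proposal is correct and follows the same route as the paper: for each of \eqref{est1}--\eqref{est5}, pick the decomposition among \eqref{dec1}--\eqref{dec3} that makes both pieces available at the relevant regularity level, and take the maximum of the bounds on the scalar ($h_0$, $h_0+\wht h$, or $h$) and on the tensor remainder ($\wht g_1$, $\wht g_2$, $\wht g_3$). The paper's proof is a single-sentence list of these choices; you have added the bookkeeping that justifies it (the $\ba L\ba L$ component is the only one that varies across decompositions, $(dq^2)(\ba L,\ba L)=4$, $Z^I$ acting on $\Up(r/t)$ and on the rectangular coefficients of $dq^2$ only produces bounded factors, and $1+|q|\lesssim 1+s$ on the support of $\Up$ so decay in $s$ upgrades to decay in $|q|$). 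One small point in your favour: the paper cites \eqref{iks7} in the proof of \eqref{est3}, but \eqref{iks7} concerns $h$, which does not appear in the decomposition \eqref{dec2}; your use of \eqref{iks8} and \eqref{ks7} for $\wht g_2$ is the correct reading, and the paper's reference is almost certainly a typo for \eqref{iks8}.
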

\begin{proof}
Estimate \eqref{est1} is obtained by using the decomposition \eqref{dec1} and
taking the maximum of the bounds given by \eqref{booth1} and \eqref{bootg1}. Estimate \eqref{est2} is obtained by using the decomposition \eqref{dec1} and
taking the maximum of the bounds given by \eqref{booth1} and \eqref{bootg2}. Estimate \eqref{est3} is obtained by using the decomposition \eqref{dec2} and
taking the maximum of the bounds given by \eqref{booth1}, \eqref{booth3} and \eqref{iks7}. Estimate \eqref{est4} is obtained by using the decomposition \eqref{dec3} and
taking the maximum of the bounds given by \eqref{iks3} and \eqref{iks2}.
Estimate \eqref{est5} is obtained by using the decomposition \eqref{dec3} and
taking the maximum of the bounds given by \eqref{iks7} and \eqref{iks6}.
\end{proof}

The rest of the paper is as followed
\begin{itemize}
\item In Section \ref{secwave}, we use the wave coordinates condition to obtain better decay on the coefficients $g_{\q T \q T}$ of the metric. The strategy is similar to the one introduced in \cite{lind}.
\item In Section \ref{secangle}, we obtain the missing estimates for the angle and linear momentum, namely the three first Fourier coefficient of $b$ which correspond to $b-\Pi b$, in order to get
$$\left|b(\theta)+\int_{\Sigma_{T,\theta}}(\partial_q \phi(q,s=T,\theta))^ 2 r dq\right| \lesssim \frac{\ep^2}{T^\frac{1}{2}},$$
by relying in particular on the constraint equations
\item In Section \ref{transport}, we improve the estimates for $h_0$, and show that it is indeed a good approximation for the coefficient $g_{\ba L \ba L}$. We also obtain estimates for $\wht h$. We prove Proposition \ref{prpbooth1}.
\item In Section \ref{seclinf} we prove Proposition \ref{prpboot1} thanks to the $L^\infty-L^\infty$ estimate.
\item In Section \ref{weighte} we derive a weighted energy estimate for an equation of the form $\Box_g u=f$, where $g$ satisfies the bootstrap assumptions.
\item In Section \ref{secl2}, we prove Proposition \ref{prpboot2} thanks to the  weighted energy estimate.
\item In Section \ref{secproof}, we prove Proposition \ref{prpbootb} by picking the right $\wht b=\Pi b$.
\end{itemize}

\section{The wave coordinates condition}\label{secwave}
The wave coordinates condition yields better decay properties in $t$ for some components of the metric. Since far from a conical neighborhoud of the light cone, we have $|q|\sim s$, this condition will only be relevant near the light cone. It is given by
$$H_b^\alpha = -\frac{1}{\sqrt{|det(g)|}}\partial_\mu (g^{\mu \alpha}\sqrt{|\det(g)}|).$$
\begin{prp}\label{estLL}
We have the following estimate, in the region $\frac{t}{2}\leq r \leq 2t$,
$$
|\partial_q Z^I \wht g_{LL}|\lesssim 
\sum_{J\leq I}\left(|\bar{\partial}Z^J \wht g_{\ba L L}|+|\bar{\partial}Z^J \wht g_{\q T \q T}|\right)
+\frac{1}{1+s}\sum_{J\leq I}
\left(|Z^I \wht g_{L\ba L}|+|Z^I \wht g_{\q T \q T}|\right).$$
\end{prp}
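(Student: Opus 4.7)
The plan is to derive a pointwise algebraic identity from the generalized wave coordinates condition, extract the $\partial_q g_{LL}$ term, and then commute with $Z^I$.

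Start from the identity equivalent to $g^{\lambda\beta}\Gamma^\alpha_{\lambda\beta}=H_b^\alpha$, namely
$$g^{\mu\nu}\partial_\mu g_{\nu\alpha} -\tfrac12 g^{\mu\nu}\partial_\alpha g_{\mu\nu}= g_{\alpha\beta}H_b^\beta,$$
and contract with $L^\alpha$. Since $L^\alpha$ depends on $x$, rewrite $L^\alpha\partial_\mu g_{\nu\alpha}= \partial_\mu(L^\alpha g_{\nu\alpha})-(\partial_\mu L^\alpha)g_{\nu\alpha}$; the second contraction collapses to $\partial_\alpha L^\alpha = 1/r$, which gives an $O(|g|/r)$ geometric correction. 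Splitting $g^{\mu\nu}=m^{\mu\nu}+H^{\mu\nu}$ in the null frame with $m^{L\bar L}=-1/2$, $m^{UU}=1$, the Minkowski part of the left-hand side produces
$$-\tfrac12 L(g_{\bar L L})-\tfrac12 \bar L(g_{LL})+U(g_{UL})+\tfrac12 L(g_{L\bar L})-\tfrac12 L(g_{UU}),$$
where the $L(g_{L\bar L})$ and $L(g_{\bar L L})$ terms cancel by the symmetry of $g$. Using $\partial_q=-\tfrac12\bar L$ this yields
$$\partial_q g_{LL}= -U(g_{UL})+\tfrac12 L(g_{UU})+L_\beta H_b^\beta+\tfrac{1}{r}+\mathcal{E},$$
whose leading terms are manifestly $\bar\partial$-derivatives of $\q T\q T$-components.

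The error $\mathcal{E}$ gathers the quadratic corrections $H^{AB}e_A(g_{BL})$ and $H^{AB}L(g_{AB})$. Expanding in the null frame, the non-principal contributions of the form $H^{L\bar L}\bar\partial g_{L\bar L}$ and $H^{LL}\bar\partial g$ are pure $\bar\partial$-derivatives and, using $|H^{AB}|\lesssim 1$ from the bootstrap (Corollary \ref{esttildg}), feed directly into the $|\bar\partial Z^J\wht g_{\bar L L}|$ and $|\bar\partial Z^J\wht g_{\q T\q T}|$ terms on the right-hand side. The only delicate contribution is the term $H^{\bar L\bar L}\bar L(g_{\bar L L})$: since $H^{\bar L\bar L}=-\tfrac14 g_{LL}+O(h^2)$ is of the size of a good component and bootstrap gives $|g_{LL}|\ll 1$, this quadratic term can be absorbed as a small multiple of $|\partial_q g_{LL}|$ on the left. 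The source $L_\beta H_b^\beta$ and $1/r$ contributions, together with the explicit $g_b$-derivatives obtained from subtracting the analogous exact identity satisfied by $g_b$ (with $\bar H_b$ in place of $H_b$), are controlled in the region $t/2\leq r\leq 2t$ by $\tfrac{1}{1+s}(|\wht g_{L\bar L}|+|\wht g_{\q T\q T}|)$ thanks to the explicit form \eqref{gb} of $g_b$ and the $\wht g\,\partial g_b/r$ structure of the correction $F^\alpha$.

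Finally, apply $Z^I$ to this identity. Since $[Z,\partial]\sim \partial$, $[Z,\bar\partial]\sim\bar\partial$ and the null-frame vectors $L,\bar L,U$ are linear combinations (with coefficients of size $1$ and of size $1/r$ for $U$) of the Klainerman fields in the cone $t/2\leq r\leq 2t$, each commutation produces only lower-order terms that are absorbed in the sum over $J\leq I$. Symmetrising over $I_1+I_2\leq I$ the quadratic contributions $Z^{I_1}H \cdot Z^{I_2}\partial g$ and bootstrapping as above closes the estimate. The main technical obstacle is the treatment of the $H^{\bar L\bar L}\bar L g_{\bar L L}$ term above, which requires combining the smallness of $g_{LL}$ with the identity itself to trade a bad derivative of $g_{\bar L L}$ for the good quantities listed on the right-hand side.
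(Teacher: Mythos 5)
Your overall approach is the one the paper takes: the generalized wave coordinate condition, contracted in the null direction that isolates $\partial_q$ of the $LL$-component, produces an identity whose leading Minkowski part gives $\partial_q g_{LL}$ and whose remaining terms are $\bar\partial$-derivatives, $\tfrac{1}{r}$ geometric corrections, and quadratic errors; subtracting the identity satisfied by $g_b$ eliminates the pure $g_b$-terms, and commuting with $Z^I$ closes the claim. The paper carries this out in the divergence form $\partial_\mu\big(\sqrt{|\det g|}\,g^{\mu\alpha}\big)=-\sqrt{|\det g|}\,H_b^\alpha$ and identifies $g^{\ba L \ba L}\approx -\tfrac14 g_{LL}$ by cofactors; your lowered-index contraction is an equivalent packaging of the same computation.

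The gap is in your treatment of the delicate quadratic error. You write that $H^{\ba L \ba L}\,\ba L(g_{\ba L L})$, with $H^{\ba L \ba L}\approx -\tfrac14 g_{LL}$, "can be absorbed as a small multiple of $|\partial_q g_{LL}|$ on the left." This is not what the term is: after $\ba L=-2\partial_q$ it is a multiple of $g_{LL}\,\partial_q g_{L\ba L}$, a bad derivative of the \emph{$L\ba L$}-component, not of the $LL$-component, so it cannot be put on the left-hand side. Your final sentence acknowledges this ("trade a bad derivative of $g_{\ba L L}$ for the good quantities on the right") but does not actually say how. The mechanism is: use $|\partial_q g_{L\ba L}|\lesssim (1+|q|)^{-1}|Z g_{L\ba L}|$, and then invoke the decay of the coefficient $g_{LL}\lesssim \ep(1+|q|)/(1+s)^{3/2-\rho}$ (provided by the wave coordinate condition and bootstrap, see Corollary \ref{estV2}) to dominate $(1+|q|)^{-1}$ by $(1+s)^{-1}$ in the cone $t/2\le r\le 2t$; the term then lands in $\tfrac{1}{1+s}\sum_{J\le I}|Z^J\wht g_{L\ba L}|$ on the right. (The paper sweeps this into "$s.t.$", but the absorption you describe would not be correct even informally.) You also omit the second delicate quadratic error of the same type, $g^{\ba L U}\partial_{\ba L}g_{UL}$, whose coefficient $g^{\ba L U}\approx\tfrac12 g_{LU}$ is handled identically.
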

\begin{proof}
The wave coordinate condition implies
\begin{align*}
-\ba L_{\alpha}H_b^\alpha &=\ba L_{\alpha}\left(\frac{1}{\sqrt{|\det(g)|}}\partial_\mu (g^{\mu \alpha}\sqrt{\det(g)})\right)\\
=&\frac{ g^{\mu \alpha}}{\sqrt{\det(|g|)}}\ba L_{\alpha}\partial_\mu \sqrt{\det(g)}
+\partial_\mu (\ba L_\alpha g^{\mu \alpha})-g^{\mu \alpha}\partial_\mu( \ba L_\alpha)\\
=&\frac{g^{\ba L \mu}}{\sqrt{\det(|g|)}}\partial_{\mu}\sqrt{\det (g)}
+\partial_\mu (g^{\ba L \mu})
-\frac{1}{r}g^{UU}\\
=&\frac{g^{\ba L \ba L}}{\sqrt{\det(|g|)}}\partial_{\ba L} \sqrt{\det (g)}
+\frac{g^{\ba L \q T}}{\sqrt{\det(|g|)}}\partial_{\q T} \sqrt{\det (g)}
+\partial_{\ba L} g^{\ba L \ba L}+\partial_U g^{\ba L U}+\partial_L g^{\ba L L}\\
&+\frac{1}{r}g^{\ba L R}-\frac{1}{r}g^{UU},
\end{align*}
where we have denoted by $R$ the vector field $\partial_r$, and used the following calculations
\begin{align*}
g^{\mu \alpha}\partial_\mu(\ba L_\alpha)=&-g^{\mu \alpha}\partial_\mu( R_\alpha)\\
=&-g^{11}\partial_1 \cos(\theta)-g^{12}(\partial_2 \cos(\theta)-\partial_1 \sin(\theta))-g^{22}\partial_2 \sin(\theta)\\
=&-\frac{g^{UU}}{r},
\end{align*}
\begin{align*}
\partial_\mu g^{\ba L \mu}=&\partial_0 g^{\ba L0} +\partial_1 g^{\ba L 1}
+\partial_2 g^{\ba L 2}\\
=&\partial_0 g^{\ba L0}+\partial_R g^{\ba L R}+\partial_U g^{\ba L U}
+g^{\ba L R}(\partial_1 \cos(\theta)+\partial_2 \sin(\theta))
+g^{\ba L U}(-\partial_1\sin(\theta)+\partial_2 \cos(\theta))\\
=&\partial_{\ba L} g^{\ba L \ba L}+\partial_U g^{\ba L U}+\partial_L g^{\ba L L}+\frac{g^{\ba L R}}{r}.
\end{align*}
Consequently
\begin{equation}\label{condonde1}
\begin{split}
\partial_{\ba L} g^{\ba L \ba L}
=& -\ba L_{\alpha}\left(\bar{H}_b^\alpha+F^\alpha\right)-\frac{g^{\ba L \ba L}}{\sqrt{\det(|g|)}}\partial_{\ba L} \sqrt{\det (g)}
-\frac{g^{\ba L \q T}}{\sqrt{\det(|g|)}}\partial_{\q T} \sqrt{\det (g)}\\
&-\partial_U g^{\ba L U}-\partial_L g^{\ba L L}
-\frac{1}{r}g^{\ba L R}-\frac{1}{r}g^{UU},
\end{split}
\end{equation}
where we have used \eqref{defHalpha}.
Also we have
$$\det(g)=g_{LL}(g_{\ba L \ba L}g_{UU}-(g_{U \ba L})^2) -g_{L\ba L}(g_{L \ba L}g_{UU}-g_{LU}g_{\ba L U} ) + 
g_{LU}(g_{\ba L L}g_{U \ba L}-g_{\ba L \ba L}g_{LU}).
$$
Therefore
$$| \sqrt{\det(g)}-\sqrt{\det(g_b)}|\lesssim 
|\wht g_{\ba L L}|+|\wht g_{\q T \q T}|.$$
We can express
\begin{align*}
g^{\ba L \ba L}=\frac{1}{\det(g)}(g_{ L  L}g_{UU}-(g_{U  L})^2)&=-\frac{1}{4}\wht g_{LL}+ O(\wht g_{\q T \q T})O(g ),\\
g^{\ba L U}=\frac{1}{\det(g)}(g_{\ba L  L}g_{LU}-g_{U \ba L}g_{LL})&=\frac{1}{2}g_{LU}+ O(\wht g_{\q T \q T})O(g ),\\
g^{\ba L L}=\frac{1}{\det(g)}(g_{\ba L  L}g_{UU}-g_{U \ba L}g_{UL})&=\frac{1}{4}(g_b)_{UU}g_{L\ba L}+ O(\wht g_{\q T \q T}),\\
\end{align*}
where we have used the notation $O(g)=O(g-m)$ where $m$ is the Minkowski metric.
Since in \eqref{condonde1}, by definition of $\bar{H}^\alpha$ (see \eqref{defHbar}) the terms involving only $g_b$ compensate, we have
$$|\partial_q \wht g_{LL}|\lesssim (|\bar{\partial} \wht g_{L \ba L}|
+| \bar{\partial} \wht g_{\q T \q T}|)
+\frac{1}{1+s} (|\wht g_{L \ba L}|+|\wht g_{\q T \q T}|)
+s.t..$$
where $s.t$ denotes similar terms (here these terms are quadratic terms with a better or similar decay), and we have used the fact that in the region $\frac{t}{2}\leq r \leq 2t$, we have $r\sim s$.
Since $[Z,\partial_q]\sim \partial_q$ and $[Z,\bar{\partial}]\sim \bar{\partial}$ we have
$$|\partial_q Z^I \wht g_{LL}|\lesssim \sum_{J\leq I-1}|Z^J\wht g_{LL}|+|\bar{\partial}Z^I \wht g_{L \ba L}|+|\bar{\partial}Z^I \wht g_{\q T \q T}|+
\frac{1}{1+s}\sum_{J\leq I} (|Z^J \wht g_{L \ba L}|+|Z^J \wht g_{\q T \q T}|).$$
This concludes the proof of Proposition \ref{estLL}.
\end{proof}

The other two contractions of the wave condition yield better decay on a conical neighbourhood of the light cone for $\wht g_{U L}$ and $\wht g_{UU}$.
\begin{prp}\label{estLU}
We have the following property
\begin{align*}
|\partial_q Z^I \wht g_{UL}|&\lesssim \sum_{J\leq I} |\overline{\partial}Z^J \wht g_{\q T \q V}| + \frac{1}{1+s}\sum_{J \leq I} |Z^J \wht g_{\q T \q V}|,\\
|\partial_q Z^I \wht g_{UU}|&\lesssim \sum_{J\leq I} |\overline{\partial}Z^J \wht g| + \frac{1}{1+s}\sum_{J \leq I} |Z^J \wht g|.
\end{align*}
\end{prp}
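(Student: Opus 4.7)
The plan is to mimic the proof of Proposition \ref{estLL} by contracting the generalized wave coordinate condition
$$H_b^\alpha=-\frac{1}{\sqrt{|\det g|}}\partial_\mu\bigl(g^{\mu\alpha}\sqrt{|\det g|}\bigr)$$
against carefully chosen null co-vectors. For the first estimate I contract with $U_\alpha$, and for the second with $L_\alpha$. In each case expansion produces exactly the three pieces used in Proposition \ref{estLL},
$$-V_\alpha H_b^\alpha=\frac{g^{\mu\alpha}V_\alpha}{\sqrt{|\det g|}}\partial_\mu\sqrt{|\det g|}+\partial_\mu(V_\alpha g^{\mu\alpha})-g^{\mu\alpha}\partial_\mu V_\alpha,$$
the last of which supplies only a benign $\tfrac{1}{r}$-contribution from the non-constancy of the null co-frame in the Cartesian coordinates.

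For the first estimate, decomposing $\partial_\mu g^{U\mu}$ in the null frame yields $\partial_{\ba L}g^{U\ba L}+\partial_U g^{UU}+\partial_L g^{UL}+\frac{g^{UR}}{r}$, and the inverse-metric expansion gives $g^{U\ba L}=\tfrac12\wht g_{UL}+O(\wht g_{\q T\q T})O(g)$. I solve for the sole bad derivative $\partial_{\ba L}g^{U\ba L}$. The sqrt-determinant contribution is harmless because its leading-order coefficient $g^{U\ba L}$ is itself linear in $\wht g$, so the whole term is a quadratic similar term. Every other piece on the right-hand side is either a good derivative $\partial_L$ or $\partial_U$ of a $\wht g_{\q T\q V}$-component, or comes with a $\tfrac{1}{r}\sim\tfrac{1}{1+s}$ factor. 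Commuting with $Z^I$ via $[Z,\partial]\sim\partial$ and $[Z,\bar\partial]\sim\bar\partial$ then delivers the stated bound on $\partial_q Z^I\wht g_{UL}$.

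For the second estimate, the divergence $\partial_\mu g^{L\mu}$ contributes the bad term $\partial_{\ba L}g^{L\ba L}\sim-\tfrac14\partial_{\ba L}\wht g_{L\ba L}$. The new feature, compared with $V=\ba L$ or $V=U$, is that the sqrt-determinant prefactor $g^{L\ba L}/\sqrt{|\det g|}$ is now of order one ($\sim-\tfrac12$ in the Minkowskian limit), so the sqrt-determinant term itself makes a genuine bad-derivative contribution. Using $\frac{\partial_{\ba L}\sqrt{|\det g|}}{\sqrt{|\det g|}}=\tfrac12 g^{\alpha\beta}\partial_{\ba L}g_{\alpha\beta}$, the bad part at leading order reads $\tfrac12(-\partial_{\ba L}\wht g_{L\ba L}+\partial_{\ba L}\wht g_{UU})$, multiplied by $g^{L\ba L}\sim-\tfrac12$. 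Added to the divergence contribution, the $\partial_{\ba L}\wht g_{L\ba L}$ pieces cancel exactly, and what remains on the left is proportional to $\partial_{\ba L}\wht g_{UU}$ alone. Solving, $\partial_q\wht g_{UU}$ is controlled by $L_\alpha H_b^\alpha$ (small by construction of $H_b$), good derivatives of all frame components of $\wht g$, $\tfrac{1}{1+s}$ times all such components, and harmless quadratic similar terms; this also explains why the right-hand side of the second estimate involves the whole $\wht g$, rather than only $\wht g_{\q T\q V}$. Commutation with $Z^I$ proceeds exactly as in the first case.

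The main obstacle is to check that the $\partial_{\ba L}\wht g_{L\ba L}$ cancellation in the second step persists beyond the strict Minkowskian leading order, i.e.\ that any apparent residual $\partial_{\ba L}\wht g_{L\ba L}$ term can be absorbed into similar terms of acceptable (quadratic) size. This reduces to bookkeeping in the inverse-metric and $\sqrt{|\det g|}$ expansions in the null frame, of the same character as the algebra summarized at the end of the proof of Proposition \ref{estLL}.
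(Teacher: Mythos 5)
Your proof is correct and follows essentially the same route as the paper: the first estimate comes from contracting the generalized wave coordinate condition with $U$, the second from contracting with $L$. The only organizational difference is that the paper realizes the cancellation of $\partial_{\ba L}\wht g_{L\ba L}$ that you describe in closed form, via the identity $\sqrt{|\det(g)|}\,g^{L\ba L}=\sqrt{g_{UU}}+O(\wht g_{\q T \q T})O(g)$, which also settles your residual worry about the cancellation persisting beyond leading order, since the corrections are manifestly quadratic similar terms.
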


\begin{proof}
To obtain the first estimate, we contract the wave coordinate condition with the vector field $U$.
\begin{align*}
-U_{\alpha}H_b^\alpha &= \frac{1}{\sqrt{|\det(g)|}}U_{\alpha}\partial_\mu (g^{\mu \alpha})\sqrt{\det(g)}\\
=&\frac{ g^{\mu \alpha}}{\sqrt{|\det(g)|}}U_{\alpha}\partial_\mu \sqrt{|\det(g)|}
+\partial_\mu (U_\alpha g^{\mu \alpha})+g^{\mu \alpha}\partial_\mu( U_\alpha)\\
=&\frac{g^{U \mu}}{\sqrt{|\det(g)|}}\partial_{\mu}\sqrt{|\det (g)|}
+\partial_\mu (g^{U \mu})
+\frac{1}{r}g^{UR}\\
=&\frac{g^{U \ba L}}{\sqrt{|\det(g)|}}\partial_{\ba L} \sqrt{|\det (g)|}
+\frac{g^{U \q T}}{\sqrt{|\det(g)|}}\partial_{\q T} \sqrt{|\det (g)|}
+\partial_{\ba L} g^{U \ba L}+\partial_U g^{U U}+\partial_L g^{U L}+\frac{1}{r}g^{UR}. 
\end{align*}
Therefore 
\begin{equation*}
\partial_{\ba L} g^{U \ba L}=-U_{\alpha}H_b^\alpha-\frac{g^{U \ba L}}{\sqrt{|\det(g)|}}\partial_{\ba L} \sqrt{|\det (g)|}
-\frac{g^{U \q T}}{\sqrt{|\det(g)|}}\partial_{\q T} \sqrt{|\det (g)|}
-\partial_U g^{U U}-\partial_L g^{U L}-\frac{1}{r}g^{UR}.
\end{equation*}
and arguing as in Proposition \ref{estLL} we infer
$$|\partial_q\wht g_{UL}|\lesssim |\bar{\partial} \wht g_{\q T \q V}|+\frac{1}{1+s}|\wht g_{\q T \q V}|+s.t.$$
Commuting with the vector fields $Z$ as before, we obtain the desired estimate.
To obtain the second one, we contract the wave coordinate condition with $L$
\begin{equation}\label{wcl}\begin{split}
L_{\alpha}H_b^\alpha = &\frac{1}{\sqrt{|\det{g}|}}L_{\alpha}\partial_\mu (g^{\mu \alpha})\sqrt{|\det(g)|}.\\
=&\frac{1}{\sqrt{|\det{g}|}}\partial_{\ba L}\left(\sqrt{|\det(g)|}g^{L\ba L}\right) +\frac{1}{\sqrt{|\det{g}|}}\partial_{\q T}\left(\sqrt{|\det(g)|}g^{L\q T}\right)-g^{\mu \alpha}\partial_\mu (L_\alpha). \end{split}
\end{equation}
We note that 
\begin{align*}
\sqrt{|\det(g)|}g^{L\ba L}=&\frac{1}{\sqrt{|\det(g)|}}(g_{L\ba L}g_{UU}-g_{U\ba L}g_{UL})\\
=&\frac{g_{L\ba L}g_{UU}}{\sqrt{g_{L\ba L}^2g_{UU}+O(\wht g_{\q T \q T})O(g)}}
+O(\wht g_{\q T \q T})O(g)\\
=&\sqrt{g_{UU}}+ +O(\wht g_{\q T \q T})O(g).
\end{align*}
Therefore \eqref{wcl} yields
$$|\partial_q \wht g_{UU}|\lesssim |\bar{\partial}\wht g|
+\frac{1}{1+s}|\wht g|.$$
We commute with the vector fields $Z$ to conclude.
\end{proof}

Thanks to the bootstrap assumptions, we obtain the following corollary.
\begin{cor}\label{estV}
We have the estimates for $q<0$
\begin{align}
\label{dwc1}&|\partial  Z^I \wht g_{UU}| \lesssim \frac{\ep}{(1+s)^{\frac{3}{2}-\rho}},
\quad |\partial  Z^I \wht g_{L\q T}| \lesssim \frac{\ep}{(1+s)(1+|q|)^{\frac{1}{2}-\rho}},
 \quad for\; I \leq N-15,\\
\label{dwc2}&|\partial \wht Z^I g_{L \q T}| \lesssim \frac{\ep}{(1+s)^{\frac{3}{2}-2\rho}} ,
\quad |\partial  Z^I \wht g_{UU}| \lesssim \frac{\ep}{(1+s)(1+|q|)^{\frac{1}{2}-2\rho}},
\quad for\; I \leq N-13,\\
\label{dwc3}&|\partial Z^I \wht g_{L \q T}| \lesssim \frac{\ep \sqrt{1+|q|}}{(1+s)^\frac{3}{2}}, 
\quad |\partial  Z^I \wht g_{UU}| \lesssim \frac{\ep}{1+s},
\quad for\; I \leq N-12 ,\\
\label{dwc4}&|\partial Z^I \wht g_{L \q T}| \lesssim \frac{\ep (1+|q|)^{1+\mu}}{(1+s)^\frac{3}{2}},
\quad |\partial Z^I \wht g_{UU}| \lesssim \frac{\ep (1+|q|)^{\frac{1}{2}+\mu}}{1+s},
 \quad for\; I \leq N-5,
\end{align}
and for $q>0$
$$
|\partial Z^I \wht g_{L \q T}| \lesssim \frac{\ep }{(1+|q|)^{\frac{1}{2}+\delta-\sigma}(1+s)^\frac{3}{2}}, \quad 
|\partial Z^I \wht g_{UU}| \lesssim \frac{\ep }{(1+|q|)^{1+\delta-\sigma}(1+s)} ,\quad
for\; I \leq N-5.
$$
\end{cor}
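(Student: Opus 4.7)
The plan is to decompose $\partial = \partial_q + \bar\partial$ and estimate each piece separately. The tangential part obeys $|\bar\partial Z^I \wht g|\lesssim (1+s)^{-1}|Z^{I+1}\wht g|$ by the commutation inequality \eqref{important}, and the $\partial_q$ part is controlled by the wave coordinate identities of Propositions \ref{estLL}--\ref{estLU}. Applying \eqref{important} once more to the tangential terms on the right-hand side of those propositions collapses both contributions, giving
\[
|\partial_q Z^I \wht g_{L\q T}|\lesssim \frac{1}{1+s}\sum_{J\leq I+1}|Z^J \wht g_{\q T\q V}|,\qquad |\partial_q Z^I \wht g_{UU}|\lesssim \frac{1}{1+s}\sum_{J\leq I+1}|Z^J \wht g|,
\]
so that everything reduces to substituting bootstrap bounds for $|Z^{I+1}\cdot|$.

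After this reduction each of the seven stated inequalities follows by selecting the appropriate bootstrap bound at the matching regularity level. By Remark \ref{comp} the ``good'' components $\wht g_{\q T\q V}$ coincide with the corresponding components of $\wht g_1$, so I substitute \eqref{bootg1} ($J\leq N-14$, producing the $(1+s)^{-1/2+\rho}$ decay which yields the $I\leq N-15$ bounds), then \eqref{bootg2} ($J\leq N-12$, yielding the $I\leq N-13$ bounds), and finally descend through \eqref{est3}--\eqref{est4} from Corollary \ref{esttildg} for the $I\leq N-12$ and $I\leq N-5$ levels. For $\wht g_{UU}$, where the right-hand side contains the full $\wht g$ and hence also the bad $\wht g_{\ba L\ba L}$, I split via the decomposition \eqref{dec1}: the $\wht g_1$-piece is handled as above and the additional $\Up h_0\, dq^2$ contribution is controlled by the $h_0$-bootstrap \eqref{booth1}, with the extra $(1+s)^{-1}$ from $\bar\partial$ absorbing the lack of $s$-decay in $h_0$. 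In the exterior region $q>0$ the sharper weighted estimates \eqref{iks6} and \eqref{est5} provide the extra $(1+|q|)^{-1/2-\delta+\sigma}$ and $(1+|q|)^{-1-\delta+\sigma}$ factors.

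The argument is essentially a table of substitutions with no conceptual difficulty. The only care required is the derivative loss $I\to I+1$ forced by \eqref{important}, which is exactly why the regularity thresholds in Corollary \ref{estV} drop by one relative to those in the bootstrap assumptions (hence the $N-15$ versus $N-14$, $N-13$ versus $N-12$ mismatches). The mild technical point, and the place where the $UU$ and $L\q T$ bounds end up with genuinely different functional shapes, is that for $\wht g_{UU}$ the weakest decaying component $\wht g_{\ba L\ba L}$ enters the right-hand side of Proposition \ref{estLU}, and must be paired carefully against the $(1+s)^{-1}$ gained from the tangential derivative so that the loss is absorbed; no new ideas beyond the wave coordinate condition and the bootstrap bounds themselves are required.
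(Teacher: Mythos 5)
Your overall route---reducing both the $\partial_q$ part (via Propositions \ref{estLL}--\ref{estLU}) and the $\bar{\partial}$ part (via \eqref{important}) to $\frac{1}{1+s}|Z^{I+1}\wht g_{\q T\q V}|$ resp.\ $\frac{1}{1+s}|Z^{I+1}\wht g|$, then substituting bootstrap bounds---is exactly the paper's, and your treatment of \eqref{dwc1}, \eqref{dwc2}, of all the $\wht g_{UU}$ bounds, and of the region $q>0$ is correct.

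There is, however, a concrete gap in the $\wht g_{L\q T}$ entries of \eqref{dwc3} and \eqref{dwc4}. You propose to ``descend through \eqref{est3}--\eqref{est4}'' for the levels $I\leq N-12$ and $I\leq N-5$. But \eqref{est3} and \eqref{est4} are bounds on the \emph{full} $\wht g$, obtained by taking the maximum over the $h$-type pieces (which have no decay in $s$) and the $\wht g_i$ pieces; they read $|Z^J\wht g|\lesssim\ep$ and $|Z^J\wht g|\lesssim\ep(1+|q|)^{\frac12+\mu}$ and carry no factor $(1+s)^{-1/2}$. Feeding them into $\frac{1}{1+s}|Z^{I+1}\wht g_{\q T\q V}|$ yields only $\frac{\ep}{1+s}$ and $\frac{\ep(1+|q|)^{\frac12+\mu}}{1+s}$, which in the relevant region $|q|\lesssim s$ are strictly weaker than the claimed $\frac{\ep\sqrt{1+|q|}}{(1+s)^{3/2}}$ and $\frac{\ep(1+|q|)^{1+\mu}}{(1+s)^{3/2}}$ (they coincide only where $|q|\sim s$). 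The correct substitution is the Klainerman--Sobolev consequences \eqref{iks8} (giving $|Z^J\wht g_{\q T\q V}|\lesssim \ep\sqrt{1+|q|}/\sqrt{1+s}$ for $J\leq N-11$) and \eqref{iks2} (giving $|Z^J\wht g_{\q T\q V}|\lesssim \ep(1+|q|)^{1+\mu}/\sqrt{1+s}$ for $J\leq N-4$), transferred to the $\q T\q V$ components by Remark \ref{comp}; these retain the $(1+s)^{-1/2}$ that \eqref{est3}--\eqref{est4} sacrifice. The loss is not cosmetic: the extra half power of $s$ in \eqref{dwc3}, and in its integrated form \eqref{wc3}, is precisely what is invoked later (e.g.\ in the transport-equation section, where one needs ``the estimate for $g_{LL}$ which gives the most decay in $s$'').
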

\begin{proof}
As mentioned in Remark \ref{comp}, the metric coefficients $\wht g_{\q V \q T}$ do not depend on the choice of decomposition between \eqref{dec1}, \eqref{dec2} and \eqref{dec3}.
Thanks to Proposition \ref{estLL} and \ref{estLU}, and the fact that
$$|\bar{\partial} u|\leq \frac{1}{1+s}|Zu|,$$
we may write
\begin{equation}
\label{esti1}|\partial Z^I \wht g_{L\q T}|\lesssim \frac{1}{1+s}|Z^{I+1} \wht g_{\q T \q V}|.
\end{equation}
The bootstrap assumptions \eqref{bootg1} and \eqref{bootg2} in the region $q<0$ yield
\begin{align*}
|Z^J \wht g_{\q T \q V}|&\lesssim \frac{\ep}{(1+s)^{\frac{1}{2}-\rho}}, \quad for\; J\leq N-14,\\
|Z^J \wht g_{\q T \q V}|&\lesssim \frac{\ep}{(1+s)^{\frac{1}{2}-2\rho}}, \quad for\; J\leq N-12.
\end{align*}
Therefore we obtain, in view of \eqref{esti1}
\begin{align*}
&|\partial Z^I \wht g_{L\q T}| \lesssim \frac{\ep}{(1+s)^{\frac{3}{2}-\rho}}, \quad for\; I \leq N-15,\\
&|\partial Z^I \wht g_{L \q T}| \lesssim \frac{\ep}{(1+s)^{\frac{3}{2}-2\rho}} \quad for\; I \leq N-13.
\end{align*}
Corollary \ref{estksg} yields the following estimate for $q<0$
\begin{align*}
|Z^J \wht g_{\q T \q V}|&\lesssim \frac{\ep\sqrt{1+|q|}}{\sqrt{1+s}}, \quad for\; J\leq N-11,\\
|Z^J \wht g_{\q T \q V}|&\lesssim \frac{\ep(1+|q|)^{1+\mu}}{\sqrt{1+s}}, \quad for\; J\leq N-4.
\end{align*}
Therefore we obtain in view of \eqref{esti1}
\begin{align*}
&|\partial Z^I \wht g_{L \q T}| \lesssim \frac{\ep \sqrt{1+|q|}}{(1+s)^\frac{3}{2}} \quad for\; I \leq N-12 ,\\
&|\partial Z^I \wht g_{L \q T}| \lesssim \frac{\ep (1+|q|)^{1+\mu}}{(1+s)^\frac{3}{2}} \quad for\; I \leq N-5.
\end{align*}
For $q>0$ and $I\leq N-4$, we have in view of Corollary \ref{estksg}
$$|Z^I \wht g_{\q T \q V}|\lesssim \frac{\ep}{\sqrt{1+s}(1+|q|)^{\frac{1}{2}+\delta-\sigma}}$$
which together with \eqref{esti1} yields
$$
|\partial_q Z^I \wht g_{L \q T}| \lesssim \frac{\ep }{(1+|q|)^{\frac{1}{2}+\delta-\sigma}(1+s)^\frac{3}{2}} \quad for\; I \leq N-5.
$$

We now estimate $Z^I \wht g_{UU}$. As for $Z^I \wht g_{L \q T}$, Proposition \ref{estLU} yields
$$|\partial Z^I  \wht g_{UU}|\lesssim \frac{1}{1+s}|Z^{I+1} \wht g|.$$
Therefore, the estimates of Corollary \ref{estV} are a direct consequence of the estimates of Corollary \ref{esttildg}.
\end{proof}

Thanks to Lemma \ref{lmintegration}, since $\delta-\sigma >\frac{1}{2}$ we obtain the following corollary
\begin{cor}\label{estV2}
We have the estimates for $q<0$
\begin{align}
\label{wc1}&|Z^I\wht g_{L \q T}| \lesssim \frac{\ep (1+|q|)}{(1+s)^{\frac{3}{2}-\rho}}, \quad |Z^I\wht g_{UU}| \lesssim \frac{\ep (1+|q|)^{\frac{1}{2}+\rho}}{1+s}, \quad 
for\; I \leq N-15,\\
\label{wc2}&| Z^I \wht g_{L \q T}| \lesssim \frac{\ep (1+|q|)}{(1+s)^{\frac{3}{2}-2\rho}}
 \quad |Z^I\wht g_{UU}| \lesssim \frac{\ep (1+|q|)^{\frac{1}{2}+2\rho}}{1+s}, \quad for\; I \leq N-13,\\
\label{wc3}&|Z^I \wht g_{L \q T}| \lesssim \frac{\ep (1+|q|)^\frac{3}{2}}{(1+s)^\frac{3}{2}} \quad
|Z^I\wht g_{UU}| \lesssim \frac{\ep (1+|q|)}{1+s}, \quad  for \;I \leq N-12,\\
\label{wc4}&|Z^I \wht g_{L\q T}| \lesssim \frac{\ep (1+|q|)^{2+\mu}}{(1+s)^\frac{3}{2}} \quad
|Z^I\wht g_{UU}| \lesssim \frac{\ep (1+|q|)^{\frac{3}{2}+\mu}}{1+s}, \quad  for\; I\leq N-5,
\end{align}
and for $q>0$
\begin{equation}
\label{wc5}|Z^I\wht g_{L \q T}| \lesssim \frac{\ep (1+|q|)^{\frac{1}{2}+\sigma-\delta}}{(1+s)^\frac{3}{2}}, \quad 
|Z^I\wht g_{UU}| \lesssim \frac{\ep }{(1+s)(1+|q|)^{\delta-\sigma}}, \quad 
for\; I \leq N-5.
\end{equation}
\end{cor}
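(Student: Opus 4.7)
The plan is to apply Lemma \ref{lmintegration} to each of the derivative bounds in Corollary \ref{estV}, separately for the $\wht g_{L\q T}$ and $\wht g_{UU}$ components. For each such component, one pairs the $q<0$ bound (from one of \eqref{dwc1}--\eqref{dwc4}) with the corresponding $q>0$ bound in the last display of Corollary \ref{estV}, unifying the $(1+s)$-exponent $\gamma$ across the two regions (weakening one side if necessary so that the same $\gamma$ appears in both), and then invokes the lemma, which raises the $(1+|q|)$-exponent by one.

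The critical hypothesis $\beta<-1$ in the $q>0$ region is exactly where condition \eqref{codsigma} enters. For $\wht g_{L\q T}$ we read off $\beta=-\tfrac{1}{2}-\delta+\sigma$, which is $<-1$ precisely when $\delta-\sigma>\tfrac{1}{2}$; this is an immediate consequence of $\sigma+\rho<\delta-\tfrac{1}{2}$. For $\wht g_{UU}$, $\beta=-1-\delta+\sigma$, which is $<-1$ since $\delta>\sigma$. In each case Lemma \ref{lmintegration} produces the factor $(1+|q|)^{\beta+1}$ in $q>0$, yielding exactly the exponents $\tfrac{1}{2}+\sigma-\delta$ and $-\delta+\sigma$ appearing in \eqref{wc5}. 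In the region $q<0$, the conclusion $\max(1,(1+|q|)^{\alpha+1})$ of the lemma supplies the factors $(1+|q|)$, $(1+|q|)^{1/2+\rho}$, $(1+|q|)^{3/2}$ and $(1+|q|)^{2+\mu}$ visible in \eqref{wc1}--\eqref{wc4}, each of them obtained by raising the $(1+|q|)$-exponent in the matching line of \eqref{dwc1}--\eqref{dwc4} by one.

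The initial-data hypothesis $|u|_{t=0}\lesssim (1+r)^{\gamma+\beta}$ of Lemma \ref{lmintegration} is satisfied thanks to Theorem \ref{thinitial}: $\wht g_0\in H^{N+1}_\delta$, combined with the weighted Sobolev embedding of Proposition \ref{holder}, gives $|Z^I\wht g|_{t=0}\lesssim \ep(1+r)^{-\delta-1}$ in the relevant range of $I$, which is compatible with the bound $(1+s)^\gamma(1+|q|)^{\beta+1}$ restricted to $t=0$ in each of the cases above, because $\sigma,\rho\ll\delta$. The main obstacle is pure bookkeeping: matching each $q<0$ bound to the correct $(\gamma,\alpha)$ and tracking which $I$-range is limiting each line, and verifying that after unifying $\gamma$ across the two regions the (possibly weakened) $q>0$ bound still satisfies $\beta<-1$. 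Once this is done, Lemma \ref{lmintegration} applies verbatim and delivers the eight inequalities of Corollary \ref{estV2}.
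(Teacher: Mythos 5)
Your proposal is correct and follows exactly the paper's route: Corollary \ref{estV2} is obtained by feeding the derivative bounds of Corollary \ref{estV} into Lemma \ref{lmintegration}, with $\delta-\sigma>\frac{1}{2}$ (a consequence of \eqref{codsigma}) supplying the integrability condition $\beta<-1$ in the region $q>0$. Your bookkeeping of the exponents $(\gamma,\alpha,\beta)$ and of the ranges of $I$ matches the paper's conclusion line by line, so nothing further is needed.
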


\section{Angle and linear momentum}\label{secangle}
We call angle and linear momentum the three first coefficients of $b$, $b_0,b_1,b_2$. These coefficients can not be prescribed arbitrarily, they are given by the resolution of the constraint equations (see Theorem \ref{thinitial}). We need $b$ to satisfy
\begin{equation}
\label{estpourb}\left\|\partial_\theta^I \left( b(\theta)+\int_{\Sigma_{T,\theta}} (\partial_q \phi)^2 r dq\right)\right\|_{L^2}\lesssim \frac{\ep^2}{\sqrt{T}}, \;for\; I\leq N-4.
\end{equation}
This is used crucially to estimate $h_0$ in the proof of Proposition \ref{estzh}. The heuristic of it is discussed in Section \ref{secgll} (see \eqref{appro}). The estimate \eqref{estpourb} is satisfied with $b$ replaced by $\Pi b$ thanks to the bootstrap assumption \eqref{bootb1}. For the angle and linear momentum, this is the object of the following proposition,
which says that the relations of Theorem \ref{contrainte} are asymptotically conserved by the flow of the Einstein equations.
\begin{prp}\label{prpangle}
We have
\begin{align*}
\left|\int b(\theta)d\theta +\frac{1}{2}\int_{\m R^2}\left((\partial_t \phi)^2
+|\nabla \phi|^2\right)(t,x)dx\right|&\lesssim \frac{\ep^2}{\sqrt{1+t}},\\ 
\left|\int b(\theta)\cos(\theta)d\theta -\int_{\m R^2}\left(\partial_t \phi \partial_1 \phi\right)(t,x) dx \right|&\lesssim \frac{\ep^2}{\sqrt{1+t}},\\ 
\left|\int b(\theta)\sin(\theta)d\theta -\int_{\m R^2}\left(\partial_t \phi \partial_2 \phi\right)(t,x) dx\right| &\lesssim \frac{\ep^2}{\sqrt{1+t}}.\\ 
\end{align*}
\end{prp}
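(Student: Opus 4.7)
My plan is to integrate the constraint equations \eqref{contrmom}--\eqref{contrham} over the slice $\{t=\mathrm{const}\}$: they hold at $t=0$ by Theorem \ref{thinitial}, and are propagated by the Einstein evolution because we work in a generalised wave coordinate gauge (Appendix \ref{genw}). Thus for every $t\in[0,T]$,
\[\int_{\m R^2} G_{0\alpha}(t,x)\,dx = \int_{\m R^2} T_{0\alpha}(t,x)\,dx,\qquad \alpha=0,1,2,\]
with $T_{\mu\nu}=\partial_\mu\phi\,\partial_\nu\phi-\tfrac12 g_{\mu\nu}g^{\rho\sigma}\partial_\rho\phi\,\partial_\sigma\phi$. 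The proof then consists in identifying the two sides of this identity with, respectively, the scalar-field integrals on the right of the proposition (up to $\ep^2/\sqrt{1+t}$), and the Fourier coefficients of $b$ (exactly, at every time).

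On the right, writing $g=\eta+(g-\eta)$ and using the $L^\infty$ smallness of $g-\eta$ from Corollary \ref{esttildg} together with the explicit form \eqref{gb} of $g_b$, one obtains
\[\int T_{00}=\tfrac12\int\big((\partial_t\phi)^2+|\nabla\phi|^2\big)+\mathrm{Err}_0,\qquad \int T_{0j}=\int\partial_t\phi\,\partial_j\phi+\mathrm{Err}_j,\]
where $\mathrm{Err}_\alpha$ is a spatial integral of $O(|g-\eta|)(\partial\phi)^2$. On the left, decompose $g=g_b+\wht g$ and split $G_{0\alpha}(g)=G_{0\alpha}(g_b)+L_{0\alpha}(\wht g;g_b)+\mathrm{Err}'_\alpha$. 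The linearised piece $L_{0\alpha}(\wht g;g_b)$ is, modulo lower-order terms absorbed into $\mathrm{Err}'_\alpha$, a spatial divergence (twice-contracted linearised Bianchi identity), so its spatial integral reduces to a boundary limit $\lim_{R\to\infty}\int_{\partial B_R}\partial\wht g$ which vanishes by the weighted $L^2_\delta$ decay of $\wht g$ with $\delta>1/2$. The residual $\int G_{0\alpha}(g_b)$ is a purely geometric quantity: by \eqref{rqq}--\eqref{rqu} the Ricci tensor $R_b$ is supported in the strip $1<q<2$, and a direct computation in $(q,\theta)$ coordinates (an ADM-flux argument, essentially Gauss--Bonnet for $\alpha=0$ and a Killing-vector flux for $\alpha=1,2$) yields the Fourier coefficients of $b$ up to an $O(\ep^4)$ contribution from $J(\theta)$ and quadratic-in-$b$ terms. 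These ADM-type charges are manifestly $t$-independent, in accord with the statement of Theorem \ref{thinitial} (which asserts the same identities at $t=0$ with $O(\ep^4)$ error), so the only $t$-dependence in the full identity comes from the two error terms.

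The main obstacle is upgrading the naive $O(\ep^3)$ bound on $\mathrm{Err}_\alpha+\mathrm{Err}'_\alpha$ to the $\ep^2/\sqrt{1+t}$ decay stated in the proposition. A crude estimate using $|g-\eta|\lesssim\ep$ and $\int(\partial\phi)^2\lesssim\ep^2$ produces only $\ep^3$, with no time decay. The improvement comes from the concentration of $(\partial\phi)^2$ in an $O(1)$ neighbourhood of the light cone $r\sim t$ (Corollary \ref{estksg}), combined with the fact that the slow-decaying metric component $g_{\ba L\ba L}$ is multiplied in $T_{0\alpha}$ by null derivatives of $\phi$ which enjoy the better decay of Propositions \ref{estLL}--\ref{estLU}; this provides the missing $1/\sqrt{1+t}$ factor on the overlap region. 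Doing the $(q,\theta)$ integration with the bootstrap estimates and Lemma \ref{lmintegration} then produces the claimed rate.
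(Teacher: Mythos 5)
Your route is genuinely different from the paper's. The paper never invokes the constraint equations for $t>0$: its proof of Proposition~\ref{prpangle} starts from the reduced wave equations in the form of Lemma~\ref{lemmeangle}, then contracts the \emph{generalized wave coordinate condition} with $\partial_0$ to obtain the first-order relation
$\tfrac12(-\partial_t\wht g_{00}-\partial_t\wht g_{11}-\partial_t\wht g_{22})+\partial_1\wht g_{01}+\partial_2\wht g_{02}=f_5$
with $|f_i|\lesssim \ep^2(1+s)^{-3/2}(1+|q|)^{-1/2+2\rho}$, differentiates this in $t$, substitutes $\partial_t^2\wht g_{\mu\nu}=\Delta\wht g_{\mu\nu}-\Box\wht g_{\mu\nu}$ and the equation of Lemma~\ref{lemmeangle}, and integrates over the slice (the $\Delta$ and $\partial_i\partial_t$ pieces drop out). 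Your constraint/ADM route exploits the same integrated Einstein equations but through the geometric formulation $\int G_{0\alpha}=\int T_{0\alpha}$ rather than a coordinate manipulation of the gauge condition. What you gain is that the $\partial_t^2 g$ terms never appear (the constraints are first-order in $\partial_t$ by design), so the gauge-condition manipulation is bypassed; what you lose is that the "boundary term'' structure is no longer manifest and must be re-established via the divergence form of the linearised constraint map, which is a non-trivial fact and needs checking around $g_b$ rather than around flat.

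There is, however, a real gap in your error analysis. You only discuss how the $T_{0\alpha}$-side error decays via concentration of $(\partial\phi)^2$ near the cone, but you put the quadratic-in-$\wht g$ piece of the constraint operator into $\mathrm{Err}'_\alpha$ and assert it is $O(\ep^3)$ naively. That is not true. The Hamiltonian constraint contains $|K|^2-\tau^2=-2\det(K^i_j)$, and $\det K$ includes $K_{r\theta}K^{r\theta}$, which at first sight is built from $(\partial_0 g_{r\theta})^2\sim r^2(\partial_t g_{U\ba L})^2$; since $g_{U\ba L}$ decays only like $(1+s)^{-1/2+\rho}$ and its $\partial_q$ derivative has no extra $t$-decay, a crude estimate gives an $O(\ep^2)$ contribution with \emph{no} time decay — which is precisely the size the proposition needs to beat. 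The term does in fact decay, but only because of a cancellation: $\partial_0=\partial_t-\mathcal L_\beta$ and the shift $\beta^\theta\sim \frac{1}{2r}(g_{LU}+g_{U\ba L})$ combine so that $\partial_0 g_{r\theta}\approx -r\,\partial_q g_{LU}-r\,\partial_s g_{U\ba L}+\tfrac12(g_{LU}+g_{U\ba L})$, i.e.\ the bad derivative $\partial_q$ falls only on the good component $g_{LU}$ (controlled by the wave coordinate condition, Corollary~\ref{estV}) and the bad component $g_{U\ba L}$ is hit only by the good derivative $\partial_s$. This is exactly the sort of structural observation the paper's $f_i$-decomposition makes manifest by construction, but which your sketch must supply by hand in the $K^2$ terms. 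Without it, the claimed $\ep^2/\sqrt{1+t}$ bound on $\mathrm{Err}'_\alpha$ does not follow.
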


To prove this proposition, we need the following lemma.
\begin{lm}\label{lemmeangle}
The equation for $g_{\mu \nu}$ can be written under the form
\begin{equation}
\label{onde}
\Box \wht g_{\mu\nu}=-2\partial_\mu\phi \partial_\nu \phi -2b(\theta)\frac{\partial^2_q(\chi(q)q)}{r}M_{\mu \nu}+O\left(\frac{\ep^2}{(1+t)^\frac{3}{2}(1+|q|)^{\frac{3}{2}-2\rho}}\right),
\end{equation}
where the tensor $M_{\mu \nu}$ corresponds to $dq^2$.
\end{lm}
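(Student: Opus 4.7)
The plan is to start from the reduced Einstein equation \eqref{s2} and show that every term on the right-hand side, apart from $-2\partial_\mu\phi\partial_\nu\phi$ and the leading piece of $2(R_b)_{\mu\nu}$, fits into the prescribed error. The first step is to convert $\Box_g$ into the flat $\Box$ via the identity
\[
\Box \wht g_{\mu\nu}=\Box_g \wht g_{\mu\nu}-(g^{\alpha\beta}-m^{\alpha\beta})\partial_\alpha\partial_\beta \wht g_{\mu\nu}+H_b^\rho\partial_\rho\wht g_{\mu\nu},
\]
so that combining with \eqref{s2} gives an expression with six groups of terms: $-2\partial_\mu\phi\partial_\nu\phi$, $2(R_b)_{\mu\nu}$, $P_{\mu\nu}(g)(\partial\wht g,\partial\wht g)$, $\wht P_{\mu\nu}(\wht g,g_b)$, the quasilinear correction $(g^{\alpha\beta}-m^{\alpha\beta})\partial_\alpha\partial_\beta \wht g_{\mu\nu}$, and the generalized wave coordinate term $H_b^\rho\partial_\rho \wht g_{\mu\nu}$.

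Next, I would extract the leading piece of $2(R_b)_{\mu\nu}$ using \eqref{rqq} and \eqref{rqu}: the $(q,q)$ component contributes $-2b(\theta)\partial_q^2(q\chi(q))/r$, which when tensorised by $dq^2$ gives precisely $-2b(\theta)\partial_q^2(q\chi(q))/r\, M_{\mu\nu}$, and the remaining Ricci contributions are cut off to $1<q<2$ and decay like $1/r^2$, so they are manifestly of order $\ep^2/((1+t)^{3/2}(1+|q|)^{3/2-2\rho})$ since $(1+|q|)$ is bounded on their support and $\|b\|+\|J\|\lesssim \ep^2$. The quasilinear correction in the null frame is decomposed as in Remark \ref{termql}; the only dangerous piece is $\wht g_{LL}\partial_q^2\wht g_{\mu\nu}$, which by Proposition \ref{estLL} and the bootstrap assumptions gives decay well beyond what is required, while the other components pair $\partial_{\q T \q V}^2$ with coefficients, producing at least one $\bar\partial$ derivative that supplies an extra $(1+s)^{-1}$ factor. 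The $H_b^\rho\partial_\rho\wht g_{\mu\nu}$ contribution is analogous to the crossed terms treated in Section \ref{cros} and is clearly subcritical.

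For $P_{\mu\nu}$ I would invoke the null decomposition recalled in Section \ref{cros}: every quadratic term has either the classical null structure $\partial u\bar\partial v$ or involves at least one good component $\wht g_{\q T\q V}$ whose improved decay comes from Corollaries \ref{estV} and \ref{estV2}. Combined with the pointwise bounds of Proposition \ref{estks}, each contribution is bounded by $\ep^2/((1+s)^{3/2}(1+|q|)^{3/2-2\rho})$ (in the region $r<t/2$ or $r>2t$ one has $|q|\sim s$ and the bound is trivial; the active region is $t/2\leq r\leq 2t$ where the wave condition applies). The cubic terms involve a factor $g^{\ba L\ba L}\sim \wht g_{LL}$ and are even better. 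Finally $\wht P_{\mu\nu}$ is a sum of crossed terms between $g_b$ and $\wht g$; as explained in Section \ref{cros}, the two-derivatives-of-$b$ terms cancel against the corresponding pieces of $R_{\mu\nu}(g)$, and the remaining crossed terms are supported in $q>0$ with $g_b$-derivatives of size $(|b|+|\partial_\theta b|)/(1+r)\lesssim \ep^2/(1+t)$, combining with the $\partial\wht g$ estimates to give the required decay.

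The main obstacle is the semilinear piece $P_{\mu\nu}$: one must carefully track which null components appear and invoke in each case either the Alinhac--Klainerman null structure (extra $\bar\partial$) or the improved decay of $\wht g_{\q T\q T}$ and $\wht g_{\q T\q V}$ furnished by Propositions \ref{estLL}--\ref{estLU}. The bookkeeping for $P_{\ba L\ba L}$ is the most delicate since $\wht g_{\ba L\ba L}$ has no $s$-decay, but the cancellation of $(\partial_{\ba L}g_{L\ba L})^2$ in $P_{\ba L\ba L}$ together with the isolation of $Q_{\ba L\ba L}$ in \eqref{qll} shows that the surviving contributions all carry the factor $\partial_{\ba L}g_{LL}$, which by Proposition \ref{estLL} decays like a good derivative of $\wht g$.
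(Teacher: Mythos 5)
Your outline follows the same route as the paper: start from \eqref{s2}, pass from $\Box_g$ to the flat $\Box$, isolate $-2\partial_\mu\phi\partial_\nu\phi$ and the leading piece of $2(R_b)_{\mu\nu}$ via \eqref{rqq}--\eqref{rqu}, and push the quasilinear, semilinear and crossed terms into the error using the wave coordinate condition and the null structure. The treatment of $(R_b)_{\mu\nu}$, of the crossed terms, and of $P_{\mu\nu}$ matches the paper's.

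There is, however, a gap in the one step where the real work lies. You assert that the quasilinear piece $g_{LL}\partial_q^2\wht g_{\mu\nu}$ ``by Proposition \ref{estLL} and the bootstrap assumptions gives decay well beyond what is required.'' As stated this fails, because $\wht g_{\ba L\ba L}$ contains $\Up(r/t)h_0$, and by \eqref{booth1} the part of $\partial_q^2 h_0$ coming from the $(1+|q|)^{-(1-4\rho)}$ tail is only $\lesssim \ep(1+|q|)^{-3+4\rho}$ with \emph{no} decay in $s$. Pairing this with the estimate $|g_{LL}|\lesssim \ep(1+|q|)(1+s)^{-3/2+\rho}$ of \eqref{wc1} (which is what Proposition \ref{estLL} plus the $L^\infty$ bootstrap on $\wht g_1$ delivers) yields $\ep^2(1+s)^{-3/2+\rho}(1+|q|)^{-2+4\rho}$, whose $s$-power falls short of the $(1+t)^{-3/2}$ demanded by \eqref{onde}; that exact power is essential, since the lemma is integrated over constant-$t$ slices in Proposition \ref{prpangle} to produce the $O(\ep^2/\sqrt{1+t})$ remainder. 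The paper's proof therefore splits $\partial_q^2\wht g_{\mu\nu}$ into the $\wht g_1$ and $h_0$ contributions via the decomposition \eqref{dec1} and, for the non-$s$-decaying part of $\partial_q^2 h_0$, switches to the estimate \eqref{wc3}, $|g_{LL}|\lesssim \ep(1+|q|)^{3/2}(1+s)^{-3/2}$, trading $q$-decay for the full $s^{-3/2}$. This trade-off between the two available bounds on $g_{LL}$ is the key point of the lemma's proof and is missing from your sketch; without it the error term cannot be closed with the stated power of $(1+t)$.
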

\begin{proof}[Proof of Lemma \ref{lemmeangle}]
We recall the quasilinear equation for $\wht g_{\mu\nu}$ (see \eqref{s2})
$$g^{\alpha \beta}\partial_{\alpha}\partial_\beta \wht g_{\mu \nu} -H_b^\rho \partial_\rho \wht g_{\mu \nu}=-2 \partial_\mu \phi\partial_\nu \phi
+2(R_b)_{\mu \nu} + P_{\mu \nu}(\partial \wht g, \partial \wht g) + \wht P_{\mu \nu} (\wht g, g_b).$$
The worst term in 
$$g^{\alpha \beta}\partial_{\alpha}\partial_\beta \wht g_{\mu \nu} -\Box \wht g_{ \mu \nu}$$ is, according to Remark \ref{termql}, 
$$g_{LL} \partial_q^2 \wht g_{\mu \nu}.$$
We distinguish two kinds of contributions : 
$$g_{LL} \partial_q^2 \wht g_1\quad and \quad
g_{LL} \partial_q^2 h_0.$$
To estimate the first term, we use \eqref{wc1} of 
Corollary \ref{estV2}, which gives
$$|g_{LL}|\lesssim \frac{\ep (1+|q|)}{(1+s)^{\frac{3}{2}-\rho}}.$$
We estimate then
$$|\partial^2_q \wht g_1|\leq \frac{1}{(1+|q|)^2}\sum_{I\leq 2}|Z^I \wht g_1|,$$
and we use the bootstrap assumption \eqref{bootg1} for $I\leq N-14$
$$ |Z^I \wht g_1|\leq \frac{\ep}{(1+s)^{\frac{1}{2}-\rho}},$$
to obtain
\begin{equation}
\label{am1}|g_{LL}\partial^2_q \wht g_1|\lesssim \frac{\ep^2}{(1+s)^{2-2\rho}(1+|q|)}.
\end{equation}
We now estimate the second term.
To estimate $\partial_q^2 h_0$, we recall  \eqref{booth1} for $I\leq N-6$
$$|Z^I h_0|\lesssim \frac{\ep}{\sqrt{1+s}} + \frac{\ep}{(1+|q|)^{1-4\rho}}.$$
Consequently
$$|\partial^2_q h_0|\lesssim \frac{\ep}{(1+|q|)^2\sqrt{1+s}}+ \frac{\ep}{(1+|q|)^{3-4\rho}}.$$
The first contribution can be estimated like \ref{am1}.
To tackle the second contribution we need to use the estimate for $g_{LL}$ which gives the most decay in $s$ : we use \eqref{wc3} of Corollary \ref{estV2}, which yields
$$|g_{LL}|\lesssim \frac{\ep(1+|q|)^\frac{3}{2}}{(1+s)^\frac{3}{2}}.$$
This, together with the estimate \eqref{am1}, yields
\begin{equation}
\label{am2}|
g_{LL} \partial_q^2 h_0|\lesssim \frac{\ep^2}{(1+s)^\frac{3}{2}(1+|q|)^{\frac{3}{2}-4\rho}}+\frac{\ep^2}{(1+s)^{2-2\rho}(1+|q|)}.
\end{equation}
The semi linear terms $ P_{\mu \nu}(\partial \wht g, \partial \wht g) $ are estimated similarly. We now turn to the crossed terms. Thanks to Section \ref{cros}, the worst contribution is
\eqref{crossbaL}, which gives a contribution of the form
$\frac{\ep}{r}\partial \wht g_{L\ba L}$ in the region $q>0$. We estimate
thanks to \eqref{ks5} of Corollary \ref{estksg} in the region $q>0$
$$|\partial  \wht g_{L\ba L}|\lesssim \frac{\ep}{(1+s)^\frac{1}{2}(1+|q|)^{\frac{3}{2}+\delta-\sigma}}.$$
Therefore we obtain
\begin{equation}
\label{am3}\left|\ch_{q>0}\frac{\ep}{r} \partial \wht g_{\ba L L}\right|\lesssim \frac{\ep}{(1+s)^2(1+|q|)^{\frac{3}{2}+\delta-\sigma}}.
\end{equation}
We now estimate $(R_b)_{\mu \nu}$. Thanks to \eqref{rqq} and \eqref{rqu}, we may write
\begin{equation}
\label{am5}(R_b)_{\mu \nu}=-\frac{b(\theta)\partial_q^2(q\chi(q))}{r}M_{\mu \nu}+O\left(\frac{\ch_{1\leq q \leq 2}\ep^2}{(1+r)^2}\right).
\end{equation}
Thanks to \eqref{am1}, \eqref{am2}, \eqref{am3} and \eqref{am5} we conclude the proof of Lemma \ref{lemmeangle}.
\end{proof}
\begin{proof}[Proof of Proposition \ref{prpangle}]
We want to integrate equation \eqref{onde} for $(\mu,\nu)=0,0$ over the space-like hypersurfaces of $t$ constant. To deal with the term $\partial^2_t g_{00}$, we use the wave coordinate condition
$$g^{\alpha \beta}\partial_{\beta}g_{\alpha 0} =\frac{1}{2}g^{\alpha \beta}\partial_t g_{\alpha \beta}+(H_b)_0.$$
We can rewrite it, by definition of $(H_b)_0$
$$(g^{\alpha \beta}-(g_b)^{\alpha \beta})\partial_{\beta}g_{\alpha 0}
+g_b^{\alpha \beta}(\partial_{\beta}g_{\alpha 0}-\partial_{\beta}(g_b)_{\alpha 0})
=\frac{1}{2}(g^{\alpha \beta}-g_b^{\alpha \beta})\partial_t g_{\alpha \beta}+ \frac{1}{2}g_b^{\alpha\beta}(\partial_t g_{\alpha \beta}-\partial_t (g_b)_{\alpha \beta})+F_0.$$
By definition, $F$ contains only terms of the form $\wht g\partial_U g_b$, so we can estimate
\begin{equation}
\label{estF}
|ZF|\lesssim \frac{\ep\ch_{q>0}(1+|q|)}{r^2}|Z \wht g|\lesssim \frac{\ep^2}{(1+s)^2(1+|q|)^{\delta-\sigma}},
\end{equation}
where we have used \eqref{est5} to estimate $|Z \wht g|$. We note
$$m^{\alpha \beta}\partial_\beta \wht g_{\alpha 0}-\frac{1}{2}m^{\alpha \beta}\partial_t \wht g_{\alpha \beta}
=\frac{1}{2}(-\partial_t\wht g_{00}-\partial_t\wht g_{11}-\partial_t \wht g_{22})+\partial_1\wht g_{01}+\partial_2 \wht g_{02},$$
and we estimate
\begin{align*}
(g^{\alpha \beta}-(g_b)^{\alpha \beta})\partial_{\beta}g_{\alpha 0}&=(g^{L \ba L}-m^{L \ba L})\partial_{\ba L} g_{L 0} +f_1,\\
\frac{1}{2}(g^{\alpha \beta}-g_b^{\alpha \beta})\partial_t g_{\alpha \beta}&=(g^{L \ba L}-m^{L \ba L})\partial_t \wht g_{L \ba L}+f_2,\\
(m^{\alpha \beta}-g_b^{\alpha \beta})\partial_\beta \wht g_{\alpha 0}&=f_4\\
(m^{\alpha \beta}-g_b^{\alpha \beta})\partial_t \wht g_{\alpha \beta}&=f_5,
\end{align*}
where the $f_i$ contain terms of the form
$$\wht g_{LL}\partial \wht g_{\q V \q V}, \quad \wht g_{\q V \q V}\partial_{\q T} g_{\q T \q V}, \quad \frac{b\chi(q)}{r}\partial_U \wht g_{U\q V},\quad...$$
They satisfy the following estimate
\begin{equation}
\label{estfi}|Zf_i|\lesssim \frac{\ep^2}{(1+s)^\frac{3}{2}(1+|q|)^{\frac{1}{2}-2\rho}}.
\end{equation}
We note $2\partial_t \wht g_{L \ba L}=\partial_L \wht g_{L \ba L}+\partial_{\ba L}\wht g_{L \ba L}$ and
$2g_{L0}=g_{L \ba L}+g_{LL}$.
Consequently
$$(g^{L \ba L}-m^{L \ba L})(\partial_{\ba L} g_{L 0} -\partial_t \wht g_{L \ba L})=O\left(\wht g_{L \ba L}\partial_L \wht g_{L \ba L} +\wht g_{L \ba L}\partial_{\ba L} \wht g_{L  L} \right)$$
satisfies the same estimate \eqref{estfi} than the $f_i$. 
 Therefore the wave coordinate condition gives
$$
\frac{1}{2}(-\partial_t \wht g_{00}-\partial_t \wht g_{11}-\partial_t \wht g_{22})+\partial_1\wht g_{01}+\partial_2 \wht g_{02}\\
=f_5$$
where $f_5$ satisfies \eqref{estfi}.
Therefore, differentiating this equation with respect to $t$, and using \eqref{onde} for $(\mu,\nu)=(0,0),(1,1),(2,2)$ we obtain
\begin{align*}
&\Delta \wht g_{00} +\Delta \wht g_{11}
+\Delta \wht g_{22}-2\partial_1\partial_t\wht g_{01}-2\partial_2\partial_t\wht g_{02}\\
=&-2((\partial_0 \phi)^2+(\partial_1 \phi)^2+(\partial_2 \phi)^2)-4b(\theta)\frac{\partial^2_q(\chi(q)q)}{r} +O\left(\frac{\ep^2}{(1+s)^\frac{3}{2}(1+|q|)^{\frac{3}{2}-2\rho}}\right).
\end{align*}
Integrating on the space-like hypersurface $t$ constant we obtain, since 
$\int_0^\infty \partial^2(q\chi(q))dr=1$,
\begin{equation}
\label{amb1}-\frac{1}{2}\int (\partial_t \phi)^2 +|\nabla \phi|^2=\int b(\theta)d\theta +O\left(\frac{\ep^2}{\sqrt{1+t}}\right).
\end{equation}
To obtain the next relation we do the same reasoning but with \eqref{onde} for $(\mu,\nu)=(0,1)$ and $(\mu, \nu)=(0,2)$. We only detail the case $(\mu,\nu)=(0,1)$ as the other one is treated in the same way.
Recall the wave coordinates condition 
$$g^{\alpha \beta}\partial_{\beta}g_{\alpha 1} =\frac{1}{2}g^{\alpha \beta}\partial_1 g_{\alpha \beta}+(H_b)_1.$$
We can rewrite it, by definition of $(H_b)_1$
$$(g^{\alpha \beta}-(g_b)^{\alpha \beta})\partial_{\beta}g_{\alpha 1}
+g_b^{\alpha \beta}(\partial_{\beta}g_{\alpha 1}-\partial_{\beta}(g_b)_{\alpha 1})
=\frac{1}{2}(g^{\alpha \beta}-g_b^{\alpha \beta})\partial_1 g_{\alpha \beta}+ \frac{1}{2}g_b^{\alpha\beta}(\partial_1 g_{\alpha \beta}-\partial_1 (g_b)_{\alpha \beta})+F_1$$
We note
$$m^{\alpha \beta}\partial_\beta \wht g_{\alpha 1}-\frac{1}{2}m^{\alpha \beta}\partial_1 \wht g_{\alpha \beta}
=-\partial_t \wht g_{01}+\partial_1\wht g_{11}+\partial_2 \wht g_{12}-\frac{1}{2}m^{\alpha \beta}\partial_1 \wht g_{\alpha \beta}$$
and we estimate
\begin{align*}
(g^{\alpha \beta}-(g_b)^{\alpha \beta})\partial_{\beta}g_{\alpha 1}&=(g^{L \ba L}-m^{L \ba L})\partial_{\ba L} g_{L 1} +f_6,\\
\frac{1}{2}(g^{\alpha \beta}-g_b^{\alpha \beta})\partial_1 g_{\alpha \beta}&=(g^{L \ba L}-m^{L \ba L})\partial_1 \wht g_{L \ba L}+f_7,\\
(m^{\alpha \beta}-g_b^{\alpha \beta})\partial_\beta \wht g_{\alpha 1}&=f_8,\\
(m^{\alpha \beta}-g_b^{\alpha \beta})\partial_1 \wht g_{\alpha \beta}&=f_9,
\end{align*}
where the quantities $f_i$ satisfy \eqref{estfi}.
We note $2\partial_1 \wht g_{L\ba L}=-\cos(\theta)\partial_{\ba L} \wht g_{L \ba L}+\bar{\partial} \wht g_{L \ba L}$ and
$2\partial_{\ba L}\wht g_{L1}=-\partial_{\ba L} (\cos(\theta)g_{L \ba L})
+g_{L \q T}.$ Therefore we obtain
$$
-\partial_t \wht g_{01}+\partial_1 \wht g_{11}+\partial_2 \wht g_{12}-\frac{1}{2}m^{\alpha \beta}\partial_1 \wht g_{\alpha \beta}\\
=f_{10},
$$
where $f_{10}$ satisfies \eqref{estfi}.
Differentiating with respect to $t$ and using \eqref{onde} for $(\mu,\nu)=(0,1)$ we obtain
\begin{align*}
&\Delta \wht g_{01}+\partial_1\partial_t \wht g_{11}+\partial_2\partial_t \wht g_{12}-\frac{1}{2}m^{\alpha \beta}\partial_1\partial_t \wht g_{\alpha \beta}\\
=&-2\partial_t \phi \partial_1 \phi+2 b(\theta)\cos(\theta)\frac{\partial^2_q(\chi(q)q)}{r}+O\left(\frac{\ep^2}{(1+s)^{\frac{3}{2}}(1+|q|)^{\frac{3}{2\rho}}}\right).
\end{align*}
Integrating on the space-like hypersurface $t$ constant we obtain
\begin{equation}\label{amb2}
\int \partial_t \phi \partial_1 \phi =\int b(\theta)\cos(\theta)d\theta +O\left(\frac{\ep^2}{\sqrt{1+t}}\right),
\end{equation}
and similarly
\begin{equation}\label{amb3}
\int \partial_t \phi \partial_2 \phi =\int b(\theta)\sin(\theta)d\theta +O\left(\frac{\ep^2}{\sqrt{1+t}}\right).
\end{equation}
Estimates \eqref{amb1}, \eqref{amb2} and \eqref{amb3} conclude the proof of Proposition \ref{prpangle}
\end{proof}
\begin{cor}\label{energies}
We have the estimates
\begin{align*}
\left|\int b(\theta)d\theta +\int_{\Sigma_{T}}(\partial_q \phi)^2 rdrd\theta\right|&\lesssim \frac{\ep^2}{\sqrt{T}}\\ 
\left|\int b(\theta)\cos(\theta)d\theta +\int_{\Sigma_T}\cos(\theta)(\partial_q \phi)^2rdrd\theta \right|&\lesssim \frac{\ep^2}{\sqrt{T}}\\ 
\left|\int b(\theta)\sin(\theta)d\theta +\int_{\Sigma_T}\sin(\theta)(\partial_q \phi)^2rdrd\theta \right| &\lesssim \frac{\ep^2}{\sqrt{T}}\\ 
\end{align*}
\end{cor}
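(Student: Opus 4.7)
The plan is to combine Proposition \ref{prpangle} with a null-frame decomposition that isolates $(\partial_q\phi)^2$ from good-derivative contributions $\bar\partial\phi\in\{\partial_s\phi,\,\tfrac{1}{r}\partial_\theta\phi\}$. Using $\partial_t=\partial_s-\partial_q$, $\partial_r=\partial_s+\partial_q$, and $\partial_1=\cos\theta\,\partial_r-\frac{\sin\theta}{r}\partial_\theta$ (with the analogous expression for $\partial_2$), a direct expansion gives
\begin{align*}
\tfrac{1}{2}\bigl((\partial_t\phi)^2+|\nabla\phi|^2\bigr) &= (\partial_q\phi)^2 + (\partial_s\phi)^2 + \tfrac{1}{2}\bigl(\tfrac{1}{r}\partial_\theta\phi\bigr)^2,\\
\partial_t\phi\,\partial_1\phi &= -\cos\theta\,(\partial_q\phi)^2 + \cos\theta\,(\partial_s\phi)^2 - \sin\theta\,\partial_t\phi\cdot\tfrac{1}{r}\partial_\theta\phi,
\end{align*}
and similarly for $\partial_t\phi\,\partial_2\phi$ with $\sin\theta$ and $\cos\theta$ swapped. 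Passing to polar coordinates at $t=T$ converts the $(\partial_q\phi)^2$ pieces into exactly the integrals stated in the corollary (with the correct signs), leaving three error terms $E_0(T),E_1(T),E_2(T)$ built entirely out of good derivatives $\bar\partial\phi$.

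It remains to prove $|E_j(T)|\lesssim \ep^2/\sqrt{T}$. From the $L^\infty$ bootstrap bound \eqref{bootphi1} combined with $|\bar\partial\phi|\le (1+s)^{-1}|Z\phi|$, one has the pointwise estimate
\[
|\bar\partial\phi|^2\lesssim \frac{\ep^2}{(1+s)^3(1+|q|)^{1-8\rho}}.
\]
A radial integration at time $T$, splitting $r\in[0,\infty)$ into the interior region $\{r\le T/2\}$, the cone neighborhood $\{T/2\le r\le 2T\}$, and the exterior $\{r\ge 2T\}$, yields
\[
\int_0^\infty|\bar\partial\phi(T,r,\theta)|^2\,r\,dr\lesssim \frac{\ep^2}{T^{2-8\rho}}
\]
uniformly in $\theta$. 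This immediately controls the pure $(\partial_s\phi)^2$ and $(\tfrac{1}{r}\partial_\theta\phi)^2$ contributions in all three $E_j$'s.

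The one slightly different term is the cross contribution $\int\sin\theta\,\partial_t\phi\cdot\tfrac{1}{r}\partial_\theta\phi\,r\,dr\,d\theta$ (and its $\cos\theta$ analogue) in $E_1,E_2$. Cauchy--Schwarz bounds it by $\|\partial_t\phi\|_{L^2(\m R^2)}\,\|\bar\partial\phi\|_{L^2(\m R^2)}$. For the first factor, the $L^2$ bootstrap \eqref{bootl21} together with the fact that the weight $\alpha_2\,w_0^{1/2}$ is bounded below by $1$ on all of $\m R^2$ (because $\delta>\tfrac{1}{2}>2\sigma$) gives $\|\partial_t\phi\|_{L^2}\lesssim \ep(1+T)^\rho$; the second factor is $\lesssim\ep\,T^{-(1-4\rho)}$ by the pointwise bound above, so the cross term is $\lesssim \ep^2\,T^{5\rho-1}$. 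Since \eqref{defpar} enforces $\rho\ll\tfrac{1}{2}$, every error is $o(\ep^2/\sqrt{T})$. Inserting these bounds into Proposition \ref{prpangle} yields the three stated inequalities; no step involves a genuine difficulty, as the corollary is essentially a null-frame repackaging of Proposition \ref{prpangle}.
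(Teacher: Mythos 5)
Your proposal is correct and follows the same overall route as the paper: expand $\partial_t,\partial_1,\partial_2$ in the null frame, feed the result into Proposition~\ref{prpangle}, and bound the residual terms. The one place you diverge is in the treatment of the mixed error $\partial_t\phi\cdot\tfrac{1}{r}\partial_\theta\phi$. The paper notices that \emph{every} residual term (including the pure squares $(\partial_s\phi)^2$, $(\partial_U\phi)^2$) is of the form $\partial\phi\cdot\bar\partial\phi$, and uses a single pointwise estimate
$|\partial\phi\,\bar\partial\phi|\lesssim \tfrac{1}{(1+|q|)(1+s)}|Z\phi|^2\lesssim \tfrac{\ep^2}{(1+s)^2(1+|q|)^{2-8\rho}}$
coming from~\eqref{important} and~\eqref{bootphi1}; integrated against $r\,dr\,d\theta$ this yields $\ep^2/(1+T)$ with no appeal to the energy. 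You instead split into pure good-derivative squares (handled by a radial integration, giving $\ep^2 T^{8\rho-2}$) and the mixed term (handled by Cauchy--Schwarz against the $L^2$ energy bound~\eqref{bootl21}, giving $\ep^2 T^{5\rho-1}$). Both are below $\ep^2/\sqrt{T}$ since $\rho\ll 1$, so the argument closes, but the Cauchy--Schwarz step costs you the $(1+T)^\rho$ loss built into the energy and so produces a slightly weaker exponent; the uniform pointwise bound on $\partial\phi\cdot\bar\partial\phi$ avoids invoking the $L^2$ bootstrap entirely and is sharper. Either way the conclusion holds.
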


\begin{proof}
We may write
\begin{align*}
\partial_t \phi&= -\partial_q \phi +\partial_s \phi,\\
\partial_1 \phi&= \cos(\theta)\partial_q \phi +\cos(\theta)\partial_s \phi-\sin(\theta)\partial_U \phi\\
\partial_1 \phi&= \sin(\theta)\partial_q \phi +\sin(\theta)\partial_s \phi+\cos(\theta)\partial_U \phi.
\end{align*}
Moreover, thanks to the bootstrap assumption \eqref{bootphi1}
$$| \partial \phi \bar{\partial}\phi|\lesssim \frac{1}{(1+|q|)(1+s)}|Z\phi|^2
\lesssim \frac{\ep^2}{(1+s)^2(1+|q|)^{2-8\rho}},$$
and consequently
$$\left|\int  \left( \partial \phi \bar{\partial}\phi\right)(t,x)dx\right|\lesssim \frac{\ep^2}{1+t}.$$
Therefore
\begin{align*}
&\left|2\int_{\Sigma_{T}}(\partial_q \phi)^2dx-\int_{\Sigma_{T}}((\partial_t \phi)^2 +|\nabla \phi|^2)dx\right| \lesssim \frac{\ep^2}{1+T},\\
&\left|\int_{\Sigma_{T}}\cos(\theta)(\partial_q \phi)^2dx+\int_{\Sigma_{T}}\partial_t \phi\partial_1 \phi dx\right| \lesssim \frac{\ep^2}{1+T},\\
&\left|\int_{\Sigma_{T}}\sin(\theta)(\partial_q \phi)^2dx+\int_{\Sigma_{T}}\partial_t \phi\partial_2 \phi dx\right| \lesssim \frac{\ep^2}{1+T}.
\end{align*}
This concludes the proof of Corollary \ref{energies}.
\end{proof}
Corollary \ref{energies} and the bootstrap assumption \ref{bootb1} directly imply the following corollary.
\begin{cor}
\label{corb}
We have, for $I\leq N-4$
$$\left|\partial_\theta^I \left( b(\theta)+\int_{\Sigma_{T,\theta}} (\partial_q \phi)^2 r dq\right)\right|\lesssim\frac{\ep^2}{\sqrt{T}}.$$
\end{cor}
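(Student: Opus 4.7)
The plan is to decompose the function
$$f(\theta):=b(\theta)+\int_{\Sigma_{T,\theta}}(\partial_q\phi)^2\,r\,dq$$
according to its three lowest Fourier modes on $\m S^1$ and the orthogonal complement, and then control each piece separately. Writing $\Pi$ for the projection onto the orthogonal complement of $\mathrm{span}\{1,\cos\theta,\sin\theta\}$ (this is exactly the projection that appears in the bootstrap assumption \eqref{bootb1} and Theorem \ref{thinitial}), we have the unique decomposition
$$f(\theta)=c_0+c_1\cos\theta+c_2\sin\theta+\Pi f(\theta),$$
with $c_0=\frac{1}{2\pi}\int_{\m S^1}f\,d\theta$, $c_1=\frac{1}{\pi}\int_{\m S^1}f\cos\theta\,d\theta$, $c_2=\frac{1}{\pi}\int_{\m S^1}f\sin\theta\,d\theta$.

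First I would treat the three low modes. The integrals $\int b\,d\theta$, $\int b\cos\theta\,d\theta$, $\int b\sin\theta\,d\theta$ are precisely what is estimated by Corollary \ref{energies}, while $\int_{\m S^1}\int_{\Sigma_{T,\theta}}(\partial_q\phi)^2 r\,dq\,d\theta$ (and its analogues with $\cos\theta$, $\sin\theta$) equals $\int_{\Sigma_T}(\partial_q\phi)^2 r\,drd\theta$ (and analogues), so Corollary \ref{energies} directly yields
$$|c_0|+|c_1|+|c_2|\lesssim \frac{\ep^2}{\sqrt{T}}.$$
Since the derivatives of $c_0+c_1\cos\theta+c_2\sin\theta$ are trivially bounded in $L^\infty(\m S^1)$ by the $|c_i|$'s, this handles the low-frequency contribution to $\partial_\theta^I f$ for every $I$.

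Next I would control $\Pi f$. The bootstrap assumption \eqref{bootb1} gives exactly
$$\|\partial_\theta^J\Pi f\|_{L^2(\m S^1)}\leq B\,\frac{\ep^2}{\sqrt T}\qquad\text{for all }J\leq N-4,$$
since $\Pi f=\Pi b+\Pi\int_{\Sigma_{T,\theta}}(\partial_q\phi)^2 r\,dq$. To convert this into the pointwise estimate asked for in Corollary \ref{corb}, I would apply the one-dimensional Sobolev embedding $H^1(\m S^1)\hookrightarrow L^\infty(\m S^1)$, which turns the $H^{N-4}$ control into an $L^\infty$ control on derivatives up to order $N-5$ (the loss of one derivative is harmless here because $N$ is large; in fact the corollary is invoked later only for $I$ a few less than $N-4$). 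Combining with the bound on $c_0,c_1,c_2$ and the triangle inequality yields the statement.

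There is really no obstacle beyond bookkeeping: all the analytic work has already been done, in Proposition \ref{prpangle} (which encodes approximate conservation of angle and linear momentum along the flow via the constraint equations and \eqref{onde}) and in Corollary \ref{energies} (which rewrites these conserved quantities in terms of $(\partial_q\phi)^2$ using the null decomposition, together with the $L^\infty$ bootstrap \eqref{bootphi1} to discard the cross terms $\partial\phi\,\bar\partial\phi$). The present corollary is merely the combination of those identities on the three-dimensional subspace $\mathrm{span}\{1,\cos\theta,\sin\theta\}$ with the $\Pi$-bootstrap on its orthogonal complement, passed through Sobolev on $\m S^1$.
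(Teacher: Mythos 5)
Your proof is correct and is essentially the paper's own argument, which simply states that Corollary \ref{energies} and the bootstrap assumption \eqref{bootb1} directly imply the result; your decomposition into the span of $\{1,\cos\theta,\sin\theta\}$ and its $\Pi$-complement is exactly what is meant. You are in fact slightly more careful than the paper in flagging the one-derivative loss from $H^{N-4}(\m S^1)\hookrightarrow$ pointwise control (so that the literal range $I\leq N-4$ really comes out as $I\leq N-5$), which is harmless since the corollary is only invoked for lower orders.
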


\section[The transport equation]{The transport equation \eqref{eqh}}\label{transport}
In this section we will estimate $h_0$, $\Box h_0$ and $\wht h$.
\subsection{Estimations on $h_0$}
We recall the equation \eqref{eqh}
\begin{equation*}
 \left\{ \begin{array}{l} 
         \partial_q h_0 =-2r(\partial_q \phi)^2-2b(\theta)\partial_q^2(\chi(q)q), \\
	 h_0|_{t=0}=0.
        \end{array}
\right.
\end{equation*}
The solution of this equation is
\begin{equation}\label{ho}
h_0(s,Q,\theta) = \int_s^Q \left(-2(\partial_q \phi)^2 -2\frac{b(\theta)\partial_q^2(q\chi(q))}{r}
\right)r dq.
\end{equation}
All the estimates we will perform in this section take place in the region $r>\frac{t}{2}$ since we will always apply the cut-off function $\Up\left(\frac{r}{t}\right)$
to $h_0$.
\begin{prp}In the region $r>\frac{t}{2}$
we have the estimates on $h_0$, for $q<0$
$$|\partial_s h_0|\lesssim \frac{\ep^2}{(1+s)^{\frac{3}{2}}}, \quad |h_0|\lesssim \frac{\ep^2}{\sqrt{1+s}}+\frac{\ep^2}{(1+|q|)^{2-8\rho}}$$
and for $q>0$
$$|\partial_s h_0|\lesssim \frac{\ep^2}{(1+s)^{\frac{3}{2}}(1+|q|)^{\frac{3}{2}+2(\delta-\sigma)}}, \quad |h_0|\lesssim \frac{\ep^2}{(1+|q|)^{2+2(\delta-\sigma)}}.$$
\end{prp}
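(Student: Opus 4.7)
The starting point is the explicit formula \eqref{ho}, written after reversing the limits as $h_0(s,Q,\theta)=\int_Q^s\bigl(2r(\partial_q\phi)^2+2b(\theta)\partial_q^2(q\chi(q))\bigr)\,dq$. First I would convert the bootstrap assumptions into pointwise bounds on the integrand. The identity $|\partial_q u|\leq (1+|q|)^{-1}|Zu|$ combined with \eqref{bootphi1} for $q<0$ and with \eqref{ks4} for $q>0$, together with $r\lesssim 1+s$ on the region $r>t/2$, gives
\[
|r(\partial_q\phi)^2|\lesssim \frac{\ep^2}{(1+|q|)^{3-8\rho}}\text{ for }q<0,\qquad |r(\partial_q\phi)^2|\lesssim \frac{\ep^2}{(1+|q|)^{3+2(\delta-\sigma)}}\text{ for }q>0.
\]
The second contribution $b(\theta)\partial_q^2(q\chi(q))$ is supported in $q\in[1,2]$ and bounded pointwise by $\ep^2$ thanks to \eqref{bootb2}, while a short computation gives $\int_a^b\partial_q^2(q\chi)\,dq=1$ whenever $a<1<2<b$ and $0$ when $[a,b]$ sits on a single side of the support.

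For $Q>0$ the $q>0$ pointwise bound is integrable and gives $|h_0|\lesssim \ep^2/(1+Q)^{2+2(\delta-\sigma)}$; the $b$-term is nonzero only when $Q<2$, contributing $O(\ep^2)$ which is absorbed in the same bound. For $Q<0$ I split $\int_Q^s=\int_Q^0+\int_0^s$: the first piece is controlled by $\ep^2/(1+|Q|)^{2-8\rho}$ directly from the $q<0$ bound, and for $s>2$ the remaining part equals
\[
2\int_0^s r(\partial_q\phi)^2\,dq+2b(\theta),
\]
so the heart of the matter is to show this combination is $O(\ep^2/\sqrt{1+s})$.

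This last estimate is the main obstacle; it is a conservation statement tying the asymptotic parameter $b(\theta)$ to the scalar-field flux. Corollary \ref{corb} supplies it at $t=T$, giving $|b(\theta)+\int_0^{\infty}(\partial_q\phi(T,r,\theta))^2 r\,dr|\lesssim \ep^2/\sqrt T$, and to transport it to an arbitrary intermediate time I would use $\Box_g\phi=0$ in null-frame form to derive an approximate divergence identity for the null energy density $r(\partial_q\phi)^2$, integrate over the wedge bounded by $\{t=0\}$, $\{t=T\}$ and $\{t+r=s\}$, and dominate the quasilinear and semilinear bulk errors by the pointwise bounds of the first paragraph; the difference between the boundary fluxes then yields the claimed $\ep^2/\sqrt{1+s}$. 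Finally, for the $\partial_s h_0$ bound, differentiating \eqref{ho} produces a boundary term at $q=s$ equal to $2s(\partial_q\phi(0,s,\theta))^2$ (the $b$-term vanishing for $s>2$), which is controlled via the Sobolev embedding of Proposition \ref{holder} applied to $\phi_1,\nabla\phi_0\in H^N_{\delta+1}$ and produces decay much stronger than required, plus an interior integral $\int_Q^s\partial_s\bigl(2r(\partial_q\phi)^2\bigr)\,dq$; the bad factor $\partial_s\partial_q\phi$ appearing here is eliminated by substituting from $\Box_g\phi=0$, which replaces it with $\bar\partial$-type good derivatives of $\phi$ supplying the extra $(1+s)^{-1/2}$ factor required to reach the claimed $(1+s)^{-3/2}$ decay with the correct $q$-weight, up to acceptable quasilinear corrections.
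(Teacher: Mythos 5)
Your proposal is correct and follows essentially the same route as the paper: the same pointwise bounds on $r(\partial_q\phi)^2$ from the bootstrap, the same substitution of $\Box_g\phi=0$ into $\partial_s\left(r(\partial_q\phi)^2\right)$ to eliminate $\partial_s\partial_q\phi$, and the same use of Corollary \ref{corb} at $t=T$ as the conservation input relating $b(\theta)$ to the null flux. The only difference is organizational: the paper first establishes $|\partial_s h_0|\lesssim \ep^2(1+s)^{-3/2}$ and then integrates along $\{t=T\}$ and along lines of constant $q$, whereas you package the same two one-dimensional integrations as a single flux identity over a wedge, which is a Fubini rearrangement rather than a genuinely different argument.
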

\begin{proof}
We write the wave operator in coordinates $(s,q,\theta)$
\begin{equation}
\label{formbox}\Box=4\partial_s \partial_q + \frac{1}{r}(\partial_s + \partial_q) + \frac{1}{r^2}\partial^2_\theta.
\end{equation}
We calculate
\begin{equation}\label{dsh}
\partial_s\partial_q h_0
= 
\partial_s (-2r(\partial_q\phi)^2)
= -r\partial_q \phi \left(4\partial_s \partial_q \phi + \frac{1}{r}\partial_q \phi\right)
= -r\partial_q \phi \left( \Box \phi-\frac{1}{r}\partial_s \phi -\frac{1}{r^2}\partial_\theta^2 \phi\right),
\end{equation}
where we have used
$$\partial_s(-2b(\theta)\partial^2_q(q\chi(q)))=0.$$
Therefore we have
\begin{equation}\label{formuleds}
\partial_s h_0=  \int_s^Q \left(-\Box \phi +\frac{1}{r}\partial_s \phi+ \frac{1}{r^2}\partial^2_\theta \phi \right)\partial_q \phi r dq+O\left(\frac{\ep^2}{(1+s)^{3+2\delta}}\right),
\end{equation}
where we have used
$$\partial_s h_0|_{t=0}=-\partial_q h_0|_{t=0}=\left(2r(\partial_q \phi)^2+2b(\theta)\partial_q^2(\chi(q)q)\right)|_{t=0}=O\left(\frac{\ep^2}{(1+s)^{3+2\delta}}\right).$$
The bootstrap assumption \eqref{bootphi1} gives
$$\left|\frac{1}{r}\partial_s \phi\right|+ \left|\frac{1}{r^2}\partial^2_\theta \phi \right|\lesssim \frac{1}{(1+s)^2}|Z^2\phi|\lesssim \frac{\ep}{(1+s)^{\frac{5}{2}}(1+|q|)^{\frac{1}{2}-4\rho}},$$
and
$$|\partial_q \phi| \lesssim\frac{1}{1+|q|}|Z\phi|\lesssim \frac{\ep}{(1+s)^\frac{1}{2}(1+|q|)^{\frac{3}{2}-4\rho}}.$$
Therefore
\begin{equation}
\label{transp1}\left|\left(\frac{1}{r}\partial_s \phi+ \frac{1}{r^2}\partial^2_\theta \phi\right)\partial_q \phi r \right|\lesssim \frac{\ep^2}{(1+s)^2 (1+|q|)^{2-8\rho}}.
\end{equation}
To estimate $\Box \phi$ we write $\Box \phi =(\Box -\Box_g)\phi$. 
Thanks to Remark \ref{termql}, in the region $q<0$ it is sufficient to estimate $g_{LL}\partial^2_q \phi$. We start with the region $q<0$.
To obtain all the possible decay in $s$, we use the estimate \eqref{wc3} of Corollary \ref{estV2} for $I\leq N-11$, which gives, for $q<0$
$$|g_{LL}|\lesssim \frac{\ep (1+|q|)^\frac{3}{2}}{(1+s)^\frac{3}{2}}.$$
The bootstrap assumption \eqref{bootphi1} imply
$$|\partial^2_q \phi|\lesssim \frac{\ep}{(1+|q|)^{\frac{5}{2}-4\rho}\sqrt{1+s}},$$
therefore
$$|g_{LL}\partial^2_q \phi \partial_q \phi|\lesssim \frac{\ep^3(1+|q|)^\frac{3}{2}}{(1+s)^\frac{5}{2}(1+|q|)^{4-8\rho}},$$
and we obtain
\begin{equation}\label{transp2}
|(\Box \phi)\partial_q \phi r|\lesssim \frac{\ep^3 }{(1+s)^\frac{3}{2}(1+|q|)^{\frac{5}{2}-8\rho}}.
\end{equation}
Thanks to \eqref{transp1} and \eqref{transp2}, in the region $q<0$ we have 
\begin{equation}\label{transp3}
\left| \left(-\Box \phi +\frac{1}{r}\partial_s \phi+ \frac{1}{r^2}\partial^2_\theta \phi \right)\partial_q \phi r\right|\lesssim \frac{\ep^3 }{(1+s)^\frac{3}{2}(1+|q|)^{\frac{5}{2}-8\rho}}.
\end{equation}
We now estimate the integrand in the region $q>0$. Estimate \eqref{iks5} yields, for $q>0$ and $I\leq N-3$
$$|Z^I\phi|\lesssim \frac{\ep}{\sqrt{1+s}(1+|q|)^{\frac{1}{2}+\delta-\sigma}},$$
and estimate \eqref{wc5} yields for $q>0$
$$|g_{LL}|\lesssim \frac{(1+|q|)^{\frac{1}{2}+\sigma-\delta}}{(1+s)^\frac{3}{2}}.$$
In the region $q>0$, $\Box \phi-\Box_g \phi$ contains also terms of the form
$\frac{\ep\chi(q)}{r}\bar{\partial}\phi$ (see \eqref{crossphi} in the discussion of Section \ref{cros}). We can neglect them since we already take into account terms of the form $\frac{1}{r}\partial_s \phi+\frac{1}{r^2}\partial^2_\theta \phi$ in \eqref{dsh}. 
Consequently for $q>0$
\begin{equation}\label{transp4}
\left| \left(-\Box \phi +\frac{1}{r}\partial_s \phi+ \frac{1}{r^2}\partial^2_\theta \phi \right)\partial_q \phi r\right|\lesssim  \frac{\ep ^2}{(1+s)^\frac{3}{2}(1+|q|)^{\frac{5}{2}+2\delta-2\sigma}}.
\end{equation}
Therefore, \eqref{formuleds} and \eqref{transp4} yield for $q>0$
\begin{equation}
\label{ests1}
|\partial_s h_0|\lesssim \frac{\ep^2}{(1+s)^\frac{3}{2}(1+|q|)^{\frac{3}{2}+2(\delta-\sigma)}},
\end{equation}
and \eqref{formuleds}, \eqref{transp3} and \eqref{transp4} yield for $q<0$,
since $\frac{1}{(1+|q|)^{\frac{5}{2}-8\rho}}$ is integrable,
\begin{equation}\label{ests}
|\partial_s h_0|\lesssim \frac{\ep^2}{(1+s)^\frac{3}{2}}.
\end{equation}

\begin{figure}[ht]\label{cone}
\centering
\includegraphics[width=15cm,height=8cm]{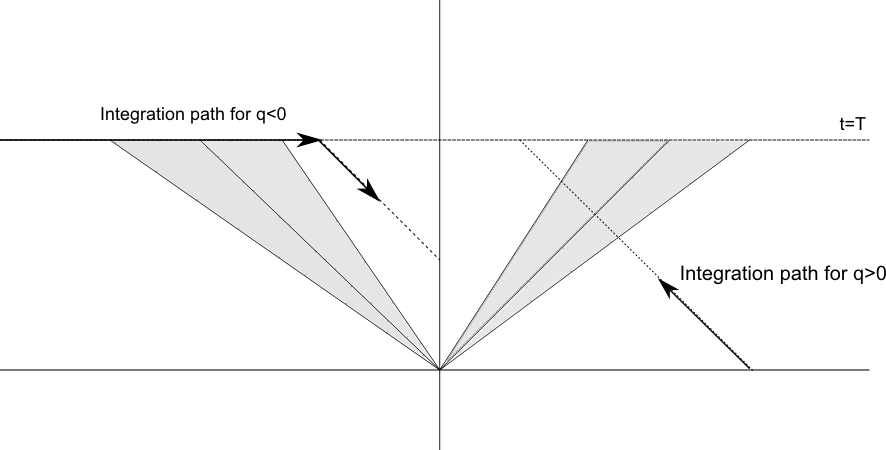}
\caption{Integration of $h_0$}
\end{figure}

Thanks to Corollary \ref{corb} we have
$$\left|b(\theta)+\int_{\Sigma_{T,\theta}} (\partial_q \phi)^ 2 r dr\right| \lesssim \frac{\ep^2}{T^\frac{1}{2}}.$$
Moreover $\partial_r h_0=\partial_q h_0 +\partial_s h_0$ and therefore \eqref{ests} and \eqref{ests1} yield
\begin{equation}
\label{aintegrer}
\partial_r h_0= -2r(\partial_q \phi)^2-2b(\theta)\partial_r^2(\chi(q)q)+O\left(\frac{\ep^2}{(1+s)^{\frac{3}{2}}}\right).
\end{equation}
Therefore, on the line $t=T$, with fixed $\theta$ we obtain the following estimate for $h_0$ in the region $r<t$ by integrating \eqref{aintegrer}
\begin{align*}
h_0(T,R,\theta) =& -\int_R^\infty \left(-2r(\partial_q \phi)^2+O\left(\frac{\ep^2}{(r+T)^\frac{3}{2}}\right)\right) +2b(\theta)\\
=&\int_0^R2r(\partial_q \phi)^2dr+O\left(\frac{\ep^2}{\sqrt{1+T}}\right)\\
=& O\left(\frac{\ep^2}{(1+T)^\frac{1}{2}}\right)+O\left(\frac{\ep^2}{(1+q)^{2-8\rho}}\right).
\end{align*}
To estimate $h_0$ elsewhere in the region $r<t$, we can integrate the estimate \eqref{ests}, at fixed $q$, as shown in left of the figure \ref{cone}. 
To estimate $h_0$ in the region $r>t$ we integrate the transport equation from $t=0$, as shown in the right of the figure \ref{cone} : we rely on formula \eqref{ho} and the estimate for $q>0$
$$|\partial_q \phi|\lesssim \frac{\ep}{\sqrt{1+s}(1+|q|)^{\frac{3}{2}+\delta-\sigma}}.$$
We obtain
\begin{align*}
 h_0&= O\left(\frac{\ep^2}{(1+q)^{2+2(\delta-\sigma)}}\right), \; q>0,\\
h_0& =  O\left(\frac{\ep^2}{(1+s)^\frac{1}{2}}\right)+O\left(\frac{\ep^2}{(1+q)^{2-8\rho}}\right), \; q<0.
\end{align*}

\end{proof}

Next we derive an estimate for $Z^I h_0$.
\begin{prp}\label{estzh}
Let $I\leq N-5$. We have the estimate for $q<0$
$$|Z^I h_0|\lesssim \frac{\ep^2}{\sqrt{1+s}}+\frac{\ep^2}{(1+|q|)^{1-4\rho}}, \quad|\partial_s Z^I h_0|\lesssim \frac{\ep^2}{(1+s)^\frac{3}{2}},
\quad |\partial_q Z^I h_0|\lesssim \frac{\ep^2}{(1+|q|)^{2-4\rho}},
$$
and for $q>0$
$$|Z^I h_0|\lesssim \frac{\ep^2}{(1+|q|)^{2+2(\delta-\sigma)}}, \quad|\partial_s Z^I h_0|\lesssim \frac{\ep^2}{(1+s)^\frac{3}{2}(1+|q)^{1+2(\delta-\sigma)}}.$$
\end{prp}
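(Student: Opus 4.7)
My plan is to induct on $I$, following the strategy of the $I=0$ case: commute $Z^I$ past the transport equation \eqref{eqh}, derive the analogous identity for $\partial_s Z^I h_0$ via the wave-equation trick used in \eqref{dsh}, and then integrate along the characteristics of the transport in $q$ (for the exterior) or along $s$ at fixed $q$ (for the interior), with boundary data on the light-cone supplied by Corollary \ref{corb}.

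First, using $[Z,\partial_q]\sim\partial$, induction on $I$ yields
$$\partial_q Z^I h_0 = -2\, Z^I\bigl(r(\partial_q\phi)^2\bigr)-2\, Z^I\bigl(b(\theta)\,\partial_q^2(q\chi(q))\bigr)+\sum_{|J|<I}c_J\,\partial Z^J h_0,$$
with bounded coefficients $c_J$ in the support of $\Up(r/t)$. Leibniz, combined with \eqref{bootphi1} and $|\partial Z^K\phi|\lesssim (1+|q|)^{-1}|Z^{K+1}\phi|$, controls the first source by $\ep^2/(1+|q|)^{2-8\rho}$ for $q<0$ and by $\ep^2/(1+|q|)^{3+2(\delta-\sigma)}$ for $q>0$ via \eqref{iks5} (using $r\lesssim 1+s$ on the support of $\Up$). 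The second source is localised in $1\leq q\leq 2$ and, thanks to \eqref{bootb2} together with Sobolev on $\m S^1$ (legitimate since $I\leq N-5$), is bounded by $\ep^2\ch_{1\leq q\leq 2}$. The commutator terms are absorbed by the inductive hypothesis, yielding the stated bound on $\partial_q Z^I h_0$.

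Second, to estimate $\partial_s Z^I h_0$ I reproduce the computation leading to \eqref{formuleds}: applying $Z^I$ to \eqref{dsh} and writing $\Box\phi=(\Box-\Box_g)\phi$ together with \eqref{formbox}, I obtain, up to an initial-data remainder of size $\ep^2/(1+s)^{3/2+\delta}$,
$$\partial_s Z^I h_0 = \int_s^Q Z^I\!\Bigl[\bigl(-(\Box-\Box_g)\phi+r^{-1}\partial_s\phi+r^{-2}\partial_\theta^2\phi\bigr)\partial_q\phi\, r\Bigr]dq'+(\text{commutators}).$$
The dangerous Leibniz piece is $Z^{I_1}g_{LL}\cdot\partial_q^2 Z^{I_2}\phi\cdot\partial_q Z^{I_3}\phi\cdot r$; in every split at least two factors have low derivative order, and Corollary \ref{estV2} for $g_{LL}$ (available at the $\leq N-5$ level needed here) together with the bootstrap on $\phi$ render the integrand $L^1_q$-integrable, with the decay $\ep^2/(1+s)^{3/2}$ for $q<0$ and the $q$-weighted analogue for $q>0$. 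This gives the claimed bound on $\partial_s Z^I h_0$.

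Finally, I integrate. In the exterior $q>0$ I integrate $\partial_q Z^I h_0$ from $t=0$ (where $h_0$, and hence $Z^I h_0$, vanishes), giving at once the exterior bound. In the interior $q<0$ I combine two contributions: integrating $\partial_q Z^I h_0$ inward from the light-cone produces the $\ep^2/(1+|q|)^{1-4\rho}$ piece, while integrating $\partial_s Z^I h_0$ at fixed $q$ provides the $\ep^2/\sqrt{1+s}$ piece, provided one has the boundary value $Z^I h_0|_{t=T,\,q=0}$. This boundary value is computed, as in the $I=0$ case, by integrating $\partial_r Z^I h_0$ from $r=T$ to $r=\infty$: the $(\partial_q\phi)^2$ term contributes $2\,Z^I\!\!\int_{\Sigma_{T,\theta}}(\partial_q\phi)^2 r\,dr$, the $b$-term contributes $2\,\partial_\theta^I b(\theta)$ (modulo lower-order cutoff terms), and Corollary \ref{corb} bounds the sum by $\ep^2/\sqrt{T}$ for $I\leq N-4$. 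The main obstacle, and the reason for the loss from $N-4$ to $N-5$, is precisely this last step: the boosts $\Omega_{0i}$ in $Z$ do not preserve the slice $\{t=T\}$, so distributing $Z^I$ on $\int_{\Sigma_{T,\theta}}(\partial_q\phi)^2 r\,dr$ requires commuting these boosts through the integration, costing one derivative relative to the hypothesis of Corollary \ref{corb}.
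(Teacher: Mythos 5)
Your treatment of $\partial_q Z^I h_0$ and $\partial_s Z^I h_0$ follows the paper's route (the paper runs the same computation on the coordinate derivatives $\partial_s^j\partial_q^k\partial_\theta^l h_0$ in Proposition \ref{estjkl} and converts to $Z^I$ at the end, rather than commuting $Z^I$ through the transport equation, but the estimates and the use of \eqref{dsh} are identical). The genuine gap is in the final integration for $q<0$. Integrating $|\partial_q Z^I h_0|\lesssim \ep^2(1+|q|)^{-2+4\rho}$ \emph{inward from the light cone} gives
$$|Z^I h_0(Q)|\leq |Z^I h_0|_{q=0}|+C\ep^2\bigl(1-(1+|Q|)^{-1+4\rho}\bigr),$$
i.e.\ an $O(\ep^2)$ bound with no decay in $|q|$; it does not "produce the $\ep^2(1+|q|)^{-1+4\rho}$ piece". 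That piece is the tail of the $q$-integral taken from the \emph{center outward}, and it is only useful because the value of $h_0$ deep in the interior is itself $O(\ep^2/\sqrt{T})$ --- which is exactly where the cancellation of Corollary \ref{corb} must be applied. Your application of it at the single point $q=0$ is also wrong: integrating $\partial_r Z^I h_0$ from $r=T$ to $\infty$ produces $\int_T^\infty 2r(\partial_q\phi)^2\,dr$, only the exterior portion of $\int_{\Sigma_{T,\theta}}(\partial_q\phi)^2 r\,dr$; the missing interior portion $\int_0^T$ is $O(\ep^2)$ with no smallness in $T$, so the sum with $2\partial_\theta^I b$ is \emph{not} $O(\ep^2/\sqrt{T})$ there.

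The correct assembly (the paper's) is: on the whole slice $\{t=T\}$, integrate $\partial_r Z^I h_0=\partial_q Z^I h_0+\partial_s Z^I h_0$ from $r=\infty$ down to $r=R$, use that $Z^I h_0$ vanishes at spatial infinity, and rewrite $\int_R^\infty 2r(\partial_q\phi)^2dr+2b=-\int_0^R 2r(\partial_q\phi)^2dr+\bigl(2b+2\int_0^\infty 2r(\partial_q\phi)^2dr\bigr)$; Corollary \ref{corb} makes the parenthesis $O(\ep^2/\sqrt{T})$ while the remaining integral decays like $(1+|q|)^{-2+8\rho}$. This yields $Z^I h_0$ on the \emph{entire} interior part of the slice $t=T$, and only then does one integrate $\partial_s Z^I h_0$ at fixed $q$ off that slice to pick up the $\ep^2/\sqrt{1+s}$ contribution. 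Without this reordering your scheme only gives $|Z^I h_0|\lesssim\ep^2$ in the regime $\sqrt{s}\ll|q|\ll s$, strictly weaker than the claim. (A minor further point: the loss from $N-4$ to $N-5$ comes from needing $\partial_s\partial_\theta^l h_0$, hence $l+1\leq N-4$, to leave the slice $t=T$, not from distributing boosts over $\int_{\Sigma_{T,\theta}}$.)
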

Observe that 
$$S=s\partial_s +q\partial_q, \; \Omega_{12}=\partial_\theta,\;\Omega_{01}=
\cos(\theta)(s\partial_s -q\partial_q)-\frac{t}{r}\sin(\theta)\partial_\theta,\;\Omega_{02}=
\sin(\theta)(s\partial_s -q\partial_q)+\frac{t}{r}\cos(\theta)\partial_\theta.$$
Hence Proposition \ref{estzh} is an immediate consequence of Proposition \eqref{estjkl}.
\begin{prp}\label{estjkl} We assume
Let $j+k+l\leq N-5$ then in the region $r>\frac{t}{2}$ we have the estimates on $h_0$, for $q<0$, if $j,k\geq 1$
$$
|\partial^{j}_s \partial^k_q \partial^l_\theta h_0|\lesssim \frac{\ep^2}{s^{j+\frac{1}{2}}(1+|q|)^{k+1-4\rho}}
$$
and 
$$|\partial^k_q \partial^l_\theta h_0|\lesssim \frac{\ep^2}{(1+|q|)^{k}}\left(\frac{1}{\sqrt{1+s}}+\frac{1}{(1+|q|)^{1-4\rho}}\right), \quad |\partial^j_s \partial^l_\theta h_0|\lesssim \frac{\ep^2}{(1+s)^{\frac{3}{2}+j}}.$$
For $q>0$ we have, with $j\geq1$
$$
|\partial^{j}_{s} \partial^k_q \partial^l_\theta h_0|\lesssim \frac{\ep^2}{s^{j+\frac{1}{2}}(1+|q|)^{k+\frac{3}{2}+2(\delta-\sigma)}},\; 
|\partial^k_q \partial^l_\theta h_0|\lesssim \frac{\ep^2}{(1+|q|)^{k+2+2(\delta-\sigma)}}.$$
\end{prp}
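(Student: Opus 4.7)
The plan is to induct on $j$, the number of $\partial_s$ derivatives, using the identity \eqref{dsh} from the previous proposition to trade each $\partial_s$ for an extra factor of $(1+s)^{-1}$. Since $\partial_\theta=\Omega_{12}$ commutes with $\partial_s$ and $\partial_q$, and since $r=(s+q)/2$ does not depend on $\theta$ in the $(s,q,\theta)$-chart, the $\partial_\theta^l$ derivatives fall only on $\phi$ and $b(\theta)$; they are controlled by treating $\partial_\theta=\Omega_{12}\in\q Z$ as an application of $Z$ (yielding $Z^l\phi$ bounded by the bootstrap \eqref{bootphi1}--\eqref{bootphi2}), or by \eqref{bootb2} combined with one-dimensional Sobolev embedding on $\m S^1$ for the $b$-factors.

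For the base case $j=0$ with $k\geq 1$, I differentiate the transport equation \eqref{eqh}:
\begin{equation*}
\partial_q^k\partial_\theta^l h_0 = -2\,\partial_q^{k-1}\partial_\theta^l\bigl(r(\partial_q\phi)^2\bigr) - 2\bigl(\partial_\theta^l b\bigr)\,\partial_q^{k+1}\bigl(\chi(q)q\bigr).
\end{equation*}
The second term is supported in $1\leq q\leq 2$ (since $\partial_q^{k+1}(\chi(q)q)$ with $k\geq 1$ involves only derivatives of $\chi$), so it contributes $O(\ep^2)$ localized in a fixed band of $q$ and is dominated by any negative power of $(1+|q|)$ together with the claimed $s$-factors in the region $r>t/2$. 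For the first term, Leibniz and $\partial_q r=1/2$ reduce the estimate to products $|\partial_q^{a+1}\partial_\theta^{l_1}\phi|\cdot|\partial_q^{b+1}\partial_\theta^{l_2}\phi|$ with $a+b\leq k-1$ and $l_1+l_2\leq l$, each controlled via \eqref{important} by $(1+|q|)^{-(a+1)}|Z^{a+1+l_1}\phi|$ and then by the pointwise bootstrap bounds on $Z^I\phi$. The remaining case $k=0$, $j=0$ is handled directly from the integral formula \eqref{ho}, with $\partial_\theta^l$ commuting through the integral and producing the same estimates as in the previous proposition.

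The inductive step $j\geq 1$ is driven by the identity \eqref{dsh}, namely
\begin{equation*}
\partial_s\partial_q h_0 = -r\,\partial_q\phi\,\Bigl(\Box\phi - \tfrac{1}{r}\partial_s\phi - \tfrac{1}{r^2}\partial_\theta^2\phi\Bigr),
\end{equation*}
together with $\Box\phi=(\Box-\Box_g)\phi$. Each term in the right-hand factor carries an extra $(1+s)^{-1}$: the two explicit terms through $r^{-1}\sim s^{-1}$ in the region $r>t/2$, and the quasilinear piece $g_{LL}\partial_q^2\phi$ (the worst term of $(\Box-\Box_g)\phi$, cf.\ Remark \ref{termql}) through the improved $s$-decay of $g_{LL}$ given by Corollary \ref{estV2}, with analogous treatment for the exterior crossed term $\frac{\ep\chi(q)}{r}\bar\partial\phi$. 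For $k\geq 1$, $j\geq 1$ I apply $\partial_s^{j-1}\partial_q^{k-1}\partial_\theta^l$ to this identity and expand by Leibniz; each resulting factor is bounded through $|\partial_s^a Z^b u|\lesssim (1+s)^{-a}|Z^{a+b}u|$ together with the bootstrap bounds on $Z^I\phi$, on $Z^I g_{LL}$, and on $b$. For $k=0$, $j\geq 1$, one integrates in $q$ from the initial data $h_0|_{t=0}=0$, exactly as in \eqref{formuleds}, and the integrability of the integrand in $q$ delivers the required $s$-decay.

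The main obstacle is purely combinatorial bookkeeping: verifying that after arbitrary Leibniz expansions, the small interior loss $(1+|q|)^{-4\rho}$ (inherited from the bootstrap on $\phi$) and the exterior gain $(1+|q|)^{-2(\delta-\sigma)}$ do not accumulate beyond the stated powers, and that in the region $q>0$ the crossed term $\frac{\ep\chi(q)}{r}\bar\partial\phi$ produces the same decay profile as $r^{-1}\partial_s\phi$. Once the pointwise bounds are substituted term by term, the estimate closes at the threshold $j+k+l\leq N-5$; Proposition \ref{estzh} then follows from the explicit expressions of $S$, $\Omega_{01}$, $\Omega_{02}$, $\Omega_{12}$ in terms of $\partial_s$, $\partial_q$, $\partial_\theta$ displayed just before that statement.
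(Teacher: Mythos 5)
Your treatment of the cases $k\geq 1$ (differentiating the transport equation and using Leibniz plus the bootstrap and Klainerman--Sobolev bounds on $Z^I\phi$) and $j\geq 1$ (trading $\partial_s$ for $(1+s)^{-1}$ via the identity \eqref{dsh} and $\Box\phi=(\Box-\Box_g)\phi$, then integrating in $q$ to reach $k=0$) follows the paper's proof closely and is sound, modulo the usual bookkeeping. But there is a genuine gap in the case $j=k=0$, $q<0$. You claim this case is ``handled directly from the integral formula \eqref{ho}'' with the $b$-factors controlled ``by \eqref{bootb2} combined with one-dimensional Sobolev embedding.'' That cannot work: for $Q<0$ the integral $\int_s^{Q}\bigl(-2r(\partial_q\phi)^2-2b(\theta)\partial_q^2(q\chi(q))\bigr)\,dq$ contains the non-decaying pieces $2b(\theta)$ and $-2\int r(\partial_q\phi)^2\,dq\sim O(\ep^2)$, each of size $\ep^2$ uniformly in $Q$ and $s$. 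Using only the size bound \eqref{bootb2} on $\partial_\theta^I b$, the best you can conclude is $|\partial_\theta^l h_0|\lesssim\ep^2$ in the interior, which is strictly weaker than the claimed $\frac{\ep^2}{\sqrt{1+s}}+\frac{\ep^2}{(1+|q|)^{1-4\rho}}$.

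The missing ingredient is the cancellation between these two terms, i.e.\ Corollary \ref{corb}: $\bigl|\partial_\theta^l\bigl(b(\theta)+\int_{\Sigma_{T,\theta}}(\partial_q\phi)^2 r\,dr\bigr)\bigr|\lesssim\ep^2/\sqrt{T}$, which rests on the bootstrap assumption \eqref{bootb1} for $\Pi b$ and on the integrated constraint identities of Section \ref{secangle} for the remaining Fourier modes. This is the whole reason $b$ is tuned to the energy of $\phi$ (cf.\ \eqref{appro}). Concretely, the paper first establishes the bound on the line $t=T$ by integrating $\partial_r\partial_\theta^l h_0=\partial_q\partial_\theta^l h_0+\partial_s\partial_\theta^l h_0$ in $r$ and invoking Corollary \ref{corb} to kill the constant, and then transports the estimate to general $(s,q)$ with $q<0$ by integrating the already-proved bound $|\partial_s\partial_\theta^l h_0|\lesssim\ep^2(1+s)^{-3/2}$ along lines of constant $q$. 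So the $j=k=0$ case is not a ``base case'' independent of the others: it logically depends on the $j\geq1$, $k=0$ estimate and on Corollary \ref{corb}, neither of which appears in your outline of this step. (A secondary, minor point: for the high-derivative factor in the Leibniz expansions you need the Klainerman--Sobolev consequences \eqref{ks1}, \eqref{iks1}, valid up to $I\leq N-4$, not just the $L^\infty$ bootstrap assumptions \eqref{bootphi1}--\eqref{bootphi2}, which stop at $N-12$.)
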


\begin{proof}
 We assume first $j=0$ and $k\geq 1$. We assume $l+k\leq N-3$. Then we can write
$$\partial^k_q \partial^l_\theta h_0=-2\partial^{k-1}_q\partial^l_\theta \left(r(\partial_q \phi)^2+\partial_q^2(q\chi(q))b(\theta)\right).$$
Therefore we can estimate
$$|\partial^k_q \partial^l_\theta h_0|\lesssim \frac{r}{(1+|q|)^{k-1}}\sum_{J\leq k+l-1}|Z^J(\partial_q \phi)^2|+\frac{1}{(1+|q|)^{k+1}}|\partial_\theta^l b|.
$$
The terms in $Z^J(\partial_q \phi)^2$ are of the form $\partial_q Z^{J_1} \phi \partial_q Z^{J_2} \phi$, where $J_1 \leq \frac{k+l}{2}\leq N-15$
therefore we can estimate, thanks to the bootstrap assumption \eqref{bootphi1}
$$|\partial_q Z^{J_1} \phi|\lesssim \frac{\ep}{(1+|q|)^{\frac{3}{2}-4\rho}\sqrt{1+s}},$$
and we estimate $\partial_q Z^{J_2} \phi$ thanks to \eqref{ks1} of Proposition \ref{estks} since $J_2\leq l+k-1\leq N-4$
$$|\partial_q Z^{J_2} \phi|\lesssim \frac{\ep}{\sqrt{1+|q|}\sqrt{1+s}}.$$
Consequently we have shown that for $k+l\leq N-3$, $k\geq1$
\begin{equation}
\label{kl}
|\partial^k_q \partial^l_\theta h_0|\lesssim \frac{\ep^2}{(1+|q|)^{k+1-4\rho}}.
\end{equation}
In the region $q>0$ we have the better estimate for $i=1,2$ thanks to \eqref{ks4} of Proposition \ref{estks} 
$$|\partial_q Z^{J_i} \phi|\lesssim \frac{\ep}{(1+|q|)^{\frac{3}{2}+\delta-\sigma}\sqrt{1+s}},$$
so
\begin{equation}
\label{kl2}|\partial^k_q \partial^l_\theta h_0|\lesssim \frac{\ep^2}{(1+|q|)^{k+2+2\delta-2\sigma}}.
\end{equation}

We now assume $k \geq 1$, $j\geq 1$ and estimate $\partial^{j}_s \partial^k_q \partial^l_\theta h_0$, for 
$j+k+l\leq N-4$. Thanks to \eqref{dsh}, we can write
$$
\partial^{j}_s \partial^k_q \partial^l_\theta h_0=-\partial^{j-1}_s \partial^{k-1}_q \partial^l_\theta 
 \left(r\partial_q \phi \left( \Box \phi-\frac{1}{r}\partial_s \phi -\frac{1}{r^2}\partial_\theta^2 \phi\right)\right).$$
We estimate
\begin{align*}\left|\partial^{j-1}_s \partial^{k-1}_q \partial^l_\theta \left(r\partial_q \phi\left(\frac{1}{r}\partial_s \phi +\frac{1}{r^2}\partial_\theta^2 \phi\right)\right)\right|&\lesssim
\frac{1}{(1+|q|)^{k-1}(1+|s|)^{j-1}}\left|Z^{j+k+l-2}\left(r\partial_q \phi\left(\frac{1}{r}\partial_s \phi +\frac{1}{r^2}\partial_\theta^2 \phi\right)\right)\right|\\
&\lesssim\frac{1}{(1+|q|)^{k}(1+|s|)^{j}}
\sum_{\substack{J_1+J_2\\ \leq j+k+l-2}}|Z^{J_1+2}\phi Z^{J_2+1} \phi|
\end{align*}
We can assume $J_1\leq \frac{ j+k+l-2}{2}$. In the region $q<0$, \eqref{bootphi1} and \eqref{iks1} yield
$$|Z^{J_1+2 }\phi|\lesssim \frac{\ep}{\sqrt{1+s}(1+|q|)^{\frac{1}{2}-4\rho}},\quad | Z^{J_2} \phi| \lesssim \frac{\ep\sqrt{1+|q|}}{\sqrt{1+s}}.$$
Consequently, for $q<0$
\begin{equation}
\label{tran1}
\left|\partial^{j-1}_s \partial^{k-1}_q \partial^l_\theta \left(r\partial_q \phi\left(\frac{1}{r}\partial_s \phi +\frac{1}{r^2}\partial_\theta^2 \phi\right)\right)\right|\lesssim \frac{\ep^2}{(1+|q|)^{k-4\rho}(1+s)^{j+1}}.
\end{equation}
To estimate the contribution of $\Box \phi$, we write as before $\Box \phi  =(\Box -\Box_g)\phi$.
Following Remark \ref{termql}, it is sufficient to estimate 
\begin{align*}
\left|\partial^{j-1}_s \partial^{k-1}_q \partial^l_\theta\left(rg_{LL}\partial_q \phi\partial_q^2\phi \right)\right|
&\lesssim \frac{1}{(1+|q|)^{k-1}(1+|s|)^{j-1}}\left|Z^{k+j+l-2}\left(rg_{LL}\partial_q \phi\partial_q^2\phi \right)\right|\\
&\lesssim \frac{1}{(1+|q|)^{k}(1+|s|)^{j-2}}\sum_{\substack{J_1+J_2+J_3\\ \leq j+k+l-2}}|Z^{J_1}g_{LL} \partial_q Z^{J_2+1}\phi \partial_q Z^{J_3} \phi|.
\end{align*}
We have
$J_1+J_2+J_3 \leq j+k+l-2\leq N-5$.
We separate in two cases
\begin{itemize}
\item $J_1\leq \frac{N}{2}-2$ and $J_2\leq \frac{N}{2}-2$ : then we have
thanks to \eqref{wc3}, \eqref{bootphi1} and \eqref{ks1}
$$|Z^{J_1}g_{LL}|\lesssim \frac{\ep (1+|q|)^{\frac{3}{2}}}{(1+s)^\frac{3}{2}},$$
$$|\partial_q Z^{J_2+1}\phi|\lesssim \frac{\ep}{\sqrt{1+s}(1+|q|)^{\frac{3}{2}-4\rho} },\quad
|\partial_q Z^{J_3}\phi|\lesssim \frac{\ep}{\sqrt{1+|q|}\sqrt{1+s}}.$$
The case  $J_1\leq \frac{N}{2}-2$ and $J_3\leq \frac{N}{2}-2$
can be treated in the same way.
\item $J_2\leq \frac{N}{2}-2$ and $J_3\leq \frac{N}{2}-2$
then, since $|J_1|\leq j+k+l-2\leq N-4$ we have thanks to \eqref{wc4} and \eqref{bootphi1}
$$|Z^{J_1}g_{LL}|\lesssim \frac{\ep (1+|q|)^{2+\mu}}{(1+s)^\frac{3}{2}},
\quad |\partial_q Z^{J}\phi|\lesssim \frac{\ep}{\sqrt{1+s}(1+|q|)^{\frac{3}{2}-4\rho} },\;
for \;J=J_2+1,J_3.$$
\end{itemize}
In the first case we obtain
\begin{equation}
\label{truc}
|Z^{J_1}g_{LL} \partial_q Z^{J_2+1}\phi \partial_q Z^{J_3} \phi|
\lesssim \frac{\ep^3}{(1+s)^{\frac{5}{2}}(1+|q|)^{\frac{1}{2}-4\rho}},\end{equation}
and in the last case we obtain
$$|Z^{J_1}g_{LL} \partial_q Z^{J_2+1}\phi \partial_q Z^{J_3} \phi|
\lesssim \frac{\ep^3}{(1+s)^{\frac{5}{2}}(1+|q|)^{1-8\rho-\mu}}.$$
We have $\mu +4\rho\leq \frac{1}{2}$. Consequently, we have 
in the region $q<0$
\begin{equation}
\label{tran2}
\left|\partial^{j-1}_s \partial^{k-1}_q \partial^l_\theta\left(rg_{LL}\partial_q \phi\partial_q^2\phi \right)\right|
\lesssim \frac{\ep^3}{(1+|q|)^{k+\frac{1}{2}-4\rho}(1+|s|)^{j-2}}.
\end{equation}
Estimates \eqref{tran1} and \eqref{tran2} yield, in the region $q<0$
for $j+k+l\leq N-4$, $j,k\geq 1$
\begin{equation}
\label{jkl}
|\partial^{j}_s \partial^k_q \partial^l_\theta h_0|
\lesssim \frac{\ep^2}{(1+s)^{j+\frac{1}{2}}(1+|q|)^{k+\frac{1}{2}-4\rho}}.
\end{equation}
In the region $q>0$, thanks to \eqref{ks4} and \eqref{wc5} we have the better estimate, for $J\leq N-5$
$$|\partial_q Z^J \phi|\leq \frac{\ep}{\sqrt{1+s}(1+|q|)^{\frac{3}{2}+\delta-\sigma}}, \; |Z^J g_{LL}|\leq \frac{\ep(1+|q|)^{\frac{1}{2}-\delta+\sigma}}{(1+s)^\frac{3}{2}},$$
so
we have
\begin{equation}
\label{jkl2}
|\partial^{j}_s \partial^k_q \partial^l_\theta h_0|
\lesssim \frac{\ep^2}{(1+s)^{j+\frac{1}{2}}(1+|q|)^{k+\frac{3}{2}+2(\delta-\sigma)}}.
\end{equation}

We now assume $k=0$ and $j\geq 1$. We obtain an estimate on
$\partial^{j}_s  \partial^l_\theta h_0$ for $q>0$ by integrating \eqref{jkl2} for $k=1$ with respect to $q$, from the hypersurface $t=0$. We obtain for $j+l \leq N-4$, $j\geq 1$, $q>0$
\begin{equation}
\label{jl2}
|\partial^{j}_s  \partial^l_\theta h_0|\lesssim
\frac{\ep^2}{(1+s)^{j+\frac{1}{2}}(1+|q|)^{\frac{3}{2}+2(\delta-\sigma)}}.
\end{equation}
For $q<0$, we integrate \eqref{jkl} from $q=0$. We obtain for $j+l \leq N-4$, $j\geq 1$,
\begin{equation}
\label{jl}
|\partial^{j}_s  \partial^l_\theta h_0|\lesssim
\frac{\ep^2}{(1+s)^{j+\frac{1}{2}}}.
\end{equation}

We now estimate $\partial^l_\theta h_0$ for $l\leq N-5$. Recall from Corollary \ref{corb} that
$$\left|\partial^l_\theta \left(b(\theta)+\int_{\Sigma_{T,\theta}} (\partial_q \phi)^ 2 r dr\right)\right| \lesssim \frac{\ep^2}{T^\frac{1}{2}}.$$
Moreover, we can write, thanks to the estimate \eqref{jl}
\[\partial_r \partial_\theta^lh_0=\partial_q \partial_\theta^l h_0+\partial_s \partial_\theta^l h_0
=\partial_\theta^l \left( -2r(\partial_q \phi)^2\right) -2\partial_q^2(q\chi(q))\partial^l_\theta b
+O\left(\frac{\ep^2}{(1+s)^\frac{3}{2}}\right).
\]
Therefore, by integrating this 
on the line $t=T$, we have 
$$\partial_\theta^l h_0(T,R,\theta)=\int_0^R \partial_q \partial^l_\theta h_0 dr+O\left(\frac{\ep^2}{\sqrt{1+T}}\right),$$
and consequently, thanks to \eqref{kl}
 we have the estimate, for $l\leq N-4$ and $q<0$
$$
|\partial^l_\theta h_0(T,r,\theta)|\lesssim \frac{\ep^2}{\sqrt{1+T}}+\frac{\ep^2}{(1+|q|)^{1-4\rho}}.
$$
To have an estimate everywhere, we integrate \eqref{jl} for $j=0$ with respect to $s$, as shown in the figure \ref{cone}.
We obtain, for $l\leq N-5$
\begin{equation}\label{l}
|\partial^l_\theta h_0|\lesssim \frac{\ep^2}{\sqrt{1+s}}+\frac{\ep^2}{(1+|q|)^{1-4\rho}}.
\end{equation}
In the region $q>0$, we just integrate \eqref{jl2} from $t=0$, and we obtain
\begin{equation}
\label{l2}|\partial^l_\theta h_0|\lesssim \frac{\ep^2}{(1+|q|)^{2+2(\delta-\sigma)}}.
\end{equation}
In view of \eqref{jkl}, \eqref{jkl2}, \eqref{kl}, \eqref{kl2}, \eqref{jl}, \eqref{jl2},  \eqref{l}, \eqref{l2}
we conclude the proof of Proposition \ref{estjkl}.
\end{proof}

\subsection{Estimation of $\Box \Up(\frac{r}{t})h_0$}

\begin{prp}\label{boxh}Let $I\leq N-7$
We have the estimate for $q<0$
$$\left|Z^I \left(\Box \left(\Up\left(\frac{r}{t}\right)h_0\right)
-\Up\left(\frac{r}{t}\right)\left( -2(\partial_q \phi)^2 -2\frac{b(\theta)\partial_q^2(q\chi(q))}{r}\right) \right)\right|
\lesssim \frac{\ep^2}{(1+s)^\frac{3}{2}(1+|q|)},$$
and for $q>0$
$$\left|Z^I \left(\Box \left(\Up\left(\frac{r}{t}\right)h_0\right)
-\Up\left(\frac{r}{t}\right)\left( -2(\partial_q \phi)^2 -2\frac{b(\theta)\partial_q^2(q\chi(q))}{r}\right) \right)\right|
\lesssim \frac{\ep^2}{(1+s)^\frac{3}{2}(1+|q|)^{2+2(\delta-\sigma)}}.$$
\end{prp}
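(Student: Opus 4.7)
The plan is to use the null form of the wave operator \eqref{formbox}, $\Box=4\partial_s\partial_q+\frac{1}{r}(\partial_s+\partial_q)+\frac{1}{r^2}\partial_\theta^2$, to expand $\Box(\Up(r/t)h_0)$ by Leibniz, and observe that the transport equation \eqref{eqh} turns the contribution $\frac{\Up}{r}\partial_q h_0$ into exactly $\Up\bigl(-2(\partial_q\phi)^2-\frac{2b(\theta)\partial_q^2(q\chi(q))}{r}\bigr)$. Once this principal piece is subtracted, what remains is the explicit error
\[
E=4\partial_s\partial_q(\Up h_0)+\frac{1}{r}\partial_s(\Up h_0)+\frac{\partial_q\Up}{r}h_0+\frac{1}{r^2}\partial_\theta^2(\Up h_0),
\]
and the proposition is reduced to establishing the pointwise bound on $Z^I E$.

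Next I would distribute each piece of $E$ by Leibniz, obtaining a finite sum of monomials of the form (derivative of $\Up(r/t)$) $\cdot$ (derivative of $h_0$) with $1/r$ weights. Any factor in which a derivative lands on $\Up(r/t)$ is localized in the strip $r\sim t$, so there $s\sim r\sim t$, $1+|q|\lesssim 1+s$ and $1/r\lesssim 1/(1+s)$; these cut-off contributions pick up an extra $1/(1+s)$ per derivative of $\Up$ and are harmless via the pointwise bounds of Proposition \ref{estjkl}. The genuine remaining terms are $4\Up\,\partial_s\partial_q h_0$, $\frac{\Up}{r}\partial_s h_0$ and $\frac{\Up}{r^2}\partial_\theta^2 h_0$. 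In the region $q<0$, Proposition \ref{estjkl} gives $|\partial_s\partial_q h_0|\lesssim \ep^2/((1+s)^{3/2}(1+|q|)^{2-4\rho})$, $|\partial_s h_0|\lesssim \ep^2/(1+s)^{3/2}$, and $|\partial_\theta^2 h_0|\lesssim \ep^2/\sqrt{1+s}+\ep^2/(1+|q|)^{1-4\rho}$; combined with $1/r\sim 1/(1+s)$ and $1+|q|\lesssim 1+s$ in the support of $\Up$, each of the three is dominated by $\ep^2/((1+s)^{3/2}(1+|q|))$, which is the desired estimate. The $q>0$ case is handled identically, using the much sharper $q$-decay of the same proposition to produce the target weight $(1+|q|)^{-2-2(\delta-\sigma)}$.

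Finally, to upgrade the pointwise statement to $|Z^I E|$ for $|I|\leq N-7$, I apply $Z^I$ to the same decomposition of $E$. The function $r/t$ is invariant under $S$ and under the angular rotation and transforms under the Lorentz boosts into smooth bounded functions on the support of $\Up$, so $Z^J(\Up(r/t))$ is still bounded and supported in $r\sim t$; and the commutators $[Z,\partial_s]$, $[Z,\partial_q]$, $[Z,1/r]$ can be expressed through Minkowski vector fields without increasing the total differential order on $h_0$. Thus every summand in $Z^I E$ is again a product whose $h_0$-factor is of the form $\partial_s^j\partial_q^k\partial_\theta^l Z^J h_0$ with $j+k+l+|J|\leq I+2\leq N-5$, exactly the range covered by Proposition \ref{estjkl}. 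The main obstacle is the top-order piece $4Z^I\bigl(\Up\,\partial_s\partial_q h_0\bigr)$: one must verify that after commuting $Z^I$ past $\partial_s\partial_q$ the derivative count never exceeds $N-5$ (which is precisely why the proposition is stated with the loss $I\leq N-7$), and one must absorb the $(1+|q|)^{4\rho}$ loss in Proposition \ref{estjkl} using the extra $1/(1+s)\lesssim 1/(1+|q|)^{1-4\rho}$ gained from the $1/r$ factor in the wave operator in the interior.
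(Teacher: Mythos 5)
Your proposal is correct and follows the same skeleton as the paper's proof: expand $\Box(\Up(r/t)h_0)$ in the null form \eqref{formbox}, use the transport equation \eqref{eqh} to identify $\frac{\Up}{r}\partial_q h_0$ with the main term, and bound what remains pointwise, with the cut-off terms localized to $r\sim t$ and the whole argument taking place on the support of $\Up(r/t)$, i.e.\ in the region where Proposition \ref{estjkl} applies.

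The one place where you genuinely diverge is the term $4\Up\,\partial_s\partial_q h_0$. The paper does not estimate it by citing Proposition \ref{estjkl}; instead it substitutes the identity \eqref{dsh}, $\partial_s\partial_q h_0=-r\partial_q\phi\bigl(\Box\phi-\frac{1}{r}\partial_s\phi-\frac{1}{r^2}\partial_\theta^2\phi\bigr)$, absorbs the $\partial_q\phi(\partial_s\phi+\partial_\theta^2\phi/r)$ piece into the error $f$, and re-estimates the quasilinear piece $Z^I(r g_{LL}\partial_q\phi\,\partial_q^2\phi)$ via $\Box\phi=(\Box-\Box_g)\phi$ and the wave-coordinate bounds on $g_{LL}$ (exactly as in \eqref{truc}). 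Your route — invoking the mixed-derivative bound of Proposition \ref{estjkl} with $j,k\geq 1$ directly — is legitimate and not circular, since that proposition is established beforehand and its proof of the $j,k\geq1$ case already packages the identity \eqref{dsh} and the quasilinear estimate; the numerology also checks out ($\frac{\ep^2}{s^{3/2}(1+|q|)^{2-4\rho}}$ beats the target since $4\rho<1$, and the $q>0$ exponents are more than sufficient). What you must be slightly careful about, and only gesture at, is that $Z^J(\partial_s\partial_q h_0)$ is not literally of the form $\partial_s^j\partial_q^k\partial_\theta^l h_0$ covered by Proposition \ref{estjkl}: you need the observation (used by the paper to deduce Proposition \ref{estzh} from \ref{estjkl}) that $S$, $\Omega_{12}$, $\Omega_{0i}$ decompose as $s\partial_s$, $q\partial_q$, $\partial_\theta$ with bounded coefficients on $r\sim t$, so each monomial produced carries compensating powers $s^{j-1}|q|^{k-1}$ and, crucially, retains at least one $\partial_s$ and one $\partial_q$ (or degenerates into strictly better $j\geq1,k=0$ terms with an extra $1/r$), which is exactly what makes the $j,k\geq1$ bound applicable. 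With that spelled out, your argument is complete; the paper's version is longer but self-contained at this point, whereas yours is shorter at the cost of leaning on Proposition \ref{estjkl} as a black box.
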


\begin{proof} We have in view of \eqref{eqh}, \eqref{formbox} and \eqref{dsh},
\begin{align*}\Box \left(\Up\left(\frac{r}{t}\right)h_0\right)
=&\Up\left(\frac{r}{t}\right)\left(4\partial_s \partial_q h_0 + \frac{1}{r}(\partial_s h_0 + \partial_q h_0) + \frac{1}{r^2}\partial^2_\theta h_0\right)
+\nabla \Up\left(\frac{r}{t}\right).\nabla h_0 + h_0 \Box \Up\left(\frac{r}{t}\right)\\
=&\Up\left(\frac{r}{t}\right)\left(
 -4r\partial_q \phi \left( \Box \phi-\frac{1}{r}\partial_s \phi -\frac{1}{r^2}\partial_\theta^2 \phi\right)
+\frac{1}{r}\left(
-2(\partial_q \phi)^2r 
-2b(\theta)\partial_q^2(q\chi(q))\right)\right)\\
&+\Up\left(\frac{r}{t}\right)\left( \frac{1}{r}\partial_s h_0  + \frac{1}{r^2}\partial^2_\theta h_0\right)
+\nabla \Up\left(\frac{r}{t}\right).\nabla h_0 + h_0 \Box \Up\left(\frac{r}{t}\right)\\
=&\Up\left(\frac{r}{t}\right)\left( -2(\partial_q \phi)^2 -4\frac{b(\theta)\partial_q^2(q\chi(q))}{r}\right) 
-4r\Up\left(\frac{r}{t}\right)\partial_q \phi \Box \phi +
 f(s,q,\theta),
\end{align*}
where
$$f(s,q,\theta)=\Up\left(\frac{r}{t}\right)\left(4\partial_q \phi \left(\partial_s \phi +\frac{\partial^2_\theta \phi}{r}\right)+\frac{1}{r}\partial_s h_0+\frac{1}{r^2}\partial^2_\theta h_0\right)+\nabla \Up\left(\frac{r}{t}\right).\nabla h_0 + h_0 \Box \Up\left(\frac{r}{t}\right).$$
We can estimate $Z^I f$, noticing that when $\Up'\left(\frac{r}{t}\right)\neq 0$ we have $r\sim t \sim |q|$. We obtain
$$|Z^I f|\lesssim \frac{1}{(1+s)^2}
\sum_{J\leq I+2} |Z^I h_0|
+\frac{1}{1+s}\sum_{I_1+I_2\leq I}|Z^{I_1+2}\phi|
|\partial_q Z^{I_2}\phi|.$$
Proposition \ref{estzh} yields, for $I\leq N-7$
$$\frac{1}{(1+s)^2}
\sum_{J\leq I+2} |Z^I h_0|\lesssim \frac{\ep^2}{(1+s)^2\sqrt{1+|q|}},$$
and as usual we may estimate, thanks to \eqref{ks1} and \eqref{bootphi1},
$$|Z^{I_1+2}\phi \partial_q Z^{I_2}\phi|\lesssim \frac{\ep^2}{(1+s)(1+|q|)^{1-4\rho}},$$
therefore we obtain
$$|Z^If|\lesssim \frac{\ep^2}{(1+s)^2\sqrt{1+|q|}}.$$
In the region $q>0$, we have the better estimate
$$|Z^If|\lesssim \frac{\ep^2}{(1+s)^2(1+|q|)^{2+2(\delta-\sigma)}}.$$
To estimate $\Box \phi$ we write, as before
$$\Box \phi = \Box \phi-\Box_g \phi.$$
It is sufficient to estimate a term of the form
$g_{LL}\partial^2_q\phi$. Therefore we write, like in estimate \eqref{truc},
$$|Z^I(rg_{LL}\partial_q \phi \partial^2_q \phi)|
\lesssim \frac{r}{1+|q|}\sum_{J_1+J_2+J_3\leq I}
|Z^{J_1}g_{LL}||\partial Z^{J_2+1}\phi||\partial Z^{J_3}\phi|
\lesssim \frac{\ep^3}{(1+s)^\frac{3}{2}(1+|q|)^{\frac{3}{2}-4\rho}}.$$
In the region $q>0$, we have the better estimate
$$|Z^I(r\partial_q \phi\Box \phi)|
\lesssim \frac{\ep^3}{(1+s)^\frac{3}{2}(1+|q|)^{2+2(\delta-\sigma)}}.$$
This concludes the proof of Proposition \ref{boxh}.
\end{proof}

\subsection{Estimation on $\wht{h}$}
We recall that $\wht h$ satisfies the equation
\begin{equation*}
 \left\{ \begin{array}{l} 
         \Box \wht h= \Box\left( \Up\left(\frac{r}{t}\right)h_0\right)+\Up\left(\frac{r}{t}\right)g_{LL}\partial^2_q h_0 +2\Up\left(\frac{r}{t}\right)(\partial_q \phi)^2 - 2(R_b)_{qq}+\Up\left(\frac{r}{t}\right)\wht Q_{\ba L \ba L}(h_0,\wht g), \\
	 (\wht h, \partial_t \wht h)|_{t=0}=(0,0),
        \end{array}
\right.
\end{equation*}
where $\wht Q_{\ba L \ba L}$ is defined by \eqref{qtildell}.
\begin{prp}\label{whth}
$\wht h$ satisfies, for $I\leq N-7$ 
$$|Z^I \wht h|\lesssim \frac{\ep^2}{(1+s)^{\frac{1}{2}-\rho}}.$$
\end{prp}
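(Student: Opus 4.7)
The approach is to apply the $L^\infty-L^\infty$ estimate of Proposition \ref{inhom} to equation \eqref{eqht} commuted with $Z^I$, using $[\Box,Z]=C(Z)\Box$ and the fact that $\wht h$ has vanishing initial data. The task thus reduces to bounding $M_{\mu,\nu}(Z^I F)$, where $F$ is the source term in \eqref{eqht}, for suitable $\mu>\tfrac{3}{2}$ and $\nu>1$.

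First I would bound $Z^I$ applied to each piece of $F$ individually. The combination $\Box(\Up(r/t)h_0)+2\Up(r/t)(\partial_q\phi)^2-2(R_b)_{qq}$ is controlled directly by Proposition \ref{boxh}, yielding $\ep^2/((1+s)^{3/2}(1+|q|))$ for $q<0$ and the much better $\ep^2/((1+s)^{3/2}(1+|q|)^{2+2(\delta-\sigma)})$ for $q>0$; the mismatch between $\Up(r/t)\cdot(-2b(\theta)\partial_q^2(q\chi)/r)$ provided by Proposition \ref{boxh} and the full term $-2(R_b)_{qq}$ given by \eqref{rqq} is supported on the compact set $q\in[1,2]$ where $\Up(r/t)=1$ for $t$ large, and contributes only $O(\ep^2\ch_{1\le q\le 2}/(1+s)^2)$. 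The quasilinear term $\Up(r/t)g_{LL}\partial_q^2 h_0$ is estimated by combining \eqref{wc3} of Corollary \ref{estV2}, which gives $|Z^{J_1}g_{LL}|\lesssim \ep(1+|q|)^{3/2}/(1+s)^{3/2}$, with Proposition \ref{estjkl} for $\partial_q^2 h_0$; the product is bounded by $\ep^3/((1+s)^{3/2}(1+|q|)^{3/2-4\rho})$ in the interior and strictly better in the exterior. Finally $\Up(r/t)\wht Q_{\ba L \ba L}(h_0,\wht g)$, with $\wht Q_{\ba L\ba L}$ as in \eqref{qtildell}, splits into two contributions: $\partial_{\ba L}g_{LL}\partial_{\ba L}h_0$ is handled by the wave-coordinate identity $\partial g_{LL}\sim\bar\partial g$ of Proposition \ref{estLL} combined with the $\partial_{\ba L}h_0$ estimate from Proposition \ref{estjkl}; the crossed piece $\partial_{\ba L}(g_b)_{UU}\partial_{\ba L}g_{\ba L L}$ exploits $|\partial_{\ba L}(g_b)_{UU}|\lesssim \ch_{q>0}(|b|+|\partial_\theta b|)/(1+r)$ coming from the explicit form \eqref{gb}, so that this contribution is confined to $q>0$ where the bootstrap estimates on $g_{\ba L L}$ in Corollaries \ref{estksg} and \ref{estV2} supply the missing $|q|$-decay.

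Putting these contributions together, $|Z^I F|$ is controlled by $\ep^2/((1+s)^{3/2}(1+|q|))$ in the worst region $q<0$, with strictly better decay for $q>0$. This bound lies on the boundary $\mu=\tfrac{3}{2},\nu=1$ of the hypotheses of Proposition \ref{inhom}; using $s\ge|q|$ one trades a small fraction of the $s$-decay for extra $|q|$-decay, or equivalently absorbs the logarithm produced by the boundary application into the allowed $(1+s)^\rho$ growth in the target. Either way, Proposition \ref{inhom} delivers $|Z^I\wht h|\lesssim \ep^2/(1+s)^{1/2-\rho}$, which is the claimed bound.

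The main technical obstacle is the treatment of the crossed term $\partial_{\ba L}(g_b)_{UU}\partial_{\ba L}g_{\ba L L}$ inside $\wht Q_{\ba L \ba L}$, because $g_{\ba L L}$ carries no decay near the light cone. The isolation of $\wht Q_{\ba L \ba L}$ as a separate source term in \eqref{eqht} is precisely what makes this contribution tractable: since $(g_b)_{UU}$ differs from the Minkowski value only in the exterior region $q>0$, the factor $\partial_{\ba L}(g_b)_{UU}$ kills this term in the interior, while in the exterior Corollaries \ref{estksg} and \ref{estV2} provide the additional $|q|$-decay needed to close the estimate.
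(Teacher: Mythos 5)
Your overall strategy coincides with the paper's: bound $Z^I$ of each source term in \eqref{eqht} pointwise, with the crucial extra $|q|$-decay for $q>0$, and then convert to the decay of $\wht h$ via the $L^\infty$--$L^\infty$ machinery (the paper packages the boundary case $\mu=\tfrac32,\nu=1$ plus the exterior gain exactly as Lemma \ref{linf2}, whose hypothesis $\beta-\alpha\geq\rho$ encodes the trade you describe). Your treatment of $\Box(\Up(r/t)h_0)+2\Up(r/t)(\partial_q\phi)^2-2(R_b)_{qq}$ and of the crossed term $\partial_{\ba L}(g_b)_{UU}\partial_{\ba L}g_{\ba L L}$ matches the paper.

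There is, however, a genuine gap in your estimate of the quasilinear term $\Up(r/t)\,g_{LL}\partial_q^2 h_0$ (and the same issue affects $\partial_{\ba L}g_{LL}\partial_{\ba L}h_0$ in $\wht Q_{\ba L\ba L}$). You combine \eqref{wc3} with Proposition \ref{estjkl}, but \eqref{wc3} only controls $Z^{J_1}g_{LL}$ for $J_1\leq N-12$, while here up to $N-7$ vector fields can fall on $g_{LL}$. In the range $N-12<J_1\leq N-7$ you are forced to use \eqref{wc4}, i.e. $|Z^{J_1}g_{LL}|\lesssim \ep(1+|q|)^{2+\mu}(1+s)^{-3/2}$, and the generic bound $|Z^{J_2}\partial_q^2h_0|\lesssim \ep^2(1+|q|)^{-(3-4\rho)}$ from Proposition \ref{estjkl} then only yields $\ep^3(1+s)^{-3/2}(1+|q|)^{-(1-4\rho-\mu)}$, which falls short of the needed $(1+|q|)^{-1}$ by $(1+|q|)^{\mu+4\rho}$ and would propagate to a loss $(1+t)^{\mu+O(\rho)}$ in $\wht h$, not the allowed $(1+t)^\rho$ (recall $\rho\ll\mu$). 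The paper avoids this by substituting the transport equation, $g_{LL}\partial_q^2h_0=g_{LL}\partial_q\bigl(-2r(\partial_q\phi)^2-2b(\theta)\partial_q^2(q\chi(q))\bigr)$: when almost all vector fields land on $g_{LL}$, the remaining factor is $r\,\partial_qZ^{J}\phi\,\partial_q^2Z^{J'}\phi$ with $J,J'$ small, and each $\phi$ factor then contributes $(1+s)^{-1/2}(1+|q|)^{-(3/2-4\rho)}$, giving $\ep^3(1+s)^{-3/2}(1+|q|)^{-(2-8\rho-\mu)}$, which suffices since $8\rho+\mu\leq\tfrac12$. You need this product structure of $\partial_q^2h_0$ (not just its size) to close the top-order case.
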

\begin{proof}
Proposition \ref{boxh} gives for $I\leq N-7$ and $q<0$
\begin{equation}
\label{tildeh1}\left|Z^I\left(\Box \Up\left(\frac{r}{t}\right)h_0
+2\Up\left(\frac{r}{t}\right)(\partial_q \phi)^2 - 2(R_b)_{qq}\right)\right|\lesssim \frac{\ep^2}{(1+s)^\frac{3}{2}(1+|q|)},
\end{equation}
where we have used that thanks to \eqref{rqq}
$$\left|(R_b)_{qq}-2\frac{b(\theta)\partial_q^2(q\chi(q))}{r}\right|\lesssim
\frac{\ch_{1\leq q \leq 2}\ep^2}{(1+s)^2}.$$
To estimate
$Z^I (g_{LL}\partial^2_q h_0)$
we use the transport equation for $h_0$
$$
g_{LL}\partial^2_q h_0
= g_{LL}\partial_q(-2
r(\partial_q \phi)^2-2b(\theta)\partial_q^2(q\chi(q))
$$
We estimate the first term as in the proof of Proposition \ref{boxh}.
$$|Z^I(rg_{LL}\partial_q \phi \partial^2_q \phi)|
\lesssim \frac{\ep^2}{(1+s)^\frac{3}{2}(1+|q|)^{\frac{3}{2}-4\rho}}.$$
To estimate the second term, we note that the terms of the form
$\chi^{(j)}(q)$ decay faster than any power of $q$, so thanks to \eqref{wc4},
$$|Z^I\left(g_{LL}b(\theta)\partial^2_q(q\chi(q))\right)|\lesssim \frac{\ep^2}{s^\frac{3}{2}(1+|q|)^3}.$$
Consequently we have proved
\begin{equation}
\label{tildeh2}
\left|Z^i\left(\Up\left(\frac{r}{t}\right)  g_{LL}\partial^2_q h_0\right)\right|\lesssim \frac{\ep^2}{(1+s)^\frac{3}{2}(1+|q|)^{\frac{3}{2}-4\rho}}.
\end{equation}
We now estimate $\wht Q_{\ba L \ba L}(h_0,\wht g)$. We note than in the region $q<0$ the only term is $\partial_{\ba L}\wht g_{L L}\partial_{\ba L}h_0$. We use again the transport equation for $h_0$
$$\partial_q g_{LL}\partial_q h_0
=\partial_q g_{LL}(-2
r(\partial_q \phi)^2-2b(\theta)\partial_q^2(q\chi(q)).$$
Consequently, for similar reasons than for \eqref{tildeh2}, we obtain in the region $q<0$
\begin{equation}
\label{tildeh3}
\left|Z^i\left(\Up\left(\frac{r}{t}\right)  \partial_q g_{LL}\partial_q h_0\right)\right|\lesssim \frac{\ep^2}{(1+s)^\frac{3}{2}(1+|q|)^{\frac{3}{2}-4\rho}}.
\end{equation}
Thanks to \eqref{tildeh1}, \eqref{tildeh2} and \eqref{tildeh3}, we have in the region $q<0$ for $I\leq N-7$
\begin{equation}
\label{waveh}|\Box Z^I \wht h|\lesssim \frac{\ep^2}{(1+s)^\frac{3}{2}(1+|q|)}.
\end{equation}
In the region $q>0$, we have 
to estimate in $\wht Q_{\ba L \ba L}(h_0,\wht g)$ the term $\partial_{\ba L}(g_b)_{UU}\partial_{\ba L}g_{\ba L L}$, which is of the form
$\frac{\chi(q)b(\theta)}{r}\partial_q g_{\ba L L}$.
 Thanks to \eqref{ks5} we have
\begin{equation}
\label{estcross}\left|Z^I \left(\frac{\chi(q)b(\theta)}{r}\partial_q g_{\ba L L}\right)\right|\lesssim \frac{\ep^2}{(1+s)^{\frac{3}{2}}(1+|q|)^{\frac{3}{2}+\delta-\sigma}}.
\end{equation}
The other terms give contributions similar to the one of Proposition \ref{boxh}. Consequently, for $q>0$ we have the
better estimate for $I\leq N-7$
\begin{equation}
\label{waveh2}|\Box Z^I\wht h|\lesssim \frac{\ep^2}{(1+s)^\frac{3}{2}(1+|q|)^{\frac{3}{2}+\delta-\sigma}}.
\end{equation}
We now use lemma \ref{linf2}, whose proof is given at the end of this section, to conclude.
\begin{lm}
\label{linf2}Let $\beta,\alpha \geq 0$, such that $\beta-\alpha \geq \rho>0$.
Let $u$ be such that
$$|\Box u|\lesssim \frac{1}{(1+s)^{\frac{3}{2}-\alpha}(1+|q|)}, \; for \;q<0
\quad |\Box u|\lesssim \frac{1}{(1+s)^{\frac{3}{2}-\alpha}(1+|q|)^{1+\beta}},
\; for \; q>0,$$
and $(u,\partial_t u)|_{t=0}=0.$
Then we have the estimate
$$|u|\lesssim \frac{(1+t)^{\alpha+\rho}}{\sqrt{1+s}}.$$
\end{lm}
Thanks to \eqref{waveh} and \eqref{waveh2}, the conditions of Lemma \ref{linf2} are satisfied with $\alpha= 0$ and $\beta=\frac{1}{2}+\delta-\sigma$. Moreover, the initial data for $Z^I \wht h$ are given by the right-hand side of \eqref{eqht} (i.e. they are quadratic), therefore, for $I\leq N-7$ at $t=0$ we have
$$|Z^I \wht h|+(1+r)|\partial_t Z^I \wht h|\lesssim \frac{\ep^2}{(1+r)^{1+\delta}}.$$
Consequently, Lemma \ref{linf2} and Proposition \ref{flat1} yield for $I\leq N-7$
$$|Z^I \wht h|\lesssim \frac{\ep^2(1+t)^\rho}{\sqrt{1+s}}.$$
This concludes the proof of Lemma \ref{whth}.
\end{proof}

\begin{proof}[Proof of Lemma \ref{linf2}]
Let $t_0 >0$. We consider times $t\leq t_0$.
In the region $r\leq 2t$ we have $|q|\leq t \leq t_0$ and $s \leq 3t \leq 3t_0$. Therefore
$$|\Box u|\lesssim \frac{(1+t_0)^{\alpha+\rho}}{(1+|q|)^{1+\frac{\rho}{2}}(1+s)^{\frac{3}{2}+\frac{\rho}{2}}}.$$
In the region $r\leq 2t$, we have 
$\frac{r}{2}\leq |q|\leq r$ and $r\leq s\leq \frac{3r}{2}$, therefore
$$|\Box u|\lesssim \frac{1}{(1+r)^{\frac{3}{2}-\alpha+1+\beta}}
\lesssim \frac{(1+t_0)^{\alpha+\rho}}{(1+r)^{\frac{5}{2}-\alpha+\beta}}
\lesssim\frac{(1+t_0)^{\alpha+\rho}}{(1+|q|)^{1+\frac{\rho}{2}}(1+s)^{\frac{3}{2}+\frac{\rho}{2}}},$$
provided $\frac{5}{2}+\rho \leq \frac{5}{2}+\beta-\alpha$, i.e. $\beta -\alpha\geq \rho$.
Consequently, the $L^\infty-L^\infty$ estimate yields, for $t\leq t_0$
$$|u|\lesssim \frac{(1+t_0)^{\alpha+\rho}}{\sqrt{1+s}}.$$
If we take $t=t_0$ we have proved
$$|u|\lesssim \frac{(1+t)^{\alpha+\rho}}{\sqrt{1+s}},$$
which concludes the proof of Lemma \ref{linf2}.
\end{proof}

\section{Commutation with the vector fields and $L^\infty$ estimates}\label{seclinf}
\subsection{Estimates for $I\leq N-14$}

\begin{prp}\label{linfN}
We have the estimates for for $I \leq N-14$
\begin{align*}
|Z^I \wht g_1| &\leq \frac{C_0\ep+C\ep^2 }{(1+s)^{\frac{1}{2}-\rho}},\\
|Z^I \phi| &\leq \frac{C_0\ep+C\ep^2 }{\sqrt{1+s}(1+|q|)^{\frac{1}{2}-4\rho}}.
\end{align*}
\end{prp}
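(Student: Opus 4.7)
The plan is to apply the $L^\infty$--$L^\infty$ estimate of Proposition \ref{inhom} to $Z^I \phi$ and to $Z^I \wht g_1$, after commuting $Z^I$ through the respective wave equations and carefully bounding the resulting sources. Concretely, I will write $\Box Z^I \phi = (\Box - \Box_g) Z^I \phi + [Z^I, \Box_g]\phi$ and show that each term decays like $\frac{\ep^2}{(1+s)^{2-4\rho}(1+|q|)^{1+\nu}}$ for some $\nu>1$, so that Proposition \ref{inhom} with $\mu = 2-4\rho$ yields precisely the factor $(1+|q|)^{-1/2+4\rho}(1+s)^{-1/2}$. The initial data contribution is controlled by Proposition \ref{flat1} combined with the weighted Sobolev embedding \ref{holder} and accounts for the $C_0\ep$ piece.

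First, for $Z^I \phi$ with $I \leq N-14$, the quasilinear terms are of the form $Z^{I_1} g_{\q T \q V}\, \partial_{\q T}\partial_{\q V} Z^{I_2}\phi$; following Remark \ref{termql}, the dominant contribution is $Z^{I_1} g_{LL}\, \partial_q^2 Z^{I_2}\phi$. In the region $q<0$, I will use estimate \eqref{wc3} of Corollary \ref{estV2} for $g_{LL}$ (which gives the strongest $s$--decay, namely $(1+|q|)^{3/2}(1+s)^{-3/2}$) for $I_1 \leq N-12$, and the weaker estimate \eqref{wc4} combined with \eqref{bootphi1}--\eqref{bootphi2} for $\partial_q^2 Z^{I_2}\phi$ when $I_2 \leq N-12$. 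Splitting $I = I_1+I_2$ so that either $I_1\le \frac{N}{2}-2$ or $I_2\le \frac{N}{2}-2$ (possible since $I\le N-14$), this gives in $q<0$
\[
|g_{LL}\partial_q^2 Z^I \phi| + |Z^I g_{LL}\, \partial_q^2\phi| \lesssim \frac{\ep^2}{(1+s)^2 (1+|q|)^{3/2 - 4\rho}},
\]
with an analogous but even better bound in $q>0$ using \eqref{wc5} and \eqref{iks5}. The crossed term $\partial_{\q V}(g_b)_{UU}\partial_{\q T}\phi$ from \eqref{crossphi} is supported in $q>0$ and contributes $\ch_{q>0}\frac{|b|+|\partial_\theta b|}{1+r}|\partial_{\q T}Z^{I}\phi|$, which decays even faster. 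Summing and applying Proposition \ref{inhom} delivers the desired bound for $\phi$.

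Next, for $Z^I \wht g_1$, I will use the decomposition \eqref{dec1} to write
\[
\Box \wht g_1 = \Box g - \Box g_b - \Box\!\left(\Up\!\left(\tfrac{r}{t}\right) h_0\, dq^2\right).
\]
Expanding $\Box g$ via \eqref{s2} and extracting the transport terms, the right-hand side becomes a sum of: (i) the semilinear term $P_{\mu\nu}(g)(\partial\wht g,\partial\wht g)$, whose null structure I estimate as in Section \ref{secsemi}; (ii) the quasilinear term $g_{LL}\partial_q^2 \wht g_1$, controlled exactly as for $\phi$ above; (iii) the crossed term $\wht P_{\mu\nu}(\wht g, g_b)$, which is bounded using \eqref{estF} and Section \ref{cros}; (iv) the residual source from the transport construction, controlled by Proposition \ref{boxh} which shows that the quadratic $(\partial_q\phi)^2$ and $(R_b)_{qq}$ contributions cancel against $\Box(\Up(r/t) h_0)$ modulo $\frac{\ep^2}{(1+s)^{3/2}(1+|q|)}$ in $q<0$; and (v) the non-commutation term \eqref{noncom} coming from pulling $dq^2$ through $\Box$, which produces $\Up(r/t) r^{-2}(u^1 g_{\ba L\ba L} + u^2 \partial_\theta g_{\ba L\ba L})$, estimated via Proposition \ref{estzh} together with Proposition \ref{whth} since $g_{\ba L\ba L} = 4h_0 + 4\wht h + 4(\wht g)_{\ba L\ba L}$. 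Gathering contributions, $|\Box Z^I \wht g_1|$ is bounded essentially like $\frac{\ep^2}{(1+s)^{2-\rho}\sqrt{1+|q|}}$, to which we apply Lemma \ref{linf2} (or directly Proposition \ref{inhom} with $\mu = 2 - \rho$).

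The principal difficulty will be step (iv)--(v) in the $\wht g_1$ estimate: the coefficient $g_{\ba L\ba L}$ carries no $s$--decay at all, so the terms from \eqref{noncom} must be absorbed through the $\sqrt{1+|q|}$ decay provided by $h_0$ in Proposition \ref{estzh} and by $\wht h$ in Proposition \ref{whth}, combined with the factor $r^{-2}\sim (1+s)^{-2}$ from $\Up(r/t)$ being supported where $r\sim t$. Ensuring that $\mu = 2 - \rho > 3/2$ in the $L^\infty$--$L^\infty$ estimate forces $\rho < 1/2$, which is already imposed by \eqref{defpar}. The remaining terms are more classical: the null-structured $P_{\mu\nu}$ is handled exactly as in the model of Section \ref{secsemi}, and the crossed $\wht P_{\mu\nu}$ is negligible since it is supported in $q>0$ where all our estimates have extra $\delta-\sigma$ decay.
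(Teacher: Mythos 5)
Your overall architecture --- commute $Z^I$, bound the source using the wave-coordinate gain on $g_{LL}$ (Corollary \ref{estV2}) and the transport approximation for $h_0$ (Propositions \ref{estzh}, \ref{boxh}), then apply Proposition \ref{inhom} or Lemma \ref{linf2} together with Proposition \ref{flat1} for the data --- is exactly the paper's route (Propositions \ref{linfphi} and \ref{linfwhtg}). The $\phi$ part is essentially correct, up to a bookkeeping slip: in the case where almost all vector fields fall on $g_{LL}$, the factor $Z^{J+2}\phi$ can only be controlled by \eqref{bootphi2}, which carries no $q$-decay, so the source is only $\ep^2(1+s)^{-(2-3\rho)}(1+|q|)^{-1}$ rather than your claimed $(1+s)^{-2}(1+|q|)^{-3/2+4\rho}$; one then trades $(1+s)^{-\rho}$ for $(1+|q|)^{-\rho}$ before invoking Proposition \ref{inhom} with $\mu=2-4\rho$, as you anticipate.

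The genuine gap is in your step (v). With the decomposition \eqref{dec1} that you correctly start from, only $\Up\left(\frac{r}{t}\right)h_0\,dq^2$ is pulled out of $g$, so the non-commutation term \eqref{noncom} is $\Up\left(\frac{r}{t}\right)r^{-2}\left(u^1 h_0+u^2\partial_\theta h_0\right)$: it involves $h_0$ only, not the full $g_{\ba L \ba L}$. Your version, which puts $g_{\ba L\ba L}=4h_0+4\wht h+4(\wht g)_{\ba L\ba L}$ in that slot, fails on both extra pieces. First, $(\wht g_1)_{\ba L\ba L}$ is the unknown itself and is only known to be $O(\ep)$ from \eqref{bootg1}, so $r^{-2}|\partial_\theta Z^I(\wht g_1)_{\ba L\ba L}|\lesssim \ep(1+s)^{-5/2+2\rho}$ feeds a contribution of size $C\ep$, not $C\ep^2$, into the right-hand side, and the improvement from $2C_0$ to $\tfrac{3}{2}C_0$ cannot close. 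Second, $\wht h$ only satisfies $|Z^{I+1}\wht h|\lesssim \ep^2(1+s)^{-1/2+\rho}$ with no $q$-decay, so Lemma \ref{linf2} returns $(1+t)^{2\rho}/\sqrt{1+s}$, losing a factor $(1+t)^{\rho}$ against \eqref{bootg1}. This is precisely why the paper improves \eqref{bootg1}--\eqref{bootg2} with decomposition \eqref{dec1} rather than \eqref{dec2} (see the Remark following the bootstrap assumptions): $h_0$ is genuinely quadratic, and Proposition \ref{estzh} gives $|Z^{I+1}h_0|\lesssim \ep^2\left((1+s)^{-1/2}+(1+|q|)^{-1+4\rho}\right)$, whence $r^{-2}|\partial_\theta Z^I h_0|\lesssim \ep^2(1+s)^{-3/2}(1+|q|)^{-1}$, which is exactly what Lemma \ref{linf2} with $\alpha=0$ requires to output $(1+t)^{\rho}/\sqrt{1+s}$. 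For the same reason your announced aggregate source bound $\ep^2(1+s)^{-(2-\rho)}(1+|q|)^{-1/2}$ is a factor $(1+s)^{\rho}$ too weak: the budget here is tight, and the source must be $\lesssim \ep^2(1+s)^{-3/2}(1+|q|)^{-1}$ in $q<0$.
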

This proposition is a consequence of $L^\infty-L^\infty$ estimates and the following propositions.

\begin{prp}\label{linfphi}
We have the estimate for $I \leq N-14$ 
\begin{align*}
|\Box Z^I \phi| &\lesssim \frac{\ep^2 } {(1+s)^{2-3\rho}(1+ |q|)} , \; q<0,\\
|\Box Z^I \phi| &\lesssim \frac{\ep^2 } {(1+s)^{2}(1+ |q|)^{1+\delta-\sigma}} ,\;q>0.
\end{align*}
\end{prp}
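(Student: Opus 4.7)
Since $\Box_g \phi = 0$ and $\Box_g = g^{\alpha\beta}\partial_\alpha\partial_\beta - H_b^\rho\partial_\rho$, we can write
\begin{equation*}
\Box \phi = (m^{\alpha\beta}-g^{\alpha\beta})\partial_\alpha\partial_\beta \phi + H_b^\rho \partial_\rho \phi.
\end{equation*}
Commuting with $Z^I$ and using $[Z,\Box]=C(Z)\Box$, all commutators are lower-order multiples of $\Box$ and are reabsorbed inductively, so
\begin{equation*}
\Box Z^I \phi = \sum_{J\leq I} c_{I,J}\,Z^J\!\left((m^{\alpha\beta}-g^{\alpha\beta})\partial_\alpha\partial_\beta \phi\right) + \sum_{J\leq I} c_{I,J}\,Z^J(H_b^\rho\partial_\rho \phi).
\end{equation*}
Following Remark \ref{termql}, in the first sum the dominant contribution is $g_{LL}\partial_q^2\phi$; every other contraction supplies at least one good derivative and therefore an extra factor $(1+s)^{-1}$, so it is strictly better than the target bound.

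The plan is to expand each $Z^J(g_{LL}\partial_q^2\phi)$ by Leibniz as $\sum_{J_1+J_2\leq J} Z^{J_1} g_{LL}\cdot \partial_q^2 Z^{J_2}\phi$ (up to commutator errors of the same form) and to split into the two cases $J_1\leq \lfloor I/2\rfloor$ and $J_2\leq \lfloor I/2\rfloor$. Because $I\leq N-14$, in each case both indices satisfy $J_1+2,\, J_2+2\leq N-12$, so all factors fall in the range where the $L^\infty$ bootstrap assumptions \eqref{bootphi2} and \eqref{bootg2} apply. Concretely, I bound $|\partial_q^2 Z^{J_2}\phi|\lesssim (1+|q|)^{-2}|Z^{J_2+2}\phi|$ via \eqref{important}, and for $g_{LL}$ I use the wave-coordinate improvement of Corollary \ref{estV2}, namely \eqref{wc3}--\eqref{wc5}, which give the decisive gain $|Z^{J_1}g_{LL}|\lesssim \ep(1+|q|)^{3/2}(1+s)^{-3/2}$ in $q<0$ and $\lesssim \ep(1+|q|)^{1/2+\sigma-\delta}(1+s)^{-3/2}$ in $q>0$. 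Combining these with the corresponding pointwise bounds on $\partial Z\phi$ from \eqref{ks1}--\eqref{ks6} and \eqref{bootphi1}--\eqref{bootphi2} yields exactly the stated decay in the two regions, with the $\rho$ loss coming from the $\rho$ loss inherent in the $L^\infty$ bootstrap on $\phi$.

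The crossed terms $Z^J(H_b^\rho\partial_\rho\phi)$, as discussed in Section \ref{cros}, are dominated by \eqref{crossphi}, i.e. expressions of type $\frac{\chi(q)b(\theta)}{r}\bar\partial\phi$ or $\partial_q(g_b)_{UU}\,\partial_{\q T}\phi$. These are supported in $q\geq 1$; using \eqref{bootb2} for $Z^{J_1}b$ and the $L^\infty$ bootstrap \eqref{bootphi2} together with \eqref{ks4} for $q>0$ gives a contribution of size $\ep^2(1+s)^{-2}(1+|q|)^{-1-\delta+\sigma}$, which is consistent with the claim in the exterior and strictly better than needed in the interior. The remaining pieces of $Z^J(H_b^\rho\partial_\rho\phi)$ involve either derivatives of $b$ (controlled by \eqref{bootb2}) or the cutoff derivatives $\chi',\chi''$, which localize to $1<q<2$ and are trivially absorbed.

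The main obstacle is in the region $q<0$: the factor $g_{LL}$ only has the decay $(1+|q|)^{3/2}(1+s)^{-3/2}$, and pairing it with $\partial_q^2 Z^{J_2}\phi\sim (1+|q|)^{-5/2+4\rho}(1+s)^{-1/2}$ produces precisely $(1+s)^{-2}(1+|q|)^{-1+4\rho}$; obtaining the stated power $(1+s)^{-2+3\rho}$ requires being careful to put the $\rho$-loss on the $s$-variable by trading $(1+|q|)^{4\rho}$ against $(1+s)^{4\rho}$ using $|q|\leq s$ in the interior region, which is the only nonautomatic step. All other contributions leave room to spare.
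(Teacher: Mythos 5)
Your overall strategy is the paper's: write $\Box Z^I\phi = Z^I(\Box\phi-\Box_g\phi)$, reduce via Remark \ref{termql} to terms $Z^{J_1}g_{LL}\,\partial_q^2 Z^{J_2}\phi$, split according to which factor carries at most half the derivatives, and exploit the wave-coordinate gain on $g_{LL}$; your treatment of the crossed terms in $q>0$ via \eqref{crossphi} and \eqref{iks5} also matches. The gap is your choice of estimate for $g_{LL}$ in $q<0$. You invoke \eqref{wc3}, $|Z^{J_1}g_{LL}|\lesssim\ep(1+|q|)^{3/2}(1+s)^{-3/2}$, uniformly. In the case $J_1\leq\lfloor I/2\rfloor$, where $J_2$ can be as large as $N-14$ so that only \eqref{bootphi2} (which has no $q$-decay) is available for $Z^{J_2+2}\phi$, this yields
\[
\frac{\ep(1+|q|)^{3/2}}{(1+s)^{3/2}}\cdot\frac{1}{(1+|q|)^{2}}\cdot\frac{\ep}{(1+s)^{\frac12-2\rho}}
=\frac{\ep^2}{(1+s)^{2-2\rho}(1+|q|)^{\frac12}},
\]
and no admissible trade (one may only convert positive powers of $(1+|q|)$ into powers of $(1+s)$, never the reverse) upgrades $(1+|q|)^{-1/2}$ to the required $(1+|q|)^{-1}$; your closing assertion that the remaining contributions ``leave room to spare'' is false precisely here. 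In the case you do compute, the trade $(1+|q|)^{4\rho}\leq(1+s)^{4\rho}$ lands you at $(1+s)^{-2+4\rho}(1+|q|)^{-1}$, strictly weaker than the stated $(1+s)^{-2+3\rho}(1+|q|)^{-1}$. This is not cosmetic: Proposition \ref{linfN} feeds the $3\rho$ exponent into the $L^\infty$--$L^\infty$ estimate with $\mu=2-4\rho$ so as to recover exactly the weight $(1+|q|)^{\frac12-4\rho}$ of \eqref{bootphi1}; a $4\rho$ loss in $s$ would only return $(1+|q|)^{\frac12-5\rho}$ and the bootstrap would not close.

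The repair, which is what the paper does, is to use \eqref{wc1} and \eqref{wc2} rather than \eqref{wc3}: these carry only one power of $(1+|q|)$ at the price of a harmless $(1+s)^{\rho}$ or $(1+s)^{2\rho}$. When $J_1\leq\lfloor I/2\rfloor\leq N-15$, pair \eqref{wc1} with \eqref{bootphi2} to get exactly $\ep^2(1+s)^{-2+3\rho}(1+|q|)^{-1}$; when $J_2\leq\lfloor I/2\rfloor$ (so $J_2+2\leq N-14$), pair \eqref{wc2} (valid since $J_1\leq N-14\leq N-13$) with \eqref{bootphi1} to get $\ep^2(1+s)^{-2+2\rho}(1+|q|)^{-\frac32+4\rho}$, which is within the target without any trading. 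With that substitution the rest of your argument goes through.
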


\begin{prp}\label{linfwhtg}
We have the estimate for $I \leq N-14$ 
\begin{align*}
|\Box Z^I \wht g_1| &\lesssim \frac{\ep^2 } {(1+s)^{\frac{3}{2}}(1+ |q|)}, \; q<0,\\
|\Box Z^I \wht g_1| &\lesssim \frac{\ep^2}{(1+s)^\frac{3}{2}(1+|q|)^{\frac{3}{2}+\delta-\sigma}},\; q>0.\\ 
\end{align*}
\end{prp}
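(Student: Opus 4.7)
The plan is to write the flat wave of $\wht{g}_1$ in terms of $\Box_g$ acting on $\wht g$ (controlled by \eqref{s2}) and $\Box$ acting on the transport piece (controlled by Proposition~\ref{boxh}), and then to exploit the algebraic cancellation of the model-problem source in the $(dq)_\mu(dq)_\nu$ direction. From \eqref{dec1}, $(\wht{g}_1)_{\mu\nu}=\wht g_{\mu\nu}-\Up(r/t)h_0\,(dq)_\mu(dq)_\nu$, and therefore
\begin{equation*}
\Box Z^I(\wht{g}_1)_{\mu\nu}=(\Box-\Box_g)Z^I(\wht{g}_1)_{\mu\nu}+Z^I\!\bigl(\Box_g\wht g_{\mu\nu}\bigr)-Z^I\!\bigl(\Box(\Up h_0)\bigr)(dq)_\mu(dq)_\nu+E_{\mu\nu},
\end{equation*}
where $E_{\mu\nu}$ collects commutators $[Z^I,\Box_g]$, the quasilinear correction $(\Box_g-\Box)(\Up h_0 (dq)_\mu(dq)_\nu)$, and terms where derivatives fall on the cut-off $\Up(r/t)$ or on the coordinate functions $(dq)_\mu$.

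The key cancellation comes from rewriting the sources in \eqref{s2} as
\begin{equation*}
-2\partial_\mu\phi\,\partial_\nu\phi=-2(\partial_q\phi)^2(dq)_\mu(dq)_\nu+R^{(1)}_{\mu\nu},\qquad 2(R_b)_{\mu\nu}=-2\frac{b(\theta)\partial_q^2(q\chi(q))}{r}(dq)_\mu(dq)_\nu+R^{(2)}_{\mu\nu},
\end{equation*}
where $R^{(1)}_{\mu\nu}$ always contains at least one good derivative $\bar\partial\phi$ (from the decomposition $d\phi=\partial_q\phi\,dq+\partial_s\phi\,ds+\partial_U\phi\,d\theta$) and $R^{(2)}_{\mu\nu}=O(\ch_{1\le q\le 2}/r^2)$ by \eqref{rqq}--\eqref{rqu}. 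Proposition~\ref{boxh} says precisely that $\Box(\Up h_0)=\Up\bigl(-2(\partial_q\phi)^2-2b(\theta)\partial_q^2(q\chi)/r\bigr)$ modulo an error already of the target size. Hence the dangerous $(\partial_q\phi)^2$ and $\frac{b}{r}\partial_q^2(q\chi)$ contributions cancel on $\{\Up=1\}$; on the support of $1-\Up$ one has $r/t\le 3/4$ or $r/t\ge 3/2$, so $|q|\sim s$ and the residual source is $\lesssim \ep^2/(1+s)^{5/2}$, well within the target bound.

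After the cancellation, the remaining terms are: (i) the cross source $R^{(1)}_{\mu\nu}$, dominated by $\partial_q\phi\cdot\bar\partial\phi$ and thus bounded by $\ep^2/((1+s)^{3/2}(1+|q|)^{3/2-4\rho})$ via \eqref{ks1} and \eqref{bootphi1}; (ii) the quasilinear piece, whose worst contribution $g_{LL}\partial_q^2 Z^I\wht{g}_1$ is controlled by combining $|g_{LL}|\lesssim \ep(1+|q|)/(1+s)^{3/2-\rho}$ from \eqref{wc1} with $|\partial^2 Z^I\wht{g}_1|\lesssim |Z^{I+2}\wht{g}_1|/(1+|q|)^2$ and \eqref{bootg2}, yielding $\ep^2/((1+s)^{2-3\rho}(1+|q|))$, which absorbs into the target bound for $\rho$ small; (iii) the null-form piece $Z^I P_{\mu\nu}(\partial\wht g,\partial\wht g)$, each term of which carries one good derivative or one good component and is bounded as in Section~\ref{secsemi} using Corollary~\ref{esttildg}; (iv) the crossed piece $Z^I\wht P_{\mu\nu}(\wht g,g_b)$, whose worst terms \eqref{crossg}--\eqref{crossbaL} are bounded using Corollaries \ref{esttildg} and \ref{estV2}; (v) the commutator and cut-off errors $E_{\mu\nu}$, all supported where $r\sim t$, so one derivative of $\Up(r/t)$ or of $(dq)_\mu$ produces the required $1/(1+s)$ gain. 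The stronger $q>0$ bound in the statement follows from the sharper estimates \eqref{ks4}, \eqref{wc5}, and the $q>0$ part of Proposition~\ref{boxh}.

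The main obstacle is the algebraic identification of the source structure, i.e.\ verifying coordinate-by-coordinate that the bad model-problem contributions in \eqref{s2} are exactly proportional to $(dq)_\mu(dq)_\nu$ and match the transport equation \eqref{eqh} modulo terms with an extra good derivative or an extra power of $1/r$. Once this is in hand, the remaining bounds are routine applications of the bootstrap assumptions and of the wave-coordinate gains of Section~\ref{secwave}.
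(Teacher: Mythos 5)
Your proposal is correct and follows essentially the same approach as the paper: it writes $\Box Z^I\wht g_1$ via the decomposition \eqref{dec1}, cancels the model-problem $(dq^2)_{\mu\nu}$-source $-2(\partial_q\phi)^2 - 2b\partial_q^2(q\chi)/r$ against $\Box(\Up h_0)$ using Proposition \ref{boxh}, and estimates the remaining quasilinear, non-commutation, semilinear, and crossed terms with the bootstrap assumptions and the wave-coordinate gains of Section \ref{secwave}, as in \eqref{linfg1}, \eqref{linfg4}, \eqref{linfg3}, \eqref{lingfB}. Your writeup is actually a bit more explicit than the paper about where the cancellation happens (the paper folds it into the displayed equation \eqref{eqg1} and the citation of \eqref{tildeh1}), but the content is the same.
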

We first assume Proposition \ref{linfphi} and \ref{linfwhtg}, and prove Proposition \ref{linfN}.

\begin{proof}[Proof of Proposition \ref{linfN}]
We have 
$$|\Box Z^I \phi| \lesssim \frac{\ep^2 } {(1+s)^{2-3\rho}(1+ |q|)}
\lesssim \frac{\ep^2}{(1+s)^{2-4\rho}(1+|q|)^{1+\rho}},$$
therefore the $L^\infty-L^\infty$ estimate, combined with Proposition \ref{flat1} for the contribution of the initial data yields
$$|Z^I \phi|\leq \frac{C_0\ep}{\sqrt{1+s}\sqrt{1+|q|}}+\frac{C\ep^2}{\sqrt{1+s}(1+|q|)^{\frac{1}{2}-4\rho}},$$
where $C$ is a constant depending on $\rho$.

The estimate $\wht g_1$ follows from Lemma \ref{linf2} with $\alpha= 0$, $\beta=\frac{3}{2}+\delta-\sigma$ combined with Proposition \ref{flat1} 
$$| Z^I \wht g_1|\leq \frac{C_0\ep}{\sqrt{1+s}\sqrt{1+|q|}}+\frac{C\ep^2}{(1+s)^{\frac{1}{2}-\rho}},$$
which concludes the proof of Proposition \ref{linfN}.
\end{proof}

\begin{proof}[Proof of Proposition \ref{linfphi}]
We first estimate $\Box Z^I \phi$ in the region $q<0$
\[
Z^I \Box \phi =  Z^I \left(\Box \phi -\Box_g \phi\right).\]
In the region $q<0$, thanks to Remark \ref{termql}, it is sufficient to estimate
$Z^I \left( g_{LL}\partial_q^2 \phi\right)$
\[|Z^{I-J} g_{LL} \partial^2_q Z^{J} \phi|\lesssim \frac{1}{(1+|q|)^2}|Z^{I-J} g_{LL}| |Z^{J+2} \phi|. \]
If $J \leq  \frac{N-14}{2}$ we have $J+2\leq \frac{N-14}{2}+2\leq N-14$ so,
thanks to \eqref{bootphi1}
$$| Z^{J+2} \phi|\lesssim \frac{\ep}{(1+s)^{\frac{1}{2}}(1+|q|)^{\frac{1}{2}-4\rho}},$$
and since $I-J\leq N-14$ we have thanks to \eqref{wc2}
$$|Z^{I-J} g_{LL}|\lesssim \frac{\ep (1+|q|) }{(1+s)^{\frac{3}{2}-2\rho}}.$$
Therefore
$$|Z^{I-J} g_{LL} \partial^2_q Z^{J} \phi|\lesssim \frac{\ep^2}{(1+s)^{2-2\rho}(1+|q|)^{\frac{3}{2}-4\rho}}.$$
If $I-J \leq  \frac{N-14}{2} \leq N-15$ we have thanks to \eqref{wc1}
$$|Z^{I-J} g_{LL}|\lesssim \frac{\ep (1+|q|) }{(1+s)^{\frac{3}{2}-\rho}},$$
and since $J+2\leq N-12$ we have thanks to \eqref{bootphi2}
$$| Z^{J+2} \phi |\lesssim \frac{\ep}{(1+s)^{\frac{1}{2}-2\rho}}.$$
In the two cases, we have for $q<0$
\begin{equation}
\label{philinf1}|Z^{I-J} g_{LL} \partial^2_q Z^{J} \phi|\lesssim \frac{\ep^2}{(1+s)^{2-3\rho}(1+|q|)}.
\end{equation}
In the region $q>0$ we have the better estimate thanks to \eqref{wc5} and \eqref{ks5}
\begin{equation}
\label{philinf2}|Z^{I-J} g_{LL} \partial^2_q Z^{J} \phi|\lesssim \frac{\ep^2}{(1+s)^{2}(1+|q|)^{2+2(\delta-\sigma)}}.
\end{equation}
In the region $q>0$ we also have to take into account the crossed term. These terms are described by \eqref{crossphi} in Section \ref{cros}. It is sufficient to estimate
$$Z^I \left(b(\theta)\frac{\chi(q)}{r}\partial_s \phi\right).$$
Since they occur only in the region $q>0$, we can estimate, thanks to \eqref{iks5}
$$|Z^I \phi| \lesssim \frac{\ep }{\sqrt{1+s}(1+|q|)^{\frac{1}{2}+\delta-\sigma}}.$$
Therefore
\begin{equation}
\label{philinf3}\left|Z^I b(\theta)\frac{\partial_q(q\chi(q))}{r}\partial_s \phi\right|\lesssim \frac{\ep^2}{(1+s)^{\frac{5}{2}}(1+|q|)^{\frac{1}{2}+\delta-\sigma}}
\lesssim \frac{\ep^2}{(1+s)^2(1+|q|)^{1+\delta-\sigma}}.
\end{equation}
Estimates \eqref{philinf1}, \eqref{philinf2} and \eqref{philinf3} conclude the proof of Proposition \ref{linfphi}.
\end{proof}

\begin{proof}[Proof of Proposition \ref{linfwhtg}]
We write the equation for
$\wht g_1$. We have, thanks to \eqref{s2} and \eqref{noncom}
\begin{equation}\label{eqg1}
\begin{split}
\Box (\wht g_1)_{\mu \nu} =& -2 \partial_\mu \phi\partial_\nu \phi
+2(R_b)_{\mu \nu} + (dq^2)_{\mu \nu}\Box \Up\left(\frac{r}{t}\right)h_0\\
&+\Up\left(\frac{r}{t}\right)\frac{1}{r^2}\left(u^1_{\mu \nu}(\theta)h_0
+u^2_{\mu \nu}(\theta)\partial_\theta h_0\right)\\
&+P_{\mu \nu}(g)(\partial \wht g, \partial \wht g) + \wht P_{\mu \nu} (\wht g, g_b),
\end{split}
\end{equation}
and therefore
$\Box Z^I (\wht g_1)_{\mu \nu} = f_{\mu \nu},$
where the terms in $f_{\mu \nu}$ are of the forms
\begin{itemize}
\item the quasilinear terms : thanks to Remark \ref{termql} it is sufficient to study $Z^I(g_{LL}\partial_q^2\wht g_1)$,
\item the terms coming from the non commutation of the wave operator with the null decomposition: they are calculated in \eqref{noncom} and they are of the form $\Up(\frac{r}{t})\frac{1}{r^2}\partial_\theta Z^I h_0$,
\item the semi-linear terms: following section the worst term is the term   $Z^I\left(\partial_{\ba L}g_{\ba L \ba L}\partial_{\ba L}g_{LL}\right)$
appearing in $Z^I P_{\ba L \ba L}$ (see \eqref{termbal}).
\item the crossed terms with the background metric $g_b$: the worst term is the term $Z^I\left(\partial_{\ba L}(g_b)_{UU}\partial_{\ba L}g_{L\ba L}\right)$ appearing in $Z^I \wht P_{\ba L \ba L}$ (see \eqref{crossbaL}).
\end{itemize}

\paragraph{The quasilinear terms}
We estimate 
$$Z^I \left( g_{LL}\partial_q^2 \wht g_1\right)=\sum_{J\leq I}Z^{I-J}g_{LL}Z^J\partial_q^2 \wht g_1.$$
We have
\[|Z^{I-J} g_{LL} \partial^2_q Z^{J} \wht g_1|\lesssim \frac{1}{(1+|q|)^2}||Z^{I-J} g_{LL}|| Z^{J+2} \wht{g_1}| .\]
If $J\leq  \frac{N-14}{2}$ we have $J+2\leq \frac{N-14}{2}+2\leq N-14$ so
thanks to \eqref{bootg1}
$$| Z^{J+2} \wht{g_1}|\lesssim \frac{\ep}{(1+s)^{\frac{1}{2}-\rho}},$$
and since $I-J\leq N-14$ we have thanks to \eqref{wc2}
$$|Z^{I-J} g_{LL}|\lesssim \frac{\ep (1+|q|) }{(1+s)^{\frac{3}{2}-2\rho}}.$$
If $I-J \leq  \frac{N-14}{2} \leq N-15$ we have thanks to \eqref{wc1}
$$|Z^{I-J} g_{LL}|\lesssim \frac{\ep (1+|q|) }{(1+s)^{\frac{3}{2}-\rho}},$$
and since $J+2\leq N-12$ we have thanks to \eqref{bootg2}
$$| Z^{J+2} \wht{g_1}|\lesssim \frac{\ep}{(1+s)^{\frac{1}{2}-2\rho}}$$
In the two cases, we have
\begin{equation}
\label{linfg1}|Z^{I-J} g_{LL} \partial^2_q Z^{J} \wht g_1|\lesssim \frac{\ep^2}{(1+s)^{2-3\rho}(1+|q|)}.
\end{equation}

\paragraph{The term coming from the non commutation of the wave operator with the null structure}
We have to estimate 
$$\Up\left(\frac{r}{t}\right)\frac{\partial_\theta Z^I h_0}{r^2}.$$
Since $I\leq N-14$, we have $I+1\leq N-5$ so thanks to Proposition \ref{estzh}
\begin{equation}
\label{linfg4}\left|\Up\left(\frac{r}{t}\right)\frac{\partial_\theta Z^I h_0}{r^2}\right|\lesssim \frac{\ep^2}{(1+s)^2\sqrt{1+|q|}}
\lesssim \frac{\ep^2}{(1+s)^\frac{3}{2}(1+|q|)}.
\end{equation}

\paragraph{The semi-linear terms}
We estimate $Z^I\left(\partial_{\ba L}g_{\ba L \ba L}\partial_{\ba L}g_{LL}\right)$. For this, we have to estimate, using the decomposition \eqref{dec1}
$$Z^I\left(\partial_{\ba L}h_0\partial_{\ba L}g_{LL}\right) \quad and \quad Z^I\left(\partial_{\ba L}\wht g_1\partial_{\ba L}g_{LL}\right) $$
The first term has been estimated in 
\eqref{tildeh1}. For the second term, we write
$$|Z^I\left(\partial_{\ba L}\wht g_1\partial_{\ba L}g_{LL}\right) |\lesssim \frac{1}{1+|q|}\sum_{J\leq I}|Z^{J+1}\wht g_1||\partial Z^{I-J}g_{LL}|,$$
and we estimate
if $J\leq \frac{N-14}{2}$ thanks to \eqref{bootg1} and \eqref{dwc2}
$$|Z^{J+1}\wht g_1|\lesssim \frac{\ep}{(1+s)^{\frac{1}{2}-\rho}}\quad and \quad
|\partial Z^{I-J}g_{LL}|\lesssim \frac{\ep}{(1+s)^{\frac{3}{2}-2\rho}}.$$
If $I-J \leq \frac{N-14}{2}$  thanks to \eqref{bootg2} and \eqref{dwc1} we have
$$|Z^{J+1}\wht g_1|\lesssim \frac{\ep}{(1+s)^{\frac{1}{2}-2\rho}}\quad and \quad
|\partial Z^{I-J}g_{LL}|\lesssim \frac{\ep}{(1+s)^{\frac{3}{2}-\rho}}.$$
In the two cases we have
$$|Z^I\left(\partial_{\ba L}\wht g_1\partial_{\ba L}g_{LL}\right) |\lesssim
\frac{\ep^2}{(1+s)^{2-3\rho}(1+|q|)}.$$
This estimate and \eqref{tildeh1} yields for $I\leq N-14$
\begin{equation}\label{linfg3}
|Z^I\left(\partial_{\ba L}\wht g_1\partial_{\ba L}g_{LL}\right) |
\lesssim \frac{\ep^2}{(1+s)^{\frac{3}{2}}(1+|q|)^{\frac{3}{2}-4\rho}}.
\end{equation}

We have now estimated $\Box Z^I (\wht g_1)_{\mu \nu}$ in the region $q<0$. Thanks to  \eqref{linfg1}, \eqref{linfg4} and \eqref{linfg3} we have, for $q<0$ and $I\leq N-14$
\begin{equation}
\label{lingfA}|\Box Z^I \wht g_1| \lesssim \frac{\ep^2 } {(1+s)^{2-3\rho}(1+ |q|)}  
\end{equation}

\paragraph{The crossed terms}
The crossed term are only present in the region $q>0$.
The estimate of
$$Z^I\left(\partial_q (g_b)_{UU}\partial_q \wht g_{\ba L L}\right)$$
is done in \eqref{estcross}. The other terms give better contributions in the region $q>0$ (see Remark \ref{remcros}). Therefore we have
for $q<0$ and $I\leq N-4$
\begin{equation}
\label{lingfB}|\Box Z^I \wht g_1| \lesssim \frac{\ep^2 } {(1+s)^{\frac{3}{2}}(1+ |q|)^{\frac{3}{2}+\delta-\sigma}}.  
\end{equation}
The estimates \eqref{lingfA} and \eqref{lingfB} conclude the proof of Proposition \ref{linfwhtg}.
\end{proof}

\subsection{Estimates for $I\leq N-12$}
\begin{prp}\label{linf}
We have the estimates for $I \leq N-12$
\begin{align*}
| Z^I \phi| &\leq \frac{C_0\ep+C\ep^2}{(1+s)^{\frac{1}{2}-2\rho}}, \\
| Z^I \wht g_1| &\lesssim \frac{C_0\ep+C\ep^2}{(1+s)^{\frac{1}{2}-2\rho}}. 
\end{align*}
\end{prp}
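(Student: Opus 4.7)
The plan is to mirror the proof of Proposition \ref{linfN}, replacing the use of the $L^\infty-L^\infty$ estimate at the end by Lemma \ref{linf2} with a small parameter $\alpha=\rho$, since we can no longer hope for the extra $(1+|q|)^{-1/2+4\rho}$ decay (we have less regularity to spare). The goal is to derive, for $I\leq N-12$ and $q<0$,
$$|\Box Z^I\phi|+|\Box Z^I\wht g_1|\lesssim \frac{\ep^2}{(1+s)^{2-C\rho}(1+|q|)^{1-C\rho}},$$
with strictly better decay in $q>0$, and then to feed this into Lemma \ref{linf2} and Proposition \ref{flat1}.

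For the quasilinear contributions (which by Remark \ref{termql} reduce to $Z^{I-J}g_{LL}\,\partial_q^2 Z^J u$ for $u\in\{\phi,\wht g_1\}$) I split the derivative budget at $J\leq (N-12)/2$ versus $I-J\leq (N-12)/2$. In the first case $J+2\leq N-14$, so the bootstrap assumptions \eqref{bootphi1}, \eqref{bootg1} apply; the factor $Z^{I-J}g_{LL}$ is controlled by the wave-coordinate estimate \eqref{wc2} (or \eqref{wc3} when $I-J=N-12$). In the second case $I-J\leq N-15$, so we pair the finer wave-coordinate estimate \eqref{wc1} for $Z^{I-J}g_{LL}$ with the lower-regularity bootstrap \eqref{bootphi2}, \eqref{bootg2} for $Z^{J+2}u$. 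Either way, the resulting product satisfies the announced bound. The semi-linear terms from $P_{\mu\nu}(g)(\partial\wht g,\partial\wht g)$ and the term $\Up(\tfrac{r}{t})\partial_\theta Z^I h_0/r^2$ coming from the non-commutation of $\Box$ with the null frame are handled as in Proposition \ref{linfwhtg}: the first via the decomposition $g_{\ba L\ba L}=4h_0+4\wht g_1$ (using Proposition \ref{estzh} for the $h_0$ part, and the analogous split \eqref{dwc1}-\eqref{dwc2} for $\partial\wht g_1$); the second via Proposition \ref{estzh} since $I+1\leq N-11\leq N-5$. Crossed terms with $g_b$ (see Section \ref{cros}) only matter for $q>0$ and are even better behaved thanks to the additional decay \eqref{iks5} and \eqref{est5}.

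Once the bound on $|\Box Z^I u|$ is in place, I use the interpolation $(1+s)^{-(2-C\rho)}(1+|q|)^{-(1-C\rho)}\lesssim (1+s)^{-(3/2-\rho)}(1+|q|)^{-1}$ valid in $|q|\leq s$ (for $\rho$ small enough depending on the hidden constant $C$), together with the comparable better bound for $q>0$, to verify the hypothesis of Lemma \ref{linf2} with $\alpha=\rho$ and some $\beta>\rho$. The lemma yields $|Z^I u|\lesssim \ep^2 (1+t)^{2\rho}/\sqrt{1+s}$. Adding the contribution of the free evolution of the initial data, bounded by $C_0\ep/\sqrt{1+s}$ via Proposition \ref{flat1} and the Sobolev embedding of Proposition \ref{holder}, gives exactly
$$|Z^I\phi|+|Z^I\wht g_1|\leq \frac{C_0\ep+C\ep^2}{(1+s)^{1/2-2\rho}}.$$

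The main difficulty is the derivative-counting in the quasilinear step: with $I\leq N-12$ the two cases $J\leq (N-12)/2$ and $I-J\leq (N-12)/2$ must exhaust all possibilities while keeping each factor in a regime where either a bootstrap assumption or a wave-coordinate estimate is available at a level strictly below the one being closed. With $N\geq 40$ this is comfortable, but one has to be careful at the endpoints $I-J=N-12$ (where only \eqref{wc3} is allowed) and $J+2=N-12$ (where only \eqref{bootphi2}/\eqref{bootg2} are allowed), which is precisely why the interpolation trick above is needed to feed the output into Lemma \ref{linf2}.
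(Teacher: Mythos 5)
Your overall architecture is the same as the paper's: reduce to pointwise bounds on $\Box Z^I\phi$ and $\Box Z^I\wht g_1$ (quasilinear, semilinear, non-commutation and crossed terms handled separately), then conclude with Lemma \ref{linf2} (with a small $\alpha$) plus Proposition \ref{flat1} for the data. The target bound $(1+s)^{-(3/2-\rho)}(1+|q|)^{-1}$ for $q<0$ and the use of $\alpha=\rho$, $\beta=\frac12+\delta-\sigma$ in Lemma \ref{linf2} are exactly what the paper does.

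However, there is a genuine gap in your second quasilinear case. When $I\le N-12$ and you put the bulk of the derivatives on $u\in\{\phi,\wht g_1\}$, i.e. $I-J\le\frac{N-12}{2}$, the index $J$ ranges up to $I=N-12$, so $Z^{J+2}u$ carries up to $N-10$ vector fields. The bootstrap assumptions \eqref{bootphi2} and \eqref{bootg2} that you invoke are only available for $I\le N-12$, so they do not cover $Z^{J+2}u$ at the top of this range; your endpoint discussion (``$J+2=N-12$'') undercounts by two. The paper avoids this by writing $\partial_q^2 Z^J u\lesssim (1+|q|)^{-1}|\partial_q Z^{J+1}u|$ and estimating the high-derivative factor with the weighted Klainerman--Sobolev consequences of the $L^2$ bootstrap, namely \eqref{ks1} for $\phi$ (valid up to $N-4$) and \eqref{est3} for $\wht g_1$ (valid up to $N-11$), which give $|\partial Z^{J+1}\phi|\lesssim \ep/(\sqrt{1+s}\sqrt{1+|q|})$ and $|\partial_q Z^{J+1}\wht g_1|\lesssim \ep/(1+|q|)$. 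These have weaker decay than \eqref{bootphi2}/\eqref{bootg2}, which is precisely why the final bound on $\Box Z^I u$ at level $N-12$ is only $(1+s)^{-(3/2-\rho)}(1+|q|)^{-1}$ rather than the $(1+s)^{-(2-3\rho)}(1+|q|)^{-1}$ obtained at level $N-14$; the conclusion through Lemma \ref{linf2} is unaffected. The same substitution is needed in your treatment of the semilinear term $\partial_{\ba L}\wht g_1\,\partial_{\ba L}g_{LL}$ in the case where $\wht g_1$ carries the high derivative count. Once these references are corrected, the argument closes.
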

This proposition is a straightforward consequence of Lemma \ref{linf2}, Proposition \ref{flat1} and the following propositions.

\begin{prp}\label{linfphi2}
We have the estimate for $I \leq N-12$
\begin{align*}
 |\Box Z^I  \phi| &\lesssim \frac{\ep^2}{(1+s)^{\frac{3}{2}-\rho}(1+|q|)},\; q<0, \\
|\Box Z^I \phi| &\lesssim \frac{\ep^2 } {(1+s)^{2}(1+ |q|)^{1+\delta-\sigma}} ,\;q>0.
\end{align*}
\end{prp}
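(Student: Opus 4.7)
The plan is to follow the same strategy as in the proof of Proposition \ref{linfphi}, but now with two fewer derivatives available, which forces us to replace some of the pointwise bootstrap assumptions by weighted Klainerman--Sobolev bounds. First I would write $\Box Z^I \phi = Z^I\!\left((\Box - \Box_g)\phi\right) + \text{(commutator terms)}$, where the commutators $[\Box, Z]$ produce only $\Box$ itself (cf.\ $C(Z)=0$ or $2$) and are absorbed. By Remark \ref{termql}, the dominant quasilinear contribution to $(\Box-\Box_g)Z^I\phi$ is $Z^I(g_{LL}\partial_q^2 \phi)$; in the region $q>0$ one must additionally control the crossed term $Z^I\!\left(\tfrac{b(\theta)\chi(q)}{r}\bar\partial\phi\right)$ from \eqref{crossphi}.

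For the quasilinear term I would expand with Leibniz and split $Z^I(g_{LL}\partial_q^2\phi) = \sum_{J\le I} Z^{I-J}g_{LL}\,\partial_q^2 Z^J\phi$ into two cases according to which factor carries the higher number of vector fields. In the case $J \le (N-12)/2$, so that $J+2 \le N-12$, I would use the bootstrap assumption \eqref{bootphi2} to bound $|Z^{J+2}\phi|\lesssim \ep(1+s)^{-1/2+2\rho}$, together with the wave-coordinate estimate \eqref{wc3} (valid up to $N-12$ derivatives), giving $|Z^{I-J}g_{LL}|\lesssim \ep(1+|q|)^{3/2}(1+s)^{-3/2}$. In the complementary case $I-J \le (N-12)/2 \le N-15$, I would apply \eqref{wc1} to get the stronger decay $|Z^{I-J}g_{LL}|\lesssim \ep(1+|q|)(1+s)^{-3/2+\rho}$, and estimate $\partial_q^2 Z^J\phi$ by writing $|\partial_q^2 Z^J\phi|\lesssim (1+|q|)^{-1}|\partial Z^{J+1}\phi|$ and using the Klainerman--Sobolev bound \eqref{ks1} (available up to $N-4$ derivatives) to get $|\partial_q^2 Z^J\phi|\lesssim \ep(1+|q|)^{-3/2}(1+s)^{-1/2}$. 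In both cases, multiplying the two factors (and using $1+|q|\le 1+s$ to convert the surplus $(1+|q|)^{1/2}$ into $(1+s)^{1/2}$) yields the bound $\ep^2 (1+s)^{-3/2+\rho}(1+|q|)^{-1}$ claimed for $q<0$.

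For the region $q>0$, the improved wave-coordinate estimates in \eqref{wc5} and the improved pointwise bound \eqref{ks4} for $\phi$ play the role of \eqref{wc1} and \eqref{ks1}, producing the much stronger decay $\ep^2 (1+s)^{-2}(1+|q|)^{-1-\delta+\sigma}$. The crossed contribution $Z^I\!\left(\tfrac{b(\theta)\chi(q)}{r}\bar\partial\phi\right)$ is then handled exactly as in \eqref{philinf3}: one uses $\|\partial_\theta^J b\|_{L^2(\mathbb{S}^1)}\lesssim \ep^2$ from \eqref{bootb2}, combined with \eqref{iks5}, giving the required $\ep^2(1+s)^{-2}(1+|q|)^{-1-\delta+\sigma}$.

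The main obstacle will be the regularity budget: since Corollary \ref{estV2} gives weak $s$-decay for $g_{LL}$ only at the top (with \eqref{wc3} providing $(1+s)^{-3/2}$ but no $\rho$-gain) and strong decay only below $N-15$ derivatives, the case split must be performed carefully so that at least one of the two factors is always in the regime where its best estimate is available. This balancing is what forces the final $s$-exponent down from $2-3\rho$ (as in Proposition \ref{linfphi}) to $3/2-\rho$, and I would need to verify that the choice $\rho\ll \delta-\sigma$ ensures all exponents stay consistent with the target bound.
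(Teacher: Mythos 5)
Your overall strategy is the same as the paper's: reduce to $Z^I(g_{LL}\partial_q^2\phi)$ by Remark \ref{termql}, split according to which factor carries more than half the vector fields, use \eqref{wc1} when $g_{LL}$ carries few derivatives and \eqref{wc3} when it carries many, pair these with \eqref{ks1} (resp.\ a bootstrap $L^\infty$ bound) on the $\phi$ factor, and handle $q>0$ via \eqref{wc5}, \eqref{ks4} and the crossed term as in \eqref{philinf3}. Your second case and your treatment of the region $q>0$ are correct and match the paper.

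There is, however, a quantitative gap in your first case. Pairing \eqref{wc3} with \eqref{bootphi2} gives
\begin{equation*}
\frac{1}{(1+|q|)^2}\cdot\frac{\ep(1+|q|)^{3/2}}{(1+s)^{3/2}}\cdot\frac{\ep}{(1+s)^{\frac12-2\rho}}
=\frac{\ep^2}{(1+s)^{2-2\rho}(1+|q|)^{\frac12}},
\end{equation*}
and trading $(1+|q|)^{1/2}\le(1+s)^{1/2}$ only yields $\ep^2(1+s)^{-\frac32+2\rho}(1+|q|)^{-1}$, not the claimed $\ep^2(1+s)^{-\frac32+\rho}(1+|q|)^{-1}$: you are short by a factor $(1+s)^{\rho}$. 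This is not cosmetic. The proposition feeds into Lemma \ref{linf2} with $\alpha=\rho$ to produce the $(1+t)^{2\rho}$ loss that closes the bootstrap assumption \eqref{bootphi2}; with $\alpha=2\rho$ you would get $(1+t)^{3\rho}/\sqrt{1+s}$, which does not improve \eqref{bootphi2}. The source of the loss is that you discarded the available $q$-decay on the low-derivative $\phi$ factor: since $J\le (N-12)/2$ gives $J+2\le \frac{N}{2}-4\le N-14$, the stronger bootstrap bound \eqref{bootphi1} applies, i.e.\ $|Z^{J+2}\phi|\lesssim \ep(1+s)^{-1/2}(1+|q|)^{-\frac12+4\rho}$ (this is exactly what the paper uses, in the form $|\partial Z^{J+1}\phi|\lesssim \ep(1+s)^{-1/2}(1+|q|)^{-\frac32+4\rho}$). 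With that choice the product is $\ep^2(1+s)^{-\frac32}(1+|q|)^{-\frac32+4\rho}\lesssim \ep^2(1+s)^{-\frac32}(1+|q|)^{-1}$, which is within the stated bound. With this one substitution your argument is complete.
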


\begin{prp}\label{linfg2}
We have the estimate for $I \leq N-12$
\begin{align*}
 |\Box Z^I \wht g_1| &\lesssim \frac{\ep^2}{(1+s)^{\frac{3}{2}-\rho}(1+|q|)} , \; q<0,\\
|\Box Z^I \wht g_1| &\lesssim \frac{\ep^2}{(1+s)^\frac{3}{2}(1+|q|)^{\frac{3}{2}+\delta-\sigma}},\; q>0.\\ 
\end{align*}
\end{prp}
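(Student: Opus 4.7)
}
The proof runs in complete parallel with that of Proposition \ref{linfwhtg}; the only change is that the range $I\le N-12$ (instead of $I\le N-14$) gives less room to distribute derivatives in products, and this is absorbed by weakening the target decay from $(1+s)^{-3/2}$ to $(1+s)^{-3/2+\rho}$. Starting from equation \eqref{eqg1} for $\wht g_1$, applying $Z^I$ and invoking Remark \ref{termql}, we reduce to controlling four contributions: the quasilinear piece $Z^I\!\left(g_{LL}\partial_q^2\wht g_1\right)$; the non-commutation piece $\Up(r/t)\tfrac{1}{r^2}\partial_\theta Z^Ih_0$ arising from \eqref{noncom}; the worst semilinear piece $Z^I\!\left(\partial_{\ba L}g_{LL}\partial_{\ba L}g_{\ba L\ba L}\right)$ coming from $P_{\ba L\ba L}$; and the crossed term $Z^I\!\left(\partial_{\ba L}(g_b)_{UU}\partial_{\ba L}g_{L\ba L}\right)$ from $\wht P_{\mu\nu}$, which is present only in $q>0$.

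For the quasilinear term I would split $Z^{I-J}g_{LL}\cdot\partial_q^2 Z^J\wht g_1$ according to which index is small. If $J\le (N-12)/2$, then $J+2\le N-14$ so \eqref{bootg1} gives $|Z^{J+2}\wht g_1|\lesssim\ep(1+s)^{-1/2+\rho}$, while $I-J\le N-12$ allows the use of \eqref{wc3}, $|Z^{I-J}g_{LL}|\lesssim\ep(1+|q|)^{3/2}(1+s)^{-3/2}$; combining with $|\partial_q^2\cdot|\le (1+|q|)^{-2}|Z^{\cdot+2}|$ and using $s\ge|q|$ in $q<0$ to absorb $(1+|q|)^{-1/2}(1+s)^{-1/2}\le (1+|q|)^{-1}$ produces the desired bound $\ep^2(1+s)^{-3/2+\rho}(1+|q|)^{-1}$. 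In the complementary case $I-J\le (N-12)/2\le N-13$, I would use \eqref{wc2} for $Z^{I-J}g_{LL}$ and bound $Z^{J+2}\wht g_1$ via the decomposition $\wht g_1=\wht g_3-\Up(r/t)\wht h\,dq^2$, applying Corollary \ref{esttildg} (or Proposition \ref{estks}) together with Proposition \ref{whth} to recover the required pointwise estimate.

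The non-commutation piece is bounded directly by Proposition \ref{estzh} since $I+1\le N-11\le N-5$, yielding a contribution $\lesssim\ep^2(1+s)^{-2}(1+|q|)^{-1/2}$ which is majorized by the target as in \eqref{linfg4}. The semilinear term decomposes via \eqref{dec1} into a piece $\partial_{\ba L}g_{LL}\partial_{\ba L}h_0$ handled by the transport equation \eqref{eqh} exactly as in \eqref{tildeh1}--\eqref{tildeh3}, and a piece $\partial_{\ba L}g_{LL}\partial_{\ba L}\wht g_1$ handled by splitting derivatives and invoking \eqref{bootg1}--\eqref{bootg2} together with the wave-coordinate estimates \eqref{dwc1}--\eqref{dwc2}. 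The crossed term in $q>0$ is handled verbatim as \eqref{estcross}, and since $\delta-\sigma>1/2$ (by \eqref{codsigma}) this gives the $q>0$ estimate directly.

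The main obstacle is Case B of the quasilinear term, where $J+2$ may reach $N-10$, outside the range of the direct $L^\infty$-bootstrap assumptions \eqref{bootg1}, \eqref{bootg2} on $\wht g_1$. This is resolved by passing through $\wht g_3$, whose $Z^{J+2}\wht g_3$ is controlled in $L^\infty$ up to order $N-4$ via Klainerman--Sobolev from \eqref{bootl22}, and by adding the $L^\infty$ bound on $\wht h$ from Proposition \ref{whth}. After assembling all four contributions, Proposition \ref{linfg2} follows.
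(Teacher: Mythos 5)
Your plan follows the paper's own strategy (mirror the proof of Proposition \ref{linfwhtg} with the four contributions: quasilinear, non-commutation, semilinear, crossed), and Case~A of your quasilinear estimate is handled correctly. However, there are concrete gaps in the low-index cases that cause a loss of $(1+s)^\rho$ relative to the stated target, and this loss is fatal: feeding $|\Box Z^I\wht g_1|\lesssim \ep^2(1+s)^{-3/2+2\rho}(1+|q|)^{-1}$ into Lemma~\ref{linf2} would produce $|Z^I\wht g_1|\lesssim (1+s)^{-1/2+3\rho}$, which does \emph{not} improve the bootstrap assumption \eqref{bootg2} (which demands exponent $1/2-2\rho$), so the argument would not close.

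The specific problems are in Case~B of the quasilinear term and in the semilinear term. In Case~B you write $I-J\le (N-12)/2\le N-13$ and then invoke \eqref{wc2}, which only gives $|Z^{I-J}g_{LL}|\lesssim\ep(1+|q|)(1+s)^{-3/2+2\rho}$. But $(N-12)/2\le N-15$ (valid since $N\ge40$), so the sharper \eqref{wc1} is available and yields $(1+s)^{-3/2+\rho}$ — this is what the paper uses and what you must use. In addition, in the same case, you need a clean $|Z^{J+2}\wht g_1|\lesssim\ep$-type bound for $J+2$ up to $N-11$; the paper passes through \eqref{est3} (valid for $I\le N-11$) rather than \eqref{bootg2} (valid only for $I\le N-12$ but giving less decay anyway). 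Moreover your algebraic identity $\wht g_1 = \wht g_3 - \Up(r/t)\wht h\,dq^2$ is incorrect; comparing \eqref{dec1}--\eqref{dec3}, the correct relations are $\wht g_1 = \wht g_2 + \Up(r/t)\wht h\,dq^2$ (and $\wht g_1 = \wht g_3 + \Up(r/t)(h-h_0)\,dq^2$). The distinction matters: the $L^\infty$ bound on $\wht g_3$ from \eqref{iks2} grows like $(1+|q|)^{1+\mu}(1+s)^{-1/2}$, which is \emph{not} uniformly $\lesssim\ep$, whereas $\wht g_2$ via \eqref{iks8} is. Finally, for the semilinear term $\partial_{\ba L}\wht g_1\,\partial_{\ba L}g_{LL}$, your cited range \eqref{bootg1}--\eqref{bootg2}, \eqref{dwc1}--\eqref{dwc2} does not cover the case where the large index lands on a factor: when $J$ is large, $J+1$ can be $N-11$, outside \eqref{bootg2}'s range $I\le N-12$, so \eqref{est3} is required; and when $I-J\le N-12$ is the large index, \eqref{dwc1}--\eqref{dwc2} (ranges $N-15$, $N-13$) do not apply and \eqref{dwc3} (range $N-12$) is needed. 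These are index-bookkeeping repairs rather than a change of strategy, but as written the proof does not yield the stated estimate.
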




\begin{proof}[Proof of Proposition \ref{linfphi2}]
We first estimate $\phi$
\[
Z^I \Box \phi =  Z^I \left(\Box \phi -\Box_g \phi\right).\]
In the region $q<0$, it is sufficient to estimate
$Z^I \left( g_{LL}\partial_q^2 \phi\right)$
\[|Z^{I-J} g_{LL} \partial^2_q Z^{J} \phi|\lesssim \frac{1}{1+|q|}|Z^{I-J} g_{LL}| |\partial_q Z^{J+1} \phi| \]
If $J \leq  \frac{N-12}{2}$ we have $J+1\leq N-14$ so thanks to \eqref{bootphi1}
$$|\partial Z^{J+1} \phi|\lesssim \frac{\ep}{(1+s)^{\frac{1}{2}}(1+|q|)^{\frac{3}{2}-4\rho}},$$
and since $I-J\leq N-12$ we have thanks to \eqref{wc3}
$$|Z^{I-J} g_{LL}|\lesssim \frac{\ep (1+|q|) }{1+s}.$$
Therefore
$$|Z^{I-J} g_{LL} \partial^2_q Z^{J} \phi|\lesssim \frac{\ep^2}{(1+s)^\frac{3}{2}(1+|q|)^{\frac{3}{2}-4\rho}}.$$
If $I-J \leq  \frac{N-12}{2} \leq N-15$ we have thanks to \eqref{wc1}
$$|Z^{I-J} g_{LL}|\lesssim \frac{\ep (1+|q|) }{(1+s)^{\frac{3}{2}-\rho}},$$
and since $J+1\leq N-12\leq N-4$ we have thanks to \eqref{ks1}
$$| \partial Z^{J+1} \phi |\lesssim \frac{\ep}{\sqrt{1+s}\sqrt{(1+|q|)}}.$$
In the two cases, we have
\begin{equation*}
|Z^{I-J} g_{LL} \partial^2_q Z^{J} \wht \phi|\lesssim \frac{\ep^2}{(1+s)^{\frac{3}{2}-\rho}(1+|q|)}.
\end{equation*}
The main contribution in the region $q>0$ is like \eqref{philinf3} in the proof of Proposition \ref{linfphi}. This concludes the proof of Proposition \ref{linfphi2}.
\end{proof}
\begin{proof}[Proof of Proposition \ref{linfg2}]
We estimate $\wht g_1$. We only deal with the quasilinear and semilinear terms in the region $q<0$, as the control obtain in the proof of Proposition \ref{linfwhtg} is sufficient to deal with the others (see \eqref{linfg4} and \eqref{estcross}).

\paragraph{The semi-linear terms}
We estimate $Z^I\left(\partial_{\ba L}g_{\ba L \ba L}\partial_{\ba L}g_{LL}\right)$. For this, we have to estimate
$$Z^I\left(\partial_{\ba L}h_0\partial_{\ba L}g_{LL}\right) \quad and \quad Z^I\left(\partial_{\ba L}\wht g_1\partial_{\ba L}g_{LL}\right) $$
The first term has been estimated in 
\eqref{tildeh1}. For the second term, we write
$$|Z^I\left(\partial_{\ba L}\wht g_1\partial_{\ba L}g_{LL}\right) |\lesssim \frac{1}{1+|q|}\sum_{J\leq I}|Z^{J+1}\wht g_1||\partial Z^{I-J}g_{LL}|,$$
and we estimate
if $J\leq \frac{N-12}{2}$ thanks to \eqref{bootg1} and \eqref{dwc3}
$$|Z^{J+1}\wht g_1|\lesssim \frac{\ep}{(1+s)^{\frac{1}{2}-\rho}}\quad and \quad
|\partial Z^{I-J}g_{LL}|\lesssim \frac{\ep\sqrt{1+|q|}}{(1+s)^{\frac{3}{2}}}.$$
If $I-J \leq \frac{N-12}{2}$  thanks to \eqref{est3} and \eqref{dwc1} we have
$$|Z^{J+1}\wht g_1|\lesssim \ep \quad and \quad
|\partial Z^{I-J}g_{LL}|\lesssim \frac{\ep}{(1+s)^{\frac{3}{2}-\rho}}.$$
In the two cases we have
$$|Z^I\left(\partial_{\ba L}\wht g_1\partial_{\ba L}g_{LL}\right) |\lesssim
\frac{\ep^2}{(1+s)^{\frac{3}{2}-\rho}(1+|q|)}.$$
This estimate and \eqref{tildeh1} yields for $I\leq N-12$
\begin{equation}\label{linfg7}
|Z^I\left(\partial_{\ba L}\wht g_1\partial_{\ba L}g_{LL}\right) |
\lesssim \frac{\ep^2}{(1+s)^{\frac{3}{2}-\rho}(1+|q|)}.
\end{equation}

\paragraph{The quasilinear terms}
We estimate $Z^I \left( g_{LL}\partial_q^2 \wht g_1\right)$. We have
\[|Z^{I-J} g_{LL} \partial^2_q Z^{J} \wht g_1|\lesssim \frac{1}{1+|q|} |Z^{I-J} g_{LL}| |\partial_q Z^{J+1} \wht{g_1}|. \]
If $J \leq  \frac{N-12}{2}$ we have $J+2\leq \frac{N-12}{2}+2\leq N-14$ so thanks to \eqref{bootg1}
$$| \partial_q Z^{J+1} \wht{g_1}|\lesssim \frac{\ep}{(1+s)^{\frac{1}{2}-\rho}(1+|q|)},$$
and since $|I-J|\leq N-12$ we have thanks to \eqref{wc3}
$$|Z^{I-J} g_{LL}|\lesssim \frac{\ep (1+|q|) }{1+s}.$$
If $|I-J| \leq  \frac{N-12}{2} \leq N-15$ we have thanks to \eqref{wc1}
$$|Z^{I-J} g_{LL}|\lesssim \frac{\ep (1+|q|) }{(1+s)^{\frac{3}{2}-\rho}}$$
and since $J+1\leq N-11$ we have thanks to \eqref{est3}
$$|\partial_q Z^{J+1} \wht{g_1}|\lesssim \frac{\ep}{1+|q|}.$$
In the two cases, we have
\begin{equation}
\label{linfg8}|Z^{I-J} g_{LL} \partial^2_q Z^{J} \wht g_1|\lesssim \frac{\ep^2}{(1+s)^{\frac{3}{2}-\rho}(1+|q|)}.
\end{equation}
The equation \eqref{linfg7} and \eqref{linfg8}, together with \eqref{linfg4} proved during the proof of Proposition \ref{linfwhtg} conclude the proof of Proposition \ref{linfg2} for $q<0$.
The estimate for $\Box Z^I \wht g_1$ in the region $q>0$ is given by \eqref{lingfB}. This conclude the proof of Proposition \ref{linfg2} for $q>0$.
\end{proof}

\section{Weighted energy estimate}\label{weighte}
We consider the equation 
$$\Box_g u=f,$$
where $g=g_b +\wht g$ is our space-time metric, satisfying the bootstrap assumptions.
We introduce the energy-momentum tensor associated to $\Box_g$
$$Q_{\alpha \beta}=\partial_\alpha u \partial_\beta u-\frac{1}{2}g_{\alpha \beta}g^{\mu \nu}\partial_\mu u \partial_\nu u.$$
We have
$$D^\alpha Q_{\alpha \beta}=f\partial_\beta u.$$
We also note $T=\partial_t$, and introduce the deformation tensor of $T$
$$\pi_{\alpha \beta}=D_\alpha T_\beta +D_\beta T_\alpha$$
where $D$ is the covariant derivative.
We have
\begin{equation}
\label{eq}
D^\alpha (Q_{\alpha \beta} T^\beta) =f\partial_t u +Q_{\alpha \beta}\pi^{\alpha \beta}.
\end{equation}

We remark that
$$Q_{TT}=\frac{1}{2}\left((\partial_t u)^2+|\nabla u|^2\right)+O(\ep(\partial u)^2).$$
\begin{prp}\label{prpweighte}Let $w$ be any of our weight functions.
 We have the following weighted energy estimate for $u$
 $$\frac{d}{dt}\left(\int Q_{TT} w(q)\right)+C\int w'(q)\left((\partial_s u)^2+\left(\frac{\partial_\theta u}{r}\right)^2\right)
 \lesssim \frac{\ep}{1+t}\int w(q)(\partial u)^2 + \int w(q)|f\partial_t u|.$$
 Moreover, if we use the weight modulator $\alpha$ defined in \eqref{wmod}, we obtain
$$\frac{d}{dt}\left(\int Q_{TT}\alpha^2 w(q)\right)+C\int \alpha^2w'(q)\left((\partial_s u)^2+\left(\frac{\partial_\theta u}{r}\right)^2\right)
  \lesssim \frac{\ep}{(1+t)^{1+2\sigma}}\int w(q)(\partial u)^2 + \int \alpha ^2w(q)|f\partial_t u|.$$
\end{prp}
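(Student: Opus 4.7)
The plan is to apply the multiplier method with $X = T = \partial_t$ and the weight $w(q)$, following the Minkowski prototype of Proposition \ref{energy} and tracking the perturbations introduced by $g = g_b + \wht g$.

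First, I would derive the pointwise divergence identity
\[
\nabla_\mu\bigl(Q^{\mu\nu} T_\nu\, w(q)\bigr) \;=\; w(q)\,f\,\partial_t u \;+\; \tfrac{1}{2}\,w(q)\,Q^{\alpha\beta}\pi_{\alpha\beta} \;+\; w'(q)\,Q(T,\nabla q),
\]
where $\pi_{\alpha\beta} = D_\alpha T_\beta + D_\beta T_\alpha = \partial_t g_{\alpha\beta}$ is the deformation tensor of $T$ in our coordinates. Multiplying by $\sqrt{|g|}$ and integrating in $x$, the spatial divergence vanishes via $\partial_\mu(\sqrt{|g|}\,J^\mu) = \sqrt{|g|}\,\nabla_\mu J^\mu$; this converts the left-hand side into $\frac{d}{dt}\int \sqrt{|g|}\,J^0\,dx = -\frac{d}{dt}\int Q_{TT}\,w + O(\ep)$ after noting that $\sqrt{|g|}\,g^{0\alpha}Q_{\alpha t} = -Q_{tt} + O(\ep)(\partial u)^2$ by the bootstrap size of $g-m$.

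Second, I would extract the positive definite tangential term from $w'(q)\,Q(T,\nabla q)$. The crucial algebraic input is that $L(q) = U(q) = 0$ and $\ba L(q) = -2$, so $\nabla q = (dq)^\sharp$ lies along $L$ up to $O(\ep)$ errors coming from $g - m$. Combined with $T = \tfrac{1}{2}(L + \ba L)$ and the Minkowski null-frame values $Q_{LL} = 4(\partial_s u)^2$ and $Q_{L\ba L} = (\partial_\theta u / r)^2$, one computes
\[
Q(T, \nabla q) \;=\; 2(\partial_s u)^2 + \tfrac{1}{2}\bigl(\partial_\theta u/r\bigr)^2 + O(\ep)(\partial u)^2.
\]
Since $w'(q) \geq 0$, moving this to the left side produces the Alinhac ghost-weight contribution appearing in the statement.

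Third, I would bound the error term $\int w\, Q^{\alpha\beta}\pi_{\alpha\beta}$. Splitting $\pi = \partial_t \wht g + \partial_t g_b$, the first piece is controlled by the bootstrap $L^\infty$ estimates (Corollaries \ref{estksg} and \ref{esttildg}), and the second follows from the explicit form \eqref{gb}: $\partial_t g_b$ is supported in $\{q \geq 1\}$ with the bound $|\partial_t g_b| \lesssim \ep^2\chi(q)/(1+r)$ (the crucial cancellation being $\partial_t(r + \chi(q)b(\theta)q) = -b\,\partial_q(q\chi)$, which kills the naive $O(1)$ term). Both contributions yield $|\pi| \lesssim \ep/(1+t)$ in the regime where $w$ is effective, so $|\int w\, Q\pi| \lesssim \tfrac{\ep}{1+t}\int w\,(\partial u)^2$, which is the first version of the inequality.

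Finally, for the weight-modulator version, I repeat the argument with $\alpha^2 w$ in place of $w$. The identity $(\alpha^2 w)' = \alpha^2 w' + 2\alpha\alpha' w$ introduces a second, non-positive term (since $\alpha$ is non-increasing) that may be discarded, leaving the stated ghost-weight $\int \alpha^2 w' (\ldots)$. The hard part is the $\pi$-error. The worst contribution is $\partial_t (g_b)_{UU}$, which is bounded but does not decay in $t$ beyond the $1/(1+r)$ weight coming from the polar frame; it pairs only with $Q_{UU}$, whose $(Uu)^2$ part is absorbed by the ghost weight and whose $\partial_L u\, \partial_{\ba L} u$ part contains one $\bar\partial$-derivative. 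Exploiting this null structure together with $\alpha^2(q) = (1+|q|)^{-2\sigma}$ for $q > 0$ and the geometric relation $1 + r \sim 1 + t + |q|$, one arrives at the improved bound $\tfrac{\ep}{(1+t)^{1+2\sigma}}\, w\,(\partial u)^2$. The main obstacle of the proof is precisely this last step: executing the refined absorption for every component of $Q\pi$ inside the full quasilinear metric, rather than for a model problem, requires combining the null decomposition of $Q\pi$, the Alinhac weight $w'$, and the $q$-weight bonus furnished by $\alpha$ in the exterior region, as outlined in the model discussion of Section \ref{secinter}.
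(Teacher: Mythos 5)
Your overall strategy — multiplier method with $T=\partial_t$, weight $w(q)$, and null decomposition of the resulting error — is exactly the one used in the paper, and Steps 1 and 2 are correct. Your computation of the boundary term, $Q(T,\nabla q)=2(\partial_s u)^2+\tfrac12(\partial_\theta u/r)^2+O(\ep)(\partial u)^2$, is in fact more accurate than the displayed coefficient in the paper's proof, which carries a harmless factor-of-four slip in front of $(\partial_s u)^2$; the substance is the same.

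The genuine gap is in Step 3. The assertion that ``both contributions yield $|\pi|\lesssim \ep/(1+t)$ in the regime where $w$ is effective'' is false, and the first inequality does not follow from any uniform pointwise bound on $\pi$ of this type. In the null frame, the components $\pi_{\ba L\ba L}=\partial_t g_{\ba L\ba L}$, $\pi_{L\ba L}$, and $\pi_{\ba L U}$ decay only in $|q|$ (for instance $\pi_{\ba L\ba L}=O\!\left(\ep(1+|q|)^{-3/2+\rho}\right)$, with no $t$-decay at all near the light cone, because $g_{\ba L\ba L}$ itself has none). What saves the estimate is that these components pair with $Q^{\ba L\ba L}$, $Q^{L\ba L}$, $Q^{\ba L U}$, which up to $O(\ep)(\partial u)^2$ contain only the tangential quantities $(\partial_s u)^2$ and $(\partial_\theta u/r)^2$; since every weight in use satisfies $w(q)/(1+|q|)^{3/2-\rho}\lesssim w'(q)$, these products must be moved to the left and absorbed into the ghost-weight term $C\int w'(q)\bigl((\partial_s u)^2+(\partial_\theta u/r)^2\bigr)$ for $\ep$ small. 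Only the remaining $\pi_{LL},\pi_{UL}$ contributions (decaying like $\ep/(1+t)^{3/2-\rho}$) and the $\pi_{UU}$ contribution (decaying like $\ep/r$ on $\{q>1\}$, so like $\ep/(1+t)$ there) go to the right-hand side and produce the $\ep(1+t)^{-1}\int w(\partial u)^2$ bound. You recognize the relevant null-structure absorption in your final paragraph, but you invoke it only for the $\alpha$-modulated inequality and describe it as an ``obstacle'' rather than carrying it out; the first inequality as you wrote it would not close. The paper's Corollary \ref{estV2}, together with the bootstrap assumptions on $h_0$ and $\wht g_1$, supplies the component-wise bounds on $\pi$ needed to make the absorption explicit.

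For the second inequality your identification of $\pi_{UU}$ as the critical term, and of the mechanism by which $\alpha^2(q)=(1+|q|)^{-2\sigma}$ (on $q>0$) trades the factor $1/r\le 1/(1+t)$ for $(1+t)^{-1/2-\sigma}(1+|q|)^{-1/2}$ after splitting the product $\partial u\,\bar\partial u$ by Young's inequality, is correct. What remains to be written out is the observation that $\alpha^2 w'\sim(\alpha^2 w)'$ so that the $(\bar\partial u)^2$ half is absorbed by the modulated ghost weight and the $(\partial u)^2$ half carries the advertised $(1+t)^{-1-2\sigma}$ factor; this is the paper's computation after \eqref{enee}.
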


\begin{proof}
We multiply \eqref{eq} by $w(q)$ and integrate it on an hypersurface of constant $t$.
We obtain
\begin{equation}
\label{energi1}-\frac{d}{dt}\left(\int Q_{TT} w(q)\right)=\int w(q)\left(f\partial_t u +Q_{\alpha \beta}\pi^{\alpha \beta}\right)+\int Q_{T\alpha}D^\alpha w.
\end{equation}
We have
$$Q_{T\alpha}D^\alpha u=
-2w'(q)g^{\alpha \ba L}Q_{T\alpha}
=w'(q)Q_{TL}+g_{L \q T}w'(q)(\partial u)^2.$$
We calculate
\begin{align*}
Q_{TL}=&\partial_t u(\partial_t u+\partial_r u)-\frac{1}{2}(-(\partial_t u)^2
+|\nabla u|^2)+g_{LL}(\partial_q u)^2+g_{\ba L \ba L}(\partial_s u)^2+s.t.\\
=&\frac{1}{2}\left((\partial_s u)^2+\left(\frac{\partial_\theta u}{r}\right)^2\right)+g_{LL}(\partial_q u)^2+g_{\ba L \ba L}(\partial_s u)^2+s.t.
\end{align*}
where $s.t.$ denotes similar terms. Consequently, with the help of the bootstrap \eqref{bootg1}, \eqref{booth1} and the estimate \eqref{wc1} we have
$$Q_{T\alpha}D^\alpha u=\left((\partial_s u)^2+\left(\frac{\partial_\theta u}{r}\right)^2\right)(1+O(\ep))w'(q)
+O\left(\frac{\ep w'(q)(1+|q|)}{(1+t)^{\frac{3}{2}-\rho}}(\partial u)^2\right),$$
and since $|w'(q)|\lesssim \frac{w(q)}{1+|q|}$
\begin{equation}
\label{qlt}
Q_{T\alpha}D^\alpha u=\left((\partial_s u)^2+\left(\frac{\partial_\theta u}{r}\right)^2\right)(1+O(\ep))w'(q)
+O\left(\frac{\ep w(q)}{(1+t)^{\frac{3}{2}-\rho}}(\partial u)^2\right).
\end{equation}

We now estimate the deformation tensor of $T$. We have
$$\pi_{\alpha \beta}= \q L_T g_{\alpha \beta}=\partial_t g_{\alpha \beta}.$$
 We obtain
\begin{align*}
\pi_{LL}&=\partial_T g_{LL}=O\left(\frac{\ep}{(1+t)^{\frac{3}{2}-\rho}}\right),\\
\pi_{UL}&=\partial_T g_{UL}=O\left(\frac{\ep}{(1+t)^{\frac{3}{2}-\rho}}\right),\\
\pi_{L \ba L}&=\partial_T g_{\ba L L}=O\left(\frac{\ep}{(1+t)^{\frac{1}{2}-\rho}(1+|q|)}\right),\\
\pi_{U \ba L}&=\partial_T g_{U \ba L}= O\left(\frac{\ep}{(1+t)^{\frac{1}{2}-\rho}(1+|q|)}\right),\\
\pi_{\ba L \ba L}&= \partial_{T}g_{\ba L \ba L}=O\left(\frac{\ep}{(1+|q|)^{\frac{3}{2}-\rho}}\right),\\
\pi_{UU}&=\partial_T g_{UU}=\frac{\partial_q(q\chi(q))b(\theta)}{r}+ O\left(\frac{\ep}{(1+s)(1+|q|)^{\frac{1}{2}-\rho}}\right),
\end{align*}
Consequently, the terms $Q^{LL}\pi_{LL}$ and $Q^{UL}Q_{UL}$ give contributions of the form
\begin{equation}\label{cont1}
\frac{\ep}{(1+t)^{\frac{3}{2}-\rho}}(\partial u)^2.
\end{equation}
We can calculate
$$Q_{L\ba L}= \partial_{L}u\partial_{\ba L}u-\frac{1}{2}g_{L\ba L}\left(2g^{L\ba L}\partial_L u \partial_{\ba L}u +(\partial_U u)^2\right)+g_{\q T \q T}(\partial u)^2+s.t.=(\partial_U u)^2+g_{\q T \q T}(\partial u)^2+s.t.$$
Consequently the term $Q^{L\ba L}\pi_{L \ba L}$ gives contributions of the form
\begin{equation}\label{cont2}
\frac{\ep}{(1+|q|)(1+t)^{\frac{1}{2}-\rho}}(\bar{\partial} u)^2.
\end{equation}
The terms $Q^{\ba L \ba L}\pi_{\ba L \ba L}$ and $Q^{\ba L U}\pi_{\ba L U}$ give contributions of the form
\begin{equation}\label{cont3}
\frac{\ep}{(1+|q|)^{\frac{3}{2}-\rho}}(\bar{\partial} u)^2,
\end{equation}
and the term  $Q^{UU}\pi_{UU}$ gives contributions of the form
\begin{equation}\label{cont4}
\frac{\partial_q(q\chi(q))b(\theta)}{r}\bar{\partial} u\partial u
, \quad\frac{\ep}{(1+s)(1+|q|)^{\frac{1}{2}-\rho}}\bar{\partial} u\partial u.
\end{equation}
Thanks to \eqref{energi1}, \eqref{qlt}, \eqref{cont1}, \eqref{cont2}, \eqref{cont3} and \eqref{cont4} what we obtain is
\begin{equation}\begin{split}\label{enee}
&\frac{d}{dt}\left(\int Q_{TT} w(q)\right)+\frac{1}{2}\int w'(q)\left((\partial_s u)^2+\left(\frac{\partial_\theta u}{r}\right)^2\right)\\
\lesssim& \frac{\ep}{(1+t)^{\frac{3}{2}-\rho}}\int w(q)(\partial u)^2
+\ep\int \frac{w(q)}{(1+|q|)^{\frac{3}{2}-\rho}}(\bar{\partial}u)^2
+\ep \int w(q)\frac{\ch_{q>1}}{r}|\partial u \bar{\partial} u|+\int w(q)|f\partial_t u |.
\end{split}
\end{equation}
In the region $q>1$, we have $\frac{1}{r}\leq\frac{1}{t+1}$. Moreover,
all our weight functions satisfy
$$\frac{w(q)}{(1+|q|)^{\frac{3}{2}-\rho}}\lesssim w'(q),$$
therefore, for $\ep$ small enough, we can subtract from our inequality the term 
$$ \ep\int \frac{w(q)}{(1+|q|)^{\frac{3}{2}-\rho}}(\bar{\partial}u)^2,$$
and we obtain
\begin{equation*}
\frac{d}{dt}\left(\int Q_{TT} w(q)\right)+C\int w'(q)\left((\partial_s u)^2+\left(\frac{\partial_\theta u}{r}\right)^2\right)
 \lesssim \frac{\ep}{1+t}\int w(q)(\partial u)^2 + \int w(q)|f\partial_t u|.
\end{equation*}
 This conclude the first part of the proof of Proposition \ref{prpweighte}.

Next, we perform the estimate with the weight modulator $\alpha$. If we replace $w$ by $\alpha^2w$ in \eqref{enee}, and we absorb as before
the term $\ep\int \frac{\alpha^2w(q)}{(1+|q|)^{\frac{3}{2}-\rho}}(\bar{\partial}u)^2$
we obtain
\begin{align*}&\frac{d}{dt}\left(\int Q_{TT}\alpha^2 w(q)\right)+\frac{1}{2}\int (\alpha^2w)'(q)\left((\partial_s u)^2+\left(\frac{\partial_\theta u}{r}\right)^2\right)\\
\lesssim& \frac{\ep}{(1+t)^{\frac{3}{2}-\rho}}\int\alpha ^2 w(q)(\partial u)^2
+ \int \frac{\alpha^2w(q)\ch_{q>1}}{r}|\partial u \bar{\partial} u|+\int\alpha^2w(q)|f\partial_t u |.
\end{align*}
We write
$$\frac{\ch_{q>1}}{r}\leq \frac{\ch_{q>1}(1+|q|)^{\sigma}}{(1+t)^{\frac{1}{2}+\sigma}(1+|q|)^\frac{1}{2}},$$
and so we estimate, since in the region $q>1$ we have $\alpha(q)=(1+|q|)^{-\sigma}$
\begin{align*}\ep\int \frac{\alpha^2(q)w(q)\ch_{q>1}}{r}|\partial u \bar{\partial} u|
&\leq\ep\int \frac{\alpha(q)w(q)\ch_{q>1}}{(1+t)^{\frac{1}{2}+\sigma}(1+|q|)^\frac{1}{2}}|\partial u \bar{\partial} u|\\
&\leq \frac{\ep}{t^{1+2\sigma}} \int  \ch_{q>1}w(q)(\partial u)^2
+\ep \int  \ch_{q>1}\frac{\alpha^2(q)w(q)}{1+|q|}(\bar{\partial}u)^2.
\end{align*}
Moreover $ \ch_{q>1}\frac{\alpha^2(q)w(q)}{1+|q|}\lesssim (\alpha^2w)'$. Therefore
\begin{align*}
&\frac{d}{dt}\left(\int Q_{TT} \alpha^2 w(q)\right)+C\int(\alpha^2 w)'(q)\left((\partial_s u)^2+\left(\frac{\partial_\theta u}{r}\right)^2\right)\\
\lesssim& \frac{\ep}{(1+t)^{\frac{3}{2}-\rho}}\int\alpha^2 w(q)(\partial u)^2
+\frac{\ep}{t^{1+2\sigma}} \int  \ch_{q>1}w(q)(\partial u)^2
+\ep \int (w\alpha^2)'(q)(\bar{\partial}u)^2
+\int\alpha^2 w(q)|f\partial_t u| .
\end{align*}
We note that with our weight functions and the definition of $\alpha$, we have $\alpha^2 w' \sim(\alpha^2w)'$.
For $\ep$ small enough, we can absorb the term
$$\ep \int w'(q)\alpha^2(q)(\bar{\partial}u)^2$$
to obtain
\begin{align*}
&\frac{d}{dt}\left(\int Q_{TT} \alpha^2 w(q)\right)+C\int\alpha^2 w'(q)\left((\partial_s u)^2+\left(\frac{\partial_\theta u}{r}\right)^2\right)\\
\lesssim& \frac{\ep}{(1+t)^{\frac{3}{2}-\rho}}\int\alpha^2 w(q)(\partial u)^2
+\frac{\ep}{t^{1+2\sigma}} \int  \ch_{q>1}w(q)(\partial u)^2
+\int\alpha^2 w(q)|f\partial_t u |,
\end{align*}
which concludes the proof of Proposition \ref{prpweighte}.
\end{proof}

\section{Commutation with the vector fields and $L^2$ estimate}\label{secl2}

\subsection{Estimation for $I\leq N$}\label{subsecl21}
We note  for $J<N$
$$E_J=\sum_{I\leq J}\|w_0(q)^\frac{1}{2}\partial Z^I \phi\|^2_{L^2}
+\|w_2(q)^\frac{1}{2}\partial Z^I \wht g_3\|^2_{L^2}
+\frac{1}{\ep (1+t)}\|w_3(q)^\frac{1}{2}\partial Z^I h\|^2_{L^2}
$$
and 
\begin{align*}
E_N=&\sum_{I\leq N}\|\alpha_2 w_0(q)^\frac{1}{2}\partial Z^I \phi\|^2_{L^2}
+\|\alpha_2 w_2(q)^\frac{1}{2}\partial Z^I \wht g_4\|^2_{L^2}\\
&+\frac{1}{\ep (1+t)}\|\alpha_2 w_3(q)^\frac{1}{2}\partial Z^I h\|^2_{L^2}
+\frac{1}{\ep (1+t)}\|\alpha_2 w_3(q)^\frac{1}{2}\partial Z^I k\|^2_{L^2}.
\end{align*}
We also note for $J<N$
\[A_J=\sum_{I\leq J}\|w_0'(q)^\frac{1}{2}\bar{\partial} Z^I \phi\|^2_{L^2}+
+\|w'_2(q)^\frac{1}{2}\bar{\partial} Z^I \wht g_3\|^2_{L^2}
+\frac{1}{\ep (1+t)}\|w_3'(q)^\frac{1}{2}\bar{\partial} Z^I h\|^2_{L^2}\]
and
\begin{align*}
A_N=&\sum_{I\leq J}\|\alpha_2 w_0'(q)^\frac{1}{2}\bar{\partial} Z^I \phi\|^2_{L^2}+
+\|\alpha_2 w'_2(q)^\frac{1}{2}\bar{\partial} Z^I \wht g_4\|^2_{L^2}\\
&+\frac{1}{\ep (1+t)}\|\alpha_2 w_3'(q)^\frac{1}{2}\bar{\partial} Z^I h\|^2_{L^2}
+\frac{1}{\ep(1+t)}\|\alpha_2w_3'(q)^\frac{1}{2}\bar{\partial} Z^I k\|^2_{L^2}.
\end{align*}
\begin{rk}
Because of the decompositions \eqref{dec3} and \eqref{dec4} for the metric, and the non commutation of the wave operator with the null decomposition, we have to deal with terms of the form $\frac{\partial_\theta h}{r^2}$ in the equation for $\wht g_4$ or $\wht g_3$. Written like this, these terms are not quadratic. However, since we choose for $h$ zero initial data, and since the equation for $h$ is quadratic, $h$ in itself is quadratic. To carry this information along the proof, we may divide in the energies $E_I$ the norms involving $h$ and $k$ by $\ep$. Since the initial data for $h$ and $k$ are zero, we have
\begin{equation}
\label{ini} E_I(0)\leq C_0^2\ep^2.
\end{equation}
\end{rk}

\begin{prp}\label{estn}
We have the estimates for $I\leq N$,
$$
E_I
\leq (C_0^2\ep^2+\ep^2)(1+t)^{C\sqrt{\ep}},
$$
and for $\kappa \gg \ep$
\[\int_0^t \frac{1}{(1+t)^{\kappa}}A_I
\lesssim \ep^2.
\]
\end{prp}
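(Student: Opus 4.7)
The plan is to apply the weighted energy estimate of Proposition \ref{prpweighte} to each of the wave equations satisfied by $Z^I\phi$, $Z^I\wht g_4$ (resp.\ $Z^I\wht g_3$), $Z^I h$ and $Z^I k$ for $|I|\leq N$, and to close the resulting differential inequalities via Gronwall. Writing $\Box_g Z^I u = Z^I(\Box_g u) + [\Box_g, Z^I] u$, the commutator produces either the original nonlinearity at lower order, quasilinear corrections of the form $Z^{I_1}\wht g\,\partial^2 Z^{I_2}u$, or terms involving derivatives of $g_b$. Combined with the structural analysis of Section \ref{struct}, we arrive at a schematic differential inequality of the form
$$\frac{d}{dt}E_I + A_I \lesssim \frac{C\sqrt{\ep}}{1+t}E_I + R_I(t),$$
where $R_I\in L^1_t$. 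Gronwall then yields the $(1+t)^{C\sqrt{\ep}}$ bound, and multiplying by $(1+t)^{-\kappa}$ with $\kappa\gg\ep$ before integrating produces the flux estimate.

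The contributions to $Z^I\Box_g u$ split into four families treated separately. The quasilinear piece $(g^{\mu\nu}-m^{\mu\nu})\partial_\mu\partial_\nu Z^I u$ is dominated by $g_{LL}\partial_q^2 Z^I u$ thanks to Remark \ref{termql}; at top order one exploits Proposition \ref{estLL} together with the weighted Hardy inequality of Proposition \ref{hardy} to trade $\|\partial Z^I g_{LL}\|_{L^2}$ for a quantity controlled by $\bar\partial Z^I g$, gaining a factor $(1+t)^{-1}$. The semilinear terms of the form \eqref{termpll} are absorbed into the positive flux $\int \alpha_2^2 w'(q)(\bar\partial u)^2$ produced by Proposition \ref{prpweighte}, via the ghost weight method illustrated in Section \ref{secsemi}. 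The crossed terms $\wht P_{\mu\nu}(\wht g, g_b)$ would produce a logarithmic $t$-loss; this is converted into a small $q$-loss by inserting the modulator $\alpha_2$, exactly as in the toy computation of Section \ref{secinter}. Finally the source terms $(R_b)_{qq}$, $\partial_q^2 h_0$, $\Box(\Up(\tfrac{r}{t})h_0)$, $\partial_\mu\phi\partial_\nu\phi$ and $\wht Q_{\ba L\ba L}$ are bounded pointwise via Proposition \ref{estzh}, Proposition \ref{boxh}, Proposition \ref{whth} and Corollary \ref{corb}.

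The main obstacle lies in the weak-null quantities \eqref{qll}, in particular $\partial_U g_{LL}\partial_{\ba L}g_{\ba L L}$ and $\partial_{\ba L}(g_b)_{UU}\partial_{\ba L}g_{\ba L L}$, where the absence of $s$-decay for $g_{\ba L\ba L}$ obstructs a naive estimate. This is precisely why we rely on the refined decompositions \eqref{dec3}--\eqref{dec4}, which isolate the dangerous behaviour into $h$ (and into $k$ for the $\partial_U g_{LL}\partial_q h$ contribution), and why the energies $E_I$ include the prefactor $1/(\ep(1+t))$ in front of $\|w_3^{1/2}\partial Z^I h\|_{L^2}^2$ and $\|w_3^{1/2}\partial Z^I k\|_{L^2}^2$. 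The coupling between $h$ and $\wht g$ then contributes a coefficient of order $\sqrt{\ep}/(1+t)$, rather than $\ep/(1+t)$, to the combined energy inequality: one factor carries $\partial Z^I h$, whose $L^2$ norm is only $O(\ep\sqrt{1+t})$, and Cauchy--Schwarz on this factor generates precisely the $\sqrt{\ep}$ improvement. This is the source of the $(1+t)^{C\sqrt{\ep}}$ growth rate stated in the proposition.

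Once every contribution has been accounted for and assembled into the single inequality above, Gronwall together with the initial bound $E_I(0)\leq C_0^2\ep^2$, following from \eqref{ini} and the assumption on the initial data, yields the $L^\infty_t$ control
$$E_I(t)\leq (C_0^2\ep^2+\ep^2)(1+t)^{C\sqrt{\ep}}.$$
The companion flux estimate is obtained by multiplying the full energy inequality by $(1+s)^{-\kappa}$ before integrating in $s\in[0,t]$: the term $\frac{C\sqrt{\ep}}{1+s}E_I$ is then integrable thanks to $\kappa\gg\ep$ and the $L^\infty_t$ bound just established, while $A_I$ stays on the positive side, producing the announced control of $\int_0^t(1+s)^{-\kappa}A_I\,ds$ by $\ep^2$.
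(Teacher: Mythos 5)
Your proposal follows essentially the same architecture as the paper's proof: commute the Minkowski vector fields through $\Box_g$, apply the weighted energy estimate of Proposition \ref{prpweighte} to each component of $E_I$, sort the source into quasilinear / semilinear / crossed / null-frame-commutation families, use Proposition \ref{estLL} together with the weighted Hardy inequality to trade $\partial Z^I g_{LL}$ against a flux term, absorb ghost-weight contributions into $A_I$, and identify the origin of the $\sqrt{\ep}$ coefficient in the $\frac{1}{\ep(1+t)}$ prefactors on the $h$- and $k$-energies combined with a Young inequality with parameter $\sqrt{\ep}$. This matches the structure of the paper's Proposition \ref{estl2N} and Lemmas \ref{lemmephi}, \ref{lemmeh}, \ref{lemmeg}.

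One intermediate claim is imprecise and, taken literally, would weaken the conclusion. You write the schematic inequality $\frac{d}{dt}E_I + A_I \lesssim \frac{C\sqrt{\ep}}{1+t}E_I + R_I(t)$ with $R_I \in L^1_t$. What the paper actually establishes is $R_I(t) \lesssim \frac{\ep^{5/2}}{1+t}$, which is \emph{not} integrable on $[0,\infty)$. Gronwall closes not because $R_I$ is $L^1$, but because after substituting $E_I = G(t)(1+t)^{C\sqrt{\ep}}$ one obtains $G' \lesssim \frac{\ep^{5/2}}{(1+t)^{1+C\sqrt{\ep}}}$, whose total integral is of order $\frac{\ep^{5/2}}{C\sqrt{\ep}} \sim \ep^2$; this is exactly what delivers the coefficient $(C_0^2\ep^2 + \ep^2)$ in the statement, in a manner that is uniform in $T$. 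Your $L^1$ assertion becomes true only after importing the bound $\int_0^T \frac{\ep^{5/2}}{1+s}\,ds = \ep^{5/2}\ln(1+T) \lesssim \ep^2$, which requires $T \lesssim \exp(C/\sqrt{\ep})$ -- but the paper explicitly notes that Proposition \ref{estn} is the one place \emph{not} requiring that restriction (it is needed only for Proposition \ref{prpbootb}). The fix is simply to replace "$R_I \in L^1_t$" with "$R_I(t) \lesssim \frac{\ep^{5/2}}{1+t}$" and carry out the Gronwall computation with the $(1+t)^{-C\sqrt{\ep}}$ weight; everything else in your plan then goes through.
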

This is a straightforward consequence of the following proposition.

\begin{prp}\label{estl2N}
We have the inequality, up to some negligible terms defined in Lemmas \ref{lemmephi}, \ref{lemmeh} and \ref{lemmeg} 
 for $I\leq N$
\[\frac{d}{dt}E_I+A_I\lesssim\frac{\sqrt{\ep}}{1+t}E_I+\frac{\ep^{\frac{5}{2}}}{1+t}.\]
\end{prp}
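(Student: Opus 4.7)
The plan is to derive the claimed differential inequality by applying the weighted energy estimate of Proposition~\ref{prpweighte} (with the weight modulator $\alpha_2$ for the top order $I=N$, and the plain weights for $I<N$) separately to each of the four quasilinear wave equations
\[
\Box_g Z^I \phi = F^\phi_I, \quad \Box_g Z^I \wht g_4 = F^g_I, \quad \Box_g Z^I h = F^h_I, \quad \Box_g Z^I k = F^k_I,
\]
obtained by commuting $Z^I$ with equations~\eqref{s2}, \eqref{eqa}, \eqref{eqk}, and then summing over $|I|\le N$. Proposition~\ref{prpweighte} yields
\[
\frac{d}{dt}\|\alpha_2 w^{1/2}\partial Z^I u\|_{L^2}^2 + C\|\alpha_2 (w')^{1/2}\bar\partial Z^I u\|_{L^2}^2 \lesssim \frac{\ep}{(1+t)^{1+2\sigma}}\|w^{1/2}\partial Z^I u\|_{L^2}^2 + \|\alpha_2 w^{1/2} F_I\|_{L^2}\|\alpha_2 w^{1/2}\partial_t Z^I u\|_{L^2}.
\]
The first right-hand term already fits into $\frac{\sqrt\ep}{1+t}E_I$, so the work reduces to placing $\|\alpha_2 w^{1/2} F_I\|_{L^2}$ either into $\frac{\sqrt\ep}{\sqrt{1+t}} E_I^{1/2}$, into $A_I^{1/2}$ (using Alinhac's ghost weight), or into $\frac{\ep^{5/4}}{\sqrt{1+t}}$.

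The forcing $F_I$ splits into four families whose treatment we outline in turn. For the quasilinear commutator $Z^{I-J} g_{LL}\,\partial_q^2 Z^J u$ (Remark~\ref{termql}), we split by which factor carries more $Z$-derivatives: the low-$Z$ factor goes into $L^\infty$ via the bootstrap assumptions, and on the high-$Z$ factor we use Proposition~\ref{estLL} to replace $\partial g_{LL}$ by $\bar\partial g$, then apply the weighted Hardy inequality of Proposition~\ref{hardy} twice (once to absorb the two $q$-derivatives and once to recover $\partial Z^{I+1}g$), giving a bound by $\frac{\sqrt\ep}{\sqrt{1+t}} E_I^{1/2}$; this is the step where $\delta>\tfrac12$ is used, as in Section~\ref{modqs}. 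The weak-null semilinear terms of the form~\eqref{termpll} are handled by Alinhac's ghost weight: their factor with a $\bar\partial$ derivative goes into the bulk $A_I^{1/2}$, the other into $L^\infty$. The source terms $-2\partial_\mu\phi\partial_\nu\phi$, $2(R_b)_{\mu\nu}$ and those on the right of~\eqref{eqh}--\eqref{eqk} produce the cubic-size contribution $\frac{\ep^{5/2}}{1+t}$ after using the pointwise bounds of Propositions~\ref{estzh} and~\ref{whth}.

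The key difficulties, flagged in Section~\ref{cros}, concern two families. First, the semilinear term $\partial_U g_{LL}\,\partial_{\ba L}g_{\ba L\ba L}$ in $P_{\ba L U}$ has no usable structure: this is precisely why we introduced the decomposition~\eqref{dec4}, in which $k$ is defined by~\eqref{eqk} to absorb this term; the remaining equation for $\wht g_4$ then contains only controlled pieces, while $k$ itself is handled with its own weighted energy, the factor $\frac{1}{\ep(1+t)}$ in $E_I$ compensating for the intrinsic $\sqrt{1+t}$ growth of $\|w_3^{1/2}\partial Z^I k\|_{L^2}$. Likewise, $Q_{\ba L\ba L}(h,\wht g)$ in~\eqref{qll} collects the semilinear terms from $P_{\ba L\ba L}$ and the crossed term~\eqref{crossbaL}, whose contributions are estimated using the transport approximation of Section~\ref{transport} for $h$. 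Second, the crossed term~\eqref{crossg}, $\partial_{\ba L}(g_b)_{UU}\partial_{\q T}\wht g_{\q T\q V}$, carries the coefficient $\frac{\chi(q)(|b|+|\partial_\theta b|)}{r}$, which fails to decay as $q\to+\infty$: via the mechanism of Section~\ref{secinter}, pairing $\alpha_2$ with the extra $(1+s)^{-1/2-\sigma}(1+|q|)^{-1/2}$ factor extracted from $\frac{\chi(q)}{r}$ converts what would be a logarithmic-in-$t$ growth into the acceptable $(1+|q|)^{-2\sigma}$ weight loss, producing a contribution of size $\frac{\sqrt\ep}{1+t}E_I$ (here $\sqrt\ep$ reflects $\|b\|_{L^\infty}\lesssim\ep^2$ combined with the Hardy step); this is the sole source of the $(1+t)^{C\sqrt\ep}$ growth.

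The main obstacle will be organising these estimates so that the different decompositions~\eqref{dec1}--\eqref{dec4} are used at the right place: \eqref{dec4} at top order $I=N$ (to kill $\partial_U g_{LL}\partial_{\ba L}g_{\ba L\ba L}$), \eqref{dec3} for $I\le N-1$ (where $h$ is used directly rather than via $h_0+\wht h$), and \eqref{dec2} at intermediate order (to exploit the improved pointwise bound on $\wht h$ from Proposition~\ref{whth} when estimating the non-commutation remainder~\eqref{noncom}). Once each forcing contribution is slotted into the correct bucket and the bootstrap estimates of Section~\ref{boot} together with the wave-coordinate bounds of Section~\ref{secwave} are applied as above, summing the four energy inequalities yields precisely $\tfrac{d}{dt}E_I + A_I \lesssim \tfrac{\sqrt\ep}{1+t}E_I + \tfrac{\ep^{5/2}}{1+t}$, up to the negligible terms referenced from the auxiliary lemmas.
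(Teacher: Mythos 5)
Your overall framework is the same as the paper's: the proposition follows by applying the weighted energy estimate of Proposition~\ref{prpweighte} to each of the four components of $E_I$, slotting the forcing terms into $\frac{\sqrt\ep}{1+t}E_I$, into $\sqrt\ep\,A_I$ (then absorbing for small $\ep$), or into $\frac{\ep^{5/2}}{1+t}$. The paper packages these four energy estimates as Lemmas~\ref{lemmephi}, \ref{lemmeh}, \ref{lemmeg} and the proof of the proposition is just their combination plus the absorption $1-C\sqrt\ep\ge\frac12$; your proposal unpacks them in one block, which is the same content.

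Two points in your sketch are off and worth correcting. First, you attribute the $\sqrt\ep$ factor (hence the $(1+t)^{C\sqrt\ep}$ growth) to the $g_b$ crossed term~\eqref{crossg} via the $\alpha_2$ mechanism. In fact the crossed term only produces a factor $\frac{\ep}{(1+t)^{1+\sigma}}$, as in~\eqref{enephi3}. The $\sqrt\ep$ arises from a different place: when coupling the $h$ (and $k$) energies, normalized by $\frac{1}{\ep(1+t)}$, to the $\wht g_4$ energy through the non-commutation term $\frac{\partial_\theta h}{r^2}$ and through $Q_{\ba L\ba L}$, Young's inequality produces precisely $\frac{1}{\sqrt\ep(1+t)}\|\alpha_2 w_3'^{1/2}\bar\partial Z^Nh\|^2 + \frac{\sqrt\ep}{1+t}\|\alpha_2 w_2^{1/2}\partial Z^N\wht g_4\|^2$, and similarly in Lemma~\ref{lemmeh} the term $\sqrt\ep\,\|\alpha_2 w_2'^{1/2}\bar\partial Z^N\wht g_4\|^2$ (this is the $\sqrt\ep\,A_I$ that must be absorbed). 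Second, decomposition~\eqref{dec2} does not enter $E_I$ or $A_I$ at all: $E_J$ for $J<N$ uses $\wht g_3$ and $E_N$ uses $\wht g_4$; \eqref{dec2} is reserved for the separate bootstrap on $\wht g_2$ in Subsection~8.3. Finally, you pass the "negligible terms" to the lemmas by reference but do not say what they are. They encode two genuine tricks that are central: the correction $\frac{\chi(q)q\,\partial u\,\partial^{N-1}_\theta b}{g_{UU}}$, needed because $\partial^{N+1}_\theta b\notin L^2$, so one performs the energy estimate for $\widetilde{Z^N u}=Z^N u-\frac{\chi(q)q\,\partial u\,\partial^{N-1}_\theta b}{g_{UU}}$; and the correction $h\,Z^N g_{LL}\,dq\,ds$ in $\widetilde{Z^N\wht g_4}$, which uses the identity for $\Box_g(hZ^N g_{LL})$ to convert $\partial_L Z^N g_{LL}\,\partial_{\ba L}h$ (appearing in $P_{L\ba L}$) into controllable pieces. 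You should at least name these, since the proposition's statement explicitly includes them.
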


We first prove Proposition \ref{estn}, admitting Proposition \ref{estl2N}.
\begin{proof}[Proof of Proposition \ref{estn}]
We have proved
$$\frac{d}{dt}E_I\leq C\frac{\sqrt{\ep}}{1+t}E_I +C\frac{\ep^\frac{5}{2}}{1+t},$$
therefore, if we note $E_I=G(1+t)^{C\sqrt{\ep}}$, we have
$$\frac{d}{dt}G\leq C\frac{\ep^\frac{5}{2}}{(1+t)^{1+C\sqrt{\ep}}}.$$
After integrating, we obtain
$$G(t)\leq G(0)+\ep^2-\frac{\ep^2}{(1+t)^{C\sqrt{\ep}}},$$
and hence
$$E_I\leq (E_I(0)+\ep^2)(1+t)^{C\sqrt{\ep}}
\leq(C_0^2\ep^2+\ep^2)(1+t)^{C\sqrt{\ep}}
.$$
Moreover, we have
$$\frac{d}{dt}E_I+A_I \leq C\frac{\sqrt{\ep}}{1+t}E_I+\frac{\ep^\frac{5}{2}}{1+t},$$
therefore if we multiply this inequality by $\frac{1}{(1+t)^\kappa}$ we obtain
$$\frac{d}{dt}\left(\frac{E_I}{(1+t)^\kappa}\right)+\frac{A_I}{(1+t)^\kappa}
\leq \frac{1}{(1+t)^\kappa}\left(\frac{d}{dt}E_I+A_I \right) \leq \frac{C\sqrt{\ep}}{(1+t)^{1+\kappa}}E_I +\frac{C\ep^\frac{5}{2}}{(1+t)^{1+\kappa}}\leq \frac{C\ep^\frac{5}{2}}{(1+t)^{1+\kappa-C\sqrt{\ep}}}.$$
Therefore, if $C\sqrt{\ep} <\kappa$, the right-hand side is integrable and so
$$\int \frac{1}{(1+t)^\kappa}A_I \lesssim \ep^2.$$
This concludes the proof of Proposition \ref{estn}.
\end{proof}
Proposition \ref{estl2N} is a direct consequence of the three following lemmas.

\begin{lm}
\label{lemmephi}We have the inequality, 
\[\frac{d}{dt}\|\alpha_2 w(q)^\frac{1}{2}\partial \wht{Z^N \phi}\|^2_{L^2}+\|\alpha_2 w'(q)^\frac{1}{2}\bar{\partial} \wht{Z^N \phi}\|^2_{L^2} \lesssim \frac{\ep}{1+t}E_N+\ep\|\alpha_2 w'_2(q)^\frac{1}{2}\bar{\partial} Z^N \wht g_4\|^2_{L^2}
+\frac{\ep^3}{1+t}
\]
where
$\widetilde{Z^N \phi}-Z^N \phi$ is composed of terms of the form $$\frac{\chi(q)q\partial \phi\partial^{N-1}_\theta b}{g_{UU}},$$
and we have
$$\|\alpha_2w_0^\frac{1}{2}\partial(\widetilde{Z^N \phi}-Z^N \phi)\|_{L^2}\lesssim \ep^2.$$
For $I<N$ we have
\[
\frac{d}{dt}\|w_0(q)^\frac{1}{2}\partial Z^I \phi\|^2_{L^2}+\|w_0'(q)^\frac{1}{2}\bar{\partial} Z^I \phi\|^2_{L^2}
\lesssim \frac{\ep}{1+t}E_I+\ep\|w'_2(q)^\frac{1}{2}\bar{\partial} Z^I \wht g_4\|^2_{L^2}.
\]
\end{lm}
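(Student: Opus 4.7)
The plan is to apply the weighted energy estimate of Proposition \ref{prpweighte} to $Z^I \phi$ (resp.\ $\widetilde{Z^N\phi}$ at top order) with weight $w_0$ (resp.\ $\alpha_2 w_0$), and to show that the source term $f = \Box_g Z^I \phi = [\Box_g, Z^I]\phi$ obeys a bound of the schematic form
\[
\int w_0 |f\, \partial_t Z^I\phi| \;\lesssim\; \frac{\ep}{1+t}E_I + \ep\|w_2'{}^{1/2}\bar\partial Z^I\wht g_4\|_{L^2}^2 + \text{quadratic ghost-weight absorbable terms}.
\]
I will split $f$ into three pieces: (i) quasilinear commutators coming from $(g-m)^{\alpha\beta}\partial_\alpha\partial_\beta\phi$, (ii) the semilinear null-form terms coming from the vector field identities, and (iii) the crossed terms with $g_b$ discussed in Section \ref{cros}.

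For (i), following Remark \ref{termql} the worst piece is $g_{LL}\partial_q^2 Z^I \phi$ and its commutators $Z^{I_1}g_{LL}\,\partial_q^2 Z^{I_2}\phi$. When $I_2\le I/2$ the $L^\infty$ estimates of Section \ref{seclinf} on $\partial_q^2 Z^{I_2}\phi$, combined with the wave coordinates estimate of Proposition \ref{estLL} (to gain a $\bar\partial$ on $g_{LL}$) and the weighted Hardy inequality of Proposition \ref{hardy}, convert the integral into $\ep(1+t)^{-1}E_I$. When $I_1\le I/2$ the $L^\infty$ bounds on $g_{LL}$ from Corollary \ref{estV2} (gaining a factor $(1+|q|)^{3/2}/(1+s)^{3/2}$ in the interior and a strong $q$-decay exterior thanks to $\delta>1/2$, cf.\ Section \ref{modqs}) give the same bound. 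For the semilinear terms, by the null structure argument of Section \ref{secsemi} one produces a ghost-weight contribution $\|w_0'{}^{1/2}\bar\partial Z^I\phi\|_{L^2}^2$ that is absorbed into the left-hand side of the energy inequality.

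The real difficulty is (iii). The main crossed contributions \eqref{crossphi}, schematically $\partial_{\q V}(g_b)_{UU}\partial_{\q T}\phi$ and the quasilinear piece $\tfrac{b(1+|q|)}{r}\partial_U^2\phi$, carry no decay in $s$ away from the cone, which produces a logarithmic loss in $t$ exactly as in Section \ref{secinter}. At intermediate order $I<N$ one handles this with the weight modulator $\alpha_2$ of \eqref{wmod}, using $\alpha(q)/(1+s)\lesssim (1+t)^{-1/2-\sigma}(1+|q|)^{-1/2}$ and absorbing the arising $\bar\partial$ term into $\|w_2'{}^{1/2}\bar\partial Z^I \wht g_4\|_{L^2}^2$; this yields precisely the second displayed inequality. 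At top order $I=N$, however, when all $N$ vector fields fall on $b(\theta)$ one obtains a contribution of the form $\tfrac{\chi(q)q\,\partial^N_\theta b}{r^2}\partial\phi$ in which $\partial^N_\theta b$ is only in $L^2(\m S^1)$: this is a genuine loss of one derivative which cannot be closed by the ghost-weight trick alone. This is the main obstacle.

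To resolve it, I would modify $Z^N\phi$ by the algebraic correction $\widetilde{Z^N\phi}=Z^N\phi + \tfrac{\chi(q)q\,\partial\phi\,\partial^{N-1}_\theta b}{g_{UU}}$, designed so that $\Box_g$ applied to the correction cancels the bad top-order crossed term up to remainders involving at most $\partial^{N-1}_\theta b$, which are then controlled by the bootstrap assumption \eqref{bootb2}. The resulting difference $\widetilde{Z^N\phi}-Z^N\phi$ is estimated in $\|\alpha_2 w_0^{1/2}\partial\,\cdot\,\|_{L^2}$ by $\ep^2$, using that $|\partial\phi|$ enjoys the $L^\infty$ decay of Proposition \ref{linf} and that $\partial^{N-1}_\theta b$ is bounded in $L^2(\m S^1)$ by $\ep^2$ thanks to \eqref{bootb2}, while $g_{UU}$ stays comparable to $1$. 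Combining this modification with the analysis of (i), (ii), the subcritical part of (iii), and the weighted energy estimate of Proposition \ref{prpweighte} (applied with $\alpha_2 w_0$), one obtains the announced inequality, the remaining quadratic error being the $\ep^3/(1+t)$ term.
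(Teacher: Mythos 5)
Your proposal follows the paper's argument in its essentials: you apply the weighted energy estimate with weight $\alpha_2 w_0$ to a modified unknown $\wht{Z^N\phi}$ whose algebraic correction is designed to cancel the top-order crossed term $\chi(q)q\,\partial_\theta^{N+1}b\,\partial\phi/r^2$ (the sign of the correction is immaterial); you handle the quasilinear commutators $Z^{I_1}g_{LL}\,\partial_q^2 Z^{I_2}\phi$ by separating the ranges of $I_1$ and $I_2$, invoking Proposition~\ref{estLL} to convert $\partial_q g_{LL}$ into $\bar\partial\wht g_4$ and the weighted Hardy inequality to land on $\|\alpha_2 w_2'^{1/2}\bar\partial Z^N\wht g_4\|_{L^2}$; and you estimate the remaining (lower-order) crossed terms using the Klainerman--Sobolev bounds and $\|\partial_\theta^N b\|_{L^2}\lesssim\ep^2$. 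Two small imprecisions worth flagging: since $\Box_g\phi=0$ there are no genuine semilinear null-form sources for $\phi$ (the entire source in \eqref{equaphi} is a commutator), so your item~(ii) is vacuous here; and the weight modulator $\alpha_2$ is only used at top order $I=N$ — the $I<N$ estimate in Lemma~\ref{lemmephi} is stated with the bare weight $w_0$, and is handled without modulator precisely because the $\partial_\theta^{N+1}b$ loss is absent and the remaining crossed terms already produce the harmless $\tfrac{\ep}{1+t}E_I$ contribution. These are minor; the key mechanisms you identify are the paper's.
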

\begin{lm}\label{lemmeh}We have the inequality, 
\begin{align*}
&\frac{d}{dt}\left(\frac{1}{\ep t}\|\alpha_2 w_3(q)^\frac{1}{2}\partial \wht{Z^N h}\|^2_{L^2}\right)
+\frac{1}{\ep (1+t)}\|\alpha_2 w_3'(q)^\frac{1}{2}\bar{\partial} \wht{Z^N h}\|_{L^2}^ 2\\
\lesssim &\frac{\sqrt{\ep}}{1+t}E_N
 +\sqrt{\ep} \|\alpha_2 w_2'^{\frac{1}{2}}(q)\bar{\partial}\wht Z^N g_4\|^2_{L^2}+\frac{\ep^\frac{5}{2}}{1+t},
\end{align*}
where
$\widetilde{Z^N h}-Z^N h$ is composed of terms of the form $$\frac{\chi(q)q\partial h\partial^{N-1}_\theta b}{g_{UU}},$$
and we have
$$\|\alpha_2w_0^\frac{1}{2}\partial(\widetilde{Z^N h}-Z^N h)\|_{L^2}\lesssim \ep^3\sqrt{1+t}.$$
We have a similar estimate for $k$
\begin{align*}
&\frac{d}{dt}\left(\frac{1}{\ep t}\|\alpha_2 w_3(q)^\frac{1}{2}\partial Z^N k\|^2_{L^2}\right)
+\frac{1}{\ep (1+t)}\|\alpha_2 w_3'(q)^\frac{1}{2}\bar{\partial} Z^N k\|_{L^2}^ 2\\
\lesssim &\frac{\sqrt{\ep}}{1+t}E_N
 +\sqrt{\ep} \|\alpha_2 w_2'^{\frac{1}{2}}(q)\bar{\partial}\wht Z^I g_4\|^2_{L^2}.
\end{align*}
Moreover for $I<N$
\begin{align*}
&\frac{d}{dt}\left(\frac{1}{\ep t}\| w_3(q)^\frac{1}{2}\partial Z^I h\|^2_{L^2}\right)
+\frac{1}{\ep (1+t)}\| w_3'(q)^\frac{1}{2}\bar{\partial}Z^I h\|_{L^2}^ 2\\
\lesssim &\frac{\sqrt{\ep}}{1+t}E_I
 +\sqrt{\ep} \| w_2'^{\frac{1}{2}}(q)\bar{\partial}\wht Z^I g_4\|^2_{L^2}+\frac{\ep^\frac{5}{2}}{1+t},
\end{align*}
\end{lm}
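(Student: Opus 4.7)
The plan is to apply the weighted energy estimate of Proposition~\ref{prpweighte} to $\Box_g(Z^I h) = F_I$ and $\Box_g(Z^I k) = G_I$, with weight $w_3$ and, at top order $I = N$, the ghost-weight modulator $\alpha_2$. The renormalising prefactor $\frac{1}{\ep(1+t)}$ built into the quantity on the LHS of the lemma makes the $\sqrt{t}$-growing bootstrap energies \eqref{bootl21} behave as bounded energies: differentiating it in $t$ produces an extra good term $-\frac{1}{\ep(1+t)^2}\|\alpha_2 w_3^{1/2}\partial\widetilde{Z^N h}\|_{L^2}^2$ which one may drop from the LHS. It then suffices to bound the nonlinear source $\frac{1}{\ep(1+t)}\int\alpha_2^2 w_3|F_N\partial_t\widetilde{Z^N h}|$ by the claimed RHS, namely $\frac{\sqrt\ep}{1+t}E_N+\sqrt\ep\,\|\alpha_2 w_2'^{1/2}\bar\partial Z^N\wht g_4\|_{L^2}^2+\frac{\ep^{5/2}}{1+t}$.

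Splitting $F_I$ into (i) the scalar source $-2Z^I(\partial_q\phi)^2$, (ii) the geometric source $2Z^I(R_b)_{qq}$, (iii) the null semilinearity $Z^I Q_{\ba L\ba L}(h,\wht g)$, and (iv) the quasilinear commutator $[Z^I,\Box_g]h$, I would treat each packet as follows. Packet~(i) is handled by splitting $I=I_1+I_2$ with $I_1\le N/2$: the lower-index factor is estimated in $L^\infty$ via Proposition~\ref{prpweight} or Proposition~\ref{estks}, the higher-index factor is placed in $L^2$ via the bootstrap on $\phi$, and the extra $(1+|q|)$ gap between $w_3$ and $w_0$ in the interior absorbs the weight mismatch against $\partial_t Z^I h$. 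Packet~(iii) is where the weak null structure is exploited: thanks to the wave-coordinate identities of Propositions~\ref{estLL}--\ref{estLU}, every factor in $Q_{\ba L\ba L}$ other than $\partial_{\ba L}h$ itself carries either a good derivative $\bar\partial$ or a good component $g_{\q T\q T}$, so after Cauchy-Schwarz the tangential derivative is absorbed by $\sqrt\ep\,\|\alpha_2 w_2'^{1/2}\bar\partial Z^N\wht g_4\|_{L^2}^2$. Packet~(iv) is dominated, via Remark~\ref{termql}, by $g_{LL}\partial_q^2 Z^{I-1}h$; the estimate $|g_{LL}|\lesssim \ep(1+|q|)/(1+s)^{3/2-\rho}$ from Corollary~\ref{estV2} combined with a weighted Hardy inequality (Proposition~\ref{hardy}) delivers a bound of the form $\frac{\sqrt\ep}{1+t}E_I$.

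The main obstacle is packet~(ii) at the top order. Since $(R_b)_{qq}=-b(\theta)\partial_q^2(q\chi(q))/r+O(\ch_{1<q<2}/r^2)$, the purely rotational component of $Z^N$ produces the contribution $-2\partial_\theta^N b\cdot\partial_q^2(q\chi(q))/r$. Because $b$ only lies in $W^{N,2}(\m S^1)$ by the bootstrap \eqref{bootb2}, the function $\partial_\theta^N b$ is merely in $L^2(\m S^1)$, and the integral $\int\alpha_2^2 w_3\bigl|\partial_\theta^N b\cdot\partial_q^2(q\chi(q))/r\cdot\partial_t Z^N h\bigr|$ cannot be closed by the bootstrap (the radial integration is logarithmically divergent). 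The cure is the renormalisation $\widetilde{Z^N h}:=Z^N h-\frac{\chi(q)q\,\partial h\,\partial_\theta^{N-1}b}{g_{UU}}$. Applying $\Box_g$ to the correction and using $g_{UU}=r^2+O(rb)$ together with the transport-like identity $\partial_q h\approx\partial_q h_0=-2r(\partial_q\phi)^2-2b(\theta)\partial_q^2(q\chi(q))$ inherited from \eqref{eqh} and the smallness of $\wht h$ from Proposition~\ref{whth}, one finds that $\Box_g(\widetilde{Z^N h}-Z^N h)$ reproduces, to leading order, the bad $\partial_\theta^N b$ contribution with the opposite sign, so that the source of $\Box_g\widetilde{Z^N h}$ only retains derivatives of $b$ up to order $N-1$. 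These lie in $H^1(\m S^1)\hookrightarrow L^\infty$ by Sobolev embedding on the circle and are therefore pointwise controlled. The companion bound $\|\alpha_2 w_0^{1/2}\partial(\widetilde{Z^N h}-Z^N h)\|_{L^2}\lesssim\ep^3\sqrt{1+t}$ then follows from $|\chi(q)q/g_{UU}|\lesssim 1/(1+r)$, the bootstrap $\|w_3^{1/2}\partial Z^Nh\|_{L^2}\lesssim\ep\sqrt{1+t}\,(1+t)^\rho$, and the $L^\infty$ control on $\partial_\theta^{N-1}b$.

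The proof for $k$ runs in parallel, with source $G_N=Z^N(\partial_U g_{LL}\partial_q h)+[Z^N,\Box_g]k$; no correction is needed because when $Z^N$ falls on $\partial_U g_{LL}$ the wave-coordinate identity of Proposition~\ref{estLU} converts it into a good derivative, which is absorbed by $\sqrt\ep\,\|\alpha_2 w_2'^{1/2}\bar\partial Z^N\wht g_4\|_{L^2}^2$, and when $Z^N$ falls on $\partial_q h$ one invokes the bootstrap on $\partial Z^Nh$ against the pointwise decay of $\partial_U g_{LL}$ from Corollary~\ref{estV}. The absence of the residual $\ep^{5/2}/(1+t)$ term in the $k$-bound reflects that $G_N$ is a product of two small factors ($\partial g$ and $\partial h$), rather than a source involving the background term $(R_b)_{qq}$. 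Finally the lower-order version ($I<N$) follows the same three-step scheme without the modulator $\alpha_2$ and without any renormalisation, because for $I\le N-1$ the derivative $\partial_\theta^I b$ is already pointwise controlled by Sobolev on $\m S^1$, removing the derivative-loss obstacle that was the only reason for introducing $\widetilde{Z^N h}$ in the first place.
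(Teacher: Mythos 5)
Your overall skeleton matches the paper's proof: weighted energy estimate with the $\frac{1}{\ep(1+t)}$ prefactor, quasilinear terms handled by the wave-coordinate condition \eqref{cool} plus the weighted Hardy inequality to land on $\ep\|\alpha_2 w_2'^{1/2}\bar\partial Z^N\wht g_4\|_{L^2}$, the source $Z^N(\partial_q\phi)^2$ by an $L^\infty$--$L^2$ splitting using the weight gap $w_3^{1/2}/(1+|q|)^{1/2}\le w_0^{1/2}$, and $Q_{\ba L\ba L}$ by isolating the crossed term $\partial_{\ba L}(g_b)_{UU}\partial_{\ba L}g_{\ba L L}$.

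However, you have misidentified the term that forces the renormalisation $\widetilde{Z^N h}$, and your proposed cancellation mechanism would not work. You locate the obstruction in $Z^N(R_b)_{qq}$, claiming it produces $\partial_\theta^N b\cdot\partial_q^2(q\chi(q))/r$ with a logarithmically divergent radial integral. Neither claim is right: $\partial_q^2(q\chi(q))$ is supported in $1\le q\le 2$, so the radial integral lives on a strip of width one at $r\sim t$ and the paper bounds this term directly by $\frac{1}{\sqrt{1+t}}\sum_{I\le N}\|\partial_\theta^I b\|_{L^2(\m S^1)}\lesssim\frac{\ep^2}{\sqrt{1+t}}$ (estimate \eqref{eneh5}), using only that $\partial_\theta^N b\in L^2(\m S^1)$, which the bootstrap \eqref{bootb2} provides. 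The genuine obstruction sits in the crossed terms $\sum Z^{I_1}H_b^\rho\,Z^{I_2}\partial_\rho h$ and $\partial Z^{I_1}(g_b)_{UU}\,\partial Z^{I_2}h$ with $I_1\ge N-2$: since $H_b$ and $\partial(g_b)_{UU}$ already carry one $\theta$-derivative of $b$, applying $Z^N$ produces $\frac{\chi(q)q\,\partial_\theta^{N+1}b}{r^2}\partial h$, and $\partial_\theta^{N+1}b\notin L^2(\m S^1)$ because $b$ is only in $W^{N,2}$. This is the term the correction kills. Moreover the cancellation is purely algebraic: the angular part $\frac{1}{r^2}\partial_\theta^2$ of $\Box_g$ acting on the factor $\partial_\theta^{N-1}b$ in $\frac{\chi(q)q\,\partial h\,\partial_\theta^{N-1}b}{g_{UU}}$ regenerates exactly $\frac{\chi(q)q\,\partial_\theta^{N+1}b}{r^2}\partial h$ up to remainders containing at most $N$ derivatives of $b$ (which are then estimated in $L^2(\m S^1)$ against the pointwise bound \eqref{ks6} on $\partial Z^Ih$, $I\le2$, as in \eqref{eneh8}). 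Your invocation of the transport identity $\partial_q h\approx\partial_q h_0$ plays no role here, and your formula $g_{UU}=r^2+O(rb)$ is off by a factor $r^2$ (in the frame $U=\partial_\theta/r$ one has $g_{UU}\approx1$); with your normalisation the correction would produce $\partial_\theta^{N+1}b/r^4$ rather than $\partial_\theta^{N+1}b/r^2$ and the bad term would survive. These are not cosmetic slips: following your plan one would conclude that no renormalisation is needed at all (since the $(R_b)_{qq}$ term is in fact harmless) and the top-order energy estimate would then fail on the $H_b$-crossed terms.
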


\begin{lm}\label{lemmeg}We have the estimate 
\begin{align*}
&\frac{d}{dt}\|\alpha_2 w_2(q)^\frac{1}{2}\partial \wht{Z^N \wht g_4}\|^2_{L^2}
+\|\alpha_2 w'_2(q)^\frac{1}{2}\partial \wht{Z^N \wht g_4}\|^2_{L^2}\\
&\lesssim \frac{\sqrt{\ep}}{1+t}E_N
+\sqrt{\ep}\frac{1}{\ep(1+t)}(\|\alpha_2 w_3'(q)^\frac{1}{2}\bar{\partial}Z^N h\|_{L^2}^ 2
+\|\alpha_2 w_3'(q)^\frac{1}{2}\bar{\partial} Z^N k\|_{L^2}^ 2)
\end{align*}
where
$\widetilde{Z^N \wht g_4}-Z^N \wht g_4$ is composed of terms of the form $$-\frac{\chi(q)q\partial \wht g_4\partial^{N-1}_\theta b}{g_{UU}}, \quad h Z^N g_{LL}dqds$$
and we have
$$\|\alpha_2w_0^\frac{1}{2}\partial(\widetilde{Z^N \wht g_4}-Z^N \wht g_4)\|_{L^2}\lesssim \ep^2 + \ep \|\alpha_2 w_2^\frac{1}{2}Z^N \wht g_{LL}\|_{L^2}.$$
For $I<N$, we have
\[
\frac{d}{dt}\|w_2(q)^\frac{1}{2}\partial Z^I \wht g_3\|^2_{L^2}
+\|w'_2(q)^\frac{1}{2}\partial Z^I \wht g_3\|^2_{L^2}
\lesssim \frac{\sqrt{\ep}}{1+t}E_I
+\sqrt{\ep}\frac{1}{\ep(1+t)}\|w_3'(q)^\frac{1}{2}\bar{\partial} Z^Ih\|_{L^2}^ 2.
\]
\end{lm}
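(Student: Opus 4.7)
The plan is to apply the weighted energy estimate of Proposition \ref{prpweighte} with weight $\alpha_2^2 w_2$ (for $I=N$) or $w_2$ (for $I<N$) to the wave equation satisfied by $Z^I \wht g_4$ (respectively $Z^I \wht g_3$). Starting from $\Box_g \wht g_4 = G$, where $G$ is obtained from the right-hand side of \eqref{s2} after accounting for the localized pieces $\Up(r/t)h\,dq^2$ and $\Up(r/t)k\,r\,dqd\theta$ (which contribute the non-commutation terms from \eqref{noncom} and replace $\wht g$ by $\wht g_4$ in the semilinear nonlinearities involving $g_{\ba L \ba L}$), commutation with $Z^I$ produces an equation of the form $\Box_g Z^I \wht g_4 = F^I$; the problem then reduces to $L^2$ bounds on each term of $F^I$ tested against $\alpha_2^2 w_2\,\partial_t Z^I \wht g_4$.

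The source terms in $F^I$ split into five groups. First, quasilinear commutator contributions $Z^{I_1}(g-g_b)^{\alpha\beta}\partial_\alpha\partial_\beta Z^{I_2}\wht g_4$, dominated (by Remark \ref{termql}) by $Z^{I_1}g_{LL}\partial_q^2 Z^{I_2}\wht g_4$, controlled by combining the improved decay of $g_{LL}$ from Proposition \ref{estLL} with the weighted Hardy inequality (this is the mechanism of Section \ref{modqs}). Second, the curvature source $-2\partial_\mu\phi\partial_\nu\phi$ and its commutators, handled via the null decomposition as in Section \ref{secsemi}. Third, the Ricci correction $2(R_b)_{\mu\nu}$ of \eqref{rqq}--\eqref{rqu}; at order $I=N$ the $UU$ component contains a piece involving $\partial_\theta^N b$ on which we only have an $L^2$ bound, so I move the offending part to the left-hand side by absorbing $-\frac{\chi(q)q\,\partial \wht g_4\,\partial^{N-1}_\theta b}{g_{UU}}$ into $\widetilde{Z^N\wht g_4}$ and control the discrepancy through the bootstrap \eqref{bootb2}. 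Fourth, the crossed terms $\wht P_{\mu\nu}(\wht g,g_b)$ of Section \ref{cros}; here the far-from-the-light-cone piece $\frac{b(1+|q|)}{1+r}\partial_U^2 \wht g$ forces the use of the weight modulator $\alpha_2$, converting the logarithmic $t$-loss of Section \ref{secinter} into a small $q$-loss. Fifth, the non-commutation contributions from \eqref{noncom}, namely $\Up(r/t)r^{-2}\bar\partial Z^I h$ and, at top order, $\Up(r/t)r^{-2}\bar\partial Z^I k$; using the weight gap $w_3/w_2\sim (1+|q|)^{1+2\mu}$ in $q<0$ together with the bound $|\bar\partial Z^I h|\leq (1+s)^{-1/2-\mu}(1+|q|)^{-1/2+\mu}|Z^{I+1}h|$, Cauchy--Schwarz produces precisely the ghost-weight bilinear terms $\sqrt{\ep}(\ep(1+t))^{-1}\|\alpha_2 w_3'^{1/2}\bar\partial Z^N h\|_{L^2}^2$ and the analogous $k$-term on the right-hand side of the statement, matching the mechanism outlined in Section \ref{secnon}.

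The main obstacle is the treatment at top order of the two semilinear nonlinearities that lack a direct null structure and were flagged in Section \ref{cros}: $\partial_L g_{LL}\partial_{\ba L}g_{\ba L\ba L}$ in $P_{L\ba L}$ and $\partial_U g_{LL}\partial_{\ba L}g_{\ba L\ba L}$ in $P_{U\ba L}$. For the first, I apply the algebraic identity $\Box(uv)=u\Box v+v\Box u+\partial_L u\,\partial_{\ba L}v+\partial_L v\,\partial_{\ba L}u+\partial_U u\,\partial_U v$ with $u=g_{LL}$ and $v\sim h$, which exchanges $\partial_L g_{LL}\partial_{\ba L}h$ for $\partial_{\ba L}g_{LL}\partial_L h$ (which is a genuine null form) plus a modification of the metric by a term of the form $h\,Z^N g_{LL}\,dqds$; this is exactly the second discrepancy listed in $\widetilde{Z^N\wht g_4}-Z^N\wht g_4$, and its $L^2$ cost is $\ep\|\alpha_2 w_2^{1/2}Z^N \wht g_{LL}\|_{L^2}$. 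For the second, the algebraic trick fails, and the only way to tame the nonlinearity is to recognize that its leading part coincides with $\Box_g$ of an auxiliary unknown: this is precisely the role of $k$ in the decomposition \eqref{dec4} and equation \eqref{eqk}. Once both rearrangements are performed, the remaining semilinear contributions satisfy a true null condition and close with the $\sqrt{\ep}/(1+t)$ gain via the ghost-weight device. For $I<N$ the derivative counts leave ample room, so neither the $b$-absorption nor the $k$-correction is needed: the same scheme with weight $w_2$ and the decomposition \eqref{dec3} for $\wht g_3$ yields the stated inequality directly.
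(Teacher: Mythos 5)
Your proposal matches the paper's proof in all essential respects: the weighted energy estimate with $\alpha_2^2 w_2$, the Hardy-plus-wave-coordinates treatment of the quasilinear terms, the weight gap $w_3/w_2$ for the non-commutation pieces producing the $\sqrt{\ep}(\ep(1+t))^{-1}$-prefactored ghost terms, the absorption of the $\partial_\theta^{N-1}b$ piece and $hZ^N g_{LL}$ into $\widetilde{Z^N\wht g_4}$, the algebraic $\Box(uv)$ identity to remove $\partial_L g_{LL}\partial_{\ba L}h$, and the role of $k$ in soaking up $\partial_U g_{LL}\partial_{\ba L}h$. The only cosmetic difference is that you spell out a separate group for $-2\partial_\mu\phi\partial_\nu\phi$; the paper leaves this implicit since the dangerous $(\partial_q\phi)^2$ component is already carried by $h$ and what remains is a null form.
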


We prove Proposition \ref{estl2N}.
\begin{proof}[Proof of Proposition \ref{estl2N}]Therefore, if we combine Lemmas \ref{lemmephi}, \ref{lemmeh} and \ref{lemmeg} we obtain
\[\frac{d}{dt}E_I + A_I \lesssim \frac{\sqrt{\ep}}{1+t}E_I +\sqrt{\ep}A_I + \frac{\ep^\frac{5}{2}}{1+t},\]
and therefore
\[\frac{d}{dt}E_I + (1-C\sqrt{\ep})A_I \lesssim \frac{\sqrt{\ep}}{1+t}E_I+ \frac{\ep^\frac{5}{2}}{1+t}.\]
If $\ep$ is small enough, we have $1-C\sqrt{\ep}\geq \frac{1}{2}$, which concludes the proof of Proposition \ref{estl2N}.
\end{proof}

It is sufficient to prove these three lemmas for $I=N$. For $I<N$ everything work in the same way. The weight modulator $\alpha_2$ is only needed to estimate a particular term for $I=N$ and is no longer needed for $I<N$.

\begin{proof}[Proof of Lemma \ref{lemmephi}]
We start with the estimates for $\phi$.
We use the weighted energy estimate for the equation
\begin{equation}
\label{equaphi}\Box_g Z^N \phi = \sum_{\substack{I+J\leq N\\J\leq N-1}}\left(Z^I g^{\alpha \beta}\right)\left(Z^J\partial_\alpha \partial_\beta \phi \right)
+\sum_{\substack{I+J\leq N\\J\leq N-1}}Z^IH_b^\rho Z^J\partial_\rho \phi.
\end{equation}
It yields
\begin{align*}
&\frac{d}{dt}\left(\|\alpha_2w_0(q)^\frac{1}{2}\partial Z^N \phi\|^2_{L^2}\right)
+\|\alpha_2w_0'(q)^\frac{1}{2}\bar{\partial} Z^N \phi\|^2_{L^2}\\
&\lesssim\left\|\alpha_2 w_0\Box_g Z^N \phi\right\|_{L^2}\|\alpha_2w_0(q)^\frac{1}{2}\partial Z^N \phi\|_{L^2}
+\frac{\ep}{1+t}\|\alpha_2w_0(q)^\frac{1}{2}\partial Z^N \phi\|^2_{L^2}.
\end{align*}
\paragraph{Estimate of the first term} Thanks to Remark \ref{termql}, it is sufficient to estimate
$$\left|\sum_{\substack{I+J\leq N\\J\leq N-1}}Z^I g_{LL}\partial^2_q Z^J\phi\right|\lesssim
\frac{1}{(1+|q|)}\sum_{\substack{I+J\leq N\\J\leq N-1}}|Z^I g_{LL}\partial_q Z^{J+1}\phi|.$$
If $I\leq \frac{N}{2}\leq N-15$, we can estimate thanks to \eqref{wc1}
$$|Z^I g_{LL}|\lesssim \frac{\ep (1+|q|)}{(1+t)^{\frac{3}{2}-\rho}}$$
so
\begin{equation}
\label{enephi1}\left\|\frac{\alpha_2w_0^\frac{1}{2}}{(1+|q|)}Z^I g_{LL}\partial_q Z^J\phi\right\|_{L^2}
\lesssim \frac{\ep}{(1+t)^{\frac{3}{2}-\rho}}\|\alpha_2w_0^\frac{1}{2}\partial_q Z^J\phi\|_{L^2}.
\end{equation}
If $J\leq \frac{N}{2}$, we can estimate
$$|\partial_q \phi|\lesssim \frac{\ep}{(1+|q|)^{\frac{3}{2}-4\rho}\sqrt{1+t}}.$$
Therefore, 
$$\left\|\frac{\alpha_2w_0^\frac{1}{2}}{(1+|q|)}Z^I g_{LL}\partial_q Z^J\phi\right\|_{L^2}
\lesssim \frac{\ep}{\sqrt{1+t}}\left\|\frac{\alpha_2w_0(q)^\frac{1}{2}}{(1+|q|)^{\frac{5}{2}-4\rho}}Z^I g_{LL}\right\|_{L^2}\lesssim
\frac{\ep}{\sqrt{1+t}}\left\|\frac{\alpha_2v(q)^\frac{1}{2}}{1+|q|}Z^I g_{LL}\right\|_{L^2},$$
where
$$\left\{\begin{array}{l}
v(q)=\frac{1}{(1+|q|)^{\frac{5}{2}-4\rho}}\; for \; q<0,\\
v(q)=\frac{w_0(q)}{(1+|q|)}=(1+|q|)^{1+2\delta}\; for \; q>0.
\\
\end{array}\right.$$
We do not keep all the decay in $q$ in the region $q>0$ in order to be in the range of application of the weighted Hardy inequality and we obtain
$$\left\|\frac{\alpha_2w_0^\frac{1}{2}}{(1+|q|)}Z^I g_{LL}\partial_q Z^J\phi\right\|_{L^2}\lesssim \frac{\ep}{\sqrt{1+t}}\|\alpha_2v(q)^\frac{1}{2}\partial_q Z^I g_{LL}\|_{L^2}.$$
We use Proposition \ref{estLL}, which gives
\begin{equation}
\label{cool}\partial_q Z^N g_{LL} \sim \bar{\partial}Z^N (\wht g_{L\ba L}+\wht g_{\q T \q T}).
\end{equation}
Consequently, thanks to Remark \ref{comp}, we have $\partial_q Z^N g_{LL} \sim \bar{\partial} Z^N \wht g_4$.
Moreover, we calculate
$$\left\{\begin{array}{l}
w_2'(q)=\frac{1+2\mu}{(1+|q|)^{2+2\mu}}\; for \; q<0,\\
w_2'(q)=(2+2\delta)(1+|q|)^{1+2\delta}\; for \; q>0.
\\
\end{array}\right.$$
Therefore, $v\lesssim w_2'$ and we obtain
\begin{equation}
\label{enephi2}\left\|\frac{\alpha_2w_0^\frac{1}{2}}{(1+|q|)}Z^I g_{LL}\partial_q Z^J\phi\right\|_{L^2}\|\alpha_2w_0(q)^\frac{1}{2}\partial Z^N \phi\|_{L^2}
\lesssim
\frac{\ep}{1+t}\|\alpha_2w_0(q)^\frac{1}{2}\partial Z^N \phi\|^2_{L^2}
+\ep\left\|\alpha_2w_2'(q)^\frac{1}{2}\bar{\partial} \wht Z^N g_4\right\|^2_{L^2}.
\end{equation}

\paragraph{Estimate of the second term} The second term contains only the crossed term, which occur only in the region $q>0$. Thanks to the discussion of Section \ref{cros}, it is sufficient to estimate \eqref{crossphi},  which gives a contribution of the form
$$Z^N\left(\partial (g_b)_{UU}\partial \phi\right).$$
For $I\leq N-2$ we have
$$|Z^I \partial (g_b)_{UU}|\lesssim \frac{\ep\ch_{q>0}}{r}$$
and consequently
\begin{equation}
\label{enephi3}\|\alpha_2w_0^\frac{1}{2}\partial Z^I  (g_b)_{UU}\partial Z^{N-I} \phi\|_{L^2}
 \lesssim \frac{\ep}{1+t}\|\alpha_2 w^\frac{1}{2}\partial Z^{N-I} \phi\|_{L^2}.
\end{equation}
In $\partial Z^I  (g_b)_{UU}\partial_\rho Z^{N-I} \phi$ with $I\geq N-2$, we have to note the presence of terms of the form
\begin{equation}
\label{problem}\frac{\chi(q)q\partial^{N+1}_\theta b(\theta)}{r^2}\partial \phi,
\end{equation}
which require a special treatment since $\partial^{N+1}_\theta b(\theta)$ does not belong to $L^2$.
To deal with these terms we write
$$\left|\Box_g \left(\frac{\chi(q)q\partial^{N-1}_\theta b}{g_{UU}} \partial \phi \right)-\frac{\chi(q)q\partial^{N+1}_\theta b(\theta)}{r^2}\partial \phi\right|
\lesssim \frac{\chi(q)}{1+s}\left(\sum_{I\leq 2}|\partial Z^I \phi|\right)\left(|\partial^N_\theta b|+|\partial^{N-1}_\theta b|\right)+s.t.$$
We can estimate,  thanks to the estimate \eqref{ks4} for $\partial \phi$,
\begin{equation}
\label{termphi}\left\|w_0^\frac{1}{2}\partial\left(\chi(q)q\partial \phi\partial^{N-1}_\theta b\right)\right\|_{L^2}
\lesssim \left\|\frac{\ep}{\sqrt{1+s}(1+|q|)^{\frac{1}{2}+2\sigma-\sigma}}\partial^{N}_\theta b\right\|_{L^2}
\lesssim \ep^2.
\end{equation}
Therefore, we may perform the energy
estimate for $\widetilde{Z^N \phi}=Z^N \phi-\frac{\chi(q)q\partial \phi\partial^{N-1}_\theta b}{g_{UU}}$ instead of $Z^N \phi$.
We are reduced to estimate
\begin{equation}\label{enephi4}\begin{split}
\left\|\alpha_2w_0^\frac{1}{2}\frac{\chi(q)}{1+s}\left(\sum_{I\leq 2}|\partial Z^I \phi|\right)\left(|\partial^N_\theta b|+|\partial^{N-1}_\theta b|\right)\right\|_{L^2}
&\lesssim \left\|\frac{\ep}{(1+s)^\frac{3}{2}(1+|q|)^{\frac{1}{2}+\sigma}}\left(|\partial^N_\theta b|+|\partial^{N-1}_\theta b|\right)\right\|_{L^2}\\
&\lesssim \frac{\ep^3}{1+t}.\end{split}
\end{equation}
The other terms in $\partial Z^I  (g_b)_{UU}\partial Z^{N-I} \phi$ with $I\geq N-2$, give contributions similar to \eqref{enephi4}.
\begin{rk}
We introduce the weight modulator $\alpha_2$ to deal with the term
\eqref{problem} which is only present for $I=N$. It is no longer needed for $I<N$. To see this, let us estimate
$\frac{\chi(q)q\partial^{N}_\theta b(\theta)}{r^2}\partial \phi$ which is the analogue of \eqref{problem} for $I=N-1$.
$$\left\|w_0^\frac{1}{2}\frac{\chi(q)q\partial^{N}_\theta b(\theta)}{r^2}\partial \phi\right\|_{L^2}
\lesssim \sum_{I\leq 2}\|w_0^\frac{1}{2}\partial Z^I \phi\|_{L^2} \left\|\frac{\chi(q) q\partial^N_\theta b}{r^2\sqrt{1+s}\sqrt{1+|q|}}\right\|_{L^2}
\lesssim \frac{1}{1+t}\sqrt{E_2} \|\partial^N_\theta b\|_{L^2(\m S^1)},$$
where we have used the weighted Klainerman-Sobolev inequality
$$|w_0^\frac{1}{2}\partial \phi|\lesssim \frac{1}{\sqrt{1+s}\sqrt{1+|q|}}\sum_{I\leq 2}\|w_0^\frac{1}{2}\partial Z^I\phi\|_{L^2},$$
and consequently
\begin{equation}
\label{noproblem}\left\|w_0^\frac{1}{2}\frac{\chi(q)q\partial^{N}_\theta b(\theta)}{r^2}\partial \phi\right\|_{L^2}\lesssim \frac{\ep^2}{1+t}\sqrt{E_2}.
\end{equation}
\end{rk}

Thanks to \eqref{enephi1}, \eqref{enephi2}, \eqref{enephi3}, \eqref{enephi4} we obtain
\begin{equation}
\label{estphi}
\frac{d}{dt}\left(\|\alpha_2 w_0(q)^\frac{1}{2}\partial \widetilde{Z^N \phi}\|^2_{L^2}\right)
+\|\alpha w'(q)^\frac{1}{2}\bar{\partial} \widetilde{Z^N \phi}\|^2_{L^2}\lesssim
\frac{\ep}{1+t}E_N
+\ep\|\alpha_2w_2'(q)^\frac{1}{2}\bar{\partial} \wht g_4\|^2_{L^2}+\frac{\ep^3}{1+t},
\end{equation}
which, with the estimate \eqref{termphi} for $\wht{Z^N \phi}-Z^N \phi$ concludes the proof of Lemma \ref{lemmephi}.
\end{proof}
\begin{proof}[Proof of Lemma \ref{lemmeh}]
We now estimate $h$. The equation for $Z^N h$ writes
\begin{equation}\label{equah}
\begin{split}
\Box_g Z^I h =& \sum_{\substack{I+J\leq N\\J\leq N-1}}\left(Z^I g^{\alpha \beta}\right)\left(Z^J\partial_\alpha \partial_\beta h\right)+\sum_{\substack{I+J\leq N\\J\leq N-1}}Z^IH^\rho Z^J\partial_\rho h\\
&+ Z^I((\partial_q \phi)^2+(R_b)_{qq} + Q_{\ba L \ba L}(h,\wht g)).
\end{split}
\end{equation}
\paragraph{Estimate of 
the first term}
Following Remark \ref{termql}, it is sufficient to estimate
$Z^I g_{LL}\partial_q^2Z^J h$. 
For $I\leq \frac{N}{2}$, similarly than \eqref{enephi1} we have
\begin{equation}
\label{eneh1}\left\|\frac{\alpha_2w_3(q)^\frac{1}{2}}{(1+|q|)}Z^I g_{LL}\partial_q Z^J h\right\|_{L^2}
\lesssim \frac{\ep}{(1+t)^{\frac{3}{2}-\rho}}\|\alpha_2 w_3(q)^\frac{1}{2}\partial_q Z^J h\|_{L^2}.
\end{equation}
For $J\leq \frac{N}{2}$, we have the estimate, thanks to \eqref{est1},
$$|\partial_q Z^J h|\lesssim \frac{\ep}{(1+|q|)^{\frac{3}{2}-\rho}},$$
so
$$ \left\|\frac{\alpha_2w_3(q)^\frac{1}{2}}{(1+|q|)}Z^I g_{LL}\partial_q Z^J h\right\|_{L^2}
\lesssim \left\|\frac{\alpha_2}{1+|q|}\left(\frac{w_3(q)}{(1+|q|)^{3-2\rho}}\right)^\frac{1}{2}Z^I g_{LL}\right\|_{L^2}.$$
We have
$$\frac{w_3(q)}{(1+|q|)^{3-2\rho}}\leq\left\{\begin{array}{l}
\frac{1}{(1+|q|)^{3-2\rho}}\quad for \; q<0,\\
(1+|q|)^{2\delta-2\rho}\leq(1+|q|)^{1+2\delta}\quad for \; q>0.
\\
\end{array}\right.$$
This yields
$$\frac{w_3(q)}{(1+|q|)^{3-2\rho}}\lesssim w_2'(q).$$
Therefore the weighted Hardy inequality and the wave coordinate condition
give, similarly than for \eqref{enephi2},
\begin{equation}
\label{eneh2} \left\|\frac{\alpha_2w_3(q)^\frac{1}{2}}{(1+|q|)}Z^I g_{LL}\partial_q Z^J h\right\|_{L^2}
\lesssim \ep\|\alpha_2w_2'(q)^\frac{1}{2}\partial_q Z^I g_{LL}\|_{L^2}\lesssim \ep \|\alpha_2w_2'^{\frac{1}{2}}(q)\bar{\partial}\wht g_4\|_{L^2}.
\end{equation}
\paragraph{Estimate of the second term}
The second term contains crossed terms, which can be studied exactly in the same way than for $\phi$. Similarly than \eqref{enephi3}, we have for $I\leq N-2$
\begin{equation}
\label{eneh7}\|\alpha_2w_3^\frac{1}{2}\partial Z^I  (g_b)_{UU}\partial Z^{N-I} h\|_{L^2}
 \lesssim \frac{\ep}{1+t}\|\alpha_2 w^\frac{1}{2}\partial Z^{N-I} h\|_{L^2}.
\end{equation}
Like for $\phi$ the following term require a special treatment.
\begin{equation}
\label{problemh}\frac{\chi(q)q\partial^{N+1}_\theta b(\theta)}{r^2}\partial h.
\end{equation}
We have
$$\left|\Box_g \left(\frac{\chi(q)q\partial \phi\partial^{N-1}_\theta b}{g_{UU}} \partial h \right)-\frac{\chi(q)q\partial^{N+1}_\theta b(\theta)}{r^2}\partial h\right|
\lesssim \frac{\chi(q)}{1+s}\left(\sum_{I\leq 2}|\partial Z^I h|\right)\left(|\partial^N_\theta b|+|\partial^{N-1}_\theta b|\right)+s.t.$$
We can estimate,  thanks to the estimate \eqref{ks6} for $\partial h$,
\begin{equation}
\label{termh}\left\|\alpha_2w_3^\frac{1}{2}\partial\left(\chi(q)q\partial h\partial^{N-1}_\theta b\right)\right\|_{L^2}
\lesssim \left\|\frac{\ep}{(1+|q|)^{1+2\sigma-\sigma}}\partial^{N}_\theta b\right\|_{L^2}
\lesssim \ep^2\sqrt{1+t}.
\end{equation}
Therefore, we may perform the energy
estimate for $\widetilde{Z^N h}=Z^N h-\frac{\chi(q)q\partial h\partial^{N-1}_\theta b}{g_{UU}}$ instead of $Z^N h$.
We are reduced to estimate
\begin{equation}\label{eneh8}\begin{split}
\left\|\alpha_2w_3^\frac{1}{2}\frac{\chi(q)}{1+s}\left(\sum_{I\leq 2}|\partial Z^I h|\right)\left(|\partial^N_\theta b|+|\partial^{N-1}_\theta b|\right)\right\|_{L^2}
&\lesssim \left\|\frac{\ep}{(1+s)(1+|q|)^{1+\sigma}}\left(|\partial^N_\theta b|+|\partial^{N-1}_\theta b|\right)\right\|_{L^2}\\
&\lesssim \frac{\ep^3}{\sqrt{1+t}}.\end{split}
\end{equation}
The other terms in $\partial Z^I  (g_b)_{UU}\partial Z^{N-I} h$ with $I\geq N-2$, give contributions similar to \eqref{eneh8}.

\paragraph{Estimate of $Z^N (\partial_q \phi)^2$} We have
$$\|\alpha_2w_3(q)^\frac{1}{2}Z^N \left((\partial_q \phi)^2\right)\|_{L^2}
\lesssim \sum_{I+J\leq N}
\|\alpha_2w_3(q)^\frac{1}{2}\partial_q Z^I \phi \partial_q Z^J \phi\|_{L^2}.$$
We can assume $I\leq \frac{N}{2}$ and estimate thanks to \eqref{ks1}
$$|\partial_q Z^I \phi|\lesssim \frac{\ep}{\sqrt{1+|q|}\sqrt{1+t}}.$$
Then, since 
$$\frac{w_3^\frac{1}{2}}{\sqrt{1+|q|}}\leq w_0^\frac{1}{2},$$
we obtain
\begin{equation}
\label{eneh3}\|\alpha_2w_3(q)^\frac{1}{2}Z^N (\partial_q \phi)^2\|_{L^2}
\lesssim \frac{\ep}{\sqrt{1+t}}\sum_{I\leq N} \|\alpha_2w_0(q)^\frac{1}{2} \partial_q Z^J \phi\|_{L^2}.
\end{equation}

\paragraph{Estimate of $Z^N (R_b)_{qq}$}
Thanks to \eqref{rqq}, the main contribution in $(R_b)_{qq}$ is
$$\frac{\partial_q ^2(q\chi(q))b(\theta)}{r},$$
which is supported in $1\leq q\leq 2$.
We estimate
\begin{equation}
\label{eneh5}\left\|\alpha_2w_3(q)^\frac{1}{2}Z^N\left(\frac{b(\theta)\partial^2_q(q\chi(q))}{r}\right)\right\|_{L^2}\lesssim \frac{1}{\sqrt{1+t}} \sum_{I\leq N}\|\partial^I_\theta b\|_{L^2(\m S^1)}\lesssim \frac{\ep^2}{\sqrt{1+t}}.
\end{equation}
\paragraph{Estimate of $Z^N Q_{\ba L \ba L}(h,\wht g)$} We recall from
\eqref{qll} that
$$Q_{\ba L \ba L}(h,\wht g)=
\partial_{\ba L} g_{LL}\partial_{\ba L}h+\partial_{\ba L} g_{L \ba L}\partial_L h+
\partial_{\ba L} (g_b)_{UU}\partial_{\ba L} g_{\ba L L}.$$
The terms $Z^N(\partial_{\ba L} g_{LL}\partial_{\ba L}h)$ and
$Z^N(\partial_{\ba L} g_{L \ba L}\partial_L h)$ may be treated in a similar way than the quasilinear term, giving contributions similar to \eqref{eneh1} and \eqref{eneh2}.
The term $Z^N(\partial_{\ba L} (g_b)_{UU}\partial_{\ba L} g_{\ba L L})$ is a crossed term, hence it is supported only in the region $q>0$.
It is sufficient to estimate $\partial_{\ba L} (g_b)_{UU}\partial_{\ba L} Z^Ng_{\ba L L}$.
We have
$$|\partial_q (g_b)_{UU}|\lesssim  \frac{\ep \mathbbm{1}_{q>0}}{r} \lesssim \frac{\ep \mathbbm{1}_{q>0}}{\sqrt{1+t}\sqrt{1+|q|}}.$$
so we can estimate
$$\|\alpha_2 w^\frac{1}{2}_3\partial_q (g_b)_{UU}\partial Z^Ng_{L\ba L}\|_{L^2}\lesssim \frac{\ep}{\sqrt{1+t}}\left\|
\frac{\alpha_2 w_3^\frac{1}{2}\mathbbm{1}_{q>0}}{\sqrt{1+|q|}}\partial Z^Ng_{L\ba L} \right\|_{L^2}
\lesssim \frac{\ep}{\sqrt{1+t}}\left\|
\alpha_2 w_2^\frac{1}{2}\partial Z^Ng_{L\ba L} \right\|_{L^2},$$
and consequently, since $\wht g_{L \ba L}=(\wht g_4)_{L\ba L}$ we have
\begin{equation}
\label{eneh6}
\|\alpha_2w^\frac{1}{2}_3\partial_q (g_b)_{UU}\partial Z^Ng_{L\ba L}\|_{L^2}\lesssim \frac{\ep}{\sqrt{1+t}}\left\|
\alpha_2 w_2^\frac{1}{2}\partial Z^N\wht g_4\right\|_{L^2}.
\end{equation}

In view of \eqref{eneh1}, \eqref{eneh2}, \eqref{eneh7}, \eqref{eneh8}, \eqref{eneh3}, \eqref{eneh5}, \eqref{eneh6},
the energy inequality yields
\begin{align*}
&\frac{d}{dt}\|\alpha_2w_3(q)^\frac{1}{2}\partial\wht{Z^N h}\|^2_{L^2}+
\|\alpha_2 w_3'(q)^\frac{1}{2}\bar{\partial} \wht{Z^N h}\|_{L^2}^ 2\\
&\lesssim \Bigg(
\frac{\ep}{1+t}\|\alpha_2w_3^\frac{1}{2}\partial Z^N h\|_{L^2}
+\ep \|\alpha_2 w_2'(q)^{\frac{1}{2}}\bar{\partial} Z^N \wht g_4\|_{L^2}\\
&+\frac{\ep}{\sqrt{1+t}} \left(\|\alpha_2w_0^\frac{1}{2} \partial_q Z^J \phi\|_{L^2}+\|\alpha_2w_2^\frac{1}{2} \partial_q Z^J \wht g_4\|_{L^2}\right) +\frac{\ep^2}{\sqrt{1+t}}\Bigg)\|\alpha_2w_3^\frac{1}{2}\partial_q Z^N h\|_{L^2}+s.t.
\end{align*}
We note that
$$\frac{d}{dt}\left(\frac{1}{\ep (1+t)}\|\alpha_2w_3^\frac{1}{2}\partial Z^N h\|^2_{L^2}\right)\leq
\frac{1}{\ep (1+t)}\frac{d}{dt}\|\alpha_2w_3^\frac{1}{2}\partial Z^Nh\|^2_{L^2}$$
and we calculate
\begin{align*}
\frac{\ep}{\ep (1+t)^2}\|\alpha_2w_3^\frac{1}{2}\partial_q Z^N h\|^2_{L^2}&\leq
\frac{\ep}{1+t}\frac{ \|\alpha_2w_3^\frac{1}{2}\partial_q Z^J h\|^2_{L^2}}{\ep (1+t)},\\
\frac{\ep}{\ep (1+t)} \|\alpha_2w_2'^{\frac{1}{2}}(q)\bar{\partial} Z^N \wht g_4\|_{L^2}
\|\alpha_2w_3^\frac{1}{2}\partial_q Z^N h\|_{L^2}&\leq \sqrt{\ep}\|\alpha_2w_2'^{\frac{1}{2}}(q)\bar{\partial}\wht Z^I g_4\|^2_{L^2}+\frac{1}{\sqrt{\ep} (1+t)^2}\|\alpha_2w_3^\frac{1}{2}\partial_q Z^N h\|^2_{L^2},\\
\frac{\ep}{\ep (1+t)^{\frac{3}{2}}} \|\alpha_2w_0^\frac{1}{2} \partial_q Z^J \phi\|_{L^2}\|\alpha_2w_3^\frac{1}{2}\partial_q Z^N h\|_{L^2}
&\leq \frac{\sqrt{\ep}}{1+t}\|\alpha_2w^\frac{1}{2} \partial_q Z^J \phi\|^2_{L^2}
+\frac{1}{\sqrt{\ep}(1+t)^2}\|\alpha_2w_3^\frac{1}{2}\partial_q Z^N h\|^2_{L^2}.
\end{align*}
This yields
\begin{equation}
\label{esthN}\begin{split}
&\frac{d}{dt}\left(\frac{1}{\ep (1+t)}\|\alpha_2w_3(q)^\frac{1}{2}\partial \wht{Z^Nh}\|^2_{L^2}\right)
+\frac{1}{\ep (1+t)}\|\alpha_2w_3'(q)^\frac{1}{2}\bar{\partial}\wht{Z^N h}\|_{L^2}^ 2\\
&\lesssim \frac{\sqrt{\ep}}{1+t}E_N +\sqrt{\ep} \|\alpha_2w_2'^{\frac{1}{2}}(q)\bar{\partial}\wht Z^I g_4\|^2_{L^2}+\frac{\ep^\frac{5}{2}}{1+t}.
\end{split}
\end{equation}
The estimate for $Z^Nk$ is totally similar. This, with the estimate \eqref{termh} concludes the proof of Lemma \ref{lemmeh}.
\end{proof}
\begin{proof}[Proof of Lemma \ref{lemmeg}]
We now go to the estimate for $Z^N \wht{g_4}$. 
We write $\Box_g Z^N \wht g_4= f_{\mu \nu}$.
The energy estimate writes
\begin{align*}
\frac{d}{dt}\left(\|\alpha_2w_2(q)^\frac{1}{2}\partial Z^N \wht g_4\|^2_{L^2}\right)
+\|\alpha_2w_2'(q)^\frac{1}{2}\bar{\partial} Z^N \wht g_4\|^2_{L^2}
\lesssim&\|\alpha_2w_2(q)^\frac{1}{2}f_{\mu \nu}\|_{L^2} \|\alpha_2w_2(q)^\frac{1}{2}\partial Z^N \wht g_4\|_{L^2}\\
&+\frac{\ep}{(1+t)}\|\alpha_2w_2(q)^\frac{1}{2}\partial Z^N \wht g_4\|^2_{L^2}
\end{align*}
We recall that the terms in $f_{\mu \nu }$ consist of
\begin{itemize}
\item the quasilinear terms, 
\item the terms coming from the non commutation of the wave operator with the null decomposition: it will be sufficient to study the term $\Up(\frac{r}{t})\frac{1}{r^2}\partial_\theta Z^N h$,
\item the semi-linear terms: it is sufficient to study 
the term $Z^N(g^{L \ba L}\partial_L g_{LL}\partial_{\ba L}h)$.
We note that thanks to our decomposition, the term 
$Z^N(\partial_U g_{LL}\partial_{\ba L}h)$ is absent,
\item The crossed terms: their analysis is  similar to the one for $\phi$.

\end{itemize} 
\paragraph{The quasilinear terms}
We consider
$$\left|\sum_{\substack{I+J\leq N\\J\leq N-1}}Z^I g_{LL}\partial^2_q Z^J\wht g_4\right|\lesssim
\frac{1}{(1+|q|)}\sum_{\substack{I+J\leq N\\J\leq N-1}}Z^I g_{LL}\partial_q Z^{J+1}\wht g_4|.$$
If $I\leq \frac{N}{2}$, we can estimate
$$|Z^I g_{LL}|\lesssim \frac{\ep (1+|q|)}{(1+t)^{\frac{3}{2}-\rho}},$$
so
\begin{equation}
\label{eneg1}\left\|\frac{\alpha_2w_2^\frac{1}{2}}{(1+|q|)}Z^I g_{LL}\partial_q Z^J\wht g_4\right\|_{L^2}
\lesssim \frac{\ep}{(1+t)^{\frac{3}{2}-\rho}}\|\alpha_2w_2^\frac{1}{2}\partial_q Z^J\wht g_4\|_{L^2}.
\end{equation}
If $J\leq \frac{N}{2}$, we can estimate, thanks to Proposition \ref{estks} and since the difference between $\wht g_4$ and $\wht g_3$ is contained in $\wht g_{\ba L U}$, which is equal to $(\wht g_3)_{\ba L U}$,
\begin{equation}
\label{ksg4}\partial_q  Z^J \wht g_4|\lesssim \frac{\ep}{\sqrt{1+|q|}\sqrt{1+t}}.
\end{equation}
Therefore, if we apply Hardy inequality we obtain
$$\left\|\frac{\alpha_2w_2^\frac{1}{2}}{(1+|q|)}Z^I g_{LL}\partial_q Z^J\wht g_4\right\|_{L^2}
\lesssim 
\frac{\ep}{\sqrt{1+t}}\left\|\frac{\alpha_2w_2^\frac{1}{2}}{(1+|q|)^{\frac{1}{2}}}\partial_q Z^I g_{LL}\right\|_{L^2}.$$
Thanks to \eqref{cool} and the fact that $\frac{w_2^\frac{1}{2}}{(1+|q|)^{\frac{1}{2}}}\lesssim w_2'(q)^\frac{1}{2}$ we obtain
\begin{equation}
\label{eneg2}\left\|\frac{\alpha_2w_2^\frac{1}{2}}{(1+|q|)}Z^I g_{LL}\partial_q Z^J\wht g_4\right\|_{L^2}
\|\alpha_2w_2(q)^\frac{1}{2}\partial Z^N \wht g_4\|_{L^2}
\lesssim
\frac{\ep}{1+t}\|w_2(q)^\frac{1}{2}\partial Z^N \wht g_4\|^2_{L^2}
+\ep\|{\alpha_2w'_2}^\frac{1}{2}\bar{\partial} \wht g_4\|^2_{L^2}.
\end{equation}

\paragraph{The term coming from the non commutation of the wave operator with the null decomposition} We note that $\frac{\partial_\theta h}{r}$ is a tangential derivative $\bar{\partial}h$. Therefore
$$\left\|\alpha_2w^\frac{1}{2}_2(q)\Up\left(\frac{r}{t}\right)\frac{1}{r^2}\partial_\theta Z^N h\right\|_{L^2} \lesssim
\frac{1}{1+t}\|\alpha_2 w_2^\frac{1}{2}\bar{\partial}Z^N h\|_{L^2}
$$
We calculate
$$\left\{\begin{array}{l}
w_3'(q)=2\mu\frac{1}{(1+|q|)^{1+2\mu}}\; for \; q<0,\\
w_3'(q)=(3+2\delta)(1+|q|)^{2+2\delta}\; for \; q>0.
\\
\end{array}\right.$$
Therefore $w_2\lesssim w_3'$ and we obtain
$$\left\|w^\frac{1}{2}_2(q)\Up\left(\frac{r}{t}\right)\frac{1}{r^2}\partial_\theta Z^N h\right\|_{L^2} \lesssim \frac{1}{1+t}\|\alpha_2w'_3(q)^\frac{1}{2}\bar{\partial}Z^N h\|_{L^2}.$$
This yields
\begin{equation}\label{eneg3}
\begin{split}
&\left\|\alpha_2w^\frac{1}{2}_2(q)\Up\left(\frac{r}{t}\right)\frac{1}{r^2}\partial_\theta Z^N h\right\|_{L^2}\|\alpha_2w_2(q)\partial Z^N \wht g_4\|_{L^2}\\
&\lesssim \frac{1}{\sqrt{\ep}(1+t)}\|\alpha_2w_3'(q)^\frac{1}{2}\bar{\partial }Z^Nh\|^2_{L^2}+\frac{\sqrt{\ep}}{1+t}\|\alpha_2w_2(q)^\frac{1}{2}\partial Z^N \wht g_4\|^2_{L^2}.
\end{split}
\end{equation}
\paragraph{The semi-linear terms}
We now estimate $Z^N(g^{L \ba L}\partial_L g_{LL}\partial_{\ba L}h)$. We first estimate
$$\|w_2(q)^\frac{1}{2}\bar{\partial}Z^{I_1} g_{LL}\partial Z^{I_2} h\|_{L^2}$$
 for $I_1+I_2\leq N$ and $I_1\leq N-1$.
If $I_1\leq \frac{N}{2}$ we estimate
$$|\bar{\partial}Z^{I_1} g_{LL}|\lesssim \frac{1}{1+s}
| Z^{I_1+1}g_{LL}|\lesssim \frac{\ep (1+|q|)}{(1+s)^{\frac{5}{2}-\rho}}\lesssim \frac{(1+|q|)^\rho}{(1+t)^{\frac{3}{2}}}.$$
Therefore
$$\|\alpha_2 w_2(q)^\frac{1}{2}\bar{\partial}Z^{I_1} g_{LL}\partial Z^{I_2} h\|_{L^2}\lesssim
\frac{\ep}{(1+t)^\frac{3}{2}}\|\alpha_2 w_2(q)^\frac{1}{2}(1+|q|)^\rho\partial Z^{I_2}h\|_{L^2}\lesssim 
\frac{\ep}{(1+t)^\frac{3}{2}}\|\alpha_2 w_3(q)^\frac{1}{2}\partial Z^{I_2}h\|_{L^2}.$$
If $I_2\leq \frac{N}{2}$ we estimate, thanks to \eqref{est1}
$$|\partial Z^{I_2}h|\leq \frac{\ep}{(1+|q|)^{\frac{3}{2}-\rho}},$$
therefore
\begin{align*}
\|\alpha_2w_2(q)^\frac{1}{2}\bar{\partial}Z^{I_1} g_{LL}\partial Z^{I_2} h\|_{L^2}&\lesssim
\ep\left\|\frac{\alpha_2w_2^\frac{1}{2}}{(1+|q|)^{\frac{3}{2}-\rho}}\bar{\partial}Z^{I_1} g_{LL}\right\|_{L^2}\\
&\lesssim \frac{\ep}{1+t} \left\|\frac{\alpha_2w_2^\frac{1}{2}}{(1+|q|)^{\frac{3}{2}-\rho}} Z^{ I_1+1}g_{LL}\right\|_{L^2}\\
&\lesssim \frac{\ep}{1+t}\left\|\frac{\alpha_2w_2^\frac{1}{2}}{(1+|q|)^{\frac{1}{2}-\rho}}
\partial Z^{ I_1+1}g_{LL}\right\|_{L^2}
\end{align*}
where in the third inequality we have used the weighted Hardy inequality. Consequently
\begin{equation}
\label{eneg4}
\|\alpha_2w_2(q)^\frac{1}{2}\bar{\partial}Z^{I_1} g_{LL}\partial Z^{I_2} h\|_{L^2}\lesssim  \frac{\ep}{1+t}\|\alpha_2w_2(q)^\frac{1}{2}\partial Z^{ I_1+1}\wht g_4\|_{L^2}.
\end{equation}

It is not possible to do the same reasoning for $I_1=N$. To treat the term $g^{L\ba L}\partial_L Z^N g_{LL}\partial_{\ba L}h$, which appears only in
$P_{\ba L L}$ we will write
$$\Box_g (h Z^N g_{LL})=D^{\alpha}D_\alpha (hZ^Ng_{LL})
=h\Box_g Z^N g_{LL} + Z^N(g_{LL})\Box_g h+g^{\alpha \beta}
\partial_\alpha h \partial_\beta Z^N g_{LL}.$$
We estimate
\begin{equation}
\label{termg}\|w_2(q)^\frac{1}{2}\partial(h Z^N g_{LL})\|_{L^2}
\lesssim \ep \|w_2(q)^\frac{1}{2}\partial Z^N g_{LL}\|_{L^2},
\end{equation}
therefore, we can 
perform the energy estimate for $\wht{Z^N \wht g_4}=Z^N  \wht g_4-hZ^N g_{LL}-\frac{\chi(q)q\partial \wht g_4\partial^{N-1}_\theta b}{g_{UU}}$ instead of $Z^N \wht g_4$, where the last term is here to deal with the troublesome crossed term which is the equivalent of \eqref{problem}.
We have now to estimate $h\Box_g Z^N g_{LL} + Z^N(g_{LL})\Box_g h+\partial Z^N g_{LL}\bar{\partial} h$.
We estimate first
\begin{equation}
\label{eneg5}\|\alpha_2 w_2(q)^\frac{1}{2}\partial Z^N g_{LL}\bar{\partial} h\|_{L^2}\lesssim \frac{\ep}{1+t}\|w_2(q)^\frac{1}{2}\partial Z^N g_{LL}\|_{L^2}.
\end{equation}
We have $\Box_g h = -2(\partial_q \phi)^2+\partial_q h \partial_q g_{LL}+...$ therefore
$$|\Box_g h|\lesssim \frac{\ep^2}{(1+t)(1+|q|)}$$
and
\begin{equation}\label{eneg6}\|\alpha_2 w_2(q)^\frac{1}{2}Z^N g_{LL}\Box_g h\|_{L^2}
\lesssim \frac{\ep}{1+t}\left\|\frac{\alpha_2w_2(q)^\frac{1}{2}}{(1+|q|)}Z^N g_{LL}\right\|\lesssim \frac{\ep}{1+t}\|w_2(q)^\frac{1}{2}\partial Z^N \wht g_4\|_{L^2}.
\end{equation}
To estimate the last term, we have to note that since $g^{L\ba L}\partial_L Z^N g_{LL}\partial_{\ba L}h$ appears only in $P_{L \ba L}$, it is absent from $\Box_g Z^N g_{LL}$. However, we have terms appearing from the non commutation of the wave operator with the null decomposition. They are of the form
$ \frac{1}{r}h\bar{\partial} Z^N g_{LL}$. We estimate
\begin{equation}
\label{eneg7}\left\|\alpha_2w_2(q)^\frac{1}{2}\frac{1}{r}h\bar{\partial} Z^N g_{LL}\right\|_{L^2}\lesssim \frac{\ep}{1+t}\|\alpha_2w_2(q)^\frac{1}{2}\partial Z^N \wht g_4\|_{L^2}
\end{equation}
The other terms in $\Box_g Z^N g_{LL}$ have already been estimated. 

\begin{rk}This reasoning would not have been possible to treat terms of the form $\partial_U g_{LL}\partial_q h$. It is why we have introduced the function $k$, which is allowed to decay less.
\end{rk}
Thanks to \eqref{eneg1}, \eqref{eneg2}, \eqref{eneg3}, \eqref{eneg4}, \eqref{eneg5}, \eqref{eneg6}, \eqref{eneg7} the energy estimate yields

\begin{equation}
\label{estg4}
\begin{split}
&\frac{d}{dt}\left(\|\alpha_2w_2(q)^\frac{1}{2}\partial \wht{Z^N \wht g_4}\|^2_{L^2}\right)
+\|\alpha_2w_2'(q)^\frac{1}{2}\bar{\partial} \wht{Z^N \wht g_4}\|^2_{L^2}\\
\lesssim& \frac{\sqrt{\ep}}{1+t}\left(\|\alpha_2w_2(q)^\frac{1}{2}\partial Z^N \wht g_4\|^2_{L^2}+\frac{1}{\ep (1+t)}\|\alpha_2w_3^\frac{1}{2}\partial h\|^2_{L^2} \right)\\
&+\sqrt{\ep}\left(\frac{1}{\ep (1+t)}\|\alpha_2w'_3(q)^\frac{1}{2}\bar{\partial}Z^Nh\|_{L^2}^2
+\|\alpha_2w'_2(q)^\frac{1}{2}\bar{\partial} Z^N\wht g_4\|_{L^2}^2\right)+s.t.
\end{split}
\end{equation}
This, together with the estimates \eqref{termg} concludes the proof of Lemma \ref{lemmeg}.
\end{proof}

\subsection{Estimates for $I\leq N-2$}\label{subsecl2}
\begin{prp}\label{l2n1} Let $I\leq N-2$. We have the estimates
\begin{align*}
\|\alpha w_0(q)^\frac{1}{2}\partial Z^I \phi\|_{L^2}&\leq C_0\ep+C\ep^\frac{3}{2},\\
\|\alpha w(q)^\frac{1}{2}\partial Z^I h\|_{L^2}&\leq C\ep^\frac{3}{2} (1+t),\\
\|\alpha w_2(q)^\frac{1}{2}\partial Z^I \wht g_3\|_{L^2}&\leq C_0\ep+C\ep^{\frac{5}{4}}.\\
\end{align*}
Moreover
$$\int_0^t\|\alpha w_2'(q)^\frac{1}{2}\bar{\partial} Z^I \wht g_3\|^2_{L^2}\lesssim \ep^2.$$
\end{prp}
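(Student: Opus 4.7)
The plan is to mirror the strategy of Proposition \ref{estn} (Section \ref{subsecl21}), but to exploit the second half of Proposition \ref{prpweighte}, which furnishes a weighted energy identity with the modulator $\alpha$. The crucial feature of that identity is that the commutator error term appears with the \emph{integrable} weight $(1+t)^{-1-2\sigma}$, whereas the corresponding term in Section \ref{subsecl21} only had $(1+t)^{-1}$ and therefore generated the factor $(1+t)^{C\sqrt\ep}$. I would define the modified energies
$$E^\alpha_I := \sum_{J\leq I}\left(\|\alpha w_0^{\frac12}\partial Z^J\phi\|_{L^2}^2+\|\alpha w_2^{\frac12}\partial Z^J\wht g_3\|_{L^2}^2+\tfrac{1}{\ep(1+t)}\|\alpha w_3^{\frac12}\partial Z^J h\|_{L^2}^2\right)$$
and $A^\alpha_I$ analogously with $w'$ and $\bar\partial$. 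Initially $E^\alpha_I(0)\leq C_0^2\ep^2$ since the data enter only through the $\phi$- and $\wht g_3$-parts.

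Next, I would revisit Lemmas \ref{lemmephi}, \ref{lemmeh} and \ref{lemmeg} inserting $\alpha$ for $\alpha_2$ throughout. The sole place in those arguments where the stronger modulator $\alpha_2=\alpha^2$ was actually \emph{needed} is the treatment of the top-order crossed contribution \eqref{problem} involving $\partial^{N+1}_\theta b$; this term appears only for $I=N$, and as already noted in \eqref{noproblem}, at order $I\leq N-1$ (a fortiori $I\leq N-2$) one loses one $\theta$-derivative and can estimate the contribution directly by $\ep^2(1+t)^{-1}\sqrt{E_2}$ with no weight modulator. All quasilinear ($g_{LL}\partial_q^2$), semilinear (null and weak-null $Q_{\ba L\ba L}$), non-commutator ($r^{-2}\partial_\theta h$), and crossed ($\partial_q(g_b)_{UU}\partial_q\wht g$) pieces are unchanged. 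The outcome is a differential inequality of the form
$$\frac{d}{dt}E^\alpha_I+A^\alpha_I\lesssim\sqrt\ep A^\alpha_I+\frac{\sqrt\ep}{1+t}E^\alpha_I+\frac{\ep}{(1+t)^{1+2\sigma}}E_I+\frac{\ep^3}{1+t}.$$
Absorbing $\sqrt\ep A^\alpha_I$ into the left-hand side and using Proposition \ref{estn} in the form $E_I(t)\lesssim\ep^2(1+t)^{C\sqrt\ep}$ together with the parameter relation $C\sqrt\ep\ll\sigma$ implied by \eqref{defpar} gives
$$\int_0^t\frac{\ep E_I(\tau)}{(1+\tau)^{1+2\sigma}}d\tau\lesssim\ep^3\int_0^\infty\frac{d\tau}{(1+\tau)^{1+2\sigma-C\sqrt\ep}}\lesssim\ep^3.$$

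A Grönwall step then yields $E^\alpha_I(t)\leq C_0^2\ep^2+C\ep^{5/2}$ and $\int_0^t A^\alpha_I\,d\tau\lesssim\ep^2$ for all $t\leq T$, which after taking square roots produces the four claimed bounds (with margin: $\ep^{3/2}$ is obtained where $\ep^{5/4}$ is stated, so the loss $\alpha_2\to\alpha$ is more than affordable). The bulk estimate for $\alpha w_2'^{\frac12}\bar\partial Z^I\wht g_3$ follows from the space-time integral of $A^\alpha_I$; the estimate for $h$ is recovered from the prefactor $\frac{1}{\ep(1+t)}$ absorbed in $E^\alpha_I$, which turns the bound $E^\alpha_I\lesssim\ep^2$ into $\|\alpha w_3^{\frac12}\partial Z^Ih\|_{L^2}\lesssim\ep^{3/2}\sqrt{1+t}$ (again stronger than needed). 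The main delicate point is to verify, once the replacement $\alpha_2\to\alpha$ is made, that the semilinear term $\partial_{\ba L}g_{LL}\partial_{\ba L}h$ in $Q_{\ba L\ba L}$ can still be handled at order $I=N-2$ without the algebraic trick $\Box(uv)=\ldots$ used at order $N$ in Lemma \ref{lemmeg}; this is fine because at order $N-2$ one can place two derivatives on the $h$-factor and apply the Klainerman--Sobolev bounds \eqref{ks3}--\eqref{ks6} instead of the weighted Hardy trick, at which point the improved $(1+t)^{-1-2\sigma}$ source decay more than compensates.
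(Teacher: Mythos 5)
There is a genuine gap in your closing step. The differential inequality you arrive at,
\begin{equation*}
\frac{d}{dt}E^\alpha_I+A^\alpha_I\lesssim\sqrt\ep\, A^\alpha_I+\frac{\sqrt\ep}{1+t}E^\alpha_I+\frac{\ep}{(1+t)^{1+2\sigma}}E_I+\frac{\ep^3}{1+t},
\end{equation*}
does not yield the uniform-in-time bound $E^\alpha_I\leq C_0^2\ep^2+C\ep^{5/2}$ that you claim. Gr\"onwall applied to the term $\frac{\sqrt\ep}{1+t}E^\alpha_I$ produces exactly the factor $(1+t)^{C\sqrt\ep}$ that the proposition is designed to eliminate, and the source $\frac{\ep^3}{1+t}$ integrates to $\ep^3\ln(1+t)$, another loss. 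Simply replacing $\alpha_2$ by $\alpha$ in Lemmas \ref{lemmephi}--\ref{lemmeg} is not enough: those lemmas were built for the top order and deliberately tolerate the $\frac{\sqrt\ep}{1+t}E_N$ term because growth is accepted there.

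What the paper's proof does differently, and what you are missing, is a derivative-loss mechanism. In Propositions \ref{prpphi} and \ref{prpwhtg} every borderline contribution is pushed, via the weight modulator $\alpha$ (splitting $\frac{1}{1+s}\lesssim(1+t)^{-\frac12-\sigma}(1+|q|)^{-\frac12}$ on $q>0$), two applications of the weighted Hardy inequality, and the wave-coordinate condition \eqref{cool}, onto an \emph{unmodulated} transverse bulk norm at \emph{one higher order}, e.g.\ $\frac{\ep}{(1+t)^{\sigma}}\|w_2'(q)^{1/2}\bar\partial Z^{I+1}\wht g_3\|_{L^2}^2$. Since $I\leq N-2$ gives $I+1\leq N-1$, this is controlled not by Gr\"onwall on $E^\alpha_I$ but by the already-established space-time bulk estimate $\int_0^t(1+\tau)^{-\sigma}A_{I+1}\,d\tau\lesssim\ep^2$ of Proposition \ref{estn}; the remaining terms are all $O\bigl(\ep^{5/2}(1+\tau)^{-1-\sigma}\bigr)$ or $O\bigl(\ep^3(1+\tau)^{-1-\sigma+C\sqrt\ep}\bigr)$, hence time-integrable because $C\sqrt\ep\ll\sigma$. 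After this the proof is a plain integration in $t$ with no exponential factor, and $h$ is handled separately by $\frac{d}{dt}\|\alpha w_3^{1/2}\partial Z^Jh\|_{L^2}^2\lesssim\ep^3$. Your outline, by contrast, keeps a same-order $E^\alpha_I$ on the right-hand side with a non-integrable prefactor, so the argument does not close.
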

We prove the proposition by using the energy estimate for $\phi$, $h$ and $\wht g_3$.

\begin{prp}\label{prpphi}
Let $I\leq N-2$. We have
$$
\frac{d}{dt}\sum_{J\leq I}\|\alpha w_0(q)^\frac{1}{2}\partial Z^J \phi\|^2_{L^2}
+\sum_{J\leq I}\|\alpha w_0'(q)^\frac{1}{2}\bar{\partial} Z^J \phi\|^2_{L^2}
\lesssim\ep^3\frac{1}{(1+t)^{1+\sigma-C\sqrt{\ep}}}
+\frac{\ep}{(1+t)^{\sigma}}\sum_{J\leq I}\left\| w'_0(q)^\frac{1}{2}\bar{\partial}Z^{J}\phi\right\|^2_{L^2}.
$$
\end{prp}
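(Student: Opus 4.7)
The plan is to apply the weighted energy estimate with modulator from Proposition \ref{prpweighte} (the second statement, with weight $\alpha^2 w_0$) to the commuted equation
$$\Box_g Z^J \phi = \sum_{I_1+I_2\leq J,\, I_2\leq J-1} Z^{I_1} g^{\alpha\beta} \partial_\alpha \partial_\beta Z^{I_2}\phi + \sum_{I_1+I_2\leq J,\, I_2\leq J-1} Z^{I_1} H_b^\rho \partial_\rho Z^{I_2}\phi,$$
for each $J \leq I \leq N-2$. The essential gain over Lemma \ref{lemmephi} is a margin of at least two derivatives: every source term involves $Z^{I_1}$-derivatives of the coefficients with $I_1\leq N-1$, so $\partial_\theta^{I_1+1} b$ never reaches the maximal regularity $\partial_\theta^{N+1}b$ that forced the substitution $\widetilde{Z^N\phi}$ at top order. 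Consequently no modification of the energy functional is needed here, and the estimate will be read off from the three classes of source terms after summing in $J$.

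For the quasilinear part, Remark \ref{termql} reduces the work to $Z^{I_1}g_{LL}\,\partial_q^2 Z^{I_2}\phi$ with $I_1+I_2\leq J+1$, $I_2\leq J$. I split as in Lemma \ref{lemmephi}. When $I_1\leq N/2$, the bound \eqref{wc1} gives $|Z^{I_1}g_{LL}|\lesssim \ep(1+|q|)(1+t)^{-3/2+\rho}$, which combined with $\|\alpha w_0^{1/2}\partial_q Z^{I_2+1}\phi\|_{L^2}$ is absorbed by $\frac{\ep}{1+t}\sum_{J\leq I}\|\alpha w_0^{1/2}\partial Z^J\phi\|_{L^2}^2$ and ultimately by the Grönwall/absorption term built into Proposition \ref{prpweighte}. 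When $I_2\leq N/2$, the pointwise bound \eqref{ks1} for $\partial_q Z^{I_2+1}\phi$ plus the weighted Hardy inequality convert the contribution to $\ep\|\alpha v^{1/2}\partial_q Z^{I_1}g_{LL}\|_{L^2}$, after which the wave-coordinate identity $\partial_q Z^{I_1}g_{LL}\sim \bar\partial Z^{I_1}\wht g$ of Proposition \ref{estLL} and the inequality $v\lesssim w_2'$ produce a contribution of the form $\sqrt{\ep}\|\alpha w_2'^{1/2}\bar\partial Z^{I_1}\wht g_3\|_{L^2}^2$. This is integrable in $t$ thanks to the $A_N$-bound of Proposition \ref{estn} and yields, after time integration, a harmless $\ep^3$ tail.

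The decisive step is the crossed term $\partial Z^{I_1}(g_b)_{UU}\,\partial Z^{I_2}\phi$ together with the $F^\rho$-piece of $H_b^\rho$, both supported in $q>0$. Since $I_1\leq N-2$, the bootstrap \eqref{bootb2} gives $\|\partial_\theta^{I_1+1}b\|_{L^2(\m S^1)}\lesssim \ep^2$, and Sobolev embedding on $\m S^1$ yields $L^\infty$ control; the crossed term is therefore pointwise bounded by $\ep\,\ch_{q>0}(1+s)^{-1}|\partial Z^{I_2+1}\phi|$. Following the mechanism of Section \ref{secinter}, I exploit the modulator identity
$$\frac{\alpha(q)\ch_{q>0}}{1+s}\lesssim \frac{1}{(1+t)^{1/2+\sigma}(1+|q|)^{1/2}},$$
and then apply Cauchy–Schwarz peeling off the factor $w_0^{1/2}/(1+|q|)^{1/2}\lesssim w_0'^{1/2}$ to obtain a bound of the form
$$\frac{\ep}{(1+t)^\sigma}\|w_0'^{1/2}\bar\partial Z^{I_2+1}\phi\|_{L^2}^2+\frac{\ep}{(1+t)^{1+\sigma}}\|\alpha w_0^{1/2}\partial Z^J\phi\|_{L^2}^2.$$
The first summand is exactly the ghost-weight term displayed in the statement (note $I_2+1\leq J\leq I$, so it falls into the sum on the right), while the second is absorbed by the Grönwall term of Proposition \ref{prpweighte}. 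The main subtlety, and the source of the exponent $C\sqrt{\ep}$ in $\ep^3(1+t)^{-1-\sigma+C\sqrt{\ep}}$, lies in the pieces of the quasilinear and semilinear reductions where $\bar\partial Z^{I_1}\wht g$ must be bounded by the top-order energy $E_N\lesssim \ep^2(1+t)^{C\sqrt{\ep}}$ of Proposition \ref{estn}; tracking this growth through the Hardy/wave-coordinate chain yields exactly the integrable tail claimed on the right. Summing the resulting differential inequality over $J\leq I$ closes the estimate.
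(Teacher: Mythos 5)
Your overall architecture matches the paper's: apply the modulated weighted energy estimate of Proposition \ref{prpweighte} to the commuted equation, observe that for $I\leq N-2$ the substitution $\widetilde{Z^N\phi}$ is unnecessary, and treat the crossed term with the modulator identity $\frac{\alpha(q)\ch_{q>0}}{1+s}\lesssim (1+t)^{-\frac{1}{2}-\sigma}(1+|q|)^{-\frac{1}{2}}$ together with $w_0^{1/2}(1+|q|)^{-1/2}\lesssim w_0'^{1/2}$ for $q>0$; that part coincides with \eqref{benephi3} and is fine.

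However, there is a genuine gap in your treatment of the quasilinear term in the case $I_2\leq N/2$. You stop the chain at $\ep\,\|\alpha v^{1/2}\partial_q Z^{I_1}g_{LL}\|_{L^2}\lesssim$ a tangential-derivative norm and claim a resulting contribution $\sqrt{\ep}\,\|\alpha w_2'^{1/2}\bar{\partial}Z^{I_1}\wht g_3\|_{L^2}^2$ which is ``integrable thanks to the $A_N$-bound.'' This fails on two counts. First, such a term is not on the right-hand side of the stated inequality, so you would not be proving Proposition \ref{prpphi} as written. Second, Proposition \ref{estn} only controls $\int_0^t(1+\tau)^{-\kappa}A_I\,d\tau$ for $\kappa\gg\sqrt{\ep}$; without a decaying prefactor $(1+t)^{-\sigma}$ (which your Young splitting does not produce, and cannot produce with an acceptable complementary term), $\int_0^tA_I$ is not bounded uniformly, so the argument does not close in the subsequent integration of Proposition \ref{l2n1}. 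The paper's resolution is precisely the extra step you omit: after the first Hardy inequality and the wave-coordinate identity $\partial_qZ^{I_1}g_{LL}\sim\bar{\partial}Z^{I_1}\wht g_4$, one writes $|\bar{\partial}Z^{I_1}\wht g_4|\lesssim\frac{1}{1+s}|Z^{I_1+1}\wht g_4|\lesssim(1+t)^{-\frac{1}{2}-\sigma}(1+|q|)^{-\frac{1}{2}+\sigma}|Z^{I_1+1}\wht g_4|$ (estimate \eqref{estutil}) and applies the weighted Hardy inequality a \emph{second} time, checking $v(q)(1+|q|)^{1+2\sigma}\leq w_2(q)$ (which uses $\delta>\frac{1}{2}+\sigma$). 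This converts the whole contribution into $\frac{\ep}{(1+t)^{1+\sigma}}\|w_2^{1/2}\partial Z^{I_1+1}\wht g_4\|_{L^2}\lesssim\ep^2(1+t)^{-1-\sigma+C\sqrt{\ep}}$, a pointwise-in-time bound that, multiplied by $\|\alpha w_0^{1/2}\partial Z^J\phi\|_{L^2}\lesssim\ep$, gives exactly the first term of the statement. Since $I_1+1\leq N-1$, this stays within the range of Proposition \ref{estn}.
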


\begin{prp}\label{prphn}
Let $I\leq N-2$.
We have the estimate
$$
\frac{d}{dt}\sum_{J\leq I}\|\alpha w_3(q)^\frac{1}{2}\partial Z^J h\|^2_{L^2}
+\sum_{J\leq I}\|\alpha w_3'(q)\bar{\partial} Z^J h\|^2_{L^2}
\lesssim \ep^3.$$
\end{prp}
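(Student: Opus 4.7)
\noindent\textbf{Proof plan for Proposition \ref{prphn}.} The plan is to apply the modulated weighted energy estimate of Proposition \ref{prpweighte} (with weight modulator $\alpha$ and weight $w_3$) to the commuted equation for $Z^J h$ for each $J\leq I\leq N-2$. Commuting \eqref{eqa} with $Z^J$ yields schematically
$$\Box_g Z^J h \;=\; \sum_{\substack{J_1+J_2\leq J\\J_2\leq J-1}}Z^{J_1}g^{\alpha\beta}\,Z^{J_2}\partial_\alpha\partial_\beta h \;+\; \sum Z^{J_1}H_b^\rho\,Z^{J_2}\partial_\rho h \;+\; Z^J\!\left(-2(\partial_q\phi)^2+2(R_b)_{qq}+Q_{\ba L\ba L}(h,\wht g)\right).$$
Proposition \ref{prpweighte} then gives, after summing over $J\leq I$,
$$\frac{d}{dt}\sum_{J\leq I}\|\alpha w_3^{1/2}\partial Z^J h\|_{L^2}^2 \;+\; C\sum_{J\leq I}\|\alpha w_3'{}^{1/2}\bar\partial Z^J h\|_{L^2}^2 \;\lesssim\; \frac{\ep}{(1+t)^{1+2\sigma}}\sum_{J\leq I}\|w_3^{1/2}\partial Z^Jh\|_{L^2}^2 + \sum_{J\leq I}\|\alpha w_3^{1/2}\Box_g Z^J h\|_{L^2}\|\alpha w_3^{1/2}\partial Z^J h\|_{L^2}.$$
The first term on the right is controlled using the bound at order $N-1$ from Proposition \ref{prpboot2}, namely $\|w_3^{1/2}\partial Z^Jh\|_{L^2}\lesssim \ep^2(1+t)^{\tfrac{1}{2}+C\sqrt{\ep}}$, yielding a contribution $\lesssim \ep^5/(1+t)^{2\sigma-C\sqrt{\ep}}\ll \ep^3$.

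For the source terms, I would estimate each piece as follows. The quasilinear commutators $Z^{J_1}g_{LL}\partial_q^2 Z^{J_2}h$ are treated exactly as in Lemma \ref{lemmeh}: split at $\min(J_1,J_2)\leq (N-2)/2$, use the pointwise bootstraps (Proposition \ref{estks} and \eqref{est1}) on the small-index factor, invoke the wave-coordinate relation (Proposition \ref{estLL}) together with the weighted Hardy inequality on the high-index factor, and absorb the resulting $\bar\partial\wht g_4$ term into the ghost weight at higher order through the $\sqrt{\ep}$ smallness. The semilinear term $(\partial_q\phi)^2$ is handled by Klainerman--Sobolev on one factor and the inequality $w_3^{1/2}/\sqrt{1+|q|}\leq w_0^{1/2}$, giving a bound $\lesssim\ep/\sqrt{1+t}\cdot \|w_0^{1/2}\partial Z^J\phi\|_{L^2}$ which, paired with $\|\alpha w_3^{1/2}\partial Z^J h\|_{L^2}$ through the bootstrap, contributes at worst $\ep^3/(1+t)^{1-C\sqrt{\ep}}$. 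The geometric term $(R_b)_{qq}$ is localized to $1\leq q\leq 2$ by \eqref{rqq}, so $\|\alpha w_3^{1/2}Z^J(R_b)_{qq}\|_{L^2}\lesssim \|\partial_\theta^J b\|_{L^2(\m S^1)}/\sqrt{1+t}\lesssim \ep^2/\sqrt{1+t}$ using the bootstrap assumption \eqref{bootb2}. The three contributions of $Q_{\ba L\ba L}$ are handled as in Lemma \ref{lemmeh}: the first two are treated as quasilinear commutators, while the crossed term $\partial_{\ba L}(g_b)_{UU}\partial_{\ba L}g_{L\ba L}$, supported in $q>0$ only, is bounded using the decay $|\partial_q(g_b)_{UU}|\lesssim \ep/r$ together with the inequality $w_3^{1/2}/\sqrt{1+|q|}\lesssim w_2^{1/2}$, and absorbed into the $\wht g_3$ energy via the modulator $\alpha$ (Section \ref{secinter}).

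The remaining crossed terms in $Z^{J_1}g_b\,Z^{J_2}\partial^2 h$ are the $\partial_U^2$-type contributions of the form $\ch_{q>0}b(1+|q|)\partial_U^2 h/r$, together with the lower-order analogues of \eqref{problem}; since $J\leq N-2$, the troublesome factor $\partial_\theta^{J+1}b$ now has at most $N-1$ derivatives and sits in $L^2(\m S^1)$, so it can be paired via Klainerman--Sobolev with $|\partial Z^J h|\lesssim \ep/\sqrt{1+t}$ (see the remark following \eqref{noproblem}) without needing the $\wht{Z^Nh}$ subtraction used at top order. The main obstacle is precisely this last crossed contribution: without the weight modulator $\alpha$ one would pick up a logarithmic loss in $t$ from the $\ch_{q>0}/r$ factor, and the whole point of working with $\alpha w_3$ rather than $w_3$ is to convert this logarithmic loss into the harmless $q$-loss $(1+|q|)^{-\sigma}$ in the region $q>0$, as explained in Section \ref{secinter}. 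Collecting all contributions and using $\sqrt{\ep}$ to absorb ghost-weight terms back into the left-hand side, the right-hand side is bounded by $\ep^3$, which yields the claim.
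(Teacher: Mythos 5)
Your overall strategy is the paper's: commute \eqref{eqa} with $Z^J$, apply the weighted energy estimate of Proposition \ref{prpweighte} with the modulator $\alpha$, and estimate the quasilinear, semilinear, geometric and crossed source terms exactly along the lines of Lemma \ref{lemmeh}, with the simplification (which you correctly identify) that for $I\leq N-2$ the troublesome term \eqref{problem} involves at most $\partial_\theta^{N-1}b\in L^2(\m S^1)$ and can be handled as in \eqref{noproblem} without the $\widetilde{Z^Nh}$ subtraction. The treatment of $(\partial_q\phi)^2$, $(R_b)_{qq}$, and the crossed terms matches \eqref{eneh3}, \eqref{eneh5}, \eqref{eneh6} and \eqref{beneh4}.

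There is, however, one step where your proposed mechanism does not prove the inequality as stated. For the high-index quasilinear commutator $Z^{I_1}g_{LL}\,\partial_q^2Z^{I_2}h$ with $I_2\leq N/2$, you propose to apply Hardy and the wave coordinate condition and then ``absorb the resulting $\bar\partial\wht g_4$ term into the ghost weight at higher order through the $\sqrt\ep$ smallness.'' That absorption is the top-order ($I=N$) mechanism of Lemmas \ref{lemmeh}--\ref{lemmeg}, where all the energies and ghost-weight quantities $E_N,A_N$ are estimated as a coupled system; here the left-hand side of the claimed differential inequality contains only $\|\alpha w_3'{}^{1/2}\bar\partial Z^Jh\|^2$ with $J\leq I$, so a term $\ep\|w_2'{}^{1/2}\bar\partial Z^{I_1+1}\wht g_4\|_{L^2}^2$ has nothing to be absorbed into, and it is not pointwise bounded by $\ep^3$ (only its time integral against $(1+t)^{-\kappa}$ is controlled, by Proposition \ref{estn}). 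The paper's proof instead continues past the wave coordinate relation: writing $|\bar\partial Z^{I_1+1}\wht g_4|\lesssim (1+s)^{-1}|Z^{I_1+2}\wht g_4|\lesssim (1+t)^{-\frac12-\sigma}(1+|q|)^{-\frac12+\sigma}|Z^{I_1+2}\wht g_4|$ and applying the weighted Hardy inequality a second time yields a bound $\frac{\ep}{(1+t)^{\frac12+\sigma}}\|w_2^{\frac12}\partial Z^{I_1+1}\wht g_4\|_{L^2}\lesssim \frac{\ep^2(1+t)^{C\sqrt\ep}}{(1+t)^{\frac12+\sigma}}$ via the already-established Proposition \ref{estn} (this is estimate \eqref{beneh2}, following \eqref{benephi2}); paired with $\|\alpha w_3^{\frac12}\partial Z^Ih\|_{L^2}\lesssim\ep\sqrt{1+t}$ from \eqref{bootl23}, this contributes $\ep^3(1+t)^{C\sqrt\ep-\sigma}\lesssim\ep^3$ as required. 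So the fix is not to absorb but to trade the tangential derivative for extra $t$-decay and invoke the higher-order energy bound; with that replacement your argument closes.
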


\begin{prp}\label{prpwhtg}
Let $I\leq N-2$. We have
$$
\frac{d}{dt}\sum_{J\leq I}\|\alpha w_2(q)^\frac{1}{2}\partial Z^J \wht g_3\|^2_{L^2}
+\sum_{J\leq I}\|\alpha w_2'(q)^\frac{1}{2}\bar{\partial} Z^J \wht g_3\|^2_{L^2}
\lesssim\frac{\ep^\frac{5}{2}}{(1+t)^{1+\sigma}}+
\frac{\ep}{(1+t)^\sigma}\|w_2'(q)^\frac{1}{2}\bar{\partial} Z^{I+1} \wht g_3\|^2_{L^2}.
$$
\end{prp}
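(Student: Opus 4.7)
The plan is to apply the second, $\alpha$-modulated, weighted energy inequality of Proposition~\ref{prpweighte} with weight $w_2$ to each commuted equation $\Box_g Z^J \wht g_3 = f^{(J)}_{\mu\nu}$ for $J\le I\le N-2$, and to bound $\|\alpha w_2^{1/2}f^{(J)}_{\mu\nu}\|_{L^2}$ term by term. As in the proof of Lemma~\ref{lemmeg}, $f^{(J)}_{\mu\nu}$ decomposes into four kinds of contributions: the quasilinear commutator $Z^{I_1}g_{LL}\,\partial_q^2 Z^{I_2}\wht g_3$; the non-commutation term $\Up(r/t)\,\partial_\theta Z^J h/r^2$; the semi-linear null-structure piece dominated by $\partial_L g_{LL}\,\partial_{\ba L}h$; and the crossed terms in $\wht P_{\mu\nu}$, supported in $q>0$ and dominated by $\partial_{\ba L}(g_b)_{UU}\,\partial \wht g$. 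Two features distinguish the situation from Lemma~\ref{lemmeg}: the gap $I\le N-2$ leaves spare vector fields so that the algebraic subtraction of $hZ^Ng_{LL}$ used in the $I=N$ case is unnecessary, and the modulator $\alpha$ (weaker than $\alpha_2$) is chosen precisely so that the crossed terms produce, via the mechanism of Section~\ref{secinter}, exactly the right-hand-side terms of the claimed inequality.

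For the quasilinear piece I split according to whether $I_1$ or $I_2$ lies below $I/2$: when $I_1\le I/2$, the bound \eqref{wc1} on $Z^{I_1}g_{LL}$ produces $\ep/(1+t)^{3/2-\rho}\|\alpha w_2^{1/2}\partial Z^{I_2+1}\wht g_3\|_{L^2}$, absorbable into an $\ep^{5/2}/(1+t)^{1+\sigma}$ forcing after pairing with the bootstrap-controlled energy; when $I_2\le I/2$, the Klainerman--Sobolev estimate~\eqref{ksg4} on $\partial Z^{I_2}\wht g_3$, together with Proposition~\ref{estLL} converting $\partial_q Z^{I_1+1}g_{LL}$ into $\bar{\partial}Z^{I_1+1}\wht g_3$ and the weighted Hardy inequality, yields $\sqrt{\ep}\|\alpha w_2'^{1/2}\bar{\partial}Z^{I+1}\wht g_3\|_{L^2}$, which is absorbable on the left once the dissipation term of the RHS is available. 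The non-commutation piece, bounded by $\|\alpha w_2^{1/2}\bar{\partial}Z^J h\|_{L^2}/(1+t)$, is handled via $w_2\lesssim w_3'$ and the bootstrap~\eqref{bootl22}. The crossed contribution is estimated as in Section~\ref{secinter}: the factor $\alpha/\sqrt{1+s}\lesssim (1+t)^{-1/2-\sigma}(1+|q|)^{-1/2}$ and Young's inequality split the corresponding product into $\ep/(1+t)^{1+\sigma}$ times the energy plus $\ep/(1+t)^\sigma\|w_2'^{1/2}\bar{\partial}Z^{I+1}\wht g_3\|_{L^2}^2$, matching the two terms on the right. The $\partial_\theta^{I+1}b$-type contribution analogous to \eqref{problem} is handled through \eqref{noproblem} together with the bootstrap \eqref{bootb2}, without needing to subtract anything from $Z^J\wht g_3$.

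The principal obstacle is the semi-linear term $Z^J(\partial_L g_{LL}\,\partial_{\ba L}h)$, because $h$ has no decay in $s$ near the cone. Splitting $I_1+I_2\le J$: if $I_2\le I/2$, the pointwise bound~\eqref{ks3} on $\partial Z^{I_2}h$ together with the observation that $\alpha w_2^{1/2}/\sqrt{1+|q|}\lesssim \alpha w_2'^{1/2}$ and Proposition~\ref{estLL} converts this into $\ep\|\alpha w_2'^{1/2}\bar{\partial}Z^{I_1+1}\wht g_3\|_{L^2}$, which is absorbable into the dissipation term on the RHS; if instead $I_1\le I/2$, the pointwise decay~\eqref{wc1} on $\bar{\partial}Z^{I_1}g_{LL}$ combined with the $L^2$ bound on $\partial Z^{I_2}h$ from Proposition~\ref{prphn} (used inductively on $I$) gives an acceptable $\ep^{5/2}/(1+t)^{1+\sigma}$ contribution once the $\alpha$-gain $\sqrt{1+t}^{-1-2\sigma}$ is exploited. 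Summing over $J\le I$ and reorganising yields the proposition.
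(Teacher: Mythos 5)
Your overall architecture matches the paper's: apply the $\alpha$-modulated energy estimate of Proposition~\ref{prpweighte} with weight $w_2$, split the source into quasilinear, non-commutation, semi-linear and crossed contributions, treat the crossed terms by the mechanism of Section~\ref{secinter} (which is where both RHS terms of the statement come from), and note that no subtraction of $hZ^Ng_{LL}$ or of the $\partial_\theta^{N-1}b$-corrector is needed below top order. Two points, however, deserve attention.

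First, a genuine gap: your treatment of the non-commutation term $\Up\left(\frac{r}{t}\right)\frac{1}{r^2}\partial_\theta Z^J h$ "via $w_2\lesssim w_3'$ and the bootstrap \eqref{bootl22}" does not close. The inequality $w_2\lesssim w_3'$ converts the term into $\frac{1}{1+t}\|\alpha w_3'^{1/2}\bar{\partial}Z^Jh\|_{L^2}$, which is the route of Lemma~\ref{lemmeg}; but there it is absorbed by the $h$-dissipation $\frac{1}{\ep(1+t)}\|\alpha_2 w_3'^{1/2}\bar{\partial}Z^Nh\|^2$ sitting on the left-hand side of the coupled system. In Proposition~\ref{prpwhtg} no $h$-dissipation is available on the left, and \eqref{bootl22} controls only the $h$-\emph{energy} $\|w_3^{1/2}\partial Z^Jh\|$ pointwise in $t$, not the dissipation norm $\|w_3'^{1/2}\bar{\partial}Z^Jh\|$; the latter is controlled only after time integration with a $(1+t)^{-\kappa}$ weight (Proposition~\ref{estn}), which is not enough to make $\int\frac{1}{1+t}\|w_3'^{1/2}\bar{\partial}Z^Jh\|\,dt$ finite. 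The paper instead writes $\frac{1}{r^2}\lesssim (1+t)^{-\frac32-\sigma}(1+|q|)^{-\frac12+\sigma}$ on the support of $\Up$, applies the weighted Hardy inequality to pass from $Z^{J+1}h$ to $\partial Z^{J+1}h$, checks $\alpha^2w_2(1+|q|)^{1+2\sigma}\le w_3$, and then uses the energy bound $\|w_3^{1/2}\partial Z^{J+1}h\|\lesssim\ep^{3/2}(1+t)^{1/2+C\sqrt\ep}$, so that the net contribution is a pure forcing term of size $\ep^{5/2}(1+t)^{C\sqrt\ep-1-\sigma}$. You should replace your step by this one.

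Second, a mislabeling rather than an error: for the decomposition \eqref{dec3} the dominant semi-linear term is $Z^I(\partial_U g_{LL}\partial_{\ba L}h)$, not $\partial_L g_{LL}\partial_{\ba L}h$; the $\partial_L$ variant is the one removed by the product trick at top order, while the $\partial_U$ variant is precisely why $k$ and $\wht g_4$ were introduced for $I=N$. Since $\partial_U$ and $\partial_L$ are both tangential, your estimates apply verbatim, and your splitting ($I_2$ small: pointwise decay of $\partial Z^{I_2}h$ plus Hardy and the wave coordinate condition; $I_1$ small: pointwise decay of $\bar\partial Z^{I_1}g_{LL}$ against the $L^2$ bound on $\partial Z^{I_2}h$) reproduces \eqref{beneg5}--\eqref{beneg6}. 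Do make sure, there and in the quasilinear case with $I_2\le I/2$, that you actually extract the factor $(1+t)^{-1/2-\sigma}$ (via $\frac{1}{1+s}\lesssim(1+t)^{-\frac12-\sigma}(1+|q|)^{-\frac12+\sigma}$) before applying Young's inequality, since the second right-hand-side term of the proposition carries the prefactor $\ep(1+t)^{-\sigma}$ and this decay is indispensable when the proposition is integrated in Proposition~\ref{l2n1}.
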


We admit for the moment Propositions \ref{prpphi}, \ref{prphn} and \ref{prpwhtg} and prove Proposition \ref{l2n1}.
\begin{proof}[Proof of Proposition \ref{l2n1}]
We estimate $\phi$. Since $\sigma>C\sqrt{\ep}$ for $\ep>0$ small enough, 
by integrating the inequality of Proposition \ref{prpphi} with respect to $t$ we obtain
\begin{align*}
 &\sum_{J\leq I}\|\alpha w_0(q)^\frac{1}{2}\partial Z^J \phi\|^2_{L^2}+\int_0^t\sum_{J\leq I}\|\alpha w_0'(q)^\frac{1}{2}\bar{\partial} Z^J \phi\|^2_{L^2}\\
&\leq \sum_{J\leq I}\|\alpha w_0(q)^\frac{1}{2}\partial Z^I \phi(0) \|^2_{L^2}+C\ep^3 +C\int_0^t \frac{\ep}{(1+\tau)^{\sigma}}\sum_{J\leq I}\left\| w'_0(q)^\frac{1}{2}\bar{\partial}Z^{J}\phi\right\|^2_{L^2}.
\end{align*}
Thanks to Proposition \ref{estn}, we have
$$\int_0^t \frac{\ep}{(1+\tau)^{\sigma}}\sum_{J\leq I}\left\| w'_0(q)^\frac{1}{2}\bar{\partial}Z^{J}\phi\right\|^2_{L^2}\lesssim \ep^2,$$
and therefore
$$\sum_{J\leq I}\|\alpha w_0(q)^\frac{1}{2}\partial Z^J \phi\|^2_{L^2}
\lesssim C_0^2\ep^2+C\ep^3.$$
We now estimate $h$.
We integrate the inequality of Proposition \ref{prphn} with respect to $t$. We obtain, since we take zero initial data for $h$, and therefore, initial data for $Z^I h$ of size $\ep^2$
$$\sum_{J\leq I}\|\alpha w_3(q)^\frac{1}{2}\partial Z^J h\|^2_{L^2}
+\int_0^t\sum_{J\leq I}\|\alpha w_3'(q)\bar{\partial} Z^J h\|^2_{L^2}
\lesssim \ep^3(1+t).$$
We now integrate the inequality of Proposition \ref{prpwhtg} to estimate $\wht g_3$. We obtain
\begin{align*}
&\sum_{J\leq I}\|\alpha w_2(q)^\frac{1}{2}\partial Z^J \wht g_3\|^2_{L^2}
+\int_0^t \sum_{J\leq I}\|\alpha w_2'(q)^\frac{1}{2}\bar{\partial} Z^J \wht g_3\|^2_{L^2}\\
\leq&\sum_{J\leq I}\|\alpha w_2(q)^\frac{1}{2}\partial Z^J \wht g_3(0)\|^2_{L^2}+ C\ep^\frac{5}{2}+
\int_0^t\frac{\ep}{(1+\tau)^\sigma}\|\alpha w_2'(q)^\frac{1}{2}\bar{\partial} Z^{I+1} \wht g_4\|^2_{L^2}.
\end{align*}
Proposition \ref{estn} yields
$$\int_0^t\frac{1}{(1+t)^\sigma}\| w_2'(q)^\frac{1}{2}\bar{\partial} Z^{I+1} \wht g_4\|^2_{L^2}\lesssim \ep^2,$$
Therefore
$$\sum_{I\leq N-1}\|\alpha w_2(q)^\frac{1}{2}\partial Z^I \wht g_3\|^2_{L^2}
+\int_0^t \sum_{I\leq N-1}\|\alpha w_2'(q)^\frac{1}{2}\bar{\partial} Z^I \wht g_3\|^2_{L^2}\leq C_0^2\ep^2+C\ep^\frac{5}{2}.$$
This concludes the proof of Proposition \ref{l2n1}.
\end{proof}

\begin{proof}[Proof of Proposition \ref{prpphi}]
We follow the proof of Lemma \ref{lemmephi}.
Let $I\leq N-1$.
We use the weighted energy estimate for the equation
\eqref{equaphi}.
It yields
\begin{equation}\label{benephi}\begin{split}\frac{d}{dt}\left(\|\alpha w(q)^\frac{1}{2}\partial Z^I \phi\|^2_{L^2}\right)
+\|\alpha w'(q)^\frac{1}{2}\bar{\partial} Z^I \phi\|^2_{L^2}
\lesssim&\left\|\Box_g Z^I \phi \right\|_{L^2} \|\alpha w(q)^\frac{1}{2}\partial Z^I \phi\|_{L^2}\\
&+\frac{\ep}{(1+t)^{1+\sigma}}\|w^\frac{1}{2}\partial Z^I \phi\|^2_{L^2}.
\end{split}
\end{equation}
We first estimate
$$\left|\sum_{\substack{I_1+I_2\leq I\\I_2\leq I-1}}Z^{I_1} g_{LL}\partial^2_q Z^{I_2}\phi\right|\lesssim
\frac{1}{(1+|q|)}\sum_{I_1+I_2\leq I}|Z^{I_1} g_{LL}\partial_q Z^{I_2}\phi|.$$
If $I_1\leq \frac{N}{2}$, we can estimate
$$|Z^{I_1} g_{LL}|\lesssim \frac{\ep (1+|q|)}{(1+t)^{\frac{3}{2}-\rho}}$$
so
\begin{equation}
\label{benephi1}\left\|\frac{\alpha w_0^\frac{1}{2}}{(1+|q|)}Z^{I_1} g_{LL}\partial_q Z^{I_2}\phi\right\|_{L^2}
\lesssim \frac{\ep}{(1+t)^{\frac{3}{2}-\rho}}\|\alpha w_0(q)^\frac{1}{2}\partial_q Z^{I_2}\phi\|_{L^2}.
\end{equation}
If $I_2\leq \frac{N}{2}$, we can estimate
$$|\partial_q Z^{I_2}\phi|\lesssim \frac{\ep}{(1+|q|)^{\frac{3}{2}-4\rho }\sqrt{1+t}}, \; for \;q<0,\quad
|\partial_q Z^{I_2}\phi|\lesssim \frac{\ep}{(1+|q|)^{\frac{3}{2}+\delta-\sigma }\sqrt{1+t}},\; for \;q>0.
$$
We apply the weighted Hardy inequality, but in order to be in its range, we cannot keep all the decay in $q$ in the region $q>0$. 
$$\left\|\frac{\alpha w_0(q)^\frac{1}{2}}{(1+|q|)}Z^{I_1} g_{LL}\partial_q Z^{I_2}\phi\right\|_{L^2}
\lesssim \frac{\ep}{\sqrt{1+t}}\left\|\frac{v(q)^\frac{1}{2}}{(1+|q|)}Z^I g_{LL}\right\|_{L^2}\lesssim
\frac{\ep}{\sqrt{1+t}}\|v(q)^\frac{1}{2}
\partial_q Z^{I_1} g_{LL}\|_{L^2}$$
where
\begin{equation}
\label{poid}
\left\{\begin{array}{l}
v(q)=\frac{1}{(1+|q|)^{3-4\rho}}\; for \; q<0,\\
v(q)=\frac{w_0(q)}{(1+|q|)^{1+\delta}}=(1+|q|)^{1+\delta}\; for \; q>0.
\\
\end{array}\right.
\end{equation}
We use \eqref{cool}, which gives
$\partial_q Z^{I_1} g_{LL} \sim \bar{\partial}Z^{I_1} \wht{g_4}$ so
\begin{equation}
\label{estutil}
|\partial_q Z^{I_1} g_{LL}|\lesssim \frac{1}{1+s} | Z^{I_1+1} \wht g_4|
\lesssim \frac{1}{(1+t)^{\frac{1}{2}+\sigma}(1+|q|)^{\frac{1}{2}-\sigma}}
 | Z^{I_1+1} \wht g_4|.
\end{equation}
Therefore, we obtain
\begin{align*}
\left\|\frac{\alpha w_0(q)^\frac{1}{2}}{(1+|q|)}Z^{I_1} g_{LL}\partial_q Z^{I_2}\phi\right\|_{L^2}
&\lesssim\frac{\ep}{(1+t)^{1+\sigma}}\left\|\frac{v(q)^\frac{1}{2}}{(1+|q|)^{\frac{1}{2}-\sigma}}Z^{I_1+1}\wht g_4\right\|_{L^2}\\
&\lesssim \frac{\ep}{(1+t)^{1+\sigma}}\|v^\frac{1}{2}(1+|q|)^{\frac{1}{2}+\sigma}\partial_q Z^{I_1+1} \wht g_4\|_{L^2}
\end{align*}
where we have used again the weighted Hardy inequality.
We calculate
$$\left\{\begin{array}{l}
v(q)(1+|q|)^{1+2\sigma}=\frac{1}{(1+|q|)^{2-4\rho-2\sigma}}\; for \; q<0,\\
v(q)(1+|q|)^{1+2\sigma}=(1+|q|)^{2+\delta+2\sigma}\; for \; q>0.
\\
\end{array}\right.$$
Therefore if $1-4\rho-2\sigma \geq \mu$ and $\delta+2\sigma<2\delta$ we have $v(q)(1+|q|)^{1+2\sigma}\leq w_2$ so we obtain, together with Proposition \ref{estn},
\begin{equation}
\label{benephi2}\left\|\frac{\alpha w_0(q)^\frac{1}{2}}{(1+|q|)}Z^{I_1} g_{LL}\partial_q Z^{I_2}\phi\right\|_{L^2}
\lesssim
\frac{\ep}{(1+t)^{1+\sigma}}\|w_2(q)^\frac{1}{2} \partial_q Z^{I_1+1} \wht g_4\|_{L^2}\lesssim \frac{\ep^2(1+t)^{C\sqrt{\ep}}}{(1+t)^{1+\sigma}}
\end{equation}
We now estimate the crossed terms, for which the weight modulator $\alpha$ has been introduced. They are of the form \eqref{crossphi}.
It is sufficient to estimate, for $I\leq N-1$
$$\left\|\frac{\ep\mathbbm{1}_{q>0}}{r}\alpha(q)w_0(q)^\frac{1}{2}\bar{\partial}
Z^{I} \phi\right\|_{L^2}.$$
We obtain
\begin{equation*}
\left\|\frac{\ep\mathbbm{1}_{q>0}}{r}\alpha(q)w_0(q)^\frac{1}{2}\bar{\partial}
Z^{N-1} \phi\right\|_{L^2}
\lesssim \frac{\ep}{(1+t)^{\frac{1}{2}+\sigma}}\left\| \frac{\mathbbm{1}_{q>0}}{(1+|q|)^\frac{1}{2}}w_0(q)^\frac{1}{2}\bar{\partial}Z^{N-1}\phi\right\|_{L^2}.
\end{equation*}
and consequently, since in the region $q>0$ we have
$\frac{w_0(q)^\frac{1}{2}}{\sqrt{1+|q|}}\lesssim w_0'(q)^\frac{1}{2}$,
\begin{equation}
\label{benephi3}\left\|\frac{\ep\mathbbm{1}_{q>0}}{r}\alpha(q)w_0(q)^\frac{1}{2}\bar{\partial}
Z^{N-1} \phi\right\|_{L^2}\|\alpha w_0^\frac{1}{2}\partial Z^I \phi\|^2_{L^2}\lesssim
\frac{\ep}{(1+t)^{1+\sigma}}\|\alpha w_0^\frac{1}{2}\partial Z^I \phi\|^2_{L^2}+\frac{\ep}{(1+t)^\sigma}\|w'_0(q)^\frac{1}{2}\bar{\partial}
Z^{N-1} \phi\|^2_{L^2}
\end{equation}
The last term which appears in \eqref{benephi} can be estimated thanks to Proposition \ref{estn}
\begin{equation}
\label{benephi4}\frac{\ep}{(1+t)^{1+\sigma}}\|w_0^\frac{1}{2}\partial Z^I \phi\|^2_{L^2} \lesssim \frac{\ep^3}{(1+t)^{1+\sigma-C\sqrt{\ep}}}.
\end{equation}
The estimates \eqref{benephi}, \eqref{benephi1}, \eqref{benephi2}, \eqref{benephi3} and \eqref{benephi4}, together with the bootstrap assumption \eqref{bootl23}
which imply
$$\|\alpha w_0(q)^\frac{1}{2}\partial Z^I \phi\|_{L^2}\lesssim \ep,$$
conclude the proof of Proposition \ref{prpphi}.
\end{proof}
We now estimate $h$

\begin{proof}[Proof of \ref{prphn}]
 The equation for $Z^I h$ is given by \eqref{equah}.
We estimate first
$$\left\|\alpha w_3^\frac{1}{2}\sum_{\substack{I_1+I_2\leq I\\I_2\leq I-1}}\left(Z^{I_1} g^{\alpha \beta}\right)\left(Z^{I_2}\partial_\alpha \partial_\beta h\right)\right\|_{L^2}.$$
As before, we can estimate, for $I_1\leq \frac{N}{2}$, thanks to the bootstrap assumption \eqref{bootl23},
\begin{equation}\label{beneh1}
\left\|\frac{\alpha w_3^\frac{1}{2}}{(1+|q|)}Z^{I_1} g_{LL}\partial_q Z^{I_2} h\right\|_{L^2}
\lesssim \frac{\ep}{(1+t)^{\frac{3}{2}-\rho}}\|\alpha w_3^\frac{1}{2}\partial_q Z^{I_2} h\|_{L^2}\lesssim \frac{\ep^2}{(1+t)^{1-\rho}}.
\end{equation}
For $I_2\leq \frac{N}{2}$, we have the estimate
$$|\partial_q Z^{I_2}h|\lesssim \frac{\ep}{(1+|q|)^{\frac{3}{2}-\rho}},\; for \; q<0,  \quad
|\partial_q Z^{I_2}h|\lesssim \frac{\ep}{(1+|q|)^{2+\delta-\sigma}},\; for \; q<0, 
$$
so
$$ \left\|\frac{\alpha w_3^\frac{1}{2}}{(1+|q|)}Z^{I_1} g_{LL}\partial_q Z^{I_2} h\right\|_{L^2}
\lesssim \left\|\frac{\alpha v^\frac{1}{2}}{(1+|q|)}Z^{I_1} g_{LL}\right\|_{L^2}.$$
 where $v$ is defined by \eqref{poid} and
with Hardy inequality and the same reasoning than for $\phi$
$$ \left\|\frac{\alpha w_3^\frac{1}{2}}{(1+|q|)}Z^{I_1}g_{LL}\partial_q Z^{I_2} h\right\|_{L^2}
\lesssim \ep\|v^\frac{1}{2}\partial_q Z^{I_1} g_{LL}\|_{L^2}\lesssim\ep\frac{1}{(1+t)^{\frac{1}{2}+\sigma}}
\|w_2(q)^\frac{1}{2}\partial Z^{I_1+1} \wht g_4\|_{L^2},$$
and thanks to Proposition \ref{estn} we obtain
\begin{equation}
\label{beneh2}\left\|\frac{\alpha w_3^\frac{1}{2}}{(1+|q|)}Z^{I_1}g_{LL}\partial_q Z^{I_2} h\right\|_{L^2}\lesssim \frac{\ep^2(1+t)^{C\sqrt{\ep}}}{(1+t)^{\frac{1}{2}+\sigma}}.
\end{equation}
We estimate the second term
$$\left\|\alpha w_3^\frac{1}{2}Z^I (\partial_q \phi)^2\right\|_{L^2}
\lesssim \sum_{I_1+I_2\leq I}
\|\alpha w_3^\frac{1}{2}\partial_q Z^{I_1} \phi \partial_q Z^{I_2} \phi\|_{L^2}
\lesssim \frac{\ep}{\sqrt{1+t}}\sum_{J\leq I} \| \alpha w_0^\frac{1}{2}\partial_q Z^I \phi\|_{L^2} $$
so thanks to \eqref{bootl23} we obtain
\begin{equation}
\label{beneh3}\left\|\alpha w_3^\frac{1}{2}Z^I (\partial_q \phi)^2\right\|_{L^2}\lesssim \frac{\ep^2}{\sqrt{1+t}}.
\end{equation}
The semi-linear term $\partial_{\ba L}g_{LL}\partial_{\ba L}h$, appearing in $Q_{\ba L \ba L}$ can be estimated in the same way at the first. The crossed term $\partial_{\ba L}(g_b)_{UU}\partial_{\ba L}g_{L\ba L}$ appearing in $Q_{\ba L \ba L}$ and
the term $(R_b)_{qq}$ can be estimated in the same way than in the case $I\leq N$. The crossed terms of $H_b^\rho \partial_\rho h$ can be estimated in the following way
\begin{equation}\label{beneh4}
\left\|\frac{\ep\ch_{q>0}}{r}\alpha w_3^\frac{1}{2}\partial Z^I h\right\|_{L^2}\lesssim \frac{\ep}{\sqrt{1+t}}\|\alpha w_3^\frac{1}{2}\partial Z^I h\|\lesssim \frac{\ep^2}{\sqrt{1+t}}.
\end{equation}
Thanks to \eqref{beneh1}, \eqref{beneh2}, \eqref{beneh3} and \eqref{beneh4}, and the bootstrap assumption \eqref{bootl23},
the energy inequality yields (we use here the first inequality of Proposition \ref{prpweighte})
$$
\frac{d}{dt}\|\alpha w_3^\frac{1}{2}\partial Z^I h\|^2_{L^2}+
\|\alpha w_3'(q)^\frac{1}{2}\bar{\partial} Z^I h\|_{L^2}^ 2
\lesssim \frac{\ep^2}{\sqrt{1+t}} \|\alpha w_3^\frac{1}{2}\partial_q Z^I h\|_{L^2}+\frac{\ep}{1+t}\|\alpha w_3^\frac{1}{2}\partial_q Z^I h\|^2_{L^2}
\lesssim \ep^3,
$$
which concludes the proof of Proposition \ref{prphn}.
\end{proof}
We now estimate $\wht g_3$

\begin{proof}[Proof of Proposition \ref{prpwhtg}]
We write $\Box_g Z^I \wht g_3= f_{\mu \nu}$.
The energy estimate yields
\begin{align*}
\frac{d}{dt}\left(\|\alpha w_2(q)^\frac{1}{2}\partial Z^I \wht g_3\|^2_{L^2}\right)
+\|\alpha w'(q)^\frac{1}{2}\bar{\partial} Z^I \wht g_3\|^2_{L^2}
\lesssim&\|\alpha w_2(q)f_{\mu \nu}\|_{L^2} \|\alpha w_2(q)\partial Z^I \wht g_3\|_{L^2}\\
&+\frac{\ep}{(1+t)^\sigma}\| w_2(q)^\frac{1}{2}\partial Z^I \wht g_3\|^2_{L^2}.
\end{align*}
We recall that the terms in $f_{\mu \nu }$ are 
\begin{itemize}
\item the quasilinear terms, 
\item the terms coming from the non commutation of the wave operator with the null decomposition: it will be sufficient to study the term $\chi(\frac{r}{t})\frac{1}{r^2}\partial_\theta Z^N h$,
\item the semi-linear terms: it is sufficient to study 
the term $Z^I\partial_U g_{LL}\partial_{\ba L}h$,
\item the crossed term: their analysis is the same than for $\phi$.
\end{itemize} 
We first estimate the quasilinear term.
$$\left|\sum_{\substack{I_1+I_2\leq I\\I_2\leq I-1}}Z^{I_1} g_{LL}\partial^2_q Z^{I_2}\wht g_4\right|\lesssim
\frac{1}{(1+|q|)}\sum_{I_1+I_2\leq I}|Z^{I_1} g_{LL}\partial_q Z^{I_2}\wht g_4|$$
If $I_1\leq \frac{N}{2}$, we can estimate
$$|Z^{I_1} g_{LL}|\lesssim \frac{\ep (1+|q|)}{(1+t)^{\frac{3}{2}-\rho}}$$
so
\begin{equation}
\label{beneg1}\left\|\frac{\alpha w_2^\frac{1}{2}}{(1+|q|)}Z^{I_1} g_{LL}\partial_q Z^{I_2}\wht g_4\right\|_{L^2}
\lesssim \frac{\ep}{(1+t)^{\frac{3}{2}-\rho}}\|\alpha w_2^\frac{1}{2}\partial_q Z^{I_2}\wht g_4\|_{L^2}
\end{equation}
If $I_2\leq \frac{N}{2}$, we can estimate, thanks \eqref{ksg4} and \eqref{est1} 
$$|\partial_q Z^{I_2}\wht g_4|\lesssim \left(\frac{\ep}{\sqrt{1+t}\sqrt{1+|q|}}\right)^\frac{1}{2}
\left(\frac{\ep}{(1+|q|)^{\frac{3}{2}-\rho}}\right)^\frac{1}{2}
\lesssim \frac{\ep}{(1+t)^\frac{1}{4}(1+|q|)^{1-\frac{\rho}{2}}}
$$
Therefore, 
$$\left\|\frac{\alpha w_2^\frac{1}{2}}{(1+|q|)}Z^{I_1} g_{LL}\partial_q Z^{I_2}\wht g_4\right\|_{L^2}
\lesssim 
\frac{\ep}{t^{\frac{1}{4}}} \left\|\frac{\alpha w_2^\frac{1}{2}}{(1+|q|)^{2-\frac{\rho}{2}}} Z^{I_1} g_{LL}\right\|_{L^2}
\lesssim \frac{\ep}{t^{\frac{1}{4}}} \left\|\frac{\alpha w_2^\frac{1}{2}}{(1+|q|)^{1-\frac{\rho}{2}}}\partial Z^{I_1} g_{LL}\right\|_{L^2},$$
where we have used the weighted Hardy inequality, noting that in the  region $q>0$
$$\frac{\alpha^2 w_2}{(1+|q|)^{2-\rho}}=(1+|q|)^{2\delta-2\sigma-\rho},$$
so the condition $\delta>\sigma+\rho+\frac{1}{2}$ ensure that we can apply the weighted Hardy inequality. 
We use the wave coordinate condition, \eqref{cool} which gives
$\partial_q Z^{I_1} g_{LL} \sim \bar{\partial}Z^{I_1} \wht{g_4}$.
We obtain
\begin{equation*}
|\partial_q Z^{I_1} g_{LL}|
\lesssim \frac{1}{1+s}|Z^{I_1+1} \wht g_4| \frac{1}{(1+t)^{\frac{3}{4}+\sigma}(1+|q|)^{\frac{1}{4}-\sigma}}
| Z^{I_1+1} \wht g_4|.
\end{equation*} 
It yields, by using Hardy inequality again
\begin{align*}
\left\|\frac{\alpha w_2^\frac{1}{2}}{(1+|q|)}Z^{I_1} g_{LL}\partial_q Z^{I_2}\wht g_4\right\|_{L^2}
&\lesssim 
\frac{\ep}{(1+t)^{1+\sigma}}\left\|\frac{\alpha w_2^\frac{1}{2}}{(1+|q|)^{\frac{5}{4}-\frac{\rho}{2}-\sigma}}Z^{I_1+1} \wht g_4\right\|_{L^2}\\
&\lesssim
\frac{\ep}{(1+t)^{1+\sigma}}\|\frac{\alpha w_2^\frac{1}{2}}{(1+|q|)^{\frac{1}{4}-\frac{\rho}{2}-\sigma}}Z^{I_1+1} \wht g_4\|_{L^2}.
\end{align*}
Consequently, since $\frac{1}{4}-\frac{\rho}{2}-\sigma>0$ we have
\begin{equation}
\label{beneg2}\left\|\frac{\alpha w_2^\frac{1}{2}}{(1+|q|)}Z^{I_1} g_{LL}\partial_q Z^{I_2}\wht g_4\right\|_{L^2}
\lesssim \frac{\ep}{(1+t)^{1+\sigma}}\|w_2^\frac{1}{2}\partial Z^{I+1} \wht g_4\|_{L^2}\lesssim \frac{\ep^2(1+t)^{C\sqrt{\ep}}}{(1+t)^{1+\sigma}}, 
\end{equation}
where we have used Proposition \ref{estn}.
We now estimate the term coming from the non commutation with the wave operator 
$\|\alpha w^\frac{1}{2}_2(q)\Up\left(\frac{r}{t}\right)\frac{1}{r^2}\partial_\theta Z^I h\|_{L^2}$.
On the support of $\Up \left(\frac{r}{t}\right)$, we have $r\sim t$ and hence 
$$\frac{1}{r^2}\lesssim \frac{1}{(1+t)^{\frac{3}{2}+\sigma}(1+|q|)^{\frac{1}{2}-\sigma}}.$$ Therefore
\begin{align*}\left\|\alpha w^\frac{1}{2}_2(q)\Up\left(\frac{r}{t}\right)\frac{1}{r^2}\partial_\theta Z^I h\right\|_{L^2}
&\lesssim \frac{1}{(1+t)^{\frac{3}{2}+\sigma}}\left\|\frac{\alpha w_2^\frac{1}{2}}{(1+|q|)^{\frac{1}{2}-\sigma}}Z^{I+1}h\right\|_{L^2}\\
&\lesssim \frac{1}{(1+t)^{\frac{3}{2}+\sigma}}\left\|\alpha w_2^\frac{1}{2}(1+|q|)^{\frac{1}{2}+\sigma}\partial Z^{I+1}h\right\|_{L^2},
\end{align*}
where we have applied the weighted Hardy inequality. We calculate
$$\alpha^2 w_2(1+|q|)^{1+2\sigma}=\left\{\begin{array}{l}
(1+|q|)^{2\sigma-2\mu}\; for \; q<0,\\
(1+|q|)^{3+2\delta}\; for \; q>0.
\\
\end{array}\right.$$
If $\sigma<\mu$ we have $\alpha^2 w_2(1+|q|)^{1+2\sigma}\leq w_3$ and
\begin{equation}
\label{beneg4}\left\|\alpha w^\frac{1}{2}_2(q)\chi\left(\frac{r}{t}\right)\frac{1}{r^2}\partial_\theta Z^Ih\right\|_{L^2}
\lesssim\frac{1}{(1+t)^{\frac{3}{2}+\sigma}}\|w_3(q)^\frac{1}{2}\partial Z^{I+1}h\|_{L^2}\lesssim \frac{\ep^\frac{3}{2}(1+t)^{C\sqrt{\ep}}}{(1+t)^{1+\sigma}},
\end{equation}
where we have used Proposition \ref{estn} which yields, for $I\leq N-2$
$$\|w_3(q)^\frac{1}{2}\partial Z^{I+1}h\|_{L^2}\lesssim \ep{\frac{3}{2}}(1+t)^{\frac{1}{2}+C\sqrt{\ep}}.$$
We now estimate $Z^I(\partial_U g_{LL}\partial_{\ba L}h)$. We have
$$\|\alpha w_2(q)^\frac{1}{2}Z^I(\partial_U g_{LL}\partial_{\ba L}h)\|_{L^2}\lesssim \sum_{I_1+I_2\leq I}
\|\alpha w_2(q)^\frac{1}{2}\bar{\partial}Z^{I_1} g_{LL}\partial Z^{I_2} h\|_{L^2}.$$ 
If $I_1\leq \frac{N}{2}$ we estimate
$$|\bar{\partial}Z^{I_1} g_{LL}|\lesssim \frac{1}{1+s}
| Z^{I_1+1}g_{LL}|\lesssim \frac{\ep (1+|q|)}{(1+t)^{\frac{5}{2}-\rho}}\lesssim \frac{(1+|q|)^{\rho+\sigma}}{(1+t)^{\frac{3}{2}+\sigma}}.$$
Therefore
$$\|\alpha w_2(q)^\frac{1}{2}\bar{\partial}Z^{I_1} g_{LL}\partial Z^{I_2} h\|_{L^2}\lesssim
\frac{\ep}{(1+t)^{\frac{3}{2}+\sigma}}\|\alpha w_2^\frac{1}{2}(1+|q|)^{\rho+\sigma}\partial Z^{I_2}h\|_{L^2}\lesssim 
\frac{\ep}{t^{\frac{3}{2}+\sigma}}\|\alpha w_3(q)^\frac{1}{2}\partial Z^{I_2}h\|_{L^2},$$
and consequently
\begin{equation}
\label{beneg5}\|\alpha w_2(q)^\frac{1}{2}\bar{\partial}Z^{I_1} g_{LL}\partial Z^{I_2} h\|_{L^2}\lesssim\lesssim \frac{\ep^2}{t^{1+\sigma}}.
\end{equation}
If $I_2\leq \frac{N}{2}$ thanks to \eqref{est1} we estimate
$$|\partial Z^{I_2}h|\leq \frac{\ep}{(1+|q|)^{\frac{3}{2}-\rho}},$$
therefore
\begin{align*}
\|\alpha w_2(q)^\frac{1}{2}\bar{\partial}Z^{I_1} g_{LL}\partial Z^{I_2} h\|_{L^2}&\lesssim
\ep\left\|\frac{\alpha w_2^\frac{1}{2}}{(1+|q|)^{\frac{3}{2}-\rho}}\bar{\partial}Z^{I_1} g_{LL}\right\|_{L^2}\\
&\lesssim \frac{\ep}{(1+t)^{\frac{1}{2}+\sigma}} \left\|\frac{\alpha w_2^\frac{1}{2}}{(1+|q|)^{2-\rho-\sigma}} Z^{ I_1+1}g_{LL}\right\|_{L^2}\\
&\lesssim \frac{\ep}{(1+t)^{\frac{1}{2}+\sigma}}\left\|\frac{\alpha w_2^\frac{1}{2}}{(1+|q|)^{1-\rho-\sigma}}
\partial Z^{ I_1+1}g_{LL}\right\|_{L^2}\\
&\lesssim  \frac{\ep}{(1+t)^{\frac{1}{2}+\sigma}}\|\alpha w'_2(q)^\frac{1}{2}\bar{\partial} Z^{ I_1+1}\wht g_4\|_{L^2}
\end{align*}
where in the last inequality we have used the wave coordinate condition. 
Therefore
\begin{equation}\label{beneg6}\|\alpha w_2(q)^\frac{1}{2}\bar{\partial}Z^{I_1} g_{LL}\partial Z^{I_2} h\|_{L^2}\|\alpha w_2(q)^\frac{1}{2}\partial Z^I \wht g_3\|
\lesssim \frac{\ep}{(1+t)^\sigma}\|\alpha w'_2(q)^\frac{1}{2}\bar{\partial} Z^{ I_1+1}\wht g_4\|^2_{L^2} +\frac{\ep^3}{(1+t)^{1+\sigma}}.
\end{equation}
The estimates \eqref{beneg1},\eqref{beneg2}, \eqref{beneg4}, \eqref{beneg5} and \eqref{beneg6} 
 conclude the proof of Proposition \ref{prpwhtg}.
\end{proof}
\subsection{Estimates for $I\leq N-8$}
\begin{prp}\label{estl27}
We have for $I\leq N-8$
\begin{align}
\label{ff1}&\| w_1(q)^\frac{1}{2}\partial Z^I \wht g_2\|_{L^2} \leq C_0\ep(1+t)^{C\ep},\\
\label{ff2}&\|\alpha w_1(q)^\frac{1}{2}\partial Z^I \wht g_2\|_{L^2}\leq C_0  \ep + C\ep^\frac{3}{2},
\end{align}
and for $I\leq N-9$
\begin{align}
\label{ff3}&\| w(q)^\frac{1}{2}\partial Z^I \wht g_2\|_{L^2} \leq C_0\ep(1+t)^{C\ep},\\
\label{ff4}&\|\alpha w(q)^\frac{1}{2}\partial Z^I \wht g_2\|_{L^2}\leq C_0  \ep + C\ep^\frac{3}{2}.
\end{align}
\end{prp}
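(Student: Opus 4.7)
The plan is to adapt the strategy used for Proposition \ref{prpwhtg} to the decomposition \eqref{dec2} of the metric, exploiting the fact that $h_0+\wht h$ is a much better-behaved approximation of $h$ than $h$ itself. Concretely, subtracting \eqref{eqht} and multiplying by the tensor $dq^2$ shows that
\[
\Box_g \wht g_2 = \Box_g \wht g_3 - \Box\!\left(\Up(r/t)\,\wht h\,dq^2\right) + \left(\Up(r/t)(\partial_q\phi)^2 - (R_b)_{qq} + \Up(r/t)\wht Q_{\ba L \ba L}(h_0,\wht g)\right) \otimes (\text{tensor terms}),
\]
so that the ``bad'' source pieces that force the $\sqrt{1+t}$ loss in $\|w_3^{1/2}\partial Z^I h\|_{L^2}$ are essentially cancelled at the level of $\wht g_2$. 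What remains as source is controlled pointwise by the estimates of Proposition \ref{estzh} for $h_0$ (valid for $I\leq N-5$) and Proposition \ref{whth} for $\wht h$ (valid for $I\leq N-7$), which is the reason the present bound is only claimed in the ranges $I\leq N-8$ and $I\leq N-9$.

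First I would apply the weighted energy estimate of Proposition \ref{prpweighte} to $Z^I \wht g_2$ with the weights $w_1$ and $\alpha w_1$ (respectively $w_0$, $\alpha w_0$). The quasilinear contribution $Z^I(g_{LL}\partial_q^2 \wht g_2)$ is handled exactly as in the proof of Proposition \ref{prpwhtg}: one splits into cases according to which factor carries the high derivative count, uses the wave coordinate estimate \eqref{cool} $\partial_q Z^I g_{LL}\sim\bar\partial Z^I \wht g_4$ together with the weighted Hardy inequality, and reduces to the term $\|\alpha w_2'^{1/2}\bar\partial Z^{I+1}\wht g_4\|_{L^2}$ which is integrable in time by Proposition \ref{estn}. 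Here one must check that $w_1$ (resp.\ $w_0$) lies within the range of applicability of the weighted Hardy inequality in the exterior region $q>0$, where both weights behave like $(1+|q|)^{2+2\delta}$ just as $w_2$ does, so the argument goes through unchanged; only in $q<0$ the weights differ, but there the $L^\infty$ control of $g_{LL}$ through \eqref{wc1}--\eqref{wc3} is strong enough in $s$ to compensate the larger $q$-weight.

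Next, the new source terms coming from the difference with $\wht g_3$ are controlled by brute force using the pointwise bounds on $h_0$ and $\wht h$. For the non-commutation contribution $\Up(r/t)\frac{1}{r^2}\partial_\theta Z^I(h_0+\wht h)$ and the quasilinear correction $\Up(r/t)g_{LL}\partial_q^2 Z^I h_0$, Proposition \ref{estzh} gives, for $I\leq N-6$, an $L^\infty$ bound by $\ep^2(1+|q|)^{-1+4\rho}$ in $q<0$ and $\ep^2(1+|q|)^{-2-2(\delta-\sigma)}$ in $q>0$, which multiplied by $w_j^{1/2}/r^2$ is integrable in time after taking $L^2$ in space (using that the support is localised in $\frac{t}{2}\le r\le 2t$). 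The contribution of $\wht h$ is even better: Proposition \ref{whth} yields $|Z^I\wht h|\lesssim\ep^2(1+s)^{-1/2+\rho}$ for $I\leq N-7$, so the corresponding $L^2$ norm is at worst of size $\ep^2(1+t)^{-1+\rho}$, which is time-integrable. The crossed terms with $g_b$ are treated exactly as in the proof of Proposition \ref{prpphi}: the weight modulator $\alpha$ converts the logarithmic loss into a small $q$-loss, while without it one picks up the factor $(1+t)^{C\ep}$.

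Putting all pieces together, one obtains
\[
\frac{d}{dt}\|\alpha w_j(q)^{1/2}\partial Z^I \wht g_2\|_{L^2}^2 + \|\alpha w_j'(q)^{1/2}\bar\partial Z^I \wht g_2\|_{L^2}^2 \lesssim \frac{\ep^{5/2}}{(1+t)^{1+\sigma}} + \frac{\ep}{(1+t)^{\sigma}}\|\alpha w_2'(q)^{1/2}\bar\partial Z^{I+1}\wht g_4\|_{L^2}^2
\]
for $j=0,1$, and an analogous inequality without $\alpha$ but with right-hand side $\frac{\sqrt\ep}{1+t}$ times the higher-order energy. Integrating in time and invoking Proposition \ref{estn} to bound the last term by $\ep^2$ gives \eqref{ff2} and \eqref{ff4}, while the unmodulated inequality gives \eqref{ff1} and \eqref{ff3} after a Gronwall argument that produces the factor $(1+t)^{C\ep}$. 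The main obstacle I expect is a bookkeeping one: carefully tracking in the source $\Box_g \wht g_2 - \Box_g \wht g_3$ that every cancellation predicted by the definition \eqref{eqht} of $\wht h$ is indeed realised, so that the only surviving error terms are genuinely quadratic in $\wht g$, $\phi$ and $(h_0,\wht h)$, without hidden contributions proportional to $\partial h$ that would reintroduce the $\sqrt{1+t}$ growth.
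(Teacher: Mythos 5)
Your overall strategy --- weighted energy estimates for $\wht g_2$ with the weights $w_1,\alpha w_1,w_0,\alpha w_0$, pointwise control of the non-commutation source $\Up\left(\frac{r}{t}\right)\frac{1}{r^2}\partial_\theta Z^I(h_0+\wht h)$ via Propositions \ref{estzh} and \ref{whth}, Hardy plus the wave coordinate condition for the quasilinear terms, and the modulator $\alpha$ for the crossed terms --- is the same as the paper's (Propositions \ref{l27} and \ref{l28}). But there is a genuine gap in your treatment of the semilinear term $Z^I(\partial_U g_{LL}\partial_{\ba L}h)$, which you never isolate and which is the decisive term here. When $h$ carries few vector fields one has $|\partial Z^{I_2}h|\lesssim \ep(1+|q|)^{-\frac{3}{2}+\rho}$ in $q<0$, and after the wave coordinate condition and two applications of the weighted Hardy inequality one is left with $\frac{\ep}{(1+t)^{1/2+\sigma}}\big\|\frac{w^{1/2}}{(1+|q|)^{1-\rho-\sigma}}\partial Z^{I_1+1}g_{LL}\big\|_{L^2}$, where $w$ is the energy weight. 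For $w=w_2$ (Proposition \ref{prpwhtg}) the factor $w_2^{1/2}\sim(1+|q|)^{-\frac{1}{2}-\mu}$ supplies enough extra $q$-decay that this is dominated by $w_2'^{1/2}$, and the term closes against the spacetime integral of $\|\alpha w_2'^{1/2}\bar{\partial}Z^{I+1}\wht g_3\|^2_{L^2}$. For $w=w_0\approx 1$ in $q<0$ this fails: $(1+|q|)^{-(1-\rho-\sigma)}$ is \emph{not} bounded by $w_2'^{1/2}\sim(1+|q|)^{-1-\mu}$, so the single differential inequality you propose, with only $\|\alpha w_2'^{1/2}\bar{\partial}Z^{I+1}\wht g_4\|^2_{L^2}$ on the right for both $j=0$ and $j=1$, cannot be established for $j=0$ and your proof of \eqref{ff3}--\eqref{ff4} breaks down.

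This is exactly why the paper proceeds in two stages. First it proves the $w_1$ estimate for $I\leq N-8$, where $\frac{w_1^{1/2}}{(1+|q|)^{1-\rho-\sigma}}\sim(1+|q|)^{-(\frac{5}{4}-\rho-\sigma)}\lesssim w_2'^{1/2}$ does hold (using $\rho+\sigma+\mu\leq\frac14$), closing against Proposition \ref{l2n1}. Then the $w_0$ estimate for $I\leq N-9$ has right-hand side $\ep\|\alpha w_1'^{1/2}\bar{\partial}Z^{I+1}\wht g_2\|^2_{L^2}$ (the needed weight is $w_1'^{1/2}\sim(1+|q|)^{-3/4}$, not $w_2'^{1/2}$), which is time-integrable only because the first stage has already produced $\int_0^t\|\alpha w_1'^{1/2}\bar{\partial}Z^{J}\wht g_2\|^2_{L^2}\lesssim\ep^2$ for $J\leq N-8$. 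The drop from $N-8$ to $N-9$ therefore comes from this cascade (the semilinear term costs one vector field through $Z^{I+1}\wht g_2$), not, as you suggest, from the admissible ranges of the pointwise bounds on $h_0$ and $\wht h$. To repair the argument you must add $\partial_U g_{LL}\partial_{\ba L}h$ to your list of sources, run the $w_1$ estimate first, and feed its integrated tangential-derivative output into the $w_0$ estimate.
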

This is a consequence of the following two propositions.
\begin{prp}\label{l27}
We have for $I\leq N-8$
\begin{equation}\label{ee1}
\frac{d}{dt}\sum_{J\leq I}\| w_1(q)^\frac{1}{2}\partial Z^J \wht g_2\|^2_{L^2}
+\sum_{J\leq I}\| w'_1(q)^\frac{1}{2}\bar{\partial} Z^J \wht g_2\|^2_{L^2}\\
\lesssim\frac{\ep^3}{(1+t)^{1+\sigma}}+
\frac{\ep}{1+t}\sum_{J\leq I}\| w_1(q)^\frac{1}{2}\partial Z^J \wht g_2\|^2_{L^2},
\end{equation}
and
\begin{equation}\label{ee2}
\frac{d}{dt}\sum_{J\leq I}\|\alpha w_1(q)^\frac{1}{2}\partial Z^J \wht g_2\|^2_{L^2}
+\sum_{J\leq I}\|\alpha w'_1(q)^\frac{1}{2}\bar{\partial} Z^J \wht g_2\|^2_{L^2}
\lesssim\frac{\ep^3}{(1+t)^{1+\sigma}}+
\ep\|\alpha w_2'(q)^\frac{1}{2}\bar{\partial} Z^{I+1} \wht g_3\|^2_{L^2}.
\end{equation}

\end{prp}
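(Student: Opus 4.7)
The plan is to adapt the proof of Proposition \ref{prpwhtg} to the decomposition \eqref{dec2} in place of \eqref{dec3} and to the weight $w_1$ in place of $w_2$. The main benefit of working with $\wht g_2 = \wht g - \Up(r/t)(h_0+\wht h)dq^2$ is that, on the level of the $(\ba L\ba L)$ component, the bad sources $-2(\partial_q\phi)^2$ and $2(R_b)_{qq}$ appearing in the equation \eqref{s2} for $\wht g$ cancel against the corresponding terms in $\Box\wht h$ coming from \eqref{eqht}. What remains is the commutator $(\Box_g-\Box)(\Up(h_0+\wht h))$, the piece $\Up g_{LL}\partial_q^2 h_0$, and the crossed contribution $\partial_{\ba L}(g_b)_{UU}\partial_{\ba L}g_{\ba L L}$ from $\wht Q_{\ba L\ba L}$; each of these is compatible with the weaker $q$-decay of $w_1$ near the light cone.

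After commuting with $Z^J$, $J\leq I\leq N-8$, and applying Proposition \ref{prpweighte} with the weight $w_1$ for \eqref{ee1} and $\alpha w_1$ for \eqref{ee2}, the right-hand side splits into the same four families already analysed in the proof of Proposition \ref{prpwhtg}: the quasilinear terms $Z^{I_1}g_{LL}\,\partial_q^2 Z^{I_2}\wht g_2$, the non-commutation contribution $\Up(r/t)r^{-2}\partial_\theta Z^J(h_0+\wht h)$, the semilinear $P_{\mu\nu}$ terms, and the crossed $\wht P_{\mu\nu}$ terms. The quasilinear and semilinear families are handled exactly as in \eqref{eneg1}--\eqref{eneg7}, using the dichotomy on which factor carries the top derivatives, the wave-coordinates identity \eqref{cool}, and the weighted Hardy inequality of Proposition \ref{hardy}; one checks that $w_1/(1+|q|)^{2-\rho}$ is admissible on both sides of the light cone, i.e.\ integrable in $\{q<0\}$ and satisfying $\delta>\sigma+\rho+\tfrac{1}{2}$ in $\{q>0\}$. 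The non-commutation contribution is controlled using Propositions \ref{prpbooth1} and \ref{whth}: since $I+1\leq N-7$, the quantities $Z^{I+1}h_0$ and $Z^{I+1}\wht h$ lie within their regularity range, so the resulting $L^2$ bound is of order $\ep^3/(1+t)^{1+\sigma}$. The crossed family is supported in $\{q>0\}$ with a factor $\ch_{q>0}/r$; for \eqref{ee1} it produces the bulk term $\frac{\ep}{1+t}\|w_1^{1/2}\partial Z^J\wht g_2\|_{L^2}^2$, while for \eqref{ee2} the weight-modulator device of Section \ref{secinter}, through $\alpha(q)/r\lesssim (1+t)^{-1/2-\sigma}(1+|q|)^{-1/2}$, trades this growth against the dissipation $\ep\|\alpha w_2'^{1/2}\bar\partial Z^{I+1}\wht g_3\|_{L^2}^2$ appearing on the right-hand side of \eqref{ee2}.

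The main obstacle is the top-order quasilinear term $Z^{I_1}g_{LL}\,\partial_q^2 Z^{I_2}\wht g_2$ in the regime where the top derivatives fall on $g_{LL}$. With $w_1$ weaker than $w_2$ in $\{q<0\}$ there is less slack in the Hardy step, and one must route the estimate so that the wave-coordinates substitution $\partial_q Z^{I_1+1}g_{LL}\sim\bar\partial Z^{I_1+1}\wht g$ produces a right-hand side absorbable into $\|w_1'^{1/2}\bar\partial Z^J\wht g_2\|_{L^2}^2$ for \eqref{ee1}, or into $\|\alpha w_2'^{1/2}\bar\partial Z^{I+1}\wht g_3\|_{L^2}^2$ for \eqref{ee2}, rather than into a time-growing bulk term; this is precisely why the dissipation on the right of \eqref{ee2} is stated in terms of $\wht g_3$ and weight $w_2'$, not of $\wht g_2$ and $w_1'$. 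The regularity restriction $I\leq N-8$ is forced by the non-commutation contribution, since one needs $Z^{I+1}(h_0+\wht h)$ to lie within the range of Propositions \ref{prpbooth1} and \ref{whth}.
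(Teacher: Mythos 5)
Your overall architecture matches the paper's: the energy estimate of Proposition \ref{prpweighte} with the weights $w_1$ and $\alpha w_1$, the observation that the point of the decomposition \eqref{dec2} is that the non-commutation term now involves $\Up(r/t)r^{-2}\partial_\theta Z^{I+1}(h_0+\wht h)$, which is controlled pointwise by Propositions \ref{estzh} and \ref{whth} (this is indeed what forces $I+1\leq N-7$, i.e.\ $I\leq N-8$, and yields the $\ep^3(1+t)^{-1-\sigma}$ source), and the Hardy/wave-coordinates machinery for the terms carrying top derivatives of $g_{LL}$.

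There is however one concrete misattribution that, as written, does not go through. You claim the dissipation term $\ep\|\alpha w_2'(q)^{1/2}\bar{\partial}Z^{I+1}\wht g_3\|^2_{L^2}$ on the right of \eqref{ee2} is produced by the crossed terms in $\{q>0\}$ via the weight-modulator device. It is not: the modulator trick of Section \ref{secinter} applied to a crossed term $\ep\ch_{q>0}r^{-1}\bar{\partial}Z^J\wht g_2$ returns a dissipation in the \emph{same} unknown with the \emph{same} weight, namely $\frac{\ep}{(1+t)^{\sigma}}\|w_1'^{1/2}\bar{\partial}Z^J\wht g_2\|^2$ (cf.\ \eqref{benephi3}); it cannot manufacture $\bar{\partial}Z^{I+1}\wht g_3$ with weight $w_2'$. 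In the paper the $\wht g_3$ dissipation arises from the semilinear term $\partial_U g_{LL}\,\partial_{\ba L}h$ in the region $q<0$, in the regime where the low derivatives fall on $h$: one uses $|\partial Z^{I_2}h|\lesssim\ep(1+|q|)^{-3/2+\rho}$, a weighted Hardy inequality, the wave-coordinate identity $\partial_q Z^{I_1+1}g_{LL}\sim\bar{\partial}Z^{I_1+1}\wht g_3$ (legitimate since $g_{LL}$ and the $\q T\q T$, $L\ba L$ components are common to all decompositions by Remark \ref{comp}), and finally the comparison $\ch_{q<0}(1+|q|)^{-(\frac{5}{4}-\rho-\sigma)}\lesssim w_2'(q)^{1/2}$, which is where the condition $\sigma+\rho+\mu\leq\frac{1}{4}$ enters. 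Your proposal never isolates $\partial_U g_{LL}\partial_{\ba L}h$ as a separate difficulty (you fold all semilinear terms into "handled as in \eqref{eneg1}--\eqref{eneg7}", which treat $\partial_L g_{LL}\partial_{\ba L}h$ after the $k$-trick, a different term), so the actual origin of the only non-decaying, non-absorbable contribution in \eqref{ee2} is missing from your argument. The top-order quasilinear term you single out in your last paragraph is, in the paper's scheme, closed off with an $L^\infty$ bound on the low factor plus two Hardy steps and lands on a decaying source, so it is not the term that dictates the form of the right-hand side of \eqref{ee2}.
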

\begin{prp}\label{l28}
We have for $I\leq N-9$
\begin{equation}
\label{ee3}\frac{d}{dt}\sum_{J\leq I}\| w_0(q)^\frac{1}{2}\partial Z^J \wht g_2\|^2_{L^2}
+\sum_{J\leq I}\| w_0'(q)^\frac{1}{2}\bar{\partial} Z^J \wht g_2\|^2_{L^2}
\lesssim\frac{\ep^3}{(1+t)^{1+\sigma}}+
+\frac{\ep}{1+t}\sum_{J\leq I}\| w_0(q)^\frac{1}{2}\partial Z^J \wht g_2\|^2_{L^2},
\end{equation}
and
\begin{equation}
\label{ee4}\frac{d}{dt}\sum_{J\leq I}\|\alpha w_0(q)^\frac{1}{2}\partial Z^J \wht g_2\|^2_{L^2}
+\sum_{J\leq I}\|\alpha w_0'(q)^\frac{1}{2}\bar{\partial} Z^J \wht g_2\|^2_{L^2}
\lesssim\frac{\ep^3}{(1+t)^{1+\sigma}}+
\ep\|\alpha w_1'(q)^\frac{1}{2}\bar{\partial} Z^{I+1} \wht g_2\|^2_{L^2}.
\end{equation}
\end{prp}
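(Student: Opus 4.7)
The plan is to mirror the proof of Proposition \ref{prpwhtg}, but adapted to the decomposition \eqref{dec2} of $g$, to the lower regularity level $I\leq N-9$, and to the weights $w_0$ and $\alpha^2 w_0$ in place of $w_2$ and $\alpha^2 w_2$. First I would derive the wave equation $\Box_g Z^I\wht g_2=f_{\mu\nu}$. The definition \eqref{eqht} of $\wht h$ was crafted precisely so that the $dq^2$-source contributions $-2(\partial_q\phi)^2+2(R_b)_{qq}$ and the troublesome $\Up g_{LL}\partial_q^2 h_0+\Up\wht Q_{\ba L\ba L}(h_0,\wht g)$ cancel out; consequently $f_{\mu\nu}$ is the sum of four families of the same type as in Proposition \ref{prpwhtg}: the quasilinear $Z^{I_1}g_{LL}\partial_q^2 Z^{I_2}\wht g_2$ with $I_2\leq I-1$, the null-frame non-commutation $\Up(r/t)r^{-2}\partial_\theta Z^I(h_0+\wht h)$, the semilinear piece dominated by $\partial_{\ba L}g_{LL}\partial_{\ba L}\wht g_2$ together with $Z^I((\partial\phi)^2)$, and the crossed terms $\partial(g_b)_{UU}\partial Z^I\wht g_2$ (with the delicate high-$\theta$-derivative piece $\chi(q)q(\partial_\theta^{I+1}b)\partial\wht g_2/r^2$). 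I then apply Proposition \ref{prpweighte} with weight $w_0$ for \eqref{ee3} and $\alpha^2 w_0$ for \eqref{ee4}.

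Each family is handled by the scheme of Section \ref{subsecl2}, with the simplifications afforded by the lower regularity. The semilinear sources and $Z^I((\partial\phi)^2)$ produce contributions of size $\ep^3(1+t)^{-1-\sigma}$ using the bootstrap \eqref{bootl23}. The non-commutation term is favourable at this regularity because Proposition \ref{estzh} controls $Z^{I+1}h_0$ pointwise and Proposition \ref{whth} yields $|Z^{I+1}\wht h|\lesssim\ep^2(1+s)^{-1/2+\rho}$; combined with $r^{-2}\lesssim(1+t)^{-3/2-\sigma}(1+|q|)^{-1/2+\sigma}$ on the support of $\Up(r/t)$ and with the weighted Hardy inequality, the $w_0$-weighted $L^2$-norm of this source is bounded by $\ep^2(1+t)^{-1-\sigma/2}$. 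The crossed term $\partial(g_b)_{UU}\partial Z^I\wht g_2$ gives the Gronwall factor $\frac{\ep}{1+t}\|w_0^{1/2}\partial Z^I\wht g_2\|_{L^2}^2$ for \eqref{ee3}, and via the modulator $\alpha$ as in \eqref{benephi3} becomes $\frac{\ep}{(1+t)^{1+2\sigma}}\|w_0^{1/2}\partial Z^I\wht g_2\|_{L^2}^2$ plus a good-gradient term absorbable on the left for \eqref{ee4}. The $\partial_\theta^{I+1}b$ piece is treated by the same $\Box_g$-identity argument used around \eqref{termphi}.

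The main obstacle lies in closing the quasilinear term with the weight $w_0$, which in the region $q<0$ is essentially trivial ($w_0=1+(1+|q|)^{-2\mu}$). In the case $I_2\leq(N-9)/2$ I would first replace $|\partial_q Z^{I_2}\wht g_2|$ by its pointwise estimate \eqref{ks7}, apply weighted Hardy once, use the wave-coordinate identity \eqref{cool} to write $\partial_q Z^{I_1+1}g_{LL}\sim\bar\partial Z^{I_1+1}\wht g_2$, and apply Hardy a second time. In $q<0$ this double-Hardy step is borderline and loses exactly the factor $(1+|q|)^{1/2-\rho}$ that must be compensated by upgrading the weight at one derivative higher: one obtains a bound by $\frac{\ep}{(1+t)^{1+\sigma}}\|w_1^{1/2}\partial Z^{I+1}\wht g_2\|_{L^2}$ with the intermediate weight $w_1$, and pairing against $\|\alpha w_0^{1/2}\partial Z^I\wht g_2\|_{L^2}$ in the energy identity produces precisely the term $\ep\|\alpha w_1'^{1/2}\bar\partial Z^{I+1}\wht g_2\|_{L^2}^2$ appearing on the right-hand side of \eqref{ee4}. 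This explains the regularity restriction $I\leq N-9$: the $w_1$-weighted energy of $Z^{I+1}\wht g_2$ required to close the two-weight bootstrap $(w_0,w_1)$ is available from \eqref{bottl24} only up to order $N-8$, so $I+1=N-8$ is the limit. For \eqref{ee3} no modulator is inserted, and the same step instead yields the Gronwall contribution $\frac{\ep}{1+t}\|w_0^{1/2}\partial Z^I\wht g_2\|_{L^2}^2$ directly.
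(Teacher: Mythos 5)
Your overall architecture is right (energy estimate via Proposition \ref{prpweighte}, term-by-term analysis in the region $q<0$ only, and the correct explanation of the restriction $I\leq N-9$: the term $\ep\|\alpha w_1'^{1/2}\bar{\partial}Z^{I+1}\wht g_2\|^2_{L^2}$ must be integrable after Proposition \ref{l27}, which controls the $w_1$-energies only up to order $N-8$). But there is a genuine gap in your list of source terms: you replace the critical semilinear term by ``$\partial_{\ba L}g_{LL}\partial_{\ba L}\wht g_2$'', whereas the term that actually drives this proposition is $\partial_U g_{LL}\partial_{\ba L} h$ (coming from $\partial_U g_{LL}\partial_{\ba L}g_{\ba L\ba L}$ in $P_{U\ba L}$ — precisely the term that the auxiliary function $k$ was introduced to remove from the equation for $\wht g_4$, and which therefore survives in the equations for $\wht g_2$ and $\wht g_3$). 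In the paper's proof it is this term, in the case where the high derivative falls on $g_{LL}$ and the low derivative on $h$ (so that $|\partial Z^{I_2}h|\lesssim\ep(1+|q|)^{-3/2+\rho}$ by \eqref{est1}), that after one Hardy inequality, the wave-coordinate conversion $\partial_q Z^{I_1+1}g_{LL}\sim\bar{\partial}Z^{I_1+1}\wht g_2$ and a second Hardy inequality produces exactly $\frac{\ep}{(1+t)^{1/2+\sigma}}\|w_1'(q)^{1/2}\bar{\partial}Z^{I_1+1}\wht g_2\|_{L^2}$, hence the right-hand side of \eqref{ee4}; the exponent $1-\rho-\sigma$ left over after the two Hardy steps fits under $w_1'(q)^{1/2}=(1+|q|)^{-3/4}/\sqrt{2}$ only because $\rho+\sigma\leq\frac14$, which is why the transition weight $w_1$ (and not $w_0$ or $w_2$) is forced here.

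Your alternative attribution of the $w_1'$ term to the quasilinear part does not work as written: after bounding the quasilinear source by $\frac{\ep}{(1+t)^{1+\sigma}}\|w_1^{1/2}\partial Z^{I+1}\wht g_2\|_{L^2}$, pairing with $\|\alpha w_0^{1/2}\partial Z^I\wht g_2\|_{L^2}$ and Cauchy--Schwarz yields a \emph{full-gradient} norm with weight $w_1$, which cannot be converted into the \emph{tangential-gradient} norm $\|\alpha w_1'^{1/2}\bar{\partial}Z^{I+1}\wht g_2\|^2_{L^2}$ that appears in \eqref{ee4}; the $\bar{\partial}$ and the weight $w_1'$ arise only through the wave-coordinate identity applied to $g_{LL}$ inside the source, not through the pairing. (In fact the quasilinear term with the high derivative on $g_{LL}$ can be closed without introducing $w_1$ at all, exactly as in Proposition \ref{prpphi}, by landing on $\|w_2^{1/2}\partial Z^{I_1+1}\wht g_3\|_{L^2}$ after the second Hardy step.) You therefore need to restore the term $\partial_U g_{LL}\partial_{\ba L}h$ — and, in the case where the low derivative falls on $g_{LL}$, follow the paper's device of splitting $h$ into $h_0+\wht h$ (estimated pointwise by \eqref{booth1}, \eqref{booth3}) and the remainder absorbed into $\wht g_2$ — to obtain both the $\frac{\ep^3}{(1+t)^{1+\sigma}}$ contribution and the $\ep\|\alpha w_1'^{1/2}\bar{\partial}Z^{I+1}\wht g_2\|^2_{L^2}$ contribution of \eqref{ee4}.
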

We assume Proposition \ref{l27} and \ref{l28} and prove Proposition \ref{estl27}.
\begin{proof}[Proof of Proposition \ref{estl27}]
The inequalities \eqref{ff1} and \eqref{ff3} are straightforward consequences of \eqref{ee1} and \eqref{ee3}.
To prove  \eqref{ff2}, we integrate \eqref{ee2}. We obtain
\begin{align*}&\sum_{J\leq I}\|\alpha w_1(q)^\frac{1}{2}\partial Z^J \wht g_2\|^2_{L^2}
+\int_0^t\sum_{J\leq I}\|\alpha w'_1(q)^\frac{1}{2}\bar{\partial} Z^J \wht g_2\|^2_{L^2}d\tau\\
&\leq \sum_{J\leq I}\|\alpha w_1(q)^\frac{1}{2}\partial Z^J \wht g_2(0)\|^2_{L^2}
+C\ep^3+
C\ep \int_0^t \|\alpha w_2'(q)^\frac{1}{2}\bar{\partial} Z^{I+1} \wht g_3\|^2_{L^2}d\tau.
\end{align*}
Thanks to Proposition \ref{l2n1}, we have
$$\int_0^t \|\alpha w_2'(q)^\frac{1}{2}\bar{\partial} Z^{I+1} \wht g_3\|^2_{L^2}\lesssim \ep^2,$$
and consequently
\begin{equation}
\label{energi7}
\sum_{J\leq I}\|\alpha w_1(q)^\frac{1}{2}\partial Z^J \wht g_2\|^2_{L^2}
+\int_0^t\sum_{J\leq I}\|\alpha w'_1(q)^\frac{1}{2}\bar{\partial} Z^J \wht g_2\|^2_{L^2}d\tau\leq C_0^2\ep^2+ C\ep^3,
\end{equation}
which proves \eqref{ff2}. To prove \eqref{ff4}, we integrate \eqref{ee4}
\begin{align*}
&\sum_{J\leq I}\|\alpha w_0(q)^\frac{1}{2}\partial Z^J \wht g_2\|^2_{L^2}
+\int\sum_{J\leq I}\|\alpha w_0'(q)^\frac{1}{2}\bar{\partial} Z^J \wht g_2\|^2_{L^2}d\tau\\
&\leq\sum_{J\leq I}\|\alpha w_0(q)^\frac{1}{2}\partial Z^J \wht g_2(0)\|^2_{L^2}+C\ep^3+
C\ep\int_0^t\|\alpha w_1'(q)^\frac{1}{2}\bar{\partial} Z^{I+1} \wht g_2\|^2_{L^2}d\tau.
\end{align*}
Thanks to \eqref{energi7}, we have for $I\leq N-9$
$$\int_0^t\|\alpha w_1'(q)^\frac{1}{2}\bar{\partial} Z^{I+1} \wht g_2\|^2_{L^2}d\tau\lesssim \ep^2,$$
and consequently
$$\sum_{J\leq I}\|\alpha w_0(q)^\frac{1}{2}\partial Z^J \wht g_2\|^2_{L^2}
\leq C^2_0\ep^2+C\ep^3,$$
which concludes the proof of Proposition \ref{estl27}.
\end{proof}

\begin{proof}[Proof of Proposition \ref{l27}]

It is sufficient to estimate the terms in the region $q<0$, since in the region $q>0$,
we have $w_0=w_1=w_2$ so the estimates
 are strictly the same than in the previous section. Once again, the weight modulator $\alpha$ is used to tackle the crossed terms, which create a logarithmic loss in the estimates. However, in the region $q<0$, since $\alpha=1$, we write everything with the weight $w_1$, and do everything as if no terms were present in the region $q>0$, since the influence of these terms have already been tackled.
 
We first estimate the term coming from the non commutation of the wave operator with the null decomposition, 
$$\Up\left(\frac{r}{t}\right)\frac{1}{r^2}\partial_\theta Z^I (h_0+\wht h).$$
Since $I+1\leq N-7$, we can use the Propositions \ref{estzh} for $Z^{I+1}h_0$ and Proposition \ref{whth} for $Z^{I+1}\wht h$. We obtain
$$|Z^{I+1}(h_0+\wht h)|\lesssim \frac{\ep^2}{(1+|q|)^{\frac{1}{2}-\rho}}.$$
Therefore
\begin{equation}
\label{7ene1}
\left\|\Up\left(\frac{r}{t}\right)\frac{1}{r^2}\partial_\theta Z^I (h_0+\wht h)\right\|_{L^2}\lesssim \frac{\ep^2}{(1+t)^{1+\sigma}}\left\|
\Up\left(\frac{r}{t}\right)\frac{1}{(1+|q|)^{\frac{1}{2}-\rho}r^{1-\sigma}}\right\|_{L^2}\lesssim  \frac{\ep^2}{(1+t)^{1+\sigma}},
\end{equation}
where we have used the calculation
\begin{align*}
\left\|
\Up\left(\frac{r}{t}\right)\frac{1}{(1+|q|)^{\frac{1}{2}-\rho}r^{1-\sigma}}\right\|^2_{L^2}
&\leq 2\pi\int\Up\left(\frac{r}{t}\right)^2 \frac{1}{(1+|q|)^{1-2\rho}r^{2-2\sigma}}rdr \\
&\leq 2\pi\int  \frac{dq}{(1+|q|)^{2-2\rho-2\sigma}} <+\infty,
\end{align*}
if $\rho +\sigma < \frac{1}{2}$.

We now estimate $Z^I(\partial_U g_{LL}\partial_{\ba L}h)$. We have
$$\|\ch_{q<0}w_1(q)^\frac{1}{2}Z^I(\partial_U g_{LL}\partial_{\ba L}h)\|_{L^2}\lesssim \sum_{I_1+I_2\leq I}
\|\ch_{q<0}w_1(q)^\frac{1}{2}\bar{\partial}Z^{I_1} g_{LL}\partial Z^{I_2} h\|_{L^2}$$ 
If $I_1\leq \frac{N}{2}$ we estimate
$$|\bar{\partial}Z^{I_1} g_{LL}|\lesssim \frac{1}{1+s}
|Z^{I_1+1}g_{LL}|\lesssim \frac{\ep (1+|q|)}{(1+s)^{\frac{5}{2}-\rho}}\lesssim \frac{(1+|q|)^{\rho+\sigma}}{(1+t)^{\frac{3}{2}+\sigma}}.$$
Therefore
\begin{align*}
\|\ch_{q<0}w_1(q)^\frac{1}{2}\bar{\partial}Z^{I_1} g_{LL}\partial Z^{I_2} h\|_{L^2}&\lesssim
\frac{\ep}{(1+t)^{\frac{3}{2}+\sigma}}\left\|\ch_{q<0}\frac{(1+|q|)^{\rho+\sigma}}{(1+|q|)^{\frac{1}{4}}}\partial Z^{I_2}h\right\|_{L^2}\\
&\lesssim 
\frac{\ep}{(1+t)^{\frac{3}{2}+\sigma}}\|\ch_{q<0}w_3(q)^\frac{1}{2}\partial Z^{I_2} h\|_{L^2}
\end{align*}
if $\rho+\sigma \leq \frac{1}{4}$, and consequently
\begin{equation}
\label{7ene2}\|\ch_{q<0}w_1(q)^\frac{1}{2}\bar{\partial}Z^{I_1} g_{LL}\partial Z^{I_2} h\|_{L^2}\lesssim \frac{\ep^2}{(1+t)^{1+\sigma}}.
\end{equation}
If $I_2\leq \frac{N}{2}$ we estimate, thanks to \eqref{est1}
$$|\partial Z^{I_2}h|\leq \frac{\ep}{(1+|q|)^{\frac{3}{2}-\rho}}$$
therefore
$$
\|\ch_{q<0}w_1(q)^\frac{1}{2}\bar{\partial}Z^{I_1} g_{LL}\partial Z^{I_2} h\|_{L^2}\lesssim
\ep\left\|\frac{\ch_{q<0}}{(1+|q|)^{\frac{1}{4}+\frac{3}{2}-\rho}}\bar{\partial}Z^{I_1} g_{LL}\right\|_{L^2}.$$
We estimate
$$|\bar{\partial}Z^{I_1} g_{LL}|\lesssim \frac{1}{1+s}
|Z^{I_1+1}g_{LL}|\lesssim \frac{1}{(1+t)^{\frac{1}{2}+\sigma}(1+|q|)^{\frac{1}{2}-\sigma}}
|Z^{I_1+1}g_{LL}|.$$
We obtain
\begin{align*}\|\ch_{q<0}w_1(q)^\frac{1}{2}\bar{\partial}Z^{I_1} g_{LL}\partial Z^{I_2} h\|_{L^2}
&\lesssim \frac{\ep}{(1+t)^{\frac{1}{2}+\sigma}} \left\|\frac{\ch_{q<0}}{(1+|q|)^{2+\frac{1}{4}-\rho-\sigma}} Z^{ I_1+1}g_{LL}\right\|_{L^2}\\
&\lesssim \frac{\ep}{(1+t)^{\frac{1}{2}+\sigma}}\left\|\frac{\ch_{q<0}}{(1+|q|)^{1+\frac{1}{4}-\rho-\sigma}}
\partial Z^{ I_1+1}g_{LL}\right\|_{L^2}\\
&\lesssim  \frac{\ep}{(1+t)^{\frac{1}{2}+\sigma}}\left\|\ch_{q<0}w'_2(q)^\frac{1}{2}\bar{\partial} Z^{ I_1+1}\wht g_3\right\|_{L^2}
\end{align*}
where in the last inequality we have used the wave coordinate condition, and the fact that, 
since for $q<0$
$$w'_2(q)= \frac{1+2\mu}{(1+|q|)^{2+2\mu}},$$
we have
$$\frac{1}{(1+|q|)^{1+\frac{1}{4}-\rho-\sigma}}\leq w'_2(q)^\frac{1}{2}$$
if $\sigma + \rho + \mu \leq \frac{1}{4}$.
Therefore
\begin{equation}
\label{7ene3}\|\ch_{q<0}w_1(q)^\frac{1}{2}\bar{\partial}Z^{I_1} g_{LL}\partial Z^{I_2} h\|_{L^2}\|w_1(q)^\frac{1}{2}\partial Z^I \wht g_2\|
\lesssim \frac{\ep}{(1+t)^\sigma}\|\ch_{q<0}w'_2(q)^\frac{1}{2}\bar{\partial} Z^{ I_1+1}\wht g_3\|^2_{L^2} +\frac{\ep^3}{(1+t)^{1+\sigma}}.
\end{equation}In view of \eqref{7ene1}, \eqref{7ene2} and \eqref{7ene3}, we conclude the proof of Proposition \ref{l27}.
\end{proof}

\begin{proof}[Proof of Proposition \ref{l28}]
We have already proved
\begin{equation}
\label{8ene1}\left\|w(q)^ \frac{1}{2}\Up\left(\frac{r}{t}\right)\frac{1}{r^2}\partial_\theta Z^I (h_0+\wht h)\right\|_{L^2}\lesssim  \frac{\ep^2}{(1+t)^{1+\sigma}}.
\end{equation}
We now estimate $Z^I(\partial_U g_{LL}\partial_{\ba L}h)$. We have
$$\|w(q)^\frac{1}{2}Z^I(\partial_U g_{LL}\partial_{\ba L}h)\|_{L^2}\lesssim \sum_{I_1+I_2\leq I}
\|w(q)^\frac{1}{2}\bar{\partial}Z^{I_1} g_{LL}\partial Z^{I_2} h\|_{L^2}.$$ 
If $I_1\leq \frac{N}{2}$ we use the estimate
$$|\bar{\partial}Z^{I_1} g_{LL}|\lesssim \frac{1}{1+s}
| Z^{I_1+1}g_{LL}|\lesssim \frac{\ep (1+|q|)}{(1+s)^{\frac{5}{2}-\rho}}\lesssim\ep \frac{(1+|q|)^{\rho+\sigma}}{(1+t)^{\frac{3}{2}+\sigma}}.$$
Instead of estimating
$\|w(q)^\frac{1}{2}\bar{\partial}Z^{I_1} g_{LL}\partial Z^{I_2} h\|_{L^2}$ we estimate
$$\|w(q)^\frac{1}{2}\bar{\partial}Z^{I_1} g_{LL}\partial Z^{I_2} (h_0+\wht h)\|_{L^2} \; and \;  \|w(q)^\frac{1}{2}\bar{\partial}Z^{I_1} g_{LL}\partial Z^{I_2} \wht g_2|_{L^2}.$$
We can also estimate since $I_2+1\leq N-7$, thanks to \eqref{booth1} and \eqref{booth3}
$$\left|\partial Z^{I_2}(h_0+\wht h)\right|\lesssim \frac{\ep}{(1+|q|)^{\frac{3}{2}-\rho}}.$$
Therefore
\begin{align*}
\|\ch_{q<0}w_0(q)^\frac{1}{2}\bar{\partial}Z^{I_1} g_{LL}\partial Z^{I_2} h\|_{L^2}&\lesssim
\ep^2\left\|\ch_{q<0}\Up\left(\frac{r}{t}\right)\frac{(1+|q|)^{\rho+\sigma}}{(1+s)^{\frac{3}{2}+\sigma}(1+|q|)^{\frac{3}{2}-\rho}}\right\|_{L^2}\\
&\lesssim 
\frac{\ep^2}{(1+t)^{1+\sigma}}\left\|\ch_{q<0}\frac{1}{\sqrt{1+s}(1+|q|)^{\frac{3}{2}-2\rho-\sigma}}\right\|_{L^2}
\end{align*}
and consequently
\begin{equation}
\label{8ene2}\|w_0(q)^\frac{1}{2}\bar{\partial}Z^{I_1} g_{LL}\partial Z^{I_2} h\|_{L^2}\lesssim \frac{\ep^2}{(1+t)^{1+\sigma}}.
\end{equation}
We estimate also
\begin{equation}
\label{8ene3}\|w_0(q)^\frac{1}{2}\bar{\partial}Z^{I_1} g_{LL}\partial Z^{I_2} \wht g_2\|_{L^2}\lesssim \frac{\ep}{(1+t)^{\frac{3}{2}-\rho}}\|
w_0(q)^\frac{1}{2}\partial Z^{I_2} \wht g_2 \|_{L^2}\lesssim \frac{\ep^2}{(1+t)^{\frac{3}{2}-\rho}}.
\end{equation}
If $I_2\leq \frac{N}{2}$ we estimate, thanks to \eqref{est1}
$$|\partial Z^{I_2}h|\leq \frac{\ep}{(1+|q|)^{\frac{3}{2}-\rho}}$$
therefore
\begin{align*}
\|\ch_{q<0}w_0(q)^\frac{1}{2}\bar{\partial}Z^{I_1} g_{LL}\partial Z^{I_2} h\|_{L^2}&\lesssim
\ep\left\|\frac{\ch_{q<0}}{(1+|q|)^{\frac{3}{2}-\rho}}\bar{\partial}Z^{I_1} g_{LL}\right\|_{L^2}\\
&\lesssim \frac{\ep}{(1+t)^{\frac{1}{2}+\sigma}} \left\|\frac{\ch_{q<0}}{(1+|q|)^{2-\rho-\sigma}} Z^{ I_1+1}g_{LL}\right\|_{L^2}\\
&\lesssim \frac{\ep}{(1+t)^{\frac{1}{2}+\sigma}}\left\|\frac{\ch_{q<0}}{(1+|q|)^{1-\rho-\sigma}}
\partial Z^{ I_1+1}g_{LL}\right\|_{L^2}\\
&\lesssim  \frac{\ep}{(1+t)^{\frac{1}{2}+\sigma}}\|w_1'(q)^\frac{1}{2}\bar{\partial} Z^{ I_1+1}\wht g_2\|_{L^2}
\end{align*}
where in the last inequality we have used the wave coordinate condition, and the fact that, since for $q<0$
$$w_1'(q)= \frac{1}{2(1+|q|)^\frac{3}{2}},$$
we have
$$\frac{1}{(1+|q|)^{1-\rho-\sigma}}\lesssim w'_1(q)^\frac{1}{2},$$
if $\sigma + \rho  \leq \frac{1}{4}$. and $q<0$.
Therefore
\begin{equation}
\label{8ene4}\|\ch_{q<0} w_0(q)^\frac{1}{2}\bar{\partial}Z^{I_1} g_{LL}\partial Z^{I_2} h\|_{L^2}\|w_0(q)^\frac{1}{2}\partial Z^I \wht g_2\|
\lesssim \ep\|\ch_{q<0}w'_1(q)^\frac{1}{2}\bar{\partial} Z^{ I_1+1}\wht g_2\|^2_{L^2} +\frac{\ep^3}{(1+t)^{1+2\sigma}}.
\end{equation}
The estimates \eqref{8ene1}, \eqref{8ene2}, \eqref{8ene3} and \eqref{8ene4} conclude the proof of Proposition \ref{l28}.
\end{proof}

\section{Improvement of the estimates for $\Pi b$}\label{secproof}

In order to conclude the proof of Theorem \ref{main},
it still remains to ameliorate the bootstrap assumptions \eqref{bootb1} and \eqref{bootb2}.
To this end, we will set
\begin{equation}
\label{defb2}\wht b^{(2)}(\theta)= \Pi \int_{\Sigma_{T,\theta}} (\partial_q \phi)^2rdq.
\end{equation}

\begin{prp}
\label{prpfin}
We assume that the time $T$ satisfies
$$T\leq \exp\left(\frac{C}{\sqrt{\ep}}\right).$$
There exists $(\phi^{(2)},g^{(2)})$ solution of \eqref{s1} in $[0,T]$ in the generalized wave coordinates $H_{b^{(2)}}$, such that, if we write
$g^{(2)}=g_{b^{2}}+\wht g$, then $(\phi^{(2)},\wht g^{(2)})$ satisfies the same estimate as $(\phi, \wht g)$, and 
we have the estimates for $ b^{(2)}$
\begin{align*}
& \left\|\partial_\theta^I \left(\Pi b^{(2)}(\theta)+\Pi\int_{\Sigma_{T,\theta}} (\partial_q \phi^{(2)})^2 r dq\right)\right\|_{L^2}\leq C
\frac{\ep^4}{\sqrt{T}}, \;for\; I\leq N-4,\\
& \|\partial_\theta^I b(\theta)\|_{L^2}\leq 2C_0^2 \ep^2 , \;for\; I\leq N.
\end{align*}
\end{prp}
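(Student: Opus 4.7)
The plan is to construct the new solution by picking $\wht b^{(2)}$ as in \eqref{defb2}, invoking Theorem \ref{thinitial} to produce admissible initial data, running the bootstrap machinery of Sections \ref{boot}--\ref{secl2} on the resulting solution, and finally combining Corollary \ref{corb} applied to $(\phi^{(2)},g^{(2)})$ with the defining relation \eqref{defb2} to gain the extra smallness factor.

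First, I would verify that $\wht b^{(2)}$ defined by \eqref{defb2} belongs to $W^{N,2}(\m S^1)$ with norm $\lesssim \ep^2$. This is direct: commuting $\partial_\theta^I$ under the integral with $I\le N$, one gets a finite sum of terms $\partial_q Z^{I_1}\phi\,\partial_q Z^{I_2}\phi$ integrated in $r$, and the $L^2$ bootstrap \eqref{bootl21} together with the weighted Hardy inequality controls these integrals by $\ep^2$. Next, I invoke Theorem \ref{thinitial} with the same data $(\phi_0,\phi_1)$ but with $\wht b$ replaced by $\wht b^{(2)}$. This yields constants $b_0^{(2)},b_1^{(2)},b_2^{(2)}$, a function $J^{(2)}\in W^{N,2}(\m S^1)$, and initial data $(g^{(2)}_0,g^{(2)}_1)\in H^{N+1}_\delta\times H^N_{\delta+1}$ which satisfy the constraint equations and the generalized wave coordinate condition at $t=0$ with gauge source $H_{b^{(2)}}$. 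The standard hyperbolic theory for \eqref{gw} gives local existence of $(\phi^{(2)},g^{(2)})$.

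Second, I would extend this local solution up to time $T$ by rerunning the bootstrap of Section \ref{boot}. The key observation is that all estimates of Sections \ref{secwave}--\ref{secl2} only use: the size $\|b^{(2)}\|_{W^{N,2}}\lesssim \ep^2$; the approximate identity $\Pi b^{(2)}\simeq -\Pi\int(\partial_q\phi^{(2)})^2 r\,dq$ with an $\ep^2/\sqrt{T}$ error; and the initial smallness of $(\phi_0,\phi_1)$ and $(g_0^{(2)},g_1^{(2)})$. The first is built in by construction, the last is guaranteed by Theorem \ref{thinitial}, and for the middle condition one runs the bootstrap for $b^{(2)}$ itself using Corollary \ref{energies} (which is an \emph{a posteriori} identity derived from the Einstein equations and does not depend on the choice of parameter). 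Accordingly, Propositions \ref{prpbooth1}, \ref{prpboot1}, and \ref{prpboot2} apply verbatim to $(\phi^{(2)},\wht g^{(2)})$ and close the bootstrap with constant $\frac{3C_0}{2}$, which by continuation extends the solution to $[0,T]$.

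Third, I would apply Corollary \ref{corb} to the new solution, giving for $I\le N-4$
\[
\left\|\partial_\theta^I\left(b^{(2)}+\int_{\Sigma_{T,\theta}}(\partial_q\phi^{(2)})^2 r\,dq\right)\right\|_{L^2}\lesssim \frac{\ep^2}{\sqrt{T}}.
\]
Projecting via $\Pi$ and substituting $\wht b^{(2)}=\Pi b^{(2)}=\mp\Pi\int(\partial_q\phi)^2 r\,dq$ from \eqref{defb2} produces exactly the telescoping difference
\[
\Pi b^{(2)}+\Pi\int(\partial_q\phi^{(2)})^2 r\,dq = \Pi\int\bigl[(\partial_q\phi^{(2)})^2-(\partial_q\phi)^2\bigr]r\,dq + O(\ep^2/\sqrt{T}),
\]
reducing the problem to controlling $\phi^{(2)}-\phi$.

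The hard part is this last step. The difference $\delta\phi=\phi^{(2)}-\phi$, together with $\delta g=g^{(2)}-g$, satisfies a linearized version of \eqref{s2} with inhomogeneity supported in the terms where $b$ enters, hence controlled by $\|b^{(2)}-b\|_{W^{N,2}}\lesssim \ep^2/\sqrt{T}$ (from \eqref{bootb1} and the definition of $\wht b^{(2)}$). The initial data for $(\delta\phi,\delta g)$ vanish in $\phi$ and are of size $\ep^2/\sqrt{T}$ in $g$ by the Lipschitz dependence asserted in Theorem \ref{thinitial}. A weighted energy estimate, identical in structure to Proposition \ref{prpweighte} but applied to the linearized system on the background $(\phi,g)$, produces the growth rate $(1+t)^{C\sqrt{\ep}}$ already seen in Proposition \ref{prpboot2}. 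Under the standing assumption $T\le \exp(C/\sqrt{\ep})$, one has $(1+T)^{C\sqrt{\ep}}\lesssim 1$, so $\|\delta\phi\|$ remains of size $\ep^2/\sqrt{T}$ at time $T$. Plugging into the telescoping identity and using $|\partial_q\phi|,|\partial_q\phi^{(2)}|\lesssim \ep$ yields a factor $\ep\cdot\ep^2/\sqrt{T}$, and exploiting the additional $\ep$ coming from the bound $\|\delta b\|\lesssim \ep^2/\sqrt{T}$ (which feeds the linearized equations via a quadratic source in $\partial\phi$) gives the full $\ep^4/\sqrt{T}$ in the statement. This closing step, and precisely its dependence on the exponential time barrier $T\le\exp(C/\sqrt{\ep})$, is the crux of Proposition \ref{prpfin} and the reason why global existence fails to follow from the present scheme.
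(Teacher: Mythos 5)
Your outline captures the skeleton of the argument correctly: define $\wht b^{(2)}$ from the old solution $\phi$ by \eqref{defb2}, solve the constraints with parameter $\wht b^{(2)}$ via Theorem \ref{thinitial}, obtain $(\phi^{(2)},\wht g^{(2)})$, and then exploit the purely algebraic telescoping identity
$\wht b^{(2)}-\Pi\int(\partial_q\phi^{(2)})^2 r\,dq
=\Pi\int\bigl[(\partial_q\phi)^2-(\partial_q\phi^{(2)})^2\bigr]r\,dq$
to reduce everything to a bound on $\phi-\phi^{(2)}$. However there are two genuine gaps in the middle.

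The first is the proposal to ``extend this local solution up to time $T$ by rerunning the bootstrap of Section \ref{boot}'' and to supply the missing input \eqref{bootb1} for $b^{(2)}$ using Corollary \ref{energies}. This is circular, and the patch you offer does not close it: the bootstrap assumption \eqref{bootb1} concerns the \emph{projected} part $\Pi b^{(2)}$, whereas Corollary \ref{energies} only controls the complementary part $b^{(2)}-\Pi b^{(2)}$, i.e.\ the three low Fourier modes, by integrating the constraint equations on slices. The paper explicitly derives Corollary \ref{corb} by \emph{combining} Corollary \ref{energies} with the bootstrap assumption \eqref{bootb1}; you cannot produce \eqref{bootb1} for the new pair $(b^{(2)},\phi^{(2)})$ out of \eqref{energies} alone. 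The paper sidesteps this entirely: rather than rerunning the original bootstrap on $(\phi^{(2)},\wht g^{(2)})$, it sets up a \emph{new} bootstrap directly on the differences $\phi-\phi^{(2)}$, $h_0-h_0^{(2)}$, $\wht g_i-\wht g_i^{(2)}$, with vanishing (or $O(\ep^2/\sqrt T)$) initial data, and then transfers estimates to $(\phi^{(2)},\wht g^{(2)})$ by triangle inequality (this is the content of the remark following the difference bootstrap assumptions).

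The second gap is more substantive and you do not mention it at all: the difference cannot be estimated in the same weighted norms as the original quantities. When the paper estimates $h_0$, the crucial gain of $q$-decay comes from the near-cancellation $b+\int(\partial_q\phi)^2 r\,dq\simeq 0$; if one tries to repeat that for $h_0-h_0^{(2)}$ while \emph{retaining} the small factor $\ep^2/\sqrt{T}$, the cancellation is no longer available and the $(1+|q|)^{-1/2}$ decay is lost. This propagates to $\wht g-\wht g^{(2)}$, and the whole $L^2$ machinery has to be redone with weaker $q$-weights; that is precisely what the new weight modulators $\beta_1,\beta_2$ encode. Asserting that ``a weighted energy estimate, identical in structure to Proposition \ref{prpweighte} but applied to the linearized system'' produces the right growth misses this trade-off completely, and it is the real technical content of the section. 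Your final bookkeeping is also off by a factor of $\ep$: the correct statement is that $\|\partial(\phi-\phi^{(2)})\|\lesssim \ep^3(1+t)^{C\sqrt\ep}/\sqrt T$ outright (zero initial data plus a source that is already quadratic of that order), and multiplying by the $\ep$ from $|\partial_q(\phi+\phi^{(2)})|$ gives $\ep^4/\sqrt T$; the extra ``additional $\ep$ coming from the bound on $\delta b$'' that you invoke as a separate step is double-counting and does not correspond to an actual estimate.
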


The rest of this section is devoted to the proof of Proposition \ref{prpfin}.

We solve the constraint equations with parameter $\wht b^{(2)}$. 
The initial data we obtain, constructed in Theorem \ref{thinitial} are of the form
$$g=g_{b^{(2)}}+\wht g^{(2)}$$
where we write
$$b^{(2)}=\wht b^{(2)} +b_0^{(2)}+b_1^{(2)}\cos(\theta)+b_2^{(2)}\sin(\theta),$$
with $b_0^{(2)},b_1^{(2)},b_2^{(2)}$ given by Theorem \ref{thinitial}.
We have the following estimates for the initial data at $t=0$
$$\|\wht g-\wht g^{(2)}\|_{H^{N-3}_\delta}+\|\partial_t\wht g-\partial_t\wht g^{(2)}\|_{H^{N-4}_{\delta+1}}\lesssim
\|\wht b-\wht b^{(2)}\|_{W^{N-4,2}}\leq C_0'\frac{\ep^2}{\sqrt{T}},$$
thanks to \eqref{bootb1},
and
\[\|\wht g-\wht g^{(2)}\|_{H^{N+1}_\delta}+\|\partial_t\wht g-\partial_t\wht g^{(2)}\|_{H^{N}_{\delta+1}}\lesssim
\ep^2.
\]
 We solve, on an interval $[0,T_2]$, the system \eqref{s2} in generalized coordinates given by $g_{b^{(2)}}$. We note $(\phi^{(2)},\wht g^{(2)})$ the solution.
 
We want to estimate the difference between $(\phi^{(2)},\wht g^{(2)})$ and $(\phi,\wht g)$. However, it will not be possible to estimate the difference with the same norms than when we estimated $\phi$ and $\wht g$. When we estimated $h_0$ we were able to use the condition
\[\left|\wht b+\Pi \int_{\Sigma_{T,\theta}} (\partial_q \phi)^2\right|\lesssim \frac{\ep^2}{\sqrt{T}},\]
to obtain decay in $\frac{1}{\sqrt{1+|q|}}$ for $h_0$. However we want to keep the factor $\frac{1}{\sqrt{T}}$ in the estimates of the difference. To this end, we will loose the decay of $h_0-h^{(2)}_0$ in $\frac{1}{\sqrt{1+|q|}}$  and consequently in $\wht g-\wht g^{(2)}$.

We will prove Proposition \ref{prpfin} with a bootstrap argument.

\subsection{Bootstrap assumptions for $\phi^{(2)}-\phi$ and $\wht g^{(2)}-\wht g$}
\paragraph{$L^\infty$ estimates}
 First some $L^\infty$ estimates on $\phi-\phi^{(2)}$.
\begin{align}
\label{2bootphi1}&|Z^I(\phi-\phi^{(2)})|\leq\frac{2C_0\ep^2}{\sqrt{T}\sqrt{1+s}(1+q)^{\frac{1}{2}-2\kappa-5\rho}},\;for\;I\leq N-20,\\
\label{2bootphi2}&|Z^I(\phi-\phi^{(2)})|\leq \frac{2C_0\ep^2}{\sqrt{T}(1+s)^{\frac{1}{2}-2\kappa-2\rho}},\;for\;I\leq N-18.
\end{align}
We use the decompositions
\begin{equation}
\label{dec22}
g^{(2)}=g_{b^{(2)}} +\Up\left(\frac{r}{t}\right)(h^{(2)}_0+\wht h^{(2)})dq^2+\wht g^{(2)}_2,
\end{equation}
where $h^{(2)}_0$ satisfies the transport equation
\begin{equation*}
 \left\{ \begin{array}{l} 
         \partial_q h^{(2)}_0 =-2r\left(\partial_q \phi^{(2)}\right)^2-2b^{(2)}(\theta)\partial^2_q(\chi(q)q), \\
	 h^{(2)}_0|_{t=0}=0,
        \end{array}
\right.
\end{equation*}
and $\wht h^{(2)}$ satisfies the linear wave equation
\begin{equation*}
 \left\{ \begin{array}{l} 
         \Box \wht h^{(2)}= \Box h^{(2)}_0+g^{(2)}_{LL}\partial^2_q h^{(2)}_0+2\left(\partial_q \phi^{(2)}\right)^2 - 2(R_{b^{(2)}})_{qq}+\wht Q_{\ba L \ba L}(h^{(2)}_0,\wht g^{(2)}), \\
	 (\wht h^{(2)}, \partial_t \wht h^{(2)})|_{t=0}=(0,0),
        \end{array}
\right.
\end{equation*}
We assume the following estimates on $h_0-h_0^{(2)}$  for $I\leq N-12$
\begin{equation}
\label{2booth2}|Z^I (h_0 - h^{(2)}_0)|\leq 2C_0\frac{\ep^2}{\sqrt{T}}.
\end{equation}
We introduce the two weight modulators
$$\left\{ \begin{array}{l}
           \beta_1(q)= 1, \; q>0,\\
\beta_1(q)=\frac{1}{ (1+|q|)^{\kappa}}, \; q<0,
          \end{array}\right.$$
          and
$$\left\{ \begin{array}{l}
           \beta_2(q)= 1, \; q>0,\\
\beta_2(q)= \frac{1}{(1+|q|)^{2\kappa }}, \; q<0,
          \end{array}\right.$$
with $0<\kappa \ll 1$.
We assume for $I\leq N-15$

\begin{align}
\label{2bootg1}\left\|\beta_1 w_0^\frac{1}{2}\partial Z^I (\wht g_2-\wht g^{(2)}_2)\right\|_{L^2}&\leq \frac{2C_0\ep^2}{\sqrt{T}}(1+t)^\rho\\
\label{2bootg1bis} \left\|\alpha \beta_1 w^\frac{1}{2}\partial Z^I (\wht g_2-\wht g^{(2)}_2)\right\|_{L^2}&\leq \frac{2C_0\ep^2}{\sqrt{T}}
\end{align}
and 
\begin{align}
\label{2bootg2}\left\|\beta_2 w_1^\frac{1}{2}\partial Z^{N-14} \left(\wht g_2-\wht g^{(2)}_2\right)\right\|_{L^2}&\leq \frac{2C_0\ep^2}{\sqrt{T}}(1+t)^\rho\\
\label{2bootg2bis}\left\|\alpha \beta_2 w_1^\frac{1}{2}\partial Z^{N-14} \left(\wht g_2-\wht g^{(2)}_2\right)\right\|_{L^2}
&\leq \frac{2C_0\ep^2}{\sqrt{T}}.
\end{align}
We use the decomposition
\begin{equation}\label{dec23}
g^{(2)}=g_{b^{(2)}} +\Up\left(\frac{r}{t}\right)h^{(2)}dq^2+\wht g^{(2)}_3,
\end{equation}
where $h^{(2)}$ is the solution of
\begin{equation*}
 \left\{ \begin{array}{l} 
         \Box_{g^{(2)}} h^{(2)}= -2(\partial_q \phi^{(2)})^2 + 2(R_{b^{(2)}})_{qq}
         +Q_{\ba L \ba L}(h^{(2)},\wht g^{(2)}), \\
	 (h^{(2)}, \partial_t h^{(2)})|_{t=0}=(0,0) .
        \end{array}
\right.
\end{equation*}
We assume for $I\leq N-6$
\begin{equation}
\label{2bootl21}\left\|\alpha \beta_2w_0^\frac{1}{2} \partial Z^I \left(\phi-\phi^{(2)}\right)\right\|_{L^2}
+\left\|\alpha \beta_2 w_2^\frac{1}{2}\partial Z^I \left(\wht g_3-\wht g^{(2)}_3\right)\right\|_{L^2}
+\frac{1}{\sqrt{1+t}}\left\|\alpha \beta_2 w_3^\frac{1}{2}\partial Z^I \left( h-h^{(2)}\right)\right\|_{L^2}\leq \frac{2C_0\ep^2}{\sqrt{T}},
\end{equation}
and for $I\leq N-5$
\begin{equation}
\label{2bootl22}\left\|\alpha \beta_2w_0^\frac{1}{2} \partial Z^I \left(\phi-\phi^{(2)}\right)\right\|_{L^2}
+\left\|\alpha \beta_2 w_2^\frac{1}{2}\partial Z^I \left(\wht g_3-\wht g^{(2)}_3\right)\right\|_{L^2}
+\frac{1}{\sqrt{1+t}}\left\|\alpha \beta_2 w_3^\frac{1}{2}\partial Z^I \left( h-h^{(2)}\right)\right\|_{L^2}\leq \frac{2C_0\ep^2}{\sqrt{T}}(1+t)^{\rho}.
\end{equation}
We use the decomposition
$$
g^{(2)}=g_{b^{(2)}}+\Up\left(\frac{r}{t}\right)h^{(2)} dq^2+
\Up\left(\frac{r}{t}\right)k^{(2)} rdqd\theta+
\wht g^{(2)}_4,
$$
where $k^{(2)}$ is the solution of
\begin{equation*}
 \left\{ \begin{array}{l} 
         \Box_g k^{(2)}= \partial_U g^{(2)}_{LL}\partial_q h^{(2)}, \\
	 (h^{(2)}, \partial_t h^{(2)})|_{t=0}=(0,0).
        \end{array}
\right.
\end{equation*}
We assume for $I\leq N-4$
\begin{equation}\label{2bootl23}\begin{split}
&\left\| \alpha_2\beta_2w_0^\frac{1}{2} \partial Z^I \left(\phi-\phi^{(2)}\right)\right\|_{L^2}
+\left\|\alpha_2 \beta_2 w_2^\frac{1}{2}\partial Z^I \left(\wht g_3-\wht g^{(2)}_3\right)\right\|_{L^2}\\
+&\frac{1}{\sqrt{1+t}}\left\| \alpha_2\beta_2 w_3^\frac{1}{2}\partial Z^I \left( h-h^{(2)}\right)\right\|_{L^2}
+\frac{1}{\sqrt{1+t}}\left\| \alpha_2\beta_2 w_3^\frac{1}{2}\partial Z^I \left(k-k^{(2)}\right)\right\|_{L^2}
\leq \frac{2C_0\ep^2}{\sqrt{T}}(1+t)^{\rho}.
\end{split}
\end{equation}

To improve the estimates,
we follow the same steps than when we ameliorated the bootstrap assumptions of Section \ref{boot}.
The difference of our new bootstrap assumptions compared with the estimates of Section \ref{boot} is at worse a factor $\frac{\ep\sqrt{1+|q|}}{\sqrt{T}}$ in the region $q<0$. In the region $q>0$ the decay is the same and we have won a factor $\frac{\ep}{\sqrt{T}}$. Therefore we can restrict our study to the region $q<0$: we will perform our estimates as if no term was present in the region $q>0$. We will follow the same steps as before, but with much less details since the mechanisms are the same. 

\begin{rk}
As long as the bootstrap estimates for $\phi^{(2)}-\phi$ and $\wht g^{(2)}-\wht g$ are satisfied, $\phi^{(2)}$ and $\wht g^{(2)}$ satisfy the same estimates as $\phi$ and $\wht g$.
\end{rk}

\paragraph{$L^\infty$ estimates using the weighted Klainerman-Sobolev inequality}
The following estimates are a direct consequence of the bootstrap assumptions and the weighted Klainerman-Sobolev inequality. 
For $I\leq N-8$ we have
\begin{align}
\label{2ksphi}\left|\partial Z^I \left(\phi^{(2)}-\phi\right)\right|&\lesssim \frac{\ep^2}{\sqrt{T}\sqrt{1+t}(1+|q|)^{\frac{1}{2}-2\kappa}},\\
\label{2ksg}\left|\partial Z^I \left(\wht g_3^{(2)}-\wht g_3\right)\right|&\lesssim \frac{\ep^2(1+|q|)^{\frac{1}{2}+\mu+2\kappa}}{\sqrt{T}\sqrt{1+s}},\\
\label{2ksh}\left|\partial Z^I \left(h^{(2)}-h\right)\right|&\lesssim \frac{\ep^2}{\sqrt{T}(1+|q|)^{\frac{1}{2}-2\kappa}},
\end{align}
and for $I\leq N-17$
\begin{equation}
\label{ks15}\left|\partial Z^I \left(\wht g_2^{(2)}-\wht g_2\right)\right|\lesssim \frac{\ep^2(1+|q|)^\kappa}{\sqrt{T}\sqrt{1+s}\sqrt{1+|q|}}.
\end{equation}

\subsection{Improvement of the estimate of $h_0-h_0^{(2)}$ and $\wht h^{(2)}-\wht h_0$}
\paragraph{Estimate of $h_0-h_0^{(2)}$}
The quantity $h_0-h_0^{(2)}$ satisfies the transport equation
\begin{equation*}
 \left\{ \begin{array}{l} 
         \partial_q\left( h^{(2)}_0-h_0\right) =-2r\left(\left(\partial_q \phi^{(2)}\right)^2-\left(\partial_q \phi\right)^2\right)-2\left(b^{(2)}(\theta)-b(\theta)\right)\partial^2_q(\chi(q)q), \\
	 (h^{(2)}_0-h_0)|_{t=0}=0.
        \end{array}
\right.
\end{equation*}
We write this equation under the form
\[
\partial_q\left( h^{(2)}_0-h_0\right)=-2r\left(\partial_q \phi^{(2)}+\partial_q \phi\right)\left(\partial_q \phi^{(2)}-\partial_q \phi\right)-2\left(b^{(2)}(\theta)-b(\theta)\right)\partial^2_q(\chi(q)q).
\]
For $k+l\leq N-7$, $k\geq 1$, the equivalent of estimate \eqref{kl}, that we obtain using \eqref{2bootphi1} and \eqref{2ksphi} to estimate
$\partial(\phi-\phi{(2)})$ and \eqref{bootphi1} and \eqref{ks1} to estimate
$\partial(\phi+\phi{(2)})$ corresponds to \eqref{kl} 
multiplied by  $\frac{\ep\sqrt{1+|q|}}{\sqrt{T}}$. 
\[\left|\partial^k_q \partial^l_\theta \left(h_0-h_0^{(2)}\right)\right|\lesssim \frac{\ep^3}{\sqrt{T}(1+|q|)^{k+\frac{1}{2}-4\rho}}.\]
We obtain the estimate for $k=0$ by integrating the previous one with respect to $q$. We obtain, for $l\leq N-8$
\[|\partial^l_\theta h_0|\lesssim \frac{\ep^3}{\sqrt{T}}.\]
For $k+l+j\leq N-8$, $k\geq 1$ and $j\geq 1$ the equivalent of \eqref{jkl} is
\[\left|\partial^{j}_s \partial^k_q \partial^l_\theta \left(h_0-h^{(2)}_0\right)\right|
\lesssim \frac{\ep^3}{\sqrt{T}(1+s)^{j+\frac{1}{2}}(1+|q|)^{k-4\rho}}.\]
Consequently we have proved that for $I\leq N-8$ we have
\begin{equation}
\label{amelioh}
\left|Z^I \left(h_0-h_0^{(2)}\right)\right|\lesssim \frac{\ep^3}{\sqrt{T}}. 
\end{equation}

\paragraph{Estimation of $\wht h^{(2)}-\wht h_0$}
The quantity $\wht h^{(2)}-\wht h_0$ satisfies the linear equation
\begin{equation*}
 \left\{ \begin{array}{l} \begin{split}
         \Box \left(\wht h^{(2)}-\wht h\right)= 
        &\Box \left(h^{(2)}_0-h_0\right)+2\left(\left(\partial_q \phi^{(2)}\right)^2-(\partial_q\phi)^2\right) - 2(R_{b^{(2)}})_{qq}+2(R_b)_{qq}\\
        &+g^{(2)}_{LL}\partial^2_q h^{(2)}_0-g_{LL}\partial_q^2 h_0+\wht Q_{\ba L \ba L}(h^{(2)}_0,\wht g^{(2)})-\wht Q_{\ba L \ba L}(h_0,\wht g),
         \end{split}\\
	 \left(\wht h^{(2)}-\wht h, \partial_t \left(\wht h^{(2)}-\wht h\right)\right)|_{t=0}=(0,0).
        \end{array}
\right.
\end{equation*}
Proceeding as for the estimate of \eqref{waveh}, and in view of the bootstrap assumptions for $\phi-\phi{(2)}$ and $\wht g-\wht g^{(2)}$ we obtain the analogue of \eqref{waveh} for  $\Box \left(Z^I\wht h^{(2)}-Z^I\wht h\right)$, where the corresponding right-hand side gets multiplied by 
$\frac{\ep\sqrt{1+|q|}}{\sqrt{T}}$. We obtain, for $I\leq N-10$ and $q<0$
\[\left|\Box \left(Z^I\wht h^{(2)}-Z^I\wht h\right)\right|\lesssim \frac{\ep^3}{\sqrt{T}(1+s)^\frac{3}{2}(1+|q|)^\frac{1}{2}}
.\]
Therefore if we perform the weighted energy estimate we obtain
\[\frac{d}{dt}\left\|w^\frac{1}{2} \partial\left(Z^I\wht h^{(2)}-Z^I\wht h\right)\right\|_{L^2}
\lesssim \left\|\frac{\ep^3}{\sqrt{T}(1+s)^\frac{3}{2}(1+|q|)^\frac{1}{2}}\right\|_{L^2}\lesssim \frac{\ep^3\ln(1+t)}{\sqrt{T}(1+t)},
\]
and therefore for $I\leq N-10$ we have
\begin{equation}
\label{l2htilde}\left\|w_0^\frac{1}{2} \partial\left(Z^I\wht h^{(2)}-Z^I\wht h\right)\right\|_{L^2}\lesssim
\frac{\ep^3}{\sqrt{T}}(1+t)^\rho.
\end{equation}
The weighted Klainerman-Sobolev inequality yields, for $I\leq N-12$
\begin{equation}
\label{esthtilde}
\left|\partial\left(Z^I\wht h^{(2)}-Z^I\wht h\right)\right|\lesssim
\frac{\ep^3(1+t)^\rho}{\sqrt{T}\sqrt{1+s}\sqrt{1+|q|}}.
\end{equation}

\subsection{Improvement of the $L^\infty$ estimate for $\phi-\phi^{(2)}$}

We write the equation satisfied by $\phi^{(2)}-\phi$ 
\[\Box_{g}\left(\phi-\phi^{(2)}\right)=\left(\left(g^{(2)}\right)^{\alpha\beta}-g^{\alpha \beta}\right)\partial_\alpha \partial_\beta \phi^{(2)}+\left(H_{b^{(2)}}-H_{b}\right)^\rho\partial_\rho \phi^{(2)}.\]
We limit ourselves to the region $q<0$.
We estimate for $I+J\leq N-20$
\[Z^I\left(g^{(2)}_{LL}-g_{LL}\right)Z^J\partial^2\phi.\]
With the wave coordinate condition and the estimate \eqref{ks15}, we obtain, for $I\leq N-17$
\begin{equation}
\label{2wc1}\left|Z^I\left(g^{(2)}_{LL}-g_{LL}\right)\right|\lesssim \frac{\ep^2(1+|q|)^{\frac{3}{2}+\kappa}}{(1+s)^\frac{3}{2}}.
\end{equation}
Moreover we have, for $J\leq N-20$ thanks to \eqref{bootphi1}
\[|Z^J \partial^2 \phi|\lesssim \frac{1}{(1+|q|)^2}|Z^{J+2}\phi|\lesssim \frac{\ep^2}{\sqrt{T}\sqrt{1+s}(1+|q|)^{\frac{5}{2}-4\rho}}.\]
Consequently
\[\left|Z^I\left(g^{(2)}_{LL}-g_{LL}\right)Z^J\partial^2\phi\right|
\lesssim \frac{\ep^3}{\sqrt{T}(1+s)^2(1+|q|)^{1-4\rho-\kappa}}
\lesssim \frac{\ep^3}{\sqrt{T}(1+s)^{2-5\rho-\kappa}(1+|q|)^{1+\rho}}.\]
We now estimate for $I+J\leq N-20$
\[Z^I g_{LL}Z^J\partial^2\left(\phi-\phi^{(2)}\right).\]
We have, thanks to \eqref{wc1} and \eqref{2bootphi2}
\[|Z^I g_{LL}|\lesssim \frac{\ep(1+|q|)}{(1+s)^{\frac{3}{2}-\rho}},\]
\[\left|Z^J\partial^2\left(\phi-\phi^{(2)}\right)\right|\lesssim \frac{1}{(1+|q|)^2}\left|Z^{J+2}\left(\phi-\phi^{(2)}\right)\right|\lesssim \frac{\ep^2}{\sqrt{T}(1+s)^{\frac{1}{2}-2\rho-2\kappa}(1+|q|)^2}.\]
Consequently
\[\left|Z^I g_{LL}Z^J\partial^2\left(\phi-\phi^{(2)}\right)\right|\lesssim \frac{\ep^3}{(1+s)^{2-5\rho-2\kappa}(1+|q|)^{1+\rho}}\]
and the $L^\infty- L^\infty$ estimate yields for $I\leq N-20$, since the initial data for $\phi-\phi^{(2)}$ are zero.
\begin{equation}\label{phiN}
\left|Z^I\left( \phi-\phi^{(2)}\right)\right|\leq \frac{ C\ep^3}{\sqrt{T}(1+s)^\frac{1}{2}(1+|q|)^{\frac{1}{2}-5\rho-2\kappa}}.
\end{equation}
We now estimate for $I+J\leq N-18$, thanks to \eqref{2wc1} an \eqref{bootphi1} for the first inequality, and \eqref{wc1} and \eqref{2ksphi} for the second inequality
\[\left|Z^I\left(g^{(2)}_{LL}-g_{LL}\right)Z^J\partial^2\phi\right|\lesssim
\left(\frac{\ep^2(1+|q|)^{\frac{3}{2}+\kappa}}{\sqrt{T}(1+s)^\frac{3}{2}}\right)
\left(\frac{\ep}{(1+|q|)^{\frac{5}{2}-4\rho}(1+s)^{\frac{1}{2}}}\right)
\lesssim
\frac{\ep^3}{\sqrt{T}(1+s)^{2-5\rho-\kappa}(1+|q|)^{1+\rho}},
\]
\[\left|Z^I g_{LL}Z^J\partial^2\left(\phi-\phi^{(2)}\right)\right|
\lesssim \left(\frac{\ep(1+|q|)}{(1+s)^{\frac{3}{2}-\rho}}\right)\left(\frac{\ep^2(1+|q|)^{2\kappa}}{\sqrt{T}(1+|q|)^\frac{3}{2}\sqrt{1+s}}\right)
\lesssim \frac{\ep^3}{\sqrt{T}(1+s)^{2-\rho}(1+|q|)^{\frac{1}{2}-2\kappa}}.
\]
Consequently, for $I\leq N-18$ and $q<0$ we have
\[\left|\Box Z^I\left(\phi-\phi^{(2)}\right)\right|\lesssim \frac{\ep^3}{\sqrt{T}(1+s)^{\frac{3}{2}-\rho-2\kappa}(1+|q|)}\]
and Lemma \ref{linf2} yields, for $I\leq  N-18$, since the initial data for $\phi-\phi^{(2)}$ are zero.
\begin{equation}
\label{phiN2}
\left| Z^I\left(\phi-\phi^{(2)}\right)\right|\leq
 \frac{C\ep^3}{\sqrt{T}(1+s)^{\frac{1}{2}-2\rho-2\kappa}}.
\end{equation}

\subsection{$L^2$ estimates}
\paragraph{$L^2$ estimate for $\partial Z^I\left(\wht g_2^{(2)}-\wht g_2\right)$ with $I\leq N-15$}
We have
\[
\Box_g\left((\wht g_2)_{\mu \nu}-\left(\wht g^{(2)}_2\right)_{\mu \nu}\right)=f_{\mu \nu},
\]
where the terms in $f_{\mu \nu}$ are
\begin{itemize}
\item the terms coming from the non commutation of the null decomposition with the wave operator: it is sufficient to study the term 
$\Up\left(\frac{r}{t}\right)\frac{1}{r^2}\partial_\theta \left(h^{(2)}_0-h_0+\wht h^{(2)}-\wht h\right)$,
\item the semi-linear terms: it is sufficient to study
$\partial_{\ba L} (h)\partial_U \left(g_{LL}-g^{(2)}_{LL}\right)$ and $\partial_U g_{LL}\partial_{\ba L} \left(h^{(2)}-h\right)$,
\item the quasilinear terms: it is sufficient to study the terms
$g_{LL}\partial^2_{\ba L} \left( \wht g^{(2)}-\wht g\right)$ and $\left( g^{(2)}_{LL}-g_{LL}\right)\partial^2_{\ba L} \wht g$,
\item the crossed terms: they do not occur in the region $q<0$.
\end{itemize}

We estimate the first term. Thanks to \eqref{esthtilde} and \eqref{amelioh} we have, for $I\leq N-15$
\[\left|\partial_\theta Z^I \left(h^{(2)}_0-h_0+\wht h^{(2)}-\wht h\right)\right|\lesssim \frac{\ep^3(1+|q|)^\rho}{\sqrt{T}}.\]
Therefore, we can estimate in the region $q<0$,
$$
\left\|\beta_1w_0^\frac{1}{2}\Up\left(\frac{r}{t}\right)\frac{1}{r^2}\partial_\theta Z^I \left(h^{(2)}_0-h_0+\wht h^{(2)}-\wht h\right)\right\|_{L^2}\lesssim \left\|\Up\left(\frac{r}{t}\right)\frac{\ep^3}{\sqrt{T}(1+s)^2(1+|q|)^{\kappa-\rho}}\right\|_{L^2}$$
and consequently
\begin{equation}
\label{he1}\left\|\beta_1w_0^\frac{1}{2}\Up\left(\frac{r}{t}\right)\frac{1}{r^2}\partial_\theta Z^I \left(h^{(2)}_0-h_0+\wht h^{(2)}-\wht h\right)\right\|_{L^2}\lesssim  \frac{\ep^3}{\sqrt{T}(1+t)^{1+\kappa-\rho}}.
\end{equation}

We now estimate the semi-linear terms. For $I\leq N-13$, we have, thanks to \eqref{est2} 
\[|\partial_{\ba L} (Z^I h)| \lesssim \frac{\ep}{(1+|q|)^{\frac{3}{2}-2\rho}},\]
Therefore we can estimate, for $I+J\leq N-15$ in the region $q<0$
\begin{align*}
\left\|\beta_1w_0^\frac{1}{2}Z^I\partial_{\ba L} (h)Z^J\partial_U \left(g_{LL}-g^{(2)}_{LL}\right)\right\|_{L^2}
&\lesssim \left\|\frac{\ep}{(1+|q|)^{\frac{3}{2}-2\rho+\kappa}(1+s)}Z^{J+1}\left(g_{LL}-g^{(2)}_{LL}\right)\right\|_{L^2}\\
&\lesssim \left \|\frac{\ep}{(1+|q|)^{\frac{1}{2}-2\rho+\kappa}(1+s)}\partial Z^{J+1}\left(g_{LL}-g^{(2)}_{LL}\right)\right\|_{L^2}\\
&\lesssim \frac{\ep}{(1+t)^{\frac{1}{2}+\sigma}}\left\|\frac{1}{(1+|q|)^{1-2\rho+\kappa-\sigma}}\bar{\partial}Z^{J+1}\left(\wht g_2-\wht g^{(2)}_2\right)\right\|_{L^2}
\end{align*}
and consequently
\begin{equation}
\label{he2}\left\|\beta_1w_0^\frac{1}{2}Z^I\partial_{\ba L} (h)Z^J\partial_U \left(g_{LL}-g^{(2)}_{LL}\right)\right\|_{L^2}\lesssim\frac{\ep}{(1+t)^{\frac{1}{2}+\sigma}}\left\|\beta_2w_1'(q)^\frac{1}{2}\bar{\partial}Z^{J+1}\left(\wht g_2-\wht g^{(2)}_2\right)\right\|_{L^2},
\end{equation}
where we have used the wave coordinate condition and the fact that, for $q<0$
$$\beta_2w_1'(q)^\frac{1}{2}=\frac{1}{4(1+|q|)^{{3}{4}+2\kappa}}\geq
\frac{1}{(1+|q|)^{1-2\rho+\kappa-\sigma}}.$$
For $I\leq N-14$ thanks to Proposition \ref{linfg2}, we have
\[|Z^I\partial_U g_{LL}|\lesssim \frac{\ep(1+|q|)}{(1+s)^{\frac{5}{2}-2\rho}}.
\]
In order to estimate
$$\left\|\beta_1w_0^\frac{1}{2}\partial_U Z^I g_{LL}\partial_{\ba L} Z^J \left(h^{(2)}-h\right)\right\|_{L^2}$$
we will perform the estimates with $\left(h^{(2)}-h\right)$ replaced by
$\left(h^{(2)}_0-h_0\right)$, $\left(\wht h^{(2)}-\wht h\right)$ and
$\left(\wht g_2^{(2)}-\wht g_2\right)$.  
We estimate, in the region $q<0$, thanks to \eqref{amelioh},
\begin{align*}
\left\|\beta_1w_0^\frac{1}{2}\partial_U Z^I g_{LL}\partial_{\ba L} Z^J \left(h_0^{(2)}-h_0\right)\right\|_{L^2}&\lesssim\frac{\ep^3}{\sqrt{T}}
\left\|\frac{\ep(1+|q|)^{1-\kappa}}{(1+s)^{\frac{5}{2}-2\rho}(1+|q|)}\right\|_{L^2}\\
&\lesssim \frac{\ep^3}{\sqrt{T}(1+t)^\frac{3}{2}},
\end{align*}
thanks to \eqref{l2htilde}
\begin{align*}
\left\|\beta_1w_0^\frac{1}{2}\partial_U Z^I g_{LL}\partial_{\ba L} Z^J \left(\wht h^{(2)}-\wht h\right)\right\|_{L^2}&\lesssim\ep
\left\|\frac{\ep(1+|q|)^{1-\kappa}}{(1+s)^{\frac{5}{2}-2\rho}}\partial_{\ba L} Z^J \left(\wht h^{(2)}-\wht h\right)\right\|_{L^2}\\
&\lesssim \frac{\ep^3}{\sqrt{T}(1+t)^{\frac{3}{2}+\kappa-2\rho}},
\end{align*}
and thanks to \eqref{2bootg1}
\begin{align*}
\left\|\beta_1w_0^\frac{1}{2}\partial_U Z^I g_{LL}\partial_{\ba L} Z^J \left(\wht g_2^{(2)}-\wht g_2\right)\right\|_{L^2}&\lesssim\ep
\left\|\frac{\ep(1+|q|)^{1-\kappa}}{(1+s)^{\frac{5}{2}-2\rho}}\partial_{\ba L} Z^J \left(\wht g^{(2)}-\wht g\right)\right\|_{L^2}\\
&\lesssim \frac{\ep^3}{\sqrt{T}(1+t)^{\frac{3}{2}-\rho}}.
\end{align*}
Consequently, we have
\begin{equation}
\label{he3}\left\|\beta_1w_0^\frac{1}{2}\partial_U Z^I g_{LL}\partial_{\ba L} Z^J \left(h^{(2)}-h\right)\right\|_{L^2}\lesssim \frac{\ep^3}{\sqrt{T}(1+t)^{\frac{3}{2}-\rho}}.
\end{equation}

The other terms are similar to estimate. Thanks to \eqref{he1}, \eqref{he2} and \eqref{he3},
the energy inequality yields for $I\leq N-15$
\begin{equation}\label{en13}\begin{split}
&\frac{d}{dt}\left\|\beta_1w_0^\frac{1}{2}\partial Z^I \left(\wht g^{(2)}_2-\wht g_2\right)\right\|^2_{L^2}
+\left\|\beta_1w_0'(q)^\frac{1}{2}\bar{\partial} Z^I \left(\wht g^{(2)}_2-\wht g_2\right)\right\|^2_{L^2}\\
\lesssim& \frac{\ep^3}{\sqrt{T}(1+t)^{1+\sigma}}
\left\|\beta_1w_0^\frac{1}{2}\partial Z^I \left(\wht g^{(2)}_2-\wht g_2\right)\right\|_{L^2}+
\frac{\ep}{(1+t)^{\sigma}}\left\|\beta_2w_1'(q)^\frac{1}{2}\bar{\partial}Z^{I+1}\left(\wht g_2-\wht g^{(2)}_2\right)\right\|^2_{L^2}\\
&+\frac{\ep}{(1+t)^{1+\sigma}}
\left\|\beta_1w_0^\frac{1}{2}\partial Z^I \left(\wht g^{(2)}_2-\wht g_2\right)\right\|^2_{L^2}.
\end{split}
\end{equation}

\paragraph{$L^2$ estimate for $\partial Z^I\left(\wht g_2^{(2)}-\wht g_2\right)$ with $I\leq N-14$.}
We follow the same steps as in the previous paragraph. First we still have
\begin{align*}
\left\|\beta_2w_1^\frac{1}{2}\Up\left(\frac{r}{t}\right)\frac{1}{r^2}\partial_\theta Z^I \left(h^{(2)}_0-h_0+\wht h^{(2)}-\wht h\right)\right\|_{L^2}&\lesssim \left\|\Up\left(\frac{r}{t}\right)\frac{\ep^3}{\sqrt{T}(1+s)^2(1+|q|)^{\frac{1}{4}+2\kappa-\rho}}\right\|_{L^2}\\
&\lesssim \frac{\ep^3}{\sqrt{T}(1+t)^{\frac{5}{4}}} .
\end{align*}
We estimate the second terms for $I+J\leq N-14$ 
\begin{align*}
\left\|\beta_2w_1^\frac{1}{2}Z^I\partial_{\ba L} (h)Z^J\partial_U \left(g_{LL}-g^{(2)}_{LL}\right)\right\|_{L^2}
&\lesssim \left\|\frac{\ep}{(1+|q|)^{\frac{3}{2}-2\rho+\frac{1}{4}+2\kappa}(1+s)}Z^{J+1}\left(g_{LL}-g^{(2)}_{LL}\right)\right\|_{L^2}\\
&\lesssim \left \|\frac{\ep}{(1+|q|)^{\frac{3}{4}-2\rho+2\kappa}(1+s)}\partial Z^{J+1}\left(g_{LL}-g^{(2)}_{LL}\right)\right\|_{L^2}\\
&\lesssim \frac{\ep}{(1+t)^{\frac{1}{2}+\sigma}}\left\|\frac{1}{(1+|q|)^{\frac{5}{4}-2\rho+2\kappa-\sigma}}\bar{\partial}Z^{J+1}\left(\wht g_3-\wht g_3^{(2)}\right)\right\|_{L^2}\\
&\lesssim \frac{\ep}{(1+t)^{\frac{1}{2}+\sigma}}\left\|\beta_2w'_2(q)^\frac{1}{2}\bar{\partial}Z^{J+1}\left(\wht g_3-\wht g_3^{(2)}\right)\right\|_{L^2},
\end{align*}
where we have used the wave coordinate condition and the fact that
$$\beta_2w_2'(q)^\frac{1}{2}=\frac{1}{(1+|q|)^{1+2\kappa+\mu}}\geq
\frac{1}{(1+|q|)^{\frac{5}{4}-2\rho+2\kappa-\sigma}}.$$ 

The other terms are similar to estimate than for $I\leq N-15$. The energy inequality yields for $I\leq N-14$
\begin{equation}\label{en12}\begin{split}
&\frac{d}{dt}\left\|\beta_2w_1^\frac{1}{2}\partial Z^I \left(\wht g^{(2)}_2-\wht g_2\right)\right\|^2_{L^2}
+\left\|\beta_2w'_1(q)^\frac{1}{2}\bar{\partial} Z^I \left(\wht g^{(2)}_2-\wht g_2\right)\right\|^2_{L^2}\\
\lesssim& \frac{\ep^3}{\sqrt{T}(1+t)^{1+\sigma}}
\left\|\beta_2w_1^\frac{1}{2}\partial Z^I \left(\wht g^{(2)}_2-\wht g_2\right)\right\|_{L^2}+
\frac{\ep}{(1+t)^{\sigma}}\left\|\beta_2w_2'(q)^\frac{1}{2}\bar{\partial}Z^{I+1}\left(\wht g_3-\wht g^{(2)}_3\right)\right\|^2_{L^2}\\
&+\frac{\ep}{(1+t)^{1+\sigma}}
\left\|\beta_2w_1^\frac{1}{2}\partial Z^I \left(\wht g^{(2)}_2-\wht g_2\right)\right\|^2_{L^2}.
\end{split}
\end{equation}

\paragraph{$L^2$ estimates for $\partial Z^I \left(\phi^{(2)}-\phi\right)$ with $I\leq N-6$.}
We estimate for $I+J\leq N-6$, $J\leq N-7$,
\[\left\| \beta_2w_0^\frac{1}{2}Z^I\left(g^{(2)}_{LL}-g_{LL}\right)\partial_q^2Z^J \phi\right\|_{L^2}.\]
If $I\leq \frac{N-7}{2}$ we can estimate, thanks to \eqref{2wc1}
\[\left|Z^I\left(g^{(2)}_{LL}-g_{LL}\right)\right|\lesssim \frac{\ep^2(1+|q|)^{\frac{3}{2}+\kappa}}{\sqrt{T}(1+s)^{\frac{3}{2}}},\]
and therefore, if we restrict our quantities to $q<0$
\begin{align*}
\left\| \beta_2w_0^\frac{1}{2}Z^I\left(g^{(2)}_{LL}-g_{LL}\right)\partial_q^2Z^J \phi\right\|_{L^2}&\lesssim
\left\| \frac{\ep^2(1+|q|)^{\frac{3}{2}+\kappa}}{\sqrt{T}(1+s)^\frac{3}{2}(1+|q|)^{1+2\kappa}}\partial Z^{J+1} \phi\right\|_{L^2}\\
&\lesssim \frac{\ep^2}{\sqrt{T}(1+t)^{1+\kappa}}\left \| w^\frac{1}{2}\partial Z^{J+1} \phi\right\|_{L^2}\\
&\lesssim \frac{\ep^3}{\sqrt{T}(1+t)^{1+\kappa}}.
\end{align*} 
The case $J\leq \frac{N-6}{2}$ can be treated as in  Section \ref{subsecl2}.

We now evaluate
\[\left\| \beta_2w_0^\frac{1}{2}Z^Ig_{LL}\partial_q^2Z^J \left(\phi^{(2)}-\phi\right)\right\|_{L^2}\]
for $I+J\leq N-6$ and $J\leq \frac{N-6}{2}$. We have, since $\frac{N-6}{2}+2\leq N-20$
\[\left|\partial_q^2Z^J \left(\phi^{(2)}-\phi\right)\right|\lesssim \frac{\ep^2}{\sqrt{T}\sqrt{1+s}(1+|q|)^{\frac{5}{2}-5\rho-2\kappa}}.\]
Therefore we can estimate
\begin{align*}
\left\| \beta_2w_0^\frac{1}{2}Z^Ig_{LL}\partial_q^2Z^J \left(\phi^{(2)}-\phi\right)\right\|_{L^2}
&\lesssim \left\| \frac{\ep^2}{\sqrt{T}\sqrt{1+s}(1+|q|)^{\frac{5}{2}-5\rho}}Z^Ig_{LL}\right\|_{L^2}\\
&\lesssim \left\| \frac{\ep^2}{\sqrt{T}\sqrt{1+s}(1+|q|)^{\frac{3}{2}-5\rho}}\partial Z^Ig_{LL}\right\|_{L^2}\\
&\lesssim \left\| \frac{\ep^2}{\sqrt{T}(1+s)^\frac{3}{2}(1+|q|)^{\frac{3}{2}-5\rho}}Z^{I+1}\wht g_3\right\|_{L^2}\\
&\lesssim  \left\| \frac{\ep^2}{\sqrt{T}(1+s)^\frac{3}{2}(1+|q|)^{\frac{1}{2}-5\rho}}\partial Z^{I+1}\wht g_3\right\|_{L^2}\\
&\lesssim \frac{\ep^2}{\sqrt{T}(1+t)^{\frac{3}{2}-5\rho-\mu}}\left\|w_2^\frac{1}{2}\partial Z^{I+1}\wht g_3\right\|_{L^2}
\lesssim \frac{\ep^3}{\sqrt{T}(1+t)^{\frac{3}{2}-5\rho-\mu}}.
\end{align*}
The case $I\leq \frac{N-6}{2}$ can be treated similarly than in  Section \ref{subsecl2}.
The weighted energy estimate yields
\begin{equation}\label{enphi}
\frac{d}{dt}\left\|\beta_2w_0^\frac{1}{2}\partial Z^I \left(\phi^{(2)}_2-\phi\right)\right\|^2_{L^2}
+\left\|\beta_2w_0'(q)^\frac{1}{2}\bar{\partial} Z^I \left(\phi^{(2)}_2-\phi\right)\right\|^2_{L^2}\\
\lesssim \frac{\ep^3}{\sqrt{T}(1+t)^{1+\kappa}}\left\|\beta_2w^\frac{1}{2}\partial Z^I \left(\phi^{(2)}_2-\phi\right)\right\|_{L^2}.
\end{equation}
Consequently, since the initial data for $\phi^{(2)}_2-\phi$ are zero we have
\begin{equation}
\label{l2phi}
\left\|\beta_2w_0^\frac{1}{2}\partial Z^I \left(\phi^{(2)}_2-\phi\right)\right\|_{L^2}\lesssim \frac{\ep^3}{\sqrt{T}}.
\end{equation}
\paragraph{$L^2$ estimates for $\partial Z^I \left(h^{(2)}-h\right)$ with $I\leq N-6$.}
We write the equation satisfied by $h^{(2)}-h$
\begin{equation*}
\Box_{g}\left(h- h^{(2)}\right)= 2(\partial_q \phi^{(2)})^2-2(\partial_q \phi)^2 +2(R_{b})_{qq}- 2(R_{b^{(2)}})_{qq}
         +Q_{\ba L\ba L}(h,\wht g)-Q_{\ba L \ba L}(h^{(2)},\wht g^{(2)}).
\end{equation*}
We first estimate for $I+J\leq N-6$ and $I\leq \frac{N-6}{2}$. We recall that we restrict all the quantities to $q<0$ (therefore $w_3=w_0$).
\begin{align*}
\left\|\beta_2w_0^\frac{1}{2}\partial_q Z^I\left(\phi-\phi^{(2)}\right)\partial_q Z^J \phi\right\|_{L^2}
\lesssim \left\|\frac{\ep^2}{\sqrt{T}\sqrt{1+s}(1+|q|)^{\frac{3}{2}-5\rho}}\partial_q Z^J \phi\right\|_{L^2}\\
\lesssim \frac{\ep^2}{\sqrt{T}\sqrt{1+t}}\|w_0^\frac{1}{2}\partial_q Z^J \phi\|_{L^2}\lesssim \frac{\ep^3}{\sqrt{T}\sqrt{1+t}}.
\end{align*}
We now estimate the quasilinear term
\[\left\| \beta_2w_0^\frac{1}{2}Z^I\left(g^{(2)}_{LL}-g_{LL}\right)\partial_q^2Z^J h\right\|_{L^2}\]
for $I+J\leq N-6$ and $I\leq \frac{N-6}{2}$. We have
\begin{align*}
\left\| \beta_2w_0^\frac{1}{2}Z^I\left(g^{(2)}_{LL}-g_{LL}\right)\partial_q^2Z^J h\right\|_{L^2}&\lesssim
\left\| \frac{\ep^2(1+|q|)^{\frac{3}{2}+\kappa}}{\sqrt{T}(1+s)^\frac{3}{2}(1+|q|)^{1+2\kappa}}\partial Z^{J+1} h\right\|_{L^2}\\
&\lesssim \frac{\ep^2}{\sqrt{T}(1+t)^{1+\kappa}}\left \| w_0^\frac{1}{2}\partial Z^{J+1} h\right\|_{L^2}\\
&\lesssim \frac{\ep^3}{\sqrt{T}(1+t)^{\frac{1}{2}+\kappa}}.
\end{align*} 
The other terms can be treated as in the proof of Proposition \ref{prphn}.
The energy inequality yields
\begin{equation*}
\frac{d}{dt}\left\|\beta_2w_0^\frac{1}{2}\partial Z^I \left(h^{(2)}_2-h\right)\right\|^2_{L^2}
+\left\|\beta_2w_0'(q)^\frac{1}{2}\bar{\partial} Z^I \left(h^{(2)}_2-h\right)\right\|^2_{L^2}\\
\lesssim \frac{\ep^3}{\sqrt{T}(1+t)^{\frac{1}{2}}}\left\|\beta_2w_0^\frac{1}{2}\partial Z^I \left(h^{(2)}_2-h\right)\right\|_{L^2}.
\end{equation*}
Consequently, since the initial data for $h^{(2)}_2-h$ are zero, we have
\begin{equation}
\label{l2h}
\left\|\beta_2w_0^\frac{1}{2}\partial Z^I \left(h^{(2)}_2-h\right)\right\|_{L^2}\lesssim \frac{\ep^3\sqrt{1+t}}{\sqrt{T}}.
\end{equation}
\paragraph{$L^2$ estimates for $\partial Z^I \left(\wht g^{(2)}-\wht g \right)$ with $I\leq N-6$.}
As usual we estimate the following contributions
\begin{itemize}
\item the terms coming from the non commutation of the null decomposition with the wave operator: it is sufficient to study the term 
$\Up\left(\frac{r}{t}\right)\frac{1}{r^2}\partial_\theta \left(h^{(2)}-h\right)$,
\item the semi-linear terms: it is sufficient to study
$\partial_{\ba L} (h)\partial_U \left(g_{LL}-g^{(2)}_{LL}\right)$ and $\partial_U g_{LL}\partial_{\ba L} \left(h^{(2)}-h\right)$,
\item the quasilinear terms: it is sufficient to study the terms
$g_{LL}\partial^2_{\ba L} \left( \wht g^{(2)}-\wht g\right)$ and $\left( g^{(2)}_{LL}-g_{LL}\right)\partial^2_{\ba L} \wht g$.
\end{itemize}
We estimate the first term.  We recall that we restrict all the quantities to $q<0$.
\begin{align*}
\left\|\beta_2w_2^\frac{1}{2}\chi\left(\frac{r}{t}\right)\frac{1}{r^2}\partial_\theta Z^I \left(h^{(2)}-h\right)\right\|_{L^2}&\lesssim
\left\|\beta_2\frac{1}{(1+s)^2(1+|q|)^{\frac{1}{2}+\mu}} Z^{I+1} \left(h^{(2)}-h\right)\right\|_{L^2}\\
&\lesssim \left\|\beta_2\frac{1}{(1+s)^{\frac{3}{2}+\sigma}(1+|q|)^{\mu-\sigma}} \partial Z^{I+1} \left(h^{(2)}-h\right)\right\|_{L^2}\\
&\lesssim \frac{1}{(1+t)^{\frac{3}{2}+\sigma}}\left\|\beta_2w_0^\frac{1}{2}\partial Z^{I+1} \left(h^{(2)}-h\right)\right\|_{L^2}.
\end{align*}
We estimate the second term
\[\left\|\beta_2w_1^\frac{1}{2}Z^I\partial_{\ba L} (h)Z^J\partial_U \left(g_{LL}-g^{(2)}_{LL}\right)\right\|_{L^2}\]
for $I+J\leq N-6$ and $J\leq \frac{N-6}{2}$. We have, thanks to \eqref{2wc1}
\[\left|Z^J\partial_U \left(g_{LL}-g^{(2)}_{LL}\right)\right|\lesssim \frac{1}{1+s}\left|Z^{J+1} \left(g_{LL}-g^{(2)}_{LL}\right)\right|\lesssim \frac{\ep^2(1+|q|)^{\frac{3}{2}+\kappa}}{\sqrt{T}(1+s)^{\frac{5}{2}}}.\]
Therefore we can estimate
\begin{align*}
\left\|\beta_2w_2^\frac{1}{2}Z^I\partial_{\ba L} (h)Z^J\partial_U \left(g_{LL}-g^{(2)}_{LL}\right)\right\|_{L^2}
&\lesssim \left\| \frac{\ep^2(1+|q|)^{\frac{3}{2}+\kappa}}{\sqrt{T}(1+s)^{\frac{5}{2}}(1+|q|)^{\frac{1}{2}+\mu +2\kappa}}\partial_{\ba L} Z^I h\right\|_{L^2}\\
&\lesssim \left\| \frac{\ep^2}{\sqrt{T}(1+s)^{\frac{3}{2}+\mu+\kappa}}\partial_{\ba L} Z^I h \right\|_{L^2}\\
&\lesssim \frac{\ep^2}{\sqrt{T}(1+t)^{\frac{3}{2}+\mu+\kappa}}\|w^\frac{1}{2}\partial_{\ba L} Z^I h\|_{L^2}
\lesssim \frac{\ep^3}{\sqrt{T}(1+t)^{1+\mu +\kappa}}.
\end{align*}
The other terms are treated as in the proof of Proposition \ref{prpwhtg}. We have proved, when we restrict ourselves to $q<0$
\begin{equation}\label{eng3}\begin{split}
&\frac{d}{dt}\left\|\beta_2w_2^\frac{1}{2}\partial Z^I \left(\wht g^{(2)}_3-\wht g_3\right)\right\|^2_{L^2}
+\left\|\beta_2w'_2(q)^\frac{1}{2}\bar{\partial} Z^I \left(\wht g^{(2)}_3-\wht g_3\right)\right\|^2_{L^2}\\
\lesssim& \Bigg(\frac{\ep^3}{\sqrt{T}(1+t)^{1+\sigma}}+\frac{\ep}{(1+t)^{\frac{1}{2}+\sigma}}\left\|\beta_2w'_2(q)^\frac{1}{2}\bar{\partial}Z^{J+1}\left(\wht g_4-\wht g^{(2)}_4\right)\right\|_{L^2}\\
&+ \frac{1}{(1+t)^{\frac{3}{2}+\sigma}}\left\|\beta_2w^\frac{1}{2}\partial Z^{I+1} \left(h^{(2)}-h\right)\right\|_{L^2}
\Bigg)\left\|\beta_2w_2^\frac{1}{2}\partial Z^I \left(\wht g^{(2)}_3-\wht g_3\right)\right\|_{L^2}.
\end{split}
\end{equation}

\paragraph{$L^2$ estimates for $I\leq N-4$}
We can prove, following Section \ref{subsecl21} that, since we do as if no quantity was present for $q>0$,
\begin{equation}\label{total}
\begin{split}
&\frac{d}{dt}\Bigg(\left\| \beta_2w^\frac{1}{2} \partial Z^I \left(\phi-\phi^{(2)}\right)\right\|^2_{L^2}
+\left\| \beta_2 w_2^\frac{1}{2}\partial Z^I \left(\wht g_4-\wht g^{(2)}_4\right)\right\|^2_{L^2}\\
+&\frac{1}{\ep(1+t)}\left\| \beta_2 w_3^\frac{1}{2}\partial Z^I \left( h-h^{(2)}\right)\right\|^2_{L^2}
+\frac{1}{\ep(1+t)}\left\| \beta_2 w_3^\frac{1}{2}\partial Z^I \left(k-k^{(2)}\right)\right\|^2_{L^2}\Bigg)\\
+&\left\| \beta_2w_0'(q)^\frac{1}{2} \bar{\partial} Z^I \left(\phi-\phi^{(2)}\right)\right\|^2_{L^2}
+\left\| \beta_2 w'_2(q)^\frac{1}{2}(q)'\bar{\partial} Z^I \left(\wht g_4-\wht g^{(2)}_4\right)\right\|^2_{L^2}\\
+&\frac{1}{\ep(1+t)}\left\| \beta_2 w'_3(q)^\frac{1}{2}\bar{\partial} Z^I \left( h-h^{(2)}\right)\right\|^2_{L^2}
+\frac{1}{\ep(1+t)}\left\| \beta_2 w'_3(q)^\frac{1}{2}\bar{\partial} Z^I \left(k-k^{(2)}\right)\right\|^2_{L^2}.\\
&\lesssim \frac{\sqrt{\ep}}{1+t}\Bigg(\left\| \beta_2w^\frac{1}{2} \partial Z^I \left(\phi-\phi^{(2)}\right)\right\|^2_{L^2}
+\left\| \beta_2 w_2^\frac{1}{2}\partial Z^I \left(\wht g_4-\wht g^{(2)}_4\right)\right\|^2_{L^2}\\
+&\frac{1}{\ep(1+t)}\left\| \beta_2 w_3^\frac{1}{2}\partial Z^I \left( h-h^{(2)}\right)\right\|^2_{L^2}
+\frac{1}{\ep(1+t)}\left\| \beta_2 w_3^\frac{1}{2}\partial Z^I \left(k-k^{(2)}\right)\right\|^2_{L^2}\Bigg)
+O\left(\frac{\ep^\frac{9}{2}}{T(1+t)}\right)
\end{split}
\end{equation}

\subsection{Conclusion of the proof of Proposition \ref{prpfin}}
Estimate \eqref{amelioh} gives us for $I\leq N-8$
\[\left|Z^I \left(h_0-h_0^{(2)}\right)\right|\leq \frac{C\ep^3}{\sqrt{T}}. \]
Estimate \eqref{phiN} gives us for $I\leq N-18$
\[\left|Z^I\left( \phi-\phi^{(2)}\right)\right|\leq \frac{C\ep^3}{\sqrt{T}(1+s)^\frac{1}{2}(1+|q|)^{\frac{1}{2}-5\rho-2\kappa}}.\]
Estimate \eqref{phiN2} gives us for $I\leq N-16$
\[\left| Z^I\left(\phi-\phi^{(2)}\right)\right|\leq \frac{C\ep^3}{\sqrt{T}(1+s)^{\frac{1}{2}-2\rho-2\kappa}}.\]
Therefore, if $C\ep \leq C_0$ we have ameliorated the $L^\infty$ estimates
\eqref{2booth2}, \eqref{2bootphi1} and \eqref{2bootphi2}.
Estimate \eqref{total} implies, following the proof of Proposition \ref{estn},
\begin{align*}
&\left\| \beta_2w^\frac{1}{2} \partial Z^I \left(\phi-\phi^{(2)}\right)\right\|_{L^2}
+\left\| \beta_2 w_2^\frac{1}{2}\partial Z^I \left(\wht g_3-\wht g^{(2)}_3\right)\right\|_{L^2}\\
+&\frac{1}{\sqrt{\ep(1+t)}}\left\| \beta_2 w_3^\frac{1}{2}\partial Z^I \left( h-h^{(2)}\right)\right\|_{L^2}
+\frac{1}{\sqrt{\ep(1+t)}}\left\| \beta_2 w_3^\frac{1}{2}\partial Z^I \left(k-k^{(2)}\right)\right\|_{L^2}\\
&\leq\frac{1}{\sqrt{T}}\left(C_0\ep^2+\ep^2 \right)(1+t)^{C\sqrt{\ep}}.
\end{align*}
Therefore, if we had chosen $C_0\geq 2$ and $C\sqrt{\ep}\leq \rho$ we have ameliorated this estimate \eqref{2bootl23} and \eqref{2bootl22}. Moreover we have
\[\left\| \beta_2 w_3^\frac{1}{2}\partial Z^I \left( h-h^{(2)}\right)\right\|_{L^2}\lesssim \frac{\ep^\frac{5}{2}}{\sqrt{T}}
(1+t)^{\frac{1}{2}+C\sqrt{\ep}}.\]
Estimate \eqref{total} also implies
\[\int_0^t \frac{1}{(1+t)^\sigma}\left\| \beta_2 w_2'(q)^\frac{1}{2}\bar{\partial} Z^I \left(\wht g_4-\wht g^{(2)}_4\right)\right\|^2_{L^2} \lesssim \frac{\ep^4}{T}\]
and consequently, estimate \eqref{eng3}, together with the bootstrap assumption \eqref{2bootl21} yields
\begin{equation*}\begin{split}
&\frac{d}{dt}\left\|\beta_2w_2^\frac{1}{2}\partial Z^I \left(\wht g^{(2)}_3-\wht g_3\right)\right\|^2_{L^2}
+\left\|\beta_2w'_2(q)^\frac{1}{2}\bar{\partial} Z^I \left(\wht g^{(2)}_3-\wht g_3\right)\right\|^2_{L^2}\\
\lesssim&
\frac{\ep^{\frac{9}{2}}}{T(1+t)^{1+\sigma}}
+\frac{\ep}{(1+t)^\sigma}\left\|(\beta_2w_2^\frac{1}{2})'\bar{\partial}Z^{J+1}\left(\wht g_4-\wht g^{(2)}_4\right)\right\|^ 2_{L^2},
\end{split}
\end{equation*}
Therefore, when we integrate we obtain
\begin{align*}
&\left\|\beta_2w_2^\frac{1}{2}\partial Z^I \left(\wht g^{(2)}_3-\wht g_3\right)\right\|^2_{L^2}
+\int_0^t \left\|\left(\beta_2w_2^\frac{1}{2}\right)'\bar{\partial} Z^I \left(\wht g^{(2)}_3-\wht g_3\right)\right\|^2_{L^2}\\
\lesssim& \frac{C_0^2\ep^4}{T}+C^2\frac{\ep^{\frac{9}{2}}}{T}.
\end{align*}
Therefore, for $C\ep^\frac{1}{2}\leq \frac{C_0}{2}$, this, together with \eqref{l2h} and \eqref{l2phi} improve the estimate \eqref{2bootl21}.
We proceed in the same way to ameliorate the remaining estimates, using  \eqref{en12} and \eqref{en13}.
Consequently, the solution $(\phi^{(2)}, \wht g^{(2)})$ exists in $[0,T]$ and we have the following estimate for $\phi-\phi^{(2)}$
\begin{align}
\label{phi1}|Z^I(\phi-\phi^{(2)})|&\leq\frac{C\ep^3}{\sqrt{T}\sqrt{1+s}(1+q)^{\frac{1}{2}-2\kappa-5\rho}},\;for\;I\leq N-20\\
\label{phi2}\|\alpha_2w_0^\frac{1}{2}\partial Z^I(\phi-\phi^{(2)})\|&\leq \frac{C\ep^3(1+t)^{C\sqrt{\ep}}}{\sqrt{T}}, \; for \; I\leq N-4.
\end{align}

We now go to the amelioration of the estimate for $\wht b$. In view of the definition \eqref{defb2} of $\wht b^{(2)}$ we have for $I\leq N-4$.
\begin{align*}
& \partial_\theta^I\left(\wht b^{(2)}(\theta)-\Pi\int_{\Sigma_{T,\theta}} (\partial_q \phi^{(2)})^2 r dr\right)\\
=&\partial_\theta^I\Pi\int_{\Sigma_{T,\theta}}\left((\partial_q \phi)^2-(\partial_q \phi^{(2)})^2 \right)r dr\\
=&\sum_{I_1+I_2\leq J}\Pi\int_{\Sigma_{T,\theta}}\partial_\theta^{I_1}(\partial_q \phi+\partial_q \phi^{(2)})\partial_\theta^{I_2}(\partial_q \phi-\partial_q \phi^{(2)})rdr.
\end{align*}
We estimate, for $I_1\leq \frac{N}{2}$
\begin{align*}
&\left\|\int_{\Sigma_{T,\theta}}\partial_\theta^{I_1}(\partial_q \phi+\partial_q \phi^{(2)})\partial_\theta^{I_2}(\partial_q \phi-\partial_q \phi^{(2)})rdr\right\|_{L^2(\m S^1)}\\
&\lesssim 
\int_{\Sigma_{T,\theta}}\frac{\ep}{\sqrt{1+s}(1+|q|)^{\frac{3}{2}-4\rho}}\left\|\partial_\theta^{I_2}(\partial_q \phi-\partial_q \phi^{(2)})\right \|_{L^2(\m S^1)}rdr\\
&\lesssim \left(\int_0^\infty \frac{\ep^2}{(1+s)(1+|q|)^{3-8\rho-4\kappa}}rdr\right)^\frac{1}{2}\left\|\beta_2\partial_\theta^{I_2}(\partial_q \phi-\partial_q \phi^{(2)})\right\|_{L^2}
\end{align*}
Then the estimate \eqref{phi2}, with the condition
$(1+T)^{C\sqrt{\ep}}\leq 1$
 yields for $I\leq N-4$
\begin{equation}
\label{enfin}
\left|\partial_\theta^I\left(\wht b^{(2)}(\theta)-\Pi\int_{\Sigma_{T,\theta}} (\partial_q \phi^{(2)})^2 r dr\right)\right|_{L^2(\m S^1)}\lesssim \frac{\ep^4}{\sqrt{T}}.
\end{equation}
The case $I_2\leq \frac{N}{2}$ can be treated similarly thanks to \eqref{phi1}.
To conclude, we estimate
\begin{align*}
\left\|\partial_\theta^I \wht b^{(2)}\right\|_{L^2(\m S^2)}
&=\left\|\int_0^\infty \sum_{I_1+I_2= I}\partial_q \partial^{I_1}_\theta \phi \partial_q \partial^{I_2}_\theta \phi rdr\right\|_{L^2(\m S^1)}\\
&\leq \int_0^\infty \frac{C_0\ep}{\sqrt{1+s}(1+|q|)^{\frac{3}{2}-4\rho}}\|\partial_q \partial^{I_2}_\theta \phi\|_{L^2(\m S^1)}rdr\\
&\leq \left(\int \frac{C_0^2\ep^2}{(1+s)(1+|q|)^{3-8\rho}}rdr\right)^\frac{1}{2}\|\partial_q \partial^{I_2}_\theta \phi\|_{L^2}\\
&\leq 2C_0^2\ep^2
\end{align*}
where we have used again
$(1+T)^{C\sqrt{\ep}}\leq 1$.
 This concludes the proof of Proposition \ref{prpfin}, and the proof of Theorem \ref{main}.

\appendix

\section{Reduction of the Einstein equations}\label{reduc}
 we recall the form of the Einstein equations in the presence of a space-like translational 
Killing field. We follow here the exposition
in \cite{livrecb}.
A metric $^{(4)}\mathbf{g}$ on  $\mathbb{R}^2 \times \m R  \times \mathbb{R}$ admitting $\partial_3$ as a Killing field can be written
$$^{(4)}\mathbf{g} = \grat g + e^{2\gamma}(dx^3 +A_\alpha dx^\alpha)^2, $$
where $\grat g$ is a Lorentzian metric on $\mathbb{R}^{1+2}$, $\gamma$ is a scalar function on $\mathbb{R}^{1+2}$, $A$ is a $1$-form on
$\mathbb{R}^{1+2}$ and $x^\alpha$, $\alpha=0,1,2$, are the coordinates on $\mathbb{R}^{1+2}$. Since $\partial_3$ is a Killing field, $\mathbf{g}$, $\gamma$ and $A$ do not depend on $x^3$. The polarized case consists in choosing $A=0$.
Let  $^{(4)}\mathbf{R}_{\mu \nu}$ denote the Ricci tensor associated to $^{(4)}\mathbf{g}$.
$\grat R_{\alpha \beta}$ and $\grat D$ are respectively the Ricci tensor and the covariant derivative associated to $\grat g$.

With this metric, the vacuum Einstein equations
$$^{(4)}\mathbf{R}_{\mu \nu} = 0, \; \mu, \nu = 0,1,2,3$$
can be written in the basis $(dx^\alpha, dx^3+A_\alpha dx^\alpha)$  (see \cite{livrecb} appendix VII)
\begin{align}
\label{rab}0= ^{(4)}\gra R_{\alpha \beta} &= \grat R_{\alpha \beta}
 -\grat D_\alpha \partial_\beta \gamma -\partial_\alpha \gamma \partial_\beta \gamma,\\
\label{r33} 0=^{(4)}\gra R_{33}&= -e^{-2\gamma} \left( \grat g^{\alpha \beta}\partial_\alpha \gamma \partial_\beta \gamma
 + \grat g^{\alpha \beta}\grat D_\alpha \partial_\beta \gamma\right),
\end{align} 
and the equation $ 0=^{(4)}\gra R_{\alpha 3}$ is automatically satisfied. By doing the conformal change of metric $\grat g = e^{-2\gamma}\gra g$, 
 \eqref{rab} and \eqref{r33}, yield the following system,

\begin{align*}
& \Box_\mathbf{g} \gamma  = 0, \\
 &\mathbf{R}_{\alpha \beta} = 2\partial_\alpha \gamma\partial_\beta \gamma  \; \alpha,\beta = 0,1,2.
\end{align*}
By setting $\phi= \sqrt{2}\gamma$ we obtain the system \ref{s1}.

\section{Construction of the initial data}\label{apini}
Theorem \ref{thinitial} is a consequence of the following result on the constraint equations, proved in \cite{expcont}.
The method of solving is inspired from the conformal method in three dimension. 
We look for space-like metrics $\bar{g}$ of the form $\bar{g}= e^{2\lambda}\delta$. 
We introduce the traceless part of $K$, 
$$H_{ij} = K_{ij} - \frac{1}{2}\tau \bar{g}_{ij},$$
and the following rescaling
$$\dot{\phi} = \frac{e^{\lambda}}{N}\partial_0 u, \quad \breve{H}=e^{-\lambda}H, \quad \breve{\tau} = e^{\lambda}\tau.$$
We also introduce the notation
$$M_\theta = \left(\begin{array}{ll}\cos(2\theta)&\sin(2\theta)\\\sin(2\theta)&-\cos(2\theta)\end{array}\right), \quad N_\theta=\left(\begin{array}{ll}-\sin(2\theta)&\cos(2\theta)\\\cos(2\theta)&\sin(2\theta)\end{array}\right).$$
\begin{thm}\label{contrainte}
Let $0<\delta<1$. Let $\dot{\phi}^2, |\nabla \phi|^2 \in H^{N-1}_{\delta+2}$ and $\bar{ b}\in W^{N,2}(\m S^1)$ such that
$$\int_{\m S^1} \bar{ b}(\theta)\cos(\theta)d\theta=\int_{\m S^1}  \bar{b}(\theta)\sin(\theta)d\theta =0.$$
We note
$$\varepsilon^2 = \int \dot{\phi}^2 +|\nabla \phi|^2.$$
We assume 
$$\|\dot{\phi}^2\| _{H^{N-1}_{\delta+2}} + \||\nabla \phi|^2\|_{H^{N-1}_{\delta+2}} +\|\bar{ b}\|_{W^{N,2}}\lesssim \varepsilon^2.$$
Let $B\in W^{N,2}(\m S^1)$. We assume
$$\|B\|_{W^{N,2}}\lesssim \varepsilon^4.$$
Let $\Psi \in H^{N+1}_{\delta+1}$ be such that 
$\int \Psi = 2\pi.$
If $\ep>0$ is small enough, there exist $\alpha,\rho,\eta,A,J,c_1,c_2$ in $\m R$, a scalar function $\wht \lambda \in H^{N+1}_{\delta}$ and a symmetric traceless tensor $\wht H \in H^{N}_{\delta+1}$ such that, if
$r,\theta$ are the polar coordinates centered in $c_1,c_2$, and if we note
\begin{align*}
\lambda&= -\alpha \chi(r)\ln(r)+\wht \lambda,\\
\breve{H}&=-(\bar{b}(\theta)+\rho\cos(\theta-\eta))\frac{\chi(r)}{2r}M_\theta + e^{-\lambda}\frac{\chi(r)}{r^2}\left((J-(1-\alpha)B(\theta))N_\theta -\frac{B'(\theta)}{2}M_\theta \right) + \wht H,
\end{align*}
then $\lambda, e^{\lambda}\breve{H}$ are solutions of the constraint equations with
$$\breve{\tau}=(\bar{ b}(\theta)+\rho \cos(\theta-\eta))\frac{\chi(r)}{r}+e^{-\lambda} B'(\theta)\frac{\chi(r)}{r^2}+A\psi.$$
Moreover we have the estimates
\begin{align*}
\alpha&=\frac{1}{4\pi}\int \left(\dot{\phi}^2+|\nabla \phi|^2\right) +O(\ep^4),\\
\rho \cos(\eta)&=\frac{1}{\pi}\int \dot{\phi}\partial_1 \phi +O(\ep^4),\\
\rho \sin(\eta)&=\frac{1}{\pi}\int \dot{\phi}\partial_2 \phi +O(\ep^4),\\
c_1&=-\frac{1}{4\pi}\int x_1\left(\dot{\phi}^2+|\nabla \phi|^2\right) +O(\ep^4),\\
c_2&=-\frac{1}{4\pi}\int x_2\left(\dot{\phi}^2+|\nabla \phi|^2\right) +O(\ep^4),\\
J&=-\frac{1}{2\pi}\int \dot{\phi}\partial_\theta \phi+\frac{\rho }{\alpha}(c_2\cos(\eta)-c_1\sin(\eta))+O(\ep^4),\\
A&=-\frac{1}{2\pi}\int \dot{\phi}r\partial_r \phi
+\frac{1}{2\pi}\left(\int \chi'(r)rdr\right)\int \bar{b}(\theta)d\theta + O(\ep^4),\\
\end{align*}
and
$$\|\wht \lambda\|_{H^{N+1}_\delta}+\|\wht H\|_{H^N_{\delta+1}} \lesssim \ep^2.$$
\end{thm}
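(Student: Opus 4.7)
The plan is to view the constraint equations as a nonlinear elliptic system for $(\widehat\lambda,\widehat H)$, with the seven real parameters $\alpha,\rho,\eta,A,J,c_1,c_2$ playing the role of Lagrange multipliers that kill the cokernel of the linearized problem. Substituting $\bar g=e^{2\lambda}\delta$, the decomposition $K=H+\tfrac12\tau\bar g$, and the rescalings $\breve H=e^{-\lambda}H$, $\breve\tau=e^\lambda\tau$ into \eqref{contrmom}--\eqref{contrham}, and using the 2D identity $\bar R=-2e^{-2\lambda}\Delta\lambda$ together with the fact that in two dimensions the conformal rescaling turns $D^i H_{ij}$ into a flat-space divergence (modulo lower order), one obtains a coupled system of the schematic form
\begin{align*}
-2\Delta\lambda+\tfrac12\breve\tau^2-|\breve H|_\delta^2 &= e^{2\lambda}\rho_{\mathrm{mat}},\\
\partial^i\breve H_{ij}-\tfrac12 e^\lambda\partial_j\breve\tau &= e^\lambda\pi_j,
\end{align*}
with sources $\rho_{\mathrm{mat}},\pi_j$ built from $(\dot\phi,\nabla\phi)$ and of size $\varepsilon^2$ in $H^{N-1}_{\delta+2}$. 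The Laplacian of $\chi(r)\ln r$ and the flat divergence of the prescribed $\chi(r)M_\theta/r$, $\chi(r)N_\theta/r^2$ tails of $\breve H$ produce only compactly supported or faster-decaying residuals, so after subtracting the explicit ansatz one is left with a genuine elliptic system for $(\widehat\lambda,\widehat H)$ with quadratic nonlinearities plus the matter source.

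At leading order the linearized operator is $(-2\Delta,\partial^i)$, acting $H^{N+1}_\delta\times H^N_{\delta+1}\to H^{N-1}_{\delta+2}\times H^{N-1}_{\delta+2}$. For $0<\delta<1$ these maps are Fredholm in the weighted Sobolev framework of McOwen/Nirenberg--Walker, with explicit cokernels: the constants for $\Delta$, and the trace-free symmetrized derivatives of conformal Killing fields of $(\mathbb R^2,\delta)$ for the divergence. Translations, rotation, dilation, and a change of origin are the only conformal Killing obstructions compatible with the prescribed asymptotics, and they correspond bijectively to the seven parameters in the ansatz: $\alpha$ absorbs the Laplacian constant obstruction (the 2D analogue of ADM mass); $(\rho\cos\eta,\rho\sin\eta)$ and $J$ absorb the translation/rotation obstructions of the momentum constraint (linear and angular momentum); $A$ absorbs the dilation obstruction (the mean-curvature integral); and $(c_1,c_2)$ recenter the origin so as to kill the dipole moment that would otherwise prevent $\widehat\lambda$ from lying in $H^{N+1}_\delta$. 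This is the structural reason for the precise form of the ansatz.

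The proof then proceeds by a contraction mapping in $H^{N+1}_\delta\times H^N_{\delta+1}\times\mathbb R^7$, with the finite-dimensional parameters determined at each iteration by requiring the right-hand sides to be orthogonal to the cokernel. Pairing the Hamiltonian constraint against $1$ and integrating by parts yields $\alpha=\tfrac1{4\pi}\int(\dot\phi^2+|\nabla\phi|^2)+O(\varepsilon^4)$; pairing the momentum constraint against the translations $\partial_1,\partial_2$ gives the formulas for $\rho\cos\eta$ and $\rho\sin\eta$; pairing against the rotation $\Omega_{12}$ and the dilation $S$ gives those for $J$ and $A$; and the $(x_1,x_2)$-moment of the Hamiltonian constraint fixes $(c_1,c_2)$. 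On the orthogonal complement the linear operator is an isomorphism, and because the nonlinearities are at least quadratic in $(\widehat\lambda,\widehat H)$ and in the small parameters, a contraction on the ball of radius $C\varepsilon^2$ in $H^{N+1}_\delta\times H^N_{\delta+1}$ provides the solution together with the stated estimates.

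The main obstacle is the genuinely two-dimensional nature of the problem: the Laplacian has a logarithmic Green's function, so the mass parameter $\alpha$ enters multiplicatively through $e^{2\lambda}\sim r^{-2\alpha}$, and the $\chi(r)/r$ tails of $\breve H$ are borderline for the space $H^N_{\delta+1}$ and only lie there after subtraction. This forces one to expand the nonlinear couplings (especially the $e^{2\lambda}\rho_{\mathrm{mat}}$ term in the Hamiltonian constraint) to second order in order to obtain the sharp $O(\varepsilon^4)$ remainders in the explicit formulas for the parameters, and to iterate the recentering by $(c_1,c_2)$ together with the determination of the other six parameters, since each of these steps couples to all the others through the nonlinearity.
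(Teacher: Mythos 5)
The paper does not actually prove Theorem \ref{contrainte}: it is imported verbatim, with the explicit remark that it is ``proved in \cite{expcont}'', and only its consequence Theorem \ref{thinitial} is established here. Your outline — conformal method with $\bar g=e^{2\lambda}\delta$, weighted-Fredholm theory for $(-2\Delta,\partial^i)$ on $H^{N+1}_\delta\times H^N_{\delta+1}$ with $0<\delta<1$, the seven parameters absorbing the cokernel (constants and dipoles for the Laplacian; translations, rotation and dilation for the divergence), and a contraction on a ball of radius $C\varepsilon^2$ — is precisely the strategy of that companion paper, and your identification of the $2$d-specific difficulties (logarithmic Green's function, $e^{2\lambda}\sim r^{-2\alpha}$ shifting the effective weights, the borderline $\chi(r)/r$ tails of $\breve H$) matches the reasons for the particular ansatz in the statement. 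So the proposal is a correct sketch of essentially the same approach as the actual (externally cited) proof; there is nothing in the present paper to compare it against in further detail.
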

We will use the notation
\begin{equation}
\label{defb1}
b^{(1)}=\rho \cos(\theta-\eta)+\bar{b}(\theta).
\end{equation}
The end of this section is devoted to the proof of Theorem \ref{thinitial}.
\begin{lm}\label{lmgas}
The second fundamental form of the space-time metric
\begin{equation}
\label{gas}
g_{a}=-dt^2-2Jdtd\theta+r^{-2\alpha}(dr^2+(r-b^{(1)}(\theta)r^{\alpha}t)^2d\theta^2)
-2B'(\theta)td\theta^2+4(1-\alpha)B(\theta)\frac{t}{r}drd\theta.
\end{equation}
is given at $t=0$ by
$$K_{ij}=H_{ij}+\frac{1}{2}(g_{a})_{ij}\tau,$$
with
$$\tau=r^{\alpha}\frac{b^{(1)}(\theta)}{r}+r^{2\alpha}\frac{B'(\theta)}{r},$$
$$H=-r^{-\alpha}b^{(1)}(\theta)\frac{\chi(r)}{2r}M_\theta + (J-(1-\alpha)B(\theta))\frac{\chi(r)}{r^2}N_\theta-B'(\theta)\frac{\chi(r)}{2r^2}M_\theta.$$
\end{lm}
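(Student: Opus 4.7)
The proof is a direct ADM-style computation at $t = 0$, built on the $3+1$ splitting recalled in Section~\ref{initial}. First, I would read off the lapse–shift decomposition of $g_a$. At $t = 0$ the spatial metric is
\[\bar g\big|_{t=0} = r^{-2\alpha}(dr^2 + r^2\, d\theta^2),\]
a conformal rescaling of the flat polar metric; in particular $\bar g^{rr} = r^{2\alpha}$, $\bar g^{\theta\theta} = r^{2\alpha - 2}$, $\bar g_{r\theta} = 0$. From $g_{tr} = 0$ and $g_{t\theta} = -J$ one extracts $\beta^r = 0$, $\beta^\theta = -J\, r^{2\alpha - 2}$ (equivalently $\beta_\theta = -J$ is constant), and $N^2 = 1 + J^2\, r^{2\alpha - 2} = 1 + O(\epsilon^4)$, so $1/N = 1 + O(\epsilon^4)$ throughout.

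Next I would compute the two pieces of $\partial_0 g_{ij} := \partial_t g_{ij} - (\mathcal L_\beta \bar g)_{ij}$ at $t=0$. A direct differentiation of the explicit formula for $g_a$ in $t$ gives
\[\partial_t g_{r\theta}\big|_{t=0} = \frac{2(1-\alpha)B(\theta)}{r},\qquad \partial_t g_{\theta\theta}\big|_{t=0} = -2b^{(1)}(\theta)\, r^{1-\alpha} - 2 B'(\theta),\]
while $\partial_t g_{rr}|_{t=0} = 0$. For the shift contribution, since $\beta_r = 0$ and $\beta_\theta = -J$ is constant, $(\mathcal L_\beta \bar g)_{ij} = -2\Gamma^k_{ij}\beta_k$, so only the component $(\mathcal L_\beta \bar g)_{r\theta} = -2\Gamma^\theta_{r\theta}\beta_\theta$ is nonzero; since $\Gamma^\theta_{r\theta} = (1-\alpha)/r$ for the conformal metric above, this yields $(\mathcal L_\beta \bar g)_{r\theta}|_{t=0} = 2(1-\alpha)J/r$.

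Assembling $K_{ij} = -\tfrac{1}{2N}\partial_0 g_{ij}$, I would then obtain $K_{rr} = 0$ together with explicit expressions for $K_{\theta\theta}$ and $K_{r\theta}$ in terms of $b^{(1)}, J, B, B'$; tracing with $\bar g^{ij}$ gives the mean curvature $\tau$, and subtracting $\tfrac12 \tau\, \bar g_{ij}$ yields the traceless part $H_{ij}$. To match the claimed expression for $H$, I would translate from the coordinate basis $(\partial_r, \partial_\theta)$ to the Cartesian/orthonormal basis via
\[M_\theta = \hat r\otimes \hat r - \hat\theta\otimes \hat\theta, \qquad N_\theta = \hat r\otimes\hat\theta + \hat\theta\otimes\hat r,\]
so that in the coordinate basis $M_\theta$ has matrix $\mathrm{diag}(1,-r^2)$ and $N_\theta$ has off-diagonal entry $r$. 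Reading off, the traceless diagonal part of $H$ produces the $-b^{(1)}r^{-\alpha}\chi/(2r)$ and $-B'\chi/(2r^2)$ coefficients in front of $M_\theta$, while $H_{r\theta}$ produces the coefficient of $N_\theta/r^2$.

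\emph{Main obstacle.} The only non-routine aspect is the bookkeeping in $H_{r\theta}$: the $\partial_t$ piece contributes $2(1-\alpha)B/r$ and the Lie derivative piece contributes $2(1-\alpha)J/r$, and after the normalization by $N$ and the insertion of the cut-off $\chi(r)$ inherited from the definition of the initial data in Theorem~\ref{contrainte}, these must assemble into the stated coefficient $(J - (1-\alpha)B)\chi/r^2$ in front of $N_\theta$. Tracking the $r^{\pm\alpha}$ powers through the conformal rescaling $\breve H = e^{-\lambda}H$ (with $\lambda = -\alpha\ln r$ in the relevant region) and through $\tau \mapsto \breve\tau = e^{\lambda}\tau$ is what forces the specific $r$-weights in the $b^{(1)}$, $B'$ and $J$ terms of the claimed formula; once these bookkeeping identifications are made, the equality of the two sides is immediate from the computation above.
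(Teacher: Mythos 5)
Your proposal follows essentially the same route as the paper: read off the ADM lapse/shift/spatial metric of $g_a$ at $t=0$, compute $K_{ij}=-\tfrac{1}{2N}(\partial_t-\mathcal L_\beta)\bar g_{ij}$, take the trace to get $\tau$, and convert the traceless part to the $M_\theta,N_\theta$ basis (the paper does this in Cartesian rather than polar coordinates, which is immaterial). One small point: your covariant polar computation, with $(\mathcal L_\beta\bar g)_{r\theta}=-2\Gamma^\theta_{r\theta}\beta_\theta=2(1-\alpha)J/r$, actually yields $K_{r\theta}=(1-\alpha)(J-B)/r$ rather than the stated $(J-(1-\alpha)B)/r$ — a discrepancy of $\alpha J/r=O(\ep^4)/r$ that arises because the paper replaces $D_i\beta_j+D_j\beta_i$ by $\partial_i\beta_j+\partial_j\beta_i$; this is negligible at the order of precision of Theorem \ref{thinitial}, but you should not claim the two pieces "assemble into the stated coefficient" exactly.
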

\begin{proof}[Proof of Lemma \ref{lmgas}]
The metric induced by $g_{a}$ on the space-like hypersurface $t=0$ is $r^{-2\alpha}\delta$. The shift is given by
$\beta_\theta=-J$ and the lapse is given by $N=1$. Therefore we calculate
$$K_{ij}=-\frac{1}{2N}(\partial_t \bar{g}_{ij}-\partial_i \beta_j-\partial_j \beta_i).$$
We infer
\begin{align*}
K_{11}=&-\frac{1}{2}\left(-\left(\frac{2r^{-\alpha}b^{(1)}(\theta)}{r}+\frac{2B'(\theta)}{r^2}\right)\sin^2(\theta)
-\frac{4(1-\alpha)B(\theta)}{r^2}\cos(\theta)\sin(\theta)
+\frac{2J}{r^2}\cos(\theta)\sin(\theta)\right),\\
K_{22}=&-\frac{1}{2}\left(-\left(\frac{2r^{-\alpha}b^{(1)}(\theta)}{r}+\frac{2B'(\theta)}{r^2}\right)\cos^2(\theta)
+\frac{4(1-\alpha)B(\theta)}{r^2}\cos(\theta)\sin(\theta)
-\frac{2J}{r^2}\cos(\theta)\sin(\theta)\right),\\
K_{12}=&-\frac{1}{2}\Bigg(\left(\frac{2r^{-\alpha}b^{(1)}(\theta)}{r}+\frac{2B'(\theta)}{r^2}\right)\cos(\theta)\sin(\theta)
+\frac{4(1-\alpha)B(\theta)}{2r^2}(\cos^2(\theta)-\sin^2(\theta))\\
&-\frac{2J}{r^2}(\cos^2(\theta)-\sin^2(\theta))\Bigg).\\
\end{align*}
We calculate
$$\tau = \bar{g}^{ij}K_{ij}=r^{\alpha}\frac{b^{(1)}(\theta)}{r}+r^{2\alpha}\frac{B'(\theta)}{r^2}
$$
so we obtain exactly
$$H=-r^{-\alpha}b^ {(1)}(\theta)\frac{\chi(r)}{2r}M_\theta + (J-(1-\alpha)B(\theta))\frac{\chi(r)}{r^2}N_\theta-B'(\theta)\frac{\chi(r)}{2r^2}M_\theta.$$
\end{proof}

\begin{lm}
\label{lmiso}
The metric $g_a$, defined by \eqref{gas} is isometric to 
$g_b +g^{(1)}$
where at $t=0$ we have
$$g^{(1)}=O\left(\frac{1}{r^2}\right), \quad \partial_t g^{(1)}=O\left(\frac{1}{r^2}\right),$$
and $g_b$ is defined by \eqref{gb}, 
where
\begin{align}
\label{defb}b(\theta)&=\frac{b^{(1)}(F(\theta))}{1-\alpha-b^{(1)}(F(\theta))},\\
\label{defJ}J(\theta)&=2JF'(\theta),
\end{align}
with $F$ the inverse function of 
$$\theta \mapsto \theta+\int_0^\theta (\alpha-b^{(1)}(\theta')d\theta';$$
provided the following relations hold
\begin{align}
\label{rel1}\alpha&=-\int \bar{b}(\theta),\\
\label{rel2}B(\theta)&=\frac{Jb^{(1)}(\theta)}{1-\alpha}.
\end{align}
\end{lm}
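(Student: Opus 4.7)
The plan is to exhibit an explicit time--preserving diffeomorphism $\Phi:(t,r,\theta)\mapsto(t,\rho,\vartheta)$ with $\rho=r^{1-\alpha}/(1-\alpha)$ and $\theta=F(\vartheta)$ (the $F$ of the statement), and to verify that under this pullback $g_a$ becomes $g_b+g^{(1)}$ with $g^{(1)}=O(r^{-2})$, $\partial_tg^{(1)}=O(r^{-2})$ at $t=0$. The radial change is chosen so that $d\rho=r^{-\alpha}\,dr$ identically, which converts $r^{-2\alpha}dr^{2}$ into $d\rho^{2}$ and turns the angular coefficient $r^{-2\alpha}(r-b^{(1)}(\theta)r^{\alpha}t)^{2}$ into $((1-\alpha)\rho-b^{(1)}(\theta)t)^{2}$ using $r-b^{(1)}r^{\alpha}t=r^{\alpha}(r^{1-\alpha}-b^{(1)}t)$. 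The angular change is fixed by $d\vartheta=(1+\alpha-b^{(1)}(\theta))d\theta$; that this is a diffeomorphism of $\mathbb{S}^{1}$ rests on \eqref{rel1} together with the vanishing mean of the dipole part $\rho\cos(\theta-\eta)$ of $b^{(1)}$, which together force $\int_{0}^{2\pi}(\alpha-b^{(1)})d\theta=0$.

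The bulk of the computation is the algebraic identity
$$(1-\alpha)\rho-b^{(1)}t=(1-\alpha-b^{(1)})\rho+b^{(1)}q=(1-\alpha-b^{(1)})\bigl(\rho+b(\vartheta)q\bigr),\qquad q=\rho-t,$$
where the last equality is precisely formula \eqref{defb}. Combined with $d\theta=F'(\vartheta)d\vartheta=d\vartheta/(1+\alpha-b^{(1)})$, this identifies, up to the scalar prefactor $\bigl((1-\alpha-b^{(1)})/(1+\alpha-b^{(1)})\bigr)^{2}$, the angular block of $\Phi^{*}g_a$ with the $(\rho+\chi(q)b(\vartheta)q)^{2}d\vartheta^{2}$ of $g_b$ (the cutoff $\chi$ being harmless for $r>2$). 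The off--diagonal terms $-2J\,dt\,d\theta$ and $4(1-\alpha)B(\theta)(t/r)\,drd\theta$ of $g_a$ then rearrange, using formula \eqref{rel2} $B=Jb^{(1)}/(1-\alpha)$, into the single expression $J(\vartheta)\chi(q)\,dq\,d\vartheta$ of $g_b$, with $J(\vartheta)=2JF'(\vartheta)$, thereby verifying \eqref{defJ}.

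The main obstacle is controlling the error in the angular block: the prefactor $\bigl((1-\alpha-b^{(1)})/(1+\alpha-b^{(1)})\bigr)^{2}=1-4\alpha/(1+\alpha-b^{(1)})+\cdots$ multiplied by $(\rho+b(\vartheta)q)^{2}$ produces what naively looks like an $O(\alpha\rho^{2})$ error, far worse than the claimed $O(r^{-2})$. The point is that this apparent discrepancy is nothing but an additional angular reparametrization of $\vartheta$, which is already absorbed into the second--order definition of $b(\vartheta)$ in \eqref{defb} (where dividing by $1-\alpha-b^{(1)}$ rather than $1$ is exactly designed to kill this term). After this cancellation, each remaining contribution in $\Phi^{*}g_a-g_b$ is either manifestly $O(r^{-2})$ from Taylor remainders in $r^{1-\alpha}$, $F$, $B$, or else is supported in $\{r\le 2\}$ through $\chi$. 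The $\partial_{t}$ estimate at $t=0$ then follows by the same accounting applied to $\partial_{t}g_a|_{t=0}$: every $t$--derivative picks up either an explicit $1/r$ factor from the $B(\theta)t/r$ coefficient of $g_a$, or a $t$--independent coefficient which vanishes upon the differentiation.
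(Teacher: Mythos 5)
Your overall strategy (the radial change $\rho=r^{1-\alpha}/(1-\alpha)$ followed by an angular reparametrization, plus the factorization $(1-\alpha)\rho-b^{(1)}t=(1-\alpha-b^{(1)})(\rho+bq)$) is the same as the paper's, but your diffeomorphism omits one essential intermediate step, and your handling of the angular Jacobian is incorrect.

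The missing step: the paper interposes, between the radial change and the reparametrization $\theta=F(\vartheta)$, an $r$-dependent angular shift $\theta=\theta'-\frac{J}{(1-\alpha)^2r}$. This shift is what converts $-2J\,dt\,d\theta$ into $-2J(dt-dr)\,d\theta'$: the $+2J\,dr\,d\theta'$ piece comes from the cross term that the shift generates out of the $d\theta^2$ block, and it is $O(\varepsilon^2)$. The only other candidate source in your scheme, $4(1-\alpha)B(\theta)(t/r)\,dr\,d\theta$ with $B=Jb^{(1)}/(1-\alpha)=O(\varepsilon^4)$, is too small by two orders in $\varepsilon$ and vanishes at $t=0$ anyway. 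Without the shift, the pullback of $g_a$ contains no $J(\vartheta)\,dr\,d\vartheta$ contribution, so $g^{(1)}$ retains the term $-J(\vartheta)\chi(q)\,dr\,d\vartheta=O(\varepsilon^2/r)$ in Cartesian components, violating the claimed $O(1/r^2)$. Moreover, the actual role of \eqref{rel2} is to cancel the $t\,d\theta^2$ and $(t/r)\,dr\,d\theta$ terms that this shift generates (through $b^{(1)}(\theta)\sim b^{(1)}(\theta')-b^{(1)\prime}(\theta')J/((1-\alpha)^2r)$ inside the angular block) against the $-2B'(\theta)t\,d\theta^2$ and $4(1-\alpha)B(t/r)\,dr\,d\theta$ terms of $g_a$; the "rearrangement" you describe does not produce $J(\vartheta)\chi(q)\,dq\,d\vartheta$.

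The Jacobian: the cancellation in the angular block requires $d\vartheta=(1-\alpha-b^{(1)}(\theta))\,d\theta$, i.e.\ $F$ the inverse of $\theta\mapsto\theta-\int_0^\theta(\alpha+b^{(1)})$ (the statement carries a sign slip on $\alpha$; the proof in the paper uses $f'=-(\alpha+b^{(1)})$). With that choice the prefactor multiplying $(\rho+b(\vartheta)q)^2\,d\vartheta^2$ is exactly $1$, and periodicity of the reparametrization is precisely \eqref{rel1}. You instead take $d\vartheta=(1+\alpha-b^{(1)})\,d\theta$, obtain the non-unit prefactor $\bigl((1-\alpha-b^{(1)})/(1+\alpha-b^{(1)})\bigr)^2$, and assert it is absorbed into the definition of $b$. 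It is not: $b$ is already fixed by \eqref{defb} to absorb the factor coming from the affine factorization, and a $\vartheta$-dependent (or even constant, non-unit) conformal factor on $\rho^2\,d\vartheta^2$ cannot be removed by a further $2\pi$-periodic reparametrization of $\vartheta$ while leaving $d\rho^2$ untouched. The $O(\alpha\rho^2)\,d\vartheta^2$ error you flag as the main obstacle therefore remains uncancelled in your argument.
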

\begin{proof}
During all the proof, the notation $g\sim g'$ stands for $g$ is isometric to $g'+\wht g$ where $\wht g = O\left(\frac{1}{r}\right)$ and
$\partial_t \wht g= O\left(\frac{1}{r^3}\right).$
 In polar coordinate $r,\theta$, this means neglecting the metric terms of the form
$$\frac{dr^2}{r^2}, \quad \frac{d\theta dr}{r}, \quad d\theta^2,$$
$$\frac{tdr^2}{r^3}, \quad \frac{td\theta dr}{r^2}, \quad \frac{td\theta^2}{r}.$$
We  perform some changes of variable in $g_{as}$. First of all we introduce $r'$ such that 
$$r'=\frac{r^{1-\alpha}}{1-\alpha}, \quad dr'=r^{-\alpha}dr.$$
The metric $g_{a}$ becomes
$$
g_{a}\sim-dt^2-2Jdtd\theta+(dr)^2+(r(1-\alpha)-b^{(1)}(\theta)t)^2d\theta^2
-2B'(\theta)td\theta^2+4B(\theta)\frac{t}{r}drd\theta,
$$
where we keep writing $r$ instead of $r'$.
We now make the change of variable
$$\theta=\theta'-\frac{J}{(1-\alpha)^2r},\quad d\theta=d\theta'+\frac{J}{(1-\alpha)^2r^2}.$$
Since we will neglect the contributions to the metric decaying like $\frac{1}{r^2}$ we obtain
$$d\theta^2\sim (d\theta')^2 +2\frac{J}{(1-\alpha)^2r^2}d\theta'dr, \quad b^{(1)}(\theta)\sim b(\theta')-b'(\theta')\frac{J}{r(1-\alpha)^2}.$$
We keep also writing $\theta$ instead of $\theta'$. We infer
\begin{align*}
g_{a}\sim& -dt^2-2J(dt-dr)d\theta+dr^2+(r(1-\alpha)-b^{(1)}(\theta)t)^2d\theta^2\\
&+\left(2\frac{Jb'(\theta)}{(1-\alpha)}-2B'(\theta)\right)td\theta^2+\left(-4b^{(1)}(\theta)\frac{J}{1-\alpha}+4B(\theta)\right)\frac{t}{r}drd\theta.
\end{align*}
We choose 
$$B(\theta)=\frac{Jb^{(1)}(\theta)}{1-\alpha}.$$ 
With this choice we obtain
\begin{align*}
g_{a}\sim&-dt^2-2J(dt-dr)d\theta+dr^2+(r(1-\alpha)-b^{(1)}(\theta)t)^2d\theta^2\\
\sim&-dt^2-2J(dt-dr)d\theta+dr^2+(r-(b^{(1)}(\theta)+\alpha)r+b^{(1)}(\theta)(r-t))^2d\theta^2.
\end{align*}
We impose 
$$\alpha=-\int b^{(1)}(\theta)=-\int \bar{b}(\theta).$$
 Therefore we can find $f(\theta)$ such that
$$f'(\theta)=-(b^{(1)}(\theta)+\alpha).$$
 We perform the change of variable
$$\theta'=\theta+f(\theta).$$
We note $F$ the inverse function of
$$\theta \mapsto \theta+f(\theta),$$
so that $\theta=F(\theta').$
Then $g_{a}$ becomes
$$g_{a}\sim-dt^2-2JF'(\theta')(dt-dr)d\theta'+dr^2+\left(r+\frac{b^{(1)}(F(\theta'))}{1-\alpha-b^{(1)}(F(\theta'))}(r-t)\right)^2d(\theta')^2.$$
We set
$$b(\theta')=\frac{b^{(1)}(F(\theta'))}{1-\alpha-b^{(1)}(F(\theta'))},$$
$$J(\theta')=2JF'(\theta').$$
Let us note that $J$ is at the same level of regularity than $b$.
\end{proof}
We are now ready to prove Theorem \ref{thinitial}.
\begin{proof}[Proof of Theorem \ref{thinitial}]
We consider the map 
$$\Phi: \bar{b}\mapsto \Pi b,$$
where
\begin{itemize}
\item $\bar{b} \in W^{N,2}$ is such that 
$$\int \bar{b}\cos(\theta)=\int \bar{b}\sin(\theta)=0,\quad \alpha=-\int \bar{b}(\theta),$$
where $\alpha$ is given by Theorem \ref{contrainte},
\item $b$ is given by formula \eqref{defb}, where $b^{(1)}=\rho\cos(\theta-\eta)+\bar{b},$ and $\rho,\eta$ are given by Theorem \ref{contrainte}.
\item $\Pi$ is the projection
\begin{equation}
\label{projection}
\Pi: W^{2,N}(\m S^1) \rightarrow \{u \in W^{2,N}(\m S^1), \; \int u= \int \cos(\theta)u=\int \sin(\theta)u=0\}.
\end{equation}
\end{itemize} 
It is easy to see that $\Phi$ is invertible for $\ep$ small enough.
Therefore, for $\wht b \in W^{2,N}$ such that 
$$\int \wht b= \int \wht b\cos(\theta)= \int \wht b\sin(\theta)=0,$$
we apply Theorem \ref{contrainte} to $\Phi^{-1}(\wht b)$.
Thanks to Lemma \ref{lmgas} and \ref{lmiso} we can find 
$(g_0)_{ij} \in H^{N+1}_\delta$ and $(K_0)_{ij} \in H^N_{\delta+1}$
such that $(g_b)_{ij}+(g_{0})_{ij}$ and $(K_b)_{ij}+(K_0)_{ij}$ satisfy the constraint equations, where we have noted $K_b$ the second fundamental form associated to $g_b$. We complete the initial data as follow.
We write our metric in the form $g=g_b + \wht g$. The initial data for $\wht g$ are the following
\begin{itemize}
\item $\wht g_{ij}$ is given by $\wht g_{ij}=(\wht g_0)_{ij}$, 
\item $\wht g_{00}$ and $\wht g_{0i}$ are taken to be $0$\footnote{The lapse ans shift are given by $g_b$: we have $N=1$ and $\beta_r=0$ and $\beta_\theta=-J$.}
\item $\partial_t \wht g_{ij}$ is given by the relation
$\partial_0 g_{ij}=-2NK_{ij}$ and $K_{ij}=(K_b)_{ij}+(K_0)_{ij}$.
\item $\partial_t \wht g_{00}$ and $\partial_t \wht g_{0i}$ are chosen such that the generalized wave coordinate condition is satisfied at $t=0$. 
\end{itemize}
Let us describe the last point. The generalized wave coordinate condition writes
$$g^{\lambda \beta}\Gamma^\alpha_{\lambda \beta}=H_b^\alpha=(g_b)^{\lambda \beta}(\Gamma_b)^\alpha_{\lambda \beta}+F^\alpha,$$
Therefore, if we write it for $\alpha=i$ we obtain a relation for $\partial_tg_{0i}$ and if we write it for $\alpha=0$, we obtain a relation for
$\partial_t g_{00}$. However, if we write $g=g_b +\wht g$, the term
$$g^{\lambda \beta}\Gamma^\alpha_{\lambda \beta}-(g_b)^{\lambda \beta}(\Gamma_b)^\alpha_{\lambda \beta}
$$
contains crossed terms of the form
$$\wht g \partial_U g_b \sim \wht g \frac{\partial_\theta b(\theta)}{r}.$$
which do not belong in $H^N_{\delta+1}$ because we are missing a derivative on $b$, since $b\in W^{2,N}$. Therefore, we will take $F^\alpha$ as defined in \eqref{defHalpha}. With this choice, the generalized wave coordinate condition imply that $\partial_t \wht g_{00}$ and $\partial_t \wht g_{0i}$ are given by a sum of terms the form
$$K_0, \;\nabla g_0,\;g_bK_0,\; g_b \nabla g_0, \; \frac{\chi(r)g_b}{r}g_0.$$ 
With this choice, $\partial_t \wht g_{0i}$ and $\partial_t \wht g_{00}$
belong to $H^N_{\delta+1}$.
\end{proof}

\section{The generalised wave coordinates}\label{genw}
In a coordinate system, the Ricci tensor is given by
\begin{equation}\label{defric}
R_{\mu \nu} = \partial_\alpha \Gamma^\alpha_{\mu \nu} -\partial_\mu \Gamma^\alpha_{\alpha \nu}
+\Gamma^\alpha_{\mu \nu}\Gamma^\lambda_{\alpha \lambda} -\Gamma^\alpha_{\mu \lambda} \Gamma^\lambda_{\nu \alpha},
\end{equation}
where the $\Gamma^\lambda_{\alpha \beta}$ are the Christoffel symbols given by
\begin{equation}\label{defcri}
\Gamma^\lambda_{\alpha \beta}=\frac{1}{2}g^{\lambda \rho}\left(\partial_\alpha g_{\rho \beta} + \partial_\beta g_{\rho \alpha }
-\partial_\rho g_{\alpha \beta}\right).
\end{equation}
$R_{\mu \nu}$ an operator of order two for $g$. In order to single out the hyperbolic part, we will write
\begin{equation}\label{defh}
H^\alpha = g^{\lambda \beta}\Gamma^\alpha_{\lambda \beta},
\end{equation}
which can also be written
$$H^\alpha = -\partial_\lambda g^{\lambda \alpha}-\frac{1}{2}g^{\lambda \mu} \partial^\alpha g_{\lambda \mu}.$$
We compute $R_{\mu \nu}$ in terms of $g$ and $H$.
\begin{align*}
 R_{\mu \nu}=& \frac{1}{2} \partial_\alpha\left(g^{\alpha \rho}(\partial_\mu g_{\rho  \nu} +\partial_\nu g_{\rho \mu} -\partial_\rho g_{\mu \nu})\right)
-\frac{1}{2}\partial_\mu \left(g^{\alpha \rho}(\partial_\nu g_{\rho \alpha} + \partial_\alpha g_{\rho \nu} -\partial_\rho g_{\nu \alpha})\right)\\
&+\frac{1}{4}g^{\alpha \rho}g^{\lambda\beta}(\partial_\mu g_{\rho  \nu} +\partial_\nu g_{\rho \mu} -\partial_\rho g_{\mu \nu})
(\partial_\lambda g_{\beta \alpha} + \partial_\alpha g_{\beta \lambda} -\partial_\beta g_{\alpha \lambda})\\
&-\frac{1}{4}g^{\alpha \rho}g^{\lambda \beta}(\partial_\nu g_{\rho \lambda} + \partial_\lambda g_{\rho \nu} -\partial_\rho g_{\nu \lambda})
(\partial_\mu g_{\alpha \beta} + \partial_\alpha g_{\beta \mu} -\partial_\beta g_{\alpha \mu}),\end{align*}

\begin{equation}\label{calcricci}
R_{\mu \nu}=-\frac{1}{2}g^{\alpha \rho}\partial_\alpha \partial_\rho g_{\mu \nu} +\frac{1}{2} H^\rho \partial_\rho g_{\mu \nu}
+\frac{1}{2}\left( g_{\mu \rho}\partial_\nu H^\rho + g_{\nu \rho}\partial_\mu H^\rho \right) + \frac{1}{2}P_{\mu \nu}(g)(\partial g, \partial g),
\end{equation}
with
\begin{equation}\label{quad}
\begin{split}
 P_{\mu \nu}(g)(\partial g, \partial g)
=&\frac{1}{2}g^{\alpha \rho}g^{\beta \sigma}\left(\partial_\mu g_{\rho \sigma}\partial_\alpha g_{\beta \nu}+\partial_\nu g_{\rho \sigma}\partial_\alpha g_{\beta \mu}
-\partial_\beta g_{\mu \rho}\partial_\alpha g_{\nu \sigma}-\frac{1}{2}\partial_\mu g_{\alpha \beta} \partial_\nu g_{\rho \sigma}\right)\\
&+\frac{1}{2}g^{\alpha \beta}g^{\lambda \rho}\partial_\alpha g_{\nu \rho}\partial_\beta g_{\mu \rho}
\end{split}
\end{equation}

\begin{prp}
 If the coupled system of equations
$$\left\{\begin{array}{l}
-\frac{1}{2}g^{\alpha \rho}\partial_\alpha \partial_\rho g_{\mu \nu} +\frac{1}{2} F^\rho \partial_\rho g_{\mu \nu}
+\frac{1}{2}\left( g_{\mu \rho}\partial_\nu F^\rho + g_{\nu \rho}\partial_\mu F^\rho \right) +\frac{1}{2} P_{\mu \nu}(g)(\partial g, \partial g)=\partial_\mu \phi \partial_\nu \phi\\
g^{\alpha \rho}\partial_\alpha\partial_\rho \phi - F^\rho\partial_\rho \phi = 0
  \end{array}\right.
$$
with $F$ a function which may depend on $\phi, g$,
is satisfied on a time interval $[0,T]$ with $T>0$, if the initial induced Riemannian metric and second fundamental form $(\bar{g}, K)$ satisfy the constraint equations, and if the initial compatibility condition
\begin{equation}\label{ft0}
F^\alpha|_{t=0} = H^\alpha|_{t=0},
\end{equation}
is satisfied, then for all time, the equations \eqref{s1} are satisfied on $[0,T]$, together with the wave coordinate condition
$$F^\alpha = H^\alpha.$$
\end{prp}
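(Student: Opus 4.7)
Set $W^\alpha := H^\alpha - F^\alpha$, where $H^\alpha = g^{\lambda\beta}\Gamma^\alpha_{\lambda\beta}$ is the genuine wave-coordinate quantity \eqref{defh}. The goal is to show $W^\alpha \equiv 0$ on $[0,T]$, from which one reads off that $F^\alpha = H^\alpha$ and, comparing \eqref{calcricci} with the reduced system, that $R_{\mu\nu} = \partial_\mu\phi\,\partial_\nu\phi$ and $\Box_g\phi = 0$. By the compatibility assumption \eqref{ft0} we already have $W^\alpha|_{t=0}=0$, so the plan is to show that $W^\alpha$ solves a linear hyperbolic system with vanishing Cauchy data, and then invoke uniqueness.

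Subtracting the reduced equation from the identity \eqref{calcricci}, one obtains directly
\[
E_{\mu\nu}\;:=\;R_{\mu\nu}-\partial_\mu\phi\,\partial_\nu\phi
\;=\;\tfrac12 W^\rho\partial_\rho g_{\mu\nu}+\tfrac12\bigl(g_{\mu\rho}\partial_\nu W^\rho+g_{\nu\rho}\partial_\mu W^\rho\bigr),
\]
and similarly $\Box_g\phi = W^\rho\partial_\rho\phi$ from the reduced wave equation for $\phi$. Now form the trace-reversed discrepancy $\widetilde E_{\mu\nu} := E_{\mu\nu}-\tfrac12 g_{\mu\nu} g^{\alpha\beta}E_{\alpha\beta}$, which is exactly $G_{\mu\nu}-T_{\mu\nu}$ with $T_{\mu\nu}=\partial_\mu\phi\,\partial_\nu\phi-\tfrac12 g_{\mu\nu}g^{\alpha\beta}\partial_\alpha\phi\,\partial_\beta\phi$. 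The contracted Bianchi identity gives $\nabla^\mu G_{\mu\nu}=0$, while a direct computation, using symmetry of second covariant derivatives of the scalar $\phi$, yields $\nabla^\mu T_{\mu\nu} = (\Box_g\phi)\,\partial_\nu\phi = W^\rho\partial_\rho\phi\cdot\partial_\nu\phi$. Therefore
\[
\nabla^\mu\widetilde E_{\mu\nu}\;=\;-\,W^\rho\partial_\rho\phi\cdot\partial_\nu\phi.
\]
On the other hand, substituting the explicit formula for $E_{\mu\nu}$ into $\widetilde E_{\mu\nu}$ and taking the divergence, the highest-order contribution produces the principal part
$\tfrac12\, g_{\nu\rho}\,\Box_g W^\rho$, and all remaining terms are linear combinations of $W$ and $\partial W$ with coefficients depending smoothly on $g$, $\partial g$, $\phi$, $\partial\phi$. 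Equating the two expressions and contracting with $g^{\nu\sigma}$ yields a homogeneous linear hyperbolic system
\[
\Box_g W^\alpha \;+\; \mathcal{A}^\alpha{}_\beta(g,\partial g,\phi,\partial\phi,F)\,W^\beta \;+\; \mathcal{B}^{\alpha\beta}{}_\gamma(g,\phi)\,\partial_\beta W^\gamma\;=\;0.
\]

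It remains to show $\partial_t W^\alpha|_{t=0}=0$; this is where the constraint equations are used. By \eqref{ft0} we have $W^\alpha|_{t=0}=0$, hence also $\partial_i W^\alpha|_{t=0}=0$ for spatial $i$, so the expression for $E_{\mu\nu}$ above shows that at $t=0$ only the components of $E_{\mu\nu}$ involving one time derivative of $W$ can be nonzero. A short bookkeeping gives, at $t=0$,
\[
\widetilde E_{0\nu}\big|_{t=0}\;=\;\tfrac12\,g_{\nu\rho}\,\partial_t W^\rho\;+\;(\text{terms vanishing on }\{W=0\}),
\]
after the usual rearrangement using $\widetilde E=E-\tfrac12 g\,\mathrm{tr}_g E$. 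But $\widetilde E_{0\nu}= G_{0\nu}-T_{0\nu}$, which vanishes at $t=0$ precisely by the constraint equations \eqref{contrmom}--\eqref{contrham}. Since $g$ is Lorentzian (hence $g_{\nu\rho}$ invertible), we conclude $\partial_t W^\alpha|_{t=0}=0$. Standard uniqueness for linear second-order hyperbolic systems on the globally hyperbolic slab $[0,T]\times\mathbb{R}^2$ then forces $W^\alpha\equiv 0$, which completes the proof.

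The main obstacle, and the only computation that requires real care, is the first part of the second paragraph: verifying that the divergence of the explicit expression for $\widetilde E_{\mu\nu}$ really produces $\tfrac12 g_{\nu\rho}\Box_g W^\rho$ as its principal part, with all other contributions genuinely of order $\le 1$ in $W$. This uses essentially the same algebraic identities that underlie the original derivation of \eqref{calcricci} and is the standard Choquet-Bruhat argument adapted to a nonzero prescribed gauge source $F^\alpha$; the point worth checking is that the $F$-dependence enters only through lower-order coefficients, not through the principal symbol, which follows from the fact that $F$ is a prescribed function (not a differential operator) of $g$ and $\phi$.
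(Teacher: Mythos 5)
Your proposal is correct and follows essentially the same route as the paper: define $W^\alpha=H^\alpha-F^\alpha$, subtract the reduced equation from \eqref{calcricci} to express $R_{\mu\nu}-\partial_\mu\phi\,\partial_\nu\phi$ in terms of $W$ and $\partial W$, apply the contracted Bianchi identity to produce a homogeneous linear second-order hyperbolic system for $W$, and then combine the compatibility condition \eqref{ft0} with the constraint equations to get vanishing Cauchy data, concluding by uniqueness. One small point in your favor: you correctly record $\nabla^\mu T_{\mu\nu}=(\Box_g\phi)\partial_\nu\phi=W^\rho\partial_\rho\phi\,\partial_\nu\phi$ (a harmless lower-order term in $W$), whereas the paper's proof asserts $D^\mu T_{\mu\nu}=0$, which only holds once $W\equiv 0$ is already known; your accounting is the more careful one.
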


\begin{proof}
 We use the twice contracted Bianchi Identity
$$D^\mu \left(R_{\mu \nu} -\frac{1}{2}Rg_{\mu \nu} \right)=0.$$
with $H$ defined by \eqref{defh}. Since in $[0,T]$, we have
$$-\frac{1}{2}g^{\alpha \rho}\partial_\alpha \partial_\rho g_{\mu \nu} +\frac{1}{2} F^\rho \partial_\rho g_{\mu \nu}
+\frac{1}{2}\left( g_{\mu \rho}\partial_\nu F^\rho + g_{\nu \rho}\partial_\mu F^\rho \right) + P_{\mu \nu}(g)(\partial g, \partial g)=\partial_\mu \phi \partial_\nu \phi.$$
Thanks to \eqref{calcricci} we obtain
$$\frac{1}{2} (F^\rho-H^\rho) \partial_\rho g_{\mu \nu}
+\frac{1}{2}\left( g_{\mu \rho}\partial_\nu (F^\rho-H^\rho) + g_{\nu \rho}\partial_\mu (F^\rho-H^\rho) \right)= \partial_\mu \phi \partial_\nu \phi-R_{\mu\nu}.$$
Consequently, since
$D^\mu \left(R_{\mu \nu} -\frac{1}{2}Rg_{\mu \nu} \right)=0$ and
$D^\mu \left(\partial_\mu \phi \partial_\nu \phi -\frac{1}{2}g_{\mu \nu}\partial^\alpha \phi \partial_\alpha \phi \right)=0$
and we obtain the following equation on $F^\rho-H^\rho$
\begin{align*}
0=& D^\mu \bigg(\frac{1}{2}\left( g_{\mu \rho}\partial_\nu (F^\rho-H^\rho) + g_{\nu \rho}\partial_\mu (F^\rho-H^\rho) \right)
-\frac{1}{4}g_{\mu \nu} g^{\alpha \beta}\left( g_{\alpha \rho}\partial_\beta (F^\rho-H^\rho) + g_{\alpha \rho}\partial_\beta (F^\rho-H^\rho) \right)\\
&+\frac{1}{2}\left(\partial_\rho g_{\mu \nu}-\frac{1}{2}g^{\alpha\beta}\partial_\rho g_{\alpha \beta}\right) (F^\alpha - H^\alpha) \bigg)
\end{align*}
Multiplying by $g^{\nu \alpha}$ we obtain
$$\Box_g (F^\alpha-H^\alpha) + B^{\alpha,\beta}_\rho \partial_\beta (F^\rho-H^\rho) + C^\alpha_\rho (F^\rho-H^\rho)=0,$$
with $ B^{\alpha,\beta}_\rho $, $C^\alpha_\rho$ coefficients depending on $g,\phi$, well defined in $[0,T]$. 
This is an equation in hyperbolic form, therefore if the initial data  $(F^\alpha-H^\alpha)|_{t=0} $
 and $\partial_t(F^\alpha-H^\alpha)|_{t=0} $ are zero, then the solution is identically zero on $[0,T]$.  
Since we assume \eqref{ft0}, we only have to check 
$$\partial_t(F^\alpha-H^\alpha)|_{t=0} = 0.$$
Since the constraint equations are satisfied, we have
\begin{align*}
 R_{0i} &= \partial_0 \phi \partial_i \phi,\\
R_{00} - \frac{1}{2}g_{00} R &= \partial_0 \phi \partial_0 \phi-\frac{1}{2}g_{00} \partial^\mu \phi \partial_\mu \phi.
\end{align*}
Therefore, using once again equation \eqref{calcricci} and \eqref{ft0} we obtain
\begin{align*}
0=& g_{i \rho}\partial_t (F^\rho-H^\rho), \\
0=& 2g_{0 \rho}\partial_t (F^\rho-H^\rho) -g_{00}  \partial_t (F^0 -H^0). 
\end{align*}
This system can be written as
$$\left(\begin{array}{lll}
g_{00}& 2g_{01} &2g_{02}\\
g_{01} & g_{11} & g_{12}\\
g_{02} & g_{12} & g_{22}
        \end{array}\right)
\left( \begin{array}{l}
        \partial_t (F^0 -H^0)\\
\partial_t (F^1 -H^1)\\
\partial_t (F^2 -H^2)
       \end{array}\right)=0.$$
It is invertible so $\partial_t(F^\rho -H^\rho)_{t=0}=0$. Therefore in $[0,T]$ we have
$F^\rho = H^\rho$ and equation \eqref{calcricci} implies that the Einstein Equations \eqref{s1} are satisfied.
\end{proof}

\section{The $L^\infty-L^\infty$ estimate}\label{linflinf}
For the sake of completeness, we give here the proof of the $L^\infty-L^\infty$ estimate by Kubo and Kubota (see \cite{kubo}).
\begin{prp}
Let u be a solution of
\begin{equation*}
 \left\{ \begin{array}{l} 
         \Box u = F, \\
	 (u, \partial_t u)|_{t=0}=(0,0),
        \end{array}
\right.
\end{equation*}
The $L^\infty-L^\infty$ estimate writes: for $\mu>\frac{3}{2} , \nu >1$
$$|u(t,x)|( 1+t+|x|)^\frac{1}{2} \leq C(\mu, \nu) M_{\mu, \nu}(f) (1+|t-|x|||)^{-\frac{1}{2} + [2-\mu]_{+}},$$
where 
$$M_{\mu, \nu}(f) = \sup (1+|y|+s)^\mu (1+|s-|y||)^\nu |F(y,s)|,$$
and we have the convention $A^{[0]_{+}}=\ln(A)$.

\end{prp}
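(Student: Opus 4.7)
The plan is to use the explicit retarded fundamental solution of the wave equation in $2+1$ dimensions, together with a careful analysis of the resulting integral in null-type coordinates. Recall that a solution of $\Box u=F$ with vanishing Cauchy data admits the representation
\begin{equation*}
u(t,x)=\frac{1}{2\pi}\int_0^t\!\!\int_{|y-x|<t-s}\frac{F(s,y)}{\sqrt{(t-s)^2-|y-x|^2}}\,dy\,ds.
\end{equation*}
Substituting the pointwise bound $|F(s,y)|\le M_{\mu,\nu}(F)\,(1+s+|y|)^{-\mu}(1+|s-|y||)^{-\nu}$ reduces everything to controlling an explicit integral depending on $(t,r)$ with $r=|x|$ only, by rotational invariance.

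First, I would pass to polar coordinates $y=(\rho\cos\theta,\rho\sin\theta)$ centered at the origin and perform the $\theta$-integration. Writing $|y-x|^2=\rho^2+r^2-2r\rho\cos\theta$, the substitution $\sigma=|y-x|$ converts the angular integral into
\begin{equation*}
\int_{|\rho-r|}^{\min(\rho+r,\,t-s)}\frac{d\sigma}{\sqrt{(t-s)^2-\sigma^2}\,\sqrt{\bigl((\rho+r)^2-\sigma^2\bigr)\bigl(\sigma^2-(\rho-r)^2\bigr)}},
\end{equation*}
which, after a trigonometric substitution of the form $\sigma^2=(\rho-r)^2+((\rho+r)^2-(\rho-r)^2)\sin^2\varphi$, can be bounded by an elliptic-type kernel that is pointwise dominated (up to universal constants) by $1/\sqrt{(t-s+\rho+r)(t-s-|r-\rho|)}$ on the effective domain of integration. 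Next, I would switch to the null variables $\lambda=s+\rho$ and $q=s-\rho$; the backward solid light cone $D(t,x)$ then maps to a bounded wedge in $(\lambda,q)$ inside which $t-s\pm(\rho-r)$ and $t-s\pm(\rho+r)$ become linear functions of $\lambda-t-r$, $\lambda-t+r$, $q+t-r$, $q+t+r$. This is the change of variable that separates the interior ($r<t$) and exterior ($r>t$) regions and makes the two relevant scales $1+t+r$ and $1+|t-r|$ explicit.

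The proof then reduces to the following weighted integral estimate: on the region $\{|q|\le\lambda,\ \lambda\ge 0\}$ intersected with the appropriate wedge determined by $(t,r)$, one has to bound
\begin{equation*}
\int\!\!\int\frac{d\lambda\,dq}{(1+\lambda)^\mu(1+|q|)^\nu\sqrt{(1+t+r-\lambda_+)(1+|t-r|-q_+)}},
\end{equation*}
where $\lambda_+,q_+$ denote the characteristic variables effectively bounded by $t+r$ and $|t-r|$ respectively. I would estimate this by doing the $\lambda$-integral first: the factor $(1+\lambda)^{-\mu}$ combined with the square root contributes $(1+t+r)^{-1/2}$ provided $\mu>3/2$ (this is exactly where the hypothesis $\mu>3/2$ is used). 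The remaining $q$-integral, with weight $(1+|q|)^{-\nu}$ and kernel $1/\sqrt{1+|t-r|-q_+}$ against the range of $q$ dictated by the wedge, gives the claimed factor $(1+||x|-t|)^{-1/2+[2-\mu]_+}$; the exponent $[2-\mu]_+$ (and the logarithm when $\mu=2$) arises precisely as the integrability threshold of $(1+\lambda)^{-\mu}$ against a large $\lambda$-tail of size $\sim 1+|t-r|$ that survives after the $\sqrt{1+t+r}$-gain.

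\textbf{Main obstacle.} The delicate step is the precise derivation of the $(1+|t-r|)^{-1/2+[2-\mu]_+}$ factor. The kernel $1/\sqrt{(t-s)^2-|y-x|^2}$ is only borderline integrable along the lateral boundary of the backward light cone, so one cannot afford to lose any power of $1+|q|$ in the angular reduction; the trigonometric substitution above must be carried out with sharp constants and the wedge of integration must be split into the regions $\{q\lesssim 0\}$, $\{0<q\lesssim 1+|t-r|\}$, $\{q\sim \lambda\}$, each contributing separately to the three cases $\mu>2$, $\mu=2$, $3/2<\mu<2$. Once this splitting is done, each piece reduces to a one-dimensional weighted integral which can be evaluated explicitly.
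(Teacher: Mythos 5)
Your overall route — retarded representation formula, reduction of the angular integration to an elliptic-type kernel in the two radial variables $\rho=|y|$ and $|y-x|$, passage to null coordinates $\lambda=s+\rho$, $q=s-\rho$, and factorization into two one-dimensional weighted integrals — is exactly the paper's strategy. However, there is a concrete error at the key step, namely the claimed pointwise bound on the reduced kernel. After integrating out $\sigma=|y-x|$ one is left (with $a=(\rho-r)^2$, $b=(\rho+r)^2$, $d=(t-s)^2$) with $\tfrac12\int_a^{\min(b,d)}\bigl((d-u)(b-u)(u-a)\bigr)^{-1/2}du$, which is bounded by $\pi/\sqrt{|d-b|}=\pi\bigl(|t-s-\rho-r|\,(t-s+\rho+r)\bigr)^{-1/2}$ and genuinely diverges as $\rho+r\to t-s$ (when $b=d$ the integrand behaves like $(b-u)^{-1}(u-a)^{-1/2}$, which is not integrable). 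Your proposed bound $\bigl((t-s+\rho+r)(t-s-|r-\rho|)\bigr)^{-1/2}$ stays finite on the set $\rho+r=t-s$, so it is false there; it places the square-root singularity on the wrong hypersurface ($|\rho-r|=t-s$, where the true kernel is actually bounded, rather than $\rho+r=t-s$).

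This is not a cosmetic slip, because the location of the singularity dictates how the two weights split between the two one-dimensional integrals. In null coordinates the correct kernel is $\bigl(|(t-r)-\lambda|\,\bigl((t+r)-q\bigr)\bigr)^{-1/2}$, so the $\lambda$-integral carries the weight $(1+\lambda)^{-\mu}$ \emph{and} the singularity at $\lambda=t-r$: it is this integral, e.g. $\int_0^{t-r}\beta(1+\beta)^{-\mu}(t-r-\beta)^{-1/2}d\beta$, that produces the factor $(1+|t-r|)^{-\frac12+[2-\mu]_+}$ and uses $\mu>\tfrac32$ (the three cases $\mu>2$, $\mu=2$, $\tfrac32<\mu<2$ come from the large-$\beta$ integrability of $\beta(1+\beta)^{-\mu}$, not from a splitting in $q$). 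The $q$-integral, with weight $(1+|q|)^{-\nu}$ and the regular factor $(t+r-q)^{-1/2}$, yields $(1+t+r)^{-1/2}$ using only $\nu>1$. Your plan assigns these roles in reverse ($\mu$-integral giving $(1+t+r)^{-1/2}$, $\nu$-integral giving the $|t-r|$-factor), which is already inconsistent with the fact that the exponent $[2-\mu]_+$ depends on $\mu$; carried out as written, the computation would not produce the stated estimate. You need to keep the exact factor $|d-b|^{-1/2}$, split according to the sign of $d-b$ (the interior and exterior of the cone $\rho+r=t-s$), and then the change to null variables factorizes the integral correctly.
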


\begin{proof}

We write the solution $u$ of
\begin{equation*}
 \left\{ \begin{array}{l} 
         \Box u = F, \\
	 (u, \partial_t u)|_{t=0}=(0,0),
        \end{array}
\right.
\end{equation*}
with the representation formula
\begin{equation*}
u(x,t) = \int_0^t \int_{|y| \leq t-s} \frac{1}{\sqrt{(t-s)^2-|y|^2}}F(s, x-y)dy ds.
\end{equation*}
With $M_{\mu, \nu}(f) = \sup (1+|y|+s)^\mu (1+|s-|y||)^\nu |F(y,s)|$, we can write
$$|u(x,t)| \leq M_{\mu,\nu}(f) \int_0^t \int_{|y| \leq t-s} \frac{1}{\sqrt{(t-s)^2-|y|^2}}\frac{1}{(1+|x-y|+s)^\mu (1+|s-|x-y||)^\nu}dy ds.$$
It is therefore sufficient to study the quantity
$$I(x,t)= \int_0^t \int_{|y| \leq t-s} \frac{1}{\sqrt{(t-s)^2-|y|^2}}\frac{1}{(1+|x-y|+s)^\mu (1+|s-|x-y||)^\nu}dy ds.$$
We begin with a lemma on spherical means.

\begin{lm}\label{moysp}
Let $b \in \q C^0(\m R^2)$. We have the following equality for $\rho \geq 0$
$$\int_{|\omega|=1} b(|x+\rho \omega|)d\omega = 4\int_{|\rho-r|}^{\rho+r}\lambda b(\lambda)h(\lambda, \rho,r)d\lambda,$$
where we note $r=|x|$ and 
\begin{align*}
 h(\lambda, \rho,r)&=\left(\lambda^2-(\rho-r)^2\right)^{-\frac{1}{2}}\left((\rho+r)^2-\lambda^2\right)^{-\frac{1}{2}}\\
&= \left((\lambda+r)^2-\rho^2\right)^{-\frac{1}{2}}\left(\rho^2-(\lambda-r)^2\right)^{-\frac{1}{2}}.
\end{align*}
\end{lm}

\begin{proof}
By eventually rotating the axis, we can assume $x=(r,0)$ in $(x_1,x_2)$ coordinates. Therefore we have 
\[\int_{|\omega|=1} b(|x+\rho \omega|)d\omega= \int_0^{2\pi}b\left( (r^2+\rho^2+ 2r\rho \cos(\theta))^\frac{1}{2}\right)d\theta
=2\int_0^{\pi}b\left( (r^2+\rho^2+ 2r\rho \cos(\theta))^\frac{1}{2}\right)d\theta.\]
 We make the change of variable $\lambda = (r^2+\rho^2+2\rho r \cos(\theta))^{\frac{1}{2}}$, for $\theta \in [0, \pi[$. Then we have
\begin{align*}
 d\lambda &= -\frac{1}{\lambda}\rho r \sin(\theta) d\theta \\
&= -\frac{1}{\lambda}\rho r \left(1- \frac{(\lambda^2-r^2-\rho^2)^2}{(2\rho r)^2}\right)^{\frac{1}{2}}d\theta\\
&= -\frac{1}{2\lambda} \left( (2\rho r)^2-(\lambda^2 -r^2-\rho^2)^2\right)^{\frac{1}{2}} d\theta\\
&=-\frac{1}{2\lambda}\left(2\rho r -\lambda^2+r^2+\rho^2\right)^\frac{1}{2}\left( 2\rho r + \lambda^2 -\rho^2 -r^2\right)^\frac{1}{2}d\theta.
\end{align*}
We have therefore 
$d\theta = -2\lambda h(\lambda,\rho,r) d\lambda$, which concludes the proof of Lemma \ref{moysp}.
\end{proof}

We use Lemma \ref{moysp} to  calculate $I$
\begin{align*}
I(x,t)&= \int_0^t \int_{\rho \leq t-s}\frac{\rho}{\sqrt{(t-s)^2-\rho^2}}\int_{|\omega|=1}\frac{1}{(1+|x+\rho \omega|+s)^\mu (1+|s-|x+\rho \omega||)^\nu}d\omega d\rho ds\\
&=  4\int_0^t \int_{\rho \leq t-s}\frac{\rho}{\sqrt{(t-s)^2-\rho^2}}\int_{|\rho-r|}^{\rho +r} \frac{h(\lambda,\rho,r)}{(1+\lambda+s)^\mu (1+|s-\lambda|)^\nu} \lambda d\lambda d\rho ds.
\end{align*}
We exchange the integration in $\rho$ with the integration in $\lambda$, noticing that 
$$\ch 1_{|\rho-r|\leq \lambda \leq \rho+r} = \ch 1_{|\lambda-r|\leq \rho \leq \lambda+r},$$
and we make the decomposition $ I= I_1 + I_2$, separating the region $\lambda+r\leq t-s$ from $\lambda+r \geq t-s$.
\begin{align*}
 I_1 &= \int_0^{t-r} \int_{\lambda = 0}^{t-s-r} \frac{\lambda}{z(s,\lambda)}\int_{|\lambda-r|}^{\lambda +r} \frac{h(\lambda, \rho,r)}{\sqrt{(t-s)^2-\rho^2}} \rho d\rho d\lambda ds,\\
I_2 &= \int_0^{t} \int_{\lambda = \max(t-s-r,0)}^{t-s+r} \frac{\lambda}{z(s,\lambda)}\int_{|\lambda-r|}^{t-s} \frac{h(\lambda, \rho,r)}{\sqrt{(t-s)^2-\rho^2}} \rho d\rho d\lambda ds,
\end{align*}
where $z(s,\lambda)=(1+\lambda+s)^\mu (1+|s-\lambda|)^\nu$.

\subsection{Estimate of $I_1$}
We write
\begin{align*}
\int_{|\lambda-r|}^{\lambda +r} \frac{h(\lambda, \rho,r)}{\sqrt{(t-s)^2-\rho^2}} \rho d\rho &=\int_{|\lambda-r|}^{\lambda +r}\frac{1}{\sqrt{(t-s)^2-\rho^2}
\sqrt{(\lambda+r)^2-\rho^2}\sqrt{\rho^2-(\lambda-r)^2}}\rho d\rho\\
&=\frac{1}{2}\int_a^b \frac{du}{\sqrt{d-u}\sqrt{b-u}\sqrt{u-a}},
\end{align*}
with $a=(\lambda-r)^2$, $b=(\lambda+r)^2$ and $d=(t-s)^2$. Recall that in the integration region of $I_1$, 
we have $\lambda+r\leq t-s$ so $b\leq d$. This yields
\begin{equation}\label{astuce} 
\int_a^b \frac{du}{\sqrt{d-u}\sqrt{b-u}\sqrt{u-a}} \leq
\frac{1}{\sqrt{d-b}} \int_a^b \frac{du}{\sqrt{b-u}\sqrt{u-a}}
\leq \frac{1}{\sqrt{d-b}} \int_0^1 \frac{dv}{\sqrt{v}\sqrt{1-v}} \leq \frac{\pi}{\sqrt{d-b}}.
\end{equation}

Consequently we have
$$I_1 \lesssim \int_0^{t-r} \int_0^{t-s-r}\frac{\lambda}{\sqrt{(t-s)^2-(\lambda+r)^2}(1+\lambda+s)^\mu (1+|s-\lambda|)^\nu}d\lambda ds.$$
We make the change of variable $\alpha = s-\lambda$, $\beta = \lambda +s$. We obtain
$$I_1 \lesssim \left( \int_0^{t-r}\frac{\beta d\beta}{\sqrt{t-r-\beta}(1+\beta)^\mu }\right) \left( \int_{r-t}^{t-r}\frac{d\alpha}{\sqrt{t+r-\alpha}(1+|\alpha|)^\nu }\right).$$

We estimate the first factor. We note that if $t-r \leq 1$, this factor is bounded. We assume therefore that $t-r \geq 1$.
\begin{align*}
 \int_0^{t-r}\frac{\beta d\beta}{\sqrt{t-r-\beta}(1+\beta)^\mu}& = \int_0^\frac{t-r}{2}\frac{\beta d\beta}{\sqrt{t-r-\beta}(1+\beta)^\mu}
+\int_\frac{t-r}{2}^{t-r}\frac{\beta d\beta}{\sqrt{t-r-\beta}(1+\beta)^\mu}\\
&\lesssim \frac{1}{\sqrt{t-r}} \int_0^\frac{t-r}{2}\frac{\beta d\beta}{(1+\beta)^\mu} +(t-r)^{1-\mu} \int_\frac{t-r}{2}^{t-r}\frac{d\beta}{\sqrt{t-r-\beta}}\\
&\lesssim \frac{(t-r)^{[2-\mu]_{+}}}{\sqrt{t-r}}.
\end{align*}

We estimate the second factor
\begin{align*}
 &\int_{r-t}^{t-r}\frac{d\alpha}{\sqrt{t+r-\alpha}(1+|\alpha|)^\nu }\\&= \int_{r-t}^{\min(\frac{t+r}{2},t-r)}\frac{d\alpha}{\sqrt{t+r-\alpha}(1+|\alpha|)^\nu }
+\int_{\min(\frac{t+r}{2},t-r)} ^{t-r} \frac{d\alpha}{\sqrt{t+r-\alpha}(1+|\alpha|)^\nu }\\
&\lesssim \frac{1}{\sqrt{t+r}}\int_{r-t}^{\min(\frac{t+r}{2},t-r)}\frac{d\alpha}{(1+|\alpha|)^\nu }+\frac{1}{(1+t+r)^\nu}\int_{\min(\frac{t+r}{2},t-r)} ^{t-r} \frac{d\alpha}{\sqrt{t+r-\alpha} }\\
&\lesssim \frac{1}{\sqrt{t+r}},
\end{align*}
where we have used in the last inequality the fact that $\nu>1$. We have proved
$$I_1\lesssim \frac{(1+|t-r|)^{[2-\mu]_{+}}}{\sqrt{1+t+r}\sqrt{|t-r|}}.$$

\subsection{Estimate of $I_2$}
As in the estimate of $I_1$, we write
\[
\int_{|\lambda-r|}^{t-s} \frac{h(\lambda, \rho,r)}{\sqrt{(t-s)^2-\rho^2}} \rho d\rho 
=\frac{1}{2}\int_a^d \frac{du}{\sqrt{d-u}\sqrt{b-u}\sqrt{u-a}},
\]
with $a=(\lambda-r)^2$, $b=(\lambda+r)^2$ and $d=(t-s)^2$.
In the region $\lambda+r\geq t-s$, we have $b\geq d$, therefore as for \eqref{astuce} we get
\[\frac{1}{2}\int_a^d \frac{du}{\sqrt{d-u}\sqrt{b-u}\sqrt{u-a}}
\lesssim \frac{1}{\sqrt{b-d}}\int_a^d \frac{du}{\sqrt{d-u}\sqrt{u-a}}\]
and so
\[\int_{|\lambda-r|}^{t-s} \frac{h(\lambda, \rho,r)}{\sqrt{(t-s)^2-\rho^2}} \rho d\rho
\lesssim \frac{1}{\sqrt{(\lambda+r)^2-(t-s)^2}}.\]
Therefore we have
$$I_2 \lesssim \int_0^{t} \int_{\lambda = \max(t-s-r,0)}^{t-s+r} \frac{\lambda}{\sqrt{(\lambda +r)^2-(t-s)^2}(1+\lambda+s)^\mu (1+|s-\lambda|)^\nu}d\lambda ds.$$
We make the same change of variable $\alpha = s-\lambda$, $\beta = \lambda +s$. We obtain
$$I_2 \lesssim  \left( \int_{\max(0, t-r)}^{t+r}\frac{\beta d\beta}{\sqrt{\beta-(t-r)}(1+\beta)^\mu }\right) \left( \int_{-r-t}^{t}\frac{d\alpha}{\sqrt{t+r-\alpha}(1+|\alpha|)^\nu }\right).$$

We estimate the first factor. We first assume $t-r>0$.
\begin{align*}
 \int_{t-r}^{t+r}\frac{\beta d\beta}{\sqrt{\beta-(t-r)}(1+\beta)^\mu}
&\lesssim \int_0^{2r} \frac{ (\rho +1+t-r)^{1-\mu}}{\sqrt{\rho}}d\rho\\
&\lesssim (1+|t-r|)^{\frac{3}{2}-\mu}\int_0^{\frac{2r}{1+t-r}} \frac{(1+u)^{1-\mu}}{\sqrt{u}}du\\
&\lesssim (1+|t-r|)^{\frac{3}{2}-\mu},
\end{align*}
where we have made consecutively the changes of variable $\rho = \beta-|t-r|$ and $u= \frac{\rho}{1+|t-r|}$, and where we use in the last inequality the fact that
$\frac{(1+u)^{1-\mu}}{\sqrt{u}}$ is integrable.

We now assume $t-r<-1$. Then
\begin{align*}
 \int_{0}^{t+r}\frac{\beta d\beta}{\sqrt{\beta+|t-r|}(1+\beta)^\mu}
&\lesssim |t-r|^\frac{3}{2}\int_0^{\frac{t+r}{|t-r|} }\frac{ \rho}{\sqrt{1+\rho}(1+|t-r|\rho)^\mu}d\rho\\
&\lesssim (1+|t-r|)^{\frac{3}{2}-\mu}\int_0^{\frac{t+r}{|t-r|}} \frac{\rho}{\sqrt{\rho}\left(\frac{1}{|t-r|}+\rho\right)^\mu}d\rho\\
&\lesssim (1+|t-r|)^{-\frac{1}{2}+[2-\mu]_{+}},
\end{align*}
where we have made the change of variable $\rho= \frac{\beta}{|t-r|}$, and also used the fact that 
$\mu >\frac{3}{2}$.

We estimate the second factor
\begin{align*}
&\int_{-r-t}^{t}\frac{d\alpha}{\sqrt{t+r-\alpha}(1+|\alpha|)^\nu }\\
&\lesssim \int_{-r-t}^{\min(t,\frac{t+r}{2})} \frac{d\alpha}{\sqrt{t+r-\alpha}(1+|\alpha|)^\nu }
+\int_{\min(t,\frac{t+r}{2})}^{t}  \frac{d\alpha}{\sqrt{t+r-\alpha}(1+|\alpha|)^\nu }\\
&\lesssim \frac{1}{\sqrt{t+r}} \int_{-r-t}^{\min(t,\frac{t+r}{2})}  \frac{d\alpha}{(1+|\alpha|)^\nu } + \frac{1}{(1+t+r)^\nu} \int_{\min(t,\frac{t+r}{2})}^{t}\frac{d\alpha}{\sqrt{t+r-\alpha} }\\
&\lesssim \frac{1}{\sqrt{t+r}},
\end{align*}
where we have used the fact that $\nu>1$. We have proved therefore that
\[I_2\lesssim \frac{(1+|t-r|)^{-\frac{1}{2}+[2-\mu]_{+}}}{\sqrt{1+t+r}}, \]
so 
\[I \leq I_1+I_2 \lesssim  \frac{(1+|t-r|)^{[2-\mu]_{+}}}{\sqrt{1+t+r}\sqrt{1+|t-r|}}\]
The proof of the $L^\infty-L^\infty$ estimate is now complete.
\end{proof}

\section{Hardy inequality with weight}\label{hardyin}
\begin{prp}\label{prphard}
 Let $\alpha<1$ and $\beta>1$. We have, with $q=r-t$,
$$\int u^2f(q)rdrd\theta \leq C(\alpha,\rho)\int (\partial_r u)^2 g(q)rdrd\theta$$
where
\begin{align*}
 f(q)&= (1+|q|)^{\beta-2}, \quad q>0\\
&=(1+|q|)^{\alpha-2},\quad q<0
\end{align*}
\begin{align*}
 g(q)&= (1+|q|)^{\beta}, \quad q>0\\
&=(1+|q|)^{\alpha},\quad q<0
\end{align*}
\end{prp}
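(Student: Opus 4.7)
The inequality is local in $\theta$, so it suffices to prove, for each fixed $t$ and $\theta$ and for $u\in C_0^\infty$, the one-dimensional weighted Hardy estimate
\[
\int_0^\infty u^2 f(q)\,r\,dr\leq C\int_0^\infty(\partial_r u)^2 g(q)\,r\,dr,\qquad q=r-t,
\]
and then integrate in $\theta$. By density I may assume $u\in C_0^\infty$, so boundary terms at $r=\infty$ automatically vanish; the factor $r$ in the measure kills boundary terms at $r=0$.

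The idea is to introduce primitives converting $f(q)\,r$ into an exact $r$-derivative. Set
\[
\phi_-(q)=\frac{(1+|q|)^{\alpha-1}}{1-\alpha}\ (q<0),\qquad \phi_+(q)=-\frac{(1+q)^{\beta-1}}{\beta-1}\ (q>0),
\]
so $\partial_r\phi_-=f(q)$ and $\partial_r\phi_+=-f(q)$, and a direct computation yields the decisive algebraic identities $\phi_-^2/f=g/(1-\alpha)^2$ and $\phi_+^2/f=g/(\beta-1)^2$. These are exactly what is needed for a single application of Cauchy-Schwarz to produce the weight $g$ on the gradient. I split the ray integral at $r=t$ and apply the pointwise identity
\[
\partial_r\!\bigl(u^2\phi_\pm(q)\,r\bigr)=2u\partial_r u\,\phi_\pm r+u^2(\partial_r\phi_\pm)\,r+u^2\phi_\pm
\]
in each region.

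In the exterior region $r>t$ the boundary at infinity vanishes, and the boundary contribution at $r=t$ equals $-u(t,t,\theta)^2\,t/(\beta-1)$; together with the residual $\int_t^\infty u^2\phi_+\,dr$, which is nonpositive since $\phi_+<0$, both have the favorable sign and move to the left-hand side. Cauchy-Schwarz, the identity $\phi_+^2/f=g/(\beta-1)^2$, and Young's inequality then yield simultaneously the desired exterior estimate
\[
\int_t^\infty u^2 f(q)\,r\,dr\le \frac{C}{(\beta-1)^2}\int_t^\infty(\partial_r u)^2 g(q)\,r\,dr
\]
and, as a byproduct, the trace-type bound
\[
u(t,t,\theta)^2\,t\le \frac{C}{\beta-1}\int_t^\infty(\partial_r u)^2 g(q)\,r\,dr.
\]
In the interior region $0<r<t$, the same integration by parts gives no contribution at $r=0$ but a \emph{positive} boundary term $u(t,t,\theta)^2\,t/(1-\alpha)$ at $r=t$. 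Absorbing this via the trace bound from the exterior, and running Cauchy-Schwarz with $\phi_-^2/f=g/(1-\alpha)^2$, closes the interior estimate; summing the two pieces and integrating in $\theta$ gives Proposition~\ref{prphard}.

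The main obstacle is precisely the matching at the light cone $r=t$: the primitives $\phi_\pm$ do not agree at $q=0$, so the interior integration-by-parts produces an obstructing positive boundary term $u(t,t,\theta)^2\,t/(1-\alpha)$ that has no a priori control. What saves the argument is the observation that the exterior region, run with the correctly signed primitive $\phi_+$, automatically produces for free the trace estimate for $u(t,t,\theta)^2\,t$ needed to absorb this obstruction. The hypotheses $\alpha<1$ and $\beta>1$ enter in sign-critical ways: $\alpha<1$ ensures $\phi_-\to 0$ as $q\to-\infty$, while $\beta>1$ ensures $\phi_+<0$ on $q>0$ and lets the bulk residual be dropped.
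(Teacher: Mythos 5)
Your proof is correct and follows essentially the same route as the paper: split at the light cone $r=t$, integrate by parts against an explicit primitive of $f(q)r$ in each region, exploit the favorable sign of the exterior boundary term, and absorb the positive interior boundary term $u^2(t,t,\theta)\,t$ via a trace bound controlled by the exterior gradient integral. The only minor (and harmless) difference is that you read that trace bound off directly from the signed boundary term of the exterior integration by parts, whereas the paper derives it separately by bounding $t\int u^2(t,\theta)\,d\theta \le t\int_t^\infty\int|\partial_r(u^2)|\,dr\,d\theta$, applying Cauchy--Schwarz with weights $(1+r-t)^{\pm\beta/2}$, and invoking the already-established exterior estimate through the inequality $\beta>2-\beta$.
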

\begin{proof}
We look first at the region $r>t$. We can assume, by a density argument that $u$ is compactly supported. 
We calculate
$$\partial_r\left( r(1+r-t)^{\beta-1}\right)=(1+r-t)^{\beta-1}+(\beta-1)r(1+r-t)^{\beta-2}
=r(1+r-t)^{\beta-2}\left( \frac{1+r-t}{r}+\beta-1 \right)$$
We want to find $c>0$ such that 
$$\frac{1+r-t}{r}+\beta-1>c.$$
This condition is satisfied if
$$t<1+r(\beta-c)$$
which is the case if $\beta-c>1$. Since $\beta>1$ we can find such a $c>0$.
Therefore
\begin{align*}
&\int_t^\infty\int_0^{2\pi}  u^2(1+r-t)^{\beta-2} rdrd\theta \\
\leq& \frac{1}{c} \int_t^\infty \int_0^{2\pi} u^2 \partial_r \left( r(1+r-t)^{\beta-1}\right)drd\theta\\
\leq &\frac{1}{c}\left( -\int_t^\infty \int_0^{2\pi} (\partial_r u^2 ) (1+r-t)^{\beta-1}rdrd\theta+\left[ \int_0^{2\pi}  u^2(r,\theta)(1+r-t)^{\beta-1}rd\theta\right]_t^\infty\right).
\end{align*}
Since $u$ is compactly supported, 
$$\left[ \int_0^{2\pi} u^2(r,\theta)(1+r-t)^{\beta-1}rd\theta\right]_t^\infty\leq 0$$
therefore
\begin{align*}
 &\int_t^\infty\int_0^{2\pi}  u^2(1+r-t)^{\beta-2} rdrd\theta \\
 \leq &\frac{2}{c}
\int_t^\infty \int_0^{2\pi} |u\partial_r u|(1+r-t)^{\beta-1}rdrd\theta\\
\leq &\frac{2}{c} \left( \int_t^\infty\int_0^{2\pi}  u^2(1+r-t)^{\beta-2} rdrd\theta\right)^\frac{1}{2}
\left( \int_t^\infty \int_0^{2\pi} (\partial_r u)^2(1+r-t)^{\beta} rdrd\theta\right)^\frac{1}{2}
\end{align*}
We have proved
\begin{equation}\label{har1}
 \int_t^\infty \int_0^{2\pi} u^2(1+r-t)^{\beta-2} rdrd\theta \leq C(\alpha)
 \int_t^\infty \int_0^{2\pi} (\partial_r u)^2(1+r-t)^{\beta} rdrd\theta.
\end{equation}
We now look at the region $r<t$. We calculate
$$\partial_r \left(r(1+t-r)^{\alpha-1}\right)= (1+t-r)^{\alpha-1} + (1-\alpha) r(1+t-r)^{\alpha-2}.$$
Therefore
\begin{align*}&\int_0^t\int_0^{2\pi} u^2(1+t-r)^{\alpha-2}rdrd\theta\\
&\leq \frac{1}{1-\alpha} \int_0^t\int_0^{2\pi} u^2\partial_r \left(r(1+t-r)^{\alpha-1}\right)drd\theta\\
&\leq \frac{1}{1-\alpha} \left(\int_0^t\int_0^{2\pi} -\left(\partial_r u^2\right) (1+t-r)^{\alpha-1}rdrd\theta
+\left[\int_0^{2\pi}  u^2 (1+t-r)^{-\rho}r \right]_0^t\right)\\
&\leq \frac{1}{1-\alpha} \left( 2\int_0^t \int_0^{2\pi} |u\partial_r u| (1+t-r)^{\alpha-1}rdrd\theta
+ t\int_0^{2\pi} u^2(t,\theta)d\theta
\right).
\end{align*}
We have
\begin{align*}
&\int_0^t\int_0^{2\pi}  |u\partial_r u| (1+t-r)^{\alpha-1}rdrd\theta\\
 \leq&
 \left( \int_0^t\int_0^{2\pi}  u^2(1+t-r)^{\alpha-2} rdrd\theta\right)^\frac{1}{2}
\left( \int_0^t \int_0^{2\pi} (\partial_r u)^2(1+t-r)^{\alpha} rdrd\theta\right)^\frac{1}{2}
\end{align*}
and
\begin{align*}
 t\int_0^{2\pi}  u^2(t,\theta)d\theta
&\leq t \int_t^\infty \int_0^{2\pi} |\partial_r (u^2)|drd\theta\\
&\leq 2t \int_t^\infty \int_0^{2\pi} |u\partial_r u| \frac{(1+t-r)^\frac{\beta}{2}}{(1+t-r)^\frac{\beta}{2}}\frac{r}{t}drd\theta\\
&\leq 2\left( \int_t^\infty \int_0^{2\pi} u^2(1+r-t)^{-\beta} rdrd\theta\right)^\frac{1}{2}
\left( \int_t^\infty \int_0^{2\pi} (\partial_r u)^2(1+r-t)^{\beta} rdrd\theta\right)^\frac{1}{2}.
\end{align*}
Since $\beta >1$, we have $\beta >2-\beta$. Thanks to the estimate \eqref{har1} in the region $r>t$, we obtain
\begin{align*}
&\int_0^t \int_0^{2\pi} u^2(1+t-r)^{\alpha-2}rdrd\theta\\
 \leq& C(\rho,\alpha)\left(
 \int_0^t \int_0^{2\pi} (\partial_r u)^2(1+t-r)^{\alpha} rdrd\theta
+  \int_t^\infty \int_0^{2\pi} (\partial_r u)^2(1+r-t)^{\beta} rdrd\theta\right)
\end{align*}
This concludes the proof of Proposition \ref{prphard}.
\end{proof}

\section{Weighted Klainerman-Sobolev inequality}\label{weigklai}
\begin{prp}\label{prpks}
 We have the inequality
$$|f(t,x)v^\frac{1}{2}(|x|-t)|\lesssim \frac{1}{\sqrt{1+t+|x|}\sqrt{1+||x|-t|}}\sum_{I\leq 2}\|v^\frac{1}{2}(.-t) Z^I f\|_{L^2}.$$
\end{prp}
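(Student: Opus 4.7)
\medskip

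\noindent\emph{Plan.} I would prove this by reducing to a rescaled two-dimensional Sobolev embedding on a suitable box adapted to the null foliation, the gain in decay coming from the change of variables. Fix a time $t$, a point $x_0$ with $r_0=|x_0|$, $\theta_0$ its angle, and set $q_0=r_0-t$. Working in polar coordinates $(r,\theta)$ for the spatial slice, and switching to the null coordinate $q=r-t$, the target is
\begin{equation*}
(1+t+r_0)(1+|q_0|)\,v(q_0)\,|f(t,r_0,\theta_0)|^2\;\lesssim\;\sum_{|I|\le 2}\|v(q-t)^{1/2}\,Z^I f\|_{L^2}^2 .
\end{equation*}
I would split into the two geometric regions $r_0\ge t/2$ (near or outside the light cone) and $r_0\le t/2$ (deep interior). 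The far-interior region $r_0\le t/2$ (and symmetrically $r_0\ge 2t$) is easy: there $|q_0|\sim 1+t+r_0$, so the weight is essentially constant on a ball of radius $c(1+t+r_0)$ around $x_0$, and ordinary 2D Sobolev after rescaling $y=(x-x_0)/(1+t+r_0)$ gives the bound, because each Cartesian derivative $(1+t+r_0)\partial_{x^i}$ is a bounded linear combination of the scaling $S$ and the boosts $\Omega_{0i}$ in this region.

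For the main region $r_0\ge t/2$, I would work on the box $B=\{(q,\theta): q_0\le q\le q_0+(1+|q_0|),\ |\theta-\theta_0|\le 1\}$ (or the analogous box extended in the other direction, chosen so that $B$ stays in the region $q>0$, resp.\ $q<0$, corresponding to the piece of $v$ containing $q_0$). Rescale to unit size via $(q',\theta')=((q-q_0)/(1+|q_0|),\theta-\theta_0)$, and apply the standard 2D Sobolev embedding $\|g\|_{L^\infty}\lesssim\sum_{|I|\le 2}\|\partial^I g\|_{L^2}$ on the unit square. Converting back to the original variables, each $\partial_{q'}$ becomes $(1+|q_0|)\partial_q$ and $\partial_{\theta'}$ becomes $\partial_\theta=\Omega_{12}$. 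By the fundamental identity \eqref{important}, $(1+|q|)\partial_q$ is bounded by a $Z$-derivative, and on the rescaled unit box $(1+|q|)\sim(1+|q_0|)$, so each rescaled derivative translates to (at most) one $Z$-derivative on $f$.

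It remains to insert the weight and the measure. On $B$, the hypothesis $v(q)/(1+|q|)^{1+\mu}\lesssim v'(q)\lesssim v(q)/(1+|q|)$ gives $v(q)\sim v(q_0)$ uniformly (logarithmic/polynomial comparability on a range of length $1+|q_0|$), so we may freely swap $v(q_0)$ for $v(q)$; and the measure $r\,dq\,d\theta\sim r_0\,dq\,d\theta\sim (1+t+r_0)\,dq\,d\theta$ on the box. Collecting the Jacobian factor $(1+|q_0|)$ from the rescaling of $dq$ and the factor $(1+t+r_0)$ from the measure yields precisely the prefactor $(1+t+r_0)(1+|q_0|)$ on the left-hand side, while the right-hand side is controlled by $\sum_{|I|\le 2}\int_B v(q)|Z^I f|^2\,r\,dq\,d\theta\le\sum_{|I|\le 2}\|v^{1/2}Z^I f\|_{L^2}^2$.

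The main obstacle, and really the only nontrivial point, is the bookkeeping near $q_0=0$ where the weight $v$ can have a kink (our weights $w_i$ are piecewise): one must choose the rescaled box so it lies entirely in $\{q>0\}$ or $\{q<0\}$, and treat $|q_0|\lesssim 1$ as a separate elementary case where the weighted inequality reduces to its unweighted analogue. Once this is done, the argument is a direct application of 2D Sobolev, the identity $(1+|q|)\partial\lesssim |Z|$, and the weight-comparability condition.
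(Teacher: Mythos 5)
Your argument is correct, and for the main (near light cone) region it takes a genuinely different route from the paper. The paper also splits $f=f_1+f_2$ with a cut-off at $r\sim t$, and its treatment of the interior piece coincides with yours (isotropic rescaling by $1+t+r$ plus $H^2(\m R^2)\hookrightarrow L^\infty$, using $t\partial\lesssim Z$ there). For the exterior piece, however, the paper does not localize: it writes $(1+t+r)(1+|t-r|)v(r-t)f_2^2$ as the integral of its $\partial_\rho$-derivative from $\rho=t/2$ to $\rho=r$, absorbs the derivatives of the prefactor using $|sv'(s)|\lesssim v(s)$ and $(1+|t-\rho|)\partial_\rho\lesssim Z$, and handles the angular variable with the one-dimensional embedding $W^{1,1}(\m S^1)\hookrightarrow L^\infty(\m S^1)$ — the Lindblad--Rodnianski scheme. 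Your anisotropic box of size $(1+|q_0|)\times 1$ in $(q,\theta)$ with a rescaled two-dimensional Sobolev embedding is the H\"ormander-style alternative; it yields a local version of the estimate (the $L^2$ norm on the right can be taken over the box only), at the cost of the bookkeeping you already identify (keeping the box on one side of $q=0$, and the boundary of the domain).

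One detail to tighten: the comparability $r\sim r_0\sim 1+t+r_0$ of the measure on the box is \emph{not} automatic in the whole region $r_0\ge t/2$, since there $|q_0|$ can still be comparable to $t$ (e.g.\ $r_0=t/2$ gives $|q_0|=t/2$), in which case the box of $q$-length $1+|q_0|$ lets $r$ degenerate to $0$ and even exit the domain. The correct dichotomy is $|q_0|\le c(1+t+r_0)$ (anisotropic box, where then $r\sim r_0$ does hold) versus $|q_0|\ge c(1+t+r_0)$ (isotropic ball, where $1+|q_0|\sim 1+t+r_0$ and your first case applies verbatim). With that adjustment the proof closes.
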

\begin{proof}
We introduce the decomposition
$$f=f_1+f_2,$$
where
$$f_1=\chi\left(\frac{r}{t}\right)f, \quad f_2= \left(1-\chi\left(\frac{r}{t}\right) \right)f,$$
and $\chi$ is a cut-off such that $\chi(\rho)=1$ for $\rho \leq \frac{1}{2}$ and $\chi(\rho)=0$ for $\rho\geq \frac{2}{3}.$
Since the quantities $Z^I \chi$ are bounded, it is sufficient to prove the proposition for $f_1$ and $f_2$.

For $f_1$, we introduce the function $f_t=f_1(t,tx).$ The Sobolev embedding $H^2 \hookrightarrow L^\infty$ gives
\begin{align*}
\|f_t\|_{L^\infty}&\lesssim \sum_{|\alpha| \leq 2}\|\nabla^\alpha f_t\|_{L^2}\\
&\lesssim  \frac{1}{t} \sum_{|\alpha| \leq 2}  \|t^\alpha \nabla^\alpha f_1\|_{L^2}.
\end{align*}
In the region $r\leq \frac{2t}{3}$ we have $-t\leq r-t \leq -\frac{t}{3}$, therefore
$$|t\nabla \phi|\lesssim |(r-t)\nabla \phi|\lesssim \sum_{Z \in \q Z}|Z \phi|.$$ 
Moreover, in this region $v(|x|-t)\sim v(t)$, so
\begin{align*}
|f_1(t,x)v^\frac{1}{2}(|x|-t)|&\lesssim \frac{1}{t} \sum_{I \leq 2}  \|v^\frac{1}{2}(t)Z^I f_1\|_{L^2}\\
&\lesssim \frac{1}{\sqrt{1+t+|x|}\sqrt{1+||x|-t|}}\sum_{I\leq 2}\|v^\frac{1}{2}(.-t) Z^I f_1\|_{L^2}.
\end{align*}

For $f_2$ we write
\begin{align*}
& (1+t+r)(1+|t-r|)v(r-t)(f_2(t,r,\theta))^2 \\
&\lesssim
\int_{\frac{t}{2}}^r \partial_\rho \left((1+t+\rho)(1+|t-\rho|)v(\rho-t)f_2(t,\rho,\theta)^2\right)d\rho\\
&\lesssim \sum_{0\leq \alpha \leq 1}
\int_{\frac{t}{2}}^r \int_0^{2\pi}|\partial^\alpha_\theta \partial_\rho \left((1+t+\rho)(1+|t-\rho|)v(\rho-t)f_2(t,\rho,\theta)^2\right)|d\rho d\theta
\end{align*}
where we have used the Sobolev embedding $W^{1,1}(\m S^1) \hookrightarrow L^\infty(\m S^1)$ .
We estimate the terms appearing when we distribute the derivation $\partial_\rho$ from left to right.
\begin{align*}
|(1+|t-\rho|) v(\rho-t) \partial^\alpha_\theta f_2^2| &\lesssim \rho |v(\rho-t) \partial^\alpha_\theta f_2^2| ,\\
|(1+t+\rho) v(\rho-t) \partial^\alpha_\theta f_2^2| &\lesssim \rho |v(\rho-t) \partial^\alpha_\theta f_2^2| ,\\
|(1+t+\rho) (1+|t-\rho|)v'(\rho-t) \partial^\alpha_\theta f_2^2|
&\lesssim \rho |(1+|t-\rho|)v'(\rho-t) ||\partial^\alpha_\theta f_2^2| &\lesssim \rho |v(\rho-t) \partial^\alpha_\theta f_2^2| ,\\
|(1+t+\rho) v(\rho-t) (1+|t-\rho|)\partial_\rho \partial^\alpha_\theta f_2^2|
&\lesssim \rho |v(\rho-t)|\sum_{Z \in \q Z} |Z\partial^\alpha_\theta f_2^2|,
\end{align*}
where we have used in the third inequality $|sv'(s)|\leq v(s)$.
Therefore
$$|(1+t+r)(1+|t-r|)v(r-t)(f_2(t,r,\theta))^2|
\lesssim \sum_{0\leq \alpha \leq 1} \sum_{Z \in \q Z}\|v^\frac{1}{2} \partial^\alpha_\theta Z f_2\|^2_{L^2}
\lesssim \sum_{I\leq 2}   \|v^\frac{1}{2}Z^I f_2\|^2_{L^2}.$$
This concludes the proof of Proposition \ref{prpks}.
\end{proof}

\paragraph{Acknowledgement} The author would like to thank Jérémie Szeftel for his attentive reading of this paper and 
his supervising.
\bibliographystyle{plain}
\bibliography{stab}

\begin{thebibliography}{10}

\bibitem{alin}
S.~Alinhac.
\newblock {The null condition for quasilinear wave equations in two space
  dimensions {I}}.
\newblock {\em Invent. Math.}, 145(3):597--618, 2001.

\bibitem{alin2}
Serge Alinhac.
\newblock {An example of blowup at infinity for a quasilinear wave equation}.
\newblock {\em Ast{\'e}risque}, (284):1--91, 2003.
\newblock Autour de l'analyse microlocale.

\bibitem{asht}
Abhay Ashtekar, Ji\v{r}{\'i} Bi\v{c}{\'a}k, and Bernd~G. Schmidt.
\newblock {Asymptotic structure of symmetry-reduced general relativity}.
\newblock {\em Phys. Rev. D (3)}, 55(2):669--686, 1997.

\bibitem{beck}
G.~Beck.
\newblock { Zur Theorie bin{\"a}rer Gravitationsfelder}.
\newblock {\em Zeitschrift f{\"u}r Physik}, 33(14):713--728, 1925.

\bibitem{chru}
B.~K. Berger, P.~T. Chru{\'s}ciel, and V.~Moncrief.
\newblock {On ``asymptotically flat'' space-times with {$G_2$}-invariant
  {C}auchy surfaces}.
\newblock {\em Ann. Physics}, 237(2):322--354, 1995.

\bibitem{livrecb}
Yvonne Choquet-Bruhat.
\newblock {\em {General relativity and the {E}instein equations}}.
\newblock {Oxford Mathematical Monographs}. Oxford University Press, Oxford,
  2009.

\bibitem{choquet}
Yvonne Choquet-Bruhat and Vincent Moncrief.
\newblock {Nonlinear stability of an expanding universe with the {$S^1$}
  isometry group}.
\newblock In {\em {Partial differential equations and mathematical physics
  ({T}okyo, 2001)}}, volume~52 of {\em {Progr. Nonlinear Differential Equations
  Appl.}}, pages 57--71. Birkh{\"a}user Boston, Boston, MA, 2003.

\bibitem{ck}
Demetrios Christodoulou and Sergiu Klainerman.
\newblock {\em {The global nonlinear stability of the {M}inkowski space}},
  volume~41 of {\em {Princeton Mathematical Series}}.
\newblock Princeton University Press, Princeton, NJ, 1993.

\bibitem{godin}
Paul Godin.
\newblock {Lifespan of solutions of semilinear wave equations in two space
  dimensions}.
\newblock {\em Comm. Partial Differential Equations}, 18(5-6):895--916, 1993.

\bibitem{hoshiga}
Akira Hoshiga.
\newblock {The existence of global solutions to systems of quasilinear wave
  equations with quadratic nonlinearities in 2-dimensional space}.
\newblock {\em Funkcial. Ekvac.}, 49(3):357--384, 2006.

\bibitem{expcont}
C.~Huneau.
\newblock {Constraint equations for 3 + 1 vacuum Einstein equations with a
  translational space-like Killing field in the asymptotically flat case II}.

\bibitem{john}
Fritz John.
\newblock {Blow-up for quasilinear wave equations in three space dimensions}.
\newblock {\em Comm. Pure Appl. Math.}, 34(1):29--51, 1981.

\bibitem{klai}
S.~Klainerman.
\newblock {The null condition and global existence to nonlinear wave
  equations}.
\newblock In {\em {Nonlinear systems of partial differential equations in
  applied mathematics, {P}art 1 ({S}anta {F}e, {N}.{M}., 1984)}}, volume~23 of
  {\em {Lectures in Appl. Math.}}, pages 293--326. Amer. Math. Soc.,
  Providence, RI, 1986.

\bibitem{nicolo}
Sergiu Klainerman and Francesco Nicol{\`o}.
\newblock {\em {The evolution problem in general relativity}}, volume~25 of
  {\em {Progress in Mathematical Physics}}.
\newblock Birkh{\"a}user Boston, Inc., Boston, MA, 2003.

\bibitem{kubo}
Hideo Kubo and K{\^o}ji Kubota.
\newblock {Scattering for systems of semilinear wave equations with different
  speeds of propagation}.
\newblock {\em Adv. Differential Equations}, 7(4):441--468, 2002.

\bibitem{kubota}
K{\^o}ji Kubota.
\newblock {Existence of a global solution to a semi-linear wave equation with
  initial data of noncompact support in low space dimensions}.
\newblock {\em Hokkaido Math. J.}, 22(2):123--180, 1993.

\bibitem{lindsym}
Hans Lindblad.
\newblock {Global solutions of nonlinear wave equations}.
\newblock {\em Comm. Pure Appl. Math.}, 45(9):1063--1096, 1992.

\bibitem{lindblad}
Hans Lindblad.
\newblock {Global solutions of quasilinear wave equations}.
\newblock {\em Amer. J. Math.}, 130(1):115--157, 2008.

\bibitem{craslind}
Hans Lindblad and Igor Rodnianski.
\newblock {The weak null condition for {E}instein's equations}.
\newblock {\em C. R. Math. Acad. Sci. Paris}, 336(11):901--906, 2003.

\bibitem{lind}
Hans Lindblad and Igor Rodnianski.
\newblock {The global stability of {M}inkowski space-time in harmonic gauge}.
\newblock {\em Ann. of Math. (2)}, 171(3):1401--1477, 2010.

\end{thebibliography}

\end{document}